\newcommand{\catfont}[1]{\mathcal{#1}}
\newcommand{\slot}{\underline{\hspace{.4cm}}}
\newcommand{\filtfont}[1]{\catfont{#1}}
\newcommand{\extDim}{\text{dim} \hspace{.1cm}}
\newcommand{\isomorphic}{\simeq}
\newcommand{\intHom}{\text{\underline{Hom}}}
\newcommand{\dual}[1]{#1^{\vee}}
\newcommand{\isoarrow}{\stackrel{\isomorphic}{\longrightarrow}}
\newcommand{\gen}[1]{\langle #1 \rangle}
\newcommand{\res}{ | }
\newcommand{\vlongrightarrow}{\overrightarrow{\qquad}}
\newcommand{\vvlongrightarrow}{\overrightarrow{\qquad\qquad}}
\newcommand{\vvvlongrightarrow}{\overrightarrow{\qquad\qquad\qquad}}
\newcommand{\myand}{\hspace{.05cm} \wedge \hspace{.05cm}}
\newcommand{\myor}{\hspace{.05cm} \vee \hspace{.05cm}}
\newcommand{\mynot}{\neg}
\newcommand{\myimplies}{\implies}
\newcommand{\myiff}{\Longleftrightarrow}
\newcommand{\myeq}{\doteq}
\newcommand{\nin}{\notin}
\newcommand{\nothing}{\emptyset}
\newcommand{\multsymbol}{*}
\newcommand{\invlim}{\varprojlim}
\newcommand{\dirlim}{\varinjlim}
\newcommand{\ideal}{\lhd}
\newcommand{\languageSpace}{\hspace{1cm}}
\newcommand{\abs}[1]{|#1|}
\newcommand{\eqspace}{\hspace{.5cm}}
\newcommand*{\uprod}{\mathchoice{\underset{\scriptscriptstyle\filtfont{U}}{\textstyle\prod}\displaystyle}
{\prod_{\scriptscriptstyle\filtfont{U}} \hspace{-.05cm}\textstyle}
{\scriptscriptstyle\prod_{\scriptscriptstyle\filtfont{U}}
\hspace{-.05cm}\scriptstyle}
{\prod_{\filtfont{U}}\scriptscriptstyle}}
\newcommand*{\resprod}{\mathchoice{\underset{\scriptscriptstyle R}{\textstyle\prod}\displaystyle}
{\prod_{\scriptscriptstyle R} \hspace{-.05cm} \textstyle}
{\scriptscriptstyle\prod_{\scriptscriptstyle R} \hspace{-.05cm}
\scriptstyle} {\prod_{R}\scriptscriptstyle}}
\newcommand*{\hprod}{\mathchoice{\underset{\scriptscriptstyle H}{\textstyle\prod}\displaystyle}
{\prod_{\scriptscriptstyle H} \hspace{-.05cm} \textstyle}
{\scriptscriptstyle\prod_{\scriptscriptstyle H} \hspace{-.05cm}
\scriptstyle} {\prod_{H}\scriptscriptstyle}}
\newcommand*{\hnprod}[1]{\mathchoice{\underset{\scriptscriptstyle H \scriptscriptstyle \leq \scriptscriptstyle #1}{\textstyle\prod}\displaystyle}
{\prod_{\scriptscriptstyle H \scriptscriptstyle \leq
\scriptscriptstyle #1} \hspace{-.05cm} \textstyle}
{\scriptscriptstyle\prod_{\scriptscriptstyle H \scriptscriptstyle
\leq \scriptscriptstyle #1} \hspace{-.05cm} \scriptstyle}
{\prod_{H \leq #1}\scriptscriptstyle}}
\theoremstyle{plain}
\newtheorem{thm}{Theorem}[section]
\newtheorem{cor}[thm]{Corollary}
\newtheorem{lem}[thm]{Lemma}
\newtheorem{prop}[thm]{Proposition}
\theoremstyle{definition}
\newtheorem{defn}{Definition}[section]
\theoremstyle{remark}
\numberwithin{equation}{section}
\begin{document}          

\title{Ultraproducts of Tannakian Categories and Generic Representation Theory of Unipotent Algebraic Groups}

\author{Michael Crumley}


\CopyRightPage{yes}  

\MyDocument{Dissertation}  

\MyDegree{Doctor of Philosophy Degree in Mathematics}

\ConferDate{August}{2010}

\MyAdvisor{Dr.~Paul R.~Hewitt}


  \SecondCommitteeMember{Dr.~Charles J.~Odenthal}
   \ThirdCommitteeMember{Dr.~Martin R.~Pettet}
  \FourthCommitteeMember{Dr.~Gerard Thompson}
   \FifthCommitteeMember{Dr.~Steve Smith}

\GradDean{Dr.~Patricia R.~Komuniecki}{Dean}

\maketitle 


\begin{abstractpage}

Let $G$ be an affine group scheme defined over a field $k$, and
denote by $\text{Rep}_k G$ the category of finite dimensional
representations of $G$ over $k$.  The principle of tannakian
duality states that any neutral tannakian category is tensorially
equivalent to $\text{Rep}_k G$ for some affine group scheme $G$
and field $k$, and conversely.

Originally motivated by an attempt to find a first-order
explanation for generic cohomology of algebraic groups, we study
neutral tannakian categories as abstract first-order structures
and, in particular, ultraproducts of them. One of the main
theorems of this dissertation is that certain naturally definable
subcategories of these ultraproducts are themselves neutral
tannakian categories, hence tensorially equivalent to
$\text{Comod}_A$ for some Hopf algebra $A$ over a field $k$. We
are able to give a fairly tidy description of the representing
Hopf algebras of these categories, and explicitly compute them in
several examples.  The work done in this vein constitutes roughly
half of this dissertation.

The second half is much less abstract in nature, as we turn our
attention to working out the representation theories of certain
unipotent algebraic groups, namely the additive group $G_a$ and
the Heisenberg group $H_1$. The results we obtain for these groups
in characteristic zero are not at all new or surprising, but in
positive characteristic they perhaps are.  In both cases we obtain
that, for a given dimension $n$, if $p$ is large enough with
respect to $n$, all $n$-dimensional modules for these groups in
characteristic $p$ are given by commuting products of
representations, with the constituent factors resembling
representations of the same group in characteristic zero.  This
has led us to define the `height-restricted ultraproduct' of the
categories $\text{Rep}_{k_i} G$ for a sequence of fields $k_i$ of
increasing positive characteristic, and the above result can be
summarized by saying that these height-restricted ultraproducts
are tensorially equivalent to $\text{Rep}_{k} G^n$, where $G^n$
denotes a direct product of copies of $G$ and $k$ is a certain
field of characteristic zero. We later use these results to
extrapolate some generic cohomology results for these particular
unipotent groups.

\end{abstractpage}



\begin{dedication}
\vspace{2cm}
\begin{center} {\LARGE \textit{To Sarah} }
\end{center}
\end{dedication}


\begin{acknowledgments}

It is not often in one's life that the opportunity arises to truly
thank the people that have made a real impact upon his existence.
I would therefore like to take this opportunity to express my
sincerest thanks

\begin{description}
\item[]to my advisor Dr.~Paul Hewitt, without whose guidance this
dissertation could not have possibly come into being.  He has at
all times been utterly unselfish with his expertise, insight, and
optimism.  He challenged me when I was wrong, and encouraged me
when I was right.  A more willing and able advisor one could not
ask for.

\item[]to Dr.'s Charles Odenthal, Martin Pettet, Gerard Thompson
and Steve Smith, for their careful reading of this manuscript and
their many thoughtful critiques and suggestions;

\item[]to my father Michael, for his encouragement and support,
even when he had no idea what I was doing with my weekends;

\item[]to my mother Valerie, who instilled in me my earliest love
of learning, and fostered what she called my `insatiable
curiosity';

\item[]to my brother Josh, for his encouragement, and for being
the first of our family to discover the beauty of mathematics;

\item[]to my aunt Ellie, uncle Chad, Nanna, and cousins Andrew,
Jessie, and Lexi, for making me a part of their family;

\item[]to my guitar teacher Kevin Smith, who was the first to show
me that a life of scholarship was a life well spent;

\item[]to John, Jay, Geoff and Andrew, for their encouragement,
support, and friendship;

\item[]to Petko, my roommate and mathematical brother;

\item[]to all of my aunts, uncles, cousins, and dearly departed
grandma;

\item[]to all of the excellent teachers and mentors I've had
throughout the years;

\item[]to my cat Mam and my fish, for reminding me that happiness
has shockingly little to do with money;

\item[]to British Petroleum and the owners of Deepwater Horizon,
for proving to us that intelligence is a thing utterly divorced
from wisdom;

\item[]to major league umpire Jim Joyce, for aspiring to do his
job the best he could even when it was the least popular thing in
the world to do at that moment;

\item[]to the Detroit Tigers, because they are due for a series
win very soon;

\item[]to the makers of Guinness draft beer and Jim Beam bourbon
whiskey, for delaying the completion of this dissertation by at
least six months;

\item[]to Ludwig van Beethoven and Johnny Cash, and they know why;

\item[]and to Sarah, for her support, encouragement,
understanding, faith, strength, patience, and love.

\end{description}
\end{acknowledgments}

\tableofcontents 


\newtheorem{theorem}{Theorem}[section]
\newtheorem{lemma}[theorem]{Lemma}
\newtheorem{proposition}[theorem]{Proposition}
\newtheorem{example}[theorem]{Example}
\newtheorem*{maintheorem}{Main Theorem}
\newtheorem{definition}{Definition}

\CaptionFormat{hang}  

\StartDocumentText 




\chapter{Introduction}

\newcommand{\extdim}{\extDim}

Consider the following two theorems:

\begin{thm} (see Corollary 3.4 of \cite{FP}) Let $G$ be a simple,
simply connected algebraic group defined and split over
$\mathbb{F}_p$, and $\lambda$ a dominant weight.  If $p$ is
sufficiently large with respect to $G$, $\lambda$ and $n$, then
the dimension of $H^n(G(\mathbb{F}_p),S(\lambda))$ is independent
of $p$.
\end{thm}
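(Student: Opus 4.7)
The plan is to reduce the computation of $H^n(G(\mathbb{F}_p), S(\lambda))$ to a rational cohomology calculation whose answer can be pinned down uniformly in $p$, via the machinery of generic cohomology as developed by Cline, Parshall, Scott and van der Kallen. The key idea is that for $p$ sufficiently large relative to $\lambda$ and $n$, the finite group cohomology agrees with a stable value that is computed, in effect, in characteristic zero, whence its dimension cannot depend on $p$.

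First I would realize $G(\mathbb{F}_p)$ as the Frobenius fixed subgroup $G^F \subset G(\overline{\mathbb{F}}_p)$, and invoke the generic cohomology comparison: for $r$ large, the restriction map induces an isomorphism
\[
H^n(G, M^{(p^s)}) \;\isomorphic\; H^n(G(\mathbb{F}_{p^r}), M^{(p^s)})
\]
for a sufficiently twisted version of a rational $G$-module $M$. The next step is to show that when $p$ itself is large with respect to $n$ and $\lambda$, this stabilization already occurs at $r = 1$, so that $H^n(G(\mathbb{F}_p), S(\lambda))$ can be identified with $H^n(G, S(\lambda) \otimes N)$ for an appropriate auxiliary module $N$ depending only on the combinatorial data.

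Second, I would evaluate this stable rational cohomology for the standard module $S(\lambda)$. Applying the Lyndon–Hochschild–Serre spectral sequence for a Borel $B \subset G$, together with Kempf vanishing and the fact that for $p$ large $S(\lambda)$ has a good filtration whose sections have characteristic-zero character, reduces the computation to $\mathrm{Ext}$-groups between Weyl modules (or equivalently to a Lie-algebra cohomology computation via Polo–Tilouine type comparisons). For $p$ exceeding a bound depending only on $G$, $\lambda$, and $n$, each of these Ext groups coincides with its characteristic zero analogue, and so the resulting dimension is manifestly $p$-independent.

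The principal obstacle will be assembling the three separate "$p$ sufficiently large" hypotheses — the one needed to collapse $r$ to $1$ in the generic cohomology comparison, the one needed to guarantee a good filtration of $S(\lambda)$ with the expected characters, and the one needed to equate the relevant Ext groups with their characteristic-zero values — into a single effective bound. This amounts to a careful bookkeeping of which Frobenius twists contribute to degree $n$ of the spectral sequence, and bounding the weights that appear in the filtration of $S(\lambda)$ in terms of $\lambda$ alone. Once such a uniform bound is established, the theorem follows because the cohomological dimension we compute is read off from a diagram of rational $G$-modules whose structure, for $p$ past the bound, is identical to the corresponding diagram in characteristic zero.
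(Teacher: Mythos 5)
This theorem is not proved in the dissertation at all — it is quoted verbatim (as Corollary~3.4 of~\cite{FP}) in the opening lines of the introduction purely as motivation. The author juxtaposes it with the compactness-theorem fact that a first-order sentence true in all characteristic-zero fields holds in all sufficiently large positive characteristic, and the entire project of the dissertation is to ask whether there is a \emph{model-theoretic} explanation for generic-cohomology phenomena of this kind. So there is no ``paper's own proof'' for your sketch to be measured against; the paper deliberately takes the Friedlander--Parshall result as a black box.

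As an account of the classical argument, your outline is in the right spirit — the Cline--Parshall--Scott--van der Kallen stability isomorphism $H^n(G, V^{(s)}) \isomorphic H^n(G(\mathbb{F}_q), V^{(s)})$ for $q$ and $s$ large, followed by an identification of the stable value with a characteristic-zero computation via good filtrations and Lie-algebra cohomology, is indeed how Friedlander--Parshall proceed. But you should be careful about two things. First, the claim that the stabilization ``already occurs at $r = 1$'' when $p$ is large is not automatic and is actually the crux of the FP paper: CPSvdK stability as originally stated only controls $H^n(G(\mathbb{F}_{p^r}))$ for $r$ large with $p$ fixed, and Friedlander--Parshall had to re-derive effective bounds in $p$. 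Second, the module $S(\lambda)$ and its interaction with Frobenius twists needs to be handled more carefully than ``an appropriate auxiliary module $N$'' — the untwisting step is where the characters-stay-the-same argument lives. If you want to actually execute this plan rather than defer to~\cite{FP}, those are the two places where the bookkeeping you allude to is substantive rather than routine. In the context of this dissertation, though, none of this is attempted; the paper's contribution lies in the ultraproduct/tannakian framework built in later chapters, and the generic-cohomology results it eventually proves (section~\ref{GenericCohomologyforExt1}) are a different, much more restricted statement about $\text{Ext}^1$ for the specific unipotent groups $G_a$ and $H_1$.
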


\begin{thm}
Let $\phi$ be a first-order statement in the language of fields
such that $\phi$ is true for every characteristic zero field. Then
$\phi$ is true for all fields of sufficiently large positive
characteristic.
\end{thm}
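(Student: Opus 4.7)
The plan is to prove the contrapositive by an ultraproduct construction, which fits naturally into the model-theoretic framework of this dissertation. Suppose, for contradiction, that $\phi$ fails in fields of arbitrarily large positive characteristic. Then there is an infinite set of primes $P$ such that for each $p \in P$ one may choose a field $F_p$ of characteristic $p$ satisfying $F_p \models \neg \phi$.

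Next, I would fix a non-principal ultrafilter $\mathcal{U}$ on $P$ and form the ultraproduct $F = \prod_{p \in P} F_p / \mathcal{U}$. Since the axioms defining a field are first-order sentences that hold in every factor, $F$ is itself a field. By {\L}o\'s's theorem, $F \models \neg\phi$, because $\neg\phi$ holds in every factor and hence on a $\mathcal{U}$-large set. It remains to check that $F$ has characteristic zero: for any fixed prime $q$, the set $\{p \in P : p \neq q\}$ is cofinite and thus belongs to $\mathcal{U}$, so the sentence $q \cdot 1 \neq 0$ holds on a $\mathcal{U}$-large set and therefore holds in $F$. Since $q$ was arbitrary, $\mathrm{char}(F) = 0$, contradicting the hypothesis that $\phi$ holds in every characteristic zero field.

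The substantive content of the argument is entirely concentrated in {\L}o\'s's theorem, which guarantees that first-order truth in an ultraproduct is governed coordinate-wise modulo $\mathcal{U}$. The only mildly delicate point is verifying that $F$ has characteristic zero, and this reduces to the observation that for each prime $q$ only finitely many of the $F_p$ have characteristic exactly $q$. I do not anticipate a genuine obstacle; the proof is essentially a textbook consequence of the compactness theorem, and one could equivalently present it by adding $\neg\phi$ together with the schema $\{n \cdot 1 \neq 0 : n \geq 1\}$ to the theory of fields and observing that every finite subset is realized by some $F_p$ with $p$ large. The ultraproduct formulation is preferable here because it produces an explicit counter-model and foreshadows the use of ultraproducts throughout the rest of the dissertation.
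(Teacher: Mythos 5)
Your proof is correct, and it reaches the conclusion by a genuinely different route than the paper's. The paper argues directly from the compactness theorem (its proposition on corollaries of compactness): since the infinite conjunction of the sentences $\neg\mathrm{char}_p$ over all primes $p$ (together with the field axioms) implies $\phi$, some finite subconjunction already implies $\phi$, and any field of sufficiently large positive characteristic satisfies that finite subconjunction. You instead assume failure in arbitrarily large characteristic, pick witnessing fields $F_p$, and build an explicit ultraproduct over a non-principal ultrafilter; {\L}o\'s's theorem then produces a characteristic zero field modelling $\neg\phi$. You even anticipate the paper's argument in your closing remark, observing that one could equivalently add $\neg\phi$ and the schema $\{n\cdot 1 \neq 0\}$ to the theory of fields and invoke compactness. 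The two routes buy slightly different things: the paper's compactness argument is shorter and requires no machinery beyond the statement of compactness; your ultraproduct construction is constructive in the sense of exhibiting an explicit counter-model, and (as you note) it is thematically aligned with the dissertation's heavy later use of ultraproducts and {\L}o\'s's theorem. One small point of care, which you handled correctly: showing $\mathrm{char}(F)=0$ requires that for each fixed prime $q$ only one index $p \in P$ can satisfy $p=q$, so that $\{p : p\neq q\}$ is cofinite and hence $\mathcal{U}$-large because $\mathcal{U}$ is non-principal; a principal ultrafilter would collapse the ultraproduct to a single $F_p$ of positive characteristic and the argument would fail.
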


The first is a classic generic cohomology theorem; if you can
assume such and such a thing to be large (in this case,
characteristic), cohomology stabilizes.  The second is a textbook
exercise in model theory, an easy consequence of the compactness
theorem for first-order logic.  The analogy between these two
statements has been the broad motivation for the following: is
there a first-order explanation for the phenomenon of generic
cohomology?

Our investigations into this question have, as fate would have it,
led us far astray from our original objective.  The majority of
this dissertation is devoted to the study of \textit{neutral
tannakian categories} as abstract first-order structures (roughly
speaking, the categories which can in some sense be thought of as
$\text{Rep}_k G$ for some affine group scheme $G$ and field $k$),
and in particular, ultraproducts of them. To this end we identify
certain subcategories of these ultraproducts which themselves are
neutral tannakian categories, hence tensorially equivalent to
$\text{Comod}_{A}$ for some Hopf algebra $A$.  We are able to
provide a general formula for $A$, and explicitly compute it in
several examples.

For the remainder we turn away from ultraproducts, and instead to
the study of the concrete representation theories of certain
unipotent algebraic groups, namely the additive group $G_a$ and
the Heisenberg group $H_1$.  For both groups we obtain a certain
`generic representation theory' result: that while the
characteristic $p >0$ and characteristic zero theories of both
can, by and large, be expected to bear little resemblance to one
another, if instead one is content to keep positive characteristic
large with respect to dimension, there is in fact a very strong
correspondence between the two.  These results are later codified
by considering the `height-restricted ultraproduct' of these
groups for increasing characteristic, and from them we are able to
generate some modest, `height-restricted' generic cohomology
results for these groups.

\section{Preliminaries}

For an algebraic group $G$ defined over $\mathbb{Z}$ and a field
$k$, $\text{Rep}_k G$ is the category of finite dimensional
representations of $G$ over $k$.  This category is tensorially
equivalent to $\text{Comod}_{A \otimes k}$, where $A$ is the
representing Hopf algebra of $G$ over $\mathbb{Z}$, and we
generally prefer to think of it as the latter.  If $k_i$ is a
collection of fields indexed by $I$ and $\filtfont{U}$ a
non-principal ultrafilter over $I$, we consider the ultraproduct
of the categories $\text{Comod}_{A_i}$ with respect to
$\filtfont{U}$, with $A_i = A \otimes k_i$, which we denote as
$\uprod \text{Comod}_{A_i}$.

The language over which these categories are realized as
first-order structures, which we call the `language of abelian
tensor categories' (section \ref{chapterTheLanguageOf}), includes
symbols denoting an element being an object or morphism,
composition of morphisms, addition of morphisms, morphisms
pointing from one object to another, and notably, a symbol for the
tensor product (of objects and morphisms).  It also includes
symbols denoting certain natural transformations on the category,
necessary to describe certain regularity properties of the tensor
product, e.g.~being naturally associative and commutative.  The
primary reason we have chosen these symbols is

\begin{thm}(see chapter \ref{chapterFirstOrderDefinabilityOf})
In the language of abelian tensor categories, the statement ``is a
tannakian category'' is a first-order sentence.
\end{thm}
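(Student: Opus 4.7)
The plan is to verify that each clause in the definition of a neutral tannakian category can be written as a first-order sentence in the language of abelian tensor categories. Following Deligne-Milne I decompose the definition into the axiom groups (a) abelian category, (b) a compatible symmetric monoidal structure, (c) rigidity, (d) $\text{End}(\mathbf{1})$ is a field, and (e) existence of a fiber functor.

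For (a), the standard abelian axioms---existence of a zero object, binary biproducts, kernels and cokernels, and the coincidence of monics with kernels and of epics with cokernels---translate directly into bounded $\forall\exists$ formulas using the object/morphism predicates, source and target, composition, and addition of morphisms. For (b), the monoidal coherence axioms (pentagon, triangle, hexagon, involutivity of the symmetry) are universal equalities between compositions of the natural transformations explicitly named in the language (associator, unit constraints, braiding), and compatibility of $\otimes$ with the abelian structure amounts to further universal equations. For (c), rigidity is the $\forall\exists$ statement that every object $X$ admits a $Y$ and morphisms $e\colon X \otimes Y \to \mathbf{1}$, $c\colon \mathbf{1} \to Y \otimes X$ satisfying the two zigzag identities, a bounded first-order formula. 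For (d), the unit object is characterized up to isomorphism by the unit constraints, so one may quantify over morphisms whose source and target both equal the unit and demand that every nonzero such morphism be invertible---again first-order.

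The main obstacle is (e), since ``admits a fiber functor'' as phrased quantifies over functors and is formally second-order. My plan is to replace it by an intrinsic characterization in the spirit of Deligne: an abelian rigid symmetric monoidal category with $\text{End}(\mathbf{1})$ a field is tannakian precisely when every object has non-negative integer categorical dimension, or equivalently some antisymmetric tensor power vanishes. The categorical dimension of $X$ is the scalar $\text{coev}_X \circ \text{ev}_X \in \text{End}(\mathbf{1})$ extracted from the rigidity data, and the antisymmetrization idempotents are first-order definable from the braiding and composition. The delicate point is to express ``some antisymmetric power vanishes'' without an explicit quantifier over natural numbers; this is the real technical heart of the argument, and I expect it to require a careful reformulation using intrinsic properties of $X$ that bound the allowed exterior degree in terms of data already nameable in the language. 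Assembling all of the above yields a single first-order sentence in the language of abelian tensor categories whose models are exactly the neutral tannakian categories.
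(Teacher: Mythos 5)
You have misread what is being asserted. In this paper a \emph{tannakian category} is, by definition (see the chapter on tannakian duality), a rigid abelian tensor category in which $\text{End}(\underline{1})$ is a field --- no fibre functor is required. A \emph{neutral} tannakian category is a tannakian category additionally equipped with a fibre functor, and the author explicitly disclaims the claim you set out to prove: the chapter opens by noting ``we do not claim that the property of `being neutral' is necessarily first-order.'' So the theorem under discussion is exactly your clauses (a)--(d); your clause (e) is not part of the statement, and the paper deliberately avoids it.

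This matters because your clause (e) is precisely where your argument has real gaps, and those gaps are not incidental. Deligne's internal characterization via categorical dimensions (or vanishing of alternating powers) has two problems you cannot wave away. First, it only holds in characteristic zero, whereas the paper's statement is uniform over all fields $\text{End}(\underline{1})$. Second, even where it applies, it characterizes the existence of a fibre functor over \emph{some} extension of $k$ (Deligne's ``tannakian''), not over $k$ itself (``neutral''), so it would not give you what you want. Third, you correctly flag that ``some exterior power vanishes'' is an unbounded existential over the natural numbers, and you have no mechanism to bound the degree by data in the language; that is not a small technicality but exactly the obstruction that makes ``neutral'' resist first-order axiomatization. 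Drop (e) entirely and the remaining difficulty evaporates.

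On the parts that do belong to the statement, your approach is essentially the paper's, with one genuinely different choice in (c): you express rigidity through duality data (evaluation, coevaluation, and the zigzag identities), whereas the paper uses the representability characterization of $\intHom(X,Y)$ via a universal map $\text{ev}$, reflexivity of each object, and the requirement that a certain comparison map $\intHom(X_1,Y_1)\otimes\intHom(X_2,Y_2)\rightarrow\intHom(X_1\otimes X_2,Y_1\otimes Y_2)$ be an isomorphism. Both routes yield bounded $\forall\exists$ sentences and both are faithful to what ``rigid'' means here, so this is a legitimate alternative; your formulation is a bit shorter to write out, while the paper's tracks its later use of $\intHom$ in the duality-recovery algorithm. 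Everything else --- the abelian axioms, the monoidal coherence axioms as universal equalities among the named natural isomorphisms, and ``$\text{End}(\underline{1})$ is a field'' expressed by quantifying over endomorphisms of the unit --- matches the paper's axiomatization.
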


Chapter \ref{chapterTannakianDuality} is devoted to giving an
explicit definition of a tannakian category.  Suffice it to say
for the moment, it is an abelian category $\catfont{C}$, endowed
with a bifunctor $\otimes:\catfont{C} \times \catfont{C}
\rightarrow \catfont{C}$, which satisfies a plethora of regularity
conditions, e.g. being naturally associative and possessing
internal Homs.  We say that a tannakian category $\catfont{C}$ is
\textit{neutral} (see definition
\ref{defnneutraltannakiancategory}) if it comes equipped with a
\textit{fibre functor}, i.e.~an exact, faithful, $k$-linear tensor
preserving functor \index{$\omega$} $\omega$ from $\catfont{C}$ to
$\text{Vec}_k$ (the category of finite dimensional vector spaces
over $k$, where $k$ is the field
$\text{End}_\catfont{C}(\underline{1})$, and $\underline{1}$
denotes the identity object of $\catfont{C}$). The motivation for
the definition of a neutral tannakian category is the following
theorem.

\begin{thm}(see theorem 2.11 of \cite{deligne})
Let $\catfont{C}$ be a neutral tannakian category over the field
$k$ with fibre functor $\omega$. Then

\begin{enumerate}
\item{The functor $\text{Aut}^\otimes(\omega)$ on $k$-algebras is
representable by an affine group scheme $G$} \item{$\omega$
defines an equivalence of tensor categories between $\catfont{C}$
and $\text{Rep}_k G$}
\end{enumerate}
\end{thm}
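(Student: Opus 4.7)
The plan is to build the representing group scheme $G$ by reconstructing a commutative Hopf algebra $A$ from the pair $(\catfont{C}, \omega)$ and setting $G = \text{Spec}(A)$; this will deliver representability (part 1) and the tensor equivalence (part 2) essentially in one stroke.

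I would first reduce to the finitely generated case by expressing $\catfont{C}$ as the filtered union of its tensor subcategories $\gen{X}$, each the smallest full abelian tensor subcategory containing an object $X$ and closed under duals. Every object of $\gen{X}$ is a subquotient of a finite direct sum of terms $X^{\otimes a} \otimes (\dual{X})^{\otimes b}$, so $\omega|_{\gen{X}}$ lands in vector spaces of controlled dimension. The first substantive step is to show that $\underline{\text{Aut}}^\otimes(\omega|_{\gen{X}})$ is representable by a closed algebraic subgroup $G_X \hookrightarrow GL(\omega(X))$, cut out by the Zariski-closed conditions that an automorphism stabilize every subobject appearing in the tensor algebra generated by $X$ and $\dual{X}$.

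The Hopf algebra $A$ itself I would construct as a direct limit of coalgebras. For each $Y$ set $A_Y := \text{End}(\omega|_{\gen{Y}})^{\vee}$, the linear dual of the finite-dimensional $k$-algebra of tensor-preserving natural endomorphisms; duality turns $A_Y$ into a coalgebra, and the obvious transition maps assemble them into
\[
A \;=\; \dirlim_{Y} A_Y.
\]
The tensor product on $\catfont{C}$ endows $A$ with a multiplication, making it a bialgebra, and the existence of duals in $\catfont{C}$ supplies an antipode, so $A$ is a commutative Hopf algebra. By construction each $\omega(Y)$ acquires a natural $A$-comodule structure, and $\omega$ refines to a $k$-linear tensor functor
\[
\tilde{\omega}\colon \catfont{C} \longrightarrow \text{Comod}_A^{\mathrm{fd}};
\]
the group scheme $G := \text{Spec}(A)$ is then the candidate for part 1.

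The main obstacle is showing that $\tilde{\omega}$ is an equivalence and that $G$ really represents $\underline{\text{Aut}}^\otimes(\omega)$. Faithfulness of $\tilde{\omega}$ is assumed, and fullness is a Jacobson-density argument: a comodule morphism in $\text{Hom}_A(\omega(Y), \omega(Y'))$ commutes with all of $\text{End}(\omega|_{\gen{Y\oplus Y'}})$ and therefore descends from a morphism in $\catfont{C}$. Essential surjectivity is the subtlest step; I would attack it by first showing every finite-dimensional $A$-comodule embeds in some $A_Y^{\oplus n}$, then realizing $A_Y$ with its regular comodule structure as $\omega$ of an explicit ind-object in $\catfont{C}$, and finally invoking that $\catfont{C}$ is abelian to pull subcomodules back to subobjects. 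Once the equivalence is in place, the representability statement (1) drops out, because the universal property of $A$ identifies
\[
\underline{\text{Aut}}^\otimes(\omega)(R) \;\isomorphic\; \text{Hom}_{k\text{-alg}}(A, R) \;=\; G(R)
\]
naturally in the $k$-algebra $R$.
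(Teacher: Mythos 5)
Your overall strategy — construct $A$ as a direct limit of coalgebras obtained by dualizing endomorphism algebras of $\omega$ over small subcategories, refine $\omega$ to an equivalence with $\text{Comod}_A$, and read off representability — matches the paper's outline (and Deligne's proof), and your fullness argument via double-centralizer and your sketch of essential surjectivity are the right ideas. But the definitions you supply for the crucial finiteness step do not cohere. The paper takes $\gen{X}$ to be the \emph{abelian} subcategory generated by $X$ (subquotients of finite direct sums $X^{\oplus n}$), explicitly \emph{not} closed under $\otimes$; that restriction is exactly what forces $\text{End}(\omega\res\gen{X})$ to be a subalgebra of $\text{End}_k(\omega(X))$, hence finite-dimensional, so that its linear dual is a coalgebra. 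You instead take $\gen{X}$ to be the tensor subcategory generated by $X$ and $\dual{X}$, and assert that $\omega\res\gen{X}$ ``lands in vector spaces of controlled dimension.'' It does not: $\omega(X^{\otimes a}\otimes(\dual{X})^{\otimes b})$ has dimension $(\dim\omega(X))^{a+b}$, which is unbounded, and there is no reason for $\text{End}(\omega\res\gen{X})$ to be finite-dimensional. Worse, the ``finite-dimensional $k$-algebra of tensor-preserving natural endomorphisms'' you propose to dualize is not even a $k$-linear subspace: the constraint $\lambda_{Y\otimes Z}=\lambda_Y\otimes\lambda_Z$ is multiplicative in $\lambda$, not additive, and $\lambda_{\underline{1}}=\text{id}$ is destroyed by scaling, so neither sums nor scalar multiples of tensor-preserving endomorphisms are tensor-preserving. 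There is nothing there to take the $k$-linear dual of.

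Two consistent repairs are available. Deligne's route (the one the paper follows): work with the abelian $\gen{X}$ and \emph{all} natural endomorphisms of $\omega\res\gen{X}$, first reconstruct the coalgebra $B=\dirlim B_X$ and the abelian equivalence $\catfont{C}\isomorphic\text{Comod}_B$ making \emph{no} reference to $\otimes$, and only afterward recover the multiplication on $B$ from the bifunctor $\otimes$ on $\catfont{C}$ via proposition \ref{tensortomultprop}, which identifies bifunctors on $\text{Comod}_B$ with the correct underlying tensor product with $k$-algebra maps $B\otimes B\to B$. Alternatively, your remark about $G_X\hookrightarrow GL(\omega(X))$ is the seed of the other standard route: for the tensor subcategory one shows directly that $\text{Aut}^\otimes(\omega\res\gen{X})$ is represented by a closed subgroup scheme of $GL(\omega(X))$ (a tensor-preserving automorphism is determined by its component at $X$, and the conditions of stabilizing images of subobjects of tensor powers are Zariski-closed), and then one defines $A_X$ to be the coordinate Hopf algebra of $G_X$ directly, never passing through the dual of an endomorphism algebra. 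Either route works in isolation; your proposal splices them together in an incompatible way.
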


The moral: a neutral tannakian category is (tensorially equivalent
to) the category of finite dimensional representations of an
affine group scheme over a field, and vice versa.  (Section
\ref{recoveringanalgebraicgroupsection} is devoted to describing
how one goes about, in principle, recovering the representing Hopf
algebra of a neutral tannakian category.)

\section{The Restricted Ultraproduct of Neutral Tannakian
Categories}

As is argued in section \ref{FODofExt1Section}, the basic concepts
of cohomology of modules (at least in the case of $\text{Ext}^1$)
are quite naturally expressible in the language of abelian tensor
categories.  Therefore, to study cohomology over a particular
group $G$ and field $k$, a reasonable object of study is the
category $\text{Rep}_{k}G$ as a first-order structure in this
language.  But further, we are interested in studying
\textit{generic} cohomology; that is, for a fixed group $G$ and
sequence of fields $k_i$, we would like to know if a particular
cohomological computation eventually stabilizes for large enough
$i$.  We are then drawn to the study of not the single category
$\text{Rep}_k G$ for fixed $k$, but rather the infinite sequence
of the categories $\text{Rep}_{k_i} G$.  And as ultraproducts of
relational structures, by design, tend to preserve only those
first-order properties which are true `almost all of the time', it
is for this reason that we have chosen to study ultraproducts of
categories of the form $\text{Rep}_{k_i} G$, which we denote by
$\uprod \text{Rep}_{k_i} G$.

While being a tannakian category is a first-order concept, the
property of being endowed with a fibre functor, so far as we can
tell, is not. If $\catfont{C}_i$ is a sequence of tannakian
categories neutralized by the fibre functors $\omega_i$, the
natural attempt to endow $\uprod \catfont{C}_i$ with a fibre
functor would go as follows.
 Define a functor $\omega$ on $\uprod \catfont{C}_i$ which takes an
object $[X_i] \in \uprod \catfont{C}_i$ to $\uprod \omega_i(X_i)$
(ultraproduct of vector spaces; see section
\ref{ultraproductsofvectorspaces}), and similarly for a morphism
$[\phi_i]$ (ultraproduct of linear maps; see section
\ref{lineartransformationsandmatricessubsection}).  But this will
not do; $\omega([X_i])$ will in general be infinite dimensional
(proposition \ref{prop1}), specifically disallowed by the
definition of a fibre functor. Further, for any collection of
vector spaces $V_i$ and $W_i$ over the fields $k_i$, we have a
natural injective map $\uprod V_i \otimes \uprod W_i \rightarrow
\uprod V_i \otimes W_i$ (section \ref{tensorproductssubsection}).
But unless at least one of the collections is boundedly finite
dimensional, this will not be an isomorphism; thus $\omega$ will
not be tensor preserving in general. We therefore make the
following compromise:

\begin{defn}
The \textbf{restricted ultraproduct} of the $\catfont{C}_i$,
denoted $\resprod \catfont{C}_i$, is the full subcategory of
$\uprod \catfont{C}_i$ consisting of those objects $[X_i]$ such
that the dimension of $\omega_i(X_i)$ is bounded.
\end{defn}

Then we indeed have

\begin{thm}(see theorems \ref{resprodistannakianthm} and
\ref{resprodisneutralthm}) $\resprod \catfont{C}_i$ is a tannakian
category, neutralized over the field $k=\uprod k_i$ by the functor
$\omega$ described above.
\end{thm}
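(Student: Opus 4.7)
The plan is to decompose the claim into two stages. First I would establish that the full ultraproduct $\uprod \catfont{C}_i$ is already an abelian tensor category satisfying all of the first-order axioms of a tannakian category, and then show that the restricted subcategory $\resprod \catfont{C}_i$ inherits this structure by closure arguments and that the functor $\omega$ then serves as a fibre functor over $k = \uprod k_i$.

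For the first stage I would invoke \L{}o\'s's theorem together with the earlier theorem that ``is a tannakian category'' is a first-order sentence in the language of abelian tensor categories: since each $\catfont{C}_i$ is tannakian, the ultraproduct $\uprod \catfont{C}_i$ satisfies the same sentence, and is therefore a tannakian category in its own right. The only reason this does not already finish the theorem is that we need the distinguished functor $\omega$ to land in finite dimensional vector spaces, which generically fails on all of $\uprod \catfont{C}_i$; hence the passage to $\resprod \catfont{C}_i$.

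Next I would verify that $\resprod \catfont{C}_i$ is closed under each of the structural operations that make up a tannakian category. If $[X_i]$ and $[Y_i]$ have $\omega_i$-dimensions bounded by $m$ and $n$ respectively almost everywhere, then $\omega_i(X_i \oplus Y_i)$, $\omega_i(X_i \otimes Y_i)$, $\omega_i(\dual{X_i})$, $\omega_i(\intHom(X_i,Y_i))$, and the $\omega_i$-image of any sub-object, quotient, kernel, or cokernel are all bounded by an explicit polynomial expression in $m$ and $n$ (using that $\omega_i$ is exact, faithful, and tensor preserving). Thus $\resprod \catfont{C}_i$ is a full tensor subcategory closed under the operations needed to extract a tannakian structure; combined with the fact that $\resprod \catfont{C}_i$ contains the identity object (which has bounded dimension $1$), this upgrades it to a tannakian category, and inherits from the ambient ultraproduct the natural isomorphisms (associativity, commutativity, unit) required by the axioms.

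For the second stage I would check that $\omega$ of the statement truly is a fibre functor. Boundedness ensures that $\omega([X_i]) = \uprod \omega_i(X_i)$ is \emph{finite} dimensional over $k = \uprod k_i$, so $\omega$ lands in $\text{Vec}_k$; exactness and faithfulness transfer from each $\omega_i$ via \L{}o\'s's theorem applied to the first-order descriptions of exactness and faithfulness; $k$-linearity follows from $\text{End}_{\resprod \catfont{C}_i}(\underline{1}) \cong \uprod \text{End}_{\catfont{C}_i}(\underline{1}_i) = \uprod k_i = k$; and, crucially, tensor preservation holds precisely because the natural map $\uprod V_i \otimes \uprod W_i \to \uprod (V_i \otimes W_i)$ is an isomorphism whenever \emph{either} family is boundedly finite dimensional, which is exactly the situation we have restricted ourselves to. The main obstacle I anticipate is this last point: the whole reason the restriction exists is to force the tensor preservation to go through, so the proof must explicitly tie the boundedness hypothesis to the comparison map on ultraproducts of vector spaces (i.e.\ invoke the result from section \ref{tensorproductssubsection}) rather than simply appeal to \L{}o\'s's theorem, which would only give tensor preservation object-by-object in the ambient ultraproduct and not as a functor with the correct codomain $\text{Vec}_k$.
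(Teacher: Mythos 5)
Your proposal matches the paper's own argument essentially point for point: the paper first proves that $\uprod \catfont{C}_i$ is tannakian via \L{}o\'s's theorem, then deduces that $\resprod \catfont{C}_i$ is a tannakian subcategory using precisely the closure-under-operations criterion you describe (Proposition \ref{tannakiansubcategorylemma}), and finally verifies that $\omega$ is a fibre functor by exactly the route you indicate, with the key step being that the comparison map $\Phi$ of Proposition \ref{ultratensorproductprop} is an isomorphism on boundedly finite dimensional families. The one small caveat is that the clause about ``\L{}o\'s's theorem applied to the first-order descriptions of exactness and faithfulness'' is slightly loose since $\omega$ itself is not a symbol in the language; the paper instead characterizes exactness of a sequence in $\resprod \catfont{C}_i$ first-orderly, applies $\omega_i$ slotwise, and invokes Proposition \ref{ultraexactsequenceprop}, but this is the same idea stated more carefully.
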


Thus, $\resprod \catfont{C}_i$ is tensorially equivalent to
$\text{Comod}_{A_\infty}$ for some Hopf algebra $A_\infty$ over
the field $k = \uprod k_i$.  The question then: what is
$A_\infty$?

The obvious first guess, that it is the ultraproduct of the Hopf
algebras $A_i$ representing each of the $\catfont{C}_i$, is not
correct; problem being, this is not a Hopf algebra at all.  We
start by defining a map $\Delta$ on $\uprod A_i$ by the formula
$\uprod A_i \stackrel{[\Delta_i]}{\vlongrightarrow} \uprod A_i
\otimes A_i$ (the ultraproduct of the maps $\Delta_i$).  But
again, unless the $A_i$ are boundedly finite dimensional, we
cannot expect this $\Delta$ to point to $\uprod A_i \otimes \uprod
A_i \subset \uprod A_i \otimes A_i$ in general. So we make another
compromise:

\begin{defn} (see section
\ref{sectionTheRestrictedUltraproductOfHopfAlgebras}) The
\textbf{restricted ultraproduct} of the Hopf algebras $A_i$,
denoted $A_R$, is the collection of all $[a_i] \in \uprod A_i$
such that the rank of $a_i$ is bounded.
\end{defn}

Defining exactly what ``rank'' means here takes some doing, so we
defer it; suffice it to say, $A_R$ can indeed be given the
structure of a coalgebra, under the definition of $\Delta$ given
above. We are able to prove
\ref{chapterTheRepresentingHopfAlgebraOf}

\begin{thm}
\label{introThm1}The representing Hopf algebra of the restricted
ultraproduct of the categories $\text{Comod}_{A_i}$ is isomorphic
to the restricted ultraproduct of the Hopf algebras $A_i$.
\end{thm}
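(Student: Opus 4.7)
The plan is to realise both $A_R$ and $A_\infty$ as spaces of matrix coefficients and exhibit an explicit isomorphism between the two descriptions. Recall from section \ref{recoveringanalgebraicgroupsection} that for any neutral tannakian $(\catfont{C},\omega)$ over $k$, the representing Hopf algebra is spanned by matrix coefficients $c_{X,\phi,v}$ with $X \in \catfont{C}$, $\phi \in \omega(X)^\vee$, $v \in \omega(X)$, subject to the identifications $c_{X,\, \phi \circ \omega(f),\, v} = c_{Y,\, \phi,\, \omega(f)(v)}$ for morphisms $f: X \to Y$. Applied to $\catfont{C}_i = \text{Comod}_{A_i}$ this recovers the usual description of any $a \in A_i$ as a matrix coefficient $c_{X_i,\phi_i,v_i}$ of a finite dimensional subcomodule, and the minimal $\extDim \omega_i(X_i)$ realising such an expression is precisely the ``rank'' used to define $A_R$.

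First I would define the candidate map $\Phi: A_R \to A_\infty$. Given $[a_i] \in A_R$ of rank uniformly bounded by some $n$, choose coefficient data $(X_i,\phi_i,v_i)$ on a set in $\filtfont{U}$ with $\extDim \omega_i(X_i) \le n$. The sequence $(X_i)$ assembles into an object $[X_i] \in \resprod \catfont{C}_i$, while $(\phi_i)$ and $(v_i)$ determine elements of $\omega([X_i])^\vee = \uprod \omega_i(X_i)^\vee$ and $\omega([X_i]) = \uprod \omega_i(X_i)$; these ultraproducts are legitimate because the relevant dimensions are bounded (see section \ref{ultraproductsofvectorspaces}). Set
\[
\Phi([a_i]) \;:=\; c_{[X_i],\,[\phi_i],\,[v_i]} \;\in\; A_\infty.
\]

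The main obstacle is well-definedness, together with injectivity. Suppose two choices of coefficient data $(X_i,\phi_i,v_i)$ and $(X_i',\phi_i',v_i')$ produce the same element of $A_i$ almost surely. The equality $c_{X_i,\phi_i,v_i} = c_{X_i',\phi_i',v_i'}$ can be witnessed in each fibre by a zigzag of $\catfont{C}_i$-morphisms between comodules whose dimensions are controlled by $n$; these witnesses ultraproduct into a zigzag inside $\resprod \catfont{C}_i$, forcing $c_{[X_i],[\phi_i],[v_i]} = c_{[X_i'],[\phi_i'],[v_i']}$ in $A_\infty$. Running the same argument in reverse gives injectivity of $\Phi$: any identification of matrix coefficients in $A_\infty$ is witnessed by a morphism in $\resprod \catfont{C}_i$, which is itself the ultraproduct of morphisms in the $\catfont{C}_i$, yielding the corresponding identity almost surely in each $A_i$. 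The bookkeeping in this step, making sure the rank bound on $A_R$ controls the dimensions of the witnessing objects and morphisms, is the most delicate part of the argument.

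Surjectivity and the Hopf algebra compatibility are then routine. Any $\xi \in A_\infty$ is $c_{[X_i],\xi',w}$ for some $[X_i] \in \resprod \catfont{C}_i$, and because $\omega([X_i])$ is a bounded-dimensional ultraproduct, $\xi'$ and $w$ admit coordinatewise representatives $[\phi_i]$ and $[v_i]$, giving $\xi = \Phi([c_{X_i,\phi_i,v_i}])$. Compatibility with comultiplication, counit, antipode, and multiplication is built into the construction of $A_R$ in section \ref{sectionTheRestrictedUltraproductOfHopfAlgebras}: each structure map was defined as the ultraproduct of the corresponding structure maps on the $A_i$, and on matrix coefficients these are precisely the standard formulas expressing the coaction, tensor product, and dual of comodules over $\omega$ in $\resprod \catfont{C}_i$. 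Thus $\Phi$ is a bijective Hopf algebra homomorphism, proving the theorem.
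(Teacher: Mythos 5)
Your approach is genuinely different from the paper's. The paper never invokes the matrix-coefficient (coend) description of the representing Hopf algebra; its section \ref{recoveringanalgebraicgroupsection} reconstructs $A_\infty$ as the direct limit $\dirlim_{[X_i]} L_{[X_i]}^\circ$ of duals of the endomorphism algebras $L_{[X_i]} = \text{End}(\omega \res \gen{[X_i]})$. Proposition \ref{everythingisilluminatedprop} then identifies $L_{[X_i]}^\circ$ with $\uprod L_{X_i}^\circ$, and the comparison map $\Omega: A_\infty \to A_R$ is assembled via universal properties of direct limits. Your substitution of the coend picture is legitimate in principle, but it means you are not actually leaning on the section you cite, and the rest of the argument must be reconciled with the rank definition the paper actually uses (ideal codimension in $(\invlim L_X)^\circ$, not ``minimal dimension of a realising comodule'' --- lemma \ref{FromRankToDimensionLemma} plus lemma \ref{smallDimensionEmbeddingLemma} show these are mutually bounded, but with a quadratic loss that you would need to track).

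The key gap is exactly where you say it is, and you cannot wave it through. For well-definedness and injectivity you need that whenever $c_{X_i,\phi_i,v_i} = c_{X_i',\phi_i',v_i'}$ in $A_i$, the identification is witnessed (almost surely) by morphisms whose source and target dimensions are \emph{uniformly} bounded in $i$. Nothing in the coend presentation gives this for free: the intermediate objects in an arbitrary zigzag can have dimension unbounded in $i$, and then the tuple of witnesses fails to be an object or morphism of $\resprod \catfont{C}_i$ at all. This is precisely the content of the paper's proof of proposition \ref{OmegaIsInjectiveProp}, which replaces the a priori unbounded $Z_i$ with subcoalgebras $C_i$ of dimension controlled by $\dim(X_i \oplus Y_i)^2$ using lemma \ref{smallDimensionEmbeddingLemma}, and then uses surjectivity of the transition map $T_{C_i,D_i}$ (proposition \ref{surjectivetransitionmapprop}) to collapse the commuting diamond. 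You would need an analogue of that dimension-control argument rewritten for coend witnesses; without it, the step is a genuine hole, not ``delicate bookkeeping.''

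The claim that Hopf-algebra compatibility is ``built into the construction'' is also not correct. Section \ref{sectionTheRestrictedUltraproductOfHopfAlgebras} only equips $A_R$ with a \emph{coalgebra} structure; the paper explicitly notes it has ``not yet proved that $A_R$ is a Hopf algebra,'' and its theorem as stated is only a coalgebra isomorphism. The multiplication is recovered afterwards, in a separate section, by comparing the induced tensor product $\overline{\otimes}$ on $\text{Comod}_{A_R}$ with the one coming from coordinatewise multiplication on $\uprod A_i$, using proposition \ref{tensortomultprop} --- and even then the paper leaves the antipode unchecked. Your one-sentence dismissal of this step is an overclaim; establishing it is comparable in effort to the rest of the argument.

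So: same theorem, different and plausible route, but two real gaps remain --- the dimension control in the zigzag-witness argument, and the verification that the Hopf structure is the one you want.
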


In section \ref{sectionExamples} we explicitly work out $A_\infty$
for a few examples.  If $G$ is a finite group defined over
$\mathbb{Z}$ with representing Hopf algebra $A$, and if $k_i$ is
any collection of fields, then the Hopf algebras $A \otimes k_i$
are constantly finite dimensional, whence the full ultraproduct
$\uprod A \otimes k_i$ \textit{is} in fact a Hopf algebra.  In
this case $A_\infty$ can be identified with $A \otimes \uprod
k_i$, whence $\resprod \text{Rep}_{k_i} G \isomorphic
\text{Rep}_{\uprod k_i} G$.

For non-finite groups, the situation becomes considerably more
delicate. As an example, consider the multiplicative group $G=G_m$
(subsection \ref{subsectionTheGroupGm}) and let $k_i$ be any
collection of fields. For a fixed ultrafilter $\filtfont{U}$, let
$\uprod \mathbb{Z}$ denote the ultrapower of the integers. Then we
can identify $A_\infty$ as the $k = \uprod k_i$-span of the formal
symbols $x^{[z_i]}$, $[z_i] \in \uprod \mathbb{Z}$, with $\Delta$
and $\text{mult}$ defined by
\begin{gather*}
 A_\infty = \text{span}_k(x^{[z_i]}: [z_i] \in \uprod
 \mathbb{Z})\\
 \Delta: x^{[z_i]} \mapsto x^{[z_i]} \otimes x^{[z_i]} \\
 \text{mult}: x^{[z_i]} \otimes x^{[w_i]} \mapsto x^{[z_i + w_i]}
\end{gather*}

We also note here that chapter \ref{adualitytheoremchapter}
contains an interesting theorem about finite dimensional
subcoalgebras of Hopf algebras which was necessary to prove
theorem \ref{introThm1}, but is certainly of interest in its own
right, and requires no understanding of ultraproducts.

\section{From Ultraproducts to Generic Cohomology}

The reason we chose to study these categories in the first place
is because cohomology of modules (at least in the $\text{Ext}^1$
case) is a naturally expressible concept in the language of
abelian tensor categories. That is (see section
\ref{FODofExt1Section})

\begin{prop}
For fixed $n$, the statement $\phi(M,N) \stackrel{\text{def}}{=} $
``$\text{Ext}^1(M,N)$ has dimension $n$'' is a first-order formula
in the language of abelian tensor categories.
\end{prop}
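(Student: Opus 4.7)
The plan is to use the Yoneda/Baer description of $\text{Ext}^1(M,N)$ as the set of equivalence classes of short exact sequences $0 \to N \to E \to M \to 0$ under Baer sum, and to verify that every ingredient of this description can be written down in the language of abelian tensor categories. Since the hard theorem earlier in the excerpt already shows that being a tannakian category is first-order, most of the ambient abelian-category vocabulary (monomorphism, epimorphism, kernel, cokernel, image, finite limits and colimits) is available to us as first-order predicates.

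First I would write a formula $\sigma(M,N,E,i,p)$ asserting that $0 \to N \xrightarrow{i} E \xrightarrow{p} M \to 0$ is a short exact sequence: $i$ is monic, $p$ is epic, $p \circ i = 0$, and $i$ is a kernel of $p$. Each of these clauses is a first-order sentence in the language, since the universal properties of kernels and cokernels quantify only over objects and morphisms of the category. Next, equivalence of two extensions $\xi_1, \xi_2$ of $M$ by $N$ is captured by $\exists \phi : E_1 \to E_2$ making the obvious squares commute; by the short five lemma $\phi$ is automatically an isomorphism, so this quantifier-free postfix is enough, and $\xi_1 \sim \xi_2$ is first-order.

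Then I would encode the group operations. Pullbacks and pushouts in an abelian category are finite limits/colimits characterized by first-order universal properties, so ``$\xi_3$ represents the Baer sum of $\xi_1$ and $\xi_2$'' (pull back $p_1$ and $p_2$ over $M$, push out along the sum map $N \oplus N \to N$, then ask the result to be $\sim$-equivalent to $\xi_3$) is a first-order predicate $B(\xi_1, \xi_2, \xi_3)$. Scalar multiplication is handled similarly: an element $\alpha \in k = \text{End}(\underline{1})$ acts on any morphism via the tensor-unit isomorphism $X \simeq X \otimes \underline{1}$, and $\alpha \cdot \xi$ is the pushout of $\xi$ along $\alpha \cdot \text{id}_N$. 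Quantification over $\alpha \in k$ is legitimate, since it simply quantifies over endomorphisms of $\underline{1}$, a notion built into the language. Putting these together, for fixed $n$ the $n$-fold weighted Baer sum $\alpha_1 \xi_1 + \cdots + \alpha_n \xi_n \sim \eta$ is expressed by introducing $n-1$ intermediate extension variables and chaining the binary predicate $B$ and the scalar action.

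With this machinery, the formula $\phi(M,N)$ saying $\dim \text{Ext}^1(M,N) = n$ becomes: there exist extensions $\xi_1, \ldots, \xi_n$ of $M$ by $N$ such that (i) for all $\alpha_1, \ldots, \alpha_n \in k$, if $\alpha_1 \xi_1 + \cdots + \alpha_n \xi_n$ is equivalent to the split extension $0 \to N \to N \oplus M \to M \to 0$, then every $\alpha_j$ is $0$; and (ii) for every extension $\eta$ of $M$ by $N$, there exist $\alpha_1, \ldots, \alpha_n \in k$ with $\eta \sim \alpha_1 \xi_1 + \cdots + \alpha_n \xi_n$. The principal obstacle, and the source of most of the bookkeeping, is that pullbacks, pushouts, and Baer sums are only defined up to isomorphism, so each ``='' in the informal recipe must be systematically replaced by the equivalence relation $\sim$, and one must verify that the resulting predicate on equivalence classes really is the usual abelian-group structure on $\text{Ext}^1(M,N)$; once that translation is done carefully, the formula above has the desired meaning and is manifestly first-order for each fixed $n$.
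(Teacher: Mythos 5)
Your approach is essentially the same as the paper's: you build first-order predicates for short exactness, equivalence of extensions (using the fact that for $1$-fold extensions the comparison map is automatically an isomorphism), Baer sum via pullback/pushout, and scalar multiplication via $\text{End}(\underline{1})$, then assemble them into a formula for $\dim \text{Ext}^1(M,N)=n$. The only cosmetic difference is the final step: you assert the existence of $n$ extensions that are both linearly independent and spanning, whereas the paper asserts the existence of $n$ linearly independent extensions together with the non-existence of $n+1$ linearly independent ones — equivalent formulations, and the paper's is marginally shorter since it reuses the single $\text{LISE}$ predicate rather than also encoding spanning.
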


Here we have adopted the view that $\text{Ext}^1(M,N)$, relative
to a given abelian category, consists of equivalence classes of
module extensions of $M$ by $N$, as opposed to the more standard
definition via injective or projective resolutions; necessary,
since the category $\text{Rep}_k G$ will in general not have
enough injective or projective objects (due to it consisting of
only \emph{finite} dimensional representations of $G$ over $k$).
Suppose then that $M$ and $N$ are modules for $G$ over
$\mathbb{Z}$, and that $k_i$ is a collection of fields. We wish to
discover whether the quantity
\[ \extdim \text{Ext}^1_{G(k_i)}(M,N) \]
stabilizes for large $i$.  We have a criterion for this to be
true.

\begin{thm}
Let $M_i$ and $N_i$ denote the images inside $\text{Rep}_{k_i}(G)$
of the modules $M$ and $N$, and let $[M_i],[N_i]$ denote the
images of the tuples $(M_i),(N_i)$ inside the category $\resprod
\text{Rep}_{k_i}(G)$. Then if the computation $\extdim
\text{Ext}^1([M_i],[N_i])$ is both finite and the same inside the
category $\resprod \text{Rep}_{k_i}(G)$ for \emph{every} choice of
non-principal ultrafilter, the computation $\extdim
\text{Ext}^1_{G(k_i)}(M_i,N_i)$ is the same for all but finitely
many $i$.
\end{thm}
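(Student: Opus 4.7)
The plan is to combine the Proposition's first-order expressibility of ``$\extdim \text{Ext}^1(M,N) = n$'' with \L o\'s's theorem for ultraproducts, together with the standard fact that a subset of the index set $I$ lying in every non-principal ultrafilter must be cofinite. Fix once and for all the common finite value $n$ furnished by the hypothesis; the target is to show that
\[ S_n := \{i \in I : \extdim \text{Ext}^1_{G(k_i)}(M_i, N_i) = n\} \]
is cofinite in $I$.

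The first step, which I expect to be the main obstacle, is a matching lemma: the $\text{Ext}^1$ computed inside $\resprod \text{Rep}_{k_i}(G)$ coincides with the $\text{Ext}^1$ computed inside the full ambient ultraproduct $\uprod \text{Rep}_{k_i}(G)$. This is crucial because \L o\'s's theorem naturally produces conclusions about the full ultraproduct, whereas the hypothesis concerns the restricted one. Any short exact sequence $0 \to [M_i] \to [E_i] \to [N_i] \to 0$ in $\uprod \text{Rep}_{k_i}(G)$ pulls back, via \L o\'s, to a short exact sequence in $\text{Rep}_{k_i}(G)$ for $\filtfont{U}$-almost all $i$, whence $\dim_{k_i} E_i = \dim_{k_i} M_i + \dim_{k_i} N_i$ on a $\filtfont{U}$-large set. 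Because $M$ and $N$ are fixed finite-rank modules over $\mathbb{Z}$, the dimensions of $M_i$ and $N_i$ are bounded uniformly in $i$, hence so is $\dim_{k_i} E_i$, and $[E_i]$ automatically lands in the restricted ultraproduct. Thus every extension in $\uprod$ is already an extension in $\resprod$, and the two $\text{Ext}^1$ groups agree.

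The second step is a direct application of \L o\'s's theorem to the first-order formula $\phi_n(X,Y) := $ ``$\extdim \text{Ext}^1(X,Y) = n$'' furnished by the Proposition. For each non-principal ultrafilter $\filtfont{U}$ on $I$,
\[ \uprod \text{Rep}_{k_i}(G) \models \phi_n([M_i],[N_i]) \iff S_n \in \filtfont{U}. \]
Combined with the previous step, the hypothesis translates to the assertion that $S_n \in \filtfont{U}$ for \emph{every} non-principal ultrafilter $\filtfont{U}$ on $I$.

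The third and final step is the standard ultrafilter compactness observation. If $I \setminus S_n$ were infinite, then together with the Fr\'echet filter on $I$ it would generate a proper filter that extends to some non-principal ultrafilter $\filtfont{U}'$; this $\filtfont{U}'$ would contain $I \setminus S_n$ and hence exclude $S_n$, contradicting the conclusion of the previous step. Thus $I \setminus S_n$ is finite, which is precisely the desired statement. The entire proof is essentially a transfer-and-compactness argument, and the only delicate point is the matching lemma, where the global boundedness of $\dim_{k_i} M_i$ and $\dim_{k_i} N_i$ is what legalises passing between $\resprod$ and $\uprod$.
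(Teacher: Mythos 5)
Your proof follows the same three-step route as the paper: (1) observe that because $M$ and $N$ have fixed finite rank over $\mathbb{Z}$, every $1$-fold extension module in the full ultraproduct already has bounded dimension and so lives in $\resprod$, making the two $\text{Ext}^1$ groups agree; (2) invoke the first-order definability of $\extDim \text{Ext}^1 = n$ together with \L o\'s's theorem; (3) conclude via the standard fact that a subset of $I$ belonging to every non-principal ultrafilter is cofinite. The paper's version is terser but uses exactly the same decomposition and the same key matching lemma, so this is a correct unpacking of the paper's argument rather than a different proof.
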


\begin{proof}
The key fact (and the reason we restrict to $\text{Ext}^1$ in the
first place) is that computing $\text{Ext}^1$ in the restricted
ultracategory $\resprod \catfont{C}_i$ is the same as doing so in
the full ultracategory $\uprod \catfont{C}_i$, since the extension
module of any $1$-fold extension of $[M_i]$ by $[N_i]$ has bounded
dimension $\text{dim}(M_i) + \text{dim}(N_i)$. First-order
statements that are true in $\uprod \text{Rep}_{k_i}(G)$ for
\textit{every} choice of non-principal ultrafilter correspond to
statements that are true for all but finitely many of the
categories $\uprod \text{Rep}_{k_i}(G)$, namely the statement
``$\extdim \text{Ext}^1_{G(k_i)}(M_i,N_i) = n$''.
\end{proof}

Our attempts to extend these results to the case of
$\text{Ext}^n$, $n>1$, have so far met with resistance; for more
on this see section \ref{TheDifficultyWithSection}.

\section{Generic Representation Theory of Unipotent Groups}

Beginning in chapter \ref{themethodchapter} we take a break from
working with ultraproducts, and instead focus on the concrete
representation theory of two unipotent algebraic groups, both in
zero and positive characteristic. Starting with the additive group
\index{$G_a$} $G_a$ (chapter \ref{chapterTheAdditiveGroup}) we
prove

\begin{thm}
\label{GacharzeropanaloguetheoremIntro} (see theorem
\ref{Gacharzeropanaloguetheorem})
\begin{enumerate}
\item{Let $k$ have characteristic zero.  Then every
$n$-dimensional representation of $G_a$ over $k$ is given by an $n
\times n$ nilpotent matrix $N$ over $k$ according to the formula
\[ e^{xN} \]
} \item{Let $k$ have positive characteristic $p$.  Then if $p >>
n$, every $n$-dimensional representation of $G_a$ over $k$ is
given by a finite ordered sequence $N_i$ of $n \times n$ commuting
nilpotent matrices over $k$ according to the formula
\[ e^{xN_0} e^{x^p N_1} \ldots e^{x^{p^m} N_m} \]
}
\end{enumerate}
\end{thm}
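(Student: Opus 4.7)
The plan is to reduce both statements to a direct computation on matrix-valued polynomials. An $n$-dimensional representation of $G_a$ over $k$ is the same data as a polynomial $\rho(x) = \sum_{i \geq 0} a_i x^i \in M_n(k[x])$ with $a_0 = I$ satisfying $\rho(x+y) = \rho(x)\rho(y)$; expanding the right side with the binomial theorem and matching coefficients of $x^i y^j$ produces the master identity
\[
  a_i \, a_j \;=\; \binom{i+j}{i}\, a_{i+j},
\]
out of which everything else will be squeezed.

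For part (1) in characteristic zero, setting $N := a_1$ and taking $j = 1$ gives $a_{i+1} = a_i N / (i+1)$, and induction yields $a_i = N^i / i!$. Thus $\rho(x) = e^{xN}$; since $\rho$ is a polynomial, the series terminates and $N$ must be nilpotent.

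For part (2), the same master identity is substantially more subtle because many of the binomials $\binom{i+j}{i}$ vanish modulo $p$. Lucas' theorem characterizes these precisely: the binomial is nonzero iff $i$ and $j$ add without carries in base $p$, and in that case equals $\prod_k \binom{i_k + j_k}{i_k}$, a product of ordinary binomials with entries below $p$, hence units in $\mathbb{F}_p$. With this in mind I would designate $N_k := a_{p^k}$ as the candidate generators and prove four things in sequence: (a) $N_k^p = 0$, by applying the master identity to $a_{(p-1)p^k} \cdot a_{p^k} = \binom{p^{k+1}}{p^k}\, a_{p^{k+1}}$, where the binomial vanishes by Lucas while a digit-internal induction shows $a_{(p-1)p^k} = N_k^{p-1}/(p-1)!$; (b) $N_k N_\ell = N_\ell N_k$ for $k \ne \ell$, since both equal $a_{p^k + p^\ell}$ (the binomial there being $1$ by Lucas); (c) the full decomposition $a_i = \prod_k N_k^{i_k}/i_k!$ for $i = \sum_k i_k p^k$ in base $p$, established by strong induction on $i$: stripping off one copy of $p^{k_0}$ at the lowest nonzero digit gives the carry-free sum $(i - p^{k_0}) + p^{k_0} = i$ whose master-identity binomial reduces to $i_{k_0}$, invertible because $i_{k_0} < p$; and (d) only finitely many $N_k$ are nonzero, as $\rho$ is a polynomial. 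Substituting (c) into $\rho(x) = \sum_i a_i x^i$, rewriting $x^i = \prod_k (x^{p^k})^{i_k}$, factoring the sum into a product via (b), and truncating each inner sum at $p-1$ via (a), then yields
\[
  \rho(x) \;=\; \prod_k \sum_{i_k = 0}^{p-1} \frac{(x^{p^k} N_k)^{i_k}}{i_k!} \;=\; \prod_k e^{x^{p^k} N_k}.
\]

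The main obstacle is the combinatorial bookkeeping in step (c): to apply the master identity one needs every intermediate binomial to be invertible in $\mathbb{F}_p$, which forces one to unfold $a_i$ only along carry-free sums. The lowest-digit strategy succeeds because each peeling-off step is carry-free by construction. The hypothesis $p \gg n$ then cleanly separates the Frobenius layers: for $p > n$, Cayley--Hamilton already guarantees $N_k^p = 0$, so step (a) imposes no further restriction and the classification collapses into a finite ordered tuple of pairwise commuting nilpotent matrices, reproducing along each Frobenius level the same picture as in part (1).
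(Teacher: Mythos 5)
Your proposal follows essentially the same route as the paper: encode the representation as a matrix-valued polynomial $\rho(x)$ satisfying $\rho(x+y)=\rho(x)\rho(y)$ --- the paper's comodule axiom $\sum_k a_{ik}\otimes a_{kj}=\Delta(a_{ij})$ is exactly this equation written out in $M_n(k[x,y])$ --- derive the master identity $a_i a_j = \binom{i+j}{i} a_{i+j}$ by matching coefficients, and in characteristic $p$ read off generators $N_k := a_{p^k}$ governed by Lucas' theorem. Your steps (a), (b), (c), (d) parallel the paper's Theorem~\ref{Gacharptheorem} and its proof, with your lowest-nonzero-digit induction in (c) being a clean way to organize the descent that the paper handles a bit less explicitly (it only observes that whenever $n$ is not a power of $p$, some $\binom{n}{r}$ with $0<r<n$ is a unit, permitting $a_n$ to be recovered from smaller data).

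There is one genuine gap: you establish only that every representation factors as $\prod_k e^{x^{p^k}N_k}$ with commuting $p$-nilpotent $N_k$, not the converse --- that for an arbitrary commuting family of $p$-nilpotent matrices, the assignment $a_i := \prod_k N_k^{i_k}/i_k!$ actually \emph{satisfies} the master identity and hence defines a representation. This is not automatic. One must verify $a_i a_j = \binom{i+j}{i} a_{i+j}$ both when $i+j$ is carry-free in base $p$ (there Lucas gives a nonzero binomial, and the digitwise factorials rearrange) and when the sum carries (there the binomial vanishes mod $p$, and $p$-nilpotency must be invoked to see that the left side also vanishes, since some $N_k$ then appears to a power $\geq p$). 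That verification is Proposition~\ref{Gacharpprop} in the paper, and without it you have only one direction of the correspondence asserted in Theorem~\ref{Gacharzeropanaloguetheorem}. A minor secondary point: the threshold needed is $p \geq n$ rather than $p > n$, since an $n\times n$ nilpotent matrix already satisfies $N^n = 0$.
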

This is our first indication of a connection between the
characteristic zero theory of a unipotent group and its positive
characteristic theory for $p$ large with respect to dimension.  In
chapter \ref{TheHeisenbergGroup} we obtain an identical result for
the Heisenberg group \index{$H_1$} $H_1$:

\begin{thm}
(see theorems \ref{H1charZeroBakerThm} and \ref{H1charpBakerThm})
\begin{enumerate}
\item{Let $k$ have characteristic zero.  Then every
$n$-dimensional representation of $H_1$ is given by a triple
$X,Y,Z$ of $n \times n$ nilpotent matrices over $k$ satisfying
$Z=[X,Y]$ and $[X,Z] = [Y,Z] = 0$, according to the formula
\[ e^{xX + yY + (z-xy/2)Z} \]
} \item{Let $k$ have positive characteristic $p$.  Then if $p >>
n$, every $n$-dimensional representation of $H_1$ over $k$ is
given by a sequence $X_0,Y_0,Z_0,X_1,Y_1,Z_1\ldots,X_m, Y_m,Z_m$
of $n \times n$ nilpotent matrices over $k$ satisfying $Z_i =
[X_i,Y_i]$, $[X_i,Z_i] = [Y_i,Z_i] = 0$, and whenever $i \neq j$,
$X_i,Y_i,Z_i$ all commute with $X_j,Y_j,Z_j$, according to the
formula
\[ e^{xX_0+yY_0 + (z-xy/2)Z_0} e^{x^p X_1 + y^p Y_1 + (z^p-x^p
y^p/2)Z_1} \ldots e^{x^{p^m} X_m + y^{p^m} Y_m + (z^{p^m} -
x^{p^m} y^{p^m}/2)Z_m}
\] }
\end{enumerate}
\end{thm}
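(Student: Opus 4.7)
The plan is to prove the two parts separately. Part (1) follows from standard Lie theory applied to a unipotent algebraic group in characteristic zero, while part (2) is obtained by bootstrapping the analogous $G_a$ result stated just above, applied to the three coordinate one-parameter subgroups of $H_1$, and then translating the Heisenberg group relations into algebraic relations on Frobenius layers.

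For part (1), I would begin by differentiating a representation $\rho: H_1 \to GL_n(k)$ at the identity to obtain a representation of the Heisenberg Lie algebra $\mathfrak{h}_1 = \langle X, Y, Z : [X,Y]=Z,\ [X,Z]=[Y,Z]=0 \rangle$ on $k^n$. Since $H_1$ is unipotent, the images of $X$, $Y$, $Z$ are nilpotent matrices. Because all iterated brackets of length $\geq 3$ vanish in $\mathfrak{h}_1$, the Baker--Campbell--Hausdorff series truncates after the first commutator term, and the characteristic-zero exponential $\exp: \mathfrak{gl}_n(k) \to GL_n(k)$ produces the claimed formula $e^{xX + yY + (z-xy/2)Z}$. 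The correction term $-xy/2$ is precisely the BCH adjustment needed to pass between the canonical coordinates of the first kind on $\mathfrak{h}_1$ and the ``upper-triangular'' coordinates on $H_1$ in which the group law reads $(x,y,z)(x',y',z') = (x+x',\ y+y',\ z+z'+xy')$.

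For part (2), I would restrict $\rho: H_1 \to GL_n(k)$ to each of the three coordinate $G_a$-subgroups of $H_1$. Applying the $G_a$ theorem cited above (valid because $p \gg n$), the restrictions take the form $\rho(x,0,0) = \prod_i e^{x^{p^i} X_i}$, $\rho(0,y,0) = \prod_i e^{y^{p^i} Y_i}$ and $\rho(0,0,z) = \prod_i e^{z^{p^i} W_i}$, each with commuting nilpotent matrices inside its own family. The key combinatorial step is then to expand the group-theoretic identity $\rho(x,0,0)\rho(0,y,0)\rho(x,0,0)^{-1}\rho(0,y,0)^{-1} = \rho(0,0,xy)$ as a formal power series in $x,y$ and match coefficients of the monomials $x^{p^i} y^{p^j}$. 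Because $p \gg n$, these monomials (through the degrees that survive the nilpotency bound) are linearly independent, so each Frobenius-layer equation separates cleanly. The expected outcome is: (a) within a single layer, $W_i = [X_i, Y_i]$ and $[X_i, W_i] = [Y_i, W_i] = 0$; (b) across layers, all of $X_i, Y_i, W_i$ commute with all of $X_j, Y_j, W_j$ whenever $i \neq j$. Setting $Z_i := W_i$ and performing a layer-by-layer BCH computation (again truncating, since each layer then behaves as a characteristic-zero Heisenberg representation once $p$ exceeds the relevant nilpotency degree) assembles the full factored formula with the $-x^{p^i}y^{p^i}/2$ corrections.

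The main obstacle is the cross-layer separation argument: one must show that the layered product formulas coming from the three axes fit together without mixing, i.e.~that no matrix from layer $i$ on one axis silently contributes to a layer $j \neq i$ on another. The control comes from choosing $p$ larger than all exponents appearing in the bounded-dimension expansion, so that the monomials $x^{p^i} y^{p^j}$ corresponding to distinct $(i,j)$ remain independent in the coordinate ring of $H_1$ up to the relevant degree, and factorials up through roughly $n$ are invertible so that BCH may be applied verbatim. A secondary point is ensuring that the denominator $2$ in $-xy/2$ is sensible, which is automatic once $p > 2$, itself subsumed by $p \gg n$.
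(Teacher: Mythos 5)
Your approach differs from the paper's in a genuine way: for part (1) you differentiate the group action to obtain a Lie algebra representation and then invoke BCH via the standard exponential, whereas the paper deliberately avoids Lie-theoretic machinery and instead derives a \emph{fundamental relation} $(c_{ij})^{\vec{r}}(c_{ij})^{\vec{s}} = \sum_{l} (\dots)(c_{ij})^{\vec{r}+\vec{s}+(-l,-l,l)}$ by matching coefficients in the comodule axiom $\Delta(a_{ij}) = \sum_k a_{ik}\otimes a_{kj}$ on a monomial basis of $A\otimes A$. The paper's method has the advantage of yielding the same combinatorial lemma (a formal-derivative BCH formula, valid whenever the required factorials are invertible) for both the characteristic-zero and characteristic-$p$ cases, so the $p \gg n$ argument inherits the structure of the $p=0$ argument verbatim; your Lie-theoretic route for part (1) does not by itself hand you the characteristic-$p$ case. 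For part (2) you are closer to the paper: both restrict to the three $G_a$ axes and both organize the data in Frobenius layers; but where you propose expanding the group commutator identity $\rho(x,0,0)\rho(0,y,0)\rho(x,0,0)^{-1}\rho(0,y,0)^{-1}=\rho(0,0,xy)$ as a power series and matching monomials, the paper reads off all the needed relations directly from the fundamental relation via Lucas' theorem on binomial coefficients mod $p$.

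There are two genuine gaps. First, and most seriously, you only address necessity. The theorem is a biconditional: every $n$-dimensional representation arises from such a tuple, \emph{and} every tuple satisfying the stated relations yields a representation. Your proposal sketches only how to extract the matrices and their relations from a given representation; nowhere do you check that the exponential formula, built from an arbitrary admissible tuple, actually satisfies the cocycle/homomorphism condition. This is the content of the paper's Theorem \ref{H1CharpLargeThm2}, whose proof is a nontrivial double induction using the reduced fundamental relation $X_{(n)}Y_{(m)}=\sum_{l}Z_{(l)}Y_{(m-l)}X_{(n-l)}$. Second, your cross-layer separation step is underspecified. Matching the coefficient of $x^{p^i}y^{p^j}$ in the expanded group commutator is not clean, because lower-order products of terms from other layers also contribute to that coefficient; controlling these contributions is exactly the role that Lucas' theorem plays in the paper (Corollary \ref{Lucas'Cor} combined with Lemma \ref{H1remark} forces the cross terms to vanish because they all involve a $p$-th power of a nilpotent of order $\le p/2$). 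You also appeal only to one group identity, but to derive $[X_i,Z_j]=[Y_i,Z_j]=0$ you would additionally need the identities expressing that the $z$-axis is central in $H_1$. None of these gaps is fatal, but the sufficiency direction in particular requires a real argument that is currently absent.
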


We see then that for $p$ sufficiently larger than dimension,
characteristic $p$ representations for these unipotent groups are
simply commuting products of representations, each of which `look
like' a characteristic zero representation, with each factor
accounting for one of its `Frobenius layers'.  It is this
phenomenon to which the phrase `generic representation theory' in
the title refers.

Note that this is only a theorem about $p >> \text{dimension}$;
for any positive characteristic field $k$, once dimension becomes
too large in the category $\text{Rep}_k H_1$, the analogy
completely breaks down, and representations of $H_1$ over $k$ can
be expected to bear no resemblance to representations in
characteristic zero.

\section{The Height-Restricted Ultraproduct}

Let $G$ be either of the two above discussed unipotent groups, $k$
a field of characteristic $p>0$, $V$ a representation of $G$ over
$k$. Suppose that $V$ is of the form described in part 2.~of the
two preceding theorems.  Then we define the \index{height}
\textbf{height} of $V$ to be $m+1$, that is, the number of
Frobenius layers in the representation. For instance, in the case
of $G_a$, height is simply the largest $m$ such that $x^{p^{m-1}}$
occurs in the matrix formula of the representation.

Now let $k_i$ be a sequence of fields of strictly increasing
positive characteristic, and let $\catfont{C}_i = \text{Rep}_{k_i}
G$.  For an object $[X_i]$ of the restricted ultraproduct
$\resprod \catfont{C}_i$ (i.e.~where the $X_i$ have bounded
dimension), by the above two theorems, for large enough $i$, $X_i$
will be of the aforementioned form, so that for all but finitely
many $i$, the height of $X_i$ is well-defined.  We therefore
define the \textbf{height} of $[X_i]$ as the essential supremum of
$\{ \text{height}(X_i):i \in I\}$. Note that the height of a given
object $[X_i]$ of $\resprod \catfont{C}_i$ might well be infinite.
In case it is not, we define

\begin{defn}(see definition \ref{defnHightResUprod})
The \textbf{height-restricted ultraproduct} of the categories
$\catfont{C}_i = \text{Rep}_{k_i} G$, for $k_i$ of increasing
positive characteristic, is the full subcategory of the restricted
ultraproduct $\resprod \catfont{C}_i$ consisting of those objects
$[X_i]$ of finite height.  We denote this category by $\hprod
\catfont{C}_i$.  For $n \in \mathbb{N}$, we denote by $\hnprod{n}
\catfont{C}_i$ the full subcategory of $\hprod \catfont{C}_i$
consisting of those objects of height no greater than $n$.
\end{defn}

We have seen already that, for $p$ sufficiently large with respect
to dimension, representations of $G$ in characteristic $p$
resemble representations of $G^n$ in characteristic zero.  We
shall also see later that this resemblance is functorial, in the
sense that the analogy carries over to morphisms between the
representations, and to various other constructions, e.g.~direct
sums and tensor products. The most compact way to express this is

\begin{thm}(see theorem \ref{heightResMainTheorem})
If $k_i$ is a sequence of fields of strictly increasing positive
characteristic, the category $\hprod \text{Rep}_{k_i} G$ is
tensorially equivalent to $\text{Rep}_k G^\infty$, where
$G^\infty$ denotes a countable direct power of $G$, and $k$ is the
ultraproduct of the fields $k_i$. Similarly, for any $n \in
\mathbb{N}$, $\hnprod{n} \text{Rep}_{k_i} G$ is tensorially
equivalent to $\text{Rep}_k G^n$.
\end{thm}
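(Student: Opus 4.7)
The plan is to prove first the finite-height equivalence $\hnprod{n} \text{Rep}_{k_i} G \isomorphic \text{Rep}_k G^n$ and then obtain the $G^\infty$ statement as an ascending union. For each fixed $n$, I would construct an explicit tensor functor
\[ F_n : \hnprod{n} \text{Rep}_{k_i} G \longrightarrow \text{Rep}_k G^n \]
directly from the Frobenius-layer decomposition of the preceding theorems. An object $[X_i]$ has bounded dimension $d$, so for all but finitely many $i$ the characteristic $p_i$ is large enough that $X_i$ admits a presentation
\[ e^{x N_0^{(i)}} e^{x^{p_i} N_1^{(i)}} \cdots e^{x^{p_i^{n-1}} N_{n-1}^{(i)}} \]
(with the analogous three-matrix-per-layer formula for $G = H_1$) by a commuting tuple of nilpotent $d \times d$ matrices over $k_i$. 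Since the $p_i$ strictly increase, $k = \uprod k_i$ has characteristic zero, and the ultraproducts $N_j := \uprod N_j^{(i)} \in M_d(k)$ form a commuting tuple of nilpotents: exactly the data of a $d$-dimensional representation of $G^n$ over $k$ via $e^{x_0 N_0} \cdots e^{x_{n-1} N_{n-1}}$. I would set $F_n([X_i])$ to be this object and $F_n([\phi_i]) := \uprod \phi_i$ on morphisms.

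To verify that $F_n$ is a tensor equivalence I would check the standard four conditions. Well-definedness is automatic because the layer tuple of a representation is canonical up to simultaneous conjugation. For essential surjectivity, commuting nilpotents $M_0, \ldots, M_{n-1} \in M_d(k)$ can be lifted to sequences $(M_j^{(i)})$ over $k_i$ whose nilpotency and commutation relations hold on a $\filtfont{U}$-large set; for $i$ with $p_i$ sufficiently large these matrices assemble into an object of $\text{Rep}_{k_i} G$ of height at most $n$ that $F_n$ sends back to the given $G^n$-representation. Faithfulness is immediate from the fact that $[\phi_i] = 0$ in the ultracategory is equivalent to $\phi_i = 0$ on a $\filtfont{U}$-large set, which in turn is equivalent to $\uprod \phi_i = 0$. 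Tensor compatibility rests on the layer-level derivation rule: the layer matrices of $X_i \otimes Y_i$ are $N_j^{(i)} \otimes I + I \otimes M_j^{(i)}$, valid precisely because distinct layers commute; this passes to the ultraproduct to give the tensor rule for $G^n$, and compatibility with unit, associator and braiding is immediate.

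The $G^\infty$ case then follows by colimit: the coordinate Hopf algebra of the countable direct power $G^\infty$ is the union of those of the finite powers, so every finite-dimensional $G^\infty$-representation factors through some $G^n$, giving $\text{Rep}_k G^\infty = \bigcup_n \text{Rep}_k G^n$; dually, every object of $\hprod \catfont{C}_i$ has finite height and so lies in $\hnprod{n} \catfont{C}_i$ for some $n$, whence the compatible family $(F_n)$ assembles into the desired equivalence. The main obstacle I anticipate is fullness of $F_n$, which amounts to extracting individual layer-commutation relations from a single global intertwining relation $\phi_i \cdot \rho_{X_i}(x) = \rho_{Y_i}(x) \cdot \phi_i$ over $k_i$. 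For $p_i$ large with respect to $\dim(X_i) + \dim(Y_i)$, the exponents $p_i^j$ are spread out enough that expanding both sides and comparing coefficients of the various monomials $x^{p_i^j}$ isolates each layer identity $\phi_i N_j^{(i)} = M_j^{(i)} \phi_i$; this layer-by-layer recovery of relations is precisely the ingredient that ties the ultraproduct formalism of the earlier chapters to the explicit representation theory of Chapters \ref{chapterTheAdditiveGroup} and \ref{TheHeisenbergGroup}.
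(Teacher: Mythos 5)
Your proposal follows essentially the same route as the paper: both arguments package an object of $\hnprod{n}\text{Rep}_{k_i}G$ as the ultraproduct of its Frobenius-layer matrices, identify that data with a characteristic-zero representation of $G^n$, and check tensor compatibility via the layer-level derivation rule, treating $G^\infty$ as the union of the $G^n$. The paper's version makes fullness and faithfulness automatic rather than the obstacle you anticipate, because the morphism characterization of Theorem \ref{morphismsInTheMethodThm}, pushed through the ultraproduct, already gives a common concrete description of $\text{Hom}$ in both categories as linear maps intertwining every layer separately.
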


Note in particular that the group $G^n$ obtained is independent of
the choice of non-principal ultrafilter, and while the field $k$
does vary, it will in all cases have characteristic zero.

\section{Height-Restricted Generic Cohomology}

\label{GenericCohomologyforUnipotentAlgebraicGroupsinLargeCharacteristicIntroSection}

This last result will allow us to derive some generic cohomology
theorems for the unipotent algebraic groups discussed above, at
least for the case of $\text{Ext}^1$.  Rather than state the
theorem precisely here it will be much more illuminating to
illustrate with an example, which is worked out in more detail in
section \ref{GenCohomExampleSection}.

Let $G = G_a$ and let $k$ have characteristic $p >0$.  Then direct
computation shows (using theorem \ref{Gacharptheorem}) that a
basis for $\text{Ext}^1_{G_a(k)}(k,k)$ is given by the sequence of
linearly independent extensions
\[ \xi_m: 0 \rightarrow k \rightarrow \left(%
\begin{array}{cc}
  1 & x^{p^m} \\
   & 1 \\
\end{array}%
\right) \rightarrow k \rightarrow 0 \] for $m=0,1, \ldots$.  This
is obviously infinite dimensional, so we ask the more interesting
question: what happens when we restrict the height of the
extension module?  Specifically, let
$\text{Ext}^{1,h}_{G_a(k)}(k,k)$ denote the space of equivalence
classes of extensions whose extension module has height no greater
than $h$. Then of course $\{\xi_m:m=0,1, \ldots, h-1\}$ forms a
basis for it, and its dimension is $h$.

Now assume $k$ has characteristic zero and compute
$\text{Ext}^1_{G^h(k)}(k,k)$.  Then similarly (using theorems
\ref{Gacharzerothm} and \ref{directproducttheorem}) we have the
basis
\[ \xi_m: 0 \rightarrow k \rightarrow \left(%
\begin{array}{cc}
  1 & x_m \\
   & 1 \\
\end{array}
\right)\rightarrow k \rightarrow 0 \] for $m=0,1, \ldots, h-1$,
where $x_m$ denotes the $m^{\text{th}}$ free variable of $G^h$.
Thus we see that, for $k_i$ of sufficiently large positive
characteristic
\[ \extDim \text{Ext}^{1,h}_{G_a(k_i)}(k_i,k_i) =
\extDim \text{Ext}^1_{G_a^h(\uprod k_i)}(\uprod k_i,\uprod k_i) \]

This example is misleading in that the above equality holds for
all primes $p$ (due to the small dimension of the extension
modules of the extensions); in general we merely claim that the
above holds for sufficiently large characteristic. We shall prove
in section \ref{GenericCohomologyforExt1} that this is quite a
general phenomenon.

\section{Notational Conventions}

Throughout this dissertation our convention for expressing
composition of maps is as follows.  If $\phi:X \rightarrow Y$ and
$\psi:Y \rightarrow Z$ are maps, then the composition  $X
\stackrel{\phi}{\longrightarrow} Y
\stackrel{\psi}{\longrightarrow} Z$ shall be expressed as
\[ \phi \circ \psi \] However, if $x$ is an element of $X$, then the element $z \in Z$
gotten by evaluating $\phi \circ \psi$ at $x$ shall be expressed
as
\[\psi(\phi(x))\]
In other words, when we are expressing a composition with no
inputs we shall write functions on the right, and when we are
evaluating a composition at an input we shall write functions on
the left. The reader need only remember that whenever he sees the
symbol $\circ$ as in $\phi \circ \psi$, we are writing functions
on the right, and when he instead sees the parenthetical notation
$\psi(\phi(x))$ we are writing functions on the left.  In
particular, we shall never write $(x)(\phi \circ \psi)$, $(\phi
\circ \psi)(x)$, $((x)\phi)\psi$, $\phi(\psi(x))$, or $\psi \circ
\phi$.

A matrix, if we wish to emphasize what its entries are, is
generally written in the notation $(a_{ij})$, suppressing mention
of its dimensions. If $\phi:U \rightarrow V$ is a linear map
between finite dimensional vector spaces, and if $(k_{ij})$ is the
matrix representing $\phi$ in certain bases, then it is understood
to be doing so be passing column vectors to the right of
$(k_{ij})$.  In particular, if $\phi:U \rightarrow V$, $\psi:V
\rightarrow W$ are linear maps, and if $(k_{ij})$ represents
$\phi$ and $(l_{ij})$ represents $\psi$, then the matrix product
$(l_{ij})(k_{ij})$ represents the map $\phi \circ \psi$.

\index{$\uprod M_i$} If $M_i$ is a collection of relational
structures in a common signature indexed by the set $I$, and if
$\filtfont{U}$ is a non-principal \index{ultrafilter} ultrafilter
over $I$, we denote by $\uprod M_i$ the \index{ultraproduct}
ultraproduct of these structures with respect to $\filtfont{U}$.
The reader may consult the appendix for a review of ultrafilters
and ultraproducts in general.  In several instances in this
dissertation we shall be considering certain substructures of
ultraproducts, e.g.~the restricted ultraproduct of the neutral
tannakian categories $\catfont{C}_i$, which in this case we denote
by $\resprod \catfont{C}_i$.  Note that, in this case and in
several others, to avoid using a double subscript, we have dropped
reference to the particular non-principal ultrafilter being
applied; as it will always be assumed to be fixed but arbitrary,
no confusion should result.

The reader is encouraged to consult the index for a more complete
list of commonly used symbols.

\chapter{Algebraic Groups, Hopf Algebras, Modules, Comodules and Cohomology}

Here we review the basic facts concerning the duality between
algebraic groups and their Hopf algebras, modules for algebraic
groups vs.~comodules for Hopf algebras, and the definitions
concerning cohomology of modules and comodules.  We shall also
define the equivalent categories $\text{Rep}_k G$ and
$\text{Comod}_A$, where $A$ is the representing Hopf algebra of
$G$, and the important constructions within them.  We shall mostly
be content to recording definitions and important theorems, only
rarely supplying proofs. The reader may consult \cite{jantzen},
\cite{hopfalgebras}, and \cite{waterhouse} for a more thorough and
excellent account of what follows.  We particularly recommend the
first several chapters of \cite{waterhouse} for those less
accustomed to the `functorial' view of algebraic groups we shall
be adopting. \cite{humphreys} and \cite{borel} are also excellent
references, but with much less of an emphasis on this functorial
view.

\section{Algebraic Groups, Coalgebras and Hopf Algebras}

In this dissertation, ``algebraic group'' over a ring $k$ shall
always mean a particular kind of affine group scheme. It is at
this level of generality in which we will operate throughout.  If
$k$ is any commutative ring with identity (and all rings will
assumed to be so) a \textbf{$k$-algebra} shall always mean a
commutative $k$-algebra with identity.

\begin{defn}(see section $1.2$ of \cite{waterhouse})
\index{affine group scheme} An \textbf{affine group scheme} over a
commutative ring $k$ with identity is a representable covariant
functor from the category of all $k$-algebras to the category of
groups. We say it is an \index{algebraic group} \textbf{algebraic
group} if the representing object of the functor is finitely
generated as a $k$-algebra, and a \textbf{finite group} if it is
finitely generated as a $k$-module.
\end{defn}

\begin{defn}(see section $1.1$ of \cite{hopfalgebras})
\index{coalgebra} \label{coalgebradefn} Let $k$ be a ring, $C$ a
$k$-module.  $C$ is called a $k$-\textbf{coalgebra} if it comes
equipped with $k$-linear maps \index{$\Delta$} $\Delta:C
\rightarrow C \otimes C$ \index{co-multiplication}
(co-multiplication) and \index{$\varepsilon$} $\varepsilon:C
\rightarrow k$ \index{co-unit} (co-unit) making the following two
diagrams commute:
\begin{equation}
\label{hopf1}
\begin{diagram}
C & \rTo^\Delta & C \otimes C \\
\dTo^\Delta & & \dTo_{1 \otimes \Delta} \\
C \otimes C & \rTo_{\Delta \otimes 1} & C \otimes C \otimes C
\end{diagram}
\end{equation}
\begin{equation}
\label{hopf2}
\begin{diagram}
& & C & & \\
& \ldTo^{\isomorphic} & & \rdTo^\isomorphic & \\
k \otimes C & & \dTo^\Delta & & C \otimes k \\
& \luTo_{\varepsilon \otimes 1} & & \ruTo_{1 \otimes \varepsilon}
\\
& & C \otimes C & & \\
\end{diagram}
\end{equation}
A $k$-\textbf{bialgebra} is a $k$-module $C$ which is
simultaneously a $k$-algebra and a $k$-coalgebra in such a way
that $\Delta$ and $\varepsilon$ are algebra maps.  A bialgebra $C$
is called a \index{Hopf algebra} \textbf{Hopf algebra} if it comes
equipped with a $k$-algebra map \index{$S$} $S:C \rightarrow C$
(co-inverse or \index{antipode} antipode) making the following
commute:
\begin{equation}
\label{hopf3}
\begin{diagram}
A & \rTo^\Delta & A \otimes A & &\\
  & & & \rdTo^{S \otimes 1} & \\
\dTo^{\varepsilon} & & & & A \otimes A \\
 & & & \ldTo_{\text{mult}} & \\
 k & \rTo & A & & \\
\end{diagram}
\end{equation}
A \index{coalgebra!morphism of} \textbf{morphism} between the
$k$-coalgebras $(C,\Delta)$ and $(C^\prime, \Delta^\prime)$ is a
$k$-linear map $\phi:C \rightarrow C^\prime$ such that the
following diagram commutes:
\begin{diagram}
C & \rTo^\phi & C^\prime \\
\dTo^\Delta & & \dTo_{\Delta^\prime} \\
C \otimes C & \rTo_{\phi \otimes \phi} & C^\prime \otimes C^\prime
\\
\end{diagram}

\end{defn}

\begin{thm}(see section $1.4$ of \cite{waterhouse})
The representing object of an affine group scheme over $k$ is a
Hopf algebra over $k$.  Conversely, any Hopf algebra over $k$
defines an affine group scheme over $k$.
\end{thm}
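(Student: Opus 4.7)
The plan is to prove this via Yoneda's lemma, exploiting the fact that $\text{Hom}_{\text{Alg}_k}(A \otimes B, R) \cong \text{Hom}_{\text{Alg}_k}(A,R) \times \text{Hom}_{\text{Alg}_k}(B,R)$, so that the product of representable functors $h^A \times h^B$ is represented by $A \otimes B$. This is the linchpin: group operations on the functor side correspond (contravariantly) to co-operations on the representing object side.

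For the forward direction, suppose $G$ is an affine group scheme represented by a $k$-algebra $A$, so $G(R) = \text{Hom}_{\text{Alg}_k}(A,R)$ comes with a natural group structure. The multiplication $m_R: G(R) \times G(R) \to G(R)$ is a natural transformation $G \times G \to G$, hence by Yoneda corresponds to a $k$-algebra map $\Delta: A \to A \otimes A$. Similarly, the identity element, viewed as a natural transformation from the trivial representable functor (represented by $k$) into $G$, corresponds to $\varepsilon: A \to k$, and inversion corresponds to $S: A \to A$. I would then check that the group axioms translate diagram-by-diagram into the Hopf algebra axioms: associativity of $m$ gives diagram (\ref{hopf1}), the left/right identity laws give diagram (\ref{hopf2}), and the inverse law gives diagram (\ref{hopf3}). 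Each translation is just Yoneda applied to the appropriate commutative square, combined with the identification of products with tensor products.

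For the converse, given a Hopf algebra $(A, \Delta, \varepsilon, S)$, define the functor $G(R) = \text{Hom}_{\text{Alg}_k}(A,R)$ and endow each $G(R)$ with a binary operation by $\phi \cdot \psi = \text{mult}_R \circ (\phi \otimes \psi) \circ \Delta$, identity element $\varepsilon$ followed by the structure map $k \to R$, and inverse $\phi \mapsto \phi \circ S$. Functoriality of $G$ in $R$ is automatic, and naturality of these operations is immediate from their definitions. Verifying the group axioms amounts to reversing the Yoneda translation of the previous paragraph, feeding the Hopf algebra diagrams back through $\text{Hom}_{\text{Alg}_k}(-, R)$.

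The only genuinely delicate point — and the one I would treat most carefully — is ensuring that $\Delta$, $\varepsilon$, and $S$ really are $k$-algebra maps (not merely $k$-linear). This is what forces the Hopf algebra to be a \emph{bi}algebra rather than just a coalgebra, and it follows from the fact that $G(R)$ is a group under pointwise operations for each $R$, so the multiplication map $G \times G \to G$ lives in the category of algebras, not just sets. Once this is in place the rest is bookkeeping: everything else is a direct diagrammatic translation, and the two constructions are manifestly inverse to one another, giving an equivalence between affine group schemes over $k$ and Hopf algebras over $k$.
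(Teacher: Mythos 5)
Your proof is correct and follows the standard Yoneda-lemma argument; the paper does not prove this theorem itself but cites Waterhouse, and the sketch the paper gives immediately after the statement (defining $\phi * \psi$, $\text{inv}(\phi)$, and $e$ via $\Delta$, $S$, $\varepsilon$ and noting that diagrams \ref{hopf1}--\ref{hopf3} encode the group axioms) is exactly the translation you describe. One small imprecision worth tightening: you say $\Delta$, $\varepsilon$, $S$ are algebra maps "because the multiplication map $G \times G \to G$ lives in the category of algebras" --- but $m$ is a natural transformation of set-valued functors; the reason $\Delta$ is an algebra map is purely formal, namely that Yoneda applied to representable functors on $\text{Alg}_k$ produces a morphism \emph{in} $\text{Alg}_k$, i.e.~$\text{Nat}(\text{Hom}(A\otimes A,-),\text{Hom}(A,-)) \cong \text{Hom}_{\text{Alg}_k}(A, A\otimes A)$, with no extra input needed from the group structure.
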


Let $G(\slot) = \text{Hom}_k(A,\slot)$ be an affine group scheme
over $k$ represented by the Hopf algebra $A$. As the names
suggest, the co-multiplication, co-unit, and co-inverse maps
attached to a Hopf algebra encode the group multiplication,
identity, and inversion, respectively.  If $R$ is a $k$-algebra
then an element of $G(R)$ is by definition a $k$-homomorphism
$\phi:A \rightarrow R$.  Then the map $\Delta$ tells us how to
multiply elements of $G(R)$; given $\phi,\psi:A \rightarrow R$
their product, call it $\phi * \psi$, is defined to be the unique
map making the following diagram commute:
\begin{diagram}
A & \rTo^\Delta & A \otimes A \\
\dTo^{\phi * \psi} & & \dTo_{\phi \otimes \psi} \\
R & \lTo_{\text{mult}} & R \otimes R \\
\end{diagram}
Similarly the inverse of the element $\phi$, call it
$\text{inv}(\phi)$, is the unique map making
\begin{diagram}
A & \rTo^S & A \\
\dTo^{\text{inv}(\phi)} & \ldTo_\phi & \\
R & & \\
\end{diagram}
commute.  Finally, the identity element $e$ of $G(R)$ is defined
by the commutativity of the diagram.
\begin{diagram}
A & \rTo^{\varepsilon} & k \\
\dTo^{e} & \ldTo & \\
R & & \\
\end{diagram}

With this in mind, diagrams \ref{hopf1}, \ref{hopf2} and
\ref{hopf3} are really not mysterious at all. \ref{hopf1} merely
encodes the fact that the multiplication defined by $\Delta$ is
associative, \ref{hopf2} corresponds to the statement that $A
\stackrel{\varepsilon}{\longrightarrow} k \rightarrow R$ is always
the identity element of $G(R)$, and one can probably guess what
fact about groups \ref{hopf3} represents.

When we refer to a Hopf algebra we shall often write it as
$(A,\Delta,\varepsilon)$, emphasizing the fact that these are
often the only pieces of information we require for a given
purpose.  Besides, just as inverses can be discovered by looking
at the multiplication table of a group, so also is the map $S$
completely determined by the map $\Delta$ (and the same can be
said for the map $\varepsilon$).

Essential to the study of representable functors in any category
is the so-called Yoneda lemma, which tells us that natural
transformations from representable functors to other functors are
quite easy to describe.

\begin{lem}
\index{Yoneda lemma} \label{YonedaLemma} (Yoneda lemma; see
section $1.3$ of \cite{waterhouse}) Let $\catfont{C}$ be any
locally small category (so that Hom-sets are actually sets),
$G,H:\catfont{C} \rightarrow \text{Sets}$ any two (covariant) set
valued functors, and suppose that $G$ is representable by the
object $A \in \catfont{C}$ (so that $G(\slot) =
\text{Hom}_{\catfont{C}}(A, \slot)$).  Let $\Phi$ be a natural
transformation from the functor $G$ to $H$.  Then for any object
$X$ of $\catfont{C}$ and element $\phi$ of
$\text{Hom}_{\catfont{C}}(A, X)$, $\Phi_X(\phi) = (H
\phi)(\Phi_A(1_A))$.
\end{lem}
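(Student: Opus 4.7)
The plan is to perform a single naturality diagram chase, starting from the identity morphism $1_A \in G(A) = \text{Hom}_{\catfont{C}}(A,A)$. Since $G = \text{Hom}_{\catfont{C}}(A,\slot)$, for any $\phi:A \to X$ the induced map $G\phi : G(A) \to G(X)$ is, by definition of the Hom-functor, post-composition with $\phi$; in particular, $G\phi(1_A) = \phi$. This observation is the only property of the representing object $A$ that the argument will actually require.

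Next I would invoke the naturality of $\Phi$ at the morphism $\phi$ itself, i.e.~the commutativity of the square whose horizontal arrows are $\Phi_A : G(A) \to H(A)$ and $\Phi_X : G(X) \to H(X)$, and whose vertical arrows are $G\phi : G(A) \to G(X)$ and $H\phi : H(A) \to H(X)$. Chasing $1_A$ around this square, the upper-right route produces $1_A \mapsto \Phi_A(1_A) \mapsto (H\phi)(\Phi_A(1_A))$, while the lower-left route produces $1_A \mapsto G\phi(1_A) = \phi \mapsto \Phi_X(\phi)$. Equating the two outputs yields precisely $\Phi_X(\phi) = (H\phi)(\Phi_A(1_A))$.

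The proof is essentially a one-line diagram chase, and there is no real obstacle to overcome. The only subtlety worth flagging is bookkeeping around the paper's composition convention, under which $\alpha \circ \phi$ denotes the morphism obtained by first applying $\alpha$ and then $\phi$, while evaluating at an element reverses the visual order; one must honor this when checking that the functor $G$ sends $1_A$ to $\phi$ along the left column of the square. Once that convention is kept straight, the verification is entirely mechanical, and in particular no assumption on $\catfont{C}$ beyond local smallness (needed only to ensure $G(X)$ and $H(X)$ are sets) is used.
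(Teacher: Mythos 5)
Your proposal is correct and is precisely the argument the paper gives: chase $1_A$ around the naturality square for $\Phi$ at $\phi$, using that $G\phi(1_A) = \phi$ because $G$ is the Hom-functor represented by $A$. The remark about the paper's right-to-left composition convention (so that $G\phi$ sends $\psi$ to $\psi \circ \phi$) matches the paper's own parenthetical clarification.
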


\begin{proof}

Given $\phi:A \rightarrow X$, consider the commutative diagram
\begin{diagram}
G(A) & \rTo^{\Phi_A} & H(A) \\
\dTo^{G\phi} & & \dTo_{H \phi} \\
G(X) & \rTo_{\Phi_X} & H(X) \\
\end{diagram}
Here $G \phi$ refers to the map that sends $\psi:A \rightarrow A$
to $\psi \circ \phi$, and $H \phi$ refers to whatever map $H$
sends $\phi$ to.  Start with $1_A \in G(A)$ in the upper left
corner, chase it around both paths to $H(X)$, and you get the
equation claimed.
\end{proof}

As the names suggest, there is a duality between $k$-algebras and
$k$-coalgebras.

\begin{defn}(see section $1.3$ of \cite{hopfalgebras})
Let $k$ be a field, $(C,\Delta,\varepsilon)$ a $k$-coalgebra.  The
\textbf{dual algebra} to $C$ is the set $C^*$ of linear
functionals on $C$ endowed with the following multiplication: if
$\phi,\psi \in C^*$, then $\text{mult}(\phi \otimes \psi)$ is the
$k$-linear map given by the composition
\[C \stackrel{\Delta}{\longrightarrow} C \otimes C \stackrel{\phi \otimes \psi}{\vlongrightarrow} k
\otimes k \isomorphic k \]
\end{defn}

If $A$ is an infinite dimensional algebra, it is generally not
possible to introduce a coalgebra structure on the entire dual
space of $A$.  We can however introduce one on a certain subspace.
\begin{defn}(see section $1.5$ of \cite{waterhouse})
\index{coalgebra!finite dual} Let $k$ be a field, $A$ a
$k$-algebra. The \textbf{finite dual coalgebra} of $A$, denoted
\index{$A^\circ$} $A^\circ$, is the subspace of $A^*$ consisting
of those linear functionals on $A$ which kill an ideal of $A$ of
finite codimension.  For $\alpha \in A^\circ$, we define
$\Delta(\alpha)$ as follows: let $\alpha$ act on $A \otimes A$ by
the composition
\[ A \otimes A \stackrel{\text{mult}}{\vlongrightarrow} A
\stackrel{\alpha}{\longrightarrow} k \] and then pass this
composition to the isomorphism $(A \otimes A)^\circ \isomorphic
A^\circ \otimes A^\circ$.  We define $\varepsilon:A^\circ
\rightarrow k$ by $\varepsilon(\alpha) = \alpha(1)$.
\end{defn}

If $A$ is finite dimensional, then $A^\circ$ is all of $A^*$. In
this case there is a natural isomorphism $A \isomorphic A^{\circ
*}$ of algebras, and likewise a natural isomorphism $C \isomorphic
C^{* \circ}$ of coalgebras.

This duality is functorial.  Given an algebra map $\phi:A
\rightarrow B$ we get a coalgebra map \index{$\phi^\circ$}
$\phi^\circ:B^\circ \rightarrow A^\circ$ defined by, for $\beta
\in B^\circ$, $\phi^\circ(\beta)$ is the composition
\[ A \stackrel{\phi}{\longrightarrow} B
\stackrel{\beta}{\longrightarrow} k \] In a similar fashion we get
algebra maps from coalgebra maps.

\section{$G$-modules and $A$-comodules}

\begin{defn}(see section $3.1$ of \cite{waterhouse})
\index{representation} Let $G$ be an affine group scheme over the
ring $k$, and $V$ a $k$-module. A \textbf{linear representation}
of $G$ on $V$ is a natural transformation from the functor
$G(\slot)$ to the functor $GL_V(\slot)$, where $\text{GL}_V(R)
\stackrel{def}{=} \text{Aut}_R(V \otimes R)$.  We say also then
that $V$ is a $G$-\textbf{module}.
\end{defn}

The concept of a linear representation, put this way, is a bit
intimidating.  However, it is a consequence of (the proof of) the
Yoneda lemma that linear representations correspond to very
concrete things, called comodules.

\begin{defn}(see definition $2.1.3$ of \cite{hopfalgebras})
 Let $(C, \Delta, \varepsilon)$ be a coalgebra
over the ring $k$, $V$ a $k$-module.  $V$ is called a (right)
\index{comodule} $C$-\textbf{comodule} if it comes equipped with a
$k$-linear map $\rho:V \rightarrow V \otimes C$ such that the
following two diagrams commute:
\begin{equation}
\label{comod1}
\begin{diagram}
V & \rTo^\rho & V \otimes C \\
\dTo^\rho & & \dTo_{1 \otimes \Delta} \\
V \otimes C & \rTo_{\rho \otimes 1} & V \otimes C \otimes C \\
\end{diagram}
\end{equation}
\begin{equation}
\label{comod2}
\begin{diagram}
V & \rTo^\rho & V \otimes C \\
& \rdTo_{\isomorphic} & \dTo_{1 \otimes \varepsilon} \\
& & V \otimes k \\
\end{diagram}
\end{equation}
\end{defn}

\begin{thm}(see section $3.2$ of \cite{waterhouse})
\index{comodule!in relation to representations}
\label{modulecomodulecorrespondencethm} If $G$ is an affine group
scheme represented by the Hopf algebra $A$, then linear
representations of $G$ on $V$ correspond to $A$-comodule
structures on $V$.
\end{thm}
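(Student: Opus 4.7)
The plan is to apply the Yoneda lemma (Lemma \ref{YonedaLemma}) to convert the natural transformation data of a linear representation into the single $k$-linear map that defines a comodule structure, and then to check that the group-theoretic axioms of a representation translate into the coassociativity and counit diagrams \ref{comod1} and \ref{comod2}.

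First, suppose $\Phi: G(\slot) \to GL_V(\slot)$ is a linear representation. Viewing $GL_V$ as a set-valued functor, Yoneda tells us that $\Phi$ is determined by the single element $\tau := \Phi_A(1_A) \in GL_V(A) = \text{Aut}_A(V \otimes A)$. Since $\tau$ is $A$-linear, it is determined by its restriction to $V \otimes 1$, so it encodes a $k$-linear map $\rho: V \to V \otimes A$ defined by $\rho(v) = \tau(v \otimes 1)$. Conversely, given any $k$-linear $\rho: V \to V \otimes A$, I would define for each $k$-algebra $R$ and each $\phi \in G(R) = \text{Hom}_k(A,R)$ an $R$-linear endomorphism of $V \otimes R$ via the composition
\[
V \otimes R \stackrel{\rho \otimes 1}{\vlongrightarrow} V \otimes A \otimes R \stackrel{1 \otimes \phi \otimes 1}{\vvlongrightarrow} V \otimes R \otimes R \stackrel{1 \otimes \text{mult}}{\vvlongrightarrow} V \otimes R,
\]
which gives a candidate natural transformation $\Phi_R(\phi)$ into $\text{End}_R(V \otimes R)$. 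A direct tensor computation verifies that these two constructions are mutually inverse bijections between $k$-linear maps $V \to V \otimes A$ and natural transformations from $G(\slot)$ to the functor of $R$-endomorphisms of $V \otimes R$.

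Next I would check that under this bijection, the comodule axioms correspond precisely to the representation axioms. Diagram \ref{comod1} (coassociativity of $\rho$) should be equivalent to the multiplicativity $\Phi_R(\phi \multsymbol \psi) = \Phi_R(\phi) \circ \Phi_R(\psi)$, which unwinds because the product $\phi \multsymbol \psi$ in $G(R)$ is defined by $\text{mult} \circ (\phi \otimes \psi) \circ \Delta$; both sides, when evaluated on $V \otimes R$ and written in the appropriate tensor factors, reduce to the two routes around diagram \ref{comod1}. Likewise, diagram \ref{comod2} (counit) corresponds to $\Phi_R$ sending the identity element of $G(R)$ (which factors through $\varepsilon$) to the identity of $V \otimes R$. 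Once this is set up, verification is essentially a routine diagram chase using the definitions of $\Delta$ and $\varepsilon$ reviewed earlier in the chapter.

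The main obstacle is the invertibility check: a linear representation takes values in $GL_V(R) = \text{Aut}_R(V \otimes R)$, not merely $\text{End}_R(V \otimes R)$, so one must verify that every endomorphism $\Phi_R(\phi)$ produced from a comodule structure is actually an automorphism. This is where the antipode $S$ enters: given $\phi \in G(R)$, its group inverse is $\text{inv}(\phi) = \phi \circ S$, and one checks via diagram \ref{hopf3} that $\Phi_R(\phi) \circ \Phi_R(\text{inv}(\phi)) = 1_{V \otimes R}$. This step uses the antipode in an essential way and is the only place where the coalgebra structure alone is insufficient; everything else uses only $\Delta$ and $\varepsilon$. Once invertibility is secured, the earlier multiplicativity check promotes the natural transformation to a linear representation, completing the correspondence.
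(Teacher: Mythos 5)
Your proposal is correct and follows essentially the same route the paper sketches: use the Yoneda lemma to show that a representation $\Phi$ is determined by $\Phi_A(1_A)$ restricted to $V \otimes 1$, and recover $\Phi$ from $\rho$ by the explicit composition. You fill in more of the verification than the paper does (the paper defers the diagram chase to \cite{waterhouse}), and your remark that invertibility of $\Phi_R(\phi)$ comes for free once multiplicativity and the counit axiom hold — because $G(R)$ is already a group, which is precisely what the antipode guarantees — is a sound and worthwhile clarification.
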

It is for this reason that, in this dissertation, we shall quite
often confuse the notions of $G$-module and $A$-comodule, and
shall sometimes speak glibly of $A$-modules, representations for
$A$, comodules for $G$, etc.

Here is the correspondence.  Given an $A$-comodule $(V,\rho)$, we
get a representation $\Phi$ of $G$ on $V$ as follows: if $g \in
G(R)$, then $g$ is by definition a $k$-homomorphism from $A$ to
$R$. Define $g$ to act on $V \otimes R$ via the composition
\begin{equation}
\label{comodtorep}
 V \stackrel{\rho}{\vlongrightarrow} V \otimes A \stackrel{1
\otimes g}{\vlongrightarrow} V \otimes R
\end{equation}
and then extend to $V \otimes R$ by $R$-linearity.  Conversely,
let $\Phi:G \rightarrow \text{GL}_V$ be a representation.  Then we
may ask how $\text{id}_A \in G(A)$ acts on $V$, that is, what is
the map
\[ \Phi_A(\text{id}_A):V \otimes A \rightarrow V \otimes A \]
As we demand this map to be $A$-linear it is necessarily
determined by its restriction to $V \isomorphic V \otimes 1$, and
it is this map, call it $\rho$, which gives $V$ the structure of
an $A$-comodule. A Yoneda lemma type argument guarantees that, for
any $g \in G(R)$, $\Phi_R(g)$ is given by equation
\ref{comodtorep}.

Let us be more explicit.  Suppose $k$ is a field and $(V,\rho)$ a
finite dimensional $A$-comodule.  Fix a basis $e_1, \ldots, e_n$
of $V$ and write
\[ \rho: e_j \mapsto \sum_{i=1}^n e_i \otimes a_{ij} \]
Then the matrix $(a_{ij})$ is the `formula' for the representation
of $G$ on $V$.  That is, for $g \in G(R)$, $g$ acts on $V \otimes
R$ via the matrix $(g(a_{ij}))$ in the given basis, and then
extending by $R$-linearity.

Comparing equations \ref{hopf1} and \ref{hopf2} with equations
\ref{comod1} and \ref{comod2}, we see that $(A,\Delta)$ itself
qualifies as a (usually infinite dimensional) $A$-comodule, and we
call it the \index{representation!regular} \textbf{regular
representation}. Among other reasons, this is an important
representation because

\begin{thm}(see section $3.5$ of \cite{waterhouse})
\label{regreplemma} \label{regreptheorem}
 If $(V,\rho)$ is an $n$-dimensional $A$-comodule, then $V$ is embeddable in the
$n$-fold direct sum of the regular representation.
\end{thm}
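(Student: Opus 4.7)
The plan is to use the coaction $\rho$ itself as the embedding, after identifying $V \otimes A$ with the $n$-fold direct sum of the regular representation. Fix a basis $e_1, \ldots, e_n$ of $V$; this gives a $k$-linear isomorphism $V \otimes A \isomorphic A^n$, and if we equip $V \otimes A$ with the $A$-comodule structure $1_V \otimes \Delta$ (that is, the comodule structure coming from $A$ on the second tensorand alone), this identification becomes an isomorphism of $A$-comodules between $V \otimes A$ and the $n$-fold direct sum of the regular representation. So it suffices to produce an injective $A$-comodule map $V \to V \otimes A$, where the target carries the coaction $1_V \otimes \Delta$.

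The natural candidate is $\rho$ itself. First I would verify that $\rho$ is a morphism of $A$-comodules from $(V, \rho)$ to $(V \otimes A, 1_V \otimes \Delta)$. Unwinding the definition, this is the commutativity of the square
\begin{diagram}
V & \rTo^\rho & V \otimes A \\
\dTo^\rho & & \dTo_{1_V \otimes \Delta} \\
V \otimes A & \rTo_{\rho \otimes 1_A} & V \otimes A \otimes A \\
\end{diagram}
which is exactly the coassociativity axiom \ref{comod1} for the comodule $(V, \rho)$.

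Next I would check that $\rho$ is injective. This is where the counit axiom \ref{comod2} enters: it says precisely that the composition $V \stackrel{\rho}{\longrightarrow} V \otimes A \stackrel{1_V \otimes \varepsilon}{\longrightarrow} V \otimes k \isomorphic V$ is the identity on $V$. Hence $\rho$ admits a $k$-linear left inverse and is in particular injective. Composing $\rho$ with the $A$-comodule isomorphism $V \otimes A \isomorphic A^n$ induced by the chosen basis gives the desired embedding of $V$ into the $n$-fold direct sum of the regular representation.

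There is no serious obstacle here; the only delicate point is being clean about which comodule structure one puts on $V \otimes A$ (the one using $1_V \otimes \Delta$, not $\rho \otimes 1_A$), since this is what makes the identification with $A^n$ an isomorphism of comodules and makes $\rho$ a comodule map via coassociativity rather than a tautology.
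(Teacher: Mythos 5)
Your proof is correct and follows essentially the same route as the one the paper appeals to (the paper itself defers this statement to section~3.5 of Waterhouse, but the argument is spelled out again in the proof of lemma \ref{smallDimensionEmbeddingLemma}, where $\rho$ composed with the basis-induced isomorphism $\Psi: V \otimes A \isomorphic A^n$ is shown to be a comodule map via the coassociativity square). If anything you are slightly cleaner than that later proof: you explicitly establish injectivity of $\rho$ from the counit axiom \ref{comod2}, a point the paper leaves implicit.
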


Over fields, we have the following elementary yet eminently useful
results.

\begin{thm}\index{coalgebra!fundamental theorem of}(Fundamental theorem of coalgebras;
see theorem $1.4.7$ of \cite{hopfalgebras})
\label{fundamentaltheoremofcoalgebras} Let $k$ be a field, $C$ a
$k$-coalgebra. Then $C$ is the directed union of its finite
dimensional subcoalgebras.
\end{thm}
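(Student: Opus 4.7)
The plan is to show that every element $c \in C$ lies in some finite-dimensional subcoalgebra of $C$; since the sum of two finite-dimensional subcoalgebras is again a finite-dimensional subcoalgebra (the image of $\Delta$ on the sum lies in the sum of the tensor squares, hence in the tensor square of the sum), this will immediately give the directed union claim. The construction of the subcoalgebra will be done by writing $\Delta(c)$ with a minimal number of terms and then iterating coassociativity to extract structure constants in $C$ that themselves generate a closed subspace.

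Fix $c \in C$ and write
\[ \Delta(c) = \sum_{i=1}^{n} x_i \otimes y_i \]
with $n$ chosen minimal; minimality forces both $\{x_1, \ldots, x_n\}$ and $\{y_1, \ldots, y_n\}$ to be linearly independent in $C$. I would then choose functionals $x_i^{*} \in C^{*}$ with $x_i^{*}(x_j) = \delta_{ij}$ and apply $(x_i^{*} \otimes 1 \otimes 1)$ to the coassociativity identity $(\Delta \otimes 1)\Delta(c) = (1 \otimes \Delta)\Delta(c)$. A short computation then yields
\[ \Delta(y_i) = \sum_{j=1}^{n} \alpha_{ij} \otimes y_j \]
for certain elements $\alpha_{ij} \in C$ (explicitly, $\alpha_{ij} = (x_i^{*} \otimes 1)\Delta(x_j)$).

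The key step is to now apply coassociativity to each $y_i$ itself. Expanding $(\Delta \otimes 1)\Delta(y_i) = (1 \otimes \Delta)\Delta(y_i)$ using the formula just derived and equating coefficients (using that $\{y_j\}$ extends to a basis of $C$), I expect to obtain the identity
\[ \Delta(\alpha_{ik}) = \sum_{j=1}^{n} \alpha_{ij} \otimes \alpha_{jk}. \]
This is exactly the statement that the finite-dimensional subspace $D = \operatorname{span}\{\alpha_{ij} : 1 \leq i,j \leq n\}$ satisfies $\Delta(D) \subseteq D \otimes D$, i.e., that $D$ is a subcoalgebra. Finally, applying the counit axiom $(1 \otimes \varepsilon)\Delta = 1$ to $\Delta(y_i) = \sum_j \alpha_{ij} \otimes y_j$ gives $y_i = \sum_j \varepsilon(y_j) \alpha_{ij} \in D$, and then $(\varepsilon \otimes 1)\Delta(c) = c$ gives $c = \sum_i \varepsilon(x_i) y_i \in D$.

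The main obstacle I anticipate is the bookkeeping for the second application of coassociativity: one must be careful to equate coefficients in the $y$-slot after extending $\{y_j\}$ to a basis, so that the putative identity $\Delta(\alpha_{ik}) = \sum_j \alpha_{ij} \otimes \alpha_{jk}$ actually falls out cleanly. The minimality of the decomposition of $\Delta(c)$ and the linear independence of $\{x_i\}, \{y_i\}$ it provides are what make both the dual-functional trick and this coefficient comparison work, so the proof really does pivot on that single choice at the start.
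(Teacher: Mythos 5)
The paper does not prove this result; it simply cites Theorem~1.4.7 of \cite{hopfalgebras}, so there is no proof in the text to compare against. Your argument, however, is correct, and it is in essence the standard proof one finds in the references.

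The step you flagged as the ``main obstacle'' does indeed go through cleanly, and you can dispense with the hedging. From $\Delta(y_i)=\sum_j \alpha_{ij}\otimes y_j$, coassociativity applied to $y_i$ gives
\[
\sum_k \Delta(\alpha_{ik})\otimes y_k \;=\; (\Delta\otimes 1)\Delta(y_i) \;=\; (1\otimes\Delta)\Delta(y_i) \;=\; \sum_k \Bigl(\sum_j \alpha_{ij}\otimes \alpha_{jk}\Bigr)\otimes y_k,
\]
and since $\{y_k\}$ is linearly independent, the equation $\sum_k\bigl(\Delta(\alpha_{ik})-\sum_j\alpha_{ij}\otimes\alpha_{jk}\bigr)\otimes y_k=0$ in $(C\otimes C)\otimes C$ forces each coefficient to vanish (extend $\{y_k\}$ to a basis; no extra care is needed). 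This is exactly the closure relation $\Delta(\alpha_{ik})=\sum_j\alpha_{ij}\otimes\alpha_{jk}$ you wanted. The rest is as you say: $(1\otimes\varepsilon)\Delta=\mathrm{id}$ applied to $\Delta(y_i)=\sum_j\alpha_{ij}\otimes y_j$ gives $y_i=\sum_j\varepsilon(y_j)\alpha_{ij}\in D$, then $(\varepsilon\otimes 1)\Delta=\mathrm{id}$ gives $c=\sum_i\varepsilon(x_i)y_i\in D$, and the directedness observation (sum of finite-dimensional subcoalgebras is a finite-dimensional subcoalgebra) is correct. The only role played by the minimality of $n$ is precisely what you identified: it guarantees linear independence of both families $\{x_i\}$ and $\{y_i\}$, which is what licenses the dual-functional extraction and the coefficient comparison.
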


\begin{thm}\index{comodule!fundamental theorem of} (Fundamental theorem of comodules;
see theorem $2.1.7$ of \cite{hopfalgebras})
\label{fundamentaltheoremofcomodules} Let $k$ be a field, $C$ a
$k$-coalgebra, $V$ a $C$-comodule.  Then $V$ is the directed union
of its finite dimensional subcomodules.
\end{thm}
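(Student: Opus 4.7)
The plan is to prove, more than the stated directedness, the stronger fact that \emph{every} element of $V$ lies in some finite dimensional subcomodule; from this directedness follows easily, since the sum of any two finite dimensional subcomodules of $V$ is again a finite dimensional subcomodule.

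So fix $v \in V$ and write $\rho(v) = \sum_{i=1}^n v_i \otimes c_i$, with the $c_i$ chosen linearly independent (one can accomplish this by extending the $c_i$ appearing in any initial expression to part of a basis of $C$, and collecting terms). The first observation, which will capture $v$ itself inside the subcomodule we are about to build, is that applying $1 \otimes \varepsilon$ to $\rho(v)$ and invoking diagram \ref{comod2} gives $v = \sum_i \varepsilon(c_i)\, v_i$, so that $v$ lies in $W := \mathrm{span}_k(v_1,\ldots,v_n)$.

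The main point is then that this $W$ is already a subcomodule, i.e.\ that $\rho(v_i) \in W \otimes C$ for every $i$. This will be extracted from coassociativity, diagram \ref{comod1}, applied to $v$. For each $i$, write $\rho(v_i) = \sum_l w_{il} \otimes c_l$ (extending the basis chosen above if necessary so that a common basis of $C$ is indexing everything), and write $\Delta(c_i) = \sum_{j,k} \mu^i_{jk}\, c_j \otimes c_k$ with $\mu^i_{jk} \in k$. Then
\begin{gather*}
(\rho \otimes 1)\rho(v) \;=\; \sum_{i,l} w_{il} \otimes c_l \otimes c_i,\\
(1 \otimes \Delta)\rho(v) \;=\; \sum_{i,j,k} \mu^i_{jk}\, v_i \otimes c_j \otimes c_k.
\end{gather*}
Using linear independence of the $c_l \otimes c_i$ inside $C \otimes C$, equating coefficients yields $w_{il} = \sum_m \mu^m_{li}\, v_m \in W$ for every $i,l$, so $\rho(v_i) \in W \otimes C$ as desired.

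The step I expect to be the main obstacle is ensuring that the coefficient comparison is actually legitimate, which is why I would insist from the outset on working with a \emph{linearly independent} family of $c_i$'s (and extending it, if necessary, to a basis of $C$ before writing $\Delta(c_i)$ and $\rho(v_i)$). Once this bookkeeping is in place the computation is forced by the two comodule axioms, and the conclusion is that $W$ is a finite dimensional subcomodule of $V$ containing $v$. Directedness then follows from the remark at the beginning, and the union of all finite dimensional subcomodules is all of $V$ precisely because every element is contained in one.
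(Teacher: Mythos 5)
Your argument is correct, and it is in fact the standard proof of this result. The paper does not supply its own proof here; it simply cites Theorem 2.1.7 of the referenced Hopf algebra text, and the proof given there is the same one you have reconstructed: write $\rho(v)=\sum_i v_i\otimes c_i$ with the $c_i$ linearly independent, use the counit axiom to place $v$ in $W=\mathrm{span}(v_i)$, and use coassociativity together with the uniqueness of coefficients in a chosen basis of $C\otimes C$ to show $\rho(v_i)\in W\otimes C$, so that $W$ is a finite-dimensional subcomodule containing $v$. Your bookkeeping (insisting on linear independence of the $c_i$ and extending to a basis before comparing coefficients) is exactly the care needed, and the directedness remark via sums of subcomodules closes the argument.
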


\section{The Categories $\text{Rep}_k G$ and $\text{Comod}_A$}

Let $k$ be a field, $G$ an affine group scheme over $k$, $A$ its
representing Hopf algebra.

\begin{defn}
\index{$\text{Comod}_A$} $\text{Comod}_A$ is the category whose
objects are finite dimensional $A$-comodules, and whose
\index{comodule!morphism of} morphisms between $(V,\rho)$ and
$(W,\mu)$ are those $k$-linear maps $\phi:V \rightarrow W$ making
the following commute:
\begin{diagram}
V & \rTo^\phi & W \\
\dTo^\rho & & \dTo_\mu \\
V \otimes A & \rTo_{\phi \otimes 1} & W \otimes A \\
\end{diagram}

\end{defn}

\begin{defn}
\index{$\text{Rep}_k G$} $\text{Rep}_k G$ is the category whose
objects are finite dimensional vector spaces $V$ with a prescribed
$G$-module structure $\Phi:G \rightarrow \text{GL}_V$, and whose
\index{representation!morphism of} morphisms between
$(V,\Phi),(W,\Psi)$ are those linear maps $\phi:V \rightarrow W$
such that, for every $k$-algebra $R$ and element $g \in G(R)$, the
following commutes:
\begin{diagram}
V \otimes R & \rTo^{\phi \otimes 1} & W \otimes R \\
\dTo^{\Phi_R(g)} & & \dTo_{\Psi_R(g)} \\
V \otimes R & \rTo_{\phi \otimes 1} & W \otimes R \\
\end{diagram}
\end{defn}

Note well that whenever we write $\text{Comod}_A$ or $\text{Rep}_k
G$ it consists of only \textit{finite dimensional}
modules/comodules, in contrast to the notation of some other
authors.

The correspondence between $G$-modules and $A$-comodules given in
theorem \ref{modulecomodulecorrespondencethm} is actually a
functorial one (and it would be of no use otherwise).

\begin{thm}
The categories $\text{Rep}_k G$ and $\text{Comod}_A$ are
equivalent as $k$-linear abelian tensor categories.
\end{thm}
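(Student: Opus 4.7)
The plan is to exploit the object-level correspondence of Theorem \ref{modulecomodulecorrespondencethm} and upgrade it to a full tensor-categorical equivalence by defining explicit functors in both directions and verifying naturality, exactness, and compatibility with $\otimes$.

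First I would define the functor $F:\text{Comod}_A \rightarrow \text{Rep}_k G$ on objects by sending $(V,\rho)$ to the representation $\Phi$ given by formula \ref{comodtorep}: for $g \in G(R) = \text{Hom}_k(A,R)$, let $\Phi_R(g)$ be the $R$-linear extension of $V \stackrel{\rho}{\longrightarrow} V \otimes A \stackrel{1 \otimes g}{\longrightarrow} V \otimes R$. On morphisms, $F$ sends a comodule map $\phi:V \rightarrow W$ to $\phi$ itself; the verification that $(\phi \otimes 1)$ intertwines $\Phi_R(g)$ with $\Psi_R(g)$ is an immediate diagram chase using the comodule-morphism square tensored with $R$ and postcomposed with $1 \otimes g$. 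In the reverse direction, define $G:\text{Rep}_k G \rightarrow \text{Comod}_A$ by $G(V,\Phi) = (V, \rho)$ where $\rho$ is the restriction of $\Phi_A(\text{id}_A)$ to $V \cong V \otimes 1$, again sending $\phi$ to itself. The Yoneda lemma (Lemma \ref{YonedaLemma}) applied to the representable functor $G(\slot) = \text{Hom}_k(A,\slot)$ and the functor $\text{GL}_V$ shows that $F$ and $G$ are mutually inverse on objects, since the natural transformation $\Phi$ is completely determined by $\Phi_A(1_A)$, which is in turn determined by the comodule map $\rho$.

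Next I would verify that $F$ and $G$ are morphism-faithful and full. Faithfulness is trivial since both functors are the identity on underlying $k$-linear maps. Fullness amounts to showing a linear map $\phi$ commutes with every $\Phi_R(g),\Psi_R(g)$ if and only if it intertwines $\rho_V$ and $\rho_W$: one direction is the computation above, and the other follows by specializing $R = A$ and $g = \text{id}_A$. Combined with the bijection on objects, this gives an equivalence of plain categories.

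For the $k$-linear abelian structure I would note that both categories have their kernels, cokernels, direct sums, and zero objects computed on the underlying vector spaces (with the comodule/representation structure induced by restriction and quotient in the obvious way). Since $F$ and $G$ are identities on underlying spaces and linear maps, they automatically preserve all of this structure, and exactness transports across. The $k$-linear structure on Hom-sets is again preserved because it is inherited from $\text{Hom}_k(V,W)$.

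The main step, and the one I expect to be the real obstacle, is compatibility with the tensor product. Given comodules $(V,\rho_V)$, $(W,\rho_W)$, the tensor product structure map is
\[
V \otimes W \xrightarrow{\rho_V \otimes \rho_W} V \otimes A \otimes W \otimes A \xrightarrow{1 \otimes \tau \otimes 1} V \otimes W \otimes A \otimes A \xrightarrow{1 \otimes \text{mult}} V \otimes W \otimes A,
\]
while on representations $g \in G(R)$ acts on $V \otimes W \otimes R$ by $\Phi_R(g) \otimes_R \Psi_R(g)$ (after the usual identification). I would check these correspond by chasing an element of $V \otimes W$ through both paths and invoking the defining diagram of $\phi * \psi$ in a Hopf algebra, which says that applying $g \otimes g$ after $\Delta$ and then multiplying in $R$ is the same as applying $g$ after $\text{mult}$; this is precisely the content that $\Delta$ encodes group multiplication and it converts the comodule tensor structure into the diagonal action. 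Unit compatibility (that the trivial one-dimensional comodule $k$ with $\rho(1) = 1 \otimes 1$ corresponds to the trivial representation) and associativity/symmetry coherences are then automatic from the coassociativity and cocommutativity-free axioms of the Hopf algebra. Putting these together yields the full tensor-categorical equivalence.
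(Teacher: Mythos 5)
Your overall plan matches the paper's implicit one: the paper states after the theorem that it is ``merely the assertion'' that morphisms agree and that the comodule tensor structure induces the usual one on $\text{Rep}_k G$, and leaves the verification to the groundwork laid around Theorem \ref{modulecomodulecorrespondencethm} and the Yoneda discussion. Your filling-in of the object correspondence via $F$, $G$, and Yoneda, the full/faithful check by specializing to $R = A$, $g = \text{id}_A$, and the transport of the abelian structure along the underlying-vector-space forgetful functors are all correct and in the same spirit.

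There is, however, a wrong justification in the one step you correctly flag as the ``real obstacle.'' To match the comodule tensor structure with the diagonal representation, you chase a simple tensor $v \otimes w$ and need the identity
\[
g \circ \text{mult}_A \;=\; \text{mult}_R \circ (g \otimes g),
\]
which is precisely the statement that $g \in G(R) = \text{Hom}_{k\text{-alg}}(A,R)$ is an \emph{algebra} homomorphism. You instead invoke the defining diagram of $\phi * \psi$, but that diagram says $\phi * \psi = \text{mult}_R \circ (\phi \otimes \psi) \circ \Delta$; it involves the comultiplication $\Delta$, not $\text{mult}_A$, and it encodes the group law on $G(R)$, which is not what the tensor-compatibility chase requires. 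The cited lemma does not even mention $\text{mult}_A$ and cannot produce the needed identity. Similarly, the associativity and symmetry coherences for the comodule tensor product come from associativity and commutativity of $\text{mult}_A$, not from coassociativity of $\Delta$ as your last sentence suggests. These are local slips in attribution rather than structural gaps --- replacing the reference to the $\phi * \psi$ diagram with ``$g$ is a $k$-algebra map'' makes the chase go through --- but as written that sentence cites a lemma that does not say what you need.
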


This theorem is merely the assertion that a linear map $\phi:V
\rightarrow W$ is a morphism for $V$ and $W$ as $G$-modules if and
only if it is for $V$ and $W$ as $A$-comodules, and that the
tensor structure on $\text{Comod}_A$ (defined below) induces the
usual tensor structure we expect to see in $\text{Rep}_k G$.

\begin{defn}
\label{comoduleConstructionsDefn} Let $A$ be a Hopf algebra,
$(V,\rho)$, $(W,\mu)$ finite dimensional $A$-comodules.

\begin{enumerate}
\index{comodule!direct sum of} \item{Their \textbf{direct sum} is
the $A$-comodule with underlying vector space $V \oplus W$ and
comodule map given by the composition
\[ V \oplus W \stackrel{\rho \oplus \mu}{\vlongrightarrow} (V
\otimes A) \oplus (W \otimes A) \isomorphic (V \oplus W) \otimes A
\]
} \item{Their \index{comodule!tensor product of} \textbf{tensor
product} is the $A$-comodule with underlying vector space $V
\otimes W$ and comodule map given by the composition
\[ V \otimes W \stackrel{\rho \otimes \mu}{\vlongrightarrow} (V
\otimes A) \otimes (W \otimes A) \stackrel{1 \otimes \text{Twist}
\otimes 1}{\vvlongrightarrow} V \otimes W \otimes A \otimes A
\stackrel{1 \otimes 1 \otimes \text{mult}}{\vvlongrightarrow} V
\otimes W \otimes A \] }
 \item{The \textbf{tensor product} of two
morphisms $\phi:V \rightarrow X$, $\psi:W \rightarrow Y$ is the
usual tensor product of linear maps $\phi \otimes \psi:V \otimes W
\rightarrow X \otimes Y$} \item{The \textbf{trivial}
$A$-\textbf{comodule}, or \textbf{trivial representation}, is the
$A$-comodule having underlying vector space $k$ and comodule map
$\rho:k \rightarrow k \otimes A$ given by $\rho: 1 \mapsto 1
\otimes 1$}
 \item{The \index{comodule!dual of} \textbf{dual} of
$(V,\rho)$ is the $A$-comodule with underlying vector space $V^*$
and comodule map $\mu:V^* \rightarrow V^* \otimes A$ defined as
follows.  Let $\rho^*:V^* \rightarrow \text{Hom}_k(V,A)$ be the
map that sends the functional $\phi:V \rightarrow k$ to the
composition $V \stackrel{\rho}{\longrightarrow} V \otimes A
\stackrel{\phi \otimes 1}{\vlongrightarrow} k \otimes A
\isomorphic A$.  Then $\mu$ is the composition
\[ V^* \stackrel{\rho^*}{\longrightarrow} \text{Hom}_k(V,A)
\isomorphic V^* \otimes A \stackrel{1 \otimes S}{\vlongrightarrow}
V^* \otimes A \] where $S$ denotes the antipode of $A$} \item{The
\index{comodule!internal Hom of} \textbf{internal Hom} of $V$ and
$W$, denoted \index{$\intHom(V,W)$} $\text{\underline{Hom}}(V,W)$,
is the tensor product of the dual of $V$ with $W$.}

\end{enumerate}
\end{defn}

An alternative, basis dependent definition of the dual $(V^*,\mu)$
of the comodule $(V,\rho)$ is as follows.  Pick a basis $\{e_1,
\ldots, e_n\}$ of $V$, and let $\{\alpha_1,\ldots, \alpha_n\}$ be
the dual basis of $V^*$.  Write $\rho:e_j \mapsto \sum_i e_i
\otimes a_{ij}$, and set $\bar{\rho}:\alpha_j \mapsto  \sum_i
\alpha_i \otimes a_{ji}$ (note the transpose being applied). Then
$\mu$ is the composition
\[ V^* \stackrel{\bar{\rho}}{\longrightarrow} V^* \otimes A
\stackrel{1 \otimes S}{\vlongrightarrow} V^* \otimes A \] In other
words, if $(a_{ij})$ is the matrix formula for a representation on
$V$ in a given basis, then the dual representation on the dual
space $V^*$, in the dual basis, is the inverse of the transpose of
$(a_{ij})$.

\section{Cohomology of Comodules}

\label{CohomologyOfComodulesSection}

The relevant definitions for cohomology in $\text{Comod}_A$ make
sense in any $k$-linear abelian category, so we state them at this
level of generality.  The reader may consult \cite{weibel} or
\cite{benson} for a more thorough introduction to these matters.

\begin{defn}
Let $\catfont{C}$ be a $k$-linear abelian category, $M,N$ objects
of $\catfont{C}$.  An $n$-fold \index{extension}\textbf{extension}
of $M$ by $N$ is an exact sequence

\[ \xi: 0 \rightarrow N \rightarrow X_{n-1} \rightarrow \ldots \rightarrow
X_0 \rightarrow M \rightarrow 0 \]
Two extensions are
\index{extension!equivalence of} \textbf{equivalent} if there
exist morphisms $\phi_i:X_i \rightarrow Y_i$ such that
\begin{diagram}
0 & \rTo & N & \rTo & X_{n-1} & \rTo & \ldots & \rTo &  X_0 & \rTo
& M & \rTo & 0
\\
& & \dEq & & \dTo^{\phi_{n-1}} & & & &  \dTo^{\phi_0} & & \dEq \\
0 & \rTo & N & \rTo & Y_{n-1} & \rTo & \ldots & \rTo & Y_0 & \rTo
& M & \rTo & 0
\\
\end{diagram}
commutes.  The set of equivalence classes (with respect to the
equivalence relation generated by the relation `being equivalent')
of $n$-fold extensions of $M$ by $N$ is denoted
\index{$\text{Ext}^n(M,N)$} $\text{Ext}^n(M,N)$.
\end{defn}

In case $n > 1$, the term `equivalent' is abusive; it is a not
necessarily symmetric or transitive relation.  Nonetheless, this
relation generates a unique equivalence relation, and it is this
relation with respect to which $\text{Ext}^n(M,N)$ is defined.

On the other hand, this relation \emph{is} a bona fide equivalence
relation on $\text{Ext}^1(M,N)$.  This is because, as can be
shown, the map $\phi_0$ given by the definition of equivalence
must necessarily be an isomorphism.

Let
\[ \xi: 0 \rightarrow N \rightarrow  X_{n-1} \rightarrow \ldots \rightarrow
X_0 \rightarrow M \rightarrow 0 \]
\[ \chi: 0 \rightarrow N  \rightarrow Y_{n-1} \rightarrow \ldots \rightarrow
Y_0 \rightarrow M \rightarrow 0 \] be two $n$-fold extensions of
$M$ by $N$ with $n >1$.  The \index{extension!Baer sum of}
\textbf{Baer sum} of $\xi$ and $\chi$, denoted $\xi \oplus \chi$,
is the extension gotten as follows. Let $\Gamma$ and $\Omega$ be
the pullback/pushout respectively of the following two diagrams
\[
\begin{diagram}
\Omega & \rTo & Y_0 \\
\dTo & & \dTo \\
X_0 & \rTo & M \\
\end{diagram}
\hspace{1.5cm}
\begin{diagram}
\Gamma & \lTo & Y_{n-1} \\
\uTo & & \uTo \\
X_{n-1} & \lTo & N \\
\end{diagram}
\]
Then $\xi \oplus \chi$ is the extension
\[ 0 \rightarrow N \rightarrow \Gamma \rightarrow X_{n-2} \oplus
Y_{n-2}  \rightarrow \ldots \rightarrow X_1 \oplus Y_1 \rightarrow
\Omega \rightarrow M \rightarrow 0 \]

For $n=1$, the Baer sum is defined slightly differently.  Let
\[ \xi:0 \rightarrow N \stackrel{\phi_1}{\longrightarrow} X_1 \stackrel{\psi_1}{\longrightarrow} M \rightarrow 0
\]
\[\chi:0 \rightarrow N \stackrel{\phi_2}{\longrightarrow} X_2 \stackrel{\psi_2}{\longrightarrow} M \rightarrow 0
\]
be two $1$-fold extensions of $M$ by $N$.  Let $X$ be the pullback
of $X_1$ and $X_2$ under $M$, and $\phi^\prime,\bar{\phi}$ the
unique maps making
\[
\begin{diagram}
N & &  & & \\
& \rdTo(4,2)^{\phi_1} \rdTo(2,4)_{-\phi_2}  \rdDashto^{\phi^\prime} &   & & \\
 & & X & \rTo_{\pi_1} & X_1 \\
& & \dTo_{\pi_2} & & \dTo_{\psi_1} \\
& & X_2 & \rTo_{\psi_2} & M \\
\end{diagram}
\hspace{1.5cm}
\begin{diagram}
N & &  & & \\
& \rdTo(4,2)^{\phi_1} \rdTo(2,4)_{0}  \rdDashto^{\bar{\phi}} &   & & \\
 & & X & \rTo_{\pi_1} & X_1 \\
& & \dTo_{\pi_2} & & \dTo_{\psi_1} \\
& & X_2 & \rTo_{\psi_2} & M \\
\end{diagram}
\]
commute.  Let $X \stackrel{\tau}{\longrightarrow} Y$ be the
cokernel of $\phi^\prime$, and $\psi$ the unique map making
\begin{diagram}
N & \rTo^{\phi^\prime} & X & \rTo^{\pi_1 \psi_1} & M \\
& & \dTo^\tau & \ruDashto_\psi \\
& & Y \\
\end{diagram}
commute.  Set $\phi = \bar{\phi} \tau$.  Then the Baer sum $\xi
\oplus \chi$ is the extension
\[ 0 \rightarrow N \stackrel{\phi}{\longrightarrow} Y
\stackrel{\psi}{\longrightarrow} M \rightarrow 0 \]

Let $a \neq 0$ be a scalar, $\xi$ an $n$-fold extension as above,
and let $M \stackrel{\phi}{\longrightarrow} X_{n-1}$ be the first
map in the extension. Then we define the \index{extension!scalar
multiplication of} \textbf{scalar multiplication} of $a$ on $\xi$
to be the extension
\[ a\xi: 0 \rightarrow M \stackrel{\frac{1}{a} \phi}{\longrightarrow} X_{n-1} \rightarrow \ldots \rightarrow
X_0 \rightarrow N \rightarrow 0 \] with all of the other maps and
objects staying the same.  For $a = 0$, we define $0 \xi$ to be
the trivial extension (defined below).

\begin{thm}
For any two objects $M$ and $N$ of a $k$-linear abelian category,
Baer sum and scalar multiplication respect equivalence classes,
and $\text{Ext}^n(M,N)$ is a vector space under those operations.
\end{thm}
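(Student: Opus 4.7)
The plan is to proceed in two stages: first establish that Baer sum and scalar multiplication descend to well-defined operations on the equivalence classes, and then verify the vector space axioms.

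For well-definedness, I would first treat the primitive relation (extensions connected by a single morphism of exact sequences) and only afterwards pass to the equivalence relation it generates. Given a primitive equivalence $\xi \to \xi'$ and any third extension $\chi$, the required equivalence $\xi \oplus \chi \sim \xi' \oplus \chi$ comes from functoriality: a morphism of extensions induces, via the universal properties of pullback and pushout, compatible morphisms between the objects used to build $\xi \oplus \chi$ and $\xi' \oplus \chi$, producing a morphism at each middle position. For scalar multiplication, $a\xi \sim a\xi'$ is immediate, since the defining ladder for the equivalence is unchanged except for a rescaling of its leftmost morphism.

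For the vector space axioms I would identify the zero element as the trivial (split) extension, built from the direct sum of $N$, $M$, and the interior objects. Commutativity of $\oplus$ is essentially built into the symmetric construction of the pullback and pushout in their two arguments. Associativity would follow from the compatibility of iterated pullback/pushout (the fact that they can be formed in either order up to canonical isomorphism). The additive inverse of $\xi$ is $(-1)\xi$: for $n=1$, computing $\xi \oplus (-1)\xi$ and examining the cokernel of the map $\phi^\prime$ in the definition, one sees it splits as $N \oplus M$, yielding the trivial extension. Distributivity and the remaining scalar axioms reduce to routine diagram chases using bilinearity of the defining maps.

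The principal obstacle is the $n > 1$ case. Here the primitive relation is a priori neither symmetric nor transitive, so one must argue modulo the equivalence relation it generates: each verification must be run on finite chains of primitive equivalences, and compatibility with the insertion of intermediate steps must be checked. In particular, associativity of Baer sum will not hold strictly on the nose, but only up to a primitive equivalence supplied by the universal property of an iterated pullback/pushout. For $n=1$ this issue does not arise, since the primitive relation is already a genuine equivalence (the middle comparison map being forced to be an isomorphism by the short five lemma), and the whole proof collapses to standard diagram chasing.
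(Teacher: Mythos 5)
The paper does not actually supply a proof of this theorem; it records it as a standard fact of homological algebra and points the reader to Weibel and Benson. So there is no ``paper's own proof'' to compare against, and your sketch should be judged on its own terms.

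Your outline is the standard Yoneda-Ext argument, and it is sound. The key moves are all there: reducing well-definedness to the primitive (one-morphism) relation and then passing to the equivalence relation it generates, using the universal properties of the pullback $\Omega$ and pushout $\Gamma$ to transport a morphism of extensions through the Baer sum construction, observing that scalar multiplication is trivially compatible because it only rescales the leftmost map, and noting that for $n=1$ the short five lemma forces the middle map of any equivalence ladder to be an isomorphism (this last point the paper does state explicitly, just before the theorem). Your flag that for $n>1$ all of this must be run on finite chains of primitive equivalences, and that associativity of the Baer sum holds only up to the canonical isomorphism furnished by the universal property of the iterated pullback/pushout, is exactly the right caveat. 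The one place where the sketch is slightly imprecise is the description of the trivial extension for $n>1$: the standard representative of the zero class is $0\to N\xrightarrow{1}N\to 0\to\cdots\to 0\to M\xrightarrow{1}M\to 0$, not a direct sum of $N$, $M$, and the interior objects. You also show $\xi\oplus(-1)\xi$ is trivial only for $n=1$; the $n>1$ version of that computation is the other place where genuine work remains, though as a sketch the omission is forgivable.
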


The additive identity of $\text{Ext}^n(M,N)$ is called the
\textbf{trivial} or \textbf{split} extension.  In the case of
$\text{Ext}^1$, it can be identified as the equivalence class of
the extension $0 \rightarrow N \rightarrow N \oplus M \rightarrow
M \rightarrow 0$.

\chapter{Tannakian Duality}

\label{chapterTannakianDuality}

Here we present the basic facts concerning the correspondence
between categories of finite dimensional representations of affine
group schemes over a field, and so-called neutral tannakian
categories.  We have no intention of giving a full account,
especially concerning proofs; we shall mostly content ourselves
with giving the definition of a neutral tannakian category, and,
following the proof given in \cite{deligne}, describing a method
for recovering the representing Hopf algebra of such a category.
The reader may consult \cite{saavedra} or \cite{breen}, but the
development given here follows almost exclusively that of
\cite{deligne}.  The reader may also consult \cite{freyd} for an
excellent introduction to the theory of abelian categories, and
\cite{maclane} for a good account of tensor categories in the
abstract (there referred to as \emph{monoidal} categories, but
without many of the assumptions we shall be placing on them).

The theory of tannakian categories, while having broader
implications than what we will be discussing is, as far as we are
concerned, a successful attempt to answer three natural questions
about the category $\text{Rep}_k G$ (equivalently,
$\text{Comod}_A$, where $A$ is the representing Hopf algebra of
$G$).  Firstly, to what extent does purely categorical information
about the category $\text{Rep}_k G$ determine the group $G$?  The
answer is, completely, if one allows for one piece of external
information (called a \emph{fibre functor}). Secondly, can we
recover in some constructive way the Hopf algebra $A$ from
$\text{Comod}_A$? The answer again is yes, and it is to this that
we will devoting most of our time. Finally, is there a set of
axioms one can write down which are equivalent to a category being
$\text{Rep}_k G$ for some $k$ and $G$?  The answer again is yes,
and these axioms serve as the definition for a neutral tannakian
category.

\section{A Motivating Example}

To motivate the definitions given in the next section it will help
to keep in mind the simplest and yet most important example, the
category \index{$\text{Vec}_k$} $\text{Vec}_k$ consisting of all
finite dimensional vector spaces over a field $k$, with morphisms
being $k$-linear maps between the vector spaces (or, if you like,
$\text{Rep}_k G_0$, where $G_0$ is the trivial group represented
by the Hopf algebra $k$).  Firstly, $\text{Vec}_k$ is a $k$-linear
\index{category!abelian} abelian category. This means, among other
things, that the Hom-sets themselves have a $k$-linear structure
on them, and composition of morphisms is bilinear with respect to
this structure. It also means that $\text{Vec}_k$ satisfies some
nice regularity conditions, and that certain desirable
constructions are always possible within it: finite biproducts
always exist (in the form of the usual direct sum of vector
spaces), kernels and cokernels always exist, and all monomorphisms
and epimorphisms are normal (every injective map is the kernel of
its cokernel, and every surjective map is the cokernel of its
kernel).

As it happens, this is not quite enough to recover fully the fact
that $\text{Vec}_k$ is indeed $\text{Vec}_k$. Enter the
\index{category!tensor} tensor product. To every pair of vector
spaces $V$ and $W$ we assign an object called $V \otimes W$, and
to every pair of morphisms $V \stackrel{\phi}{\rightarrow} X$ and
$W \stackrel{\psi}{\rightarrow} Y$ we assign a morphism, denoted
$V \otimes W \stackrel{\phi \otimes \psi}{\longrightarrow} X
\otimes Y$.  We also have that, for every composable pair
$\phi,\psi$ and composable pair $a,b$, $(\phi \otimes a) \circ
(\psi \otimes b) = (\phi \circ \psi) \otimes (a \circ b)$. This
amounts to the assertion that $\otimes$ is a bifunctor on
$\text{Vec}_k$. This bifunctor $\otimes$ is a
\index{category!abelian tensor} bilinear functor in the sense
that, for any $c \in k$, $(c \phi + \psi) \otimes \eta$ = $c (\phi
\otimes \eta) + (\psi \otimes \eta)$, and similarly for the other
slot.

We know also that $\otimes$ is a commutative operation.  That is,
for every pair of vector spaces $A$ and $B$ there is a natural
isomorphism \index{$\text{comm}$} $A \otimes B
\stackrel{\text{comm}_{A,B}}{\isomorphic} B \otimes A$ (namely the
map $a \otimes b \mapsto b \otimes a$). Naturality here means that
for every pair of maps $A \stackrel{\phi}{\rightarrow} X$ and $B
\stackrel{\psi}{\rightarrow} Y$ the following commutes:
\begin{diagram}
A \otimes B & \rTo^{\text{comm}_{A,B}} & B \otimes A \\
\dTo^{\phi \otimes \psi} & & \dTo_{\psi \otimes \phi} \\
X \otimes Y & \rTo_{\text{comm}_{X,Y}} & Y \otimes X \\
\end{diagram}
Similarly, $\otimes$ is naturally associative, given by the
natural isomorphism \index{$\text{assoc}$} $(A \otimes B) \otimes
C \stackrel{\text{assoc}_{A,B,C}}{\isomorphic} A \otimes (B
\otimes C)$ (namely $(a \otimes b) \otimes c \mapsto a \otimes (b
\otimes c)$).

$\text{Vec}_k$ also has an \index{identity object} identity object
for $\otimes$, namely the vector space $k$.  This means that to
every vector space $V$, there is a natural isomorphism
\index{$\text{unit}$} $V \stackrel{\text{unit}_V}{\isomorphic} k
\otimes V$ (namely $v \mapsto 1 \otimes v$), satisfying diagrams
analogous to the above. In the context of abstract tannakian
categories this identity object is denoted as
\index{$\underline{1}$} $\underline{1}$.

We also mention that the isomorphisms $\text{comm}$ and
$\text{assoc}$ satisfy some coherence conditions with one another.
These are expressed by the so-called pentagon and hexagon axioms,
to be discussed in the next section.

Recall the universal bilinear mapping property of the tensor
product.  For every $V$ and $W$ there is a bilinear map $\otimes:
V \times W \rightarrow V \otimes W$ with the following property:
to every bilinear map $V \times W \stackrel{\phi}{\rightarrow} Z$
there is a unique linear map $V \otimes W
\stackrel{\psi}{\rightarrow} Z$ such that the following diagram
commutes:
\begin{diagram}
V \otimes W & \rTo^\psi & Z \\
\uTo^\otimes & \ruTo_\phi & \\
V \times W & & \\
\end{diagram}
What this gives us is a natural isomorphism between linear maps on
$V \otimes W$ and bilinear maps on $V \times W$.  But a bilinear
map $V \times W \rightarrow Z$ is just another name for a linear
map $V \rightarrow \text{Hom}(W,Z)$.  Thus we have an isomorphism
\[ \text{Hom}(V \otimes W,Z) \isomorphic \text{Hom}(V,
\text{Hom}(W,Z)) \] Another way of stating this is that
$\text{Hom}(\slot \otimes W,Z)$ is a representable functor, and
its representing object is $\text{Hom}(W,Z)$.  $\text{Vec}_k$
enjoys the property that, for any objects $V$ and $W$,
$\text{Hom}(V,W)$ is also an object of $\text{Vec}_k$.  For a
category in which this is not exactly the case, we have a
different name, when it exists, for the representing object of
$\text{Hom}(\slot \otimes W,Z)$: we call it \index{internal Hom}
\emph{internal Hom}, and denote it by \index{$\intHom(V,W)$}
$\intHom(W,Z)$. The above discussion can thus be summed up as
saying that, in $\text{Vec}_k$, internal Homs always exist.

Of special interest then is, for any vector space $V$, the object
$\intHom(V,\underline{1})$, which in $\text{Vec}_k$ can be
identified as the space of linear functionals on $V$.  We denote
this object by \index{internal Hom!dual} $\dual{V}$.  It is well
known that any vector space $V$ is naturally isomorphic to
$V^{\vee \vee}$ via the map $v \mapsto \text{ev}_v$, where
$\text{ev}_v$ is the map that evaluates any functional $V
\rightarrow k$ at the element $v$.  We say then that all objects
of $\text{Vec}_k$ are \index{reflexive object} \emph{reflexive} in
the sense that $v \mapsto \text{ev}_v$ is always an isomorphism.

For any vector spaces $X_1,X_2,Y_1,Y_2$, there is an obvious
isomorphism $\intHom(X_1,Y_1) \otimes \intHom(X_2,Y_2) \isomorphic
\intHom(X_1 \otimes X_2,Y_1 \otimes Y_2)$, namely the map that
sends the element $\phi \otimes \psi$ to the map of the same name.
This isomorphism can be thought of as expressing the fact that the
tensor product, acting on $\intHom(X,Y)$ as an object, is
compatible with its action on it as a Hom-set.

In $\text{Vec}_k$, $\text{End}(\underline{1}) = \text{End}(k)$ can
be identified with the field $k$ itself, if we take addition to be
addition of maps and multiplication to be composition of
morphisms.  Thus we can say that $\text{End}(\underline{1})$ is a
field. With this in mind, and given everything else we've done, we
can actually define the $k$-linear structure on $\text{Vec}_k$
without assuming it. If $c:k \rightarrow k$ is the linear map
given by multiplication by the constant $c \in k$ and $\phi:V
\rightarrow W$ any linear map, we can define $c \phi$ as the
composition
\[ V \stackrel{\text{unit}_V}{\longrightarrow} k \otimes V
\stackrel{c \otimes \phi}{\longrightarrow} k \otimes W
\stackrel{\text{unit}_W^{-1}}{\longrightarrow} W \]

Finally, there is a so-called \emph{fibre functor} on
$\text{Vec}_k$, i.e.~an exact, k-linear tensor preserving functor
from $\text{Vec}_k$ to $\text{Vec}_{\text{End}(\underline{1})}$.
Here, we can simply take this functor to be the identity
$\text{Vec}_k \rightarrow \text{Vec}_k$.

The preceding discussion amounts to the assertion that
$\text{Vec}_k$ is a neutral tannakian category, which we formally
define now.

\section{Definition of a Neutral Tannakian Category}
\label{defntannakian}
\begin{defn}
\label{defnabeliancategory} An \index{category!abelian}
\textbf{abelian category} is a category $\catfont{C}$ with the
following properties:

\begin{enumerate}
\item{For all objects $A, B \in \catfont{C}$, $\text{Hom}(A,B)$ is
endowed with the structure of an abelian group, and composition of
morphisms is bilinear with respect to this structure} \item{Every
pair of objects in $\catfont{C}$ has a biproduct} \item{Every
morphism has a kernel and a cokernel} \item{Every monomorphism is
the kernel of some morphism, and every epimorphism is the cokernel
of some morphism}

\end{enumerate}

\end{defn}

\begin{defn}
\label{defntensorcategory}
 Let \index{category!tensor} $\catfont{C}$ be a category
endowed with a bifunctor $\otimes:\catfont{C} \times \catfont{C}
\rightarrow \catfont{C}$, and denote by, for objects $A,B$ and
morphisms $\phi,\psi$, $A \otimes B
\stackrel{\text{def}}{=}\otimes(A,B)$ and $\phi \otimes \psi
\stackrel{\text{def}}{=}\otimes(\phi,\psi)$.  Then $\otimes$ is
called a \textbf{tensor product}, and $\catfont{C}$ is called a
\textbf{tensor category}, if the following hold:

\begin{enumerate}

\item{There is a functorial isomorphism \index{$\text{assoc}$}
$\text{assoc}_{X,Y,Z}:X \otimes (Y \otimes Z) \isomorphic (X
\otimes Y) \otimes Z$} \item{There is a functorial isomorphism
\index{$\text{comm}$} $\text{comm}_{X,Y}: X \otimes Y \isomorphic
Y \otimes X$} \item{There is an \index{identity object}
\textbf{identity object}, denoted \index{$\underline{1}$}
$\underline{1}$, and a functorial isomorphism
\index{$\text{unit}$} $\text{unit}_V:V \isomorphic \underline{1}
\otimes V$ inducing an equivalence of categories $\catfont{C}
\rightarrow \catfont{C}$} \item{(pentagon axiom) For all objects
$X,Y,Z$ and $T$, the following commutes:
\begin{diagram}
& & (X \otimes Y) \otimes (Z \otimes T) & & \\
& \ruTo^{\text{assoc}} & & \rdTo^{\text{assoc}} & \\
X \otimes (Y \otimes (Z \otimes T)) & & & & ((X \otimes Y) \otimes
Z) \otimes T \\
\dTo^{1 \otimes \text{assoc}} & & & & \dTo_{\text{assoc} \otimes
1} \\
X \otimes((Y\otimes Z) \otimes T) &  & \rTo_{\text{assoc}} & & (X \otimes (Y \otimes Z)) \otimes T \\
\end{diagram}
(the obvious sub-scripts on $\text{assoc}$ have been omitted) }
\item{(hexagon axiom) For all objects $X,Y$ and $Z$, the following
commutes:
\begin{diagram}
 & & (X \otimes Y) \otimes Z & & \\
 & \ruTo^{\text{assoc}} & & \rdTo^{\text{comm}} & \\
 X \otimes (Y \otimes Z) & & & & Z \otimes (X \otimes Y) \\
 \dTo^{1 \otimes \text{comm}} & & & & \dTo_{\text{assoc}} \\
 X \otimes (Z \otimes Y) & & & & (Z \otimes X) \otimes Y \\
 & \rdTo_{\text{assoc}} & & \ruTo_{\text{comm} \otimes 1} \\
 & & (X \otimes Z) \otimes Y & & \\
\end{diagram}
} \item{For all objects $X$ and $Y$, the following commute:
\[
\begin{diagram}
X \otimes Y & \rTo^{\text{unit}} & \underline{1} \otimes (X
\otimes Y) \\
& \rdTo_{\text{unit} \otimes 1} & \dTo_{\text{assoc}} \\
& & (\underline{1} \otimes X) \otimes Y \\
\end{diagram}
\hspace{2.5cm}
\begin{diagram}
X \otimes Y & \rTo^{\text{unit} \otimes 1} & (\underline{1}
\otimes X) \otimes Y
\\
\dTo^{1 \otimes \text{unit}} & & \dTo_{\text{comm} \otimes 1} \\
X \otimes (\underline{1} \otimes Y) & \rTo_{\text{assoc}}& (X
\otimes
\underline{1}) \otimes Y \\
\end{diagram}
\]
}

\end{enumerate}

\end{defn}

For reasons of convenience our definition of a tensor category is
a slight deviation from that given on page 105 of \cite{deligne}.
There conditions 3.~and 6.~above are replaced with the seemingly
weaker requirement that there exist an identity object $U$ and
isomorphism $u:U \rightarrow U \otimes U$ such that $X \mapsto U
\otimes X$ is an equivalence of categories.  However, proposition
1.3 on that same page makes it clear that Deligne's definition
implies ours, so we have not changed anything.

What we call a tensor category others might call a \emph{monoidal}
category, and our demand that it be, e.g., naturally commutative
is quite often not assumed by other authors.  Saavedra in
\cite{saavedra} would in fact call this an ACU tensor category,
indicating that is associative, commutative, and unital.  We shall
have no occasion to consider any tensor categories but this kind,
so we call them simply tensor categories.

The significance of the pentagon and hexagon axioms is that,
loosely speaking, they introduce enough constraints to ensure that
any diagram that should commute, does commute. The reader should
see \cite{maclane} and \cite{deligne} for more on this.

We also note that the identity object $\underline{1}$ and the
isomorphism $\text{unit}$ are not demanded to be unique.  However,
any two such are isomorphic up to a unique isomorphism commuting
with the unit isomorphisms, so it is unique for all intents and
purposes; see proposition 1.3 of \cite{deligne}.

We define an \index{category!abelian tensor} \textbf{abelian
tensor category} to be an abelian category equipped with a tensor
product in such a way that $\otimes$ is a bi-additive functor,
i.e.~$(\phi + \psi) \otimes a = \phi \otimes a + \psi \otimes a$,
and similarly for the other slot.

Our goal now is to define what is called a \emph{rigid abelian
tensor category}.  This is defined to be an abelian tensor
category in which all internal Homs exist, every object is
reflexive, and for all objects $X_1, X_2, Y_1, Y_2$, a certain
natural map
\[ \intHom(X_1,Y_1) \otimes \intHom(X_2,Y_2) \rightarrow
\intHom(X_1 \otimes X_2, Y_1 \otimes Y_2) \]
 is always an
isomorphism.  We define now what these things mean in purely
categorical terms.

Let $X$ and $Y$ be objects of the tensor category $\catfont{C}$
and consider the contravariant functor $\text{Hom}(\slot \otimes
X,Y)$ from $\catfont{C}$ to the category of sets. It sends any
object $T$ to $\text{Hom}(T \otimes X,Y)$ and any morphism $T
\stackrel{\phi}{\rightarrow} W$ to the morphism $\text{Hom}(W
\otimes X,Y) \stackrel{\hat{\phi}}{\rightarrow} \text{Hom}(T
\otimes X,Y)$ defined by, for $W \otimes X
\stackrel{\psi}{\rightarrow} Y$, the image of $\psi$ under
$\hat{\phi}$ is the composition
\[ T \otimes X \stackrel{\phi \otimes 1}{\longrightarrow} W
\otimes X \stackrel{\psi}{\longrightarrow} Y \]
 Suppose that, for fixed $X$ and $Y$, this functor is representable, and call the
representing object \index{$\intHom(V,W)$} $\intHom(X,Y)$.
$\intHom(X,Y)$ is by definition an \index{internal Hom}
\label{internalHomdefnPage} \textbf{internal Hom} object for $X$
and $Y$ (so called because, e.g.~in $\text{Vec}_k$, internal Hom
is just Hom). Then we have a natural isomorphism $\Omega$ going
from the functor $\text{Hom}(\slot,\intHom(X,Y))$ to the functor
$\text{Hom}(\slot \otimes X,Y)$.  If we plug in the object
$\intHom(X,Y)$ to each slot and apply $\Omega_{\intHom(X,Y)}$ to
the element $\text{id} \in \text{Hom}(\intHom(X,Y),\intHom(X,Y))$,
we get a map $\intHom(X,Y) \otimes X \rightarrow Y$, which by
definition we call $\text{ev}_{X,Y}$ (so called because, in
$\text{Vec}_k$, $\text{ev}_{X,Y}$ is the evaluation map $\phi
\otimes x \mapsto \phi(x)$).

The significance of the map $\text{ev}_{X,Y}$ is that, analyzing
the situation in light of the Yoneda lemma (page
\pageref{YonedaLemma}), we find that for any morphism $\phi$ in
$\text{Hom}(T,\intHom(X,Y))$, applying the isomorphism $\Omega_T$
yields
\[ \Omega_T(\phi) = (\phi \otimes 1) \circ \text{ev}_{X,Y} \]
which is to say, the following is always commutative:
\begin{diagram}
\label{rigidity1}
T \otimes X & & \\
\dTo^{\phi \otimes 1} & \rdTo^{\Omega_T(\phi)} & \\
\intHom(X,Y) \otimes X & \rTo_{\text{ev}_{X,Y}} & Y \\
\end{diagram}

We suppose now that for any objects $X$ and $Y$, $\intHom(X,Y)$
exists. We define the \index{internal Hom!dual} \textbf{dual} of
$X$, denoted $\dual{X}$, to be $\intHom(X,\underline{1})$, where
$\underline{1}$ is the identity object for our tensor category,
and we simply write $\text{ev}_X$ for
$\text{ev}_{X,\underline{1}}$, which is a map $\dual{X} \otimes X
\rightarrow \underline{1}$.  In $\text{Vec}_k$, $\text{ev}_X$ is
the familiar map $\phi \otimes x \mapsto \phi(x)$.

We now proceed to define a map $\iota_X:X \rightarrow
X^{\vee\vee}$, which in $\text{Vec}_k$ will correspond to the
usual evaluation map $x \mapsto (\phi \mapsto \phi(x))$. We have,
for any $X$, an isomorphism $\Omega_X$ between
\[ \text{Hom}(X,X^{\vee \vee})
\stackrel{\Omega_X}{\longrightarrow} \text{Hom}(X \otimes
X^{\vee},\underline{1}) \] Define $\iota_X:X \rightarrow X^{\vee
\vee}$ to be the map on the left hand side of the above
isomorphism which corresponds to the composition $X \otimes
\dual{X} \stackrel{\text{comm}}{\longrightarrow} \dual{X} \otimes
X \stackrel{\text{ev}_X}{\longrightarrow} \underline{1}$ on the
right hand side.  In other words, $\iota_X$ is the unique map
making the following commute:
\begin{equation}
\label{rigidity2}
\begin{diagram}
X \otimes \dual{X}  & & & & \\
 & \rdTo^{\text{comm}} & & & \\
 \dTo^{\iota_X \otimes 1} & & \dual{X} \otimes X & & \\
 & & & \rdTo^{\text{ev}_X} & \\
 X^{\vee \vee} \otimes X^{\vee} &  & \rTo_{\text{ev}_{X^\vee}} & &
 \underline{1} \\
 \end{diagram}
\end{equation}
We define an object $X$ of $\catfont{C}$ to be \index{reflexive
object} \label{reflexiveObjectdefnPage} \textbf{reflexive} if the
map $\iota_X$ just constructed is an isomorphism.

Consider the following composition:
\begin{gather}
\label{CommAssocCompEquation} (\intHom(X_1, Y_1) \otimes
\intHom(X_2,Y_2)) \otimes (X_1 \otimes
X_2) \isoarrow \\
(\intHom(X_1,Y_1) \otimes X_1) \otimes (\intHom(X_2,Y_2) \otimes
X_2)) \stackrel{\text{ev} \otimes \text{ev}}{\longrightarrow} Y_1
\otimes Y_2 \nonumber
\end{gather}
 where the first isomorphism is the obvious one built by
application of the comm and assoc isomorphisms.  Call the above
composition $\Psi$.  Then there is a unique map, call it $\Phi$,
making the following commute:
\begin{equation}
\begin{diagram}
\label{rigidity3}
 (\intHom(X_1,Y_1) \otimes \intHom(X_2,Y_2))
\otimes (X_1 \otimes
X_2) & & \\
\dTo^{\Phi \otimes 1} & \rdTo^\Psi & \\
\intHom(X_1 \otimes X_2,Y_1 \otimes Y_2) \otimes (X_1 \otimes X_2)
& \rTo_{\text{ev}} & Y_1 \otimes Y_2 \\
\end{diagram}
\end{equation}

\begin{defn}
\label{defnrigidity}
 An abelian tensor category is called
\index{category!rigid abelian tensor}\textbf{rigid} if
$\intHom(X,Y)$ exists for all $X$ and $Y$, all objects are
reflexive, and for any quadruple $X_1, X_2, Y_1, Y_2$, the map
$\Phi$ just constructed is an isomorphism.
\end{defn}

Next consider the Hom-set $\text{End}(\underline{1})$.  As
composition is demanded to be bilinear with respect to the
additive structure on Hom-sets, $\text{End}(\underline{1})$ is a
ring with unity under addition and composition.  We say then

\begin{defn}
A rigid abelian tensor category is called a
\index{category!tannakian} \textbf{tannakian category} if the ring
$\text{End}(\underline{1})$ is a field.
\end{defn}

Assume now that $\catfont{C}$ is tannakian and let $k$ be the
field $\text{End}(\underline{1})$. Then $\catfont{C}$ has a
$k$-linear structure forced upon it as follows.  If $c$ is an
element of $k$ (that is, a morphism $\underline{1}
\stackrel{c}{\rightarrow} \underline{1}$) and $\phi$ is a morphism
from $V$ to $W$, then we define the scalar multiplication $c \phi$
to be the composition
\[ V \stackrel{\text{unit}_V}{\longrightarrow} \underline{1}
\otimes V \stackrel{c \otimes \phi}{\longrightarrow} \underline{1}
\otimes W \stackrel{\text{unit}_W^{-1}}{\longrightarrow} W \] As
$\otimes$ acts bilinearly on morphisms, we have $c(\phi+\psi) =
c\phi + c\psi$ for all $c \in \text{End}(\underline{1})$,
$\phi,\psi \in \text{Hom}(V,W)$.  Thus, $\text{Hom}(V,W)$ is a
vector space over $k$.

If $\catfont{C}$ is any category, then being tannakian is not
quite enough for us to conclude that it is the category of
representations of some affine group scheme over the field
$\text{End}(\underline{1})$. We need the additional fact that
objects of $\catfont{C}$ can, in some sense, be `thought of' as
concrete finite dimensional vector spaces, and morphisms as
concrete linear maps between them.  This is the role fulfilled by
a \emph{fibre functor}, a certain kind of functor from
$\catfont{C}$ to $\text{Vec}_k$, where $k$ is the field
$\text{End}(\underline{1})$.

We need to define first what is meant by a \emph{tensor functor}
$F:\catfont{C} \rightarrow \catfont{D}$, where $\catfont{C}$ and
$\catfont{D}$ are tensor categories.  We denote with the same
symbol $\otimes$ the tensor product in both categories.  We denote
by $\text{assoc}$ the requisite associativity isomorphism in
$\catfont{C}$, and by $\text{assoc}^\prime$ that in $\catfont{D}$;
similarly for the natural isomorphisms comm, unit, and the
identity object $\underline{1}$.

\begin{defn}
\label{defntensorfunctor} \index{tensor functor}
 Let $\catfont{C}$ and $\catfont{D}$ be
tensor categories, and $F:\catfont{C} \rightarrow \catfont{D}$ a
functor. $F$ is a \textbf{tensor functor} if there is a functorial
isomorphism $c_{X,Y}:F(X) \otimes F(Y) \isoarrow F(X \otimes Y)$
satisfying

\begin{enumerate}
\item{For all objects $X,Y,Z$ of $\catfont{C}$, the following
commutes:
\begin{diagram}
 & & F(X) \otimes F(Y \otimes Z) & & \\
 & \ruTo^{1 \otimes c} & & \rdTo^{c} & \\
F(X) \otimes (F(Y) \otimes F(Z)) & & & & F(X \otimes (Y \otimes Z)) \\
 \dTo^{\text{assoc}^\prime} & & & & \dTo_{F(\text{assoc})} \\
 (F(X) \otimes F(Y)) \otimes F(Z) & & & & F((X \otimes Y) \otimes Z) \\
 & \rdTo_{c \otimes 1} & & \ruTo_{c} \\
 & & F(X \otimes Y) \otimes F(Z) & & \\
\end{diagram}
} \item{For all objects $X$ and $Y$ of $\catfont{C}$, the
following commutes:
\begin{diagram}
F(X) \otimes F(Y) & \rTo^c & F(X \otimes Y) \\
\dTo^{\text{comm}^\prime} & & \dTo_{F(\text{comm})} \\
F(Y) \otimes F(X) & \rTo_c & F(Y \otimes X) \\
\end{diagram}
} \item{If $(\underline{1},\text{unit})$ is an identity object of
$\catfont{C}$, then there is an identity object
$(\underline{1}^\prime,\text{unit}^\prime)$ of $\catfont{D}$ such
that $F(\underline{1}) = \underline{1}^\prime$, and for every
object $X$ of $\catfont{C}$, $\text{unit}^\prime_{F(x)} =
F(\text{unit}_X) \circ c_{X,\underline{1}}$.}
\end{enumerate}
\end{defn}

Again, condition 3.~appears stronger than condition (c) on page
114 of \cite{deligne}, but they are actually equivalent, again by
proposition 1.3 of \cite{deligne}.

If $\catfont{C}$ is any tannakian category over the field $k =
\text{End}(\underline{1})$ then we define a \index{fibre functor}
\textbf{fibre functor} on $\catfont{C}$, usually denoted
\index{$\omega$} $\omega$, to be any exact, faithful, $k$-linear
tensor functor from $\catfont{C}$ to $\text{Vec}_k$.  We can
finally define

\begin{defn}
\label{defnneutraltannakiancategory} A \index{category!neutral
tannakian} \textbf{neutral tannakian category} is a tannakian
category equipped with a fibre functor.
\end{defn}

\section{Recovering an Algebraic Group from a Neutral Tannakian
Category}

\label{recoveringanalgebraicgroupsection}

Let $G$ be an affine group scheme over the field $k$ and let
$\omega:\text{Rep}_k G \rightarrow \text{Vec}_k$ be the forgetful
functor, i.e.~the functor which sends every representation of $G$
to its underlying $k$-vector space, and every map to itself.  If
$R$ is a $k$-algebra, we define $\text{Aut}^\otimes(\omega)(R)$ to
be the collection of tensor preserving automorphisms of the
functor $\omega^R: \text{Rep}_k G \rightarrow \text{Mod}_R$.  Here
$\omega^R$ is the functor which sends any object $X$ of
$\text{Rep}_k G$ to the $R$-module $X \otimes R$ (strictly
speaking we should write $\omega(X) \otimes R$, but we
deliberately confuse $X$ with its underlying vector space
$\omega(X)$ to keep the notation simple), and sends any morphism
$X \stackrel{\phi}{\rightarrow} Y$ to the $R$-linear map $X
\otimes R \stackrel{\phi \otimes 1}{\longrightarrow} Y \otimes R$.
To be more explicit

\begin{defn}
An element of $\text{Aut}^\otimes(\omega)(R)$ is a family
$(\lambda_X:X \in \text{Rep}_k G)$, where each $\lambda_X$ is an
$R$-linear automorphism of $X \otimes R$, subject to
\begin{enumerate}
\item{$\lambda_{\underline{1}}$ is the identity map on $R
\isomorphic k \otimes R$} \item{$\lambda_{X \otimes Y} = \lambda_X
\otimes \lambda_Y$ for all $X, Y \in \text{Rep}_k G$} \item{For
all morphisms $X \stackrel{\phi}{\rightarrow} Y$ in $\text{Rep}_k
G$, the following commutes:
\begin{diagram}
X \otimes R & \rTo^{\lambda_X} & X \otimes R \\
\dTo^{\phi \otimes 1} & & \dTo_{\phi \otimes 1} \\
Y \otimes R & \rTo_{\lambda_Y} & Y \otimes R \\
\end{diagram}
}

\end{enumerate}

\end{defn}

If $g$ is an element of the group $G(R)$ then it is trivial to
verify that $g$ defines an element of
$\text{Aut}^\otimes(\omega)(R)$, just by working through the
definitions. If, for $X \in \text{Rep}_k G$, we write $g_X$ for
the automorphism $X \otimes R \rightarrow X \otimes R$ defined by
the representation $G \rightarrow \text{GL}(X)$, then the above
three requirements are all satisfied. $\underline{1}$ is of course
the trivial representation in $\text{Rep}_k G$, so $g$ acts
identically on $\underline{1}$ by definition. The action of $g_{X
\otimes Y}$ on $X \otimes Y$ is defined by the equation $g_{X
\otimes Y} = g_X \otimes g_Y$, giving us 2., and 3.~is true since
morphisms in $\text{Rep}_k G$ must by definition commute with the
action of $g$.

Thus, we have a natural map from the functor $G$ to the functor $
\text{Aut}^\otimes(\omega)$.  We can now state one half of the
principle of tannakian duality:

\begin{thm}(proposition $2.8$ of \cite{deligne})
If $G$ is an affine group scheme over the field $k$, then the
natural map of functors $G \rightarrow \text{Aut}^\otimes(\omega)$
is an isomorphism.
\end{thm}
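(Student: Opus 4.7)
The plan is to verify, for each $k$-algebra $R$, that the map $G(R) \to \text{Aut}^\otimes(\omega)(R)$ sending $g$ to $(g_X)_X$ is a bijection; naturality in $R$ is automatic from the construction. Both injectivity and surjectivity will be driven by the regular representation $(A,\Delta)$ and the fact that $A$ is a directed union of its finite dimensional subcomodules (Theorem \ref{fundamentaltheoremofcoalgebras}).

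For injectivity, suppose $g_1,g_2 \in G(R) = \text{Hom}_{k\text{-alg}}(A,R)$ induce identical families. Given $a \in A$, pick a finite dimensional subcomodule $V \subset A$ with $a \in V$. From equation \ref{comodtorep} applied to $V$ with its comodule structure inherited from $\Delta$, one computes
\[ (g_i)_V(a \otimes 1) = (1 \otimes g_i)(\Delta(a)) \in V \otimes R. \]
Composing with $\varepsilon \otimes 1_R$ and invoking the counit diagram \ref{hopf2} collapses the right side to $g_i(a)$. Hence $(g_1)_V = (g_2)_V$ forces $g_1(a) = g_2(a)$, and since $a$ was arbitrary, $g_1 = g_2$.

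For surjectivity, given $\lambda \in \text{Aut}^\otimes(\omega)(R)$, I would define $g : A \to R$ by
\[ g(a) = (\varepsilon \otimes 1_R)\bigl(\lambda_V(a \otimes 1)\bigr) \]
for any finite dimensional subcomodule $V \ni a$. Well-definedness follows from the naturality of $\lambda$ applied to inclusions $V \hookrightarrow V'$ of finite dimensional subcomodules of $A$. To see $g$ is a $k$-algebra homomorphism, I would use that the multiplication $\text{mult}: A \otimes A \to A$ and the unit $k \to A$ are morphisms of $A$-comodules (the content of $A$ being a bialgebra), and then apply naturality of $\lambda$ on these morphisms together with the tensor-preserving property $\lambda_{X \otimes Y} = \lambda_X \otimes \lambda_Y$ and the normalization $\lambda_{\underline{1}} = \mathrm{id}$. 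Finally, to check that the induced family $(g_X)$ equals $\lambda$, first verify the equality on finite dimensional subcomodules of the regular representation by unwinding the definitions; then for an arbitrary $X \in \text{Rep}_k G$, use Theorem \ref{regreptheorem} to embed $X$ as a subcomodule of $A^n$ and apply naturality of $\lambda$ to propagate the agreement.

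The main obstacle is that the regular representation $A$ is typically infinite dimensional, hence not itself an object on which $\lambda$ is directly defined; one cannot simply "read off" $g$ from a hypothetical $\lambda_A$. Every step must therefore be carried out on finite dimensional pieces and glued via the directed-union structure of $A$, with compatibility enforced by naturality. The most delicate point is showing that the linear map $g$ built this way actually respects the algebra structure of $A$, since this requires translating the bialgebra axioms on $A$ into statements about morphisms in $\text{Rep}_k G$ and leveraging the tensor-preserving property of $\lambda$ simultaneously on these morphisms and on the identity object $\underline{1} = k$.
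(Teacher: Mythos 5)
The paper offers no proof of this statement: it is quoted from Deligne (Proposition 2.8 of the cited reference) and stated without argument, so there is no in-text proof to compare against. Judged on its own merits, your proposal is a correct reconstruction of the standard argument (as found in Deligne, and in section 3.5 of Waterhouse): recover $g:A\to R$ from $\lambda$ via the counit on finite dimensional subcomodules of the regular representation, with naturality on inclusions providing well-definedness and naturality plus the tensor property plus $\lambda_{\underline{1}}=\mathrm{id}$ providing multiplicativity and unitality.

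One caveat worth flagging: the step you describe as ``verify the equality on finite dimensional subcomodules of the regular representation by unwinding the definitions'' is not a mere unwinding. To show $\lambda_C = g_C$ for a finite dimensional subcoalgebra $C\subset A$, you must apply naturality to the comultiplication $\Delta: C \to C_0 \otimes C$ (trivial structure on the left factor, regular on the right), use $\lambda_{C_0\otimes C} = \lambda_{C_0}\otimes\lambda_C = 1\otimes\lambda_C$, then hit both sides with $1\otimes\varepsilon\otimes 1_R$ and invoke the counit axiom; this is exactly the same machinery you deploy to show $g$ is an algebra map, not a trivial consequence of the formula defining $g$. Your subsequent propagation to general $X$ via the embedding into $A^n$ and injectivity of $\iota\otimes 1_R$ (flatness over a field) is fine, provided you also note that $\lambda$ respects direct sums, which again follows from naturality applied to the inclusion and projection maps. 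With those details filled in the argument is complete.
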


Stated more plainly: the only tensor preserving automorphisms of
the functor $\omega^R$ are ones that are given by elements of
$G(R)$. We see then that the category $\text{Rep}_k G$, along with
the forgetful functor $\omega$, completely determines the group
$G$: it can be recovered as the affine group scheme
$\text{Aut}^\otimes(\omega)$.

This first half of our main theorem points the way to the second
half.  Starting with an abstract neutral tannakian category
$\catfont{C}$ with fibre functor $\omega$, the functor $G =
\text{Aut}^\otimes(\omega)$ is itself an affine group scheme such
that $\catfont{C}$ is tensorially equivalent to $\text{Rep}_k G$.
That is:

\begin{thm}(theorem $2.11$ of \cite{deligne})
\index{tannakian duality} \label{tannakiandualitythm} Let
$\catfont{C}$ be a \index{category!neutral tannakian} neutral
tannakian category over the field $k$ with fibre functor $\omega$.
Then

\begin{enumerate}
\item{The functor $\text{Aut}^\otimes(\omega)$ on $k$-algebras is
representable by an \index{affine group scheme} affine group
scheme $G$} \item{$\omega$ defines an equivalence of tensor
categories between $\catfont{C}$ and \index{$\text{Rep}_k
G$}$\text{Rep}_k G$}
\end{enumerate}
\end{thm}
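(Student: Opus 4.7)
The plan is to construct the representing Hopf algebra $A$ of $\text{Aut}^\otimes(\omega)$ directly from $\omega$, exhibit each object of $\catfont{C}$ as an $A$-comodule in a natural way, and verify that the resulting functor $\catfont{C} \to \text{Comod}_A$ is an equivalence of tensor categories; composing with the equivalence of $\text{Comod}_A$ with $\text{Rep}_k G$ will then deliver both halves of the theorem simultaneously.

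I would build the candidate $A$ as a coend. For each $X \in \catfont{C}$ the finite dimensional vector space $\omega(X)^{\vee} \otimes \omega(X)$ is a coalgebra (the matrix coefficients of $\omega(X)$), and I would set $A = \varinjlim_{X} \omega(X)^{\vee} \otimes \omega(X)$, where the transition maps are induced by the morphisms of $\catfont{C}$; equivalently, quotient the direct sum of these coalgebras by the relations forced by $\omega(\phi)$ for each morphism $\phi$ in $\catfont{C}$. The tensor structure $\otimes$ on $\catfont{C}$ together with the compatibility isomorphisms $c_{X,Y}\colon \omega(X)\otimes \omega(Y) \isoarrow \omega(X\otimes Y)$ endows $A$ with a multiplication, the fibre functor's unit isomorphism endows it with a unit, and rigidity (existence of duals, reflexivity, and the isomorphism $\Phi$ of diagram \ref{rigidity3}) supplies the antipode. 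Associativity of multiplication, coassociativity of $\Delta$, and the antipode axiom all reduce to the pentagon, hexagon, and rigidity diagrams of definitions \ref{defntensorcategory} and \ref{defnrigidity} after applying the tensor functor axioms for $\omega$.

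Next I would verify representability. By the universal property of the coend, an $R$-point of $\mathrm{Spec}\,A$ is the same data as a tensor-compatible natural family of $R$-linear endomorphisms of $\omega^R$, which by rigidity (invert via the dual family built from $\text{ev}_X$ and $\iota_X$) is exactly an element of $\text{Aut}^\otimes(\omega)(R)$. This yields part 1 and equips each $X$ with a canonical $A$-comodule structure $\omega(X) \to \omega(X) \otimes A$ coming from the structural map $\omega(X)^{\vee} \otimes \omega(X) \to A$, which defines a $k$-linear tensor functor $F\colon \catfont{C} \to \text{Comod}_A$. Faithfulness of $F$ is inherited from faithfulness of $\omega$; exactness likewise. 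Fullness is the assertion that any $A$-colinear map $\omega(X) \to \omega(Y)$ is of the form $\omega(\phi)$, which I would reduce to the tensor subcategory $\gen{X \oplus Y}$ and then read off from the coend definition of $A$.

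The step I expect to be the main obstacle is essential surjectivity: that every finite dimensional $A$-comodule lies in the image of $F$. My approach is to appeal to theorem \ref{regreptheorem}, so that it suffices to realize the regular comodule $A$ together with its finite dimensional subcomodules as objects of $\catfont{C}$. Since the image of $F$ is closed under subobjects, quotients, direct sums, tensor products, and duals, and since $A$ is the directed union of the finite dimensional subcoalgebras $A_X$ corresponding to $\gen{X}$, theorem \ref{fundamentaltheoremofcomodules} reduces the problem to showing that, for each $X$, every finite dimensional comodule over $A_X$ is in the image. The delicate point is to verify that $A_X$ is precisely the representing Hopf algebra of the tannakian subcategory $\gen{X}$ so that one can induct on such subcategories; this in turn relies on the faithful exactness of $\omega|_{\gen{X}}$ and on identifying the intrinsic endomorphism coalgebra of $\omega|_{\gen{X}}$ with $A_X$, for which the rigidity isomorphism $\Phi$ and the reflexivity maps $\iota_X$ do all the real work.
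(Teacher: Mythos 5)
The paper does not actually prove this theorem: it cites Deligne's theorem 2.11 and then, in section \ref{recoveringanalgebraicgroupsection}, gives only a reconstruction sketch (``We shall not justify most of the steps taken''), building $B = \dirlim_{X} L_X^\circ$ from the dual coalgebras of the endomorphism algebras $L_X = \text{End}(\omega\res\gen{X})$ and recovering the multiplication from the tensor structure. Your coend $A = \dirlim_{X}\, \omega(X)^\vee\otimes\omega(X)$ (with relations from morphisms) is the standard equivalent reformulation of that same construction, and your identification of essential surjectivity together with the isomorphism between the intrinsic endomorphism coalgebra of $\omega\res\gen{X}$ and the relevant subcoalgebra of $A$ (Deligne's lemma 2.13 and proposition 2.14, the paper's theorem \ref{L_Xtheorem}) as the genuine difficulty is exactly right, so you are taking essentially the same route as the paper.
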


This is the principle of tannakian duality.

The remainder of this section is devoted to, following the proof
of the above theorem given in \cite{deligne}, giving an
`algorithm' of sorts for recovering the representing Hopf algebra
$A$ from an abstract neutral tannakian category.  We shall not
justify most of the steps taken; the interested reader should see
the actual proof for this.  For the remainder, $\catfont{C}$ will
denote a fixed neutral tannakian category over the field $k$ with
fibre functor $\omega$. We denote the image of the object $X$
under the functor $\omega$ as $\omega(X)$, and the image of a
morphism $\phi$ under $\omega$ simply as $\phi$.  The usual names
for $\underline{1}$, comm, assoc, $\otimes$, etc.~also hold here.

\begin{defn}(see page $133$ of \cite{deligne})
\index{principal subcategory}  For an object $X$ of $\catfont{C}$,
\index{$\gen{X}$} $\gen{X}$, the \textbf{principal subcategory}
generated by $X$, is the full subcategory of $\catfont{C}$
consisting of those objects which are isomorphic to a subobject of
a quotient of $X^n = X \oplus \ldots \oplus X$ for some $n$.
\end{defn}

Note firstly that $\gen{X}$ is not itself a tannakian category, in
general not being closed under the tensor product; it is however a
$k$-linear abelian category.  Note that $Y \in \gen{X}$ if and
only if $\gen{Y} \subset \gen{X}$.  We can say then that
$\catfont{C}$ is the direct limit of its principal subcategories,
with the direct system being the inclusions $\gen{Y} \subset
\gen{X}$ when applicable.

\begin{defn}(see lemma $2.13$ of \cite{deligne})
\index{$\text{End}(\omega \res \gen{X})$} \label{endomegadefn} For
$X$ an object of $\catfont{C}$, we define $\text{End}(\omega \res
\gen{X})$, the collection of all \index{endomorphisms of a fibre
functor} \textbf{endomorphisms of the fibre functor} $\omega$
restricted to $\gen{X}$, to consist of families $\lambda =
(\lambda_Y: Y \in \gen{X})$ such that $\lambda_Y:\omega(Y)
\rightarrow \omega(Y)$ is a $k$-linear map, and for every
$\catfont{C}$-morphism $Y \stackrel{\phi}{\rightarrow} Z$, the
following commutes:
\begin{diagram}
\omega(Y) & \rTo^{\lambda_Y} & \omega(Y) \\
\dTo^{\phi} & & \dTo_\phi \\
\omega(Z) & \rTo_{\lambda_Z} & \omega(Z) \\
\end{diagram}

\end{defn}

An important point, used often in this dissertation, is the fact
that every $\lambda \in \text{End}(\omega \res \gen{X})$ is
determined by $\lambda_X$. If $\iota_i:X \rightarrow X^n$ is the
$i^\text{th}$ inclusion map then
\begin{diagram}
\omega(X) & \rTo^{\iota_i} & \omega(X^n) \\
\dTo^{\lambda_X} & & \dTo_{\lambda_{X^n}} \\
\omega(X) & \rTo_{\iota_i} & \omega(X^n) \\
\end{diagram}
must commute for every $i$, which clearly forces $\lambda_{X^n} =
\lambda_X^n$.  If $Y$ is a subobject of some $X^n$ with $Y
\stackrel{\iota}{\longrightarrow} X^n$ injective, then $\lambda_Y$
must commute with
\begin{diagram}
\omega(Y) & \rInto^\iota & \omega(X^n) \\
\dTo^{\lambda_Y} & & \dTo_{\lambda_X^n} \\
\omega(Y) & \rInto^\iota & \omega(X^n) \\
\end{diagram}
and since $\iota$ is injective, $\lambda_Y$ is unique in this
respect.  Finally, if $Z$ is a quotient of some $Y \in \gen{X}$
with $Y$ a subobject of $X^n$, then we have a surjective map $Y
\stackrel{\pi}{\longrightarrow} Z$ and the commutative diagram
\begin{diagram}
\omega(\text{ker}(\pi)) & \rInto^\iota & \omega(Y) & \rOnto^\pi & \omega(Z) \\
\dTo^{\lambda_{\text{ker}(\pi)}} & & \dTo^{\lambda_Y} & & \dTo_{\lambda_Z} \\
\omega(\text{ker}(\pi)) & \rInto^\iota & \omega(Y) & \rOnto^\pi & \omega(Z) \\
\end{diagram}
with $\iota$ the inclusion of the kernel of $\pi$ into $Y$.  By
commutativity of the left square $\lambda_Y$ must stabilize
$\omega(\text{ker}(\pi))$.  This shows that there is at most one
$\lambda_Z$ making this diagram commute, hence $\lambda_Z$ is
determined by $\lambda_X$ as well.

Therefore it does no harm to confuse $\text{End}(\omega \res
\gen{X})$ with $\{\lambda_X : \lambda \in \text{End}(\omega \res
\gen{X})\}$, its image in $\text{End}(\omega(X))$; we refer to
this subalgebra of $\text{End}(\omega(X))$ as \index{$L_X$}
\label{LXdefn} $L_X$.

\label{endomegadiscussion}

Now suppose that $X \in \gen{Y}$, which is the same as saying
$\gen{X} \subset \gen{Y}$.  If $\lambda \in \text{End}(\omega \res
\gen{Y})$ it is straightforward to check that $\lambda_X \in L_X$.
This gives, for every such $X$ and $Y$, a canonical map from $L_Y$
to $L_X$, denoted \index{$T_{X,Y}$} $T_{X,Y}$; we call this the
\index{transition mapping} \textbf{transition mapping} from $L_Y$
to $L_X$. It is clear from the definition that, for $X \in
\gen{Y}$ and $Y \in \gen{Z}$, then also $X \in \gen{Z}$, and the
diagram
\begin{diagram}
L_Z & \rTo^{T_{Y,Z}} & L_Y \\
& \rdTo_{T_{X,Z}} & \dTo_{T_{X,Y}} \\
& & L_X \\
\end{diagram}
commutes, which give the $L_X, X \in \catfont{C}$ the structure of
an inverse system.

For each $X \in \catfont{C}$ let $B_X$ be the dual coalgebra to
$L_X$. Then from the $k$-algebra maps $L_Y
\stackrel{T_{X,Y}}{\longrightarrow} L_X$ we get $k$-coalgebra maps
$B_X \stackrel{T_{X,Y}^\circ}{\longrightarrow} B_Y$.  Thus, for
objects $X,Y$ and $Z$ of $\catfont{C}$ with $X \in \gen{Y}$ and $Y
\in \gen{Z}$, the diagram
\begin{diagram}
B_Z & \lTo^{T_{Y,Z}^\circ} & B_Y \\
& \luTo_{T_{X,Z}^\circ} & \uTo_{T_{X,Y}^\circ} \\
& & B_X \\
\end{diagram}
commutes, giving the $B_X, X \in \catfont{C}$ the structure of a
direct system.  Then let
\[ B = \dirlim_{X \in \catfont{C}} B_X \]
\label{pagerefForRecoveringA_TannakianDuality} be its direct
limit;  this $B$ is the underlying coalgebra of our eventual Hopf
algebra.

We now define an equivalence of categories $F:\catfont{C}
\rightarrow \text{Comod}_B$ which carries the fibre functor
$\omega$ into the forgetful functor $\text{Comod}_B \rightarrow
\text{Vec}_k$, that is, such that the diagram
\begin{diagram}
\catfont{C} & \rTo^F & \text{Comod}_B \\
 & \rdTo_{\omega} & \dTo_{\text{forget}} \\
 & & \text{Vec}_k \\
\end{diagram}
commutes.  Let $A$ be a finite dimensional $k$-algebra, $A^\circ$
its dual coalgebra, and $V$ a finite dimensional $k$-vector space.
We define a map $\text{Hom}_k(A \otimes V,V)
\stackrel{\Phi}{\longrightarrow} \text{Hom}_k(V,V\otimes A^\circ)$
 as follows.  For $\rho \in
\text{Hom}(A \otimes V,V)$, $\Phi(\rho)$ is the composition
\begin{gather*}
 V \stackrel{\isomorphic}{\longrightarrow} k \otimes V
\stackrel{\text{diag} \otimes \text{Id}}{\vvlongrightarrow}
\text{End}_k(A^\circ) \otimes V \stackrel{\isomorphic \otimes
\text{Id}}{\vvlongrightarrow} A^{\circ *} \otimes A^\circ \otimes
V \\
 \stackrel{\text{Id} \otimes \text{Twist}}{\vvlongrightarrow}
A^{\circ *} \otimes V \otimes A^\circ \stackrel{\isomorphic
\otimes \text{Id} \otimes \text{Id}}{\vvlongrightarrow} A \otimes
V \otimes A^\circ \stackrel{\rho \otimes
\text{Id}}{\vvlongrightarrow} V \otimes A^\circ
\end{gather*}
Reading from left to right, the various maps occurring in this
composition are defined as: $\isomorphic$ is the canonical
isomorphism $V \isomorphic k \otimes V$, $\text{diag}$ is the map
that sends $1 \in k$ to $\text{Id} \in \text{End}_k(A^\circ)$,
$\isomorphic$ is the canonical isomorphism $\text{End}_k(A^\circ)
\isomorphic A^{\circ *} \otimes A^\circ$, $\text{Twist}$ is the
obvious commutativity isomorphism, and $\isomorphic$ is the
canonical isomorphism $A^{\circ *} \isomorphic A$.

\begin{lem}
Let $A$ be a finite dimensional $k$-algebra, $A^\circ$ its dual
coalgebra, and $V$ a finite dimensional $k$-vector space.  Then
the map $\Phi:\text{Hom}_k(A \otimes V,V) \longrightarrow
\text{Hom}_k(V,V\otimes A^\circ)$ just defined is a bijection.
Further, $\rho \in \text{Hom}_k(A \otimes V,V)$ defines a valid
$A$-module structure on $V$ if and only if $\Phi(\rho) \in
\text{Hom}_k(V,V \otimes A^\circ)$ defines a valid
$A^\circ$-comodule structure on $V$.

\end{lem}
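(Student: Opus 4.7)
The plan is to recognize that $\Phi$ is, up to unwinding, the composition of the tensor--hom adjunction with the finite dimensional duality $\text{Hom}_k(A,V) \isomorphic V \otimes A^*$, and then observe that the module axioms translate into the comodule axioms under this bijection, using the fact that $\Delta$ and $\varepsilon$ on $A^\circ$ were defined precisely as the duals of multiplication and unit on $A$.

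For the first statement, I would begin by noting that since $A$ is finite dimensional, $A^\circ = A^*$. The standard tensor--hom adjunction supplies an isomorphism
\[ \text{Hom}_k(A \otimes V, V) \isomorphic \text{Hom}_k(V, \text{Hom}_k(A,V)), \]
and finite dimensionality gives $\text{Hom}_k(A,V) \isomorphic V \otimes A^*$. Unraveling the composite chain of canonical maps defining $\Phi$ shows it agrees with this composition, so in particular $\Phi$ is a bijection. Concretely, fixing dual bases $\{a_j\}$ of $A$ and $\{a_j^*\}$ of $A^\circ$, one checks that
\[ \Phi(\rho)(v) = \sum_j \rho(a_j \otimes v) \otimes a_j^*, \]
whose inverse sends $\mu:V \rightarrow V \otimes A^\circ$ to the map $a \otimes v \mapsto (\text{id}_V \otimes \text{ev}_a) \mu(v)$. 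This explicit form is the most convenient for the second half of the argument.

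For the correspondence of structures, the module axioms for $\rho$ are the associativity square (involving $\text{mult}:A \otimes A \rightarrow A$) and the unit triangle (involving $u:k \rightarrow A$). Applying $\Phi$ slot by slot moves the associativity square from $\text{Hom}_k(A \otimes A \otimes V, V)$ into $\text{Hom}_k(V, V \otimes A^\circ \otimes A^\circ)$, and moves the unit triangle from $\text{Hom}_k(k \otimes V, V)$ into $\text{Hom}_k(V, V \otimes k)$. The crucial point is that, under the identification $A \isomorphic A^{\circ *}$, the comultiplication $\Delta$ on $A^\circ$ is by construction the transpose of $\text{mult}$, and $\varepsilon$ is the transpose of $u$. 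Consequently the image under $\Phi$ of the associativity square is exactly the coassociativity square \ref{comod1}, and the image of the unit triangle is the counit triangle \ref{comod2}. Since $\Phi$ is a bijection, one axiom holds if and only if its counterpart does.

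The only real obstacle is the diagrammatic bookkeeping: there are several canonical isomorphisms in the definition of $\Phi$ (the diagonal $k \rightarrow \text{End}_k(A^\circ) \isomorphic A^{\circ *} \otimes A^\circ$, the twist, and $A^{\circ *} \isomorphic A$), and one must verify that these compose coherently so that the two squares of interest really do transform into one another. I expect this to be routine but notationally heavy; the explicit formula $\Phi(\rho)(v) = \sum_j \rho(a_j \otimes v) \otimes a_j^*$ essentially eliminates the bookkeeping, since on this formula the two verifications reduce to the identities $\sum_{j,\ell} a_j a_\ell \otimes a_j^* \otimes a_\ell^* = \sum_m a_m \otimes \Delta(a_m^*)$ and $1_A = \sum_j \varepsilon(a_j^*) a_j$, both of which are direct consequences of the definitions of $\Delta$ and $\varepsilon$ on $A^\circ$.
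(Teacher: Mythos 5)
Your proposal is correct and complete; the paper itself does not supply a proof but refers the reader to proposition 2.2.1 of \cite{hopfalgebras}, with a remark that its $\Phi$ is the inverse of the map considered there and that $C, C^*$ have been replaced by $A^\circ, A^{\circ*}\isomorphic A$. Your argument therefore fills a gap the paper leaves to the literature. Two small confirmations: chasing $v$ through the paper's composite chain does give $\Phi(\rho)(v)=\sum_j \rho(a_j\otimes v)\otimes a_j^*$, since $\text{diag}(1)=\text{id}_{A^\circ}$ corresponds to $\sum_j a_j\otimes a_j^*$ under $\text{End}_k(A^\circ)\isomorphic A^{\circ*}\otimes A^\circ\isomorphic A\otimes A^\circ$; and your two identities $\sum_{j,\ell}a_ja_\ell\otimes a_j^*\otimes a_\ell^*=\sum_m a_m\otimes\Delta(a_m^*)$, $1_A=\sum_j\varepsilon(a_j^*)a_j$ are precisely the transpose relations between $(\text{mult},u)$ and $(\Delta,\varepsilon)$. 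Note also that the equivalence is genuinely two-sided: expanding both sides of the coassociativity diagram for $\Phi(\rho)$ against the basis $\{a_j^*\otimes a_\ell^*\}$ of $A^\circ\otimes A^\circ$ shows it holds if and only if $\rho(a_j\otimes\rho(a_\ell\otimes v))=\rho(a_ja_\ell\otimes v)$ for all $j,\ell,v$, which is exactly associativity of the $A$-action, and similarly for the (co)unit; so bijectivity of $\Phi$ alone is not what drives the ``only if,'' it is the linear independence of the dual bases that lets you cancel.
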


The reader should see proposition 2.2.1 of \cite{hopfalgebras} for
a proof of this fact.  However, be aware that our map $\Phi$ is
actually the inverse of the map they consider, and we have
replaced the coalgebra $C$ and dual algebra $C^*$ with the
coalgebra $A^\circ$ and algebra $A^{\circ *} \isomorphic A$.

For any $X \in \catfont{C}$ the vector space $\omega(X)$ is in the
obvious way a module for the $k$-algebra $L_X$. Then according to
the previous lemma $\omega(X)$ carries with it also the structure
of a comodule over $B_X$, call it $\rho_X$.  Then if $B_X
\stackrel{\phi_X}{\longrightarrow} B$ is the canonical map given
by the definition of $B$ as a direct limit, we get a $B$-comodule
structure on $\omega(X)$, call it $\rho$, via the composition
\[ \rho: \omega(X) \stackrel{\rho_X}{\longrightarrow} \omega(X) \otimes B_X
\stackrel{1 \otimes \phi_X}{\vlongrightarrow} \omega(X) \otimes B
\]
If $X \in \gen{Y}$ for some $Y$, then similarly $\omega(X)$ is a
module over $L_Y$ (via the transition mapping $L_Y
\stackrel{T_{X,Y}}{\longrightarrow} L_X$), hence a comodule over
$B_Y$, and yet again over $B$.  The various commutativities of the
relevant diagrams ensure that we will get the same $B$-comodule
structure on $\omega(X)$ no matter which principal subcategory we
consider it to be an object of.  We therefore define the image of
the object $X$ under the functor $F$ to be
\[ F(X) = (\omega(X),\rho) \]
That is, the $B$-comodule with underlying vector space $\omega(X)$
and comodule map $\rho:\omega(X) \rightarrow \omega(X) \otimes B$
just defined.

It is tedious but straightforward to argue that, if $X
\stackrel{\phi}{\longrightarrow} Y$ is a morphism in the category
$\catfont{C}$, then working through the definitions of $L_X$,
$B_X$ and $B$, (the image under the fibre functor of) $\phi$ is
actually a map of $B$-comodules.  We therefore define the image of
a morphism $\phi$ under $F$ to be the same map between the vector
spaces $\omega(X)$ and $\omega(Y)$.

\begin{thm}
\label{thegeneralequivalencethm} The functor $F:\catfont{C}
\rightarrow \text{Comod}_B$ just defined is an equivalence of
categories.
\end{thm}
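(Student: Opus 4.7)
The plan is to establish that $F$ is fully faithful and essentially surjective, reducing all statements to the level of a fixed principal subcategory $\gen{Z}$ and then passing to the direct limit over $Z$. The key observation is that, by construction, $B$ is the directed union of the finite-dimensional coalgebras $B_Z$, while $\catfont{C}$ is the directed union of the $\gen{Z}$, so once we show that $F$ restricts to an equivalence $\gen{Z} \isomorphic \text{Comod}_{B_Z}$ for every $Z$, compatibility of these equivalences with the transition maps $T_{X,Y}^\circ$ will yield the full equivalence on direct limits.

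First I would handle faithfulness, which is essentially free: $F$ acts on morphisms by sending $\phi:X\to Y$ to its image $\omega(\phi)$ as a linear map, and $\omega$ is faithful by hypothesis of being a fibre functor. For fullness, given $X,Y \in \catfont{C}$, set $Z = X \oplus Y$, so both objects lie in $\gen{Z}$. A $B$-comodule map $\omega(X)\to \omega(Y)$ factors through the finite-dimensional sub-coalgebra $B_Z$ (this is just unwinding the direct-limit description of $B$), hence is precisely a $B_Z$-comodule map, which under the duality lemma stated just before this theorem is the same as an $L_Z$-module map $\omega(X)\to \omega(Y)$. The content of fullness then becomes: every $L_Z$-linear map $\omega(X)\to \omega(Y)$ lifts to a $\catfont{C}$-morphism $X\to Y$. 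This is the reconstruction step, and it is forced by the very definition of $L_Z = \text{End}(\omega\res\gen{Z})$: an $L_Z$-equivariant map respects every endomorphism of the restricted fibre functor, which together with exactness and faithfulness of $\omega$ pins down a unique morphism in $\gen{Z}$.

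For essential surjectivity, let $(V,\rho)$ be a finite-dimensional $B$-comodule. Because $V$ is finite dimensional, the fundamental theorem of comodules together with the direct-limit description of $B$ shows that $\rho$ factors through $V \otimes B_Z$ for some single $Z \in \catfont{C}$, so $V$ acquires the structure of a finite-dimensional $L_Z$-module. Since $\omega(Z)$ is by construction a faithful $L_Z$-module (as $L_Z \hookrightarrow \text{End}(\omega(Z))$), every finite-dimensional $L_Z$-module embeds as a subquotient of $\omega(Z)^n = \omega(Z^n)$ for some $n$. Applying the reconstruction principle for subobjects and quotients (the same density statement in its subobject form) identifies this subquotient with $\omega(W)$ for some $W$ in $\gen{Z} \subset \catfont{C}$, and chasing through the definitions shows $F(W) \isomorphic (V,\rho)$.

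The main obstacle is the reconstruction principle on which both fullness and essential surjectivity rest: that every $L_Z$-stable subspace, quotient, and equivariant map between the $\omega$-images of objects in $\gen{Z}$ actually arises from the corresponding categorical construction in $\gen{Z}$. This is a Morita-type density assertion whose verification relies crucially on $\omega$ being exact, $k$-linear, and faithful, and on the tautological definition of $L_Z$ as the full ring of endomorphisms of $\omega\res\gen{Z}$; once it is established, the theorem follows by the direct-limit patching described above.
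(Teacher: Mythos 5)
Your outline matches the structure of Deligne's proof (Proposition 2.14 of \cite{deligne}), which the paper itself cites rather than reproduces: faithfulness is immediate from faithfulness of $\omega$, and both fullness and essential surjectivity are reduced to a single principal subcategory $\gen{Z}$ via the direct-limit descriptions of $B$ and $\catfont{C}$, with the finite-dual lemma translating $B_Z$-comodule maps into $L_Z$-module maps. The reduction steps (choosing $Z=X\oplus Y$ for fullness, factoring a finite-dimensional $B$-comodule through some $B_Z$ for essential surjectivity, and the subquotient-of-$\omega(Z)^n$ claim for faithful modules over a finite-dimensional algebra) are all correct.

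The gap is the claim that the reconstruction step is ``forced by the very definition of $L_Z$.'' That an $L_Z$-equivariant map $\omega(X)\to\omega(Y)$ commutes with every natural endomorphism of $\omega\res\gen{Z}$ is tautological; that such a map is realized by an actual morphism $X\to Y$ in $\catfont{C}$, and that an $L_Z$-stable subspace of $\omega(X)$ is $\omega(W)$ for a genuine subobject $W\subset X$, is the whole content of Deligne's Lemma 2.13 and is emphatically not immediate. The standard argument constructs the object $P_Z\subset Z^{\dim\omega(Z)}$ and proves a natural isomorphism $\omega\res\gen{Z}\isomorphic\text{Hom}_{\catfont{C}}(P_Z,\slot)$, making the functor pro-representable and invoking exactness to identify kernels and images; only then do $L_Z$-submodules and $L_Z$-equivariant maps become categorical data. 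Your proposal names this density assertion, calls it ``Morita-type,'' and correctly lists the hypotheses it uses, but it asserts the conclusion rather than supplying the argument, and the phrase ``forced by the definition'' understates the difficulty in a way that would mislead a reader into thinking the theorem is formal. Everything else in the outline is sound conditional on that lemma; the lemma itself is the proof.
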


That $F$ is a faithful functor is clear from the fact that
$\omega$ is as well.  The claim that $F$ is both full and
essentially surjective is however by no means obvious; see
proposition 2.14 of \cite{deligne} for a proof of this.

We have thus far recovered a $k$-coalgebra $B$ and an equivalence
between our abstract neutral tannakian category $\catfont{C}$ and
$\text{Comod}_B$.  What remains is to recover the multiplication
on $B$.  As the usual tensor product on comodules over a Hopf
algebra is defined in terms of its multiplication, it is not
surprising that we should turn the process around to recover the
multiplication from the tensor product.

Let $B$ be a $k$-coalgebra and $u: B \otimes_k B \rightarrow B$ be
any $k$-homomorphism. Then we can define a bifunctor $\phi^u:
\text{Comod}_B \times \text{Comod}_B \rightarrow \text{Comod}_B$
as follows: it sends the pair of comodules $(X,\rho),(Y,\mu)$ to
the comodule $\phi^u(X,Y)$ having underlying vector space $X
\otimes_k Y$ and comodule map given by the composition
\[ X \otimes Y \stackrel{\rho \otimes \mu}{\vlongrightarrow} X
\otimes B \otimes Y \otimes B \stackrel{1 \otimes \text{Twist}
\otimes 1}{\vvlongrightarrow} X \otimes Y \otimes B \otimes B
\stackrel{1 \otimes 1 \otimes u}{\vlongrightarrow} X \otimes Y
\otimes B \] (In case $B$ is a Hopf algebra and $u$ is mult, then
this is by definition the tensor product on $\text{Comod}_B$.)
What is not quite obvious is that in fact all such bifunctors
arise in this fashion.

\begin{prop}(see proposition 2.16 of \cite{deligne})
\label{tensortomultprop} For any $k$-coalgebra $B$, the map $u
\mapsto \phi^u$ defines a bijective correspondence between the set
of all $k$-homomorphisms $u:B \otimes B \rightarrow B$, and the
set of all bifunctors $F:\text{Comod}_B \times \text{Comod}_B
\rightarrow \text{Comod}_B$ having the property that the
underlying vector space of $F(X,Y)$ is the tensor product of the
underlying vector spaces of $X$ and $Y$.
\end{prop}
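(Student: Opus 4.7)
The plan is to invert the assignment $u \mapsto \phi^u$ via an explicit extraction procedure. The forward direction is largely routine: given $u$, the formula defining $\phi^u$ manifestly gives a bifunctor, and the coassociativity of the induced comodule structure on $X \otimes Y$ reduces, by a Sweedler-style calculation, to the coassociativity of $\Delta$ on $B$ (with appropriate coalgebra compatibility of $u$ implicit, since $\phi^u$ must land in $\text{Comod}_B$). Counit compatibility is handled analogously. The heart of the argument is constructing an inverse.

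For injectivity, I would recover $u$ from $\phi^u$ by evaluating on ``universal'' comodules. By theorem \ref{fundamentaltheoremofcoalgebras}, $B$ is the directed union of its finite dimensional subcoalgebras $C \subset B$, and each such $C$ is itself a finite dimensional $B$-comodule via the composition $C \stackrel{\Delta|_C}{\longrightarrow} C \otimes C \hookrightarrow C \otimes B$. Given any $b_1 \otimes b_2 \in B \otimes B$, pick subcoalgebras $C_i \ni b_i$ and consider $\phi^u(C_1, C_2)$. Writing $\Delta(c_i) = \sum c_i^{(1)} \otimes c_i^{(2)}$ in Sweedler notation, its comodule structure sends $c_1 \otimes c_2$ to $\sum c_1^{(1)} \otimes c_2^{(1)} \otimes u(c_1^{(2)} \otimes c_2^{(2)})$. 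Post-composing with $\varepsilon \otimes \varepsilon \otimes 1_B$ collapses this, by the counit axiom, to $u(c_1 \otimes c_2)$. Hence $\phi^u$ uniquely determines $u$.

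For surjectivity, given a bifunctor $F$, the same extraction provides a candidate $u$: for each pair of finite dimensional subcoalgebras $C_1, C_2 \subset B$, let $\mu_{C_1, C_2}: C_1 \otimes C_2 \to C_1 \otimes C_2 \otimes B$ be the comodule structure of $F(C_1, C_2)$, and define $u_{C_1, C_2} := (\varepsilon \otimes \varepsilon \otimes 1_B) \circ \mu_{C_1, C_2}$. Bifunctoriality of $F$ applied along the subcoalgebra inclusions $C_i \hookrightarrow C_i'$, combined with the naturality of the extraction procedure, forces these restrictions to agree on overlaps, so the family glues to a well-defined $k$-linear map $u: B \otimes B \to B$.

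The main obstacle will be verifying $F = \phi^u$ on arbitrary finite dimensional comodule pairs $(X, \rho), (Y, \nu)$, not merely on pairs of subcoalgebras. My approach is to invoke theorem \ref{fundamentaltheoremofcomodules} to find finite dimensional subcoalgebras $C_1, C_2 \subset B$ with $\rho(X) \subset X \otimes C_1$ and $\nu(Y) \subset Y \otimes C_2$. The maps $\rho: X \to X \otimes C_1$ and $\nu: Y \to Y \otimes C_2$ are themselves $B$-comodule morphisms for appropriate target comodule structures, so running bifunctoriality of $F$ along them and then applying the counit axioms for $\rho$ and $\nu$ should identify the comodule structure of $F(X,Y)$ with the formula defining $\phi^u(X, Y)$. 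The delicate point is ensuring the object-level hypothesis pins down $F$'s action on morphisms sufficiently to carry out this final comparison cleanly, which is precisely where the naturality of the extraction procedure does the decisive work.
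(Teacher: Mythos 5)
The paper does not prove this proposition; it is stated as a citation to Deligne's proposition 2.16, so there is no in-paper proof to compare against. Your reconstruction is essentially correct and is, as far as I can tell, the standard argument: recover $u$ by evaluating on finite dimensional subcoalgebras $C$ (each of which is canonically a right $B$-comodule via $\Delta|_C$) and composing the comodule structure map of $F(C_1, C_2)$ with $\varepsilon \otimes \varepsilon \otimes 1_B$; then use naturality of $F$ along the comodule structure maps $\rho: X \to X \otimes C_1$ and $\nu: Y \to Y \otimes C_2$ (where the target carries the comodule structure only in the second factor, so that $\rho,\nu$ are injective comodule morphisms) to propagate the formula $\phi^u(X,Y)$ to arbitrary pairs.

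Both caveats you flag are genuine imprecisions in the proposition as stated, and you handle them appropriately. ``All $k$-homomorphisms $u$'' cannot mean arbitrary $k$-linear maps: taking $u = 0$ already violates the counit axiom for $\phi^u(X,Y)$. The intended meaning is coalgebra maps $u: B \otimes B \to B$, which is exactly the condition making $\phi^u$ land in $\text{Comod}_B$. Likewise the object-level condition on $F$ is not by itself enough to pin down $F$ on morphisms, or even to force bi-additivity; the intended hypothesis is that $F$ commutes with the forgetful functor as a functor, i.e.\ $F(\phi,\psi) = \phi \otimes \psi$, and the bifunctor $\square$ to which the paper applies the proposition does satisfy it, being built from a tensor product conjugated by equivalences commuting with the fibre functors. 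Granting this, your surjectivity step is correct: writing out the comodule-morphism condition for $F(\rho,\nu) = \rho \otimes \nu$, post-composing with $\text{id}_{X \otimes Y} \otimes \varepsilon \otimes \varepsilon \otimes 1_B$, and invoking the counit axioms of $\rho,\nu$ reduces it to precisely $\mu_{X,Y}(x \otimes y) = \sum x^{(0)} \otimes y^{(0)} \otimes u(x^{(1)} \otimes y^{(1)})$, the defining formula of $\phi^u(X,Y)$.

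One small citation slip: to produce the coefficient subcoalgebras $C_i$ with $\rho(X) \subset X \otimes C_1$, theorem \ref{fundamentaltheoremofcomodules} is not quite the right reference; the fact you need is that the span of the matrix coefficients of $\rho$ is a finite dimensional subcoalgebra of $B$, which is the computation in lemma \ref{smallDimensionEmbeddingLemma} (and follows as well from theorem \ref{fundamentaltheoremofcoalgebras}). Minor, but worth fixing.
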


So then, let us define a bifunctor on $\text{Comod}_B$ which for
the moment we call $\square$. $F$ is an equivalence, so it has an
`inverse' functor, call it $F^{-1}$.  Then for two $B$-comodules
$S$ and $T$, we set
\[ S \hspace{.05cm} \square \hspace{.05cm} T \stackrel{\text{def}}{=} F(F^{-1}(S) \otimes
F^{-1}(T)) \]
where $\otimes$ on the right refers to the given
tensor structure on $\catfont{C}$.  We define $\square$ to act on
morphisms in an analogous fashion.  As the diagram
\begin{diagram}
\catfont{C} & \rTo^F & \text{Comod}_B \\
 & \rdTo_{\omega} & \dTo_{\text{forget}} \\
 & & \text{Vec}_k \\
\end{diagram}
commutes, it is easy to see that $\square$ as a bifunctor
satisfies the hypothesis of the previous proposition; hence,
$\square$ is uniquely of the form $\phi^u$ for some $u:B \otimes B
\rightarrow B$.  This $u$, call it now mult, is the recovered
multiplication on $B$, finally giving $B$ the structure of a Hopf
algebra.

We close by mentioning that the necessary conditions needed for
$B$ to be a commutative Hopf algebra follow from certain
properties assumed about $\otimes$ on $\catfont{C}$. For instance,
it is the existence of the natural isomorphisms comm and assoc
which guarantees that mult should be a commutative and associative
operation, and the existence of an identity element for mult
follows from the existence of the identity object $\underline{1}$
for $\otimes$.  The interested reader should see pg.~137 of
\cite{deligne} for more on this.

\section{Recovering a Hopf Algebra in Practice}

Here we record some results which will later be useful in
computing the representing Hopf algebra for a given neutral
tannakian category, according to the method outlined in the
previous section.

\subsection{A Categorical Lemma}

Much of the work done in this dissertation entails the computing
of direct/inverse limits over very large and unwieldy collections
of objects. The following lemma will allow us at times to
drastically simplify our computations.

If $(I,\leq)$ is a directed set, we say that $(J,\leq)$ is a
\textbf{sub-directed set} if $J$ is a subset of $I$, $J$ is
directed, and whenever $j \leq k$ in $J$, $j \leq k$ in $I$.  Note
that we do not demand the converse to hold; in case it does, we
call $J$ \textbf{full}. We say the sub-directed set $J$ is
\textbf{essential} in $I$ if, for every $i \in I$, there is a $j
\in J$ such that $i \leq j$ in $I$.

\begin{lem}
\label{directlimitlemma}
 Let $\catfont{C}$ be any category, $I$ a
directed set, $\{X_i\}$ a collection of objects indexed over $I$,
and $\{X_i \stackrel{\phi_{ij}}{\longrightarrow} X_j\}$ a direct
system for the $X_i$ over $I$, and let
\begin{diagram}
& & X_I & & \\
 & \ruTo^{\phi_i} & & \luTo^{\phi_j} \\
 X_i & & \rTo_{\phi_{ij}} & & X_j \\
\end{diagram}
be the direct limit of this system. Let $J \subset I$ be a (not
necessarily full) sub-directed set, and let
\begin{diagram}
& & X_J & & \\
& \ruTo^{\psi_i} & & \luTo^{\psi_j} \\
X_i & & \rTo_{\phi_{ij}} & & X_j \\
\end{diagram}
be the direct limit over $J$.  Then if $J$ is essential in $I$,
these two direct limits are isomorphic, via a unique isomorphism
commuting with the canonical injections.

\end{lem}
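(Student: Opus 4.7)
The plan is to apply the universal property of the direct limit twice, once in each direction, and check that the composite maps are forced to be identities. The map $X_J \to X_I$ is almost immediate; the content of the lemma lies in constructing and well-defining a map in the reverse direction, and for this essentialness of $J$ is the key hypothesis.

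First I would construct a canonical map $\alpha:X_J\rightarrow X_I$. Since $J\subset I$ and the transition maps of $J$ are by definition the restrictions of those of $I$, the family $(\phi_j:X_j\rightarrow X_I)_{j\in J}$ is a cocone on the $J$-system. The universal property of $X_J$ as direct limit over $J$ then yields a unique $\alpha:X_J\rightarrow X_I$ with $\psi_j\circ\alpha=\phi_j$ for every $j\in J$ (in the "evaluate on inputs" convention of the paper).

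The harder direction is a map $\beta:X_I\rightarrow X_J$. For each $i\in I$, pick (using essentialness) some $j\in J$ with $i\leq j$ in $I$, and tentatively define $\tau_i=\phi_{ij}\circ\psi_j:X_i\rightarrow X_J$. The first obstacle is to show this does not depend on the choice of $j$: given $j_1,j_2\in J$ with $i\leq j_1$ and $i\leq j_2$ in $I$, use directedness of $J$ to find $k\in J$ with $j_1\leq k$ and $j_2\leq k$ in $J$ (hence also in $I$, by the definition of sub-directed set). Then in $I$ we have $\phi_{ik}=\phi_{ij_1}\circ\phi_{j_1k}$ and similarly through $j_2$, while in $J$ we have $\psi_{j_1}=\phi_{j_1k}\circ\psi_k$ and $\psi_{j_2}=\phi_{j_2k}\circ\psi_k$, so both candidate definitions of $\tau_i$ coincide with $\phi_{ik}\circ\psi_k$. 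The second check is that $(\tau_i)_{i\in I}$ is an $I$-cocone on $X_J$: given $i\leq i'$ in $I$, choose $j\in J$ with $i'\leq j$ (hence also $i\leq j$), and both $\tau_i$ and $\phi_{ii'}\circ\tau_{i'}$ compute to $\phi_{ij}\circ\psi_j$. The universal property of $X_I$ then yields a unique $\beta:X_I\rightarrow X_J$ with $\phi_i\circ\beta=\tau_i$ for every $i\in I$.

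Finally I would verify $\alpha\circ\beta=\mathrm{id}_{X_I}$ and $\beta\circ\alpha=\mathrm{id}_{X_J}$ by the standard uniqueness trick: both $\alpha\circ\beta$ and $\mathrm{id}_{X_I}$ are endomorphisms of $X_I$ satisfying $\phi_i\circ(\text{it})=\phi_i$ for all $i\in I$ (for $\alpha\circ\beta$ this follows by chasing through a chosen $j\geq i$ in $J$), so uniqueness in the universal property for $X_I$ forces them to agree; the other composite is handled identically using the universal property of $X_J$. The uniqueness of the resulting isomorphism commuting with the injections is already built into the construction of $\alpha$. The main obstacle, as indicated, is the well-definedness step for $\tau_i$, where one must carefully distinguish order in $J$ from order in $I$ because $J$ is not assumed full; essentialness plus directedness of $J$ is exactly what is needed to push past this subtlety.
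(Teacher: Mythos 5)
Your argument is essentially the paper's: the shared key step is using essentialness of $J$ together with directedness of $J$ to extend $(\psi_j)_{j\in J}$ to a well-defined $I$-indexed cocone $(\tau_i)$ on $X_J$ (the paper calls these $\rho_i$), after which both proofs finish by universal properties, the paper by exhibiting $X_J$ as a direct limit of the full $I$-system and citing uniqueness of colimits, and you by constructing the mutually inverse arrows by hand. One small slip: in the paper's diagram-order convention for $\circ$, the composite $\alpha\circ\beta$ is an endomorphism of $X_J$ and $\beta\circ\alpha$ an endomorphism of $X_I$, so your concluding identity verifications are labelled the wrong way around, though the underlying computations are fine.
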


\begin{proof}
It is well known that any two direct limits for the same system
are isomorphic in the above mentioned way.  Thus, we will prove
the theorem by showing that the $X_J$, the direct limit over $J$,
can also be made into a direct limit object for the $X_i$ over all
of $I$. For any $i \in I$, define a map $X_i
\stackrel{\rho_i}{\rightarrow} X_J$ as $\psi_i$ if $i \in J$, and
in case $i \notin J$, as the composition
\[ X_i \stackrel{\phi_{ij}}{\longrightarrow} X_j
\stackrel{\psi_j}{\longrightarrow} X_J \] where $j$ is any member
of $J$ such that $i \leq j$.  This is well-defined: if $k \in J$
is any other such that $i \leq k$, let $l$ be an upper bound for
$j$ and $k$ in $J$.  Then every sub-triangle of the diagram
\begin{diagram}
 & & X_J & & \\
 & \ruTo(2,4)^{\psi_j} & \uTo^{\psi_l} & \luTo(2,4)^{\psi_k} & \\
 &  & X_l & & \\
 & \ruTo_{\phi_{jl}} & & \luTo_{\phi_{kl}} & \\
 X_j & & \uTo_{\phi_{il}} & & X_k \\
 & \luTo_{\phi_{ij}} & & \ruTo_{\phi_{ik}} & \\
 & & X_i & & \\
\end{diagram}
commutes, and thus so does the outermost diamond.

We claim that with these $X_i \stackrel{\rho_i}{\rightarrow} X_J$,
$X_J$ is a direct limit for the $X_i$ over all of $I$.  Let $Y$ be
any object with morphisms $X_i \stackrel{t_i}{\longrightarrow} Y$
such that, for every $i \leq j \in I$, the following commutes:
\begin{diagram}
 & & Y & & \\
 & \ruTo^{t_i} & & \luTo^{t_j} & \\
 X_i & & \rTo_{\phi_{ij}} & & X_j \\
\end{diagram}
Then this diagram obviously commutes for every $i \leq j \in J$,
and the universal property of $X_J$ guarantees a unique map $X_J
\stackrel{t}{\longrightarrow} Y$ making
\begin{diagram}
 & & Y& & \\
 & \ruTo(2,4)^{t_j} & \uTo^{t} & \luTo(2,4)^{t_k} & \\
 &  & X_J & & \\
 & \ruTo_{\psi_j} & & \luTo_{\psi_k} & \\
 X_j & & \rTo_{\phi_{jk}} & & X_k \\
\end{diagram}
commute for every $j,k \in J$.  But this map $t$ also satisfies
the universal property required for $X_J$ to be a direct limit
over all of $I$.  For if $i,l \in I$ with $i \leq l$, then let
$j,k \in J$ with $i \leq j$, $l \leq k$, and $j \leq k$, and the
following also commutes:
\begin{diagram}
 & & Y& & \\
 & \ruTo(2,4)^{t_j} & \uTo^{t} & \luTo(2,4)^{t_k} & \\
 &  & X_J & & \\
 & \ruTo_{\psi_j} & & \luTo_{\psi_k} & \\
 X_j & & \rTo_{\phi_{jk}} & & X_k \\
 \uTo^{\phi_{ij}} & & & & \uTo_{\phi_{lk}} \\
 X_i & & \rTo_{\phi_{il}} & & X_l \\
\end{diagram}
and hence so does
\begin{diagram}
 & & Y & & \\
 & \ruTo(2,4)^{t_i} & \uTo^{t} & \luTo(2,4)^{t_l} & \\
 &  & X_J & & \\
 & \ruTo_{\rho_i} & & \luTo_{\rho_l} & \\
 X_i & & \rTo_{\phi_{il}} & & X_l \\
\end{diagram}
This map $t$ is still unique, since satisfying universality over
all of $I$ is clearly a more stringent requirement than doing so
over all of $J$.

\end{proof}

There is an obvious analogue to this lemma as concerns inverse
limits which we state but do not prove.

\begin{lem}
\label{inverselimitlemma} \label{directlimitlemma}
 Let $\catfont{C}$ be any category, $I$ a
directed set, $\{X_i\}$ a collection of objects indexed over $I$,
and $\{X_i \stackrel{\tau_{ij}}{\longleftarrow} X_j\}$ an inverse
system for the $X_i$ over $I$, and let
\begin{diagram}
& & X_I & & \\
 & \ldTo^{\tau_i} & & \rdTo^{\tau_j} \\
 X_i & & \lTo_{\tau_{ij}} & & X_j \\
\end{diagram}
be the inverse limit of this system. Let $J \subset I$ be a (not
necessarily full) sub-directed set, and let
\begin{diagram}
& & X_J & & \\
& \ldTo^{\rho_i} & & \rdTo^{\rho_j} \\
X_i & & \lTo_{\tau_{ij}} & & X_j \\
\end{diagram}
be the inverse limit over $J$.  Then if $J$ is essential in $I$,
these two inverse limits are isomorphic, via a unique isomorphism
commuting with the canonical projections.

\end{lem}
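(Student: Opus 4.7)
The plan is to dualize the proof of Lemma \ref{directlimitlemma}: rather than upgrading the cocone at the direct limit $X_J$ to one over all of $I$, I extend the projections $\rho_j : X_J \to X_j$ (for $j \in J$) to a family of projections $\sigma_i : X_J \to X_i$ for every $i \in I$, and then show that $X_J$ equipped with the $\sigma_i$ satisfies the universal property of the inverse limit over the full index set. Since any two inverse limits of the same system are isomorphic via a unique compatible isomorphism, this will give the desired conclusion.

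To define $\sigma_i$, set $\sigma_i = \rho_i$ when $i \in J$; when $i \notin J$, essentialness of $J$ produces some $j \in J$ with $i \leq j$, and I take $\sigma_i$ to be the composition $X_J \to X_j \to X_i$ built from $\rho_j$ followed by $\tau_{ij}$. The main technical point is well-definedness: if $j, k \in J$ both satisfy $i \leq j$ and $i \leq k$, I use the fact that $J$ is directed to pick $l \in J$ with $j \leq l$ and $k \leq l$, and then chase the diagram in which the compatibility $\rho_j = \rho_l \circ \tau_{jl}^{-1}$-style relations of the cone $(X_J, \rho_\bullet)$ meet the transitivity $\tau_{ij} \circ \tau_{jl} = \tau_{il} = \tau_{ik} \circ \tau_{kl}$ of the inverse system. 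This is the same diamond used in the direct-limit proof, with arrows reversed, and is the one place where both the directedness of $J$ and the essentialness hypothesis are essential; it is the main obstacle, but it reduces to routine arrow chasing.

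Once the $\sigma_i$ are in place, the cone identity $\sigma_i = \sigma_{i'} \circ \tau_{ii'}$ for $i \leq i'$ in $I$ follows by picking any $j \in J$ above $i'$, noting that both $\sigma_i$ and $\sigma_{i'}$ then factor through $\rho_j$, and invoking transitivity of the $\tau$'s in the original inverse system. For universality, suppose $Y$ carries a cone $\{t_i : Y \to X_i\}_{i \in I}$ compatible with the $\tau_{ij}$. Its restriction to $J$ is a cone over $J$, so the universal property of $X_J$ yields a unique map $t : Y \to X_J$ with $t \circ \rho_j = t_j$ for all $j \in J$. To see that $t \circ \sigma_i = t_i$ for every $i \in I$, factor through some $j \in J$ with $i \leq j$ and apply the cone conditions on both sides. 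Uniqueness of $t$ as a map over $I$ is automatic, since being a cone over $I$ is strictly more restrictive than being a cone over $J$, so $t$ is already pinned down by its compatibility with the $\rho_j$. This completes the identification of $X_J$ as an inverse limit for the full system.
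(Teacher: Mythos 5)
Your proof is correct and is exactly the dualization the paper intends when it states the inverse-limit lemma without proof, deferring to its proof of the direct-limit version: you extend the projections of $X_J$ to all of $I$ via essentialness, check well-definedness by the directedness diamond, and verify the universal property using the restriction of any $I$-cone to a $J$-cone. One cosmetic remark: the cone compatibility you invoke in the well-definedness step is simply $\rho_j = \rho_l \circ \tau_{jl}$ (in the paper's left-to-right composition convention) with no inverse map involved --- the transition morphisms $\tau_{jl}$ need not be invertible, so the ``$\tau_{jl}^{-1}$-style'' gloss is misleading --- but the diamond chase you actually describe uses the correct relation, so the argument is sound.
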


\subsection{Computing $\text{End}(\omega \res \gen{X})$}

Recall from page \pageref{endomegadiscussion} that if $X$ is an
object of $\catfont{C}$, we define $L_X$ to be the subalgebra of
$\text{End}_k(\omega(X))$ consisting of those linear maps which
are `starting points' for a full endomorphism of the fibre functor
restricted to $\gen{X}$.  Here we describe a practical method for
computing $L_X$, which is gleaned from pages $132$, $133$ of
\cite{deligne}.  The definition of $L_X$ given, \textit{a priori},
seems to require that we look at arbitrarily large powers of $X$
to discover if a given transformation of $\omega(X)$ is or is not
in $L_X$, but the method described here shows that we need only
look inside a fixed power of $X$ ($X^{\text{dim}(\omega(X))}$ in
fact).

Let $n = \text{dim}(\omega(X))$ and write $X^n = X_1 \oplus X_2
\oplus \ldots \oplus X_n$, where each $X_i$ is simply a labelled
copy of $X$.  If $Y \stackrel{\psi}{\longrightarrow} X^n$ is any
embedding then we can write $\psi = \psi_1 \oplus \ldots \oplus
\psi_n$, where $Y \stackrel{\psi_i}{\vlongrightarrow} X_i$ is the
$i^{\text{th}}$ component of $\psi$.

As $\omega(X)$ is $n$-dimensional, so is $\omega(X)^*$, so fix an
isomorphism $\alpha:k^n \rightarrow \omega(X)^*$, and let $e_1 =
(1,0,\ldots,0), \ldots, e_n = (0, \ldots, 0,1)$ be the standard
basis of $k^n$.  From this $\psi$ and $\alpha$ we define a linear
map $\psi_\alpha:\omega(Y) \rightarrow \omega(X)^* \otimes
\omega(X)$ as follows: for a vector $y \in \omega(Y)$,
\[ \psi_\alpha(y) = \sum_{i=1}^n \alpha(e_i) \otimes \psi_i(y) \]
If we identify $\omega(X)^* \otimes \omega(X)$ with
$\text{End}_k(\omega(X))$ in the usual fashion, we may speak of
whether the image of $\psi_\alpha$ does or does not contain the
element $\text{id}:\omega(X) \rightarrow \omega(X)$.  Further,
exactness and faithfulness of the functor $\omega$ imply that,
just as in $\text{Vec}_k$, the concept of a ``smallest'' object
having a given property make sense.  So we define

\begin{defn}
For an object $X$ and fixed isomorphism $\alpha:k^n \rightarrow
\omega(X)^*$, $P_X^\alpha$ is the smallest subobject of $X^n$
having the property that the image of $\omega(P_X^\alpha)$ under
$\psi_\alpha$ contains $\text{id}:\omega(X) \rightarrow
\omega(X)$, where $\psi$ is the embedding $P_X^\alpha \rightarrow
X^n$.
\end{defn}

It is a completely non-obvious fact that

\begin{thm}(see lemmas 2.12 and 2.13 of \cite{deligne})
\label{L_Xtheorem}
 For any $\alpha$, the image of
$\omega(P_X^\alpha)$ under $\psi_\alpha$ is the algebra $L_X$.
\end{thm}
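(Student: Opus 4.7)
The plan is to identify $\omega(X^n)$ with $\text{End}_k(\omega(X))$ via the composite isomorphism
\[ \omega(X^n) \isomorphic k^n \otimes \omega(X) \stackrel{\alpha \otimes 1}{\vlongrightarrow} \omega(X)^* \otimes \omega(X) \isomorphic \text{End}_k(\omega(X)), \]
under which $\psi_\alpha$ is nothing but the restriction of this isomorphism to $\omega(Y) \subseteq \omega(X^n)$ for $Y$ a subobject of $X^n$. The problem then becomes the following: characterize the smallest subspace of $\text{End}_k(\omega(X))$ which (i) contains the distinguished point $\text{id}_{\omega(X)}$ and (ii) arises as the $\omega$-image of some subobject of $X^n$.

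The first key observation is that for any $\lambda \in \text{End}(\omega \res \gen{X})$, the map $\lambda_{X^n}$ is forced (by naturality against the standard inclusions $X \hookrightarrow X^n$ and projections $X^n \twoheadrightarrow X$, as discussed on page \pageref{endomegadiscussion}) to act on $\omega(X^n)$ as the diagonal map $\lambda_X \oplus \ldots \oplus \lambda_X$. Transported along the identification above, this diagonal action becomes the natural $L_X$-action on $\text{End}_k(\omega(X))$ sending $T$ to the map $w \mapsto \lambda_X(T(w))$. Since any subobject $Y \subseteq X^n$ lives in $\gen{X}$, the inclusion $\omega(Y) \hookrightarrow \omega(X^n)$ intertwines the two $\lambda$-actions, so $\psi_\alpha(\omega(Y))$ is closed under this $L_X$-action. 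An invariant subspace that contains $\text{id}_{\omega(X)}$ must, for every $\lambda$, also contain the element $w \mapsto \lambda_X(w)$, i.e.~must contain all of $L_X$; so already $\psi_\alpha(\omega(P_X^\alpha)) \supseteq L_X$.

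For the reverse inclusion it suffices to exhibit a single subobject $Y \subseteq X^n$ with $\psi_\alpha(\omega(Y)) = L_X$, since minimality of $P_X^\alpha$ then forces $\psi_\alpha(\omega(P_X^\alpha)) \subseteq L_X$. What is really required is the finer statement that $Y \mapsto \psi_\alpha(\omega(Y))$ is a bijection between subobjects of $X^n$ and those subspaces of $\text{End}_k(\omega(X))$ which are closed under the above $L_X$-action. One direction has been shown; for the other I would argue that $\omega$ restricted to $\gen{X}$ identifies this subcategory with a full subcategory of finite-dimensional left $L_X$-modules (sending $X$ to $\omega(X)$ equipped with its tautological $L_X$-structure), reducing the desired realization to the elementary observation that subobjects of $\omega(X)^n$ in $L_X$-mod are precisely the left $L_X$-submodules. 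Since $L_X$ is visibly an $L_X$-submodule of $\text{End}_k(\omega(X))$ under the given action, the required $Y$ exists.

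The main obstacle is precisely this last appeal: rigorously establishing the equivalence between $\gen{X}$ and the relevant subcategory of $L_X$-mod (equivalently, that $L_X$ is finite-dimensional and controls the subobject lattice of every $X^n$) requires combining the exactness and faithfulness of $\omega$ with the finite-dimensionality of $\omega(X)$ in an essential way, and is morally the content of Deligne's Lemmas 2.12 and 2.13 themselves.
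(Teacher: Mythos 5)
The paper gives no proof of this theorem: it cites lemmas 2.12 and 2.13 of \cite{deligne} and says explicitly that it has no intention of justifying the claim, so there is no in-text argument against which to compare yours. Of the two containments in your outline, the forward one, $L_X \subseteq \psi_\alpha(\omega(P_X^\alpha))$, is correct and complete. The diagonal form $\lambda_{X^n} = \lambda_X^{\oplus n}$ forced by naturality, its transport under your identification to the post-composition action $T \mapsto \lambda_X \circ T$ on $\text{End}_k(\omega(X))$, and the $L_X$-invariance of $\psi_\alpha(\omega(Y))$ for every subobject $Y$ of $X^n$ together show that any such $Y$ whose image contains $\text{id}$ has image containing $L_X \cdot \text{id} = L_X$, and this applies in particular to $P_X^\alpha$.

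The genuine gap is in the reverse containment, and I want to make your own diagnosis of it sharper. You reduce the matter to exhibiting a subobject $Y$ of $X^n$ with $\psi_\alpha(\omega(Y)) = L_X$, and you propose to obtain this from a bijection between subobjects of $X^n$ and $L_X$-submodules of $\text{End}_k(\omega(X))$, itself deduced from an equivalence between $\gen{X}$ and finite-dimensional left $L_X$-modules. But in Deligne's development the logical order runs the other way: the explicit description $L_X = \psi_\alpha(\omega(P_X^\alpha))$ is his lemma 2.12 --- the very statement you are proving --- and it is an input to his lemma 2.13 (the equivalence), in particular to the claim that every $L_X$-submodule of $\omega(X)^n$ arises as $\omega$ of a subobject of $X^n$. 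Invoking the equivalence to prove the characterisation of $L_X$ is therefore circular as written; to close the gap one must construct, directly inside $\gen{X}$ and using only exactness and faithfulness of $\omega$, a subobject of $X^n$ that $\psi_\alpha$ carries into $L_X$ with $\text{id}$ in the image, and producing that subobject is precisely the nontrivial content that the paper's citation is covering.
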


We have no intention of justifying this, although we do mention
the reason that $\alpha$ can be chosen arbitrarily.  Consider the
subobject $X^n$ itself of $X^n$, with the embedding being $\psi =
\text{id}$. Certainly if one chooses a different $\beta$ then the
subobjects $P_X^\alpha$ and $P_X^\beta$ will be different, but
their images under $\psi_\alpha$ and $\psi_\beta$ respectively
will not change.  Obviously $\psi_\alpha$ and $\psi_\beta$ are
isomorphisms of vector spaces, and thus we have a commutative
diagram

\begin{diagram}
\omega(X^n) & \rTo^{\phi_{\beta,\alpha}} & \omega(X^n) \\
\dTo^{\psi_\alpha} & \ldTo_{\psi_\beta} & \\
\dual{\omega(X)} \otimes \omega(X) & & \\
\end{diagram}
where $\phi_{\beta,\alpha}$ is a linear isomorphism.  But it can
in fact be shown $\phi_{\beta,\alpha}$ must in fact be (the image
of) an actual isomorphism between the object $X^n$ and itself in
the original category.  Such a $\phi_{\beta,\alpha}$ must then
preserve the notion `smallest subobject', and so the computation
will always yield the same subspace $L_X$ of
$\text{End}(\omega(X))$.

Example: consider the following module $X$ for the additive group
$G_a$ over a field $k$, with matrix formula
\[
\left(
\begin{array}{cc}
  1 & x \\
  0 & 1 \\
\end{array}
\right)
\]
in the basis $f_1,f_2$ for $\omega(X)$. We will compute
$\text{End}(\omega \res \gen{X})$ using the method outlined above.
This is a $2$-dimensional module, so we consider the module $X^2$,
with matrix formula
\[
\left(
\begin{array}{cccc}
  1 & x &  &  \\
   & 1 &  &  \\
   &  & 1 & x \\
   &  &  & 1 \\
\end{array}
\right)
\]
in the basis $f_{1,1},f_{1,2},f_{2,1},f_{2,2}$ for $\omega(X^2)$.
Next we consider an arbitrary subobject of $X^2$. As any subobject
factors through the identity mapping $X^2 \rightarrow X^2$, it
does no harm to choose $\psi = 1$.  Then the coordinate maps
$\psi_1,\psi_2:\omega(X^2) \rightarrow \omega(X)$, using the usual
canonical injections $X \rightarrow X^2$, are
\begin{equation*}
\begin{split}
& \psi_1:f_{11} \mapsto f_{1}, \quad f_{12} \mapsto f_{2}, \quad
f_{21} \mapsto 0, \quad f_{22} \mapsto 0 \\
& \psi_2:f_{11} \mapsto 0, \quad f_{12} \mapsto 0, \quad f_{21}
\mapsto f_{1}, \quad f_{22} \mapsto f_{2}
\end{split}
\end{equation*}
For the isomorphism $\alpha:k^2 \rightarrow \omega(X)^*$, let's
keep life simple and choose $\alpha:(1,0) \mapsto f_1^*,(0,1)
\mapsto f_2^*$, where
 $f_1^*,f_2^*$ is the dual basis for $\omega(X)^*$.
From this $\psi$ and $\alpha$ we compute
 $\psi_\alpha:\omega(X^2) \rightarrow \omega(X)^* \otimes
 \omega(X)$, which has formula
\[ \psi_\alpha(x) = \alpha((1,0)) \otimes \psi_1(x) +
 \alpha((0,1)) \otimes \psi_2(x) \]
and thus
\begin{equation*}
\begin{split}
\psi_\alpha:& f_{11} \mapsto f_1^* \otimes f_1 \\
 &f_{12} \mapsto f_1^* \otimes f_2 \\
 &f_{21} \mapsto f_2^* \otimes f_1 \\
 &f_{22} \mapsto f_2^* \otimes f_2
\end{split}
\end{equation*}

Now, to compute $P_X^\alpha$, we ask: what is the smallest
subobject of $X^2$ such that, under the map $\psi_\alpha$,
contains the identity map $X \rightarrow X$?  We identify the
identity map of course as the element $f_1^* \otimes f_1 + f_2^*
\otimes f_2 \in \omega(X)^* \otimes \omega(X)$.  This projects
back to, under $\psi_\alpha$, the element $f_{11} + f_{22}$ of
$\omega(X^2)$, which, in the given bases, corresponds to the
vector $(1,0,0,1)$.  A quick computation shows that the smallest
subspace of $\omega(X)^2$ stable under $X$ and containing $f_{11}
+ f_{22}$ is
\[ \text{span}(f_{11} + f_{22}, f_{21}) \]
which, under $\psi_\alpha$ and then the isomorphism $\omega(X)^*
\otimes \omega(X) \isomorphic \text{End}_k(\omega(X))$, maps to
the span of the transformations
\[
\left(
\begin{array}{cc}
  1 & 0 \\
  0 & 1 \\
\end{array}
\right), \left(
\begin{array}{cc}
  0 & 1 \\
  0 & 0 \\
\end{array}
\right)
\]
where we have written these transformations as matrices in the
bases $f_1, f_2$ for $\omega(X)$.  Thus, $\text{End}(\omega \res
\gen{X})$ can be identified with the algebra of all $2 \times 2$
matrices of the form
\[
\left(
\begin{array}{cc}
  a & b \\
  0 & a \\
\end{array}
\right)
\]
for arbitrary $a$ and $b$.

Another Example: Consider the module
\[
\left(
\begin{array}{ccc}
  x &  &  \\
   & x^2 &  \\
   &  & x^3 \\
\end{array}
\right)
\]
for the multiplicative group $G_m$.  Skipping all the mumbo-jumbo
with $\psi$ and $\alpha$, all we really have to do is find the
invariant subspace of
\[
\left(
\begin{array}{ccccccccc}
  x &  &  &  &  &  &  &  &  \\
   & x^2 &  &  &  &  &  &  &  \\
   &  & x^3 &  &  &  &  &  &  \\
   & &  & x &  &  &  &  &  \\
   &  &  &  & x^2 &  &  &  &  \\
   &  &  &  &  & x^3 &  &  &  \\
   &  &  &  &  &  & x &  & \\
   &  &  &  &  &  &  & x^2 &  \\
   &  &  &  &  &  &  &  & x^3 \\
\end{array}
\right)
\]
generated by the vector $(1,0,0,0,1,0,0,0,1)$.  This we compute to
be the span of the vectors
$(1,0,0,0,0,0,0,0,0),(0,0,0,0,1,0,0,0,0)$, and
$(0,0,0,0,0,0,0,0,1)$.  These in turn project to the span of the
matrices
\[
\left(
\begin{array}{ccc}
  1 & 0 & 0 \\
  0 & 0 & 0 \\
  0 & 0 & 0 \\
\end{array}%
\right), \left(
\begin{array}{ccc}
  0 & 0 & 0 \\
  0 & 1 & 0 \\
  0 & 0 & 0 \\
\end{array}%
\right), \left(
\begin{array}{ccc}
  0 & 0 & 0 \\
  0 & 0 & 0 \\
  0 & 0 & 1 \\
\end{array}
\right)
\]
That is, we can identify $\text{End}(\omega \res \gen{X})$ with
the collection of all diagonal transformations on $\omega(X)$.

\chapter{First-Order Definability of Tannakian Categories}

\label{chapterFirstOrderDefinabilityOf}
\label{axiomsfortannakiancategories}

The goal of this chapter is to prove that, in a certain
appropriately chosen language, the sentence
\index{category!tannakian}``is a tannakian category'' is
first-order. Note that we do not claim that the property of `being
neutral' is necessarily first-order.

\section{The Language of Abelian Tensor Categories}

\label{chapterTheLanguageOf}

The title of this section is a misnomer for two reasons.  First,
the article ``the'' implies there is only one such language, and
this is certainly not the case.  It is however, as far as the
author can tell, the most natural and minimal choice for our
purposes. Secondly, not all structures in the `language of abelian
tensor categories' are abelian tensor categories, and we may as
well have called it the `language of tannakian categories'.  But
the name seems natural enough.

Our language is purely relational; it has no function or constant
symbols.  The primitive symbols are \index{$\text{assoc}$}
\index{$\text{comm}$} \index{$\text{unit}$}
\begin{gather*}
\index{first-order language!of abelian tensor categories} \slot
\in \text{Mor} \languageSpace \slot \in \text{Ob} \languageSpace
\slot:\slot \rightarrow \slot \languageSpace \slot \circ \slot
\myeq \slot \languageSpace \slot + \slot \myeq \slot \\
 \slot \otimes \slot \myeq \slot \languageSpace
\text{assoc}_{\slot, \slot, \slot} \myeq \slot \languageSpace
\text{comm}_{\slot,\slot} \myeq \slot \languageSpace
\text{unit}_{\slot} \myeq \slot \languageSpace
\end{gather*}
The intended interpretation of the symbols are as follows.  $x \in
\text{Mor}$ expresses that $x$ is a morphism in the category, $x
\in \text{Ob}$ that $x$ is an object.  $x:y \rightarrow z$
expresses that the morphism $x$ points from the object $y$ to $z$,
$x \circ y \myeq z$ expresses that the morphism $x$ composed with
$y$ is equal to $z$, and $x+y \myeq z$ expresses that the morphism
$x$ added to $y$ is equal to $z$.  $x \otimes y \myeq z$ means
that the tensor product of $x$ and $y$ is equal to $z$, and this
could either mean tensor product of objects or tensor product of
morphisms.

The symbols $\text{assoc},\text{comm}$, and $\text{unit}$ stand
for the requisite natural isomorphisms present in an abelian
tensor category.  For instance, $\text{assoc}_{x,y,z} \myeq t$
expresses that the associativity isomorphism $(x \otimes y)
\otimes z \isomorphic x \otimes (y \otimes z)$ attached to the
objects $x,y$ and $z$ is equal to $t$, and similarly for
$\text{comm}_{x,y} \myeq t$. $\text{unit}_{x} \myeq y$ expresses
that $y$ is the natural isomorphism between the object $x$ and $x
\otimes \underline{1}$, where $\underline{1}$ is an identity
object for the tensor category.

The reader should take care not to automatically identify the
symbol $\myeq$ occurring as a sub-symbol of the above symbols with
actual equality of elements; $\myeq$ is purely formal in this
context. For a random structure in the above signature it is
entirely possible to have four elements $a,b,c,d$ such that $a
\circ b \myeq c$, $a \circ b \myeq d$, but not $c = d$, where this
last equation is actual equality of elements.  Of course, we chose
the symbol $\myeq$ because, modulo the theory we are going to
write down, $\myeq$ does in fact behave like equality.

To make anything we are about to do manageable, we must find a way
to treat certain of the relational symbols in our language as if
they were functional right from the start, and hence to treat
expressions such as $x \circ y$ as if they were terms. For
instance, we would like to be able to write the sentence
\[ (\forall x,y)(x \circ y = y \circ x) \]
and attach the intended meaning to it.  But as it stands, this is
not a sentence in our language.  Here is how this can be remedied.
In the case of $\circ$ we can treat the symbol $x \circ y$ as a
term as follows.  If $\Phi(z)$ is any formula, $x$ and $y$
variables, then we define $\Phi(x \circ y)$ to be the formula
\[ (\forall t)(x \circ y \myeq t \myimplies \Phi(t)) \]
where $t$ is some variable not occurring in $\Phi$ and not equal
to $x$ or $y$.  By iteration of this process we can in fact treat
any `meaningful composition' of variables as a term.  By
meaningful composition we mean: any variable $x$ is a meaningful
composition, and if $\Psi,\Sigma$ are meaningful compositions,
then so is $(\Psi) \circ (\Sigma)$ (for instance, $((x \circ y)
\circ z) \circ (s \circ x)$ is a meaningful composition).  Then
for a meaningful composition $(\Psi) \circ (\Sigma)$ and formula
$\Phi(x)$, we define $\Phi((\Psi) \circ (\Sigma))$ by induction on
the length of the composition to be
\[ (\forall s,t,r)((s = \Psi \myand t = \Sigma \myand s \circ t \myeq r) \myimplies \Phi(r)) \]
where $s,t$ and $r$ are some not already being used variables.
This formula is well-defined, since the formulas $s = \Psi$,
$t=\Sigma$, and $\Phi(r)$ are by induction.  For example then, the
formula $x \circ y = y \circ x$ literally translates to
\[(\forall s)(y \circ x \myeq s \myimplies (\forall r)(x \circ y
\myeq r \myimplies r = s)) \] The same trick can obviously be
applied to the symbols involving
$+,\otimes,\text{assoc},\text{comm}$, and $\text{unit}$.  Thus we
can be confident in the meaning of something like
\[ (\forall x,y,z)( (x + y) \circ z = (x \circ z) + (y \circ z))
\]

Let us agree on some abbreviations.  All capital English letter
variables ($A,B,X,Y$, etc.) are understood to range over objects,
lower case Greek letters ($\phi,\psi,\alpha,\beta$, etc.) over
morphisms, and if we wish to be nonspecific we will use lower case
English letters ($a,b,x,y$, etc.).  So if $\Phi(x)$ is a formula,
we define $(\forall X) \Phi(X)$ to mean $(\forall x)(x \in
\text{Ob} \myimplies \Phi(x))$, and $(\forall \psi)\Phi(\psi)$
means $(\forall x)(x \in \text{Mor} \myimplies \Phi(x))$.  The
formula $(\exists X)\Phi(X)$ stands for $(\exists x)(x \in
\text{Ob} \myand \Phi(x))$, and similarly for $(\exists
\psi)\Phi(\psi)$. $(\forall x) \Phi(x)$ and $(\exists x) \Phi(x)$
mean exactly what they say.

If $a_1, \ldots, a_n,x,y$ are variables then $a_1, \ldots, a_n :x
\rightarrow y$ is shorthand for $a_1:x \rightarrow y \myand \ldots
\myand a_n:x \rightarrow y$.  $(\forall a_1, \ldots, a_n:x
\rightarrow y)\Phi(a_1, \ldots, a_n)$ is shorthand for $(\forall
a_1, \ldots, a_n)(a_1, \ldots, a_n:x \rightarrow y \myimplies
\Phi(a_1, \ldots, a_n))$, and $(\exists a_1, \ldots, a_n:x
\rightarrow y) \Phi(a_1, \ldots, a_n)$ is shorthand for $(\exists
a_1, \ldots, a_n)(a_1, \ldots, a_n:x \rightarrow y \myand
\Phi(a_1, \ldots, a_n))$. We make identical definitions for the
expressions $x_1, \ldots, x_n \in \text{Ob}$ and $x_1, \ldots, x_n
\in \text{Mor}$.

If $x$ and $y$ are variables, we define the formula $\text{Dom}(x)
\myeq y$ to mean $(\exists z)(x:y \rightarrow z)$, and we make an
analogous definition for $\text{Codom}(x) \myeq y$.  We can treat
$\text{Dom}$ and $\text{Codom}$ as if they were functions by
declaring: if $\Phi(x)$ is a formula, we define
$\Phi(\text{Dom}(x))$ to mean $(\forall y)(\text{Dom}(x) \myeq y
\myimplies \Phi(y))$, and similarly for $\text{Codom}$.

The remainder of this chapter is devoted to proving, piecemeal,
that the statement ``is a tannakian category'' is expressible by a
first-order sentence in the language of abelian tensor categories.

\label{firstorderdefinabilitychapter}
\section{Axioms for a Category}
\index{category}
\begin{enumerate}

\item{Every element of $\catfont{C}$ is either an object or a
morphism, but not both:
\[ (\forall x)((x \in \text{Ob} \myor x \in \text{Mor}) \myand
\mynot(x \in \text{Ob} \myand x \in \text{Mor})) \] } \item{All
arrows are morphisms, and all vertices are objects:
\[ (\forall x, y, z)(x:y \rightarrow z \myimplies (x \in \text{Mor}
\myand y \in \text{Ob} \myand z \in \text{Ob})) \] }
 \item{Every
morphism points to and from exactly one object:
\[ (\forall \phi)(\exists!X,Y)(\phi:X \rightarrow Y) \]
}
 \item{Composition only makes sense on morphisms:
\[ (\forall x,y,z)(x \circ y \myeq z \implies x,y,z \in \text{Mor}) \]
} \item{Composition only makes sense between composable morphisms,
and the composition points where it should:
\begin{gather*}
(\forall \phi,\psi, \eta)( \phi \circ \psi \myeq \eta \myimplies
(\text{Codom}(\phi) = \text{Dom}(\psi) \\
 \myand \eta:\text{Dom}(\phi) \rightarrow \text{Codom}(\psi)))
\end{gather*} }
\item{Composition is a function on composable arrows:
\[ (\forall \phi,\psi)(\text{Codom}(\phi) = \text{Dom}(\psi) \myimplies (\exists!\eta)(\phi \circ \psi \myeq \eta)) \]}
\item{Composition is associative:
\[ (\forall x,y,z)((\exists t)((x \circ y) \circ z = t) \myimplies (x \circ (y \circ z) = (x \circ y) \circ z))  \] }
\end{enumerate}

We define the formula $x \myeq 1_y$ to mean that $x$ is a
two-sided identity morphism for $y$, i.e.~as the formula $(x:y
\rightarrow y) \myand (\forall z)((x \circ z =t \myor z \circ x =
t) \myimplies z = t)$. We write $x \myeq 1$ to mean that $x$ is an
identity morphism for some object, i.e. $(\exists X)(x \myeq
1_X)$. If $\Phi(x)$ is a formula, we define $\Phi(1_x)$ to mean
$(\forall y)(y \myeq 1_x \myimplies \Phi(y))$, and similarly for
$1$.
\begin{itemize}
\item[8.]{Every object has an identity morphism:
\[ (\forall X)(\exists \phi)(\phi \myeq 1_X) \]
}
\end{itemize}

\section{Axioms for an Abelian Category}
\label{axiomsforanabeliancategory}

In this section we build axioms amounting to the statement that a
given category is abelian, using definition
\ref{defnabeliancategory} as our guide.

\begin{enumerate}

\item{Addition is only defined on addable morphisms, and their sum
points where it should: \begin{gather*}
 (\forall x,y,z)(x+y \myeq
z \myimplies (x,y,z \in \text{Mor} \myand \text{Dom}(x) =
\text{Dom}(y) \myand \text{Dom}(y) \\
= \text{Dom}(z) \myand \text{Codom}(x) = \text{Codom}(y) \myand
\text{Codom}(y) = \text{Codom}(z))) \end{gather*}}
 \item{Addition is a function on addable
morphisms:
\[ (\forall x,y)((\text{Dom}(x) = \text{Dom}(y) \myand
\text{Codom}(x) = \text{Codom}(y)) \myimplies (\exists!z)(x + y =
z)) \] }
 \item{Addition is associative and commutative:
\[ (\forall x,y,z)((\exists t)((x+y)+z = t) \myimplies (x+y = y+x \myand (x+y)+z = x+(y+z))) \]
}
\end{enumerate}

Define the formula $x \myeq 0_{y,z}$ to mean $x$ is an additive
identity for $\text{Hom}(x,y)$.  That is, $(x:y \rightarrow z)
\myand (\forall t:y \rightarrow z)(x+t = t)$. Define $x \myeq 0$
to be $(\exists X,Y)(x=0_{X,Y})$.  If $\Phi(x)$ is a formula,
$\Phi(0_{x,y})$ means $(\forall z)(z = 0_{x,y} \myimplies
\Phi(z))$, and similarly for $0$.
\begin{itemize}
\item[4.]{Existence of zero morphisms for addition:
\[ (\forall A,B)(\exists \phi)(x \myeq 0_{A,B})\] }
\end{itemize}

Define the formula $x\myeq-y$ to mean $x$ is an additive inverse
for $y$.  That is, $x+y = 0$.  For a formula $\Phi(x)$, $\Phi(-y)$
is shorthand for the formula $(\forall x)(x \myeq -y \myimplies
\Phi(x))$.
\begin{itemize}
 \item[5.]{Existence of additive inverses:
\[(\forall \phi)(\exists x)(x = -\phi) \] }
\item[6.]{Bilinearity of composition over addition:
\begin{gather*} (\forall A,B,C,D)(\forall
\eta,\phi,\psi,\nu)((\eta:A \rightarrow B \myand
 \phi,\psi:B \rightarrow C  \myand \nu:C \rightarrow D)
\myimplies \\ ( \eta \circ (\phi + \psi) = \eta \circ \phi + \eta
\circ \psi \myand (\phi + \psi) \circ \eta = \phi \circ \eta +
\psi \circ \eta)) \end{gather*} }
\end{itemize}

The definition of an abelian category calls for the existence of
pair-wise biproducts, kernels and cokernels, and normality of
monomorphisms and epimorphisms.  Here we give first-order
definitions of these concepts.

Let $A$ and $B$ be objects.  Then a biproduct, which we denote as
$A \oplus B$, is a diagram
\begin{diagram}
A & & & & B \\
 & \luTo_{\pi_A} & & \ruTo_{\pi_B} & \\
 & & A \oplus B & & \\
 & \ruTo^{\iota_A} & & \luTo^{\iota_B} & \\
 A & & & & B \\
\end{diagram}
with the following properties: $\pi_A \circ \iota_A + \pi_B \circ
\iota_B = 1_{A \oplus B}$, $\iota_A \circ \pi_A = 1_A$, $\iota_B
\circ \pi_B = 1_B$, $\iota_A \circ \pi_B = 0$, and $\iota_B \circ
\pi_A = 0$.  One could clearly write down these conditions as a
first-order formula.  Thus, for objects $A$ and $B$ we define the
formula $\text{Sum}(Z;A,B)$ to mean that there exists maps
$\iota_A, \iota_B, \pi_A, \pi_B$ satisfying all the above
criteria.

Let $\psi:A \rightarrow B$ be a morphism. A kernel for $\psi$ is
by definition a map $k$ pointing from some object $K$ to $A$ such
that $k \circ \psi = 0_{K,B}$, and for any object $C$ and map
$\rho:C \rightarrow A$ with $\rho \circ \psi = 0_{C,B}$ there is a
unique map $\hat{\rho}:C \rightarrow K$ such that $\hat{\rho}
\circ k = \rho$.  Again, this is clearly first-order.  Thus, we
define: $\text{ker}(k;\psi)$ means that the morphism $k$ is a
kernel for $\psi$. The same obviously holds for the dual concept
of cokernel, so we define $\text{coker}(c;\psi)$ in like fashion.

In the language of categories saying that a morphism is an
epimorphism is to say that it is right-cancellative.  That is,
$\psi:A \rightarrow B$ is an epimorphism if for any maps
$\eta,\nu:B \rightarrow C$, $\psi \circ \eta = \psi \circ \nu$
implies that $\eta=\nu$. Again, this is clearly a first-order
concept, so we define the formula $\text{epic}(\phi)$ to mean
$\phi$ is an epimorphism, and likewise $\text{monic}(\phi)$ that
$\phi$ is a monomorphism.
\begin{itemize}
\item[7.]{Every pair of objects has a biproduct:
\[ (\forall A,B)(\exists Z)(\text{Sum}(Z;A,B)) \]
}
 \item[8.]{Every morphism has a kernel and a cokernel:
\[ (\forall \phi)(\exists k,c)(\text{ker}(k;\phi) \myand \text{coker}(c;\phi)) \] }
\item[9.]{Every monomorphism is normal:
\[ (\forall \phi)(\text{monic}(\phi) \myimplies (\exists \psi)(\text{ker}(\phi;\psi)) \] }
\item[10.]{Every epimorphism is normal:
\[ (\forall \phi)(\text{epic}(\phi) \myimplies (\exists
\psi)(\text{coker}(\phi;\psi))\] }
\end{itemize}

\section{Axioms for an Abelian Tensor Category}

Here we build axioms asserting that given abelian tensor category
is an abelian tensor category, per definition
\ref{defntensorcategory}. Our first task is to assert that
$\otimes$ is a bi-additive functor.

\begin{enumerate}

\item{Every pair of morphisms and objects has a unique tensor
product: \[ (\forall X,Y)(\exists!Z)(X \otimes Y \myeq Z) \myand
(\forall \phi,\psi)(\exists!\eta)(\phi \otimes \psi \myeq \eta)
\]
} \item{The tensor product of objects is an object, that of
morphisms is a morphism, and there's no such thing as a tensor
product of an object and a morphism:
\[ (\forall X,Y)(X \otimes Y \in \text{Ob}) \myand (\forall
\phi,\psi)(\phi \otimes \psi \in \text{Mor}) \myand (\forall
X,\psi)(\nexists x)(X \oplus \psi \myeq x \myor \psi \oplus X
\myeq x)
\]
} \item{The tensor product of morphisms points where it should:
\[ (\forall \phi,\psi)(\forall A,B,X,Y)((\phi:A \rightarrow B
\myand \psi:X \rightarrow Y) \myimplies (\phi \otimes \psi:A
\otimes X \rightarrow B \otimes Y)) \] }
 \item{The tensor product
preserves composition:
\begin{gather*}
 (\forall \phi,\psi,\eta,\nu)(\forall
A,B,X,Y,S,T)((\phi:A \rightarrow X \myand \psi:B \rightarrow Y
\myand \eta:X \rightarrow S \myand \nu:Y \rightarrow T) \\
\myimplies ((\phi \circ \eta) \otimes (\psi \circ \nu)) = (\phi
\otimes \psi) \circ (\eta \otimes \nu))) \end{gather*} }
 \item{The tensor product preserves
identity:
\[ (\forall A,B)((1_A \otimes 1_B) = 1_{A \otimes B}) \]
} \item{The tensor product is a bi-additive functor:
\begin{gather*}
 (\forall A,B,X,Y)(\forall \phi,\psi:A \rightarrow B)(\forall
\eta:X \rightarrow Y) \\ ( ((\phi + \psi) \otimes \eta = \phi
\otimes \eta + \psi \otimes \eta) \myand (\eta \otimes (\phi +
\psi) = \eta \otimes \phi + \eta \otimes \psi)) \end{gather*} }
\end{enumerate}

Next we assert that the natural isomorphisms assoc, comm, and unit
are doing the job we need them to.  We start with assoc.
\begin{itemize}
 \item[7.]{assoc accepts
objects and returns morphisms:
\[ (\forall x,y,z,t)(\text{assoc}_{x,y,z} \myeq t \myimplies (x,y,z \in
\text{Ob} \myand t \in \text{Mor})) \] }
 \item[8.]{assoc is a function
on triples of objects:
\[ (\forall X,Y,Z)(\exists!\phi)(\text{assoc}_{X,Y,Z} \myeq \phi) \]
}
\end{itemize}

If $\Phi(x)$ is a formula and $a,b,c$ variables, by
$\Phi(\text{assoc}_{a,b,c})$ we mean the formula $(\forall
t)(\text{assoc}_{a,b,c} \myeq t \myimplies \Phi(t))$.
\begin{itemize}
\item[9.]{assoc points where it should:
\[ (\forall X,Y,Z)(\text{assoc}_{X,Y,Z}:X \otimes (Y \otimes Z)
\rightarrow (X \otimes Y) \otimes Z) \] }
\end{itemize}

We define a formula $\text{iso}(\phi)$ to mean that the morphism
$\phi$ is an isomorphism: $(\forall A,B)(\phi:A \rightarrow B
\myimplies (\exists \psi:B \rightarrow A)(\phi \circ \psi = 1_A
\myand \psi \circ \phi = 1_B))$.
\begin{itemize}
 \item[10.]{assoc is always an
isomorphism:
\[ (\forall X,Y,Z)(\text{iso}(\text{assoc}_{X,Y,Z})) \]
} \item[11.]{assoc is a natural transformation:
\[ (\forall X,Y,Z,R,S,T)(\forall \phi:X \rightarrow R,\psi:Y
\rightarrow S,\eta:Z \rightarrow T)(\text{the following commutes:}
\]
\begin{diagram}
X \otimes (Y \otimes Z) & \rTo^{\text{assoc}_{X,Y,Z}} & (X \otimes
Y) \otimes Z \\
\dTo^{\phi \otimes (\psi \otimes \eta)} & & \dTo_{(\phi \otimes
\psi) \otimes \eta} \\
R \otimes (S \otimes T) & \rTo_{\text{assoc}_{R,S,T}} & (R \otimes
S) \otimes T \\
\end{diagram}
}
\end{itemize}

We make the necessary assertions and definitions for comm and unit
in like fashion.

\begin{itemize}
 \item[12.]{comm accepts
objects and returns morphisms:
\[ (\forall x,y,z)(\text{comm}_{x,y} \myeq z \myimplies (x,y \in
\text{Ob} \myand z \in \text{Mor})) \] }

 \item[13.]{comm is a function
on pairs of objects:
\[ (\forall X,Y)(\exists!\phi)(\text{comm}_{X,Y} \myeq \phi) \]
}
\end{itemize}

If $\Phi(x)$ is a formula and $a,b$ variables, by
$\Phi(\text{comm}_{a,b})$ we mean the formula $(\forall
t)(\text{assoc}_{a,b} \myeq t \myimplies \Phi(t))$.

\begin{itemize}
\item[14.]{comm points where it should:
\[ (\forall X,Y)(\text{comm}_{X,Y}:X \otimes Y
\rightarrow Y \otimes X) \] }
\end{itemize}

\begin{itemize}

 \item[15.]{comm is always an
isomorphism:
\[ (\forall X,Y)(\text{iso}(\text{comm}_{X,Y})) \]
}

\item[16.]{comm is a natural transformation:
\[ (\forall X,Y,R,S)(\forall \phi:X \rightarrow R,\psi:Y
\rightarrow S)(\text{the following commutes:}
\]
\begin{diagram}
X \otimes Y & \rTo^{\text{comm}_{X,Y}} & Y \otimes X \\
\dTo^{\phi \otimes \psi} & & \dTo_{\psi \otimes \phi} \\
R \otimes S & \rTo_{\text{comm}_{R,S}} & S \otimes R \\
\end{diagram}
}
\end{itemize}

\begin{itemize}
 \item[17.]{unit accepts
objects and returns morphisms:
\[ (\forall x,y)(\text{unit}_{x} \myeq y \myimplies (x \in
\text{Ob} \myand y \in \text{Mor})) \] }

 \item[18.]{unit is a function
on objects:
\[ (\forall X)(\exists!\phi)(\text{unit}_{X} \myeq \phi) \]
}
\end{itemize}

If $\Phi(x)$ is a formula and $a$ a variable, by
$\Phi(\text{unit}_{a})$ we mean the formula $(\forall
t)(\text{unit}_{a} \myeq t \myimplies \Phi(t))$.
\begin{itemize}
\item[19.]{unit is always an isomorphism:
\[ (\forall X)(\text{iso}(\text{unit}_X)) \]}
\end{itemize}

For unit, we must make the additional assertion that there exists
an identity object for $\otimes$.
\begin{itemize}
 \item[20.]{unit has an identity
object associated to it:
\[ (\exists U)(\forall X)(\text{unit}_X:X \rightarrow U \otimes
X) \]}
\end{itemize}

We define the formula $\text{id}^\otimes(u)$ to mean that $u$ is
an \index{identity object} identity object associated to unit.
That is, $(\forall X)(\text{unit}_X: X \rightarrow u \otimes X)$.
\begin{itemize}
\item[21.]{unit is a natural transformation:
\[ (\forall X,Y)(\forall \phi:X \rightarrow Y)(\forall U)(\text{id}^\otimes(U) \myimplies \text{the following commutes:}
\]
\begin{diagram}
X & \rTo^{\text{unit}_{X}} & U \otimes X \\
\dTo^{\phi} & & \dTo_{1_U \otimes \phi} \\
Y & \rTo_{\text{unit}_{Y}} & U \otimes Y \\
\end{diagram}
}
\end{itemize}

We must now assert that the functor $X \mapsto
\text{id}^\otimes(u) \otimes X$ is an equivalence, which is to say
that it is full, faithful, and essentially surjective.  Essential
surjectivity is already asserted by previous axioms: for every
$X$, $X$ is isomorphic to $\text{id}^\otimes(u) \otimes X$.  Thus
we must assert that it is full and faithful.
\begin{itemize}
 \item[22.]{The functor $X \mapsto
\text{id}^\otimes(u) \otimes X$ is full and faithful:
\[ (\forall \phi)(\forall X,Y,U)((\text{id}^\otimes(U) \myand \phi:U \otimes X \rightarrow U
\otimes Y) \myimplies (\exists ! \psi)(\psi:X \rightarrow X \myand
1_X \otimes \psi = \phi)) \] }
\end{itemize}

All that is left then is to assert the various coherence
conditions among assoc, comm, and unit, conditions 4, 5, and 6 in
definition \ref{defntensorcategory}.  All of these statements are
of the form, for some fixed $n$, ``for all objects $X_1, \ldots,
X_n$, the following diagram commutes.''  These are plainly
first-order, so we do not repeat them.

\section{Axioms for a Tannakian Category}

 Here we must assert the \index{category!rigid abelian tensor} rigidity of the
abelian tensor category $\catfont{C}$ (definition
\ref{defnrigidity}) and that $\text{End}(\underline{1})$ is a
field, modulo all of our previous axioms. The first condition for
rigidity is the existence of an \index{internal Hom} internal Hom
object for every pair of objects (see page
\pageref{internalHomdefnPage}). This is by definition an object
\index{$\intHom(V,W)$} $\intHom(X,Y)$ such that the functors
$\text{Hom}(\slot,X \otimes Y)$ and
$\text{Hom}(\slot,\intHom(X,Y))$ are naturally isomorphic. Here is
how we can define this in a first-order fashion.

 Suppose that $Z$ is an internal Hom object
for $X$ and $Y$. Then we have a natural isomorphism of functors
$\text{Hom}(\slot,Z) \stackrel{\Phi}{\longrightarrow}
\text{Hom}(\slot \otimes X, Y)$. The Yoneda lemma (page
\pageref{YonedaLemma}) guarantees that this map $\Phi$ must take
the following form: for an object $T$ and map $T
\stackrel{\phi}{\longrightarrow} Z$, $\Phi_T(\phi)$ is the unique
map making the diagram
\begin{diagram}
T \otimes X & & \\
\dTo^{\phi \otimes 1} & \rdTo^{\Phi_T(\phi)} & \\
Z \otimes X & \rTo_{\text{ev}} & Y \\
\end{diagram}
commute, where we have given the name $\text{ev}$ to the element
$\Phi_Z(1_Z) \in \text{Hom}(Z \otimes X,Y)$.  And in fact, any
such map $Z \otimes X \rightarrow Y$ gives you a natural
transformation between the functors $\text{Hom}(\slot,Z)$ and
$\text{Hom}(\slot \otimes X, Y)$.  Thus, to assert the existence
of a natural isomorphism, we need only assert the existence of a
map $\text{ev}:Z \otimes X \rightarrow Y$ such that the natural
transformation $\Phi$ it defines gives a bijection
$\text{Hom}(T,Z) \stackrel{\Phi_T}{\vlongrightarrow} \text{Hom}(T
\otimes X,Y)$ for every $T$. We therefore define the formula
$\intHom(Z,\text{ev};X,Y)$ to mean that the object $Z$ and
morphism $\text{ev}$ form an internal Hom pair for $X$ and $Y$:
\begin{gather*} (\text{ev}:Z \otimes X \rightarrow Y) \myand (\forall
T)(\forall \psi:T \otimes X \rightarrow Y)(\exists! \phi:T
\rightarrow Z)(\text{the following commutes: } \\
\begin{diagram}
T \otimes X & & \\
\dTo^{\phi \otimes 1} & \rdTo^{\psi} & \\
Z \otimes X & \rTo_{\text{ev}} & Y \\
\end{diagram}
\end{gather*}
\begin{itemize}
\item[1.]{Every pair of objects has an internal Hom: \[ (\forall
X,Y)(\exists Z)(\exists \phi)(\intHom(Z,\phi;X,Y)) \] }
\end{itemize}

\label{intHomisFODpage} Recall now the definition of
\index{reflexive object} reflexivity of the object $X$ (see page
\pageref{reflexiveObjectdefnPage}); it is the assertion that a
certain map $\iota_X:X \rightarrow X^{\vee \vee}$ is an
isomorphism.  This map is defined by the property that it uniquely
makes
\begin{diagram}
X \otimes \dual{X}  & & & & \\
 & \rdTo^{\text{comm}} & & & \\
 \dTo^{\iota_X \otimes 1} & & \dual{X} \otimes X & & \\
 & & & \rdTo^{\text{ev}_X} & \\
 X^{\vee \vee} \otimes X^{\vee} &  & \rTo_{\text{ev}_{X^\vee}} & &
 \underline{1} \\
\end{diagram}
commute.  But of course $\iota_X$ is not really unique, since
neither is e.g.~$\dual{X}$, since there are in general many
(mutually isomorphic) choices for internal Hom. But for a fixed
choice of the various internal Hom objects referenced in this
diagram, it is unique.  We therefore define $\text{incl}(\iota;X)$
to mean that $\iota$ qualifies as one of these maps:
\begin{gather*}
(\exists T,R,U)(\exists
\phi,\psi)(\text{id}^\otimes(U) \myand \intHom(T,\phi;X,U) \myand
\intHom(R,\psi;T,U) \\
\myand \iota:X \rightarrow R \myand \text{the following commutes:
} \\
\begin{diagram}
X \otimes T  & & & & \\
 & \rdTo^{\text{comm}_{X,T}} & & & \\
 \dTo^{\iota \otimes 1_T} & & T \otimes X & & \\
 & & & \rdTo^{\phi} & \\
 R \otimes T &  & \rTo_{\psi} & & U \\
\end{diagram}
\end{gather*}
\begin{itemize}
\item[2.]{All objects are reflexive:
\[ (\forall X)(\forall \iota)(\text{incl}(\iota;X) \myimplies
\text{iso}(\iota)) \] }
\end{itemize}

Our last task in defining rigidity is to assert that the map
$\Phi$ referenced in diagram \ref{rigidity3} is an isomorphism.
First, given objects $R,S,T$ and $U$, let us define the
isomorphism $(R \otimes S) \otimes (T \otimes U) \isomorphic (R
\otimes T) \otimes (S \otimes U)$ referenced in diagram
\ref{CommAssocCompEquation}.  This is gotten by composition of the
following sequence of commutativity and associativity isomorphisms
(the subscripts of which we suppress): \begin{gather*}  (R \otimes
S) \otimes (T \otimes U) \stackrel{\text{assoc}}{\vlongrightarrow}
R \otimes (S \otimes (T \otimes U)) \stackrel{1 \otimes
\text{assoc}}{\vvlongrightarrow} R \otimes ((S \otimes T) \otimes
U) \\
 \stackrel{1 \otimes ( \text{comm} \otimes 1)}{\vvlongrightarrow}
R \otimes (( T \otimes S) \otimes U)
\stackrel{\text{assoc}}{\vlongrightarrow} (R \otimes ( T \otimes
S)) \otimes U \stackrel{\text{assoc} \otimes 1}{\vvlongrightarrow}
(( R \otimes T) \otimes S) \otimes U)
\\
 \stackrel{\text{assoc}}{\vlongrightarrow} (R \otimes T) \otimes
(S \otimes U)
\end{gather*}
 Define the formula $\text{ISO}(R,S,T,U,\Psi)$ to
mean that $\Psi$ is the above composition with respect to the
objects $R,S,T$ and $U$.  Next we must define the map $\Phi$
defined by the commutativity of diagram \ref{rigidity3}.  This is
done using a similar strategy to that used to define reflexivity.
Define the formula $\text{QUAD}(X_1,X_2,Y_1,Y_2,\Phi)$ to mean
that $\Phi$ qualifies as one of the maps referenced in diagram
\ref{rigidity3} with respect to the objects $X_1,X_2,Y_1,Y_2$:
\begin{gather*}(\exists Z_1,Z_2,Z)(\exists
\text{ev}_1,\text{ev}_2,\text{ev},\Psi)(\intHom(Z_1,\text{ev}_1;X_1,Y_1)
\myand \intHom(Z_2,\text{ev}_2;X_2,Y_2) \\  \myand
\intHom(Z,\text{ev};X_1 \otimes X_2,Y_1 \otimes Y_2) \myand
\text{ISO}(Z_1,Z_2,X_1,X_2,\Psi) \myand \text{the following
commutes:} \\
\begin{diagram}
(Z_1 \otimes Z_2) \otimes (X_1 \otimes X_2) & \rTo^\Psi & (Z_1
\otimes X_1) \otimes (Z_2 \otimes X_2) \\
\dTo^{\Phi \otimes 1} & & \dTo_{\text{ev}_1 \otimes \text{ev}_2}
\\
Z \otimes (X_1 \otimes X_2) & \rTo_{\text{ev}} & Y_1 \otimes Y_2
\\
\end{diagram}
\end{gather*}
\begin{itemize}
\item[3.]{For all objects $X_1,X_2,Y_1,Y_2$, the map $\Phi$ in
diagram \ref{rigidity3} is an isomorphism:
\[(\forall X_1,X_2,Y_1,Y_2)(\forall
\Phi)(\text{QUAD}(X_1,X_2,Y_1,Y_2,\Phi) \myimplies \text{$\Phi$ is
an isomorphism}) \] }
\end{itemize}

 Finally, we have to assert that the ring
$\text{End}(\underline{1})$ is a field.  Given any object $X$, our
previous axioms already assert that $\text{End}(X)$ is a ring with
unity, so let $\text{field}(X)$ be the assertion that
$\text{End}(X)$ is a commutative ring with inverses:
\[ (\forall \phi,\psi:X \rightarrow X)( \phi \circ \psi = \psi
\circ \phi) \myand (\forall \phi:X \rightarrow X)(\mynot(\phi = 0)
\myimplies (\exists \psi)(\phi \circ \psi = 1_X)) \]
\begin{itemize}
\item[4.]{$\text{End}(\underline{1})$ is a field:
\[ (\forall U)(\text{id}^\otimes(U) \implies \text{field}(U)) \]
}
\end{itemize}

We have proved that the statement ``is a tannakian category'' is
expressible by a first-order sentence in the language of abelian
tensor categories.

\chapter{Subcategories of Tannakian Categories}

In this chapter we record some results on (abelian, tannakian)
subcategories of (abelian, tannakian) categories which will be
needed later. For instance we will prove useful criteria which
allow us to conclude that a given subcategory of a neutral
tannakian category is also neutral tannakian.

\begin{prop}
\label{tannakiansubcategorylemma}
 Let $\catfont{C}$ be a full
subcategory of the tannakian category $\catfont{D}$.  Then
$\catfont{C}$ is tannakian if it is closed under the taking of
biproducts, subobjects, quotients, tensor products, duals, and
contains an identity object.
\end{prop}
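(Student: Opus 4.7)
The plan is to verify, in order, each of the defining axioms for a tannakian category recorded in sections \ref{axiomsforanabeliancategory} and \ref{defntannakian}, exploiting fullness of $\catfont{C}$ in $\catfont{D}$ to transfer the required structure and equations downward from $\catfont{D}$. Throughout, fullness means that for all $X,Y \in \catfont{C}$, $\text{Hom}_\catfont{C}(X,Y) = \text{Hom}_\catfont{D}(X,Y)$, so the abelian group structure on Hom-sets and the bilinearity of composition are automatic.

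First, I would check that $\catfont{C}$ is an abelian subcategory of $\catfont{D}$. Biproducts exist in $\catfont{C}$ by hypothesis. For any $\catfont{C}$-morphism $\phi:A \to B$, its $\catfont{D}$-kernel is a subobject of $A$ and its $\catfont{D}$-cokernel is a quotient of $B$, so both lie in $\catfont{C}$ by the subobject/quotient closure hypotheses; fullness then makes them kernels and cokernels in $\catfont{C}$ as well. Normality of monos and epis in $\catfont{C}$ follows: a $\catfont{C}$-monomorphism is certainly a $\catfont{D}$-monomorphism, hence equals the kernel of its $\catfont{D}$-cokernel, and that cokernel is in $\catfont{C}$, so it exhibits the mono as normal in $\catfont{C}$; dually for epimorphisms.

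Next I would verify the abelian tensor category axioms of definition \ref{defntensorcategory}. Closure under $\otimes$ is given. The identity object $\underline{1}$ lies in $\catfont{C}$ by hypothesis, and the natural isomorphisms $\text{assoc}_{X,Y,Z}$, $\text{comm}_{X,Y}$, $\text{unit}_X$, being morphisms in $\catfont{D}$ between objects of $\catfont{C}$, automatically lie in $\catfont{C}$ by fullness; the pentagon, hexagon and unit-coherence diagrams already commute in $\catfont{D}$ and hence in $\catfont{C}$. Bi-additivity of $\otimes$ and the equivalence property $X \mapsto \underline{1} \otimes X$ likewise descend.

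The main step is rigidity (definition \ref{defnrigidity}). The essential point is that in any rigid tensor category one has a canonical isomorphism $\intHom(X,Y) \isomorphic \dual{X} \otimes Y$, so since $\catfont{C}$ is closed under duals and tensor products, for each pair $X,Y \in \catfont{C}$ there is an object $Z = \dual{X} \otimes Y \in \catfont{C}$ together with an evaluation morphism $\text{ev}:Z \otimes X \to Y$ lying in $\catfont{C}$ (again by fullness). To check that $(Z,\text{ev})$ represents $\text{Hom}(\slot \otimes X, Y)$ inside $\catfont{C}$, note that for any $T \in \catfont{C}$, $\text{Hom}_\catfont{C}(T,Z) = \text{Hom}_\catfont{D}(T,Z)$ and $\text{Hom}_\catfont{C}(T \otimes X,Y) = \text{Hom}_\catfont{D}(T \otimes X, Y)$ by fullness, and $\catfont{D}$-representability transfers this to $\catfont{C}$. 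Reflexivity of each $X \in \catfont{C}$ and the isomorphism condition $\intHom(X_1,Y_1) \otimes \intHom(X_2,Y_2) \isomorphic \intHom(X_1 \otimes X_2, Y_1 \otimes Y_2)$ are statements asserting that certain morphisms between objects of $\catfont{C}$ are isomorphisms; because these morphisms and their $\catfont{D}$-inverses all live in $\catfont{C}$ by fullness, these conditions descend intact.

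Finally, since $\underline{1} \in \catfont{C}$ and $\text{End}_\catfont{C}(\underline{1}) = \text{End}_\catfont{D}(\underline{1})$ by fullness, this ring is a field. The main obstacle I anticipate is the rigidity step, specifically making sure one has the right presentation of $\intHom$ as $\dual{X} \otimes Y$ (or at least as a subobject/quotient of some tensor product of objects already in $\catfont{C}$) so that the internal Hom constructed in $\catfont{D}$ genuinely lies in $\catfont{C}$; the rest of the verification is essentially bookkeeping carried along by the fullness hypothesis.
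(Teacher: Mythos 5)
Your overall plan matches the paper's proof closely: establish abelianness first, then the tensor-category structure, then rigidity via the identification $\intHom(X,Y) \isomorphic \dual{X} \otimes Y$, then $\text{End}(\underline{1})$ a field, using fullness of $\catfont{C}$ in $\catfont{D}$ to carry everything down. The abelian-category and tensor-category bookkeeping, the handling of kernels and cokernels, the treatment of rigidity, and the closing observation about $\text{End}(\underline{1})$ are all sound.

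However, there is one genuine gap at the crux of the abelianness argument: you assert that ``a $\catfont{C}$-monomorphism is certainly a $\catfont{D}$-monomorphism,'' but this is exactly the direction that is \emph{not} automatic for a full subcategory. Being $\catfont{C}$-monic only requires right-cancellation against the objects of $\catfont{C}$, while being $\catfont{D}$-monic requires it against all objects of $\catfont{D}$; a priori the former is strictly weaker. (The reverse implication, $\catfont{D}$-monic $\Rightarrow$ $\catfont{C}$-monic for a morphism between $\catfont{C}$-objects, is the automatic one.) This is where the closure hypotheses actually do the work, and the paper devotes a paragraph to it: given a $\catfont{D}$-morphism $\psi:X \to A$ with $\psi \circ \phi = 0$, one factors $\psi$ through its image as an epimorphism $\pi$ followed by a monomorphism $\iota:C \rightarrowtail A$; the image $C$ is a subobject of $A$ and hence lies in $\catfont{C}$, so the $\catfont{C}$-monic property of $\phi$ forces $\iota = 0$, whence $\psi = 0$. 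Alternatively, since you have already established that the $\catfont{D}$-kernel of a $\catfont{C}$-morphism lies in $\catfont{C}$ and agrees with its $\catfont{C}$-kernel, you could argue: a $\catfont{C}$-monomorphism has zero $\catfont{C}$-kernel, hence zero $\catfont{D}$-kernel, hence is a $\catfont{D}$-monomorphism since $\catfont{D}$ is abelian. Either route closes the gap; once closed, your use of the $\catfont{D}$-cokernel of $\phi$ itself (rather than the paper's auxiliary image factorization of a witnessing map $\psi$) to exhibit normality is a slightly tidier variant.
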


\begin{proof}
 We first show that $\catfont{C}$ is abelian.
$\catfont{C}$ is a full subcategory of $\catfont{D}$, and so
obviously Hom-sets still have the structure of an abelian group,
and composition is still bilinear. Consequently, being a zero or
identity morphism in $\catfont{C}$ is coincident with being one in
$\catfont{D}$. If $A \oplus B$ is the $\catfont{D}$-biproduct of
the $\catfont{C}$-objects $A$ and $B$, then we have a
$\catfont{D}$-diagram
\begin{diagram}
A & & & & B \\
 & \luTo_{\pi_A} & & \ruTo_{\pi_B} & \\
 & & A \oplus B & & \\
 & \ruTo^{\iota_A} & & \luTo^{\iota_B} & \\
 A & & & & B \\
\end{diagram}
such that $\pi_A \circ \iota_A + \pi_B \circ \iota_B = 1_{A \oplus
B}$, $\iota_A \circ \pi_A = 1_A$, $\iota_B \circ \pi_B = 1_B$,
$\iota_A \circ \pi_B = 0$, and $\iota_B \circ \pi_A = 0$.  But all
these maps exist in $\catfont{C}$ as well, along with the given
relations, so this diagram constitutes a $\catfont{C}$-biproduct
for $A$ and $B$.

Let $A \stackrel{\phi}{\longrightarrow} B$ be a
$\catfont{C}$-morphism, and $K \stackrel{k}{\longrightarrow} A$
its $\catfont{D}$-kernel; this map exists in $\catfont{C}$ as
well, since $K$ is a subobject of $A$. If $L
\stackrel{\psi}{\longrightarrow} A$ is any $\catfont{C}$-morphism
with $\psi \circ \phi = 0$ in $\catfont{C}$, then this composition
is zero in $\catfont{D}$ as well; consequently, there is a unique
morphism $L \stackrel{\bar{\psi}}{\vlongrightarrow} K$ such that
$\bar{\psi} \circ k = \psi$.  As $\catfont{C}$ is full, this map
$\bar{\psi}$ exists also in $\catfont{C}$, and is clearly still
unique.  Thus $k$ is a $\catfont{C}$-kernel for $\phi$ as well,
which shows that all kernels exist in $\catfont{C}$.  An analogous
proof holds for the existence of cokernels, using the fact that
$\catfont{C}$ is closed under quotients.

Let $A \stackrel{\phi}{\longrightarrow} B$ be a
$\catfont{C}$-monomorphism; we claim that it is also a
monomorphism in $\catfont{D}$.  Let $X
\stackrel{\psi}{\longrightarrow} A$ be any $\catfont{D}$-morphism
such that $\psi \circ \phi = 0$; we wish to show that $\psi = 0$.
As every morphism in an abelian category factors through an
epimorphism and a monomorphism (page 199 of \cite{maclane}), we
have a commutative diagram
\begin{diagram}
 & & C & & & & \\
 & \ruTo^\pi & & \rdTo^\iota & & & \\
 X & & \rTo_\psi & & A & \rTo_\phi & B \\
\end{diagram}
where $\pi$ and $\iota$ are a
$\catfont{D}$-epimorphism/monomorphism respectively, giving $\pi
\circ \iota \circ \phi = 0$.  As $\pi$ is epic, we have $\iota
\circ \phi = 0$.  But $C$ is a subobject of $A$, hence a member of
$\catfont{C}$, and so by the $\catfont{C}$-monomorphic property of
$\phi$, $\iota = 0$.  Thus $\psi = \pi \circ \iota$ equals $0$ as
well, and we have shown that $\phi$ is a
$\catfont{D}$-monomorphism.

So if $A \stackrel{\phi}{\longrightarrow} B$ is any
$\catfont{C}$-monomorphism, it is also a
$\catfont{D}$-monomorphism, thus normal in $\catfont{D}$.  Then
let $B \stackrel{\psi}{\longrightarrow} C$ be the
$\catfont{D}$-morphism for which $\phi$ is the
$\catfont{D}$-kernel.  Again $\psi$ factors through an epimorphism
and a monomorphism, and we have a commutative diagram
\begin{diagram}
& & & & X& & \\
& & & \ruTo^\pi & & \rdTo^\iota & \\
A & \rTo_\phi & B & & \rTo_\psi & & C \\
\end{diagram}
The map $B \stackrel{\pi}{\longrightarrow} X$ exists in
$\catfont{C}$, $X$ being a quotient of $B$.  We claim that $\phi$
is a $\catfont{C}$-kernel for $\pi$.  If $L
\stackrel{\eta}{\longrightarrow} B$ is any $\catfont{C}$-map such
that $\eta \circ \pi = 0$, then also $\eta \circ \psi = \eta \circ
\pi \circ \iota = 0$; as $\phi$ is a kernel for $\psi$, there is a
unique map $L \stackrel{\bar{\eta}}{\vlongrightarrow} A$ such that
$\bar{\eta} \circ \phi = \eta$, which satisfies the universal
property of $\phi$ being a $\catfont{C}$-kernel for $\pi$.
Therefore all monomorphisms are normal in $\catfont{C}$. An
analogous argument shows that all epimorphisms in $\catfont{C}$
are normal.  Therefore $\catfont{C}$ is an abelian category.

Since the tensor product of two objects in $\catfont{C}$ is also
in $\catfont{C}$, so also is the tensor product of two morphisms,
since $\catfont{C}$ is full.  For objects $A,B$ and $C$ of
$\catfont{C}$, the associativity map $(A \otimes B) \otimes C
\stackrel{\text{assoc}_{A,B,C}}{\vvlongrightarrow} A \otimes (B
\otimes C)$ exists in $\catfont{C}$, and is clearly still natural.
Just as `monomorphic' and `epimorphic' are identical concepts in
$\catfont{C}$ and $\catfont{D}$, so is `isomorphic', and thus
assoc is a natural isomorphism in $\catfont{C}$.  Analogous
statements hold for the requisite isomorphisms comm and unit, the
latter existing in $\catfont{C}$ since the identity element of
$\catfont{D}$ is stipulated to exist in $\catfont{C}$.  The
coherence conditions 4., 5.~and 6.~of definition
\ref{defntensorcategory} clearly also still hold, as well as the
bilinearity of $\otimes$.

In any tensor category, one can in fact identify the object
$\intHom(A,B)$ with $\dual{A} \otimes B$; as duals are assumed
exist in $\catfont{C}$, so also do all internal Homs, as well as
the requisite `ev' maps since $\catfont{C}$ is full. The remaining
conditions of definition \ref{defnrigidity} merely stipulate that
certain maps must be isomorphisms; as $\catfont{C}$ is full, these
maps also exist in $\catfont{C}$, and are isomorphisms since they
are in $\catfont{D}$.  And of course, $\text{End}(\underline{1})$
is still a field. This completes the proof.

\end{proof}

\begin{lem}
\label{pullbackpushoutlemma} Let $\catfont{C}$ be a full abelian
subcategory of the abelian category $\catfont{D}$ which is closed
under the taking of biproducts, subobjects, and quotients.

\begin{enumerate}
\item{The $\catfont{C}$-diagram
\begin{diagram}
X & \rTo^{\pi_1} & X_1 \\
\dTo^{\pi_2} & & \dTo_{\phi_1} \\
X_2 & \rTo_{\phi_2} & Z \\
\end{diagram}
is a $\catfont{C}$-pullback for $X_2
\stackrel{\phi_2}{\longrightarrow} Z
\stackrel{\phi_1}{\longleftarrow} X_1$ if and only if it is also a
$\catfont{D}$-pullback for $X_2 \stackrel{\phi_2}{\longrightarrow}
Z \stackrel{\phi_1}{\longleftarrow} X_1$}
 \item{The
$\catfont{C}$-diagram
\begin{diagram}
X & \lTo^{\iota_1} & X_1 \\
\uTo^{\iota_2} & & \uTo_{\psi_1} \\
X_2 & \lTo_{\psi_2} & Z \\
\end{diagram}
is a $\catfont{C}$-pushout for $X_2
\stackrel{\psi_2}{\longleftarrow} Z
\stackrel{\psi_1}{\longrightarrow} X_1$ if and only if it is also
a $\catfont{D}$-pushout for $X_2 \stackrel{\psi_2}{\longleftarrow}
Z \stackrel{\psi_1}{\longrightarrow} X_1$}
\end{enumerate}
\end{lem}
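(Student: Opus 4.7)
My plan is to reduce both parts to the facts already established about biproducts, kernels, and cokernels in the proof of proposition \ref{tannakiansubcategorylemma}, by using the standard formula that expresses pullbacks and pushouts in any abelian category in terms of those constructions.

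Recall that in any abelian category, the pullback of $X_2 \stackrel{\phi_2}{\longrightarrow} Z \stackrel{\phi_1}{\longleftarrow} X_1$ can be exhibited as follows: form the biproduct $X_1 \oplus X_2$ with projections $p_1, p_2$, take the kernel $K \stackrel{k}{\longrightarrow} X_1 \oplus X_2$ of the morphism $p_1 \circ \phi_1 - p_2 \circ \phi_2: X_1 \oplus X_2 \rightarrow Z$, and set $\pi_i = k \circ p_i$. In the proof of proposition \ref{tannakiansubcategorylemma} it was already shown that under the hypotheses of the present lemma, a $\catfont{D}$-biproduct of two $\catfont{C}$-objects is automatically a $\catfont{C}$-biproduct, and a $\catfont{D}$-kernel of a $\catfont{C}$-morphism is automatically a $\catfont{C}$-kernel (the dual statement for cokernels was also established, using closure under quotients). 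Moreover, since $\catfont{C}$ is full, a $\catfont{D}$-isomorphism between objects of $\catfont{C}$ is automatically a $\catfont{C}$-isomorphism.

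With these ingredients the proof of part 1 is essentially automatic. First I would apply the construction above inside $\catfont{D}$ to produce a universal pullback square for $\phi_1, \phi_2$; because the biproduct and the kernel involved in the construction coincide with their $\catfont{C}$-counterparts, this same square is also a $\catfont{C}$-pullback. Then, given any $\catfont{C}$-square as in the statement, I would use standard category-theoretic uniqueness: if the square is a $\catfont{D}$-pullback, it must be $\catfont{D}$-isomorphic to the constructed one, hence $\catfont{C}$-isomorphic to it by fullness, hence a $\catfont{C}$-pullback; conversely, if the square is a $\catfont{C}$-pullback, the same uniqueness argument in $\catfont{C}$ identifies it with the constructed square, which is already a $\catfont{D}$-pullback.

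Part 2 is handled by the dual argument: the pushout of $X_2 \stackrel{\psi_2}{\longleftarrow} Z \stackrel{\psi_1}{\longrightarrow} X_1$ may be built as the cokernel of $Z \rightarrow X_1 \oplus X_2$ sending $z$ to $(\psi_1(z), -\psi_2(z))$ (categorically, $i_1 \circ \psi_1 - i_2 \circ \psi_2$ for biproduct injections $i_1, i_2$), and the same hypotheses on $\catfont{C}$ force this cokernel, together with the biproduct, to agree in $\catfont{C}$ and $\catfont{D}$. There is no real obstacle here; the only thing to be careful about is invoking fullness at the right moment so that the isomorphism identifying a candidate square with the constructed (co)limit lives in $\catfont{C}$ rather than merely in $\catfont{D}$.
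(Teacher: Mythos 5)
Your proof is correct and follows essentially the same route as the paper: both reduce the statement to the already-established fact (from proposition~\ref{tannakiansubcategorylemma}) that biproducts and kernels/cokernels in $\catfont{C}$ and $\catfont{D}$ coincide, and both invoke uniqueness of (co)limits plus fullness to transfer the universal property. The only cosmetic difference is that the paper proves the forward direction (``$\catfont{D}$-pullback $\Rightarrow$ $\catfont{C}$-pullback'') by directly checking the universal property via fullness, and constructs a reference pullback as a subobject of $X_1\oplus X_2$ only for the converse, whereas you build the reference pullback explicitly as $\ker(p_1\circ\phi_1 - p_2\circ\phi_2)$ up front and route both directions through it via uniqueness.
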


\begin{proof}
We will prove this for pullbacks, leaving the pushout case to the
reader.  Suppose first that
\begin{diagram}
X & \rTo^{\pi_1} & X_1 \\
\dTo^{\pi_2} & & \dTo_{\phi_1} \\
X_2 & \rTo_{\phi_2} & Z \\
\end{diagram}
is a $\catfont{D}$-pullback diagram.  Let $T$ be an object of
$\catfont{C}$, and suppose we have a commutative diagram
\begin{diagram}
T & \rTo^{\rho_1} & X_1 \\
\dTo^{\rho_2} & & \dTo_{\phi_1} \\
X_2 & \rTo_{\phi_2} & Z \\
\end{diagram}
Then by the universal property of being a $\catfont{D}$-pullback,
there is a unique $\catfont{D}$-map $\rho:T \rightarrow X$ such
that $\rho \circ \pi_1 = \rho_1$ and $\rho \circ \pi_2 = \rho_2$.
As $\catfont{C}$ is full, this map $\rho$ exists in $\catfont{C}$
as well, satisfies these relations, and is clearly still unique.
Thus this diagram constitutes a $\catfont{C}$-pullback as well.

Conversely, suppose the above is a $\catfont{C}$-pullback diagram.
As $\catfont{D}$ is abelian, we know  that $X_1
\stackrel{\phi_1}{\longrightarrow} Z
\stackrel{\phi_2}{\longleftarrow} X_2$ \emph{has} a
$\catfont{D}$-pullback, say
\begin{diagram}
U & \rTo^{\mu_1} & X_1 \\
\dTo^{\mu_2} & & \dTo_{\phi_1} \\
X_2 & \rTo_{\phi_2} & Z \\
\end{diagram}
The proof of theorem 2.15 of \cite{freyd} shows that $U$ can
always be taken to be, up to isomorphism, a certain subobject of
$X_1 \oplus X_2$.  As $\catfont{C}$ is closed under the taking of
biproducts and subobjects, $U$ is an object of $\catfont{C}$, and
as $\catfont{C}$ is full, $\mu_1$ and $\mu_2$ are morphisms in
$\catfont{C}$; thus, this diagram belongs to $\catfont{C}$.  Then
by the above, as this diagram is a $\catfont{D}$-pullback, so also
is it a $\catfont{C}$-pullback. As any two pullbacks in an abelian
category are isomorphic up to a unique isomorphism, we must have
that
\begin{diagram}
T & \rTo^{\rho_1} & X_1 \\
\dTo^{\rho_2} & & \dTo_{\phi_1} \\
X_2 & \rTo_{\phi_2} & Z \\
\end{diagram}
is a $\catfont{D}$-pullback as well.
\end{proof}

\begin{lem}
\label{freydlemma}
 Let $\catfont{D}$ be an abelian category,
$\catfont{C}$ a non-empty full abelian subcategory of
$\catfont{D}$.  Then exact sequences in $\catfont{C}$ are also in
$\catfont{D}$ if and only if for every morphism $A
\stackrel{\phi}{\longrightarrow} B$ in $\catfont{C}$, there is a
$\catfont{D}$-kernel of $\phi$, $\catfont{D}$-cokernel of $\phi$,
and $\catfont{D}$-direct sum which all lie in $\catfont{C}$.
\end{lem}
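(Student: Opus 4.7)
The plan is to prove both directions by reducing everything to the statement that, under the hypothesis of the backward direction, $\catfont{C}$-kernels and $\catfont{C}$-cokernels coincide with $\catfont{D}$-kernels and $\catfont{D}$-cokernels. The key observation driving both halves is that because $\catfont{C}$ is full in $\catfont{D}$, any object of $\catfont{C}$ which satisfies a universal property over $\catfont{D}$ also satisfies it over $\catfont{C}$, since all the comparison maps required by the universal property automatically live in $\catfont{C}$.

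For the forward direction, assume exact sequences in $\catfont{C}$ remain exact in $\catfont{D}$. Given $\phi:A \to B$ in $\catfont{C}$, form the $\catfont{C}$-kernel $K \stackrel{k}{\to} A$, so that $0 \to K \to A \to B$ is exact in $\catfont{C}$; by hypothesis this sequence is exact in $\catfont{D}$, which is precisely the statement that $k$ is a $\catfont{D}$-kernel for $\phi$. The argument for cokernels is dual, using $A \to B \to C \to 0$. For the direct sum, form the $\catfont{C}$-biproduct $A \oplus B$ together with its projection/inclusion data; the split exact sequence $0 \to A \to A \oplus B \to B \to 0$ together with its splitting is exact in $\catfont{D}$, and since a biproduct in any abelian category is characterized (as in section \ref{axiomsforanabeliancategory}) by such inclusion/projection equations, $A \oplus B$ serves as a $\catfont{D}$-biproduct as well.

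For the backward direction, assume $\catfont{D}$-kernels, cokernels and direct sums of $\catfont{C}$-data all lie in $\catfont{C}$. First I would establish the key technical fact: if $K \stackrel{k}{\to} A$ is the $\catfont{D}$-kernel of $\phi:A \to B$ and happens to lie in $\catfont{C}$, then by fullness every $\catfont{C}$-morphism $T \to A$ killed by $\phi$ factors through $k$ via a unique $\catfont{D}$-morphism $T \to K$, which again by fullness lies in $\catfont{C}$. Thus $k$ is also a $\catfont{C}$-kernel for $\phi$, and by uniqueness of kernels we may identify $\ker^{\catfont{C}}(\phi) = \ker^{\catfont{D}}(\phi)$; the dual argument gives $\mathrm{coker}^{\catfont{C}}(\phi) = \mathrm{coker}^{\catfont{D}}(\phi)$. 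Since in any abelian category the image of $\phi$ is $\ker(\mathrm{coker}(\phi))$, iterating these identifications yields $\mathrm{im}^{\catfont{C}}(\phi) = \mathrm{im}^{\catfont{D}}(\phi)$ as subobjects of $B$.

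Now take an exact sequence $\cdots \to X_{i+1} \stackrel{\phi_{i+1}}{\to} X_i \stackrel{\phi_i}{\to} X_{i-1} \to \cdots$ in $\catfont{C}$. Exactness at $X_i$ means $\mathrm{im}^{\catfont{C}}(\phi_{i+1}) = \ker^{\catfont{C}}(\phi_i)$ as subobjects of $X_i$; applying the identifications from the previous paragraph termwise, this equality transports verbatim to $\mathrm{im}^{\catfont{D}}(\phi_{i+1}) = \ker^{\catfont{D}}(\phi_i)$, which is exactness at $X_i$ in $\catfont{D}$. The main obstacle is really the technical identification of $\catfont{C}$- and $\catfont{D}$-kernels/cokernels; once that is in place, exactness is a purely formal consequence, and the direct sum hypothesis, while natural to include for symmetry with the forward direction, is not strictly invoked in the exactness argument itself.
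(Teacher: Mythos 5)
Your proof is correct, and it supplies something the paper does not: the paper's ``proof'' consists entirely of the citation ``See theorem $3.41$ of \cite{freyd},'' so there is no argument to compare against. Your argument is the standard one and it holds up. The forward direction correctly reads the kernel universal property off the exact sequence $0 \to K \to A \to B$ (and dually for cokernels), and the biproduct claim is handled correctly, though it is worth noting the direct-sum part of the forward direction does not actually use the exactness hypothesis at all --- the biproduct equations are morphism identities, and since $\catfont{C}$ is a full additive subcategory with inherited Hom-group structure, they transfer to $\catfont{D}$ automatically. The backward direction's key step --- that a $\catfont{D}$-(co)kernel landing in $\catfont{C}$ is automatically a $\catfont{C}$-(co)kernel by fullness, so the two notions agree up to isomorphism --- is exactly right, and reducing exactness to $\mathrm{im} = \ker(\mathrm{coker})$ is the clean way to finish. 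Your parenthetical observation that the direct-sum hypothesis is never invoked in the backward direction is accurate in this setting; the condition appears in Freyd's formulation because his Theorem $3.41$ characterizes exactness of a general (not-assumed-additive) functor, where preserving finite biproducts is what forces additivity, but for a full additive subcategory that part comes for free. One very minor presentational point: when you assert $\ker^{\catfont{C}}(\phi) = \ker^{\catfont{D}}(\phi)$, you mean equality of the corresponding subobjects (i.e.\ there is a canonical isomorphism between the two representing monos), which is what the subsequent image comparison needs; this is implicit in your ``by uniqueness of kernels we may identify,'' but spelling out that it is an equality of subobject classes would make the last paragraph airtight.
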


\begin{proof}
See theorem 3.41 of \cite{freyd}.
\end{proof}

In the case of $1$-fold extensions in a $k$-linear abelian
category, we shall need something slightly stronger.

\begin{prop}
\label{linIndExtensionsInSubcategoriesProp}
 Let $\catfont{D}$ be a $k$-linear abelian category,
$\catfont{C}$ a non-empty full $k$-linear abelian subcategory of
$\catfont{D}$.  Let $M$ and $N$ be objects of $\catfont{C}$ and
let $\xi_1, \ldots, \xi_m$ be a sequence of $1$-fold extensions of
$N$ by $M$ in $\catfont{C}$.  Then the $\xi_j$ are linearly
independent in $\catfont{C}$ if and only if they are linearly
independent in $\catfont{D}$.
\end{prop}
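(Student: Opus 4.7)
The plan is to recast the claim as the injectivity of the natural map $\iota: \text{Ext}^1_{\catfont{C}}(N,M) \to \text{Ext}^1_{\catfont{D}}(N,M)$ obtained by viewing a $\catfont{C}$-extension as a $\catfont{D}$-extension. Linear independence of the $\xi_j$ in either category is the condition that $\sum c_j [\xi_j] \neq 0$ for every nonzero choice of scalars, so once we show $\iota$ is a $k$-linear injection, the two notions of linear independence coincide.

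First I would handle well-definedness and injectivity of $\iota$ by exploiting fullness. A $\catfont{C}$-equivalence between two $1$-fold extensions consists of a morphism $\phi_0: X_1 \to X_2$ commuting with the identity maps on $M$ and $N$; as noted in the excerpt, any such $\phi_0$ is automatically an isomorphism. Because $\catfont{C}$ is full in $\catfont{D}$, the set of candidate maps $X_1 \to X_2$ in $\catfont{C}$ equals the set in $\catfont{D}$, and isomorphism in a full subcategory is identical to isomorphism in the ambient category. Hence two extensions built from $\catfont{C}$-objects are $\catfont{C}$-equivalent if and only if they are $\catfont{D}$-equivalent, which gives both that $\iota$ is well-defined on equivalence classes and that it is injective.

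Next I would verify $k$-linearity of $\iota$ by checking that scalar multiplication and Baer sum are computed in the same way in $\catfont{C}$ and in $\catfont{D}$. Scalar multiplication is immediate: the extension $a\xi$ replaces only the first map by $\frac{1}{a}\phi$, and the $k$-linear structure on $\catfont{C}$-Hom-sets is inherited from $\catfont{D}$. For the Baer sum at $n=1$, the construction uses a pullback of $X_1, X_2$ over $M$, an induced map $N \to X$, a cokernel of that map, and a factorization map. Lemma \ref{pullbackpushoutlemma} tells us that the pullback computed in $\catfont{C}$ agrees with the one in $\catfont{D}$, and Lemma \ref{freydlemma} (applied to the morphism whose cokernel we need) tells us the same for cokernels. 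The factorization map is then the unique map with a prescribed universal property, so by fullness it is the same in either category. Consequently the Baer sum in $\catfont{C}$ is a $\catfont{D}$-extension equivalent to the Baer sum in $\catfont{D}$, so $\iota$ respects addition.

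The main obstacle is bookkeeping: I have to ensure that every intermediate object appearing in the Baer sum construction (the pullback $X$, the cokernel $Y$, and the maps $\phi'$, $\bar{\phi}$, $\tau$, $\psi$) genuinely lies in $\catfont{C}$ and matches its $\catfont{D}$-avatar under the inclusion. This is where the hypothesis that $\catfont{C}$ is an abelian subcategory of $\catfont{D}$ does the work, via Lemmas \ref{pullbackpushoutlemma} and \ref{freydlemma}, since those lemmas package exactly the statement that the ambient constructions stay inside $\catfont{C}$ and agree with the intrinsic ones. Once this is in place, $\iota$ is a $k$-linear injection and the equivalence of linear independence in $\catfont{C}$ and $\catfont{D}$ follows immediately.
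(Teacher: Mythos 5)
Your proposal follows essentially the same route as the paper's proof: construct the natural map $\text{Ext}^1_{\catfont{C}}(M,N) \to \text{Ext}^1_{\catfont{D}}(M,N)$, use fullness to show it is well-defined and injective on equivalence classes, and then verify $k$-linearity by checking that scalar multiplication and Baer sum are computed identically in both categories via Lemma~\ref{pullbackpushoutlemma}. The only cosmetic difference is that you invoke Lemma~\ref{freydlemma} for cokernel agreement where the paper points to the proof of Proposition~\ref{tannakiansubcategorylemma}; either reference serves the same purpose, and the structure of the argument is otherwise the same.
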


\begin{proof}
By `linearly independent', we mean with respect to the $k$-vector
space structure defined by the Baer sum on $\text{Ext}^1(M,N)$
(see section \ref{CohomologyOfComodulesSection}).

Denote by $\text{EXT}^1_{\catfont{C}}(M,N)$ the collection of all
$1$-fold extensions of $M$ by $N$ in $\catfont{C}$, and define
similarly $\text{EXT}^1_{\catfont{D}}(M,N)$ (this is different
from $\text{Ext}^1_{\catfont{C}}(M,N)$, which is the collection of
all \emph{equivalence classes} of $1$-fold extensions). By the
previous lemma, for every $\xi \in
\text{EXT}^1_{\catfont{C}}(M,N)$, $\xi$ is also a member of
$\text{EXT}^1_{\catfont{D}}(M,N)$, whence we have a map
$\text{EXT}^1_{\catfont{C}}(M,N) \rightarrow
\text{EXT}^1_{\catfont{D}}(M,N)$.

We claim firstly that this map respects equivalence of extensions.
If $\xi:0 \rightarrow N \rightarrow X \rightarrow M \rightarrow
0$, $\chi: 0 \rightarrow N \rightarrow Y \rightarrow M \rightarrow
0$ are two $\catfont{C}$-equivalent extensions, then we have a
$\catfont{C}$-isomorphism $\phi:X \rightarrow Y$ making
\begin{diagram}
\xi:0 & \rTo & N & \rTo & X & \rTo & M & \rTo & 0 \\
 & & \dEq & & \dTo^\phi & & \dEq \\
\chi:0 & \rTo & N & \rTo & X & \rTo & M & \rTo & 0 \\
\end{diagram}
commute.  But as $\phi$ is a $\catfont{D}$-isomorphism as well,
$\xi$ and $\chi$ are also $\catfont{D}$-equivalent.  Thus, our map
$\text{EXT}^1_{\catfont{C}}(M,N) \rightarrow
\text{EXT}^1_{\catfont{D}}(M,N)$ is actually a map
$\text{Ext}^1_{\catfont{C}}(M,N) \rightarrow
\text{Ext}^1_{\catfont{D}}(M,N)$. This map is injective, for if
$\xi$ and $\chi$ are $\catfont{D}$-equivalent according to the
above diagram, then they are also $\catfont{C}$-equivalent, since
the map $\phi$ exists in $\catfont{C}$.

What is left then is to verify that this map is linear.  Let $k$
be a scalar and $\xi:0 \rightarrow N
\stackrel{\phi}{\longrightarrow} X
\stackrel{\psi}{\longrightarrow} M \rightarrow 0$ a
$\catfont{C}$-extension of $M$ by $N$.  Then the scalar
multiplication of $k$, when $k \neq 0$, is defined to be
\[ k\xi:0 \rightarrow N \stackrel{k^{-1} \phi}{\longrightarrow}  X
\stackrel{\psi}{\longrightarrow} M \rightarrow 0 \]
and in case
$k=0$, as the trivial extension.  As scalar multiplication of
morphisms and trivial extensions are defined the same way in
$\catfont{C}$ as in $\catfont{D}$, so also is scalar
multiplication of extensions.

Let $\xi,\chi$ be two extensions.  To compute the Baer sum $\xi
\oplus \chi$, we are asked to compute a certain pullback, to
compute a pair of unique maps pushing through a pullback, to
compute a certain cokernel of one of these maps, to compute a
unique map pushing through a cokernel, and finally to compute a
certain composition.  Lemma \ref{pullbackpushoutlemma}, as well as
the proof of lemma \ref{tannakiansubcategorylemma}, show that all
of these constructions lead to the same answer whether done in
$\catfont{C}$ and $\catfont{D}$. We conclude that if $\xi \oplus
\chi = \eta$ in $\catfont{C}$, so also does this equation hold in
$\catfont{D}$. This completes the proof.
\end{proof}

\begin{lem}
A full tannakian subcategory of a neutral tannakian category is
also neutral (over the same field and via the restriction of the
same fibre functor).
\end{lem}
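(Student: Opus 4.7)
The plan is to show that the restriction $\omega' := \omega|_\catfont{C}$ is a fibre functor on $\catfont{C}$. First I would pin down the base field: since $\catfont{C}$ is a tannakian subcategory, it contains an identity object $\underline{1}_\catfont{C}$, which (being characterized by the tensor data $\catfont{C}$ inherits from $\catfont{D}$) may be identified with the identity object $\underline{1}$ of $\catfont{D}$. Fullness then gives $k := \text{End}_\catfont{D}(\underline{1}) = \text{End}_\catfont{C}(\underline{1})$, so $\omega'$ lands in $\text{Vec}_k$ where $k$ is precisely the field of $\catfont{C}$.

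Next I would dispatch $k$-linearity, tensor preservation, and faithfulness, each of which follows from the corresponding property of $\omega$ essentially by restriction. The $k$-linear structure on $\catfont{C}$-Hom-sets is the restriction of that on $\catfont{D}$-Hom-sets; the natural isomorphism $c_{X,Y}:\omega(X) \otimes \omega(Y) \isoarrow \omega(X \otimes Y)$ and the coherence diagrams of definition \ref{defntensorfunctor} restrict verbatim, using that $\otimes$, assoc, comm, and unit in $\catfont{C}$ are the restrictions of the corresponding data in $\catfont{D}$; and faithfulness of $\omega'$ is immediate from fullness and faithfulness of $\omega$.

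The main obstacle is exactness: I need that a short exact sequence in $\catfont{C}$ remains exact in $\catfont{D}$, for then exactness of $\omega$ on $\catfont{D}$ transfers to exactness of $\omega'$ on $\catfont{C}$. I would invoke lemma \ref{freydlemma}, which reduces this to the claim that for every $\catfont{C}$-morphism $\phi$ there exist a $\catfont{D}$-kernel, $\catfont{D}$-cokernel, and $\catfont{D}$-biproduct lying inside $\catfont{C}$. The biproduct case is immediate, because a biproduct is characterized by purely equational relations among $\iota, \pi, +, \circ$, and the identity morphisms (as written down in section \ref{axiomsforanabeliancategory}), and these equations hold in $\catfont{D}$ whenever they hold in the sub-structure $\catfont{C}$. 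For kernels and cokernels I would adapt the argument from the proof of proposition \ref{tannakiansubcategorylemma}: the $\catfont{C}$-kernel of $\phi$ is a $\catfont{C}$-monomorphism, hence (by exactly the factorization argument used there) a $\catfont{D}$-monomorphism, and its universal property combined with the uniqueness of kernels up to unique isomorphism then identifies it with the $\catfont{D}$-kernel, with the cokernel case dual. Once lemma \ref{freydlemma} is applicable, $\omega'$ is exact, completing the verification that $\omega'$ is a fibre functor and thus that $\catfont{C}$ is neutralized over $k$ by the restriction of $\omega$.
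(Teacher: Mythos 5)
Your overall strategy --- restrict $\omega$, check the tensor/linearity/faithfulness clauses by restriction, and reduce exactness to lemma \ref{freydlemma} --- is the same as the paper's, which supplies the hypothesis of lemma \ref{freydlemma} simply by citing proposition \ref{tannakiansubcategorylemma}. Your treatment of biproducts via the equational characterization is a clean and direct alternative, and arguably tidier than the paper's.

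The kernel/cokernel step, however, has a real gap. You show that a $\catfont{C}$-kernel $k:K \to A$ of $\phi$ is a $\catfont{D}$-monomorphism, and then assert that ``its universal property combined with the uniqueness of kernels up to unique isomorphism identifies it with the $\catfont{D}$-kernel.'' That inference does not follow. The universal property you possess for $k$ is a $\catfont{C}$-universal property, quantified only over test objects in $\catfont{C}$; a $\catfont{D}$-monomorphism that is a $\catfont{C}$-kernel is not automatically a $\catfont{D}$-kernel, and the uniqueness-up-to-unique-isomorphism lemma you want to invoke only compares two objects enjoying the \emph{same} universal property. The paper's argument in proposition \ref{tannakiansubcategorylemma} goes in the opposite direction, and that direction is the one that actually works: take the $\catfont{D}$-kernel of $\phi$, observe it is a subobject of $A$ and hence (using closure under subobjects) lies in $\catfont{C}$, and then use fullness to see that the $\catfont{D}$-universal property restricts to a $\catfont{C}$-universal property. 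If you insist on starting from the $\catfont{C}$-kernel, you can repair the step by extending its universality to all of $\catfont{D}$: given any $\catfont{D}$-morphism $\psi:X \to A$ with $\psi \circ \phi = 0$, factor $\psi$ through its image $\iota:C \to A$, note $C$ lies in $\catfont{C}$ by closure under subobjects, apply the $\catfont{C}$-universality to $\iota$, and precompose. But that extension has to be written out; it does not come for free from ``uniqueness of kernels.''
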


\begin{proof}
We are given a neutral tannakian category $\catfont{D}$ with fibre
functor $\omega:\catfont{D} \rightarrow \text{Vec}_k$, where $k$
is the field $\text{End}(\underline{1})$.  We want to show that
this functor restricted to $\catfont{C}$, which we still call
$\omega$, qualifies as a fibre functor on $\catfont{C}$. Looking
at the conditions of definition \ref{defntensorfunctor} it is easy
to verify that $\omega$ restricted to $\catfont{C}$ is still a
tensor functor. The requisite isomorphism $c_{X,Y} : \omega(A
\otimes B) \isomorphic \omega(A) \otimes \omega(B)$ still exists,
is natural, and still satisfies the relevant diagrams, since e.g.
assoc in $\catfont{C}$ is the same as assoc in $\catfont{D}$.
Faithfulness and $k$-linearity are also clearly still satisfied;
all that remains to check is exactness. By proposition
\ref{tannakiansubcategorylemma}, $\catfont{C}$ and $\catfont{D}$
satisfy the hypothesis of lemma \ref{freydlemma}, and thus exact
sequences in $\catfont{C}$ are also in $\catfont{D}$.  As $\omega$
preserves exact sequences in $\catfont{D}$, so must it also when
restricted to $\catfont{C}$.
\end{proof}

\chapter{Some Ultraproduct Constructions}

In the next chapter we shall be studying ultraproducts of
tannakian categories.  In this chapter we define several
ultraproduct constructions and record several results on them that
will soon be necessary; the learned reader may wish to treat this
chapter merely as a reference. The reader may also consult the
appendix for a review of ultrafilters and ultraproducts in
general.

In this dissertation, if $M_i$ is a collection of relational
structures in a common first-order signature, indexed by $I$, and
if \index{$\filtfont{U}$} $\filtfont{U}$ is a non-principal
ultrafilter on $I$, we denote by \index{$\uprod M_i$} $\uprod M_i$
the \index{ultraproduct} ultraproduct of those structures with
respect to $\filtfont{U}$.  For a tuple of elements $(x_i)$ from
the $M_i$, we denote by \index{$[x_i]$} $[x_i]$ its equivalence
class, that is, its image as an element of $\uprod M_i$. When we
make statements like ``the ultraproduct of vector spaces is a
vector space over the ultraproduct of the fields'', it will always
be the case that these ultraproducts, both for fields and vector
spaces, are being taken with respect to the same fixed ultrafilter
(as indeed it makes no sense to assume otherwise).

\section{Fields}

\label{ultraproductsoffields}

\index{ultraproduct!of fields}

Let $k_i$ be a sequence of fields indexed by $I$.  We treat the
$k_i$ as structures in the language \index{first-order language!of
fields} $+, \multsymbol, -, 0,1$ with the obvious interpretation.
For brevity we write the term $x \multsymbol y$ as the
juxtaposition $xy$.  As always we fix a non-principal ultrafilter
$\filtfont{U}$ on $I$ throughout.

\begin{prop}
\label{ultraprodoffieldsisafieldprop} $\uprod k_i$ is a field.
\end{prop}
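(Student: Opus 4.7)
The plan is to invoke \L os's theorem for ultraproducts: since the class of fields is axiomatizable by a set of first-order sentences in the signature $+, \multsymbol, -, 0, 1$, and every $k_i$ satisfies these sentences, the ultraproduct $\uprod k_i$ must also satisfy them.

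More concretely, I would list the field axioms: associativity and commutativity of $+$ and $\multsymbol$, distributivity, the unit laws for $0$ and $1$, existence of additive inverses, nondegeneracy $0 \neq 1$, and the existence of multiplicative inverses for nonzero elements, i.e.\ $(\forall x)(\mynot(x = 0) \myimplies (\exists y)(xy \myeq 1))$. Each of these is a universal or universal-existential first-order sentence in the language of fields, and by hypothesis holds in every $k_i$. Then \L os's theorem immediately gives that each holds in $\uprod k_i$, so $\uprod k_i$ is a field.

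If one prefers to avoid appealing to \L os in this preliminary section, the verification is entirely routine: the operations on $\uprod k_i$ are defined componentwise on representatives, so associativity, commutativity, distributivity, and the identity/inverse laws for addition descend immediately from the corresponding identities in each $k_i$. The only axiom requiring any thought is the existence of multiplicative inverses. For this, suppose $[a_i] \neq [0]$ in $\uprod k_i$; then the set $J = \{i \in I : a_i \neq 0\}$ lies in $\filtfont{U}$. Define $b_i = a_i^{-1}$ for $i \in J$ and $b_i = 0$ (or anything) for $i \notin J$; then $a_i b_i = 1$ on the set $J \in \filtfont{U}$, whence $[a_i][b_i] = [1]$ in $\uprod k_i$. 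Nondegeneracy $[0] \neq [1]$ follows because $\{i : 0 = 1 \text{ in } k_i\} = \emptyset \notin \filtfont{U}$.

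There is no real obstacle here; the only mild subtlety is the step for multiplicative inverses, where one must choose representatives $b_i$ off the $\filtfont{U}$-large set $J$, and observe that the resulting class $[b_i]$ is independent of this choice. This is exactly the place where the ultrafilter property (rather than merely filter property) is used implicitly through \L os's theorem, since inverses exist only on a \emph{large} set rather than on all of $I$.
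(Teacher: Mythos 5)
Your primary argument is exactly the paper's proof: list the field axioms as first-order sentences in the signature $+,\multsymbol,-,0,1$ and apply \L os's theorem (corollary \ref{loscor}). The optional direct componentwise verification you append is a harmless elaboration of the same idea.
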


\begin{proof}
Simply realize that the axioms for a field are first-order
sentences in this language:

\begin{enumerate}
\item{$(\forall x,y,z)((x+y)+z = x+(y+z) \myand x(yz)=(xy)z)$}
\item{$(\forall x,y)(x+y=y+x \myand xy=yx)$} \item{$(\forall x)(1x
= x \myand 0+x=x)$} \item{$(\forall x)(x+(-x) = 0)$}
\item{$(\forall x,y,z)(x(y+z) = xy+xz)$} \item{$(\forall
x)(\mynot(x=0) \myimplies (\exists y)(xy=1))$}
\item{$\mynot(1=0)$}
\end{enumerate}

Now apply corollary \ref{loscor}.

\end{proof}

\begin{prop}
Suppose that $k_i$ is a sequence of fields of strictly increasing
positive characteristic.  Then $\uprod k_i$ has characteristic
zero.
\end{prop}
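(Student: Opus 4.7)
The plan is to show, for each prime $p$, that $\uprod k_i$ does not have characteristic $p$, and then conclude that its characteristic must be zero. The essential observation is that ``does not have characteristic $p$'' is expressible as the first-order sentence
\[ \phi_p \stackrel{\text{def}}{=} \mynot(\underbrace{1 + 1 + \ldots + 1}_{p \text{ times}} = 0) \]
in the language of fields used in proposition \ref{ultraprodoffieldsisafieldprop}.

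First I would fix an arbitrary prime $p$ and observe that, since the characteristics of the $k_i$ are strictly increasing (hence unbounded as $i$ varies), only finitely many $k_i$ can fail $\phi_p$, namely those whose characteristic is exactly $p$. Thus $\phi_p$ is true in $k_i$ for all but finitely many $i$, and in particular for a $\filtfont{U}$-large set of indices (since $\filtfont{U}$ is non-principal, every cofinite subset of $I$ lies in $\filtfont{U}$). By \L o\'s's theorem (corollary \ref{loscor}), $\phi_p$ holds in $\uprod k_i$, so $\uprod k_i$ does not have characteristic $p$.

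Since $p$ was an arbitrary prime and $\uprod k_i$ is a field by proposition \ref{ultraprodoffieldsisafieldprop}, its characteristic is either zero or some prime; having ruled out every prime, it must be zero.

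I do not anticipate any real obstacle here: the argument is a textbook application of \L o\'s's theorem together with the non-principality of $\filtfont{U}$. The only mild subtlety is verifying that the set $\{i : k_i \models \phi_p\}$ is cofinite, which uses the hypothesis of \emph{strictly increasing} characteristic (so that characteristic $p$ occurs for at most one index, or in any case only finitely many).
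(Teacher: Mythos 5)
Your proof is correct and takes essentially the same approach as the paper: define the first-order sentence asserting that $p$ copies of $1$ sum to $0$, note that strictly increasing characteristic makes its negation hold cofinitely often (hence on a $\filtfont{U}$-large set, since $\filtfont{U}$ is non-principal), and apply \L o\'s's theorem to rule out each prime characteristic in turn.
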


\begin{proof}
For a fixed prime $p$, let $\text{char}_p$ be the statement $1+1+
\ldots + 1 = 0 $ ($p$-occurrences of $1$).  As the $k_i$ have
strictly increasing characteristic, for fixed $p$, $\text{char}_p$
is false in all but finitely many of them.  Thus $\mynot
\text{char}_p$ holds on a cofinite set, which is always large, and
so $\mynot \text{char}_p$ holds in the ultraproduct.  This goes
for every $p$, which is equivalent to $\uprod k_i$ having
characteristic zero.
\end{proof}

\section{Vector Spaces}

\label{ultraproductsofvectorspaces}

\index{ultraproduct!of vector spaces}

Let $V_i$ be an indexed collection of vector spaces over the
fields $k_i$.  We treat the $V_i$ simply as structures in the
signature \index{first-order language!of vector spaces} $+,0$,
i.e.~as abelian groups, forgetting for the moment the scalar
multiplication.  Then $\uprod V_i$ is also an abelian group in
this signature, and the addition is of course given by $[v_i] +
[w_i] \stackrel{\text{def}}{=} [v_i + w_i]$.

Let $k= \uprod k_i$ be the ultraproduct of the fields $k_i$ as in
the previous section.  We assume as always that both of these
ultraproducts are being taken with respect to the same fixed
non-principal ultrafilter.

\begin{thm}
$\uprod V_i$ is a vector space over $\uprod k_i$, under the scalar
multiplication
\[ [a_i][v_i] \stackrel{\text{\emph{def}}}{=} [a_i v_i] \]
\end{thm}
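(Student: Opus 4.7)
The plan is to first verify that the proposed scalar multiplication is well-defined on equivalence classes, then check each vector space axiom in turn. Everything amounts to applying Łoś's theorem, either in a suitable two-sorted signature or axiom-by-axiom in the underlying index structures.

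First I would address well-definedness. Suppose $[a_i] = [b_i]$ in $\uprod k_i$ and $[v_i] = [w_i]$ in $\uprod V_i$. Then the sets $A = \{i : a_i = b_i\}$ and $B = \{i : v_i = w_i\}$ both lie in $\filtfont{U}$, hence so does $A \cap B$, and on this set $a_i v_i = b_i w_i$; therefore $[a_i v_i] = [b_i w_i]$. This shows the rule $[a_i][v_i] \myeq [a_i v_i]$ determines a function $\uprod k_i \times \uprod V_i \rightarrow \uprod V_i$.

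Next I would verify the vector space axioms. The cleanest route is to treat the situation as a two-sorted first-order structure with one sort for scalars (equipped with $+,\multsymbol,-,0,1$), one sort for vectors (equipped with $+,0$), and a function symbol $\slot \cdot \slot$ for scalar multiplication. Each $(k_i, V_i)$ is such a structure, and in this signature the four axioms
\[ (ab)v = a(bv), \quad 1v = v, \quad (a+b)v = av+bv, \quad a(v+w) = av+aw \]
are first-order sentences true in every $(k_i, V_i)$. By Łoś's theorem (corollary \ref{loscor}) they persist in the ultraproduct, which in this two-sorted setup is precisely $(\uprod k_i, \uprod V_i)$ with scalar multiplication interpreted by the formula in the statement. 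Combined with the fact (already established via $+, 0$) that $\uprod V_i$ is an abelian group and with proposition \ref{ultraprodoffieldsisafieldprop} giving that $\uprod k_i$ is a field, this yields the vector space structure.

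The only obstacle worth naming is that Łoś's theorem as developed in the appendix is presumably stated for single-sorted structures, so I would briefly remark that the two-sorted version follows by the same proof, or, to avoid invoking it, verify each axiom directly: for example, for any representatives, the set $\{i : a_i(v_i + w_i) = a_i v_i + a_i w_i\}$ equals all of $I$ since each $V_i$ is a vector space over $k_i$, hence lies in $\filtfont{U}$, giving $[a_i]([v_i]+[w_i]) = [a_i][v_i] + [a_i][w_i]$ in $\uprod V_i$. The other three axioms are identical in style, so I would not belabor them.
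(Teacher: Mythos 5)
Your proof is correct and follows essentially the same line as the paper: the well-definedness argument via intersection of large sets is identical, and the paper dispatches the vector space axioms as ``routine to verify,'' which is exactly your fallback direct verification (each axiom holds on all of $I$, hence on a large set). Your two-sorted Łoś framing is a tidy way to package the routine check, and your own caveat about the single-sorted statement of Łoś's theorem in the appendix is the right thing to flag; since you supply the direct verification as an alternative, there is no gap.
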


\begin{proof}
The given multiplication is well-defined: If $(a_i),(b_i)$ are
equal on the large set $J$, and if $(v_i),(w_i)$ are equal on the
large set $K$, then $(a_i v_i)$ and $(b_i w_i)$ are equal on at
least the large set $J \cap K$.  It is routine to verify that this
definition satisfies the axioms of a vector space.
\end{proof}

\begin{prop}
The finite collection of linear equations of the form
\[ [a_i]_1[v_i]_1 + \ldots + [a_i]_n [v_i]_n = [0] \]
is true in $\uprod V_i$ if and only if the corresponding
collection of linear equations
\[ a_{i,1}v_{i,1}+ \ldots + a_{i,n} v_{i,n} = 0 \]
is true for almost every $i$.
\end{prop}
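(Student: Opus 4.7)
The plan is to reduce the claim, for a single equation, to an instance of \L os's theorem (or more concretely, to the definition of equality in the ultraproduct), and then patch together finitely many such equations using the fact that finite intersections of large sets are large.

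First I would handle a single linear equation. By the definitions of addition and scalar multiplication in $\uprod V_i$ given above, the expression $[a_i]_1 [v_i]_1 + \ldots + [a_i]_n [v_i]_n$ is by induction equal to $[a_{i,1} v_{i,1} + \ldots + a_{i,n} v_{i,n}]$. The zero element $[0]$ of $\uprod V_i$ is the class of the tuple $(0)_i$. Hence the equation $[a_i]_1[v_i]_1 + \ldots + [a_i]_n[v_i]_n = [0]$ in $\uprod V_i$ is equivalent, by the very definition of the equivalence relation used to form the ultraproduct, to the assertion that
\[ \{ i \in I : a_{i,1} v_{i,1} + \ldots + a_{i,n} v_{i,n} = 0 \} \in \filtfont{U}, \]
i.e.\ that the corresponding scalar equation holds for $\filtfont{U}$-almost every $i$.

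Now suppose we are given finitely many such equations, indexed say by $j = 1, \ldots, m$. By the single-equation case, the $j$-th equation holds in $\uprod V_i$ iff the set $S_j = \{ i : \text{the $j$-th equation holds at index $i$} \}$ lies in $\filtfont{U}$. If all $m$ ultraproduct equations hold, then each $S_j \in \filtfont{U}$, and since $\filtfont{U}$ is closed under finite intersections, $\bigcap_{j=1}^m S_j \in \filtfont{U}$; on this large set, all $m$ scalar equations simultaneously hold. Conversely, if the set $S$ on which all $m$ pointwise equations simultaneously hold lies in $\filtfont{U}$, then $S \subset S_j$ for each $j$, forcing each $S_j \in \filtfont{U}$, so each of the $m$ ultraproduct equations holds.

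There is no real obstacle here; the only thing worth flagging is that the result is genuinely about \emph{finite} collections, since $\filtfont{U}$ need not be closed under countable intersection, and the conclusion would fail for infinite systems in general.
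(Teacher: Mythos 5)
Your proof is correct and follows essentially the same route as the paper: reduce a single equation to the definition of equality of equivalence classes in the ultraproduct, then handle finitely many equations by intersecting the corresponding large sets. The paper's version is terser but identical in substance.
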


\begin{proof} For the forward implication, the claim is obvious in the case of a single
equation, since the first equation is equivalent to
$[a_{i,1}v_{i,1} + \ldots + a_{i,n} v_{i,n}] = [0]$.  For a finite
set of equations, each individual equation holds on a large set,
and taking the finite intersection of these large sets, we see
that there is a large set on which all the equations hold.  The
reverse implication is obvious.
\end{proof}

\begin{prop}
\index{ultraproduct!of bases} \label{basesarepreservedprop} A
finite set of vectors $[e_i]_1, \ldots, [e_i]_n$ is a basis for
$\uprod V_i$ over $\uprod k_i$ if and only if, for almost every
$i$, the set of vectors $e_{i,1}, \ldots, e_{i,n}$ is a basis for
$V_i$ over $k_i$.
\end{prop}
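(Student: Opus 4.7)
The plan is to separate the claim into linear independence and spanning, and handle each direction by invoking the preceding proposition together with the standard ultrafilter properties (closure under finite intersections, and the fact that a finite union of ``small'' sets is still small).

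For the forward direction, suppose $[e_i]_1, \ldots, [e_i]_n$ is a basis of $\uprod V_i$ over $\uprod k_i$. To see linear independence almost everywhere, argue by contradiction: if the set $S = \{i : e_{i,1}, \ldots, e_{i,n} \text{ are linearly dependent over } k_i\}$ is large, then for each $i \in S$ choose scalars $a_{i,1}, \ldots, a_{i,n} \in k_i$, not all zero, witnessing a dependence (and set them to $0$ off $S$). Each $i \in S$ lies in at least one of the $n$ sets $T_j = \{i \in S : a_{i,j} \neq 0\}$, so because a finite union of small sets is small, some $T_j$ is large; hence $[a_i]_j \neq [0]$, giving a nontrivial dependence in $\uprod V_i$ by the preceding proposition, a contradiction. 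To see spanning almost everywhere, suppose instead that on a large set $S$ the $e_{i,1}, \ldots, e_{i,n}$ fail to span $V_i$, and choose $v_i \notin \mathrm{span}(e_{i,1}, \ldots, e_{i,n})$ for $i \in S$ (arbitrary off $S$). If $[v_i] = \sum_j [a_i]_j [e_i]_j$ held in the ultraproduct, the preceding proposition would give a large set $T$ on which $v_i = \sum_j a_{i,j} e_{i,j}$, and any $i \in S \cap T$ would yield a contradiction.

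For the reverse direction, suppose $S = \{i : e_{i,1}, \ldots, e_{i,n} \text{ is a basis of } V_i\}$ is large. Linear independence of $[e_i]_1, \ldots, [e_i]_n$ follows immediately: if $\sum_j [a_i]_j [e_i]_j = [0]$, the preceding proposition gives a large set $T$ on which $\sum_j a_{i,j} e_{i,j} = 0$, so on the large (hence nonempty) intersection $S \cap T$ all $a_{i,j}$ vanish, forcing $[a_i]_j = [0]$ for each $j$. For spanning, given $[v_i] \in \uprod V_i$, for each $i \in S$ use the basis property to pick scalars $a_{i,1}, \ldots, a_{i,n} \in k_i$ with $v_i = \sum_j a_{i,j} e_{i,j}$ (arbitrary off $S$); then $\sum_j [a_i]_j [e_i]_j = [v_i]$ by the preceding proposition.

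I do not expect any real obstacle here: both directions are essentially direct applications of the preceding proposition, with the only subtle point being the pigeonhole step in the ``dependence in the ultraproduct'' argument, where one must invoke the fact that $\filtfont{U}$ is a (proper) ultrafilter so that a finite union of complements of large sets is not large. Everything else is bookkeeping with large sets and their finite intersections.
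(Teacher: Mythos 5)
Your proof is correct and follows essentially the same route as the paper's: split into linear independence and spanning, handle each direction via the preceding proposition on linear equations, and invoke the covering lemma (or equivalently that a finite union of small sets is small) for the pigeonhole step that extracts a single nonzero $[a_i]_j$ from a family of almost-everywhere nontrivial dependences. The only cosmetic difference is that you cast some steps as contradictions where the paper uses contrapositives directly; the substance is identical.
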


\begin{proof}
Given a linear dependence $[a_i]_1[e_i]_1 + \ldots + [a_i]_n
[e_i]_n = [0]$ in $\uprod V_i$, we get a linear dependence for
almost every $i$, by the previous proposition.  If $[a_i]_j \neq
[0]$, then $a_{i,j} \neq 0$ for almost every $i$, and by taking
another intersection we get a non-trivial dependence in almost
every $i$.  Conversely, if we have a non-trivial dependence
$a_{i,1} e_{i,1} + \ldots + a_{i,n} e_{i,n} = 0$ in almost every
$i$, the equation $[a_i]_1 [e_i]_1 + \ldots + [a_i]_n [e_i]_n =
[0]$ holds in $\uprod V_i$.  By lemma \ref{coveringlemma}, at
least one of the $[a_i]_j$ must be non-zero.

If $e_{i,j}$ span $V_i$ for almost every $i$, then for every
$[v_i] \in \uprod V_i$, we have an almost everywhere valid
equation $a_{i,1}e_{i,1} + \ldots + a_{i,n}e_{i,n} = v_i$ in
$V_i$, which projects to an equation $[a_i]_1 [e_i]_1 + \ldots +
[a_i]_n [e_i]_n = [v_i]$, showing that the $[e_i]_j$ span $\uprod
V_i$.  Conversely, if the $e_{i,j}$, almost everywhere, do not
span $V_i$, choose $v_i$ for each of those slots which are not in
the span of the $e_{i,j}$.  Then neither can $[v_i]$ be in the
span of the $[e_i]_j$, lest we project back to an almost
everywhere linear combination for $v_i$ in terms of the $e_{i,j}$.
\end{proof}

\begin{prop}
\label{prop1} For a fixed non-negative integer $n$, $\uprod V_i$
has dimension $n$ over $\uprod k_i$ if and only if almost every
$V_i$ has dimension $n$ over $k_i$.  $\uprod V_i$ is infinite
dimensional over $\uprod k_i$ if and only if, for every $n$,
almost every $V_i$ does not have dimension $n$ over $k_i$.
\end{prop}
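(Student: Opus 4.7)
The plan is to derive both statements directly from Proposition \ref{basesarepreservedprop}, which already characterizes finite bases of $\uprod V_i$ in terms of bases of almost every $V_i$.

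For the finite-dimensional statement, I would argue both directions as follows. Suppose almost every $V_i$ has dimension $n$ over $k_i$. For each such $i$, choose any basis $e_{i,1},\ldots,e_{i,n}$ of $V_i$ (for the exceptional slots let the $e_{i,j}$ be arbitrary). By Proposition \ref{basesarepreservedprop} the tuple $[e_i]_1,\ldots,[e_i]_n$ is a basis of $\uprod V_i$ over $\uprod k_i$, so $\dim \uprod V_i = n$. Conversely, if $\dim \uprod V_i = n$, pick any basis $[e_i]_1,\ldots,[e_i]_n$ for $\uprod V_i$; Proposition \ref{basesarepreservedprop} then gives a large set of indices on which $e_{i,1},\ldots,e_{i,n}$ is a basis of $V_i$, so almost every $V_i$ has dimension $n$.

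The infinite-dimensional statement follows by combining the finite case with the defining property of an ultrafilter. By definition $\uprod V_i$ is infinite dimensional iff for every $n$ we have $\dim \uprod V_i \neq n$, which by the finite case is equivalent to the assertion that, for every $n$, it is not the case that almost every $V_i$ has dimension $n$. The only step that requires any care is rewriting ``not almost every $V_i$ has dimension $n$'' as ``almost every $V_i$ does not have dimension $n$''; this is exactly the ultrafilter dichotomy, namely that for any subset $J\subseteq I$ either $J\in\filtfont{U}$ or $I\setminus J\in\filtfont{U}$, applied to $J=\{i: \dim V_i = n\}$.

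I do not anticipate a genuine obstacle here: the substantive content has already been packaged into Proposition \ref{basesarepreservedprop}, and everything else is bookkeeping with the ultrafilter. The only point worth flagging in the write-up is that in the reverse direction for the finite case one must make an arbitrary choice of basis on each of the almost-all indices where $\dim V_i = n$ and an arbitrary assignment on the negligible complement, so as to produce honest tuples $(e_{i,j})$ to which Proposition \ref{basesarepreservedprop} can be applied.
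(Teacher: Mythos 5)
Your proposal is correct and follows the paper's approach exactly: the paper's entire proof is "Apply the previous proposition," meaning Proposition \ref{basesarepreservedprop}, which is precisely the lemma you invoke. You simply spell out the bookkeeping (arbitrary basis choices on the negligible set, the ultrafilter dichotomy for negations) that the paper leaves implicit.
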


\begin{proof}
Apply the previous proposition.
\end{proof}

We see then that if $V_i$ is almost everywhere of dimension $n <
\infty$, it does no harm to assume that it has dimension $n$
everywhere.  We call such collections \index{constantly finite
dimensional} \textbf{constantly finite dimensional} or sometimes
\index{boundedly finite dimensional} \textbf{boundedly finite
dimensional}. With few exceptions it is these types of collections
of vector spaces we will be concerning ourselves with.

\subsection{Linear Transformations and Matrices}

\label{lineartransformationsandmatricessubsection}

For a collection of vector spaces $V_i$, $W_i$ and linear maps
$\phi_i:V_i \rightarrow W_i$, denote by $[\phi_i]$ the linear map
$\uprod V_i \rightarrow \uprod W_i$ defined by
\index{ultraproduct!of linear transformations} \index{$[\phi_i]$}
\begin{equation}
\label{ultraproductsoflineartransformations}
 [\phi_i]([v_i]) = [\phi_i(v_i)]
\end{equation}
Denote by $\uprod \text{Hom}_{k_i}(V_i,W_i)$ the collection of all
such transformations of the form $[\phi_i$].  So long as the $V_i$
are constantly finite dimensional, we are justified in using this
notation because

\begin{prop}
\label{homprop} If the $V_i$ are of constant finite dimension,
$[\phi_i] = [\psi_i]$ as linear transformations if and only if,
for almost every $i$, $\phi_i = \psi_i$ as linear transformations.
Further, $[\phi_i] \circ [\psi_i] = [\phi_i \circ \psi_i]$,
$[\phi_i] + [\psi_i] = [\phi_i + \psi_i]$, and for an element
$[a_i]$ of $\uprod k_i$, $[a_i][\phi_i] = [a_i \phi_i]$.
\end{prop}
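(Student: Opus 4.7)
The plan is to dispatch both halves of the proposition by direct appeal to the definitions: the displayed formula $[\phi_i]([v_i]) = [\phi_i(v_i)]$ from equation (5.1) together with the componentwise description of the field, vector-space, and Hom operations on ultraproducts. The only substantive use of the constant-finite-dimensionality hypothesis will be in the $\Rightarrow$ direction of the biconditional, where I will pass to a fixed basis so that finite additivity of $\filtfont{U}$ gives me a single basis index witnessing almost everywhere a discrepancy between $\phi_i$ and $\psi_i$.

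For the biconditional: the backward direction is immediate, since if $\phi_i = \psi_i$ on a large set $J$ then $\phi_i(v_i) = \psi_i(v_i)$ on $J$ for every representative $(v_i)$, so the induced equivalence classes in $\uprod W_i$ coincide. For the forward direction I argue by contrapositive. Suppose $\phi_i \neq \psi_i$ on a large set $J$. By proposition \ref{basesarepreservedprop}, after assuming every $V_i$ has dimension $n$ on the nose, any choice of bases $e_{i,1},\ldots,e_{i,n}$ for the $V_i$ yields a basis $[e_i]_1,\ldots,[e_i]_n$ of $\uprod V_i$. For each $i \in J$ pick $j(i) \in \{1,\ldots,n\}$ with $\phi_i(e_{i,j(i)}) \neq \psi_i(e_{i,j(i)})$, partition $J$ into the finitely many pieces $J_j = \{i \in J : j(i) = j\}$, and apply lemma \ref{coveringlemma} (finite additivity) to select an index $j_0$ with $J_{j_0}$ large. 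Then $[\phi_i]([e_i]_{j_0}) = [\phi_i(e_{i,j_0})] \neq [\psi_i(e_{i,j_0})] = [\psi_i]([e_i]_{j_0})$, which shows $[\phi_i]$ and $[\psi_i]$ differ as linear transformations.

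The three algebraic identities are short unwindings of equation (5.1). For composition, evaluated on $[v_i]$, one has $([\phi_i]\circ[\psi_i])([v_i]) = [\psi_i]([\phi_i(v_i)]) = [\psi_i(\phi_i(v_i))] = [(\phi_i\circ\psi_i)(v_i)] = [\phi_i\circ\psi_i]([v_i])$. For addition, the componentwise definition of $+$ on $\uprod W_i$ gives $([\phi_i]+[\psi_i])([v_i]) = [\phi_i(v_i)] + [\psi_i(v_i)] = [\phi_i(v_i)+\psi_i(v_i)] = [(\phi_i+\psi_i)(v_i)]$. Scalar multiplication is identical, using that $[a_i]\cdot[w_i] = [a_iw_i]$ in $\uprod W_i$ by the construction of the $\uprod k_i$-action from section \ref{ultraproductsofvectorspaces}.

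No step requires real ingenuity; the only place where something could go wrong is the $\Rightarrow$ direction, where one must convert a pointwise witness $v_i \in V_i$ into a \emph{single} element $[v_i]$ of $\uprod V_i$ detecting the discrepancy. Finite additivity via a fixed basis is the natural way to do this, and it is exactly where the constant-finite-dimensionality hypothesis enters; the remaining three identities go through regardless of any dimension hypothesis.
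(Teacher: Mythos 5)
Your proof is correct, and it follows the paper's approach: constant finite dimension supplies a common basis for $\uprod V_i$, and the forward direction of the biconditional then reduces to finite combinatorics of the ultrafilter, with the three algebraic identities falling out of the defining formula $[\phi_i]([v_i]) = [\phi_i(v_i)]$. The only cosmetic difference is that the paper proves the forward direction directly — take the $n$ large sets on which $\phi_i(e_{i,m}) = \psi_i(e_{i,m})$ and intersect them — whereas you argue by contrapositive via the covering lemma; these are dual uses of the ultrafilter and equally valid here.
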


\begin{proof}
The `if' direction is obvious.  For the converse, let $[e_i]_1,
\ldots, [e_i]_n$ be a basis for $\uprod V_i$, whence, for almost
every $i$, $e_{i,1}, \ldots, e_{i,n}$ is a basis for $V_i$.
$[\phi_i] = [\psi_i]$ if and only if they agree on this basis, so
let $J_m, m = 1 \ldots n$ be the large set on which $\phi_i(e_i^m)
= \psi_i(e_i^m)$.  Then the finite intersection of these $J_m$, on
which $\phi_i$ and $\psi_i$ agree on every basis element, thus on
which $\phi_i = \psi_i$, is large.  The last three claims of the
proposition are now obvious.
\end{proof}

If the $V_i$ are of unbounded dimensionality, the theorem does not
hold; the proof falls apart when we try to take the intersection
of the $J_m$, which in this case may well be an infinite
intersection, and not guaranteed to be large.

\begin{prop}
\label{homlemma} If $V_i$, $W_i$ are constantly finite dimensional
collections of vector spaces, then
\[ \text{Hom}_{\uprod k_i}(\uprod V_i, \uprod W_i) \isomorphic \uprod
\text{Hom}_{k_i}(V_i,W_i)\]
\end{prop}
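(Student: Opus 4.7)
The plan is to exhibit a natural map
\[ \Theta: \uprod \text{Hom}_{k_i}(V_i,W_i) \longrightarrow \text{Hom}_{\uprod k_i}(\uprod V_i, \uprod W_i) \]
sending $[\phi_i]$ to the transformation $[v_i] \mapsto [\phi_i(v_i)]$ defined in equation \ref{ultraproductsoflineartransformations}, and then show that $\Theta$ is a well-defined $\uprod k_i$-linear bijection. Well-definedness and $\uprod k_i$-linearity are essentially already proposition \ref{homprop}; the real work is injectivity and surjectivity.

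For injectivity, I would use the constant finite dimensionality of the $V_i$ in an essential way. Fix a common dimension $n$ and pick bases $e_{i,1},\ldots,e_{i,n}$ of each $V_i$, so that by proposition \ref{basesarepreservedprop} the classes $[e_i]_1, \ldots, [e_i]_n$ form a basis of $\uprod V_i$. If $\Theta([\phi_i]) = 0$, then $[\phi_i(e_{i,j})] = [0]$ in $\uprod W_i$ for each $j=1,\ldots,n$, so each such equality holds on a large set $J_j$. Since $n$ is finite, $J_1 \cap \cdots \cap J_n$ is still large, and on this intersection $\phi_i$ kills every basis vector of $V_i$, hence $\phi_i = 0$. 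Thus $[\phi_i] = [0]$.

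For surjectivity, let $\Phi \in \text{Hom}_{\uprod k_i}(\uprod V_i, \uprod W_i)$. Using the same bases $e_{i,1},\ldots,e_{i,n}$ as above, pick representatives $(w_{i,j})_{i \in I}$ with $\Phi([e_i]_j) = [w_{i,j}]$ for each $j=1,\ldots,n$. For each $i$, define $\phi_i: V_i \to W_i$ as the unique $k_i$-linear map with $\phi_i(e_{i,j}) = w_{i,j}$; this is legal since $e_{i,1},\ldots,e_{i,n}$ is a basis of $V_i$ for almost all $i$ (and we may as well alter $\phi_i$ arbitrarily on the small exceptional set). Then $\Theta([\phi_i])$ agrees with $\Phi$ on the basis $\{[e_i]_j\}$ of $\uprod V_i$, and both maps are $\uprod k_i$-linear, so $\Theta([\phi_i]) = \Phi$.

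The main obstacle is conceptual rather than technical: everything depends on being able to choose a \emph{uniform} basis for the $V_i$, which is exactly what the hypothesis of constant finite dimensionality (together with proposition \ref{basesarepreservedprop}) provides. If the dimensions of $V_i$ were unbounded, the argument for injectivity collapses at the step where we intersect the $n$ large sets $J_j$, since $n$ would no longer be finite, and surjectivity collapses because there is no uniform way to choose representatives $w_{i,j}$ for each basis vector of $\uprod V_i$; this is in fact precisely why one needs to pass to the restricted ultraproduct later on.
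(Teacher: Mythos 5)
Your proof is correct and takes essentially the same approach as the paper: both arguments hinge on choosing a uniform basis via proposition \ref{basesarepreservedprop} and then showing that every $\uprod k_i$-linear map is realized by an ultraproduct of componentwise maps. The paper phrases the surjectivity step via the $n \times m$ matrix of $\Phi$ in bases of both $\uprod V_i$ and $\uprod W_i$, whereas you simply pick representatives $w_{i,j}$ of $\Phi([e_i]_j)$ and build $\phi_i$ from those, which avoids choosing a basis of the $W_i$; your injectivity step is a restatement of the forward direction of proposition \ref{homprop}, which you could have cited outright, but re-proving it does no harm.
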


\begin{proof}
It is easy to verify that the right hand side of the above claimed
isomorphism is always included in the left hand side (even if the
$V_i$ or $W_i$ are not boundedly finite dimensional).  For the
other inclusion, pick bases $[e_i^1],\ldots, [e_i^n]$ and
$[f_i^1],\ldots,[f_i^m]$ of $\uprod V_i$ and $\uprod W_i$
respectively.  For a linear transformation $\phi$ in the left hand
side of the claimed isomorphism, write it as an $n \times m$
matrix
\[
\left(%
\begin{array}{ccc}
  [a_i^{1,1}] & \ldots & [a_i^{1,n}] \\
  \vdots &  & \vdots \\
  \left[a_i^{m,1}\right] & \ldots & \left[a_i^{m,n}\right] \\
\end{array}%
\right)
\]
in the given bases.  Then one can verify by hand that $\phi$ is of
the form $[\phi_i]$, where each $\phi_i$ is the transformation
$V_i \rightarrow W_i$ given by the matrix
\[
\left(%
\begin{array}{ccc}
  a_i^{1,1} & \ldots & a_i^{1,n} \\
  \vdots &  & \vdots \\
  a_i^{m,1} & \ldots & a_i^{n,m} \\
\end{array}%
\right)
\]
in the bases $e_i^1,\ldots, e_i^n$, $f_i^1, \ldots, f_i^n$.
\end{proof}

We can therefore always assume that a linear transformation
$\uprod V_i \rightarrow \uprod W_i$ is uniquely of the form
$[\phi_i]$, so long as the $V_i$ and $W_i$ are of constant finite
dimension. This theorem is not true if the $V_i$ and $W_i$ are not
both boundedly finite dimensional; the forward inclusion fails.

\begin{defn}

\index{ultraproduct!of matrices}

Let $M_i$ be a sequence of $n \times m$ matrices over the fields
$k_i$, given by
\[
\left(%
\begin{array}{ccc}
  a_i^{1,1} & \ldots & a_i^{1,n} \\
  \vdots &  & \vdots \\
  a_i^{m,1} & \ldots & a_i^{m,n} \\
\end{array}%
\right)
\]
Then we define the ultraproduct of these matrices, denoted
$[M_i]$, to be the $n \times m$ matrix over the field $\uprod k_i$
given by
\[
\left(%
\begin{array}{ccc}
  [a_i^{1,1}] & \ldots & [a_i^{1,n}] \\
  \vdots &  & \vdots \\
  \left[a_i^{m,1}\right] & \ldots & \left[a_i^{m,n}\right] \\
\end{array}%
\right)
\]
\end{defn}

\begin{prop}
\label{matrixlemma} Let $V_i$, $W_i$ have constant dimension $n$
and $m$, with bases $e_i^1, \ldots, e_i^n$, $f_i^1, \ldots, f_i^m$
respectively. If $\phi_i:V_i \rightarrow W_i$ is represented by
the $n \times m$ matrix $M_i$ in the given bases, then $[\phi_i]$
is represented by the matrix $[M_i]$ in the bases $[e_i^1],
\ldots, [e_i^n]$,$[f_i^1],\ldots,[f_i^m]$.
\end{prop}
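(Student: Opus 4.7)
The plan is to verify the proposition directly from the definition of the matrix of a linear transformation, using only the definition of the ultraproduct of vector spaces and the formula $[\phi_i]([v_i]) = [\phi_i(v_i)]$ from equation \ref{ultraproductsoflineartransformations}.

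First I would observe that proposition \ref{basesarepreservedprop} guarantees that $[e_i^1], \ldots, [e_i^n]$ and $[f_i^1], \ldots, [f_i^m]$ really are bases of $\uprod V_i$ and $\uprod W_i$ respectively (since by constancy of dimension, almost every $i$ has $e_i^1,\ldots,e_i^n$ and $f_i^1,\ldots,f_i^m$ as bases). Thus it makes sense to speak of the matrix of $[\phi_i]$ in these bases at all.

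Next, writing $M_i = (a_i^{k,j})$, the hypothesis means that for every $i$ and every $j$,
\[
\phi_i(e_i^j) = \sum_{k=1}^{m} a_i^{k,j}\, f_i^k.
\]
Applying the definition of $[\phi_i]$ and then using that the ultraproduct preserves finite sums and scalar multiplications coordinatewise (as noted at the start of section \ref{ultraproductsofvectorspaces} and in the subsection on linear transformations), I would compute
\[
[\phi_i]\bigl([e_i^j]\bigr) \;=\; \bigl[\phi_i(e_i^j)\bigr] \;=\; \Bigl[\sum_{k=1}^m a_i^{k,j} f_i^k\Bigr] \;=\; \sum_{k=1}^m [a_i^{k,j}]\,[f_i^k].
\]
By definition of the matrix representing $[\phi_i]$ in the bases $[e_i^j]$ and $[f_i^k]$, the $j$th column is $([a_i^{1,j}], \ldots, [a_i^{m,j}])^{T}$, which is exactly the $j$th column of $[M_i]$.

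I do not anticipate a real obstacle: the only subtlety is making sure that pulling the finite sum outside the brackets is legitimate, and this is immediate from the definition of addition and scalar multiplication in $\uprod V_i$ (a finite sum is large on the intersection of the finitely many large sets on which each summand agrees). Constancy of dimension is used only insofar as it makes proposition \ref{basesarepreservedprop} applicable; the computation itself does not require it.
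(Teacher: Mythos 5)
Your proof is correct, and it is precisely the routine verification the paper dismisses with the single word ``Obvious.'' The only two ingredients are proposition \ref{basesarepreservedprop} (to know $[e_i^j]$ and $[f_i^k]$ are bases) and the coordinate-wise definitions of addition and scalar multiplication in $\uprod V_i$ (to pull the finite sum past the bracket), and you have invoked both correctly.
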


\begin{proof}
Obvious.
\end{proof}

Applying propositions \ref{homlemma} and \ref{matrixlemma}
together yield

\begin{cor}
For integers $m$ and $n$ and fields $k_i$,
\[ \text{Mat}_{n,m}(\uprod k_i) \isomorphic \uprod
\text{Mat}_{n,m}(k_i) \] where the latter stands for all matrices
of the form $[M_i]$, $M_i \in \text{Mat}_{n,m}(k_i)$.
\end{cor}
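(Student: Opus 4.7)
The plan is to deduce this corollary directly by combining propositions \ref{homlemma} and \ref{matrixlemma}, using the canonical identification of an $n \times m$ matrix space with a $\text{Hom}$-space between finite-dimensional vector spaces.

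First I would take $V_i = k_i^m$ and $W_i = k_i^n$, equipped with their standard bases. These are constantly finite dimensional collections, so the hypotheses of proposition \ref{homlemma} are satisfied, yielding
\[ \text{Hom}_{\uprod k_i}(\uprod k_i^m, \uprod k_i^n) \isomorphic \uprod \text{Hom}_{k_i}(k_i^m, k_i^n). \]
Next, I would observe that $\uprod k_i^m \isomorphic (\uprod k_i)^m$ as vector spaces over $\uprod k_i$: on one hand an element of $\uprod k_i^m$ is a tuple $[(a_{i,1}, \dots, a_{i,m})]$, and on the other an element of $(\uprod k_i)^m$ is a tuple $([a_{i,1}], \dots, [a_{i,m}])$, and the obvious map between these is a bijection compatible with the scalar action; this follows formally from proposition \ref{basesarepreservedprop} applied to the standard basis. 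The same holds for $W_i$, so the Hom-space on the left is naturally $\text{Hom}_{\uprod k_i}((\uprod k_i)^m, (\uprod k_i)^n) \isomorphic \text{Mat}_{n,m}(\uprod k_i)$.

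For the right-hand side, proposition \ref{matrixlemma} says that if $\phi_i$ is represented by the matrix $M_i$ in the standard bases of $k_i^m$ and $k_i^n$, then $[\phi_i]$ is represented by the matrix $[M_i]$ in the corresponding bases of the ultraproducts. This gives a bijection $\uprod \text{Hom}_{k_i}(k_i^m, k_i^n) \isomorphic \uprod \text{Mat}_{n,m}(k_i)$ sending $[\phi_i]$ to $[M_i]$. Chaining these identifications produces the asserted isomorphism $\text{Mat}_{n,m}(\uprod k_i) \isomorphic \uprod \text{Mat}_{n,m}(k_i)$.

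There is no real obstacle here; the only mild subtlety is to be explicit about the identification $\uprod k_i^m \isomorphic (\uprod k_i)^m$ so that the matrix entries are read off in the same bases on both sides. Everything else is formal bookkeeping from the two cited propositions.
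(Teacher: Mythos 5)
Your proof follows exactly the route the paper takes: the corollary is stated immediately after propositions \ref{homlemma} and \ref{matrixlemma} as a direct combination of the two, which is precisely what you do. Your extra care in spelling out the identification $\uprod k_i^m \isomorphic (\uprod k_i)^m$ is a reasonable elaboration, but the argument is the same.
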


As we have seen, an ultraproduct of linear transformations
preserves composition, addition, and scalar multiplication.  By
induction on complexity, it thus also preserves any equation
involving a finite combination of these three operations, and by
considering a finite intersection of large sets, the same is true
for any finite collection of such equations.  We state this as a
theorem.

\begin{thm}
\label{transformationsarenicethm} Let $[\phi_i]_1, \ldots,
[\phi_i]_n$ be a finite collection of linear transformations, all
between ultraproducts of constantly finite dimensional vector
spaces. Then a finite collection of equations among the
$[\phi_i]_j$ involving addition of maps, composition, and scalar
multiplication is valid if and only if the corresponding
collection of equations among the $\phi_{i,1}, \ldots, \phi_{i,n}$
is valid almost everywhere.
\end{thm}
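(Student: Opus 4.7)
The plan is to reduce everything to the single-equation case and then apply Proposition \ref{homprop} coordinatewise after simplifying each side of the equation to a single ultraproduct of maps.

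First I would observe that a finite conjunction of equations is valid in the ultraproduct if and only if each one is, and the ``almost everywhere'' versions of finitely many statements are simultaneously true on the intersection of their large sets, which is itself large (since $\filtfont{U}$ is closed under finite intersection). Thus it suffices to prove the theorem for a single equation $E_L = E_R$, where $E_L$ and $E_R$ are expressions built from the $[\phi_i]_j$ using the three admissible operations.

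Next I would prove, by induction on the complexity (i.e.~number of operation symbols) of an expression $E$ built from $[\phi_i]_1, \ldots, [\phi_i]_n$ and symbols $[a_i]_1, \ldots, [a_i]_m \in \uprod k_i$, that there is an associated sequence $E_i$ of expressions in the $\phi_{i,1}, \ldots, \phi_{i,n}$ and the $a_{i,1}, \ldots, a_{i,m}$ (obtained by stripping brackets) such that $E = [E_i]$ in $\text{Hom}_{\uprod k_i}(\uprod V_i, \uprod W_i)$, where $V_i$ and $W_i$ are the appropriate domain and codomain. The base case, $E = [\phi_i]_j$, is trivial. For the inductive step, each of the identities
\[
[\phi_i] + [\psi_i] = [\phi_i + \psi_i], \quad [\phi_i] \circ [\psi_i] = [\phi_i \circ \psi_i], \quad [a_i][\phi_i] = [a_i \phi_i]
\]
is exactly the content of Proposition \ref{homprop}, so each combination of two already-reduced subexpressions collapses to a single ultraproduct of maps. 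I need to observe here that at every stage the ambient vector spaces remain constantly finite dimensional: addition and scalar multiplication leave the domain/codomain unchanged, and composition forces the intermediate space to be a common $\uprod Z_i$ with the $Z_i$ of constant dimension, so Proposition \ref{homprop} continues to apply throughout the induction.

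Having reduced each side of the equation to the form $[L_i] = [R_i]$ with $L_i, R_i \in \text{Hom}_{k_i}(V_i, W_i)$ and the $V_i, W_i$ constantly finite dimensional, I would apply the first assertion of Proposition \ref{homprop}: this equality holds in the ultraproduct if and only if $L_i = R_i$ for almost every $i$. Unwinding the definitions of $L_i$ and $R_i$ then gives precisely the original equation, interpreted in $V_i \to W_i$, holding almost everywhere. The only mild obstacle is the bookkeeping in the induction, ensuring that each subexpression's domain and codomain are still constantly finite dimensional so that Proposition \ref{homprop} keeps applying; this is automatic, since the three operations preserve constant finite dimensionality.
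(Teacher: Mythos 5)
Your proof is correct and follows essentially the same route the paper sketches: an induction on the complexity of the expressions (collapsing subexpressions via Proposition \ref{homprop}) combined with finite intersection of large sets to handle a finite system of equations. The paper states this reasoning informally in the paragraph immediately preceding the theorem rather than giving a displayed proof, but the content is the same.
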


By proposition \ref{matrixlemma}, the same is true for matrices:

\begin{cor}
\label{matricesarenicethm} The same is true for a finite
collection $[M_i]_1, \ldots, [M_i]_k$ of matrices over $\uprod
k_i$, if we replace `addition of maps', `composition', and `scalar
multiplication of maps' with `addition of matrices',
`multiplication of matrices', and `scalar multiplication of
matrices'.
\end{cor}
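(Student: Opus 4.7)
The plan is to reduce the corollary to Theorem~\ref{transformationsarenicethm} via the correspondence between matrices and linear transformations established in Proposition~\ref{matrixlemma}. Since matrix equations are just linear-transformation equations dressed up in a choice of basis, almost no new content is needed; the work is essentially bookkeeping.

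First I would fix, for each relevant pair of dimensions $(n,m)$, a standard basis $e_i^1,\ldots,e_i^n$ of $k_i^n$ and $f_i^1,\ldots,f_i^m$ of $k_i^m$, so that each matrix $M_{i,j}$ in the $j^{\text{th}}$ slot of the sequence represents a linear transformation $\phi_{i,j}:k_i^{n_j}\rightarrow k_i^{m_j}$ in the given bases. Section~\ref{ultraproductsofvectorspaces} and Proposition~\ref{basesarepreservedprop} give that $[e_i^1],\ldots,[e_i^n]$ and $[f_i^1],\ldots,[f_i^m]$ are bases of $\uprod k_i^{n_j}$ and $\uprod k_i^{m_j}$ respectively over $\uprod k_i$, and Proposition~\ref{matrixlemma} tells us that $[M_{i,j}]$ is the matrix representing $[\phi_{i,j}]$ in these ultraproduct bases. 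This sets up a bijection between the matrix equations in question and the corresponding transformation equations.

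Next I would observe that matrix addition, matrix multiplication, and scalar multiplication of matrices correspond, under this bijection and in each fixed pair of bases, exactly to addition, composition, and scalar multiplication of the represented linear maps. Hence each matrix equation $E([M_i]_1,\ldots,[M_i]_k)=0$ is equivalent to the corresponding transformation equation $E([\phi_i]_1,\ldots,[\phi_i]_k)=0$, and likewise equation by equation at each index $i$. Applying Theorem~\ref{transformationsarenicethm} to the finite system of transformation equations then yields that the full system of matrix equations holds in $\uprod k_i$ iff the corresponding system of matrix equations holds on a large set, which by finite intersection is the same as holding almost everywhere.

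There is essentially no obstacle here; the only mild subtleties are (i) making sure the dimensions of the matrices in each slot are constant (which we may assume by replacing the index set with the large set on which they agree, since dimension is discrete and each matrix in the sequence has a fixed size), and (ii) ensuring that the bases chosen in each slot are compatible so that the ``same'' equation at the ultraproduct level corresponds to the ``same'' equation index by index. Both are straightforward once one fixes bases at the start, so the proof really is little more than invoking Proposition~\ref{matrixlemma} to translate and then quoting Theorem~\ref{transformationsarenicethm}.
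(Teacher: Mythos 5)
Your proposal is correct and matches the paper's approach: the paper's entire justification for this corollary is the preceding sentence ``By proposition \ref{matrixlemma}, the same is true for matrices,'' which is precisely your reduction via the matrix/linear-map correspondence followed by an appeal to Theorem \ref{transformationsarenicethm}. The additional bookkeeping you spell out (fixing bases, constant dimensions on a large set) is the right way to make that one-line appeal precise.
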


\begin{prop}
\label{ultrainjectivityprop} Over collections of constantly finite
dimensional vector spaces, ultraproducts preserve injectivity,
surjectivity, kernels and cokernels.
\end{prop}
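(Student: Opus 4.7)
The plan is to reduce every claim to matrix arithmetic via proposition \ref{matrixlemma} and then exploit the fact that over constantly finite dimensional spaces, each of the relevant notions becomes expressible by a bounded number of polynomial conditions on matrix entries, so that \L o\'s's theorem (corollary \ref{loscor}, applied in the form of corollary \ref{matricesarenicethm}) transfers the property in both directions.

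First I would handle injectivity and surjectivity. Fix bases for $V_i$ and $W_i$, so that $\phi_i$ is represented by an $m \times n$ matrix $M_i$ and, by proposition \ref{matrixlemma}, $[\phi_i]$ is represented by $[M_i]$. Injectivity of $[\phi_i]$ is equivalent to $[M_i]$ having full column rank $n$, which in turn is equivalent to the non-vanishing of at least one of the $\binom{m}{n}$ many $n \times n$ minors of $[M_i]$. By corollary \ref{matricesarenicethm} each individual minor is non-zero in $[M_i]$ iff it is non-zero in $M_i$ on a large set, and since there are only finitely many minors, a pigeonhole argument (partition $I$ according to which minors vanish) shows that some minor of $[M_i]$ is non-zero iff almost every $M_i$ has some non-zero $n\times n$ minor, i.e.\ iff almost every $\phi_i$ is injective. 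Replacing $n\times n$ minors with $m \times m$ minors yields the analogous statement for surjectivity.

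For kernels, constant finite dimensionality again plus pigeonhole first gives that $\dim \ker \phi_i$ is constant on a large set $J$, say with value $k \leq n$; after modifying $\phi_i$ on the negligible complement (which does not affect $[\phi_i]$) we may take $\dim \ker \phi_i = k$ for every $i$. Choose a basis $e_{i,1}, \ldots, e_{i,k}$ of $\ker \phi_i$ for each $i$; by proposition \ref{basesarepreservedprop} the $[e_{i,j}]$ span a $k$-dimensional subspace of $\uprod V_i$ which we identify with $\uprod \ker \phi_i$, and this subspace plainly lies in $\ker [\phi_i]$. For the reverse inclusion, given $[v_i] \in \ker [\phi_i]$, the defining equation $[\phi_i(v_i)] = [0]$ means $\phi_i(v_i) = 0$ on some large set; redefining $v_i := 0$ on the small complement yields a representative with $v_i \in \ker \phi_i$ for every $i$, placing $[v_i]$ inside $\uprod \ker \phi_i$. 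The cokernel statement is proved the same way: $\dim \text{im}(\phi_i)$ is eventually constant by pigeonhole, so $\uprod \text{coker}(\phi_i)$ is a well-defined finite dimensional quotient of $\uprod W_i$, and the natural map $\uprod \text{coker}(\phi_i) \to \text{coker}[\phi_i]$ is seen to be an isomorphism by chasing representatives.

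The only real obstacle is bookkeeping: one must verify that the identifications $\uprod \ker \phi_i \hookrightarrow \uprod V_i$ and $\uprod W_i \twoheadrightarrow \uprod \text{coker}(\phi_i)$, built out of choices of bases, coincide with the categorical kernel and cokernel of $[\phi_i]$ inside the category of $\uprod k_i$-vector spaces. What makes everything run is the pigeonhole step giving constant rank on a large set; if the $V_i$ and $W_i$ were not boundedly finite dimensional there would be no uniform bound on the dimensions of $\ker \phi_i$ or $\text{im} \phi_i$, and the comparison of ultraproducts of these subspaces with the ambient ultraproduct (cf.\ proposition \ref{homlemma}) would break down exactly as it does there.
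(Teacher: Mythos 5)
Your proof is correct, and it takes a recognizably different route from the paper's. For injectivity and surjectivity, the paper works directly at the vector level: choosing a basis of $\uprod V_i$, writing a putative kernel vector as $[a_i]_1 [e_i]_1 + \ldots + [a_i]_n [e_i]_n$, and applying the covering lemma (lemma~\ref{coveringlemma}) to conclude that some coefficient $[a_i]_j$ is nonzero. You instead pass to matrices and characterize injectivity/surjectivity by non-vanishing of maximal minors, then apply the covering lemma to the finite family of minor conditions. The underlying engine (bounded dimension reduces the problem to covering a large set by finitely many conditions) is the same, but the matrix-minor formulation is cleaner in the sense that it makes the finiteness completely explicit from the start; the paper's version is shorter but buries the finiteness in the choice of basis. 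One small point you should flag: for surjectivity the $m\times m$-minor characterization only makes literal sense when $n \geq m$, though the case $n < m$ is vacuous on both sides since the ultraproduct of maps from an $n$-dimensional space to an $m > n$-dimensional space can never be surjective either.

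For kernels and cokernels the divergence is more substantive. The paper interprets "preserving kernels" as preserving the kernel-morphism property: if $\phi_i$ is a kernel map for $\psi_i$ almost everywhere, then $[\phi_i]$ is a kernel map for $[\psi_i]$, and it verifies this by checking directly that membership in $\text{im}\,\phi_i$ and in $\ker\psi_i$ (and their negations) transfers through the ultraproduct. You instead construct the object $\uprod \ker\phi_i$ explicitly by choosing bases, identify it as a subspace of $\uprod V_i$ via proposition~\ref{basesarepreservedprop}, and show it equals $\ker[\phi_i]$ by representative-chasing. Both are correct; your version produces an explicit isomorphism $\uprod\ker\phi_i \cong \ker[\phi_i]$ rather than merely verifying a universal property, which is slightly more information, at the cost of the "bookkeeping" you acknowledge (checking that this identification realizes the categorical kernel). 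Your closing observation about where bounded dimensionality is used --- that without it there would be no uniform bound on $\dim\ker\phi_i$ and the comparison with proposition~\ref{homlemma} would break --- correctly identifies the crux, and matches the role the paper assigns to the hypothesis.
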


\begin{proof}
Let $V_i \stackrel{\phi_i}{\longrightarrow} W_i$ be a collection
of linear maps.  Suppose first that almost every $\phi_i$ is
injective.  Then if $[v_i] \neq [0]$, $v_i \neq 0$ for almost
every $i$, and taking the intersection of these two large sets,
$\phi_i(v_i) \neq 0$ for almost every $i$, which is the same as
saying $[\phi_i]([v_i]) \neq [0]$.  Thus $[\phi_i]$ is injective.

For the converse, we must use the constant finite dimensionality
of the $V_i$.  Let $[e_i]_1, \ldots, [e_i]_n$ be a basis for
$\uprod V_i$, so that $e_{i,1},\ldots, e_{i,n}$ is a basis for
almost every $V_i$, say on the large set $J$. Suppose  that almost
every $\phi_i$ is not injective, say on the large set $K$, and for
each $i \in K$ let $v_i = a_{i,1} e_{i,1} + \ldots + a_{i,n}
e_{i,n}$ be a non-zero vector such that $\phi_i(v_i) = 0$.  Then
at least one of the $a_{i,j}$ is non-zero for each $i$.  By lemma
\ref{coveringlemma}, at least one of $[a_i]_j$ is non-zero in
$\uprod k_i$.  Then we see that $[\phi_i]([a_i]_1 [e_i]_1 + \ldots
[a_i]_n [e_i]_n) = [0]$, but $[a_i]_m \neq 0$; hence $[\phi_i]$ is
not injective.

The proof of surjectivity is similarly proved, using instead the
constant dimensionality of the $W_i$.

Suppose that, for almost every $i$, $\phi_i$ is a kernel map for
$\psi_i$.  This is the assertion that $\phi_i$ is injective, that
everything in the image of $\phi_i$ is killed by $\psi_i$, and
that nothing outside the image of $\phi_i$ is killed by $\psi_i$.
That $[\phi_i]$ is also injective has already been proved.  To say
that $[v_i]$ is in the image of $[\phi_i]$ is equivalent to saying
that $v_i$ is in the image of $\phi_i$ for almost every $i$, and
to say that $[\psi_i]$ kills $[w_i]$ is equivalent to saying that
$\psi_i$ kills $w_i$ for almost every $i$; the same goes for their
negations.

The case of cokernels is proved similarly, using instead the fact
that surjectivity is preserved.
\end{proof}

\begin{prop}
\label{ultraexactsequencethm} \label{ultraexactsequenceprop} Over
collections of constantly finite dimensional vector spaces, the
collection of diagrams
\[ 0 \rightarrow X_i^1 \stackrel{\phi_i^1}{\longrightarrow} \ldots
\stackrel{\phi_i^n}{\longrightarrow} X_i^{n+1} \rightarrow 0 \] is
almost everywhere exact if and only if the corresponding sequence
\[ [0] \rightarrow [X_i]_1 \stackrel{[\phi_i]_1}{\vlongrightarrow}
\ldots \stackrel{[\phi_i]_n}{\vlongrightarrow} [X_i]_{n+1}
\rightarrow [0] \] is exact.
\end{prop}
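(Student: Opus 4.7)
The plan is to decompose exactness of the sequence into conditions at each individual spot and transfer each of these conditions through the ultraproduct using the preceding proposition (kernels, cokernels, injectivity, and surjectivity are preserved) together with Theorem \ref{transformationsarenicethm} and Proposition \ref{prop1}. Since a finite intersection of large sets is large, once each spot is handled the full biconditional will follow.

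First I would dispose of the endpoints. Exactness at $X^1$ is just injectivity of $\phi^1$, and exactness at $X^{n+1}$ is surjectivity of $\phi^n$; both directions are immediate from Proposition \ref{ultrainjectivityprop}. For an internal spot $X^j$ with $2 \leq j \leq n$, I would write exactness as the conjunction of two conditions: (a) $\phi^{j-1} \circ \phi^j = 0$, so that $\text{im}(\phi^{j-1}) \subseteq \ker(\phi^j)$, and (b) $\dim \ker(\phi^j) = \dim \text{im}(\phi^{j-1})$, upgrading the containment to equality. Condition (a) transfers in both directions by Theorem \ref{transformationsarenicethm}, since it is a single polynomial equation in the maps.

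For condition (b) I would first observe that since each $X_i^j$ has a constant finite dimension $N_j$, the integer $\dim \ker \phi_i^j$ lies in the finite set $\{0,1,\ldots,N_j\}$, so by the ultrafilter property exactly one value $d_j$ is attained on a large index set; after intersecting finitely many such sets, one may assume the kernels and images all have constant dimensions. Now Proposition \ref{ultrainjectivityprop} produces, from the family of constant-dimensional kernel inclusions $K_i^j \hookrightarrow X_i^j$, a kernel map for $[\phi_i]^j$; combined with Proposition \ref{prop1} this forces $\dim \ker([\phi_i]^j)$ to equal $d_j$ iff $\dim \ker \phi_i^j = d_j$ almost everywhere. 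Running the analogous argument on the cokernels $X_i^{j-1} \to \text{coker}(\iota_i^{j-1})$, or equivalently noting that the image is the kernel of the canonical map into the cokernel, one obtains the corresponding statement for $\text{im}(\phi_i^{j-1})$. Hence (b) transfers as well, and combining (a) and (b) gives exactness at $X^j$ in either direction.

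The only real point that requires care is the bookkeeping in going from Proposition \ref{ultrainjectivityprop}, which speaks of kernel and cokernel morphisms, to statements about kernels and images as subspaces of constant dimension; the pigeonhole reduction to constantly-dimensional kernels and images is what allows Proposition \ref{prop1} to do that work. Everything else is a routine bookkeeping step of intersecting finitely many large sets.
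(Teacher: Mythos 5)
Your proof is correct, but you take a genuinely different route from the paper's. The paper's proof is essentially a one-liner: it observes that exactness of $X \stackrel{\phi}{\to} Y \stackrel{\psi}{\to} Z$ at $Y$ is precisely the statement ``$\phi$ is a kernel for $\psi$, minus the requirement that $\phi$ be injective,'' and then points out that the \emph{proof} of Proposition \ref{ultrainjectivityprop} already showed, pointwise, that both remaining sub-conditions (``everything in the image of $\phi$ is killed by $\psi$'' and ``nothing outside the image is killed'') transfer in both directions, because membership and non-membership in the image, and being killed or not by $\psi$, are all almost-everywhere-pointwise conditions. You instead reformulate exactness at an internal spot as (a) the composition vanishing plus (b) a dimension-count equality, and then handle (b) by a pigeonhole reduction to constant kernel/image dimensions together with Proposition \ref{prop1}. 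Both routes are valid; yours is more explicit and self-contained (it doesn't require reopening the proof of the preceding proposition), while the paper's is shorter precisely because it recycles the internal steps of that proof rather than re-deriving a dimension argument. One small stylistic point: the phrase ``forces $\dim\ker([\phi_i]^j)$ to equal $d_j$ iff $\dim\ker\phi_i^j = d_j$ almost everywhere'' is a bit garbled, since $d_j$ was \emph{defined} as the almost-everywhere value, so the right-hand side is tautological; what you actually establish is simply that $\dim\ker([\phi_i]^j) = d_j$, and likewise $\dim\mathrm{im}([\phi_i]^{j-1})$ equals the almost-everywhere image dimension, from which (b) transfers as an equality of two constants.
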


\begin{proof}
The assertion that the sequence
\[ X \stackrel{\phi}{\longrightarrow} Y
\stackrel{\psi}{\longrightarrow} Z \] is exact amounts to the
assertion that $\phi$ is a kernel for $\psi$, minus the
requirement that $\phi$ be injective, which (the proof of)
proposition \ref{ultrainjectivityprop} shows to be preserved by
ultraproducts.  Checking that the above sequences are exact
amounts to checking finitely many sub-sequences of this form,
which is preserved by ultraproducts.
\end{proof}

\subsection{Tensor Products}

\index{ultraproduct!of tensor products}

\label{tensorproductssubsection}

\begin{prop}
\label{ultratensorproductprop} Let $V_i,W_i$ be (not necessarily
boundedly finite dimensional) collections of vector spaces.  Then
there is a natural injective map
\[ \uprod V_i \otimes \uprod W_i \stackrel{\Phi}{\vlongrightarrow}
\uprod V_i \otimes W_i \] given by $[v_i] \otimes [w_i] \mapsto
[v_i \otimes w_i]$.  The image of \index{$\Phi$} $\Phi$ consists
exactly of those elements having bounded tensor length. The map
$\Phi$ is an isomorphism if and only if at least one of the
collections $V_i$ or $W_i$ are of \index{boundedly finite
dimensional} bounded finite dimension.
\end{prop}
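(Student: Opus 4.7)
The plan is to construct $\Phi$ via the universal property of tensor products, and then analyze its kernel and image by reducing the basic identities to statements on boundedly finite dimensional subspaces. First I would check that the assignment $([v_i], [w_i]) \mapsto [v_i \otimes w_i]$ is a well-defined bilinear map $\uprod V_i \times \uprod W_i \to \uprod V_i \otimes W_i$ over $\uprod k_i$. Well-definedness is clear: if $(v_i) \sim (v_i')$ on a large set $J$ and $(w_i) \sim (w_i')$ on $K$, then $v_i \otimes w_i = v_i' \otimes w_i'$ on the large intersection $J \cap K$; bilinearity then follows from the corresponding almost-everywhere identities in each $V_i \otimes W_i$. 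The universal property produces the desired linear map $\Phi$.

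For injectivity, given a typical element $\sum_{j=1}^n [v_i^j] \otimes [w_i^j]$, I would rewrite it as $\sum_{k=1}^N [e_i^k] \otimes [f_i^k]$ where $[e_i^1], \ldots, [e_i^N]$ is a basis for the finite-dimensional subspace of $\uprod V_i$ spanned by the $[v_i^j]$. The argument behind proposition \ref{basesarepreservedprop}, using lemma \ref{coveringlemma}, shows that linear independence of the classes $[e_i^k]$ forces the representatives $e_i^1, \ldots, e_i^N$ to be linearly independent in $V_i$ on a large set: any almost-everywhere dependence among the representatives would, by pigeonhole on the finitely many coordinates, descend to a nontrivial dependence among the classes. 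Applying $\Phi$ yields $[\sum_k e_i^k \otimes f_i^k]$, which vanishes in $\uprod V_i \otimes W_i$ iff $\sum_k e_i^k \otimes f_i^k = 0$ on a large set, iff (by linear independence of the $e_i^k$ there) every $f_i^k$ vanishes on a large set, iff each $[f_i^k] = 0$, iff the original element is zero. This gives injectivity.

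The image description is then largely bookkeeping. An element of the image is of the form $[\sum_{j=1}^n v_i^j \otimes w_i^j]$ for some fixed $n$, hence has tensor length at most $n$ on the whole index set; conversely, any $[t_i]$ with tensor length bounded by $n$ on a large set can be written there as $\sum_{j=1}^n v_i^j \otimes w_i^j$ (padding with zero summands where necessary) and lifted to $\sum_j [v_i^j] \otimes [w_i^j]$. For the isomorphism dichotomy, if say $V_i$ is boundedly finite dimensional with $\dim V_i \leq D$ almost everywhere, then every $t_i \in V_i \otimes W_i$ has tensor length at most $D$, placing each $[t_i]$ automatically in the image. Conversely, if neither collection is boundedly finite dimensional, proposition \ref{prop1} implies that the sets $\{i : \dim V_i \geq N \text{ and } \dim W_i \geq N\}$ are large for every $N$; on each such slot I would choose linearly independent families of size $r_i$ with $r_i \to \infty$ in the ultrafilter sense, and set $t_i = \sum_{k=1}^{r_i} v_i^k \otimes w_i^k$, whose tensor length is exactly $r_i$, producing $[t_i]$ of unbounded tensor length and hence outside the image. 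The main obstacle is the injectivity step, specifically the careful descent of linear independence from the ultraproduct back to a large set of indices; the remaining steps are essentially routine once this descent is in hand.
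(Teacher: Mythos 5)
Your proposal is correct and follows essentially the same structure as the paper's proof: well-definedness from coordinatewise compatibility, the image characterized via bounded tensor length, and the dichotomy via lemma \ref{minTensorLengthLemma} (maximum tensor length is $\min$ of the dimensions) and lemma \ref{coveringlemma}. The one place you go beyond the paper is injectivity, which the paper dismisses as ``easy to verify''; your reduction to a basis $[e_i^k]$ of the span of the first factors, descent of linear independence to a large set via \ref{coveringlemma}, and the standard fact that $\sum_k e^k \otimes f^k = 0$ with independent $e^k$ forces each $f^k = 0$, is exactly the right way to fill that gap.
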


\begin{proof}
That the map $\Phi$ is well-defined is easy to verify, remembering
of course that the same ultrafilter applies to all ultraproducts
under consideration. Injectivity is likewise easy to verify.  Any
element on the left hand side, by the very definition of tensor
product, has bounded tensor length, and hence so must its image on
the right hand side. Conversely, if $ [\sum_{j=1}^n v_{ij} \otimes
w_{ij} ]$ is an element of bounded tensor length on the right hand
side, then $\sum_{j=1}^n [v_i]_j \otimes [w_i]_j$ is a pre-image
for it on the left.

To prove the isomorphism claim: if $V$ and $W$ are vector spaces
of finite dimension $n$ and $m$ respectively, then the maximum
tensor length of any element of $V \otimes W$ is $\text{min}(n,m)$
(see lemma \ref{minTensorLengthLemma}).  Then if say $V_i$ is of
bounded finite dimension $n$, any $[x_i] \in \uprod V_i \otimes
W_i$ is almost everywhere a sum of no more than $n$ simple
tensors, and is in the image of $\Phi$.

Conversely, suppose neither of $V_i$ or $W_i$ are of bounded
dimension.  For each $i$, choose $x_i \in V_i \otimes W_i$ such
that $x_i$ is of maximum possible tensor length; we claim that
$[x_i] \in \uprod V_i \otimes W_i$ is of unbounded tensor length,
hence not in the image of $\Phi$.  If not, say the tensor length
of $x_i$ is almost everywhere bounded by $n$.  This gives a large
set on which the statement ``at least one of $V_i$ or $W_i$ has
dimension $n$'' is true. This large set is covered by the union of
$\{i \in I: \text{$V_i$ has dimension $n$}\}$ and $\{i \in I:
\text{$W_i$ has dimension $n$}\}$, so by lemma
\ref{coveringlemma}, at least one of them must be large. This
gives a large set on which at least one of $V_i$ or $W_i$ is of
bounded dimension, a contradiction.
\end{proof}

This next proposition justifies our calling the map $\Phi$
`natural'.

\begin{prop}
\label{ultratensormapsprop}
 If $V_i$, $W_i$, $X_i$, $Y_i$ are
collection of vector spaces, and $\phi_i:V_i \rightarrow X_i$,
$\psi_i:W_i \rightarrow Y_i$ linear maps, then the following
commutes:
\begin{equation}
\begin{diagram}
\label{naturalityofPhidiagram} \uprod V_i
\otimes \uprod W_i& \rTo^{\Phi} & \uprod V_i \otimes W_i \\
\dTo^{[\phi_i] \otimes
[\psi_i]}  & &  \dTo_{[\phi_i \otimes \psi_i]}\\
\uprod X_i
\otimes \uprod Y_i  & \rTo_{\Phi} & \uprod X_i \otimes Y_i  \\
\end{diagram}
\end{equation}

\end{prop}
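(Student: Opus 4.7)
The plan is to verify commutativity by chasing a simple tensor around both paths of the square, then extending to arbitrary elements by bilinearity of $\otimes$ and additivity of the maps involved.

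First I would reduce to simple tensors. Every element of $\uprod V_i \otimes \uprod W_i$ is a finite sum of simple tensors of the form $[v_i] \otimes [w_i]$, and the four maps $\Phi$, $\Phi$, $[\phi_i]\otimes[\psi_i]$, and $[\phi_i\otimes\psi_i]$ are all $\uprod k_i$-linear (in particular additive). So it suffices to check that both paths agree on a general simple tensor $[v_i]\otimes[w_i]$.

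Next I would compute both legs explicitly. Following the top-right path: $\Phi$ sends $[v_i]\otimes[w_i]$ to $[v_i\otimes w_i]$ by definition, and then $[\phi_i\otimes\psi_i]$ (in the sense of equation \ref{ultraproductsoflineartransformations}) sends this to $[(\phi_i\otimes\psi_i)(v_i\otimes w_i)] = [\phi_i(v_i)\otimes\psi_i(w_i)]$. Following the left-bottom path: $[\phi_i]\otimes[\psi_i]$ sends $[v_i]\otimes[w_i]$ to $[\phi_i](v_i)\otimes[\psi_i](w_i) = [\phi_i(v_i)]\otimes[\psi_i(w_i)]$, which $\Phi$ then carries to $[\phi_i(v_i)\otimes\psi_i(w_i)]$. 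The two results coincide, giving commutativity on simple tensors.

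The only thing that actually requires care is well-definedness of the four arrows on the chosen representatives, which has already been set up in the preceding propositions: equation \ref{ultraproductsoflineartransformations} and the discussion around proposition \ref{homprop} guarantee that maps of the form $[\phi_i]$ and $[\phi_i\otimes\psi_i]$ are well-defined, and the statement of proposition \ref{ultratensorproductprop} does the same for $\Phi$. There is no genuine obstacle here; the argument is a one-line diagram chase on elementary tensors, with the only subtle point being to note that $[\phi_i]\otimes[\psi_i]$ (ordinary tensor product of two ultraproduct maps, a single map on $\uprod V_i\otimes\uprod W_i$) and $[\phi_i\otimes\psi_i]$ (the ultraproduct of the maps $\phi_i\otimes\psi_i$) are a priori different constructions living on different spaces, and that $\Phi$ is precisely the bridge that identifies them on the subspace of bounded tensor length.
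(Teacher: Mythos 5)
Your proof is correct and takes essentially the same approach as the paper: chase a simple tensor $[v_i]\otimes[w_i]$ around both legs of the square, observe both paths land on $[\phi_i(v_i)\otimes\psi_i(w_i)]$, and extend by linearity. The extra remarks on well-definedness are sound but match what the paper has already established by this point.
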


\begin{proof}
Let $[v_i] \otimes [w_i]$ be a simple tensor in $\uprod V_i
\otimes \uprod W_i$.  Chasing it both ways gives the same result:
\begin{diagram}
[v_i] \otimes [w_i] & \rTo^\Phi & [v_i \otimes w_i] \\
\dTo^{[\phi_i] \otimes [\psi_i]} & & \dTo_{[\phi_i \otimes w_i]} \\
[\phi_i(v_i)] \otimes [\psi_i(w_i)] &  \rTo_\Phi  & [\phi(v_i)
\otimes \psi_i(w_i)] \\
\end{diagram}

\end{proof}

\section{Algebras and Coalgebras}

\index{ultraproduct!of algebras} If $(L_i,\text{mult}_i)$ is a
collection of algebras over the fields $k_i$ then it is easy to
verify that $\uprod L_i$ is an algebra over the field $\uprod
k_i$, under the obvious definitions of addition, multiplication,
and scalar multiplication.  The multiplication on $\uprod L_i$ is
in particular defined as the composition
\[ \text{mult}:\uprod L_i \otimes \uprod L_i
\stackrel{\Phi}{\longrightarrow} \uprod L_i \otimes L_i
\stackrel{[\text{mult}_i]}{\vlongrightarrow} \uprod L_i \]

\index{ultraproduct!of coalgebras} Alas, for coalgebras, things
are not so easy. Here is what can go wrong.  Suppose
$(C_i,\Delta_i,\varepsilon_i)$ is a collection of coalgebras over
the fields $k_i$.  Then $\uprod C_i$ is at least a vector space
over $\uprod k_i$.  Now let us try to define a co-multiplication
 map $\Delta$ on $\uprod C_i$.  We start by writing

\[ \Delta: \uprod C_i \stackrel{[\Delta_i]}{\vlongrightarrow} \uprod C_i
\otimes C_i \] But as it stands, this won't suffice; we need
$\Delta$ to point to $\uprod C_i \otimes \uprod C_i$.  As is shown
in proposition \ref{ultratensorproductprop}, unless the $C_i$ are
of boundedly finite dimension, we only have an inclusion $\uprod
C_i \otimes \uprod C_i \stackrel{\Phi}{\vlongrightarrow} \uprod
C_i \otimes C_i$, whose image consists of those elements of
bounded tensor length, and for a typical collection $C_i$ of
coalgebras it is usually a simple matter to come up with an
element $[c_i] \in \uprod C_i$ such that $[\Delta_i(c_i)]$ has
unbounded tensor length.  Thus, the $\Delta$ constructed above
cannot be expected to point to $\uprod C_i \otimes \uprod C_i$ in
general (this problem is dealt with at length in section
\ref{sectionTheRestrictedUltraproductOfHopfAlgebras}).

Nonetheless, if the $C_i$ \emph{are} of boundedly finite
dimension, then the map $\Phi$ is an isomorphism, whence we can
define
\[ \Delta:\uprod C_i \stackrel{[\Delta_i]}{\vlongrightarrow} \uprod C_i \otimes C_i
\stackrel{\Phi^{-1}}{\vlongrightarrow} \uprod C_i \otimes \uprod
C_i \] Likewise, we define a co-unit map by
\[ \varepsilon: \uprod C_i
\stackrel{[\varepsilon_i]}{\vlongrightarrow} \uprod k_i \] and we
have

\begin{prop}
If $(C_i,\Delta_i,\varepsilon_i)$ is a collection of boundedly
finite dimensional coalgebras over the fields $k_i$, then $\uprod
C_i$ is a coalgebra over the field $\uprod k_i$, under the
definitions of $\Delta$ and $\varepsilon$ given above.
\end{prop}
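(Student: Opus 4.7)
The plan is to verify the two coalgebra axioms---coassociativity (diagram \ref{hopf1}) and the counit property (diagram \ref{hopf2})---for $(\uprod C_i, \Delta, \varepsilon)$ by reducing each identity, via the isomorphism $\Phi$, to the ultraproduct of the corresponding pointwise identity already known to hold in each $(C_i, \Delta_i, \varepsilon_i)$. The hypothesis of bounded finite dimensionality enters in exactly one essential place: it makes $\Phi : \uprod C_i \otimes \uprod C_i \to \uprod C_i \otimes C_i$ an isomorphism (proposition \ref{ultratensorproductprop}), so that $\Delta = [\Delta_i] \circ \Phi^{-1}$ is a well-defined linear map into $\uprod C_i \otimes \uprod C_i$; boundedness also forces the iterated tensor product $\uprod C_i \otimes \uprod C_i \otimes \uprod C_i \to \uprod C_i \otimes C_i \otimes C_i$ to be an isomorphism, which is what we need to handle the three-fold tensor in coassociativity.

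For coassociativity, I would start from the pointwise identity $\Delta_i \circ (\Delta_i \otimes 1_{C_i}) = \Delta_i \circ (1_{C_i} \otimes \Delta_i)$, which holds in every $C_i$. Taking the ultraproduct of maps (using proposition \ref{homprop}; note each side is an ultraproduct of maps between boundedly finite dimensional spaces) gives
\[
[\Delta_i] \circ [\Delta_i \otimes 1_{C_i}] = [\Delta_i] \circ [1_{C_i} \otimes \Delta_i]
\]
as maps $\uprod C_i \to \uprod C_i \otimes C_i \otimes C_i$. The naturality square of proposition \ref{ultratensormapsprop}, applied with $\phi_i = \Delta_i$, $\psi_i = 1_{C_i}$ (and its mirror image with the factors switched), identifies $[\Delta_i \otimes 1_{C_i}] \circ \Phi$ with $\Phi \circ ([\Delta_i] \otimes 1_{\uprod C_i})$, and similarly for the other side. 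Composing with the appropriate $\Phi^{-1}$'s converts the pointwise identity into
\[
\Delta \circ (\Delta \otimes 1_{\uprod C_i}) = \Delta \circ (1_{\uprod C_i} \otimes \Delta),
\]
which is diagram \ref{hopf1} for $\uprod C_i$.

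For the counit axiom, the same strategy applies, but more cleanly since no three-fold tensor is needed. Starting from the pointwise counit identity (diagram \ref{hopf2} holding in each $C_i$), I would take the ultraproduct of maps $[(1_{C_i} \otimes \varepsilon_i) \circ \Delta_i] = [\isomorphic]$, where the right side is the canonical isomorphism $C_i \isomorphic C_i \otimes k_i$. Using proposition \ref{ultratensormapsprop} once more to commute $\Phi$ past $[1_{C_i} \otimes \varepsilon_i]$, and the fact that ultraproducts of canonical isomorphisms are canonical isomorphisms (proposition \ref{homprop} together with proposition \ref{ultrainjectivityprop}), one obtains $(1_{\uprod C_i} \otimes \varepsilon) \circ \Delta = \mathrm{id}$ up to the canonical isomorphism $\uprod C_i \isomorphic \uprod C_i \otimes \uprod k_i$, and symmetrically on the left. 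This is diagram \ref{hopf2} for $\uprod C_i$.

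The main obstacle is purely bookkeeping: keeping straight the three or four distinct $\Phi$ maps in play (the ones between $\uprod C_i \otimes \uprod C_i$, $\uprod C_i \otimes C_i$, and their three-fold analogues) and applying the naturality square of proposition \ref{ultratensormapsprop} to the correct pair $(\phi_i, \psi_i)$ each time. No new ideas are required beyond propositions \ref{homprop}, \ref{ultratensorproductprop}, and \ref{ultratensormapsprop}; the role of bounded dimension is solely to make every instance of $\Phi$ encountered an isomorphism so that the pointwise identities can be transported back to honest identities in $\uprod C_i \otimes \uprod C_i$.
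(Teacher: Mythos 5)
Your proposal is correct and follows essentially the same approach as the paper: reduce each coalgebra axiom in $\uprod C_i$ to the ultraproduct of the corresponding pointwise identity, using the naturality of $\Phi$ (proposition \ref{ultratensormapsprop}) to commute $\Phi$ past the ultraproduct maps, with bounded finite dimensionality ensuring that every instance of $\Phi$ is an isomorphism. The paper verifies diagram \ref{hopf2} explicitly via a single large commutative diagram decomposed into subpolygons (the central triangle being the pointwise counit identity) and states that \ref{hopf1} is similar; your prose account, including the observation about the three-fold tensor, is the same argument spelled out in both cases.
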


\begin{proof}
We must verify diagrams \ref{hopf1} and \ref{hopf2} of definition
\ref{coalgebradefn}.  Consider
\begin{diagram}
\text{ } & \lTo & \uprod C_i \ & \rdTo(4,2) \\
 & & \dTo^{[\Delta_i]}  &\rdTo(4,2)^{\isomorphic} \rdTo_{[\isomorphic_i]} &  & \\
 \dTo^\Delta & & \uprod C_i \otimes C_i & \rTo^{[1 \otimes
 \varepsilon_i]} & \uprod C_i \otimes k_i & \rTo^{\Phi^{-1}} &
 \uprod C_i \otimes k_i \\
  & & \dTo^{\Phi^{-1}} & & & \ruTo(4,2)_{1 \otimes \varepsilon} & \\
 \text{ } & \rTo & \uprod C_i \otimes \uprod C_i \\
\end{diagram}
Commutativity of the top middle triangle follows from the
everywhere commutativity of it, which is diagram \ref{hopf2}
applied to each $C_i$.  The rest of the subpolygons are easy to
verify, whence we have commutativity of the outermost, which is
diagram \ref{hopf2}.  Diagram \ref{hopf1} can be proved in a
similar fashion.
\end{proof}

\begin{prop}
\index{$A^\circ$} \label{uprodoverdualsprop} Let $L_i$ be a
collection of boundedly finite dimensional algebras over the
fields $k_i$. Then there is a natural isomorphism of coalgebras
\[ \uprod L_i^\circ  \isomorphic \left(\uprod L_i\right)^{\circ}  \]
which sends the tuple of functionals $[\phi_i:L_i \rightarrow
k_i]$ on the left to the functional $[\phi_i]:\uprod L_i
\rightarrow \uprod k_i$ on the right.
\end{prop}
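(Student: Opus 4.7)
The plan is to exploit the bounded finite dimensionality of the $L_i$ in two critical ways: first, to identify $L_i^\circ$ with $L_i^*$ (and $(\uprod L_i)^\circ$ with $(\uprod L_i)^*$), and second, to invoke the proven fact that Hom-spaces commute with ultraproducts in the finite dimensional case.

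First I would unpack the hypothesis. Since the $L_i$ are boundedly finite dimensional, say almost everywhere of dimension $n$, we have $L_i^\circ = L_i^*$. By proposition~\ref{prop1}, $\uprod L_i$ is itself of dimension $n$ over $\uprod k_i$, so every functional kills the zero ideal (of finite codimension), giving $(\uprod L_i)^\circ = (\uprod L_i)^*$. Applying proposition~\ref{homlemma} with $W_i = k_i$ yields a natural $\uprod k_i$-linear isomorphism $\uprod L_i^* \isomorphic (\uprod L_i)^*$, and chasing through the proof of that proposition confirms that the map in question is exactly $[\phi_i] \mapsto [\phi_i]$, where the right-hand $[\phi_i]$ denotes the functional on $\uprod L_i$ defined by \eqref{ultraproductsoflineartransformations}. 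This settles the bijection and the naturality of $\Psi: \uprod L_i^\circ \to (\uprod L_i)^\circ$.

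The substantive step is to verify that $\Psi$ intertwines the two coalgebra structures. On the left, the co-multiplication is the composition
\[ \uprod L_i^\circ \stackrel{[\Delta_i]}{\vlongrightarrow} \uprod (L_i^\circ \otimes L_i^\circ) \stackrel{\Phi^{-1}}{\vlongrightarrow} \uprod L_i^\circ \otimes \uprod L_i^\circ, \]
where $\Phi^{-1}$ exists precisely because the $L_i^\circ$ are also boundedly finite dimensional (of dimension $n$). On the right, $\Delta$ is defined by dualizing multiplication on $\uprod L_i$. To see these agree under $\Psi \otimes \Psi$, I would evaluate on an arbitrary element: given $[\alpha_i] \in \uprod L_i^\circ$, write $\Delta_i(\alpha_i) = \sum_{j=1}^n \beta_i^j \otimes \gamma_i^j$ (padding by zero if needed to get constant tensor length, possible by lemma~\ref{minTensorLengthLemma} since $L_i^\circ$ has dimension $n$), and test against an arbitrary simple tensor $[a_i] \otimes [b_i]$ in $\uprod L_i \otimes \uprod L_i$. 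On one side, $\Phi^{-1}([\Delta_i(\alpha_i)]) = \sum_j [\beta_i^j] \otimes [\gamma_i^j]$ evaluates to $[\sum_j \beta_i^j(a_i) \gamma_i^j(b_i)] = [\alpha_i(a_i b_i)]$, using that each $\Delta_i(\alpha_i)$ dualizes $\text{mult}_i$. On the other side, $\Delta(\Psi([\alpha_i]))$ evaluated at $[a_i] \otimes [b_i]$ is $\Psi([\alpha_i])([a_i b_i]) = [\alpha_i(a_i b_i)]$ as well. Agreement of co-units is immediate: both send $[\alpha_i]$ to $[\alpha_i(1)]$.

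The main obstacle I anticipate is purely bookkeeping: keeping straight the three distinct roles of the symbol $[\phi_i]$ (as an element of the ultraproduct of dual spaces, as the ultraproduct of linear maps, and as a functional on $\uprod L_i$), together with careful use of the naturality square from proposition~\ref{ultratensormapsprop} to commute $\Phi^{-1}$ past the ultraproduct of $\Delta_i$. No genuinely new ingredient is needed beyond the preceding results of this section; the bounded dimension hypothesis is what makes everything go through, since it is exactly what guarantees that $\Phi$ (and hence $\Phi^{-1}$) exists at the level of dual spaces and that $L_i^\circ = L_i^*$ throughout.
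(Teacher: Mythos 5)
Your proof is correct and takes essentially the same approach as the paper: identify the bijection $\Psi$ via proposition~\ref{homlemma}, then verify compatibility of the comultiplications by evaluating both sides as functionals on $\uprod L_i \otimes \uprod L_i$ and reducing to the fact that $\text{mult} = \Phi \circ [\text{mult}_i]$ by definition. The only difference is cosmetic: you chase an explicit element $[a_i] \otimes [b_i]$ while the paper compares the two compositions of maps directly, and you spell out the identification $L_i^\circ = L_i^*$ (implicit in the paper); neither changes the argument.
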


\begin{proof}
Call the claimed isomorphism $\Psi$. That it is an isomorphism of
vector spaces is clear from proposition \ref{homlemma}.  To see
that it is a map of coalgebras we must verify commutativity of
\begin{diagram}
\uprod L_i^\circ & \rTo^\Psi & (\uprod L_i)^\circ \\
\dTo^{[\Delta_i]} & & \\
\uprod L_i^\circ \otimes L_i^\circ & & \dTo_{\Delta} \\
\dTo^{\Phi^{-1}} & & \\
\uprod L_i^\circ \otimes \uprod L_i^\circ & \rTo_{\Psi \otimes
\Psi} & (\uprod L_i)^\circ \otimes (\uprod L_i)^\circ \\
\end{diagram}
where $\Delta_i$ denotes the coalgebra structure on $L_i^\circ$
and $\Delta$ that on $(\uprod L_i)^\circ$.  Let $\text{mult}_i$ be
the multiplication on the algebra $L_i$ and $\text{mult}$ that on
$\uprod L_i$, so by definition $\text{mult} = \Phi \circ
[\text{mult}_i]$.

Let $[\alpha_i:L_i \rightarrow k_i]$ be an arbitrary element of
$\uprod L_i^\circ$ and let us chase it both ways.  Working
downward first, we ask how $\Phi^{-1}([\Delta_i]([\alpha_i]))$
acts on $\uprod L_i \otimes \uprod L_i$; it does so by the
composition
\[ \uprod L_i \otimes \uprod L_i
\stackrel{\Phi}{\longrightarrow} \uprod L_i \otimes L_i
\stackrel{[\text{mult}_i]}{\vlongrightarrow} \uprod L_i
\stackrel{[\alpha_i]}{\longrightarrow} \uprod k_i \] Next we ask
how $\Delta(\Psi([\alpha_i]))$ acts on $\uprod L_i \otimes \uprod
L_i$.  It does so by the composition
\[ \uprod L_i \otimes \uprod L_i
\stackrel{\text{mult}}{\vlongrightarrow} \uprod L_i
\stackrel{[\alpha_i]}{\longrightarrow} \uprod k_i \] But
$\text{mult}$ is \emph{defined} to be $\Phi \circ
[\text{mult}_i]$, and so these actions are equal.  This completes
the proof.
\end{proof}

\chapter{The Restricted Ultraproduct of Neutral Tannakian Categories}

\label{TheMainTheorem} \label{themaintheoremchapter}

In this chapter we prove one of the main theorems of this
dissertation, namely that a certain natural subcategory of an
ultraproduct of neutral tannakian categories is also neutral
tannakian.

\section{Smallness of the Category $\text{Rep}_k G$}

Before beginning in earnest, we pause in this section to address a
subtle but important point.  If one wishes to consider an
ultraproduct of a collection of `things', those things must be
sets; in particular, they must be \index{relational structure}
relational structures. Thus, if one wishes to consider the
ultraproduct of a collection of categories, then those categories
must be small categories, and the necessary (abelian, tensor,
etc.) structure on the \index{category} categories must be
realized as actual relations and functions on that set. This
forces the question: for an affine group scheme $G$ and field $k$,
can $\text{Rep}_k G$ be taken to be small, up to tensorial
equivalence?

This question is not fully addressed in this dissertation, but we
shall at least give here some arguments that lead us to believe
that this is a fair assumption.  In particular, we shall argue why
we believe that the category $\text{Vec}_k$ can be taken to be
small, up to tensorial equivalence.  Similar arguments we believe
should apply to the category $\text{Rep}_k G$ for arbitrary $k$
and $G$.

For the remainder of this section, we shall use the term
\emph{small} and \emph{tensorially small} in an abusive sense; the
category $\catfont{C}$ shall be said to be \emph{small} if it is
equivalent to a small category (even though she itself may not
be), and the tensor category $\catfont{C}$ shall be said to be
\emph{tensorially small} if there is a tensor preserving
equivalence between $\catfont{C}$ and a small tensor category (see
definitions \ref{defntensorcategory} and \ref{defntensorfunctor}).

Consider the category $\text{Vec}_k$ of finite dimensional vector
spaces over a field $k$, which can be identified as $\text{Rep}_k
G_0$, the category of finite dimensional representations of the
trivial group $G_0$. Denote further by $\text{VEC}_k$ the category
of \emph{all} vector spaces over $k$ (finite dimensional or not).
We observe first that the category $\text{VEC}_k$ should by no
means be assumed to be small.  Even her skeleton would consist of
objects of every possible dimension over $k$, and hence of sets of
every possible cardinality. If this skeleton were indeed a set, we
could take the union of all objects contained in that set, and
therefore arrive at a set of cardinality greater than that of any
other set. This is anathema according to the basic tenets of set
theory.

But the category $\text{Vec}_k$ is indeed small. To see this, we
shall follow page 93 of \cite{maclane} in observing that any
category is equivalent (though not necessarily \emph{tensorially}
equivalent) to its skeleton.  To realize the skeleton of
$\text{Vec}_k$ as a small category, for each $n$ we take the set
$V_n = k^n$, i.e.~the collection of all formal linear combinations
of $k$ over the set $\{1,2, \ldots, n\}$ with the obvious
$k$-vector space structure. Likewise define $\text{Hom}(V_n,V_m)$
to be the set of all functions from $V_n$ to $V_m$ which qualify
as $k$-linear maps under the given vector space structures.  Then
the $V_n$ and $\text{Hom}(V_n,V_m)$, themselves being a collection
of sets indexed by the sets $\mathbb{N}$ and $\mathbb{N}^2$, can
indeed be collected into a single set.  Thus $\text{Vec}_k$ is a
small category.

What is far less obvious is that $\text{Vec}_k$ is
\emph{tensorially} small. To illustrate the problem, we again
direct the reader to page 164 of \cite{maclane}, where the author
shows that the skeleton of the category of sets cannot be given
the structure of a tensor category (in this case defined as the
usual cartesian product of sets).  This is the reason, after all,
that we bother with the assoc, comm, and unit isomorphisms in a
tannakian category; demanding, for example, that $(X \otimes Y)
\otimes Z = X \otimes (Y \otimes Z)$ (strict equality) is simply
too stringent. For similar reasons we do not believe that it
suffices to endow the skeleton of $\text{Vec}_k$ with the
structure of a tensor category in the usual sense.

Here is what we believe is a possible approach to remedying this.
Denote by $\catfont{C}_0$ the skeleton of $\text{Vec}_k$ as
defined above, and for each $n \in \mathbb{N}$, denote by
$\catfont{C}_n$ the following category. The objects of
$\catfont{C}_n$ are the objects of $\catfont{C}_{n-1}$, along with
all pairwise tensor products of objects in $\catfont{C}_{n-1}$
(via whatever standard construction one likes, e.g.~as a certain
quotient of the free vector space on $V \times W$; see section 1.7
of \cite{greub}). For objects $V,W \in \catfont{C}_n$, we let
$\text{Hom}_{\catfont{C}_n}(V,W)$ be
$\text{Hom}_{\catfont{C}_{n-1}}(V,W)$ if $V$ and $W$ are in
$\catfont{C}_{n-1}$, and if not, as the collection of all
functions from $V$ to $W$ which qualify as $k$-linear maps.
Finally, we define $\catfont{C}$ to be the union of the categories
$\catfont{C}_n$ for $n=0,1,2, \ldots$.

Note that, since $\catfont{C}_0$ already contains an isomorphic
copy of every finite dimensional vector space over $k$,
$\catfont{C}$ contains no new objects up to isomorphism.  The
whole point of bothering with these new objects is so as not to
encounter any paradoxes similar to that described on page 164 of
\cite{maclane}.  Proving rigorously that this category satasfies
the axioms of a tannakian category would no doubt require
significant effort, but we believe that it could be done.

We would define the primitive relations of the language of abelian
tensor categories on this structure in the obvious manner.  For
instance, the relation $\phi \circ \psi \myeq \eta$ would hold
precisely when $\eta$ is the composition of $\phi$ and $\psi$ in
the usual sense, and similarly for $\phi + \psi \myeq \eta$.
Importantly, we would define the relation $X \otimes Y \myeq Z$ to
hold when $Z$ is the unique object of $\catfont{C}_{n+1}$ such
that $X \otimes Y = Z$, where $n$ is the least integer such that
$X$ and $Y$ are both objects of $\catfont{C}_n$.  The relation
$\text{assoc}_{X,Y,Z} \myeq \phi$ would hold when $\phi$ is the
unique map $(X \otimes Y) \otimes Z \rightarrow X \otimes (Y
\otimes Z)$ such that $\phi$ sends $(x \otimes y) \otimes z$ to $x
\otimes (y \otimes z)$, and similarly for $\text{comm}_{X,Y} \myeq
\phi$.  As for the unit relation, denote by $\underline{1}$ the
unique $1$-dimensional vector space in the skeleton
$\catfont{C}_0$, and define $\text{unit}_X \myeq \phi$ to hold
when $\phi$ is the unique map $X \rightarrow \underline{1} \otimes
X$ such that $\phi:x \mapsto 1 \otimes x$.

Given that one could verify that this category $\catfont{C}$
satisfies the axioms of a tannakian category, showing it to be
tensorially equivalent to the usual $\text{Vec}_k$, and hence
small, should be straightforward; simply define $F:\catfont{C}
\rightarrow \text{Vec}_k$ to be the inclusion functor.  This
functor is clearly full, faithful, and essentially surjective,
hence an equivalence.  Showing $F$ to be tensor preserving (see
definition \ref{defntensorfunctor}) would likewise be
straightforward.  Finally, apply definition 1.10 and proposition
1.11 of \cite{deligne} to see that if $F$ is an equivalence, and
if it is a tensor functor, then it is also a \emph{tensor
equivalence}, in the sense that its inverse can also be taken to
be tensor preserving.  Thus $F$ is an equivalence of abelian
tensor categories, and $\text{Vec}_k$ is a tensorially small
category.

\subsection{A Quotient Category Approach}

Here we mention briefly a possible alternative to the ultraproduct
approach taken in this dissertation, one which replaces the
ultraproduct with a certain \index{quotient category} quotient
category.

Let $G_i$ be a collection of affine group schemes over the fields
$k_i$, and let $\catfont{C}_i$  be the category $\text{Rep}_{k_i}
G_i$.  Denote by $\prod_{i \in I} \catfont{C}_i$ the
\textit{product} of the categories $\catfont{C}_i$; that is, the
category whose objects are all possible tuples of objects $(X_i:i
\in I)$, and whose morphisms are all possible tuples of morphisms
$(\phi_i:i \in I)$, with the obvious definitions of morphism
composition, addition of morphisms, tensor product of objects,
etc.  Fix a non-principal ultrafilter $\filtfont{U}$ on $I$, and
for objects $(X_i),(Y_i) \in \prod_{i \in I} \catfont{C}_i$,
define the following congruence relation $\sim$ on
$\text{Hom}((X_i),(Y_i))$: $(\phi_i) \sim (\psi_i)$ if and only if
the subset of $I$ on which $\phi_i = \psi_i$ is large.  Note that,
if $(\phi_i),(\psi_i):(X_i) \rightarrow (Y_i)$ with $(\phi_i) \sim
(\psi_i)$, and if $(\rho_i),(\mu_i):(Y_i) \rightarrow (Z_i)$ with
$(\rho_i) \sim (\mu_i)$, then $(\phi_i \circ \rho_i) \sim (\psi_i
\circ \mu_i)$, as the intersection of two large sets is also large
(see definition \ref{defnFilter}). Thus $\sim$ is indeed a
congruence relation.

Let $\catfont{C}^\prime$ denote the quotient category of $\prod_{i
\in I} \catfont{C}_i$ with respect to $\sim$.  Then under the
assumption of the previous section, namely that any
$\text{Rep}_{k} G$ is tensorially equivalent to a small category,
we have

\begin{thm}
The quotient category \index{quotient category}
$\catfont{C}^\prime$ and the ultraproduct category $\uprod
\catfont{C}_i$ are equivalent as abelian tensor categories.
\end{thm}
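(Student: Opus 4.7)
The plan is to construct an explicit tensor functor $F : \catfont{C}^\prime \to \uprod \catfont{C}_i$ and verify that it is an equivalence of abelian tensor categories. On objects, $F$ sends the tuple $(X_i)$ to its ultraproduct class $[X_i]$; on morphisms, $F$ sends the $\sim$-equivalence class of $(\phi_i)$ to the ultraproduct class $[\phi_i]$. Well-definedness on morphisms is immediate, since $(\phi_i) \sim (\psi_i)$ means $\phi_i = \psi_i$ on a large set, which is precisely the equivalence used to define $[\phi_i] = [\psi_i]$ in the relational ultraproduct. Functoriality is automatic, as composition, addition and identities are defined componentwise in $\prod_{i \in I} \catfont{C}_i$ and, by the ultraproduct construction, componentwise on representatives in $\uprod \catfont{C}_i$.

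To show $F$ is an equivalence of categories, I would verify essential surjectivity, faithfulness, and fullness in turn. Essential surjectivity is trivial: any object $[X_i]$ of $\uprod \catfont{C}_i$ is the image under $F$ of any representative tuple $(X_i)$. For the Hom-sets, observe that $\text{Hom}_{\catfont{C}^\prime}((X_i),(Y_i))$ is by construction $\prod_i \text{Hom}_{\catfont{C}_i}(X_i, Y_i) / {\sim}$, which by the definition of the ultraproduct of sets coincides with $\text{Hom}_{\uprod \catfont{C}_i}([X_i],[Y_i])$. The only subtle point is fullness, which requires lifting a morphism $[\phi_i] : [X_i] \to [Y_i]$ in the ultraproduct to an honest tuple in the product category: on the large set $J$ where $\phi_i$ is a morphism from $X_i$ to $Y_i$ set $\phi_i^\prime = \phi_i$, and off $J$ set $\phi_i^\prime$ to be the zero morphism $X_i \to Y_i$, which exists by abelianness of $\catfont{C}_i$. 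Then $(\phi_i^\prime)$ is a morphism in $\prod_{i \in I} \catfont{C}_i$ and $F([(\phi_i^\prime)]_\sim) = [\phi_i]$, giving fullness. Faithfulness is just the well-definedness argument run in reverse.

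To show $F$ preserves the tensor structure, recall that $\otimes$, together with the isomorphisms $\text{assoc}$, $\text{comm}$, $\text{unit}$, are defined componentwise in $\prod_{i \in I} \catfont{C}_i$ and descend to $\catfont{C}^\prime$, while in $\uprod \catfont{C}_i$ they are interpreted as the relational ultraproducts of the corresponding structures on each $\catfont{C}_i$. One thus has strict equalities $F((X_i) \otimes (Y_i)) = [X_i \otimes Y_i] = [X_i] \otimes [Y_i] = F((X_i)) \otimes F((Y_i))$, and likewise on morphisms; taking the isomorphism $c_{X,Y}$ in definition \ref{defntensorfunctor} to be the identity, the coherence diagrams for a tensor functor commute trivially, and the image of the identity object $(\underline{1}_i)$ of $\catfont{C}^\prime$ is the identity object $[\underline{1}_i]$ of $\uprod \catfont{C}_i$. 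Hence $F$ is a tensor functor, and combined with the equivalence of underlying categories, a tensor equivalence.

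The main obstacle is foundational rather than computational: for $\uprod \catfont{C}_i$ to make sense as a relational structure in the language of abelian tensor categories, each $\catfont{C}_i$ must be realized as a small structure in which $\otimes$ is an actual function on objects and morphisms, and in which $\text{assoc}$, $\text{comm}$, $\text{unit}$ are actual functions producing specified morphisms, rather than being defined only up to canonical isomorphism. This is precisely the content of the preceding discussion of tensorial smallness, and that smallness assumption is what is being borrowed in the statement. Once this is in place, the proof reduces to bookkeeping: both $\catfont{C}^\prime$ and $\uprod \catfont{C}_i$ are built from tuples modulo almost-everywhere equality on the morphism sort, and both evaluate every sorted operation componentwise, so the theorem really just says that these two constructions yield the same structure.
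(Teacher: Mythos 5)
Your functor $F : \catfont{C}^\prime \to \uprod \catfont{C}_i$ sending $(X_i) \mapsto [X_i]$ is not well-defined, and your closing paragraph identifies the obstacle without actually dissolving it. The quotient category $\catfont{C}^\prime$ is built from the honest categories $\catfont{C}_i = \text{Rep}_{k_i} G_i$, which are not small. By contrast, $\uprod \catfont{C}_i$ only makes sense as the ultraproduct of the small replacements $\overline{\catfont{C}}_i$ constructed in the preceding section, and the objects of $\overline{\catfont{C}}_i$ are not literally objects of $\catfont{C}_i$. So an object $X_i$ of $\catfont{C}_i$ is simply not an element of the relational structure $\overline{\catfont{C}}_i$, and $[X_i]$ is not a well-formed element of $\uprod \overline{\catfont{C}}_i$. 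The paper's proof routes around this by first pushing each $X_i$ into $\overline{\catfont{C}}_i$ through the tensor-preserving quasi-inverse $G_i$ of the inclusion $F_i : \overline{\catfont{C}}_i \rightarrow \catfont{C}_i$, defining $G((X_i)) = [G_i(X_i)]$ and $G([\phi_i]) = [G_i(\phi_i)]$.

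This also undermines your claim of strict tensor preservation. The tensor operation on $\overline{\catfont{C}}_i$ is a genuine function on the domain of the structure (that is the whole point of building the small replacement), but it does not agree on the nose with the tensor bifunctor on $\catfont{C}_i$; they agree only up to the coherence isomorphisms $c^i_{X_i,Y_i}$ that come packaged with the tensor equivalence $G_i$. Consequently the coherence datum for the paper's $G$ is not the identity: it is assembled from the natural injection $\Phi$ of proposition \ref{ultratensorproductprop} together with the ultraproduct $[c^i_{X_i,Y_i}]$ of the individual coherence isomorphisms, and the resulting tensor functor is lax rather than strict. Your final remark that the two constructions ``yield the same structure'' is precisely what the theorem is \emph{not} saying; it asserts a genuine equivalence, and the nontrivial content of that equivalence is carried by the $G_i$'s.
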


\begin{proof}
For each $i$, denote by $\overline{\catfont{C}}_i$ the posited
small category, and by $F_i:\overline{\catfont{C}}_i \rightarrow
\catfont{C}_i$ the posited tensor equivalence (i.e., the inclusion
functor), of the previous section.  Then by proposition 1.11 of
\cite{deligne}, let $G_i$ be the tensor preserving `inverse' of
$F_i$.  Define a functor $G: \catfont{C}^\prime \rightarrow \uprod
\overline{\catfont{C}}_i$ as follows: for objects, $G((X_i)) =
[G_i(X_i)]$, and for morphisms, $G([\phi_i]) = [G_i(\phi_i)]$.
That $G$ is essentially surjective is clear from the essential
surjectivity of each $G_i$, and similarly the fullness and
faithfulness of $G$ follows.  Thus $G$ is an equivalence.

By hypothesis each $G_i$ is a tensor equivalence, and so comes
equipped with a functorial isomorphism $c^i_{X_i,Y_i}:G_i(X_i)
\otimes G(Y_i) \isoarrow G_i(X_i \otimes Y_i)$ for all $X_i,Y_i
\in \catfont{C}_i$.  Then define a functorial isomorphism $c$ as
follows.  For each pair of objects $(X_i),(Y_i)$ in the quotient
category $\catfont{C}^\prime$, define $c_{(X_i),(Y_i)}:[G_i(X_i)]
\otimes [G_i(Y_i)] \rightarrow [G_i(X_i \otimes Y_i)]$ as the
composition
\[ c_{(X_i),(Y_i)}: [G_i(X_i)]
\otimes [G_i(Y_i)] \stackrel{\Phi}{\longrightarrow} [G_i(X_i)
\otimes G_i(Y_i)] \stackrel{[c^i_{X_i,Y_i}]}{\vlongrightarrow}
[G_i(X_i \otimes Y_i)]
\] where $\Phi$ is the natural injection defined in proposition \ref{ultratensorproductprop}
, and $[c^i_{X_i,Y_i}]$ is the ultraproduct of the linear maps
$c^i_{X_i,Y_i}$ (see equation
\ref{ultraproductsoflineartransformations}). So equipped with $c$,
we believe $G$ can now be shown to be tensor preserving according
to definition \ref{defntensorfunctor}. Apply again proposition
1.11 of \cite{deligne} to see that $G$ is indeed a tensor
equivalence.
\end{proof}

The chief disadvantage of ultraproducts, as highlighted, is that
the constituent categories of the ultraproduct must be shown to be
small relational structures; the quotient category approach does
away with this requirement.  On the other hand, the chief
advantage of the ultraproduct approach is that one immediately has
$\L$os' theorem, which allows us to pass immediately from
first-order statements on the factors to first-order statements in
the ultraproduct.  In particular, we may conclude immediately that
$\uprod \catfont{C}_i$ is a tannakian category, simply by virtue
of the fact that `being tannakian' is a first-order sentence in
the language of abelian tensor categories. Of course, given the
tensorial smallness of $\text{Rep}_k G$, since the ultraproduct
and quotient categories are tensorially equivalent, we conclude
that a $\L$os' theorem-type result must indeed hold for the
quotient category as well; but this can no longer be assumed, and
must be proven, and is the chief disadvantage of the quotient
category approach.

\section{The Restricted Ultraproduct}

For the remainder of this dissertation, by a (abelian, tensor,
etc.) category, we shall always mean a small category realized as
a structure in the language of abelian tensor categories, and by a
tannakian category, we shall mean a structure satisfying the
axioms given in chapter \ref{axiomsfortannakiancategories}.

\begin{thm}
\index{ultraproduct!of tannakian categories}
\label{resprodisatannakiancategorythm} Let $\catfont{C}_i$ be a
sequence of tannakian categories indexed by $I$, $\filtfont{U}$ an
ultrafilter on $I$. Then \index{$\uprod \catfont{C}_i$} $\uprod
\catfont{C}_i$ is a tannakian category.
\end{thm}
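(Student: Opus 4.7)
The plan is to prove this almost immediately from the machinery already assembled, by invoking {\L}os' theorem. Each $\catfont{C}_i$ is, by our standing convention, a small category regarded as a relational structure in the language of abelian tensor categories described in section \ref{chapterTheLanguageOf}. The ultraproduct $\uprod \catfont{C}_i$ is then a relational structure in the same signature, whose universe consists of equivalence classes $[x_i]$ of tuples of objects-or-morphisms, with the primitive relations $\slot \in \text{Ob}$, $\slot \in \text{Mor}$, $\slot : \slot \to \slot$, $\slot \circ \slot \myeq \slot$, $\slot + \slot \myeq \slot$, $\slot \otimes \slot \myeq \slot$, $\text{assoc}_{\slot,\slot,\slot} \myeq \slot$, $\text{comm}_{\slot,\slot} \myeq \slot$, $\text{unit}_\slot \myeq \slot$ all interpreted coordinatewise modulo the ultrafilter.

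The key input is the work done in chapter \ref{chapterFirstOrderDefinabilityOf}: piecemeal, we exhibited a (finite) set of sentences $\Sigma$ in the language of abelian tensor categories whose conjunction expresses ``is a tannakian category.''  These axioms cover (i) the basic category axioms, (ii) the abelian category axioms (existence and uniqueness of biproducts, kernels, cokernels, normality of mono- and epimorphisms), (iii) the abelian tensor structure (functoriality and bi-additivity of $\otimes$, naturality and coherence of $\text{assoc}$, $\text{comm}$, $\text{unit}$, and existence of an identity object), and (iv) rigidity together with the assertion that $\text{End}(\underline 1)$ is a field. By hypothesis each $\catfont{C}_i$ satisfies every sentence in $\Sigma$.

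Now I would simply apply {\L}os' theorem (corollary \ref{loscor}): a first-order sentence holds in the ultraproduct $\uprod \catfont{C}_i$ if and only if it holds in almost every factor.  Since $\Sigma$ is a (finite) collection of first-order sentences, each holding in every $\catfont{C}_i$, each holds in $\uprod \catfont{C}_i$, and therefore $\uprod \catfont{C}_i$ satisfies the axioms for a tannakian category.

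The only step that deserves a moment's care is the book-keeping that none of our ``definitional abbreviations'' secretly smuggle in non-first-order quantification: the abbreviations $\Phi(x \circ y)$, $\Phi(\text{assoc}_{X,Y,Z})$, $\text{iso}(\phi)$, $\intHom(Z,\text{ev};X,Y)$, $\text{incl}(\iota;X)$, $\text{QUAD}(X_1,X_2,Y_1,Y_2,\Phi)$, $\text{field}(X)$, etc.\ must each unfold to a bona fide first-order formula. This was verified as we introduced them in chapter \ref{chapterFirstOrderDefinabilityOf}, so the present theorem requires no further argument beyond citing that work and invoking {\L}os.
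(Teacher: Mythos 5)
Your proof is correct and is essentially the same as the paper's: both cite the first-order axiomatization of ``tannakian category'' from chapter \ref{chapterFirstOrderDefinabilityOf} and apply {\L}os' theorem (corollary \ref{loscor}). Your additional remarks on the abbreviations unfolding to genuine first-order formulas are a reasonable sanity check but add nothing the paper hadn't already established in that chapter.
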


\begin{proof}
The property of being tannakian, by the work done in chapter
\ref{axiomsfortannakiancategories}, is expressible by a
first-order sentence in the language of these structures.  By
\index{Los' theorem} $\L$os's theorem (corollary \ref{loscor}),
the same sentence is true in the ultraproduct.
\end{proof}

A word or two about what $\uprod \catfont{C}_i$ actually looks
like. If \index{$[x_i]$} $[x_i]$ is an element of $\uprod
\catfont{C}_i$, then $[x_i]$ is an object or a morphism of $\uprod
\catfont{C}_i$ according to whether the set on which $x_i \in
\text{Ob}$ or $x_i \in \text{Mor}$ is large.  The axioms of a
category state that exactly one of these statements hold in every
slot, and in an ultrafilter, exactly one of a subset of $I$ or its
complement is large.  It thus does no harm to think of every
element $[x_i]$ of $\uprod \catfont{C}_i$ as being represented by
a tuple $(x_i)$ consisting either entirely of objects or entirely
of morphisms, since it is necessarily equivalent to a tuple (many
in fact) of one of these forms.

If \index{$[\phi_i]$} $[\phi_i],[\psi_i],[\eta_i]$ are morphisms
of $\uprod \catfont{C}_i$, then the relation $[\phi_i] \circ
[\psi_i] \myeq [\eta_i]$ holds if and only if the relation $\phi_i
\circ \psi_i \myeq \eta_i$ hold for almost every $i$.  Similarly
the tensor product of the objects \index{$[X_i]$} $[X_i]$ and
$[Y_i]$ is $[Z_i]$, where $Z_i$ denotes the unique object such
that $X_i \otimes Y_i \myeq Z_i$.  In short, $[\phi_i] \circ
[\psi_i] = [\phi_i \circ \psi_i]$, and $[X_i] \otimes [Y_i] = [X_i
\otimes Y_i]$.

By $\L$os's theorem, the same is true for anything that can be
expressed as a first-order concept in our language. For example,
since `being internal Hom' is first-order (see page
\pageref{intHomisFODpage}) we conclude immediately that an
internal Hom object for $[X_i]$ and $[Y_i]$ is necessarily an
object $[Z_i]$, where almost every $Z_i$ is an internal Hom object
for $X_i$ and $Y_i$. Importantly, an identity object for $\uprod
\catfont{C}_i$ is a tuple $[U_i]$ such that $U_i$ is an identity
object for almost every $i$, and an endomorphism of $[U_i]$ is an
element $[\phi_i]$ consisting of morphisms which point from $U_i$
to itself almost everywhere.

One should take care however not to be hasty in concluding that a
given categorical concept is inherited by $\uprod \catfont{C}_i$
from the $\catfont{C}_i$, if you do not know beforehand that the
concept is first-order. The following example illustrates this.

Take $\catfont{C}_i = \text{Vec}_k$ for a fixed field $k$, indexed
by $I = \mathbb{N}$.  For objects $A$ and $B$ of $\text{Vec}_k$,
consider the (non first-order) categorical statement ``$A$ is
isomorphic to an $n$-fold direct sum of $B$ for some $n$''.  Now
the following are both first-order: ``$X$ is isomorphic to $Y$''
and for \emph{fixed} $n$, ``$A$ is an $n$-fold direct sum of
$B$''. This means we can identify $[X_i]^n$ with $[X_i^n]$, and
for fixed $n$, objects $[Y_i]$ that are isomorphic to $[X_i]^n$
with tuples of objects $(Y_i)$ which are almost everywhere
isomorphic to $(X_i^n)$. So let $V_i \in \catfont{C}_i$ be an
$i$-dimensional vector space, and $W_i \in \catfont{C}_i$ a
$1$-dimensional vector space. Then the statement ``$V$ is
isomorphic to an $n$-fold direct sum of $W$ for some $n$'' is true
of every $V_i$ and $W_i$; but the statement is clearly \emph{not}
true of the elements $[V_i]$ and $[W_i]$ inside the category
$\uprod \catfont{C}_i$.  This observation in fact \emph{proves}
that the statement ``$A$ is isomorphic to an $n$-fold direct sum
of $B$ for some $n$'' is not first-order.

In what follows we fix, for each $i$, an identity object for
$\catfont{C}_i$, and denote it by $\underline{1}_i$. We denote
simply by $\underline{1}$ the object $[\underline{1}_i]$ of
$\uprod \catfont{C}_i$.

\begin{prop}
Let $\catfont{C}_i$ be a sequence of tannakian categories, and
denote by $k_i$ the field $\text{End}(\underline{1}_i)$.  Then
$\text{End}(\underline{1})$ can be identified with $k = \uprod
k_i$, the ultraproduct of the fields $k_i$.
\end{prop}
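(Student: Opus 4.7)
The plan is to exhibit an explicit ring isomorphism $\Psi:\uprod k_i \to \text{End}(\underline{1})$ by sending a class $[a_i]$ (where each $a_i$ is a morphism $\underline{1}_i \to \underline{1}_i$, i.e.\ an element of $k_i$) to the morphism $[a_i]$ in $\uprod \catfont{C}_i$. The whole argument is really an application of \L os' theorem combined with the fact, already emphasized several times in the exposition, that operations like composition, addition of morphisms, and the relation ``points from $X$ to $Y$'' are all primitive symbols in the language of abelian tensor categories.

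First I would check that $\Psi$ is well-defined and surjective. By choice of the fixed identity object $\underline{1} = [\underline{1}_i]$, a morphism $[\phi_i]:\underline{1} \to \underline{1}$ in $\uprod \catfont{C}_i$ is, by \L os applied to the formula $\phi:\underline{1}\to\underline{1}$, precisely a class whose representatives $\phi_i$ are, on a large set, morphisms $\underline{1}_i \to \underline{1}_i$. Modifying the representative off a large set, we may assume $\phi_i \in k_i$ for every $i$, so $[\phi_i]$ is in the image of $\Psi$. Well-definedness is immediate: if $(a_i)$ and $(b_i)$ agree on a large set as elements of the $k_i$, they agree on the same large set as morphisms in the $\catfont{C}_i$, so they define the same class in $\uprod \catfont{C}_i$. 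Injectivity is the same argument in reverse, using that equality of morphisms in the ultraproduct is determined by equality on a large set.

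Next I would verify that $\Psi$ is a ring homomorphism. The sum and composition in $\text{End}(\underline{1})$ are given componentwise: $[a_i]+[b_i] = [a_i+b_i]$ and $[a_i]\circ[b_i] = [a_i\circ b_i]$, because the relations $+$ and $\circ$ in the ultraproduct structure are defined by \L os from the same relations on the factors (on the intersection of the large sets where the equations $a_i+b_i \myeq c_i$ and $a_i \circ b_i \myeq d_i$ respectively hold). These are exactly the definitions of sum and product in $\uprod k_i$ (section \ref{ultraproductsoffields}). The multiplicative identity $1 \in \uprod k_i$ maps to $1_{\underline{1}} = [1_{\underline{1}_i}]$, which is the identity morphism of $\underline{1}$ in $\uprod\catfont{C}_i$ (again by \L os applied to the first-order formula defining $1_X$).

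The only subtle point, and really the main thing to watch, is making sure that \textbf{the same identity object} $\underline{1}_i$ is being used on both sides in each slot, so that the endomorphism rings $\text{End}(\underline{1}_i)$ feeding into $\uprod k_i$ coincide with the per-slot Hom-sets $\text{Hom}_{\catfont{C}_i}(\underline{1}_i,\underline{1}_i)$ feeding into $\text{End}(\underline{1})$. This is handled by the convention fixed just above the proposition that $\underline{1}_i$ is a chosen identity object in $\catfont{C}_i$ for each $i$, and $\underline{1}=[\underline{1}_i]$ in $\uprod\catfont{C}_i$. Once this bookkeeping is in place, the proof is just a direct application of \L os, and no further analysis is needed; in particular, the fact that $\uprod k_i$ is a field (proposition \ref{ultraprodoffieldsisafieldprop}) is consistent with $\text{End}(\underline{1})$ being a field, which is guaranteed by theorem \ref{resprodisatannakiancategorythm}.
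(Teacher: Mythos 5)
Your proof is correct and takes essentially the same route as the paper: both identify $\text{End}(\underline{1})$ with the set of classes $[\phi_i]$ where almost every $\phi_i$ is an endomorphism of $\underline{1}_i$, observe that this is precisely $\uprod k_i$, and note that composition and addition of morphisms correspond to multiplication and addition of scalars. The paper's version is terser, leaving the well-definedness, bijectivity, and ring-homomorphism checks implicit, whereas you spell them out via \L os; this is the same argument with more detail, not a different one.
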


\begin{proof}
As mentioned, $\text{End}(\underline{1})$ consists exactly of
those elements $[\phi_i]$ such that $\phi_i$ is almost everywhere
an endomorphism of $\underline{1}_i$.  But this is exactly the
description of $\uprod k_i$ (see section
\ref{ultraproductsoffields}), if we identify $k_i =
\text{End}(\underline{1}_i)$. The multiplication and addition in
$\text{End}(\underline{1})$ and $\uprod k_i$ are clearly
compatible with this identification, since multiplication is
composition of maps, and addition is addition of morphisms.
\end{proof}

Now assume that $\catfont{C}_i$ is a sequence of \emph{neutral}
tannakian categories, and denote by $\omega_i$ the fibre functor
on each $\catfont{C}_i$.  As mentioned in the introduction, we see
no way to endow $\uprod \catfont{C}_i$ with a fibre functor, at
least not one that is compatible with the each of the $\omega_i$
($\uprod \catfont{C}_i$ might thus be an interesting example of a
non-neutral tannakian category, but that is not investigated in
this dissertation). Instead we look to a certain subcategory of
$\uprod \catfont{C}_i$.

\begin{defn}

For a sequence of neutral tannakian categories $\catfont{C}_i$,
the \index{restricted ultraproduct!of neutral tannakian
categories} \textbf{restricted ultraproduct} of the
$\catfont{C}_i$, \index{$\resprod \catfont{C}_i$} denoted
$\resprod \catfont{C}_i$, is the full subcategory of $\uprod
\catfont{C}_i$ consisting of those objects \index{$[X_i]$} $[X_i]$
such that $\text{dim}(\omega_i(X_i))$ is almost everywhere
bounded.
\end{defn}

To avoid the use of a double subscript, the notation $\resprod
\catfont{C}_i$ makes no mention of the particular ultrafilter
$\filtfont{U}$ being applied.  As $\filtfont{U}$ is always assumed
to be fixed but arbitrary, no confusion should result.

If $[X_i]$ has almost everywhere bounded dimension, then we may as
well take it to have \emph{everywhere} bounded dimension.  And if
$[X_i]$ is everywhere bounded, $X_i$ takes on only finitely many
values for its dimension; by lemma \ref{coloringlemma}, there is
exactly one dimension $m$ such that the set on which
$\text{dim}(\omega_i(X_i)) = m$ is large. Thus, it does no harm to
think of $\resprod \catfont{C}_i$ as the full subcategory
consisting of those (equivalence classes of) tuples having
\emph{constant} dimension.

\begin{thm}
\label{resprodistannakianthm} $\resprod \catfont{C}_i$ is a
\index{category!tannakian} tannakian subcategory of $\uprod
\catfont{C}_i$.
\end{thm}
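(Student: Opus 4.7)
The plan is to apply proposition \ref{tannakiansubcategorylemma}: it suffices to check that $\resprod \catfont{C}_i$ is a full subcategory of $\uprod \catfont{C}_i$ containing an identity object and closed under biproducts, subobjects, quotients, tensor products, and duals. Fullness holds by definition, and $\underline{1} = [\underline{1}_i]$ lies in $\resprod \catfont{C}_i$ because each $\omega_i(\underline{1}_i)$ is $1$-dimensional. The remaining closure properties will all follow from the same two facts used in tandem: first, the relevant categorical operations (``is a subobject of,'' ``is a biproduct of,'' ``is a tensor product of,'' ``is a dual of'') are first-order in the language of abelian tensor categories, so by $\L$os's theorem they can be detected almost everywhere on the factors; second, the fibre functors $\omega_i$ are each exact, faithful, and tensor-preserving, so they translate the relevant relations on the factor categories to corresponding dimension bounds in $\text{Vec}_{k_i}$.

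More concretely, I would handle the cases in order of increasing delicacy. For biproducts and tensor products, if $[X_i],[Y_i] \in \resprod \catfont{C}_i$ with dimensions bounded by $n$ and $m$, then $[X_i] \oplus [Y_i] = [X_i \oplus Y_i]$ and $[X_i] \otimes [Y_i] = [X_i \otimes Y_i]$ by the first-order characterizations of these operations; since $\omega_i$ preserves biproducts and tensor products, the resulting objects have fibre dimension bounded by $n+m$ and $nm$ respectively. For duals, first-order definability of internal Hom (used on page \pageref{intHomisFODpage}) gives $[X_i]^\vee = [X_i^\vee]$, and since $\omega_i(X_i^\vee)$ has the same dimension as $\omega_i(X_i)$, we stay bounded.

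The subobject and quotient cases are the main point to verify carefully. Suppose $[X_i] \in \resprod \catfont{C}_i$ has $\text{dim}(\omega_i(X_i)) \leq n$ almost everywhere, and suppose $[Y_i] \hookrightarrow [X_i]$ is a monomorphism in $\uprod \catfont{C}_i$. Since ``being a monomorphism pointing from $Y$ into $X$'' is first-order, $\L$os's theorem gives a monomorphism $Y_i \hookrightarrow X_i$ in $\catfont{C}_i$ for almost every $i$. Exactness and faithfulness of $\omega_i$ then force $\omega_i(Y_i) \hookrightarrow \omega_i(X_i)$ to be injective in $\text{Vec}_{k_i}$, so $\text{dim}(\omega_i(Y_i)) \leq \text{dim}(\omega_i(X_i)) \leq n$ almost everywhere, and $[Y_i] \in \resprod \catfont{C}_i$. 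The quotient case is formally dual, using that $\omega_i$ sends epimorphisms to surjections.

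The subtlest step, and the one I expect to require the most care, is the subobject argument, because a priori a subobject of $[X_i]$ in $\uprod \catfont{C}_i$ is only given up to isomorphism by some tuple $(Y_i)$, and one has to be sure that the monomorphism realizing it really does transfer to honest monomorphisms $Y_i \hookrightarrow X_i$ on a large set and that the induced dimension bound is uniform. Once one accepts that first-order definability in the language of chapter \ref{axiomsfortannakiancategories} is robust enough to let every categorical notion we need travel from $\uprod \catfont{C}_i$ down to almost every factor, all six closure conditions of proposition \ref{tannakiansubcategorylemma} become essentially automatic, and the theorem follows.
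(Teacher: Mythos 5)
Your proof is correct and takes essentially the same approach as the paper: both invoke proposition \ref{tannakiansubcategorylemma} and then verify each closure condition by tracking how the fibre functors $\omega_i$ transform dimensions. You make the $\L$os-theorem reduction to almost-everywhere statements more explicit, whereas the paper suppresses it and simply cites the standard facts that an exact $k$-linear tensor functor preserves biproducts, subobjects, quotients, tensor products, and duals (Freyd's theorem 3.11 and Deligne's proposition 1.9); the underlying argument is the same.
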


\begin{proof}
By lemma \ref{tannakiansubcategorylemma} it is enough to show that
$\resprod \catfont{C}_i$ is closed under the taking of biproducts,
subobjects, quotients, tensor products, duals, and contains the
(an) identity object.

Each $\omega_i$ is $k_i$-linear, hence additive, and theorem 3.11
of \cite{freyd} ensures that $\omega_i$ carries direct sums into
direct sums, and hence biproducts into biproducts.  If $[X_i]$,
$[Y_i]$ have constant dimension, then certainly so do the vector
spaces $\omega_i(X_i) \oplus \omega_i(Y_i) \isomorphic
\omega_i(X_i \oplus Y_i)$.  Thus $\resprod \catfont{C}_i$ is
closed under the taking of biproducts.

As each $\omega_i$ is exact, it certainly preserves injectivity of
maps, i.e.~subobjects.  Then if $[X_i]$ has bounded dimension and
$[Y_i]$ is a subobject of $[X_i]$, likewise $[Y_i]$ must have
bounded dimension, since a vector space has larger dimension than
any of its subobjects.  A similar argument holds for quotients;
thus $\resprod \catfont{C}_i$ is closed under the taking of
quotients and subobjects.

That $\resprod \catfont{C}_i$ is closed under the taking of tensor
products is evident from the definition of a tensor functor; if
$[X_i]$ and $[Y_i]$ have constant dimension $m$ and $n$
respectively, then $\omega_i(X_i \otimes Y_i) \isomorphic
\omega_i(X_i) \otimes \omega_i(Y_i)$ has constant dimension $mn$.

That $\resprod \catfont{C}_i$ has an identity object is similarly
proved; tensor functors by definition carry identity objects to
identity objects, and the only identity objects in $\text{Vec}_k$
are $1$-dimensional vector spaces.

Finally, we must show that the dual of an object $[X_i]$ of
$\resprod \catfont{C}_i$ also has constant dimension.  But this is
evident from proposition 1.9 of \cite{deligne}, which says that
$\omega_i$ carries dual objects to dual objects, and the dual of
any vector space has dimension equal to itself.

\end{proof}

Now define a functor \index{$\omega$} $\omega$ from $\resprod
\catfont{C}_i$ to $\text{Vec}_k$ as follows.  For an object
$[X_i]$ of $\resprod \catfont{C}_i$, we define $\omega([X_i])
\stackrel{\text{def}}{=} \uprod \omega_i(X_i)$ (ultraproduct of
vector spaces; see section \ref{ultraproductsofvectorspaces}), and
for a morphism $[\phi_i]$, we define $\omega([\phi_i])
\stackrel{\text{def}}{=} [\omega_i(\phi_i)]$ (ultraproduct of
linear maps; see page
\pageref{ultraproductsoflineartransformations}).

Since $[X_i] \in \resprod \catfont{C}_i$ is assumed to have
bounded dimension, proposition \ref{prop1} guarantees that
$\omega$ carries $[X_i]$ into a finite dimensional vector space
(hence the reason we restrict to $\resprod \catfont{C}_i$ in the
first place).  As the ultraproduct of maps preserves composition,
and since $1_{\uprod V_i} = [1:V_i \rightarrow V_i]$ (proposition
\ref{homprop}), $\omega$ is evidently a functor.

\begin{thm}
\label{resprodisneutralthm} \label{omegaisafiberfunctorthm}
$\omega$ is a  \index{fibre functor} fibre functor on $\resprod
\catfont{C}_i$.
\end{thm}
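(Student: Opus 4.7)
The plan is to verify one-by-one the four properties required by the definition of a fibre functor: $k$-linearity, faithfulness, exactness, and tensor preservation (the fact that $\omega$ lands in $\text{Vec}_k$ and is a functor has already been established in the paragraphs preceding the statement). Throughout, the key technical tool will be that, for $[X_i] \in \resprod \catfont{C}_i$, the vector spaces $\omega_i(X_i)$ are boundedly finite dimensional, so the machinery of chapter on ultraproduct constructions (propositions \ref{homprop}, \ref{ultratensorproductprop}, \ref{ultraexactsequenceprop}) applies.

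First I would handle $k$-linearity and faithfulness together, as both are nearly immediate. For $k$-linearity, recall $k = \uprod k_i$ where $k_i = \text{End}(\underline{1}_i)$; for morphisms $[\phi_i],[\psi_i]$ in $\resprod \catfont{C}_i$ and a scalar $[a_i] \in k$, the identity $\omega([a_i][\phi_i] + [\psi_i]) = [a_i]\omega([\phi_i]) + \omega([\psi_i])$ follows by unwinding definitions and using that each $\omega_i$ is $k_i$-linear (together with proposition \ref{homprop} to identify $[\omega_i(\phi_i) + a_i \psi_i]$ with $[\omega_i(\phi_i)] + [a_i][\omega_i(\psi_i)]$). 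For faithfulness, if $\omega([\phi_i]) = [\omega_i(\phi_i)] = 0$, then $\omega_i(\phi_i) = 0$ on a large set; by faithfulness of each $\omega_i$, also $\phi_i = 0$ on a large set, so $[\phi_i] = 0$.

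Next I would establish exactness. Suppose $[0] \to [X_i] \to [Y_i] \to [Z_i] \to [0]$ is exact in $\resprod \catfont{C}_i$. Since exactness of a short sequence is first-order in the language of abelian tensor categories (kernels, cokernels, and the ensuing commutativities are all first-order), $\L$os' theorem gives that $0 \to X_i \to Y_i \to Z_i \to 0$ is exact in $\catfont{C}_i$ for almost every $i$. Applying each $\omega_i$, which is exact, yields an almost-everywhere exact sequence $0 \to \omega_i(X_i) \to \omega_i(Y_i) \to \omega_i(Z_i) \to 0$ of constantly finite dimensional $k_i$-vector spaces. Proposition \ref{ultraexactsequenceprop} now delivers exactness of $[0] \to \uprod \omega_i(X_i) \to \uprod \omega_i(Y_i) \to \uprod \omega_i(Z_i) \to [0]$, which is precisely $\omega$ applied to the original sequence.

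Finally, tensor preservation, which I expect to be the main obstacle because of the coherence diagrams of definition \ref{defntensorfunctor}. Each $\omega_i$ comes with a functorial isomorphism $c^i_{X_i,Y_i}: \omega_i(X_i) \otimes \omega_i(Y_i) \isoarrow \omega_i(X_i \otimes Y_i)$. Its ultraproduct $[c^i_{X_i,Y_i}]$ is an isomorphism $\uprod \omega_i(X_i) \otimes \omega_i(Y_i) \isoarrow \uprod \omega_i(X_i \otimes Y_i) = \omega([X_i] \otimes [Y_i])$ (ultraproducts preserve isomorphisms between boundedly finite dimensional spaces). Composing with the natural map $\Phi$ from proposition \ref{ultratensorproductprop}, which is an isomorphism precisely because the $\omega_i(X_i)$ have bounded dimension, I define
\[ c_{[X_i],[Y_i]} : \omega([X_i]) \otimes \omega([Y_i]) \stackrel{\Phi}{\isoarrow} \uprod \omega_i(X_i) \otimes \omega_i(Y_i) \stackrel{[c^i_{X_i,Y_i}]}{\isoarrow} \omega([X_i] \otimes [Y_i]). \]
Functoriality of $c$ follows from functoriality of each $c^i$ together with the naturality of $\Phi$ (diagram \ref{naturalityofPhidiagram}). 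The three coherence conditions on $c$ (the hexagon for associativity, the square for commutativity, and the unit compatibility) are each first-order statements about the $c^i$ and the tensor structure on $\catfont{C}_i$, and each holds in every factor by hypothesis; invoking $\L$os' theorem, together with the naturality of $\Phi$ to bridge between $\uprod (\slot \otimes \slot)$ and $(\uprod \slot) \otimes (\uprod \slot)$, yields them for $c$. The unit condition in particular requires identifying $\omega(\underline{1}) = \uprod \omega_i(\underline{1}_i)$ with the identity object of $\text{Vec}_k$, which is immediate since each $\omega_i(\underline{1}_i)$ is $1$-dimensional and the $1$-dimensional vector space over $\uprod k_i$ is $\uprod k_i$ itself. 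This completes the verification that $\omega$ is a fibre functor.
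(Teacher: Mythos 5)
Your proof is correct and takes essentially the same route as the paper: define $c_{[X_i],[Y_i]}$ as $[c^i_{X_i,Y_i}] \circ \Phi$, verify the coherence conditions by reducing them (via the naturality of $\Phi$) to almost-everywhere commutativity, and then check $k$-linearity, faithfulness, and exactness using propositions \ref{homprop} and \ref{ultraexactsequenceprop}. Your faithfulness argument (checking that $\omega$ has trivial kernel rather than directly that it preserves inequality of morphisms) is equivalent to the paper's once $k$-linearity is in hand.

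One point of imprecision worth flagging: when handling the coherence diagrams you say they hold because they are ``first-order statements'' and then ``invoke $\L$os' theorem.'' Strictly speaking, $\L$os' theorem applies to formulas in the language of the factor structures $\catfont{C}_i$, but the coherence diagrams for $c$ live in $\text{Vec}_k$ — the target of $\omega$, not the ultraproduct of the $\catfont{C}_i$. What actually does the work is theorem \ref{transformationsarenicethm} (commutativity of a diagram of ultraproducts of linear maps between boundedly finite-dimensional spaces is equivalent to almost-everywhere commutativity), which plays the role of a ``$\L$os'-type'' result but is a separate fact about ultraproducts of vector spaces, not a consequence of $\L$os' theorem in the abelian-tensor-category language. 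This is exactly why the paper's proof goes through the lengthy diagram chase: the naturality of $\Phi$ is used to contract the expanded diagram to an outer polygon consisting purely of ultraproducts of $k_i$-linear maps, at which point theorem \ref{transformationsarenicethm} applies. Your sketch correctly identifies naturality of $\Phi$ as the bridge; just be careful not to conflate the two tools. Also, the paper trades condition 3 of definition \ref{defntensorfunctor} for the weaker (but equivalent, by proposition 1.3 of \cite{deligne}) formulation from definition 1.8 of \cite{deligne} before checking the unit condition; you verify the unit condition directly, which is fine given your identification of $\omega([\underline{1}_i])$ as a $1$-dimensional space, but the paper's substitution makes the verification cleaner.
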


\begin{proof}
We prove first that $\omega$ is a \index{tensor functor} tensor
functor. For two objects $[X_i]$, $[Y_i]$ of $\resprod
\catfont{C}_i$, we define the requisite natural isomorphism
$c_{[X_i],[Y_i]}$ of definition \ref{defntensorfunctor} to be the
composition
\[ \uprod \omega_i(X_i) \otimes \uprod \omega_i(Y_i)
\stackrel{\Phi}{\longrightarrow} \uprod \omega_i(X_i) \otimes
\omega_i(Y_i) \stackrel{[c_{X_i,Y_i}]}{\vvlongrightarrow} \uprod
\omega_i(X_i \otimes Y_i)
\]
where $c_{X_i,Y_i}$ denotes the given requisite isomorphism in
each individual category, and $\Phi$ is the natural isomorphism
defined in proposition \ref{ultratensorproductprop}.  We need to
verify that the three conditions of definition
\ref{defntensorfunctor} are satisfied.  Condition 1.~translates to
\begin{diagram}
 & & \omega([X_i]) \otimes \omega([Y_i] \otimes [Z_i]) & & \\
 & \ruTo^{1 \otimes c} & & \rdTo^{c} & \\
\omega([X_i]) \otimes (\omega([Y_i]) \otimes \omega([Z_i])) & & & & \omega([X_i] \otimes ([Y_i] \otimes [Z_i])) \\
 \dTo^{\text{assoc}^\prime} & & & & \dTo_{\omega(\text{assoc})} \\
 (\omega([X_i]) \otimes \omega([Y_i])) \otimes \omega([Z_i]) & & & & \omega(([X_i] \otimes [Y_i]) \otimes [Z_i]) \\
 & \rdTo_{c \otimes 1} & & \ruTo_{c} \\
 & & \omega([X_i] \otimes [Y_i]) \otimes \omega([Z_i]) & & \\
\end{diagram}
where $\text{assoc}^\prime$ denotes the usual associativity
isomorphism in $\text{Vec}_k$, and we have dropped the obvious
subscripts on $c$. The expanded form of this diagram is
\begin{diagram}
\uprod \omega_i(X_i) \otimes \uprod \omega_i(Y_i \otimes Z_i)   & \rTo^\Phi & \uprod \omega_i(X_i) \otimes \omega_i(Y_i \otimes Z_i)\\
\uTo^{[1_i] \otimes [c_{Y_i,Z_i}]}    & & \dTo_{[c_{X_i,Y_i \otimes Z_i}]}       \\
\uprod \omega_i(X_i) \otimes \left( \uprod \omega_i(Y_i) \otimes
\omega_i(Z_i) \right) & & \uprod \omega_i(X_i \otimes (Y_i \otimes Z_i)) \\
\uTo^{[1_i] \otimes \Phi} & & \dTo_{[\omega_i(\text{assoc}_i)]}\\
\uprod \omega_i(X_i) \otimes \left(\uprod \omega_i(Y_i) \otimes
\uprod \omega_i(Z_i) \right) & & \uprod \omega_i((X_i \otimes Y_i)\otimes Z_i) \\
\dTo^{\text{assoc}^\prime} & & \uTo_{[c_{X_i \otimes Y_i,Z_i}]} \\
\left( \uprod \omega_i(X_i) \otimes \uprod \omega_i(Y_i) \right)
\otimes \uprod \omega_i(Z_i) & & \uprod \omega_i(X_i \otimes Y_i) \otimes \omega_i(Z_i)\\
\dTo^{\Phi \otimes [1_i]} & & \uTo_{\Phi}\\
\left( \uprod \omega_i(X_i) \otimes \omega_i(Y_i) \right) \otimes
\uprod \omega_i(Z_i) & \rTo_{[c_{X_i,Y_i}] \otimes [1_i]} & \left(\uprod \omega_i(X_i \otimes Y_i) \right) \otimes \uprod \omega_i(Z_i) \\
\end{diagram}
Now consider the diagram
\begin{diagram}
\text{ }  & \lTo & \uprod \omega_i(X_i) \otimes (\omega_i(Y_i) \otimes \omega_i(Z_i)) & & & & &  \\
 & & \uTo^\Phi & \rdTo(5,1)^{[1_i \otimes c_{Y_i,Z_i}]} & & & & \uprod \omega_i(X_i) \otimes \omega_i(Y_i \otimes Z_i) \\
 & & \uprod \omega_i(X_i) \otimes \left(\uprod \omega_i(Y_i) \otimes \omega_i(Z_i) \right) & & & & \ruTo(5,1)^{([1_i] \otimes [c_{Y_i,Z_i}]) \circ \Phi} & \uTo_{[c_{X_i,Y_i \otimes Z_i}]} \\
 & & \uTo^{[1_i] \otimes \Phi} & & & & & \uprod \omega_i(X_i \otimes (Y_i \otimes Z_i)) \\
\dTo^{[\text{assoc}_i^\prime]} & & \uprod \omega_i(X_i) \otimes \left(\uprod \omega_i(Y_i) \otimes \uprod \omega_i(Z_i) \right) & & & & & \uTo_{[\omega_i(\text{assoc}_i)]} \\
 & & \dTo^{\text{assoc}^\prime \circ (\Phi \otimes [1_i])} & & & & & \uprod \omega_i((X_i \otimes Y_i) \otimes Z_i) \\
 & & \left( \uprod \omega_i(X_i) \otimes \omega_i(Y_i) \right) \otimes \uprod \omega_i(Z_i) & & & & & \uTo_{[c_{X_i \otimes Y_i,Z_i}]} \\
 & & \dTo^\Phi & \rdTo(5,1)^{([c_{X_i,Y_i}] \otimes [1_i]) \circ \Phi} & & & & \uprod \omega_i(X_i \otimes Y_i) \otimes \omega_i(Z_i) \\
\text{ }& \rTo & \uprod (\omega_i(X_i) \otimes \omega_i(Y_i)) \otimes \omega_i(Z_i) & & & & \ruTo(5,1)_{[c_{X_i,Y_i} \otimes 1_i]} & \\
\end{diagram}
where $\text{assoc}_i^\prime$ is the associativity isomorphism in
the category $\text{Vec}_{k_i}$.  This diagram has four simple
subpolygons. The middle polygon consisting of seven vertices is a
contracted version of the previous diagram, and is what we are
trying to prove commutes. Commutativity of the top and bottom
triangles follow directly from the naturality of the isomorphism
$\Phi$ (see diagram \ref{naturalityofPhidiagram} on page
\pageref{naturalityofPhidiagram}), and the left-most polygon can
be verified directly by hand. And since all of the maps are
isomorphisms, some diagram chasing shows that if the outermost six
vertex polygon can be shown to commute, so also does the simple
seven vertex polygon. But the outermost polygon is
\begin{diagram}
 & & \uprod \omega_i(X_i) \otimes \omega_i(Y_i \otimes Z_i) & & \\
 & \ruTo^{[1_i \otimes c_{Y_i,Z_i}]} & & \rdTo^{[c_{X_i,Y_i \otimes Z_i}]} & \\
\uprod \omega_i(X_i) \otimes (\omega_i(Y_i) \otimes \omega_i(Z_i)) & & & & \uprod \omega_i(X_i \otimes (Y_i \otimes Z_i)) \\
 \dTo^{[\text{assoc}_i^\prime]} & & & & \dTo_{[\omega_i(\text{assoc}_i)]} \\
 \uprod (\omega_i(X_i) \otimes \omega_i(Y_i)) \otimes \omega_i(Z_i) & & & & \uprod \omega_i((X_i \otimes Y_i) \otimes Z_i) \\
 & \rdTo_{[c_{X_i,Y_i} \otimes 1_i]} & & \ruTo_{[c_{X_i \otimes Y_i,Z_i}]} \\
 & & \uprod \omega_i(X_i \otimes Y_i) \otimes \omega_i(Z_i) & & \\
\end{diagram}
and by theorem \ref{transformationsarenicethm} commutativity of
this diagram is equivalent to the almost everywhere commutativity
of
\begin{diagram}
 & &  \omega_i(X_i) \otimes \omega_i(Y_i \otimes Z_i) & & \\
 & \ruTo^{1_i \otimes c_{Y_i,Z_i}} & & \rdTo^{c_{X_i,Y_i \otimes Z_i}} & \\
\omega_i(X_i) \otimes (\omega_i(Y_i) \otimes \omega_i(Z_i)) & & & &  \omega_i(X_i \otimes (Y_i \otimes Z_i)) \\
 \dTo^{\text{assoc}_i^\prime} & & & & \dTo_{\omega_i(\text{assoc}_i)} \\
 (\omega_i(X_i) \otimes \omega_i(Y_i)) \otimes \omega_i(Z_i) & & & & \omega_i((X_i \otimes Y_i) \otimes Z_i) \\
 & \rdTo_{c_{X_i,Y_i} \otimes 1_i} & & \ruTo_{c_{X_i \otimes Y_i,Z_i}} \\
 & & \omega_i(X_i \otimes Y_i) \otimes \omega_i(Z_i) & & \\
\end{diagram}
But this is commutative everywhere, as it is merely condition
1.~of definition \ref{defntensorfunctor}, by virtue of each
$\omega_i$ being a tensor functor.  $\omega$ thus satisfies
condition 1.

Condition 2. is proved similarly; consider the diagram
\begin{diagram}
 & & \uprod \omega_i(X_i) \otimes \omega_i(Y_i) & & \\
 & \ruTo^{\Phi} & & \rdTo^{[c_{X_i,Y_i}]} & \\
\uprod \omega_i(X_i) \otimes \uprod \omega_i(Y_i) & & & & \uprod \omega_i(X_i \otimes Y_i) \\
 \dTo^{\text{comm}^\prime} & & \dTo_{[\text{comm}_i^\prime]} & & \dTo_{[\omega_i(\text{comm}_i)]} \\
 \uprod \omega_i(Y_i) \otimes \uprod \omega_i(X_i) & & & & \uprod \omega_i(Y_i \otimes X_i) \\
 & \rdTo_{\Phi} & & \ruTo_{[c_{Y_i,X_i}]} \\
 & & \uprod \omega_i(Y_i) \otimes \omega_i(X_i) & & \\
\end{diagram}
The outermost hexagon is our expanded version of condition 2., and
is what we must prove.  Commutativity of the left trapezoid can be
verified directly by hand. And again by theorem
\ref{transformationsarenicethm}, commutativity of the right
trapezoid is equivalent to the almost everywhere commutativity of
\begin{diagram}
\omega_i(X_i) \otimes \omega_i(Y_i) & \rTo^{c_{X_i,Y_i}} &
\omega_i(X_i \otimes Y_i) \\
\dTo^{\text{comm}_i^\prime} & & \dTo_{\omega_i(\text{comm}_i)} \\
\omega_i(Y_i) \otimes \omega_i(X_i) & \rTo_{c_{Y_i,X_i}} &
\omega_i(Y_i \otimes X_i) \\
\end{diagram}
But this is condition 2.~applied to each individual $\omega_i$,
which commutes by assumption.

For the purposes of this proof we shall replace condition 3.~of
definition \ref{defntensorfunctor} with the seemingly weaker but
equivalent condition given in definition 1.8 of \cite{deligne}:
that whenever $[U_i]$ is an identity object of $\resprod
\catfont{C}_i$ and $[u_i]:[U_i] \rightarrow [U_i] \otimes [U_i]$
an isomorphism, then so is $\omega([U_i])$ and $\omega([u_i])$.
Since any two identity objects of a tensor category are naturally
isomorphic  via a unique isomorphism commuting with the unit maps
(proposition 1.3 of \cite{deligne}), we need only verify this for
a single identity object, namely the pair $[1_i]$ and
$[\text{unit}_{i,\underline{1}_i}]$.  As each $\omega_i$ is a
tensor functor, it sends $1_i$ to an identity object in
$\text{Vec}_{k_i}$, and we know of course that the only identity
objects of $\text{Vec}_{k_i}$ are $1$-dimensional.  Thus
$\omega([1_i]) = \uprod \omega_i(1_i)$ is $1$-dimensional
(proposition \ref{prop1}), thus $\omega([1_i])$ is an identity
object of $\text{Vec}_k$.  And again, as each $\omega_i$ is a
tensor functor, it sends $\text{unit}_{i,\underline{1}_i}$ to an
isomorphism $\omega_i(\underline{1}_i) \rightarrow
\omega_i(\underline{1}_i) \otimes \omega_i(\underline{1}_i)$,
whence $\omega$ sends $[\text{unit}_{i,\underline{1}_i}]$ to an
isomorphism as well.

$\omega$ is $k$-linear by the $k_i$-linearity of each $\omega_i$
and proposition \ref{homprop}:
\begin{equation*}
\begin{split}
 \omega([a_i][\phi_i] + [\psi_i]) &= \omega([a_i \phi_i +
 \psi_i]) \\
&= [\omega_i(a_i \phi_i + \psi_i)] \\
 &= [a_i \omega_i(\phi_i) + \omega_i(\psi_i)] \\
  &= [a_i][\omega_i(\phi_i)] + [\omega_i(\psi_i)] \\
  &= [a_i]\omega([\phi_i]) + \omega([\psi_i])
\end{split}
\end{equation*}

$\omega$ is faithful: if $[\phi_i]$ and $[\psi_i]$ are different
morphisms, then $(\phi_i)$ and $(\psi_i)$ differ on a large set.
By faithfulness of each $\omega_i$, so do $(\omega_i(\phi_i))$ and
$(\omega_i(\psi_i))$, and by proposition \ref{homprop},
$[\omega_i(\phi_i)]$ and $[\omega_i(\psi_i)]$ are different linear
maps.

$\omega$ is exact by the exactness of each $\omega_i$, proposition
\ref{ultraexactsequenceprop}, and the fact that ``is an exact
sequence'' is a first-order concept.  The sequence
\[ [0] \rightarrow [X_i] \stackrel{[\phi_i]}{\vlongrightarrow}
[Y_i] \stackrel{[\psi_i]}{\vlongrightarrow} [Z_i] \rightarrow [0]
\]
in $\resprod \catfont{C}_i$ is exact if and only if the
constituent sequences
\[ 0 \rightarrow X_i \stackrel{\phi_i}{\longrightarrow} Y_i
\stackrel{\psi_i}{\longrightarrow} Z_i \rightarrow 0 \] are almost
everywhere exact, in which case $\omega_i$ of these sequences is
almost everywhere exact, in which case $\omega$ of the first
sequence is exact.  This completes the proof.

\end{proof}

\begin{cor}
If $G_i$ is a sequence of affine group schemes defined over the
fields $k_i$, then \index{restricted ultraproduct!of neutral
tannakian categories} \index{$\resprod \catfont{C}_i$} $\resprod
\text{Rep}_{k_i} G_i$ is (tensorially equivalent to)
$\text{Rep}_{\uprod k_i} G$ for some affine group scheme $G$.
\end{cor}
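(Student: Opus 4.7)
The plan is to observe that this corollary is an essentially immediate consequence of the three main results that have just been assembled in this section, combined with the tannakian duality theorem from Chapter \ref{chapterTannakianDuality}. The strategy is to verify each of the hypotheses of Theorem \ref{tannakiandualitythm} for the category $\resprod \text{Rep}_{k_i} G_i$.

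First I would note that each $\catfont{C}_i = \text{Rep}_{k_i} G_i$ is a neutral tannakian category, with fibre functor $\omega_i:\catfont{C}_i \rightarrow \text{Vec}_{k_i}$ given by the forgetful functor sending a representation to its underlying vector space. Theorem \ref{resprodistannakianthm} then tells us that $\resprod \catfont{C}_i$ is a tannakian category (as a full tannakian subcategory of $\uprod \catfont{C}_i$), and Theorem \ref{omegaisafiberfunctorthm} tells us that the functor $\omega:\resprod \catfont{C}_i \rightarrow \text{Vec}_k$ defined on objects by $\omega([X_i]) = \uprod \omega_i(X_i)$ and on morphisms by $\omega([\phi_i]) = [\omega_i(\phi_i)]$ is a fibre functor. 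Hence $\resprod \catfont{C}_i$ is a neutral tannakian category over $k = \text{End}(\underline{1})$.

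Next I would identify the base field. By the proposition preceding the restricted ultraproduct definition, $\text{End}(\underline{1})$ in $\uprod \catfont{C}_i$, and hence in the full subcategory $\resprod \catfont{C}_i$, is naturally identified with $\uprod k_i$, where $k_i = \text{End}(\underline{1}_i)$. Since $\omega$ takes values in $\text{Vec}_k$ with $k = \uprod k_i$, the hypotheses of tannakian duality are satisfied with this field.

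Finally, applying part 1 of Theorem \ref{tannakiandualitythm} to the neutral tannakian category $(\resprod \catfont{C}_i, \omega)$ yields an affine group scheme $G = \text{Aut}^\otimes(\omega)$ over $\uprod k_i$, and part 2 provides the tensor equivalence $\resprod \text{Rep}_{k_i} G_i \isomorphic \text{Rep}_{\uprod k_i} G$. There is really no obstacle at this stage, since all of the nontrivial work has been absorbed into Theorems \ref{resprodistannakianthm} and \ref{omegaisafiberfunctorthm}; the hard step was Theorem \ref{omegaisafiberfunctorthm}, specifically the verification that the coherence hexagons and pentagons commute after passing to the ultraproduct, which required reducing those diagrams via the naturality of the map $\Phi$ of Proposition \ref{ultratensorproductprop} to the almost-everywhere commutativity of the corresponding diagrams downstairs. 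The content of this corollary is simply to package those theorems through tannakian duality; describing the particular $G$ explicitly is the subject of the next chapter.
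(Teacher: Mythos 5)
Your proof is correct and takes exactly the same route as the paper: cite Theorems \ref{resprodistannakianthm} and \ref{omegaisafiberfunctorthm} to conclude that $\resprod \text{Rep}_{k_i} G_i$ is a neutral tannakian category over $\uprod k_i$, then apply Theorem \ref{tannakiandualitythm}. The extra elaboration you give (identifying the base field, recalling why $\omega$ is a fibre functor) is accurate but is already packaged into the cited theorems.
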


\begin{proof}
By theorems \ref{resprodistannakianthm} and
\ref{omegaisafiberfunctorthm}, $\resprod \text{Rep}_{k_i} G_i$ is
a \index{category!neutral tannakian} neutral tannakian category
over the field $\uprod k_i$. Apply theorem
\ref{tannakiandualitythm}.
\end{proof}

\chapter{Finite Dimensional Subcoalgebras of Hopf Algebras}

\label{adualitytheoremchapter}

\index{coalgebra!finite dimensional}

In this chapter we take a break entirely from working with
ultraproducts; no understanding of them is required here
whatsoever.  The main theorem of this chapter is perhaps of
interest in its own right, but for our purposes mostly serves as
an invaluable lemma with which to prove the main theorem of the
next chapter.

Here we investigate the special case of when a finite dimensional
comodule $C$ over a Hopf algebra $(A,\Delta,\varepsilon)$ over a
field $k$ is actually a sub-co\emph{algebra} of $A$; our intent is
to show that these satisfy some very nice regularity properties in
terms of how they sit inside the category $\text{Comod}_A$.  To
say that $C \subset A$ is a subcoalgebra is to simply say that the
image of the map $\Delta:A \rightarrow A \otimes A$, when
restricted to $C$, is contained inside $C \otimes C \subset A
\otimes A$, and that we are regarding the map $\Delta$ as the
(left or right, depending) $A$-comodule structure for $C$.
Throughout we will use the same symbols $\Delta$ and $\varepsilon$
for their restrictions to $C$.

In a sense though, the case of a finite dimensional $A$-comodule
being a subcoalgebra of $A$ is really not that special. The
fundamental theorem of coalgebras (theorem
\ref{fundamentaltheoremofcoalgebras}) states that any coalgebra
(and hence Hopf algebra) is a directed union of finite dimensional
subcoalgebras. Further, theorem \ref{regreptheorem} states that
any $A$-comodule can be embedded in some $n$-fold direct sum of
the regular representation. Thus every finite dimensional
$A$-comodule can be embedded in $C^n$, where $C$ is some finite
dimensional subcoalgebra of $A$. We see then that the entire
category $\text{Comod}_A$ can be realized as a direct limit of the
\index{principal subcategory} principal subcategories $\gen{C}$,
where $C$ ranges over all finite dimensional subcoalgebras of $A$.
Anything categorical we can say in general about these
subcoalgebras of $A$ must surely then (and will) be of value.

We would also like to mention that, so far as we can tell, these
results are valid for \emph{any} coalgebra $A$, Hopf algebra or
not.  Nonetheless, as all of our applications of these results
will be toward Hopf algebras, we leave them as stated.

Let $C$ be a subcoalgebra of $(A,\Delta,\varepsilon)$.  Since
$\Delta$ restricts to $C \otimes C$ on $C$, we can think of $C$ as
both a left and a right comodule over $A$. That is
\[ \Delta: C \rightarrow C \otimes C \subset C \otimes A \]
gives a right $A$-comodule structure for $C$, and
\[ \Delta: C \rightarrow C \otimes C \subset A \otimes C \]
gives a left $A$-comodule structure for $C$.  Unless $C$ is
co-commutative we can expect these structures in general to be
quite different.

For the remainder of this chapter denote by $\catfont{C}_R$ the
category of finite dimensional right $A$-comodules, and denote by
$\omega_R$ the fibre (i.e.~forgetful) functor $\catfont{C}_R
\rightarrow \text{Vec}_k$.  Define similarly $\catfont{C}_L$ and
$\omega_L$. For a finite dimensional subcoalgebra $C$ of $A$
denote by $\text{End}_{\catfont{C}_R}(C)$ the algebra of all
endomorphisms on $C$, where we consider $C$ as an object in the
category $\catfont{C}_R$, as defined above; make a similar
definition for $\text{End}_{\catfont{C}_L}(C)$.  Denote as usual
by $\text{End}(\omega_R \res \gen{C})$ the collection of all
natural transformations of the fibre functor $\omega_R$ restricted
to the principal subcategory $\gen{C}$ (see definition
\ref{endomegadefn}), similarly for $\text{End}(\omega_L \res
\gen{C})$

The remainder of this chapter is devoted to proving, piecemeal,
the following:

\begin{thm}
\label{adualitytheorem} Let $C$ be a finite dimensional
subcoalgebra of the Hopf algebra $A$ over the field $k$.  Then

\begin{enumerate}
\item{$\text{End}_{\catfont{C}_R}(C) = \text{End}(\omega_L \res
\gen{C}) = $ \index{$\text{End}(\omega \res \gen{X})$} the
centralizer of $\text{End}_{\catfont{C}_L}(C)$, and
$\text{End}_{\catfont{C}_L}(C) = \text{End}(\omega_R \res \gen{C})
= $ the centralizer of $\text{End}_{\catfont{C}_R}(C)$} \item{All
of the above are canonically isomorphic to the dual algebra of the
coalgebra $C$.}

\end{enumerate}

\end{thm}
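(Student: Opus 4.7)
The plan is to build everything from the natural convolution action of $C^*$ on $C$. First I would verify that for each $\alpha \in C^*$, left convolution $\phi_\alpha(c) = (\alpha \otimes 1)\Delta(c)$ defines a right $A$-comodule endomorphism of $C$, and dually that right convolution $\psi_\beta(c) = (1 \otimes \beta)\Delta(c)$ is a left $A$-comodule endomorphism; both facts are immediate from coassociativity. These give algebra maps $C^* \to \text{End}_{\catfont{C}_R}(C)$ and $C^* \to \text{End}_{\catfont{C}_L}(C)$ (up to an opposite, depending on one's convention for the convolution product). To see each is an isomorphism I would construct an explicit inverse: send $\phi \in \text{End}_{\catfont{C}_R}(C)$ to $\varepsilon \circ \phi \in C^*$ and use the counit axiom $(\varepsilon \otimes 1)\Delta = \text{id}$ together with the comodule condition $(\phi \otimes 1)\Delta = \Delta \circ \phi$ to recover $\phi$ as left convolution by $\varepsilon \circ \phi$.

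Next I would prove the double centralizer claim. One inclusion is easy: a direct Sweedler computation using coassociativity shows $\phi_\alpha \circ \psi_\beta = \psi_\beta \circ \phi_\alpha$, so left and right convolutions commute. The opposite inclusion is what really makes the theorem work, and here I would exploit the freedom in $\beta$. If $\phi \in \text{End}_k(C)$ commutes with every right convolution $\psi_\beta$, then comparing $\phi \circ \psi_\beta$ with $\psi_\beta \circ \phi$ gives
\[
(1 \otimes \beta)\bigl((\phi \otimes 1)\Delta(c)\bigr) \;=\; (1 \otimes \beta)\Delta(\phi(c)) \quad \text{for all } \beta \in C^*,
\]
and since $C$ is finite dimensional the family $\{\beta\}$ separates points of $C \otimes C$, forcing $(\phi \otimes 1)\Delta = \Delta \circ \phi$. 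Thus $\phi$ is a right $A$-comodule endomorphism, hence a left convolution. This establishes that $\text{End}_{\catfont{C}_R}(C)$ is precisely the centralizer of $\text{End}_{\catfont{C}_L}(C)$, and the symmetric statement follows by swapping left and right.

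Finally, for the identification $\text{End}(\omega_L \res \gen{C}) = \text{End}_{\catfont{C}_R}(C)$, I would use the key observation discussed on page \pageref{endomegadiscussion} that any $\lambda \in \text{End}(\omega_L \res \gen{C})$ is determined by its component $\lambda_C$, which in turn must commute with every $\catfont{C}_L$-endomorphism of $C$, i.e.~with every right convolution. By the centralizer computation just completed, $\lambda_C$ must then be left convolution by a unique $\alpha \in C^*$, giving an injection $\text{End}(\omega_L \res \gen{C}) \hookrightarrow C^*$. For surjectivity I would construct, for each $\alpha \in C^*$, an explicit natural transformation: set $\lambda_V(v) = (\alpha \otimes 1)\rho_V(v)$ for every left $A$-comodule $V \in \gen{C}$, noting that because $C$ is a subcoalgebra, the structure maps of objects in $\gen{C}$ factor through $C \otimes V$, so that $(\alpha \otimes 1)\rho_V$ makes sense. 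Naturality is immediate from the fact that $\catfont{C}_L$-morphisms commute with the comodule structure maps. The symmetric statement is identical.

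The main obstacle I expect is the separation argument in the second paragraph: making sure that varying $\beta$ really does pin down the tensor in $C \otimes C$, and more generally keeping careful track of left versus right and of the opposite-algebra subtleties inherent in convolution. Once those conventions are fixed, the theorem unwinds essentially formally: all four algebras in the statement are shown to coincide with $C^*$ via convolution, and the centralizer assertions reduce to a single coassociativity-plus-counit calculation.
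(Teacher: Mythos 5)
Your proposal is correct, and for the first two parts (the convolution isomorphisms $C^* \cong \text{End}_{\catfont{C}_R}(C)$, $C^* \cong \text{End}_{\catfont{C}_L}(C)$ via $\alpha \mapsto (\alpha \otimes 1)\Delta$, $\beta \mapsto (1 \otimes \beta)\Delta$ with inverses $\phi \mapsto \varepsilon \circ \phi$, together with the double-centralizer argument exploiting finite dimensionality to separate points of $C \otimes C$) your argument is essentially identical to the paper's $\Omega, \Gamma, \Theta, \Lambda$. Where you genuinely diverge is in the third part, the identification $\text{End}(\omega_R \res \gen{C}) = \text{End}_{\catfont{C}_L}(C)$ (or the symmetric $L/R$ version you use). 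The paper proves the nontrivial inclusion by abstractly constructing, for each $\phi \in \text{End}_{\catfont{C}_L}(C)$ and each $X \in \gen{C}$, a map $\phi_X$ via definition \ref{phiXdefn} (pushing $\phi^n$ forward along quotients $C^n \twoheadrightarrow Y$ and restricting to subobjects $X \hookrightarrow Y$), and then spends several sub-lemmas showing $\phi_X$ exists, is unique, and is natural. You instead notice that lemma \ref{subalgebrasOfHopfAlgebrasLemma} (comodule maps for objects of $\gen{C}$ factor through $C$) lets you write the candidate natural transformation in closed form as $\lambda_V = (\alpha \otimes 1) \circ \rho_V$, after which naturality is a one-line computation from $\rho_W \circ \phi = (1 \otimes \phi) \circ \rho_V$, and the restriction $\lambda_C = (\alpha \otimes 1)\Delta$ is manifest. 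This is cleaner and avoids the existence/uniqueness bookkeeping, at the modest cost of first needing the centralizer theorem to run your injectivity argument (the paper's construction is more self-contained at that step, not relying on the earlier centralizer result). Both routes depend on the factoring lemma; yours just uses it more directly. One small point of precision worth flagging if you write this up: the separation step should be phrased as ``if $\sum a_i \otimes b_i \in C \otimes C$ is killed by $1 \otimes \beta$ for every $\beta \in C^*$, then it is zero,'' which follows by choosing the $a_i$ linearly independent; the paper isolates this as a standalone lemma and you should too.
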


It is clear from the remarks on page 135 of \cite{deligne},
combined with lemma 2.13 of the same text, that the author is
quite aware that the algebra $C^*$ is isomorphic to both
$\text{End}(\omega_R \res \gen{C})$ and $\text{End}(\omega_L \res
\gen{C})$.  This is not surprising; we shall argue at the end of
this chapter that this theorem in fact proves that the `algorithm'
given in section \ref{recoveringanalgebraicgroupsection} for
recovering the Hopf algebra $A$ from the category $\text{Comod}_A$
does in fact give the correct answer.  As to the other assertions
of theorem \ref{adualitytheorem}, we are unable to locate any
specific occurrence of them in the literature.

In the statement of the theorem we have deliberately confused (as
we may, by the discussion on page \pageref{endomegadiscussion})
$\text{End}(\omega_R \res \gen{C})$ with its image inside
$\text{End}_{\text{Vec}_k}(\omega_R(C))$.  Note that these are
equalities given in 1.~above, not just isomorphisms.

We will prove first that $C^*$, the dual algebra to the coalgebra
$C$, is isomorphic to $\text{End}_{\catfont{C}_R}(C)$.  We define
maps \begin{gather*} C^* \stackrel{\Omega}{\longrightarrow}
\text{End}_{\catfont{C}_R}(C) \\ \index{$\Gamma$}  C^*
\stackrel{\Gamma}{\longleftarrow} \text{End}_{\catfont{C}_R}(C)
\end{gather*} as follows.  For $\alpha \in C^*$, $\Omega(\alpha)$ is the
composition
\[ C \stackrel{\Delta}{\longrightarrow} C \otimes C
\stackrel{\alpha \otimes 1}{\vlongrightarrow} k \otimes C
\isomorphic C \] and for $\phi \in \text{End}_{\catfont{C}_R}(C)$,
$\Gamma(\phi)$ is the composition
\[ C \stackrel{\phi}{\longrightarrow} C
\stackrel{\varepsilon}{\longrightarrow} k \]

\begin{thm}
\label{OmegaAndGammaTheorem} The maps $\Omega$ and $\Gamma$ are
well-defined algebra maps, and are left and right-sided inverses
for one another, making them both isomorphisms of algebras.
\end{thm}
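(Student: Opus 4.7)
The plan is to verify by direct computation in Sweedler notation that each of $\Omega$ and $\Gamma$ is a well-defined algebra map and that the two are mutually inverse. Writing $\Delta(c) = \sum c_{(1)} \otimes c_{(2)}$, we have $\Omega(\alpha)(c) = \sum \alpha(c_{(1)}) c_{(2)}$ and $\Gamma(\phi)(c) = \varepsilon(\phi(c))$. The four ingredients of the argument are coassociativity, the counit axiom, the convolution description of multiplication in $C^*$, and the fact that a morphism $\phi$ in $\catfont{C}_R$ satisfies $\Delta \circ \phi = (\phi \otimes 1_A) \circ \Delta$ (in the paper's left-to-right convention).

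First I would verify that $\Omega(\alpha)$ actually lies in $\text{End}_{\catfont{C}_R}(C)$. Applying $\Delta$ to $\Omega(\alpha)(c)$ and comparing with $(\Omega(\alpha) \otimes 1_A)(\Delta(c))$, both sides reduce via coassociativity to $\sum \alpha(c_{(1)(1)}) c_{(1)(2)} \otimes c_{(2)}$, so the comodule-morphism square commutes. That $\Gamma(\phi)$ defines a linear functional on $C$ is immediate.

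Next I would check preservation of multiplication. The product in $C^*$ is convolution, $(\alpha \cdot \beta)(c) = \sum \alpha(c_{(1)}) \beta(c_{(2)})$. A short Sweedler calculation using coassociativity gives $\Omega(\alpha \cdot \beta)(c) = \sum \alpha(c_{(1)}) \beta(c_{(2)(1)}) c_{(2)(2)} = \Omega(\beta)(\Omega(\alpha)(c))$, which in the paper's convention reads $\Omega(\alpha \cdot \beta) = \Omega(\alpha) \circ \Omega(\beta)$. For $\Gamma$ I would first extract from the comodule-morphism condition its key consequence: applying $\varepsilon \otimes 1$ to $\sum \phi(c_{(1)}) \otimes c_{(2)} = \Delta(\phi(c))$ and using the counit axiom on the right yields the identity $\sum \varepsilon(\phi(c_{(1)})) c_{(2)} = \phi(c)$. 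Feeding this into $(\Gamma(\phi) \cdot \Gamma(\psi))(c) = \sum \varepsilon(\phi(c_{(1)})) \varepsilon(\psi(c_{(2)}))$ collapses it to $\varepsilon(\psi(\phi(c))) = \Gamma(\phi \circ \psi)(c)$. Preservation of units is instantaneous: $\Omega(\varepsilon)(c) = \sum \varepsilon(c_{(1)}) c_{(2)} = c$ by the counit axiom, and $\Gamma(1_C)(c) = \varepsilon(c)$.

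Finally I would verify the inverse relations. The composite $\Gamma(\Omega(\alpha))(c) = \sum \alpha(c_{(1)}) \varepsilon(c_{(2)}) = \alpha(c)$, again by the counit axiom, so $\Gamma \circ \Omega$ is the identity on $C^*$. Conversely $\Omega(\Gamma(\phi))(c) = \sum \varepsilon(\phi(c_{(1)})) c_{(2)} = \phi(c)$ by the identity extracted above. Since each is a two-sided inverse of the other, both $\Omega$ and $\Gamma$ are bijective algebra isomorphisms. I anticipate no genuine obstacle beyond careful bookkeeping in Sweedler notation; the one subtlety worth highlighting is that the hypothesis ``$\phi$ is a morphism in $\catfont{C}_R$'' enters essentially only through the single identity $\sum \varepsilon(\phi(c_{(1)})) c_{(2)} = \phi(c)$, which is precisely what simultaneously makes $\Gamma$ multiplicative and makes $\Omega$ a left inverse for $\Gamma$.
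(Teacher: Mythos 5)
Your proof is correct and follows essentially the same approach as the paper's: a direct verification (there via commutative-diagram chases, here via Sweedler notation) using coassociativity, the counit axiom, and the $\catfont{C}_R$-morphism condition $\Delta(\phi(c)) = \sum \phi(c_{(1)}) \otimes c_{(2)}$. The only organizational difference is that you check multiplicativity of $\Omega$ directly rather than deducing it from $\Omega = \Gamma^{-1}$ as the paper does, and your isolation of the single identity $\sum \varepsilon(\phi(c_{(1)})) c_{(2)} = \phi(c)$ as the place where the morphism hypothesis enters is a clean way to package the same computation.
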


\begin{proof}
We need to prove first the non-obvious fact that, for any $\alpha
\in C^*$, $\Omega(\alpha)$ is an endomorphism on $C$ as a right
$A$-comodule, that is, that the diagram
\begin{diagram}
C & \rTo^{\Omega(\alpha)} & C \\
\dTo^\Delta & & \dTo_\Delta \\
C \otimes C & \rTo_{\Omega(\alpha) \otimes 1} & C \otimes C \\
\end{diagram}
commutes.  Consider the diagram
\begin{diagram}
C & \rTo^\Delta & C \otimes C & \rTo^{\alpha \otimes 1} & k
\otimes C & \rTo^{\isomorphic} & C \\
\dTo^\Delta & & \dTo_{1 \otimes \Delta} & & \dTo_{1 \otimes
\Delta} & & \dTo_\Delta \\
C \otimes C & \rTo_{\Delta \otimes 1} & C \otimes C \otimes C &
\rTo_{\alpha \otimes 1 \otimes 1} & k \otimes C \otimes C &
\rTo_{\isomorphic} & C \otimes C \\
\end{diagram}
The outermost rectangle is an expanded version of the previous
diagram, and is what we are trying to prove commutes.
Commutativity of the right-most simple rectangle follows directly
from the naturality of $\isomorphic$, commutativity of the middle
rectangle is obvious, and the left-most rectangle is a coalgebra
identity.  Thus the outermost rectangle commutes, and
$\Omega(\alpha)$ is indeed an endomorphism of $C$ as a right
$A$-comodule.

We argue now that $\Omega \circ \Gamma$ and $\Gamma \circ \Omega$
are both the identity.  Let $\alpha \in C^*$, and consider
\begin{diagram}
C & \rTo^\Delta & C \otimes C & \rTo^{\alpha \otimes 1} & k
\otimes C & \rTo^\isomorphic & C & \rTo^\varepsilon & k \\
& \rdTo_{\isomorphic} & \dTo_{1 \otimes \varepsilon} & & \dTo_{1
\otimes \varepsilon} & & & \ruTo(4,2)_{\isomorphic} \\
& & C \otimes k & \rTo_{\alpha \otimes 1} & k \otimes k \\
\end{diagram}
The top line is the map $\Gamma(\Omega(\alpha))$.  We would like
to see that this is equal to $\alpha$, and $\alpha$ is clearly
equal to the bottom three-map composition; thus we seek to prove
commutativity of the outermost polygon.  Commutativity of the
right-most simple polygon follows again from the naturality of
$\isomorphic$, commutativity of the middle square is obvious, and
the left-most triangle is again a coalgebra identity; thus
$\Gamma(\Omega(\alpha)) = \alpha$.

Now let $\phi \in \text{End}_{\catfont{C}_R}(C)$.  Consider
\begin{diagram}
C & \rTo^\Delta & C \otimes C & \rTo^{(\phi \circ \varepsilon)
\otimes 1} & k \otimes C & \rTo^{\isomorphic} & C \\
\dTo^\phi & & \dTo_{\phi \otimes 1} & \ruTo_{\varepsilon \otimes
1} \\
C & \rTo_\Delta & C \otimes C \\
\end{diagram}
The top line is the map $\Omega(\Gamma(\phi))$, which we would
like to see is equal to $\phi$.  Commutativity of the left-most
square is the assertion that $\phi$ is an endomorphism of $C$ as a
right $A$-comodule, and commutativity of the middle triangle is
obvious.  Thus the outermost polygon commutes, giving us
\[ \Omega(\Gamma(\phi)) = \phi \circ (\Delta \circ (\varepsilon
\otimes 1) \circ \isomorphic) \]
But $(\Delta \circ (\varepsilon
\otimes 1) \circ \isomorphic) = 1$ is coalgebra identity, and
hence the right hand side is equal to $\phi$, proving the claim.

We must finally prove that $\Gamma$ is a $k$-algebra map. Recall
the multiplication on $C^*$; it sends the pair of functionals
$\alpha, \beta:C \rightarrow k$ to the functional
\[ C \stackrel{\Delta}{\longrightarrow} C \otimes C
\stackrel{\alpha \otimes \beta}{\vlongrightarrow} k \otimes k
\stackrel{\isomorphic}{\longrightarrow} k \]
 Let $\phi, \psi \in
\text{End}_{R}(C)$, and consider the diagram
\begin{diagram}
C \otimes C & \rTo^{\phi \otimes 1} & C \otimes C &
\rTo^{\varepsilon\otimes 1} & k \otimes C & \rTo^{1 \otimes \psi}
& k \otimes C & \rTo^{1 \otimes \varepsilon} & k \otimes k &
\rTo^{\isomorphic} &  k \\
\uTo^\Delta & & \uTo_\Delta & & \uTo_\isomorphic & &
\uTo_\isomorphic & & & \ruTo(4,2)_\varepsilon \\
C & \rTo_\phi & C & \rEq & C & \rTo_\psi & C \\
\end{diagram}
The composition that starts at the bottom left hand corner, goes
up, and then all the way across, is an expanded version of the map
$\Gamma(\phi) * \Gamma(\psi)$, where $*$ denotes the
multiplication in the algebra $C^*$.  The one that starts at the
bottom left hand corner, goes across, and then diagonally up, is
the map $\Gamma(\phi \circ \psi)$; we want to see of course that
these are equal.  It is enough to show then that all of the simple
polygons commute.  Starting from the left: commutativity of the
first is the assertion that $\phi$ is an endomorphism of $C$ as a
right $A$-comodule, the second is a coalgebra identity, and
commutativity of the third and fourth follow directly from the
naturality of $\isomorphic$.  Therefore $\Gamma$ is a
multiplicative map, and is obviously $k$-linear, since composition
with $\varepsilon$ (or any linear map) is so. Therefore $\Gamma$
is an isomorphism of $k$-algebras.  The same is true of $\Omega$,
since it is the inverse of such a map.
\end{proof}

We claim also that $C^*$ is in much the same way isomorphic to
$\text{End}_{\catfont{C}_L}(C)$, the endomorphism algebra of $C$
as a \emph{left} $A$-comodule. This time we define a map
\[ C^* \stackrel{\Theta}{\longrightarrow}
\text{End}_{\catfont{C}_L}(C) \] as, for $\alpha \in C^*$,
$\Theta(\alpha)$ is the composition
\[ C \stackrel{\Delta}{\longrightarrow} C \otimes C
\stackrel{1 \otimes \alpha}{\vlongrightarrow} C \otimes k
\isomorphic C
\]
(notice the switching of the slots on which $1$ and $\alpha$ act).
We define a map $\Lambda:\text{End}_{\catfont{C}_L}(C) \rightarrow
C^*$ the same way as before: for an endomorphism $\phi$ of $C$ in
the category $\catfont{C}_L$, \label{LambdaDefFDcoalgebras}
$\Lambda(\phi)$ is the composition
\[ C \stackrel{\phi}{\longrightarrow} C
\stackrel{\varepsilon}{\longrightarrow} k \] An proof almost
identical to that of the previous theorem shows again that
$\Theta$ and $\Lambda$ are isomorphisms of $k$-algebras, which we
will not repeat.

\begin{lem}
Let $V$ be a finite dimensional vector space over a field $k$, and
let $r,s \in V \otimes V$ such that $r \neq s$.  Then there exists
a linear functional $\alpha:V \rightarrow k$ such that the
composition
\[ V \otimes V \stackrel{\alpha \otimes 1}{\vlongrightarrow} k
\otimes V \isomorphic V \]
sends $r$ and $s$ to different things.
\end{lem}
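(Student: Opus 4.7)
The plan is to observe that the map $\alpha \otimes 1$ is linear in $\alpha$ and in its argument, so that sending $r$ and $s$ to different elements is equivalent to sending $t \myeq r - s$ to a nonzero element of $V$. Thus the lemma reduces to the following claim: for any nonzero $t \in V \otimes V$, there exists $\alpha \in V^*$ such that $(\alpha \otimes 1)(t) \neq 0$ (after identifying $k \otimes V$ with $V$).

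To prove this reduced claim, I would fix a basis $e_1, \ldots, e_n$ of $V$ with dual basis $e_1^*, \ldots, e_n^* \in V^*$. Every element of $V \otimes V$ can be written uniquely in the form
\[ t = \sum_{i=1}^n e_i \otimes v_i \]
for some vectors $v_1, \ldots, v_n \in V$ (this is immediate from the fact that $\{e_i \otimes f : 1 \leq i \leq n, f \in V\}$ spans $V \otimes V$, together with the uniqueness that follows from $V \otimes V \isomorphic V^n$ via the chosen basis). Since $t \neq 0$, at least one of the $v_i$, say $v_{i_0}$, is nonzero.

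Now take $\alpha \myeq e_{i_0}^*$. Then applying $\alpha \otimes 1$ to $t$ and using the identification $k \otimes V \isomorphic V$ yields
\[ (\alpha \otimes 1)(t) = \sum_{i=1}^n e_{i_0}^*(e_i) \, v_i = v_{i_0} \neq 0, \]
which completes the proof. The argument is entirely elementary and there is no real obstacle; the only content is the choice of basis and the observation that $V \otimes V$ decomposes as $\bigoplus_{i} e_i \otimes V$, which lets us read off a coordinate on which $t$ is nonzero and then pick the corresponding dual functional.
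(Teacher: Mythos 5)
Your proof is correct and takes essentially the same approach as the paper: fix a basis of $V$, use a dual-basis functional to slice out a coordinate in the first tensor factor, and observe that this coordinate is nonzero. Your version is slightly streamlined by first replacing $(r,s)$ with $t = r - s$ and by writing $t = \sum_i e_i \otimes v_i$ rather than expanding both tensor factors, but the key idea is identical.
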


\begin{proof}
Let $e_1, \ldots, e_n$ be a basis for $V$.  Then we can write
\[ r = \sum_{i,j} c_{ij} e_i \otimes e_j \]
\[ s = \sum_{i,j} d_{ij} e_i \otimes e_j \]
If $r \neq s$, then $c_{ij} \neq d_{ij}$ for some $i$ and $j$.
Then let $\alpha$ be the functional which sends $e_i$ to $1$ and
all other $e_j$ to $0$.  Then the composition above sends $r$ to
\[ c_{i,1} e_1 + c_{i,2} e_2 + \ldots + c_{i,j} e_j + \ldots +
c_{i,n} e_n \] while it sends $s$ to
\[ d_{i,1} e_1 + d_{i,2} e_2 + \ldots + d_{i,j} e_j + \ldots +
d_{i,n} e_n \] and these are clearly not equal, since $c_{i,j}
\neq d_{i,j}$.
\end{proof}

\begin{thm}
Let $C$ be a finite dimensional subcoalgebra of the Hopf algebra
$A$. Then $\text{End}_{\catfont{C}_L}(C) = $ the centralizer of
$\text{End}_{\catfont{C}_R}(C)$.
\end{thm}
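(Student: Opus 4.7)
The plan is to prove the equality by two containments, using the explicit descriptions $\Omega(\alpha): c \mapsto (\alpha \otimes 1)(\Delta(c))$ of right comodule endomorphisms and $\Theta(\beta): c \mapsto (1 \otimes \beta)(\Delta(c))$ of left comodule endomorphisms, which parametrize their respective endomorphism algebras bijectively by theorem \ref{OmegaAndGammaTheorem} and its obvious left-handed analogue.

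For the easy inclusion (every left comodule endomorphism commutes with every right comodule endomorphism), I would fix $\alpha, \beta \in C^*$ and verify the equality $\Omega(\alpha)(\Theta(\beta)(c)) = \Theta(\beta)(\Omega(\alpha)(c))$ by a direct Sweedler-notation calculation: writing $\Delta(c) = \sum c_{(1)} \otimes c_{(2)}$ and applying coassociativity, both sides collapse to the common expression $\sum \alpha(c_{(1)}) c_{(2)} \beta(c_{(3)})$.

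The substantive direction is the reverse containment. Fix $\psi \in \text{End}_k(C)$ commuting with every $\Omega(\alpha)$; the goal is to show $\Delta(\psi(c)) = (1 \otimes \psi)(\Delta(c))$ in $C \otimes C$ for every $c \in C$. Here I would invoke the preceding lemma (with $V = C$): equality of two elements of $C \otimes C$ can be detected by applying $\alpha \otimes 1$ for every $\alpha \in C^*$. Applied to the left side this yields $\Omega(\alpha)(\psi(c))$ by definition of $\Omega$, while applied to the right side it yields
\[ (\alpha \otimes 1)\bigl((1 \otimes \psi)(\Delta(c))\bigr) = \psi\bigl((\alpha \otimes 1)(\Delta(c))\bigr) = \psi(\Omega(\alpha)(c)), \]
where the first equality uses the $k$-linearity of $\psi$ to pull it past the scalar in the first tensor slot. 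These two expressions agree by hypothesis, so $\psi$ satisfies the defining identity of a left $A$-comodule endomorphism, i.e.~$\psi \in \text{End}_{\catfont{C}_L}(C)$.

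The hard part is essentially bookkeeping in the third paragraph: correctly tracking which tensor slot each operator acts on, and recognizing that the $k$-linearity of $\psi$ is precisely what allows one to commute it past $\alpha \otimes 1$ so that the preceding lemma finishes the job. Once this identification is in place no further computation is needed, and no serious obstruction arises.
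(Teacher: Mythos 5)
Your proof is correct and follows essentially the same route as the paper: the easy containment is the same coassociativity computation (you present it in Sweedler notation where the paper draws the three-square commuting diagram and then cancels the injective $\Delta$), and the hard containment hinges on exactly the same separation lemma for elements of $C\otimes C$, which you invoke directly while the paper argues contrapositively. The only cosmetic difference is that your easy direction goes through the $\Omega,\Theta$ parametrizations of the two endomorphism algebras, which is slightly more than the paper needs there, but this is immaterial.
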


\begin{proof}
Let $\phi \in \text{End}_{\catfont{C}_L}(C)$ and $\psi \in
\text{End}_{\catfont{C}_R}(C)$, and consider the diagram

\begin{diagram}
C & \rTo^\phi & C & \rTo^\psi & C & \rTo^\phi & C \\
\dTo^\Delta & & \dTo^\Delta & & \dTo^\Delta & & \dTo^\Delta \\
C \otimes C & \rTo_{1 \otimes \phi} & C \otimes C & \rTo_{\psi
\otimes 1} & C \otimes C & \rTo_{1 \otimes \phi} & C \otimes C \\
\end{diagram}
Commutativity of all three of the simple squares are merely the
assertions that $\phi$ and $\psi$ are morphisms in the categories
$\catfont{C}_L$ and $\catfont{C}_R$ respectively; thus this entire
diagram commutes.  If we look at the rectangle consisting of the
two left squares we obtain $\phi \circ \psi \circ \Delta = \Delta
\circ (\psi \otimes \phi)$, and looking at the rectangle
consisting of the two right squares we obtain $\psi \circ \phi
\circ \Delta = \Delta \circ (\psi \otimes \phi)$.  Thus we have
\[ \phi \circ \psi \circ \Delta = \psi \circ \phi \circ \Delta \]
But $\Delta$ is injective, and hence $\phi \circ \psi = \psi \circ
\phi$.  This shows that $\text{End}_{\catfont{C}_L}(C)$ is
contained in the centralizer of $\text{End}_{\catfont{C}_R}(C)$.

Now let $\phi$ be any linear map $C \rightarrow C$, and suppose
that it is not a member of $\text{End}_{\catfont{C}_L}(C)$, which
is to say that the diagram
\begin{diagram}
C & \rTo^\phi & C \\
\dTo^\Delta & & \dTo_\Delta \\
C \otimes C & \rTo_{1 \otimes \phi} & C \otimes C \\
\end{diagram}
does \emph{not} commute; we claim there exists a member of
$\text{End}_{\catfont{C}_R}(C)$ with which $\phi$ does not
commute. Recall from theorem \ref{OmegaAndGammaTheorem}  that for
any linear functional $\alpha:C \rightarrow k$, the composition
\[ C \stackrel{\Delta}{\longrightarrow} C \otimes C \stackrel{
\alpha \otimes 1}{\vlongrightarrow} k \otimes C \isomorphic C \]
belongs to $\text{End}_{\catfont{C}_R}(C)$; our job is then to
find an $\alpha$ so that $\phi$ does not commute with this map.
Consider
\begin{diagram}
C & \rTo^\Delta & C \otimes C & \rTo^{\alpha \otimes 1} & k
\otimes C & \rTo^{\isomorphic} & C \\
\dTo^\phi & & \dTo_{1 \otimes \phi} & & \dTo_{1 \otimes \phi} & &
\dTo_\phi \\
C & \rTo_\Delta & C \otimes C & \rTo_{\alpha \otimes 1} & k
\otimes C & \rTo_{\isomorphic} & C \\
\end{diagram}
All of the simple squares of this diagram commute, except for the
left most one, which does not by assumption.  We want to show that
there is an $\alpha$ such that the outermost rectangle does not
commute.  Pick $v \in C$ such that commutativity of the left
square fails, let $r \in C \otimes C$ be its image when chasing it
one way, and $s \in C \otimes C$ its image when chasing it the
other way.  By the previous lemma, pick $\alpha$ such that the
composition
\begin{diagram}
 C \otimes C & \rTo_{\alpha \otimes 1} & k \otimes C &
\rTo_{\isomorphic} & C
\end{diagram}
sends $r$ and $s$ to different things, let's say $m \neq l$.  Then
if we chase $v$ around one path of the outermost rectangle we
arrive at $m$, and the other way, we arrive at $l$; thus the
outermost rectangle does not commute.  This gives a member of
$\text{End}_{\catfont{C}_R}(C)$ with which $\phi$ does not
commute, and the theorem is proved.
\end{proof}

An identical proof shows that $\text{End}_{\catfont{C}_R}(C)$ is
the centralizer of $\text{End}_{\catfont{C}_L}(C)$, which we do
not repeat.

Our last task is to show that $\text{End}_{\catfont{C}_L}(C)$ is
equal to $\text{End}(\omega_R \res \gen{C})$.  Any member $\phi:C
\rightarrow C$ of the latter must at the least make diagrams of
the form
\begin{diagram}
C & \rTo^\psi & C \\
\dTo^\phi & & \dTo_\phi \\
C & \rTo_\psi & C \\
\end{diagram}
commute, where $\psi$ is an arbitrary element of
$\text{End}_{\catfont{C}_R}(C)$. As
$\text{End}_{\catfont{C}_L}(C)$ is equal to the commutator of
$\text{End}_{\catfont{C}_R}(C)$, we already have the forward
inclusion $\text{End}(\omega_R \res \gen{C}) \subset
\text{End}_{\catfont{C}_L}(C)$; it remains to show the reverse.

\begin{lem}
\label{comoduleCoefficientsLemma} Let $(V,\rho),(W,\mu)$ be finite
dimensional comodules over the Hopf algebra $A$. Fix bases $e_1,
\ldots, e_n$, $f_1, \ldots, f_m$ for $V$ and $W$ respectively, and
write
\[ \rho:e_j \mapsto \sum_i e_i \otimes a_{ij} \]
\[ \mu:f_j \mapsto \sum_i f_i \otimes b_{ij} \]
\begin{enumerate}
\item{If $V$ is a subobject of $W$, then each $a_{ij}$ is a linear
combination of the $b_{ij}$.} \item{If $W$ is a quotient object of
$V$, then each $b_{ij}$ is a linear combination of the $a_{ij}$.}
\end{enumerate}
\end{lem}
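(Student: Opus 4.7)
The plan is to use the defining commutative square of a morphism of $A$-comodules, applied to the inclusion in part~1 and to the projection in part~2, to translate the morphism condition into a matrix identity among the structure coefficients. The desired coefficients are then isolated by inverting the morphism's matrix on the appropriate side.

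For part~1, let $\iota \colon V \hookrightarrow W$ be the given injective morphism of $A$-comodules and write $\iota(e_j) = \sum_k c_{kj} f_k$ with $c_{kj} \in k$. Applying $\iota$ to both paths of the comodule-morphism square and expanding in the bases $\{e_j\}$ and $\{f_l\}$ yields, for every $l,j$, the identity
\[
\sum_i c_{li}\, a_{ij} \;=\; \sum_k c_{kj}\, b_{lk}
\]
in $A$; equivalently, the matrix equation $C\mathbf{A} = \mathbf{B}C$, where $C = (c_{kj})$ is an $m \times n$ scalar matrix and $\mathbf{A} = (a_{ij})$, $\mathbf{B} = (b_{lk})$ are square matrices with entries in the Hopf algebra. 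Since $\iota$ is injective, $C$ has rank $n$ and hence admits a scalar left inverse $D \in \mathrm{Mat}_{n \times m}(k)$; multiplying $C\mathbf{A} = \mathbf{B}C$ on the left by $D$ gives $\mathbf{A} = D\mathbf{B}C$, exhibiting each $a_{ij}$ as an explicit $k$-linear combination of the $b_{lk}$.

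Part~2 is entirely parallel. Given a surjective $A$-comodule morphism $\pi \colon V \twoheadrightarrow W$ with $\pi(e_j) = \sum_i d_{ij} f_i$, the same diagram chase produces the matrix identity $D\mathbf{A} = \mathbf{B}D$, where $D = (d_{ij})$ is $m \times n$ of rank $m$ and therefore admits a scalar right inverse $E$. Multiplying on the right by $E$ yields $\mathbf{B} = D\mathbf{A}E$, so each $b_{ij}$ is a $k$-linear combination of the $a_{kl}$.

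No real obstacle arises; the argument is essentially bookkeeping. The one point worth emphasizing is that $C$, $D$, and $E$ are matrices with entries in the ground field $k$, whereas $\mathbf{A}$ and $\mathbf{B}$ have entries in the Hopf algebra --- this is precisely what makes the conclusion ``$k$-linear combination of the $b_{ij}$'' (respectively the $a_{ij}$) have content. Note also that no feature of $A$ beyond the comodule axioms intervenes, so the same proof applies verbatim to comodules over an arbitrary coalgebra.
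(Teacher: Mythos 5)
Your proof is correct and takes essentially the same approach as the paper: translate the commuting square for a comodule morphism into the matrix identity $C\mathbf{A} = \mathbf{B}C$, with $C$ a scalar matrix and $\mathbf{A},\mathbf{B}$ matrices over $A$, and then cancel $C$ using a one-sided scalar inverse. You also have the orientation of the one-sided inverses stated correctly (a scalar \emph{left} inverse of $C$ in the injective case, a scalar \emph{right} inverse in the surjective case), whereas the paper's wording appears to have these two labels transposed even though the matrix computation it then performs is the one you give.
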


\begin{proof}
This is a simple fact from linear algebra if we think of
$(a_{ij})$ and $(b_{ij})$ as matrices.  If $\phi:V \rightarrow W$
is a linear map, write it as the matrix $(c_{ij})$ in the relevant
bases. Then $\phi$ being a morphism of $A$-comodules is equivalent
to the matrix equality
\[ (c_{ij})(a_{ij}) = (b_{ij})(c_{ij}) \]
In case $(c_{ij})$ is injective it has a right-sided inverse,
given say by the matrix $(d_{ij})$.  Then $(a_{ij})=
(d_{ij})(b_{ij})(c_{ij})$, and clearly every entry of the right
hand side is a linear combination of the $b_{ij}$; this proves 1.
If $(c_{ij})$ is surjective it has a left-sided inverse, again
call it $(d_{ij})$.  Then we have $(c_{ij})(a_{ij})(d_{ij}) =
(b_{ij})$.  This proves 2.

\end{proof}

\begin{lem}
\label{subalgebrasOfHopfAlgebrasLemma} If $(V,\rho:V \rightarrow V
\otimes A)$ is a right $A$-comodule, and if it belongs to the
principal subcategory $\gen{C}$, then the image of $\rho$ is
contained in $V \otimes C$.
\end{lem}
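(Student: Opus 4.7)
The plan is to apply lemma \ref{comoduleCoefficientsLemma} in sequence to trace the ``coefficients'' of the comodule structure of $V$ back to coefficients living inside $C$.

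First I would observe that $C$ itself, viewed as a right $A$-comodule via $\Delta:C \to C \otimes C \subset C \otimes A$, has the defining property that, for \emph{any} basis $e_1,\ldots,e_n$ of $C$, if we write $\Delta(e_j) = \sum_i e_i \otimes a_{ij}$, then each $a_{ij}$ lies in $C$ (this is just the statement that $C$ is a subcoalgebra of $A$). The same is then trivially true for the $n$-fold direct sum $C^n$ in the obvious basis: all of the comodule-structure coefficients are elements of $C$.

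Next, by hypothesis $V \in \gen{C}$, so by definition there exists an object $W$ which is a quotient of some $C^n$, and an embedding of $V$ into $W$. I would now apply part 2 of lemma \ref{comoduleCoefficientsLemma} to the surjection $C^n \twoheadrightarrow W$: choosing any basis of $W$ and writing $\mu(f_j) = \sum_i f_i \otimes b_{ij}$, each $b_{ij}$ is a linear combination of the structure coefficients of $C^n$, which we have just observed all lie in $C$. Hence the $b_{ij}$ all lie in $C$. Then I would apply part 1 of the same lemma to the inclusion $V \hookrightarrow W$: in any basis of $V$ with $\rho(v_j) = \sum_i v_i \otimes a_{ij}$, each $a_{ij}$ is a linear combination of the $b_{ij}$, and therefore still lies in $C$.

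Since the image $\rho(V)$ is spanned by the elements $\sum_i v_i \otimes a_{ij}$ with all $a_{ij} \in C$, we conclude $\rho(V) \subset V \otimes C$, as required. There is no real obstacle here beyond making sure the two applications of the lemma are composed in the right order (quotient first, then subobject), and that the base case ``$C^n$ has coefficients in $C$'' is articulated correctly; both are straightforward once one unwinds the definition of $\gen{C}$.
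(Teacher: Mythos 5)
Your proof is correct and follows the paper's own argument essentially verbatim: establish that the comodule-structure coefficients of $C^n$ lie in $C$ (the subcoalgebra property), then propagate this along a quotient and then a subobject using both parts of Lemma \ref{comoduleCoefficientsLemma}. The only detail left implicit (in both your proof and the paper's) is that $V$ need only be \emph{isomorphic} to a subobject of a quotient of $C^n$, but this is harmless since ``coefficients lie in $C$'' is invariant under comodule isomorphism by the same lemma.
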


\begin{proof}
The claim is obvious in case $V$ is direct sum of copies of $C$,
since then its comodule map is the composition
\[ \Delta^n:C^n \stackrel{\Delta^n}{\longrightarrow} (C \otimes C)^n \isomorphic C^n
\otimes C \subset C^n \otimes A \]

Suppose then $(X,\rho)$ is a quotient of some $C^n$.  Then choose
$(a_{ij})$ for $X$ and $(b_{ij})$  for $C^n$ as in the previous
lemma.  Each $b_{ij}$ is in $C$ by assumption, and thus so is each
$a_{ij}$, being a linear combination of the $b_{ij}$.  A similar
argument holds if we consider a subobject of the quotient object
$(X,\rho)$.

\end{proof}

Thus, for any object $(X,\rho) \in \gen{C}$, we can write $\rho:X
\rightarrow X \otimes C$ instead of $\rho:X \rightarrow X \otimes
A$.

Let $\phi:C \rightarrow C$ be any linear map.  Then for $n \in
\mathbb{N}$ we define a map $\phi_{C^n}:C^n \rightarrow C^n$ as
$\phi^n$, that is, the unique linear map commuting with all of the
canonical injections $C \stackrel{\iota_i}{\longrightarrow} C^n$.
Now let $(X,\rho_X)$ be any object of $\gen{C}$. Then there is an
object $(Y,\rho_Y)$ of $\gen{C}$ and a commutative diagram
\begin{diagram}
X & \rInto^\iota & Y & \lOnto^\pi & C^n \\
\dTo^{\rho_X} & & \dTo^{\rho_Y} & & \dTo_{\Delta^n} \\
X \otimes C & \rInto_{\iota \otimes 1} & Y \otimes C & \lOnto_{\pi
\otimes 1}
& C^n \otimes C \\
\end{diagram}
Now, if $\phi$ defines an element of $\text{End}(\omega_R \res
\gen{C})$, there exist unique linear maps $\phi_X$ and $\phi_Y$
making
\begin{diagram}
X & \rInto^\iota & Y & \lOnto^\pi & C^n \\
\dTo^{\phi_X} & & \dTo^{\phi_Y} & & \dTo_{\phi^n} \\
X  & \rInto_{\iota } & Y  & \lOnto_{\pi }
& C^n  \\
\end{diagram}
commute.  But, unless we know \textit{a priori} that $\phi$
defines an element of $\text{End}(\omega_R \res \gen{C})$, all we
can say is that $\phi_Y$ exists, but may not be unique, and that
$\phi_X$ is unique, but may not exist.  Further, if we choose
another such $n$, $(Y,\rho_Y)$, $\pi$ and $\iota$, one cannot
expect to obtain the same linear map $\phi_X$, again unless we
know that $\phi \in \text{End}(\omega_R \res \gen{C})$. For the
moment then, we make the following deliberately ambiguous
definition.

\begin{defn}
\label{phiXdefn} Let $X$ be an object of $\gen{C}$, $\phi:C
\rightarrow C$ any linear map.  Then we define $\phi_X:X
\rightarrow X$ to be any linear map satisfying any one of the
following conditions.

\begin{enumerate}
\item{If $X = C^n$, then $\phi_X = \phi^n$.}
 \item{There exists an
$n$ and a surjective map $C^n \stackrel{\pi}{\longrightarrow} X$
such that $\pi \circ \phi_X = \phi^n \circ \pi$.} \item{There
exists a quotient object $Y$ of $C^n$ such that $\phi_Y$ exists
and satisfies condition 2.~above, and there is an injective map $X
\stackrel{\iota}{\longrightarrow} Y$ such that $\phi_X \circ \iota
= \iota \circ \phi_Y$.}

\end{enumerate}
\end{defn}

So, when we prove theorems about ``the'' map $\phi_X$, it is
understood to apply to any $\phi_X$ satisfying any one of the
above conditions, and under the assumption that it exists in the
first place. It will only later be a consequence of these theorems
that $\phi_X$ is well-defined; that it always exists, and is
always unique.

\begin{lem}
\label{FDsubcoalgebrasphiLemma} Let $(X,\rho_X)$ be an object of
$\gen{C}$, $\phi$ an element of $\text{End}_{\catfont{C}_L}(C)$.
Then the following is always commutative:
\begin{diagram}
X & \rTo^{\phi_X} & X \\
\dTo^{\rho_X} & & \dTo_{\rho_X} \\
X \otimes C & \rTo_{1 \otimes \phi} & X \otimes C \\
\end{diagram}
\end{lem}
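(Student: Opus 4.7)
The plan is to proceed by induction matching the three cases in Definition \ref{phiXdefn} by which $X$ is built up inside $\gen{C}$, with the hypothesis $\phi \in \text{End}_{\catfont{C}_L}(C)$ doing all of its work at the base case.

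For the base case $X = C^n$, the right $A$-comodule structure is $\rho_X = \Delta^n$ and $\phi_X = \phi^n$, so the diagram in question splits as $n$ independent copies of the single-factor square in which $\phi$ is required to commute with $\Delta$ via the action $1 \otimes \phi$ on the second tensor slot. But (recalling that the image of $\Delta:C \rightarrow A \otimes C$ actually lies in $C \otimes C$, so the left- and right-comodule pictures overlap on this one square) that single-factor diagram is \emph{exactly} the assertion that $\phi$ is an endomorphism of $C$ as a left $A$-comodule, which is precisely the hypothesis on $\phi$.

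For the inductive step corresponding to Case 2 of Definition \ref{phiXdefn}, assume $X$ is a quotient of $C^n$ via a surjective right-comodule map $\pi:C^n \rightarrow X$ satisfying $\pi \circ \phi_X = \phi^n \circ \pi$. I would precompose the desired identity $\rho_X \circ (1_X \otimes \phi) = \phi_X \circ \rho_X$ with $\pi$ and then use, in sequence: that $\pi$ is a comodule morphism (which converts $\pi \circ \rho_X$ into $\Delta^n \circ (\pi \otimes 1_C)$); the elementary tensor swap $(\pi \otimes 1_C) \circ (1_X \otimes \phi) = (1_{C^n} \otimes \phi) \circ (\pi \otimes 1_C)$; the just-proved base case applied to $C^n$; and finally the defining compatibility $\pi \circ \phi_X = \phi^n \circ \pi$. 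The two sides thereby become equal after precomposition with $\pi$, and since $\pi$ is surjective the original equation follows.

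Case 3 is dual in flavour: if $X \hookrightarrow Y$ is an inclusion of comodules where $Y$ is a quotient of $C^n$ for which the lemma is already known, and if $\phi_X \circ \iota = \iota \circ \phi_Y$, then I would postcompose the desired equation with $\iota \otimes 1_C$, apply the comodule-morphism property of $\iota$ together with the analogous tensor swap, invoke the Case 2 result on $Y$, and finally cancel the injective map $\iota \otimes 1_C$. I do not anticipate any serious obstacle here; the main subtlety worth flagging is that Definition \ref{phiXdefn} merely \emph{posits} the existence of the relevant $\phi_X$ and $\phi_Y$ and does not assert uniqueness or independence of the chosen presentation of $X$ in $\gen{C}$. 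Consequently the argument must be carried out for whichever specific candidate map is supplied by the definition, rather than presupposing any canonical choice; this lemma is in fact what subsequently underwrites both the existence and the uniqueness of $\phi_X$ in the rest of the chapter.
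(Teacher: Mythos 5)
Your proof is correct and mirrors the paper's own argument: base case at $C^n$ (the paper splits this further into $X = C$ and then $X = C^n$, but your "splits as $n$ copies" observation is the same content), then precomposition with the surjection $\pi$ for the quotient case, then postcomposition with the injection $\iota \otimes 1$ for the subobject case. Your closing remark about the deliberate ambiguity of Definition \ref{phiXdefn}, and that uniqueness of $\phi_X$ is established only afterward as a consequence of this lemma, is also exactly the paper's point.
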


\begin{proof}
The claim is obvious if $X = C$, since then the above diagram is
the definition of $\phi$ being an endomorphism of $\catfont{C}$ as
a left $A$-comodule.  If $X = C^n$, consider
\begin{diagram}
C^n & \rTo^{\phi^n} & C^n \\
\dTo^{\Delta^n} & & \dTo_{\Delta^n} \\
(C \otimes C)^n & \rTo^{(1 \otimes \phi)^n} & (C \otimes C)^n \\
\dTo^{\isomorphic} & & \dTo_{\isomorphic} \\
C^n \otimes C & \rTo_{1 \otimes \phi} & C^n \otimes C \\
\end{diagram}
Commutativity of each of the squares is obvious, and commutativity
of the outermost rectangle is what is desired.

Now suppose $C^n \stackrel{\pi}{\longrightarrow} X$ is a quotient
of $C^n$ and that $\phi_X$ satisfies condition 2.~of definition
\ref{phiXdefn}. Consider
\begin{diagram}
\text{ }&&& \rTo^{\pi \otimes 1} &&& \text{ } \\
\uTo &&&&&& \dTo \\
 C^n & \lTo^{\Delta^n} & C^n & \rOnto^\pi & X & \rTo^{\rho_X} & X
\otimes C \\
\dTo^{1 \otimes \phi} & & \dTo^{\phi^n} & & \dTo_{\phi_X} & &
\dTo_{1 \otimes \phi} \\
C^n \otimes C & \lTo_{\Delta^n} & C^n & \rOnto_\pi & X &
\rTo_{\rho_X} & X \otimes C \\
\dTo &&&&&& \uTo \\
\text{ } &&& \rTo_{\pi \otimes 1} &&& \text{ } \\
\end{diagram}
We seek to prove commutativity of the right-most square.
Commutativity of the left most square has been proved, the middle
square commutes by definition, the top and bottom rectangles
commute because $\pi$ is a map of right comodules, and
commutativity of the outermost polygon is obvious.  If one starts
at the second occurrence of $C^n$ at the top and does some diagram
chasing, he eventually obtains  $\pi \circ (\rho_X \circ (1
\otimes \phi)) = \pi \circ (\phi_X \circ \rho_X)$.  But $\pi$ is
surjective, and thus we have $\rho_X \circ (1 \otimes \phi) =
\phi_X \circ \rho_X$, and the claim is proved.

Now suppose $(X,\rho_X)$ is a subobject of the quotient object $Y$
via the map $X \stackrel{\iota}{\longrightarrow} Y$, so that
$\phi_X$ satisfies condition 3.~of definition \ref{phiXdefn}, and
that $\phi_Y$ satisfies condition 2. Consider
\begin{diagram}
\text{ } & & & \lTo_{\iota \otimes 1} & & & \text{ } \\
\dTo &&&&&& \uTo \\
 Y \otimes C & \lTo^{\rho_Y} & Y & \lInto^{\iota} & X &
\rTo^{\rho_X} & X \otimes C \\
\dTo^{1 \otimes \phi} & & \dTo^{\phi_Y} & & \dTo_{\phi_X} & &
\dTo_{1 \otimes \phi} \\
Y \otimes C & \lTo_{\rho_Y} & Y & \lInto_{\iota} & X &
\rTo_{\rho_X} & X \otimes C \\
\uTo &&&&&& \dTo \\
\text{ } &&& \lTo_{\iota \otimes 1} &&& \text{ } \\
\end{diagram}
Commutativity of the right most square is again what we seek to
prove.  We have proved commutativity of the left square, the
middle commutes by definition, the top and bottom rectangles
commute since $\iota$ is a map of comodules, and commutativity of
the outermost rectangle is obvious. Starting at $X$ on the top
line, some diagram chasing shows that $\rho_X \circ (1 \otimes
\phi) \circ (\iota \otimes 1) = \phi_X \circ \rho_X \circ (\iota
\otimes 1)$.  But $\iota \otimes 1$ is injective, whence we have
$\rho_X \circ (1 \otimes \phi) = \phi_X \circ \rho_X$.  The lemma
is proved.
\end{proof}

\begin{prop}
If $(X,\rho_X)$, $(Y,\rho_Y)$ are any objects of $\gen{C}$,
$\psi:X \rightarrow Y$ any morphism in $\catfont{C}_R$, and $\phi$
an element of $\text{End}_{\catfont{C}_L}(C)$, then the following
commutes:
\begin{diagram}
X & \rTo^\psi & Y \\
\dTo^{\phi_X} & & \dTo_{\phi_Y} \\
X & \rTo_\psi & Y \\
\end{diagram}
\end{prop}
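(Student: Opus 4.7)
The plan is to reduce the desired identity $\psi \circ \phi_Y = \phi_X \circ \psi$ to an equation between maps $X \rightarrow Y \otimes C$ by postcomposing with $\rho_Y$. By the counit axiom (diagram \ref{comod2}) applied to the comodule $(Y,\rho_Y)$, the composition of $\rho_Y$ with $1 \otimes \varepsilon$ and the canonical isomorphism $Y \otimes k \isomorphic Y$ is the identity on $Y$; in particular $\rho_Y$ is a split monomorphism, hence injective. So if one can show $\psi \circ \phi_Y \circ \rho_Y = \phi_X \circ \psi \circ \rho_Y$ inside $\text{Hom}(X, Y \otimes C)$, the proposition follows.

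Two ingredients suffice to establish that equality. The first is lemma \ref{FDsubcoalgebrasphiLemma}, which supplies the intertwining $\phi_Z \circ \rho_Z = \rho_Z \circ (1 \otimes \phi)$ for any object $(Z,\rho_Z)$ of $\gen{C}$ and any $\phi \in \text{End}_{\catfont{C}_L}(C)$; I shall use it once at $Z = Y$ and once at $Z = X$. The second is the fact that $\psi$ is a morphism in $\catfont{C}_R$, which reads $\psi \circ \rho_Y = \rho_X \circ (\psi \otimes 1)$.

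Starting from $\psi \circ \phi_Y \circ \rho_Y$, I would first use the lemma at $Y$ to rewrite $\phi_Y \circ \rho_Y$ as $\rho_Y \circ (1 \otimes \phi)$, then push $\psi$ past $\rho_Y$ via the comodule-morphism identity to obtain $\rho_X \circ (\psi \otimes 1) \circ (1 \otimes \phi) = \rho_X \circ (\psi \otimes \phi)$. Since $(\psi \otimes \phi)$ factors equivalently as $(1 \otimes \phi) \circ (\psi \otimes 1)$, this becomes $\rho_X \circ (1 \otimes \phi) \circ (\psi \otimes 1)$, and the lemma applied at $X$ (read in reverse) collapses the leading two factors to $\phi_X \circ \rho_X$, giving $\phi_X \circ \rho_X \circ (\psi \otimes 1) = \phi_X \circ \psi \circ \rho_Y$. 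Injectivity of $\rho_Y$ then yields $\psi \circ \phi_Y = \phi_X \circ \psi$.

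There is no serious obstacle here; the whole argument is a short diagram chase resting on lemma \ref{FDsubcoalgebrasphiLemma} and the definition of a $\catfont{C}_R$-morphism. The one bookkeeping subtlety is that definition \ref{phiXdefn} left $\phi_X$ and $\phi_Y$ ambiguous---only existence, not uniqueness, was supplied---so the argument above must be read as holding for \emph{any} valid choices. As a bonus, once the proposition is proved, the same injectivity-of-$\rho_Z$ trick gives uniqueness of $\phi_Z$: any two choices both intertwine $\rho_Z$ with $1 \otimes \phi$ and hence agree after postcomposition with $\rho_Z$. The payoff is the last inclusion $\text{End}_{\catfont{C}_L}(C) \subseteq \text{End}(\omega_R \res \gen{C})$ needed to complete theorem \ref{adualitytheorem}.
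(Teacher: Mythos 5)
Your argument is correct and follows the same path the paper takes: both proofs reduce the identity to an equation landing in $Y \otimes C$ by postcomposing with the monomorphism $\rho_Y$, then use lemma \ref{FDsubcoalgebrasphiLemma} at $X$ and at $Y$ together with the fact that $\psi$ is a $\catfont{C}_R$-morphism. The paper packages this as a single six-object commutative diagram and you write out the same chase algebraically, but the content is identical, and your closing observation that injectivity of $\rho_Z$ already yields uniqueness of $\phi_Z$ directly from lemma \ref{FDsubcoalgebrasphiLemma} is a small but valid simplification of the uniqueness argument the paper gives afterward.
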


\begin{proof}
Consider
\begin{diagram}
X & \rTo^{\phi_X} & X & \rTo^\psi & Y & \rTo^{\phi_Y} & Y \\
\dTo^{\rho_X} & & \dTo^{\rho_X} & & \dTo_{\rho_Y} & &
\dTo_{\rho_Y} \\
X \otimes C & \rTo_{1 \otimes \phi} & X \otimes C & \rTo_{\psi
\otimes 1} & Y \otimes C & \rTo_{1 \otimes \phi} & Y \otimes C \\
\end{diagram}
Commutativity of the right and left squares follow from lemma
\ref{FDsubcoalgebrasphiLemma} and the middle square commutes
because $\psi$ is a morphism in $\catfont{C}_R$.  Thus this entire
diagram commutes. Looking at the left two-square rectangle we have
\[ \phi_X \circ \psi \circ \rho_Y = \rho_X \circ (\psi \otimes
\phi) \] and at the right we have
\[ \psi \circ \phi_Y \circ \rho_Y = \rho_X \circ (\psi \otimes
\phi) \] Thus $(\phi_X \circ \psi) \circ \rho_Y = (\psi \otimes
\phi_Y) \circ \rho_Y$.  But $\rho_Y$ is injective (as all comodule
maps are), hence $\phi_X \circ \psi = \psi \circ \phi_Y$, and the
proposition is proved.

\end{proof}

All that is left to show is that the map $\phi_X$, for $\phi \in
\text{End}_{\catfont{C}_L}(C)$ and $X \in \gen{C}$, actually
exists and is unique.

Uniqueness is immediate: if $\phi_X$ and $\phi_X^\prime$ are any
two maps satisfying definition \ref{phiXdefn}, then they both
satisfy the hypotheses of the previous proposition.  As the
identity map $1:X \rightarrow X$ is a morphism in $\catfont{C}_R$,
the diagram
\begin{diagram}
X & \rTo^{1} & X \\
\dTo^{\phi_X} & & \dTo_{\phi_X^\prime} \\
X & \rTo_{1} & X \\
\end{diagram}
commutes, showing $\phi_X$ and $\phi_X^\prime$ to be equal.

For existence, suppose first that $X$ is a quotient of $C^n$ under
$\pi$. Then $\phi_X$ certainly exists; if $e_i$ is a basis for
$X$, pull each $e_i$ back through $\pi^{-1}$, down through
$\phi^n$, and back through $\pi$.  But we know that $\phi_X$ is
unique, and the only reason that it \emph{would} be unique is
because $\phi^n$ stabilizes the kernel of $\pi$; otherwise there
would be many $\phi_X$ satisfying condition 2.~of definition
\ref{phiXdefn}.  This observation applies to any surjective map on
$C^n$.  As every subobject of $C^n$ is necessarily the kernel of
some surjective map we have proved

\begin{prop}
If $\phi \in \text{End}_{\catfont{C}_L}(C)$, $\phi^n$ stabilizes
all subobjects of $C^n$.
\end{prop}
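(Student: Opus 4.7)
The plan is to deduce the proposition directly from Lemma \ref{FDsubcoalgebrasphiLemma}, specialized to $X = C^n$, combined with the counit axiom of the coalgebra $C$. First I would note that condition 1 of Definition \ref{phiXdefn} supplies $\phi_{C^n} = \phi^n$ without ambiguity, so Lemma \ref{FDsubcoalgebrasphiLemma} applies cleanly to $C^n$ and yields the intertwining identity
\[
\rho_{C^n}(\phi^n(x)) = (1_{C^n} \otimes \phi)(\rho_{C^n}(x)) \quad \text{for every } x \in C^n,
\]
where $\rho_{C^n} = \Delta^n$ is the right $A$-comodule structure on $C^n$, which by Lemma \ref{subalgebrasOfHopfAlgebrasLemma} factors through $C^n \otimes C$.

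Now let $X \subseteq C^n$ be any subobject in $\catfont{C}_R$. The subcomodule hypothesis says $\rho_{C^n}$ restricts to a map $X \to X \otimes C$. I would then fix $x \in X$ and chase through the intertwining identity: since $\rho_{C^n}(x)$ lies in $X \otimes C$ and $1 \otimes \phi$ acts as the identity on the first tensor factor, the right-hand side lies in $X \otimes C$, and hence so does $\rho_{C^n}(\phi^n(x))$. Applying $1_{C^n} \otimes \varepsilon$ and invoking the counit axiom $(1 \otimes \varepsilon) \circ \rho_{C^n} = 1_{C^n}$ (under the canonical identification $C^n \otimes k \isomorphic C^n$) recovers $\phi^n(x)$ on the left, while on the right the image of $X \otimes C$ under $1 \otimes \varepsilon$ lies in $X \otimes k \isomorphic X$. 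Thus $\phi^n(x) \in X$, proving the claim.

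The one point I would take care to verify is that Lemma \ref{FDsubcoalgebrasphiLemma} is genuinely available for $X = C^n$ without any circular dependence on the stability being proved. This should be straightforward: the lemma's proof in the case $X = C^n$ rests only on the defining identity $\phi_{C^n} = \phi^n$ together with the case $X = C$, which in turn uses nothing more than the hypothesis that $\phi$ is an endomorphism of $C$ as a left $A$-comodule. Neither step invokes the stability of subobjects, so no circularity arises. This is the main reason I would take this route in preference to the approach hinted at in the surrounding discussion, which would define $\phi_X$ on a quotient $X$ of $C^n$ by lifting a basis and pushing through the quotient map; that route requires a separate well-definedness argument that is essentially equivalent to the stability statement itself, whereas the counit trick sidesteps the issue entirely.
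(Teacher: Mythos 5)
Your proof is correct, and it is a genuinely different and more direct argument than the paper's. The paper obtains stability as a byproduct of the uniqueness of $\phi_X$: it constructs a candidate $\phi_X$ on a quotient $X$ of $C^n$ by lifting a basis through the surjection $\pi$, observes that this construction is well-defined (hence matches the unique $\phi_X$) precisely when $\phi^n$ stabilizes $\ker\pi$, and then identifies subobjects of $C^n$ with such kernels. As you point out, the well-definedness in question is essentially the stability statement being proved, so the paper's reasoning leans heavily---and somewhat loosely---on the preceding uniqueness proposition. Your argument avoids this: applying Lemma \ref{FDsubcoalgebrasphiLemma} at $X = C^n$, where $\phi_{C^n} = \phi^n$ unambiguously by condition 1 of Definition \ref{phiXdefn}, yields the intertwining identity $\rho_{C^n}(\phi^n(x)) = (1 \otimes \phi)(\rho_{C^n}(x))$. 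For $x$ in a subcomodule $X \subseteq C^n$, the right side lies in $X \otimes C$ (using Lemma \ref{subalgebrasOfHopfAlgebrasLemma}), hence so does the left; applying $1 \otimes \varepsilon$ and the counit axiom then recovers $\phi^n(x) \in X$. Your circularity check is also correct: the $X = C^n$ case of Lemma \ref{FDsubcoalgebrasphiLemma} uses nothing beyond $\phi$ being a morphism of $C$ as a left comodule. The counit trick gives a shorter, self-contained proof, whereas the paper's route requires the reader to unwind a nontrivial appeal to uniqueness in order to see exactly how well-definedness of the basis-lifted map forces $\phi^n(\ker\pi) \subseteq \ker\pi$.
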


Now let $Y$ be a subobject of the quotient object $X$, with $X
\stackrel{\pi}{\longrightarrow} X/Y$ the canonical projection.
Then there exists a map $\phi_{X/Y}$ making
\begin{diagram}
X & \rTo^{\pi} & X/Y \\
\dTo^{\phi_X} & & \dTo_{\phi_{X/Y}} \\
X & \rTo^{\pi} & X/Y \\
\end{diagram}
commute.  But this $\phi_{X/Y}$ also makes
\begin{diagram}
C^n & \rTo^{\pi^\prime} & X & \rTo^{\pi} & X/Y \\
\dTo^{\phi^n} & & \dTo^{\phi_X} & & \dTo_{\phi_{X/Y}} \\
C^n & \rTo^{\pi^\prime} & X & \rTo^{\pi} & X/Y \\
\end{diagram}
commute, in particular the outermost rectangle.  Thus $\phi_{X/Y}$
satisfies condition 2.~of definition \ref{phiXdefn}, and is hence
unique; but once again, the only reason this would be true is if
$\phi_X$ stabilized the kernel of $\pi$, namely $Y$.  We have
proved

\begin{prop}
If $X$ is a quotient object of $C^n$ and $\phi_X$ satisfies
condition 2.~of definition \ref{phiXdefn}, then $\phi_X$
stabilizes all subobjects of $X$.
\end{prop}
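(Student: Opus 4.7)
The plan is to mimic the proof just given for the previous proposition, replacing $C^n$ by $X$ and $\phi^n$ by $\phi_X$. The key idea is to realize any subobject $Y \subseteq X$ as the kernel of a canonical quotient map out of $X$, and then use the existence result for $\phi_Z$ on quotients of $C^n$ to produce a companion endomorphism that makes $\phi_X$ descend through this quotient.

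More precisely, let $\pi: C^n \twoheadrightarrow X$ be the surjection witnessing condition 2 for $\phi_X$, so $\pi \circ \phi_X = \phi^n \circ \pi$ in the paper's composition convention. Given an arbitrary subobject $Y \subseteq X$, let $\rho: X \twoheadrightarrow X/Y$ be the cokernel of the inclusion. Then the composite $\pi \circ \rho: C^n \twoheadrightarrow X/Y$ exhibits $X/Y$ as a quotient of $C^n$, so the previously established existence of $\phi_Z$ for quotients of $C^n$ yields a map $\phi_{X/Y}$ satisfying $(\pi \circ \rho) \circ \phi_{X/Y} = \phi^n \circ (\pi \circ \rho)$.

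From here I expect the proof to amount to a straightforward two-square diagram chase of the form
\begin{diagram}
C^n & \rTo^{\pi} & X & \rTo^{\rho} & X/Y \\
\dTo^{\phi^n} & & \dTo^{\phi_X} & & \dTo_{\phi_{X/Y}} \\
C^n & \rTo_{\pi} & X & \rTo_{\rho} & X/Y \\
\end{diagram}
Commutativity of the left square comes from the hypothesis on $\phi_X$, and commutativity of the outer rectangle from condition 2 on $\phi_{X/Y}$. Cancelling the surjection $\pi$ on the left then forces $\rho \circ \phi_X = \phi_{X/Y} \circ \rho$, i.e.\ the right square commutes. Evaluating this on any $y \in Y = \ker(\rho)$ gives $\rho(\phi_X(y)) = \phi_{X/Y}(0) = 0$, so $\phi_X(y) \in Y$, which is the desired stability.

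The main obstacle I anticipate is purely bookkeeping: ensuring that $\phi_{X/Y}$ may legitimately be invoked from the existence clause proved in the preceding paragraph (the one guaranteeing existence of $\phi_Z$ for quotients of $C^n$), and that this invocation does not implicitly rely on the very stability statement we are trying to prove. Because the existence of $\phi_Z$ for quotients of $C^n$ was established independently of any stability claim about $\phi_X$ on $X$, the argument is not circular, and the diagram chase goes through cleanly.
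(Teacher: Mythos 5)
Your argument is correct, but it runs the diagram in the opposite direction from the paper's own proof. The paper begins by asserting that a map $\phi_{X/Y}$ exists making the \emph{right-hand} square (against $\phi_X$ and the projection $\rho:X \rightarrow X/Y$) commute, observes that this $\phi_{X/Y}$ consequently also makes the outer rectangle commute and hence satisfies condition 2 of definition \ref{phiXdefn}, and then closes with the same ``uniqueness forces stability'' heuristic used for the preceding proposition. You start from the other end: since $\text{ker}(\pi \circ \rho)$ is a subobject of $C^n$, the proposition immediately before this one (that $\phi^n$ stabilizes every subobject of $C^n$) legitimately delivers a $\phi_{X/Y}$ satisfying condition 2 for the composite surjection $C^n \rightarrow X/Y$ outright; you then cancel the epimorphism $\pi$ against the already-commuting left square to force the right square, and read off $\phi_X(Y) \subseteq Y$ from the fact that the right square carries $\text{ker}(\rho) = Y$ into $\text{ker}(\rho)$. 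Your route is the tighter one: it replaces the paper's somewhat opaque ``the only reason this would be true'' step with an explicit epi-cancellation chase, and it makes visible exactly where the first proposition is invoked and why no circularity arises. The one small thing to tidy is notational: in your closing step you slip from the paper's left-to-right composition convention back into the usual one, so in the paper's conventions the commuting right square should read $\rho \circ \phi_{X/Y} = \phi_X \circ \rho$, which, evaluated at $y \in Y$, gives $\phi_{X/Y}(\rho(y)) = \rho(\phi_X(y))$ and hence $\phi_X(y) \in \text{ker}(\rho) = Y$.
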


Finally, if $Y$ is a subobject of the quotient object $X$, then
$\phi_X$ stabilizes $Y$, whence there is a map $\phi_Y$ making
\begin{diagram}
Y & \rInto^\iota & X \\
\dTo^{\phi_Y} & & \dTo_{\phi_X} \\
Y & \rInto^\iota & X \\
\end{diagram}
In all cases then $\phi_X$ exists and is unique.  We have proved

\begin{thm}
For a subcoalgebra $C$ of $A$, $\text{End}_{\catfont{C}_L}(C) =
\text{End}(\omega_R \vert \gen{C})$.
\end{thm}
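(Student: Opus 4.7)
The plan is to prove this as the culmination of the preceding sequence of lemmas and propositions, establishing both inclusions separately. For the inclusion $\text{End}(\omega_R \res \gen{C}) \subset \text{End}_{\catfont{C}_L}(C)$, I would observe that this is essentially already done: if $\phi \in \text{End}(\omega_R \res \gen{C})$, then by definition its $C$-component $\phi_C$ commutes with every $\catfont{C}_R$-endomorphism of $C$, i.e.\ $\phi_C$ lies in the centralizer of $\text{End}_{\catfont{C}_R}(C)$. But this centralizer was just shown to equal $\text{End}_{\catfont{C}_L}(C)$, so $\phi_C$ is a left-comodule endomorphism of $C$, giving the first inclusion via the identification of $\text{End}(\omega_R \res \gen{C})$ with its image in $\text{End}_k(\omega_R(C))$.

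For the harder inclusion $\text{End}_{\catfont{C}_L}(C) \subset \text{End}(\omega_R \res \gen{C})$, the plan is to take $\phi \in \text{End}_{\catfont{C}_L}(C)$ and assemble from it a family $\{\phi_X : X \in \gen{C}\}$ of linear maps satisfying the naturality condition of definition \ref{endomegadefn}. The construction of $\phi_X$ proceeds along the three cases of definition \ref{phiXdefn}: set $\phi_{C^n} = \phi^n$; for a quotient $C^n \stackrel{\pi}{\twoheadrightarrow} X$, descend $\phi^n$ through $\pi$; and for a subobject $X \hookrightarrow Y$ of such a quotient, restrict.

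The main obstacle, and the technical heart of the argument, will be to verify that this recipe actually produces well-defined maps, i.e.\ that $\phi^n$ stabilizes every subobject of $C^n$. I would handle this by appealing to the key compatibility proved in lemma \ref{FDsubcoalgebrasphiLemma}: namely that $\rho_X \circ (1 \otimes \phi) = \phi_X \circ \rho_X$ for any $\phi_X$ satisfying one of the clauses of definition \ref{phiXdefn} and any $X \in \gen{C}$. From this, uniqueness of the descended map is automatic (so the quotient construction forces $\phi^n$ to preserve the kernel of $\pi$), and since every subobject of $C^n$ arises as such a kernel in the abelian category $\catfont{C}_R$, we conclude that $\phi^n$ stabilizes all subobjects. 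Then $\phi_X$ is well-defined in the third case of the definition as well.

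Finally, to see that the assembled family is a natural transformation, I would invoke the proposition already proved: for any $\catfont{C}_R$-morphism $\psi : X \to Y$ between objects of $\gen{C}$, the square with $\phi_X$ and $\phi_Y$ on the vertical sides commutes. Combining well-definedness, uniqueness, and naturality, the family $\{\phi_X\}$ is a bona fide element of $\text{End}(\omega_R \res \gen{C})$ whose value at $C$ recovers $\phi$, closing the reverse inclusion. Together with the first half of theorem \ref{adualitytheorem} and its left/right analogue via the map $\Theta$ (see page \pageref{LambdaDefFDcoalgebras}), this also yields the canonical identification with $C^*$, completing the proof of theorem \ref{adualitytheorem}.
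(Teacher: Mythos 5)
Your proposal takes essentially the same route as the paper: the forward inclusion via the centralizer identity $\text{End}_{\catfont{C}_L}(C)=$ centralizer of $\text{End}_{\catfont{C}_R}(C)$, and the reverse inclusion by assembling the family $\{\phi_X\}$ using definition \ref{phiXdefn}, lemma \ref{FDsubcoalgebrasphiLemma}, and the naturality proposition that precede the theorem, culminating in the uniqueness/existence discussion. The one place where your sketch (like the paper's own prose) is worth tightening is the parenthetical ``so the quotient construction forces $\phi^n$ to preserve the kernel of $\pi$'': uniqueness of maps satisfying definition \ref{phiXdefn} does not by itself force $\phi^n$ to stabilize $\ker\pi$ (if $\phi^n$ failed to stabilize $\ker\pi$, no descended map would satisfy condition 2 and uniqueness would be vacuous). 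The clean way to see stabilization is to apply lemma \ref{FDsubcoalgebrasphiLemma} directly to $X=C^n$: for a subobject $Y \subset C^n$ one has $\Delta^n(Y)\subset Y\otimes C$ by lemma \ref{subalgebrasOfHopfAlgebrasLemma}, so $\Delta^n(\phi^n(y))=(1\otimes\phi)(\Delta^n(y))\in Y\otimes C$, and applying $1\otimes\varepsilon$ gives $\phi^n(y)\in Y$. That said, this is a refinement of the paper's own argument, not a divergence from it; your decomposition, choice of lemmas, and overall structure all match.
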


\section{Corollaries}

Here we record some results based on the above which will be used
later.  We will prove the results for the category
$\catfont{C}_R$; we leave it to the reader to formulate the
obvious analogues for $\catfont{C}_L$.

\begin{prop}
\label{surjectivetransitionmapprop} Let $C$ and $D$ be finite
dimensional subcoalgebras of the Hopf algebra $A$ with $C \subset
D$.  Then the \index{transition mapping} transition mapping
$\text{End}(\omega_R \res \gen{D}) \rightarrow \text{End}(\omega_R
\res \gen{C})$ is dual to the inclusion map $C \rightarrow D$ via
the isomorphism $\text{End}(\omega_R \res \gen{C}) =
\text{End}_{\catfont{C}_L}(C) \isomorphic C^*$.  In particular,
this transition mapping is surjective.
\end{prop}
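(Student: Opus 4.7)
The plan is to trace the transition mapping $T_{C,D}$ through the identifications $\text{End}(\omega_R \res \gen{C}) = \text{End}_{\catfont{C}_L}(C) \isomorphic C^*$ (and similarly for $D$) established earlier in the chapter, and observe that it becomes the restriction map $D^* \to C^*$. By definition, $T_{C,D}$ sends $\lambda \in \text{End}(\omega_R \res \gen{D})$ to $\lambda_C$ (which makes sense since $C \subset D$ forces $C \in \gen{D}$, hence $\gen{C} \subset \gen{D}$). Under the main theorem of this chapter, we identify $\lambda_C$ with an endomorphism of $C$ as a left $A$-comodule, and via $\Lambda$ (see page \pageref{LambdaDefFDcoalgebras}) with the functional $\varepsilon_C \circ \lambda_C \in C^*$; and similarly $\lambda_D$ with $\varepsilon_D \circ \lambda_D \in D^*$.

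The key point is then just naturality. The inclusion $\iota : C \hookrightarrow D$ is a morphism in $\gen{D}$, so by definition \ref{endomegadefn} applied to $\lambda \in \text{End}(\omega_R \res \gen{D})$ we have the commutative square
\begin{diagram}
C & \rInto^{\iota} & D \\
\dTo^{\lambda_C} & & \dTo_{\lambda_D} \\
C & \rInto_{\iota} & D \\
\end{diagram}
Post-composing with $\varepsilon_D$ and using that $\varepsilon_D \res C = \varepsilon_C$ (since $C$ is a subcoalgebra), one obtains
\[
\varepsilon_D \circ \lambda_D \circ \iota \;=\; \varepsilon_D \circ \iota \circ \lambda_C \;=\; \varepsilon_C \circ \lambda_C,
\]
which is exactly the assertion that $\Lambda_C(\lambda_C) = \iota^{*}(\Lambda_D(\lambda_D))$. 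Hence under these isomorphisms $T_{C,D}$ is precisely the transpose of $\iota$.

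For the surjectivity conclusion, I would invoke the standard fact that for a subspace $\iota : C \hookrightarrow D$ of a finite dimensional vector space, the dual map $\iota^* : D^* \to C^*$ is surjective: any functional on $C$ extends (e.g.\ by splitting $D = C \oplus C'$ as vector spaces) to one on $D$. Since $T_{C,D}$ has just been identified with $\iota^*$, it is surjective as well. The only step requiring any care is verifying the commuting square above lies within $\gen{D}$ so that naturality of $\lambda$ applies, but this is immediate since both $C$ and $D$ are objects of $\gen{D}$ and $\iota$ is a comodule morphism between them.
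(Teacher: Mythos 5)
Your proof is correct, and it is cleaner than the one in the text. Both proofs boil down to showing that, under the isomorphism $\Lambda$ with $C^*$ (respectively $D^*$), the transition mapping becomes $\iota^*$, and then deducing surjectivity from the elementary fact that the dual of an injection of finite dimensional vector spaces is surjective. The difference lies in how that identification is established. The paper introduces the map $T = \Theta(\iota^*(\Lambda(\slot)))$ first as an abstract candidate, then verifies via a fairly elaborate coalgebra-identity diagram chase (using the left-comodule characterization $\text{End}(\omega_R \res \gen{D}) = \text{End}_{\catfont{C}_L}(D)$ and the identity $\varepsilon \circ (1 \otimes \varepsilon) \circ \Delta = 1$) that $T(\phi)$ satisfies the defining commutativity property $\iota \circ T(\phi) = \phi \circ \iota$, hence equals $\phi_C$. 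You instead start from the defining commutative square of the transition mapping --- which is literally just definition \ref{endomegadefn} applied to the morphism $\iota \in \gen{D}$, once one notes that $\iota$ is a morphism of right $A$-comodules because $C$ is a subcoalgebra of $D$ --- and simply post-compose with $\varepsilon_D$, using $\varepsilon_D \res C = \varepsilon_C$. This yields $\Lambda_C(\lambda_C) = \iota^*(\Lambda_D(\lambda_D))$ in one line, sidestepping the paper's diagram chase entirely. What your version buys is economy: it exploits the naturality that is already baked into the definition of an endomorphism of the fibre functor, rather than re-deriving the consequence of that naturality through the left-comodule picture. What the paper's version arguably buys is a concrete illustration of how the $\Lambda/\Theta$ machinery interacts with $\Delta$ and $\varepsilon$, which is useful motivation for the subsequent theorem \ref{subcoalgebratheorem}. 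Both are valid.
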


\begin{proof}
Let $C \stackrel{\iota}{\longrightarrow} D$ be the inclusion
mapping.  Recall from page \pageref{LambdaDefFDcoalgebras} that we
have an isomorphism $\Lambda:\text{End}(\omega_R \res \gen{D})
\longrightarrow D^*$ given by, for $\phi:D \rightarrow D$,
$\Lambda(\phi)$ is the composition
\[ D \stackrel{\phi}{\longrightarrow} D
\stackrel{\varepsilon}{\longrightarrow} k \] and that $\Lambda$
has an inverse $\Theta$ which sends a linear functional $\alpha$
to
\[ D \stackrel{\Delta}{\longrightarrow} D \otimes D \stackrel{1
\otimes \alpha}{\vlongrightarrow} D \otimes k \isomorphic D \] As
$\iota$ is a map of coalgebras $\iota^*$ of algebras, and we have
a commutative diagram
\begin{diagram}
D^* & \rTo^{\iota^*} & C^* \\
\uTo^\Lambda & & \dTo_{\Theta} \\
\text{End}(\omega_R \res \gen{D}) & \rTo_T & \text{End}(\omega_R
\res
\gen{C}) \\
\end{diagram}
where $T$ is some as of yet unidentified algebra map.  Note that
since $\iota$ is injective, $\iota^*$ is surjective, and hence so
is $T$.  We claim that $T$ is the usual transition mapping.

Let $\phi \in \text{End}(\omega_R \res \gen{D})$, and consider
\begin{diagram}
C & \rTo^\Delta & C \otimes C & \rTo^{1 \otimes \iota} & C \otimes
D & \rTo^{1 \otimes \phi} & C \otimes D & \rTo^{1 \otimes
\varepsilon} & C \otimes k & \rTo^{\isomorphic} & C \\
\dTo^\iota & & & \rdTo_{\iota \otimes \iota} & \dTo_{\iota \otimes
1} & & \dTo_{\iota \otimes 1} & & \dTo_{\iota \otimes 1} & &
\dTo_\iota \\
D & & \rTo_\Delta &  & D \otimes D & \rTo_{1 \otimes \phi} & D
\otimes D & \rTo_{1 \otimes \varepsilon} & D \otimes k &
\rTo_{\isomorphic} & D \\
\dTo^\phi & & & & & &  \uTo_\Delta & & & \ruTo(4,2)^1 &  \\
D & & & \rEq & & &  D & &  \\
\end{diagram}
Commutativity of the top left most simple polygon is the assertion
that $\iota$ is a map of coalgebras, and commutativity of all the
other simple polygons in the top row is trivial.  The first
polygon in the bottom row is the assertion that $\phi$ is an
endomorphism on $D$ as a left comodule (which it is, by the
equality $\text{End}(\omega_R \res \gen{D}) =
\text{End}_{\catfont{C}_L}(D)$) and the last polygon is a
coalgebra identity; thus this entire diagram commutes. Now the
composition comprising the entire top line is exactly the map
$T(\phi) \stackrel{\text{def}}{=} \Theta(\iota^*(\Lambda(\phi)))$,
so the outermost polygon of this diagram is
\begin{diagram}
C & \rTo^\iota & D \\
\dTo^{T(\phi)} & & \dTo_\phi \\
C & \rTo_\iota & D \\
\end{diagram}
But, the image of $\phi$ under the transition mapping
$\text{End}(\omega_R \res \gen{D}) \rightarrow \text{End}(\omega_R
\res \gen{C})$ is \emph{by definition} the unique linear map
$\phi_C : C \rightarrow C$ that makes this diagram commute.  Hence
$\phi_C = T(\phi)$, and the proposition is proved.
\end{proof}

\begin{thm}
\label{subcoalgebratheorem} Let $D$ be a finite dimensional
subcoalgebra of $A$, $\text{End}(\omega_R \res \gen{D})
\stackrel{\pi}{\longrightarrow} L$ any surjective mapping of
algebras.  Then there exists a subcoalgebra $C$ of $D$ such that
$L$ is isomorphic to $\text{End}(\omega_R \res \gen{C})$ and
\begin{diagram}
\text{End}(\omega_R \res \gen{D}) & \rTo^\pi & L \\
& \rdTo_{T} & \dTo_{\isomorphic} \\
& & \text{End}(\omega_R \res \gen{C}) \\
\end{diagram}
commutes, where $T$ is the transition mapping.
\end{thm}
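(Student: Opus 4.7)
The plan is to exploit the duality between finite dimensional algebras and coalgebras, combined with the previous proposition identifying the transition mapping as the dual of inclusion.

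First, I would use the isomorphism $\Lambda:\text{End}(\omega_R \res \gen{D}) \isomorphic D^*$ established earlier to translate $\pi$ into a surjective algebra map $\bar{\pi}:D^* \twoheadrightarrow L$. Since $D$ is finite dimensional, so is $D^*$, and hence $L$ is a finite dimensional algebra. Now I dualize: set $C \stackrel{\text{def}}{=} \text{image}(\bar{\pi}^*)$, where $\bar{\pi}^*:L^* \rightarrow D^{**}$, and identify $D^{**}$ with $D$ via the canonical isomorphism. Because $D^*$ is a finite dimensional algebra, its linear dual $D^{**} \isomorphic D$ inherits the coalgebra structure we started with, and $L$ being a finite dimensional quotient algebra of $D^*$ means $\bar{\pi}^*$ is a coalgebra map $L^* \hookrightarrow D$. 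Its image $C$ is therefore a subcoalgebra of $D$, and $\bar{\pi}^*$ restricts to a coalgebra isomorphism $L^* \isoarrow C$.

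Next I invoke proposition \ref{surjectivetransitionmapprop}: the transition mapping $T:\text{End}(\omega_R \res \gen{D}) \rightarrow \text{End}(\omega_R \res \gen{C})$ corresponds, under the isomorphisms with $D^*$ and $C^*$ respectively, to the map $\iota^*:D^* \twoheadrightarrow C^*$ dual to the inclusion $\iota:C \hookrightarrow D$. Dualizing the coalgebra isomorphism $L^* \isoarrow C$ yields an algebra isomorphism $C^* \isoarrow L^{**} \isomorphic L$ (the last identification being canonical since $L$ is finite dimensional). Composing, I obtain the required algebra isomorphism $\text{End}(\omega_R \res \gen{C}) \isomorphic C^* \isomorphic L$.

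The final step is the commutativity of the triangle. Writing everything in dual terms, the triangle to verify becomes
\begin{diagram}
D^* & \rTo^{\bar{\pi}} & L \\
 & \rdTo_{\iota^*} & \dTo_{\isomorphic} \\
 & & C^* \\
\end{diagram}
and this is just the statement that the canonical isomorphism $L \isomorphic C^*$ is, by construction, the transpose of the inclusion $\bar{\pi}^*:L^* \hookrightarrow D$ viewed as an isomorphism onto $C$. Unwinding the canonical identifications $D \isomorphic D^{**}$ and $L \isomorphic L^{**}$ reduces this to the standard fact that double-dualization sends a surjection of finite dimensional vector spaces back to itself.

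The only place I expect any friction is in book-keeping the several canonical identifications in the last step; conceptually, once $C$ is defined as the image of $\bar{\pi}^*$, everything is forced by proposition \ref{surjectivetransitionmapprop} and finite dimensional duality. There is no difficulty in verifying that $C$ is a genuine subcoalgebra, since it arises as the image of an honest coalgebra map out of $L^*$.
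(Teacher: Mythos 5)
Your proposal is correct and follows essentially the same route as the paper: both transport $\pi$ through $\Lambda$ to a surjection $D^* \twoheadrightarrow L$, define $C$ by dualizing this surjection to an injection of $L^*$ into $D$ (you take $C$ as the image of $\bar{\pi}^*$; the paper observes $L$ is finite dimensional, hence $L = C^*$ for a unique coalgebra $C$, and that $C \hookrightarrow D$ via $(\pi')^*$ — these are the same thing), then invoke proposition \ref{surjectivetransitionmapprop} to identify $T$ with $\iota^*$ and close the triangle. The only cosmetic difference is that you finish with the abstract observation that double-dualization recovers the original surjection, whereas the paper traces through a composite diagram using the isomorphism $\Theta$ explicitly; both yield the same isomorphism $L \isomorphic \text{End}(\omega_R \res \gen{C})$.
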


\begin{proof}
As $\text{End}(\omega_R \res \gen{D})
\stackrel{\Lambda}{\longrightarrow} D^*$ is an isomorphism of
algebras, any quotient of the former gives rise to a quotient of
the latter via
\begin{diagram}
D^* & \rTo^{\pi^\prime} & C^* \\
\uTo^\Lambda & & \dTo^\isomorphic \\
\text{End}(\omega_R \res \gen{D}) & \rTo_{\pi} & L \\
\end{diagram}
Here we have skipped a step and denoted this algebra by $C^*$,
since, being finite dimensional, it is necessarily the dual
algebra to a unique coalgebra $C$.  Note that $C$ is a
subcoalgebra of $D$ via the map $(\pi^\prime)^*$ and the natural
isomorphism $C^{*\circ} \isomorphic C$.  This means that we can
take $\pi^\prime$ to be dual to the inclusion mapping $C
\stackrel{\iota}{\longrightarrow} D$; let us then replace
$\pi^\prime$ with $\iota^*$.

We need to verify that $L$ can be identified with
$\text{End}(\omega_R \res \gen{C})$.  Consider
\begin{diagram}
D^* & \rTo^{\iota^*} & C^* \\
\uTo^\Lambda & & \dTo^\isomorphic & \rdTo(2,4)^\Theta \\
\text{End}(\omega_R \res \gen{D}) & \rTo_{\pi} & L \\
& \rdTo(4,2)_T & & \rdDotsto^\Sigma \\
&     &                  &  & \text{End}(\omega_R \res \gen{C}) \\
\end{diagram}
Commutativity of the outermost polygon was proved in proposition
\ref{surjectivetransitionmapprop}, and commutativity of the square
is given.  Define the map $\Sigma$ to pass back up through the
isomorphism with $C^*$ and then down through $\Theta$, i.e.
$\Sigma = \isomorphic^{-1} \circ \Theta$. This $\Sigma$ is the
isomorphism we seek; the theorem is proved.
\end{proof}

There is an obvious analogue to this theorem as regards
subalgebras of $\text{End}(\omega_R \res \gen{D})$ as opposed to
quotients, which we will state but not prove.

\begin{thm}
Let $D$ be a finite dimensional subcoalgebra of $A$, and $L
\stackrel{\iota}{\longrightarrow} \text{End}(\omega_R \res
\gen{D}$ any injective mapping of algebras.  Then there exists a
quotient coalgebra $C$ of $D$ such that $L$ is isomorphic to
$\text{End}(\omega_R \res \gen{C})$ and
\begin{diagram}
L & \rTo^\iota & \text{End}(\omega_R \res \gen{D}) \\
& \rdTo_{\isomorphic} & \dTo_{T} \\
& & \text{End}(\omega_R \res \gen{C}) \\
\end{diagram}
commutes, where $T$ is the transition mapping.
\end{thm}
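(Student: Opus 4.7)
The plan is to mirror the proof of Theorem \ref{subcoalgebratheorem} but dualized. In that theorem, quotients of $\text{End}(\omega_R \res \gen{D}) \isomorphic D^*$ corresponded to subcoalgebras of $D$ via the standard finite-dimensional duality between quotient algebras and subcoalgebras; here, subalgebras of $D^*$ should correspond analogously to quotient coalgebras of $D$, via the equally standard bijection between subalgebras of $D^*$ and coideals of $D$.

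I would first apply the algebra isomorphism $\Lambda: \text{End}(\omega_R \res \gen{D}) \isomorphic D^*$ supplied by Theorem \ref{adualitytheorem} to transport $\iota$ into an injection of algebras $L \hookrightarrow D^*$. Since $D$ is finite dimensional, so is $L$, and I would set $C \stackrel{\text{def}}{=} L^\circ$, which (in finite dimensions) is just the linear dual equipped with its dual coalgebra structure, so that $L \isomorphic C^*$. The inclusion $L \hookrightarrow D^*$ is an injective map of finite-dimensional algebras and therefore dualizes to a surjection of coalgebras $\pi: D \isomorphic (D^*)^\circ \twoheadrightarrow L^\circ = C$, exhibiting $C$ as a quotient coalgebra of $D$. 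Applying Theorem \ref{adualitytheorem} now to $C$ itself furnishes an isomorphism $C^* \isomorphic \text{End}(\omega_R \res \gen{C})$, and composing gives the desired $L \isomorphic \text{End}(\omega_R \res \gen{C})$.

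What remains is the commutativity of the triangle, which requires the quotient-coalgebra analog of Proposition \ref{surjectivetransitionmapprop}. The key computation, which I expect to be the main obstacle, is this: given $\alpha \in L \subset D^*$, the endomorphism $\Theta(\alpha) \in \text{End}_{\catfont{C}_L}(D)$ (defined as $d \mapsto \sum \alpha(d_{(2)}) d_{(1)}$ in Sweedler notation) stabilizes the coideal $I = \ker \pi$. Indeed, for $d \in I$ one has $\Delta(d) \in I \otimes D + D \otimes I$ since $I$ is a coideal, and since $\alpha$ vanishes on $I$ (being in $\pi^*(C^*)$), only the terms with $d_{(1)} \in I$ contribute to $\Theta(\alpha)(d)$, placing it back in $I$. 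Thus $\Theta(\alpha)$ descends through $\pi$ to an endomorphism of $C$, and a brief computation with counits, using that $\varepsilon_C \circ \pi = \varepsilon_D$, identifies this descent as $\Theta_C(\beta)$ where $\beta \in C^*$ satisfies $\pi^*(\beta) = \alpha$. Under the identifications, this is precisely the assertion that $T \circ \iota$ agrees with the isomorphism $L \isomorphic \text{End}(\omega_R \res \gen{C})$.

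The subtlety to watch out for is that $T$ in the quotient case behaves quite differently from the subcoalgebra case: it cannot be described as a globally defined dual map on all of $D^*$ (since $\pi^*$ points the wrong way, into $D^*$ rather than out of it), and indeed there exist $\phi \in \text{End}(\omega_R \res \gen{D})$ whose corresponding functional lies outside $L$ and which fail to descend through $\pi$. What rescues the triangle is the observation above that the $\phi$ in the image of $\iota$ \emph{do} descend, and on this image $T$ acts as the left inverse of $\pi^*$, which is exactly the isomorphism $L \isomorphic C^*$ needed.
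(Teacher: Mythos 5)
The paper does not actually prove this theorem---immediately before it, it states ``There is an obvious analogue to this theorem as regards subalgebras \dots which we will state but not prove''---so there is no paper proof to compare against, and the proposal must be judged on its own.

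Your construction of $C$ is the natural one, but it leaves an untreated gap that is serious enough to sink the argument as written. You set $C = L^{\circ}$ and take $\pi \colon D \twoheadrightarrow C$ with $\ker\pi = I = L^{\perp}$, then invoke Theorem \ref{adualitytheorem} to conclude $C^* \isomorphic \text{End}(\omega_R\res\gen{C})$, and appeal to the transition mapping $T_{C,D}$ for the triangle. Both steps silently require $C$ to be an object of $\gen{D} \subset \text{Comod}_A$: Theorem \ref{adualitytheorem} is stated for $C$ a \emph{subcoalgebra} of $A$, and the transition map $T_{C,D}\colon \text{End}(\omega_R\res\gen{D}) \to \text{End}(\omega_R\res\gen{C})$ exists only when $C \in \gen{D}$, in which case it is a total map $\lambda \mapsto \lambda_C$. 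But $I = L^{\perp}$ is a coideal of $D$ precisely because $L$ is a subalgebra of $D^*$; being a coideal gives $\Delta(I) \subset I\otimes D + D\otimes I$, which is strictly weaker than the right-subcomodule condition $\Delta(I) \subset I\otimes D$. Your own stabilization argument makes this visible: you verify that $\Theta(\alpha)(I)\subset I$ for $\alpha \in L$, but for $\alpha \in D^*\setminus L$ this fails in general (indeed, $\Theta(\alpha)$ stabilizes $I$ iff $L * \alpha \subset L$, and since $\varepsilon \in L$, any $\alpha\notin L$ violates this). Since a subspace of $D$ is a right $A$-subcomodule exactly when it is stable under $\Theta(D^*)$ in its entirety, $I$ is not a subcomodule, $C$ does not inherit a quotient $A$-comodule structure, $C\notin\gen{D}$, and neither $\text{End}(\omega_R\res\gen{C})$ nor $T_{C,D}$ is defined in the paper's sense.

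Your final paragraph correctly observes that some $\phi \in \text{End}(\omega_R\res\gen{D})$ fail to descend through $\pi$ and tries to repair matters by letting $T$ act only on $\iota(L)$, but this reading is inconsistent with what the transition mapping \emph{is}: when $C\in\gen{D}$ the map $T_{C,D}$ is automatically total, because every natural transformation of $\omega_R$ restricts to every subquotient of $D^n$, so its restriction to $D$ automatically stabilizes $\ker\pi$. The existence of a non-descending $\phi$ is therefore not a peculiarity of $T$; it is a direct witness that $C$ is not an admissible object. A concrete illustration: take $A = D = k[\mathbb{Z}/2]$ with $\operatorname{char} k \neq 2$, and $L = k\varepsilon$. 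Then $D^* \isomorphic k\times k$ by the orthogonal idempotents $e_1, e_g$, $L^\perp = k(g-1)$, and $\Delta(g-1) = (g-1)\otimes g + 1\otimes(g-1)$ is not contained in $L^\perp\otimes D$, so $L^\perp$ is the unique nontrivial coideal of $D$ yet is not a right subcomodule; the resulting $C \isomorphic k$ cannot be realized as a quotient of $D$ in $\text{Comod}_A$ via the coalgebra surjection $\pi$. (There is an unrelated $A$-comodule surjection $D\twoheadrightarrow k$ through the trivial summand, but it is a different map with kernel $k(g-1)$ carrying a different retraction; the theorem's requirement that $C$ be a \emph{quotient coalgebra} forces the map $\pi$, which is not a comodule map.) Any correct proof of the theorem in the form the paper states it must either restrict to the case where $L^\perp$ is also a subcomodule of $D$, or reinterpret $\text{End}(\omega_R\res\gen{C})$ and $T$ in a way that does not route through $\gen{D}$; as it stands your argument does neither, and the appeal to Theorem \ref{adualitytheorem} and to the transition mapping both fail at the same underlying point.
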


As promised at the beginning of this chapter, these theorems
actually prove that the `algorithm' described in section
\ref{recoveringanalgebraicgroupsection} for recovering the Hopf
algebra $A$ from $\text{Comod}_A$ does in fact give the correct
answer.  (This does \emph{not} prove the general principle of
tannakian duality; here we are assuming from the outset that the
category we are looking at is $\text{Comod}_A$ for some Hopf
algebra $A$).

\begin{thm}
Let $\catfont{C} = \text{Comod}_A$ for some Hopf algebra $A$. Then
$A$ can be recovered as the direct limit of the finite dimensional
coalgebras $\text{End}(\omega \res \gen{X})^\circ$, with the
direct system being the maps $\text{End}(\omega \res
\gen{X})^\circ \stackrel{T_{X,Y}^\circ}{\vlongrightarrow}
\text{End}(\omega \res \gen{Y})^\circ$ whenever $X \in \gen{Y}$,
where $\text{End}(\omega \res \gen{Y})
\stackrel{T_{X,Y}}{\longrightarrow} \text{End}(\omega \res
\gen{X})$ is the transition mapping.
\end{thm}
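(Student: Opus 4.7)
The plan is to exhibit the finite-dimensional subcoalgebras of $A$ as an essential sub-directed set inside the directed set of all objects of $\catfont{C}$, and then transport the fundamental theorem of coalgebras across the duality $C \leftrightarrow \text{End}(\omega \res \gen{C})^\circ$ supplied by Theorem \ref{adualitytheorem}.

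First I would verify the essential sub-directed set claim. The directed set $I$ is the collection of objects of $\catfont{C}$ with $X \leq Y$ iff $X \in \gen{Y}$ (directedness uses $X, Y \in \gen{X \oplus Y}$). Inside $I$, let $J$ be the subset of finite-dimensional subcoalgebras of $A$, ordered by inclusion. If $C \subset D$, then $C$ is a subobject of $D$, so $C \in \gen{D}$; and $\gen{C} \subset \gen{D}$ since principal subcategories pass through subquotients. Essentiality is the key: for any $X \in \catfont{C}$ with basis $e_1, \ldots, e_n$ and structure map $\rho(e_j) = \sum_i e_i \otimes a_{ij}$, let $C_X = \operatorname{span}(a_{ij})$. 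A short calculation (using that the $a_{ij}$ satisfy $\Delta(a_{ij}) = \sum_k a_{ik} \otimes a_{kj}$) shows $C_X$ is a subcoalgebra of $A$, and the map $X \to C_X^n$, $e_j \mapsto (a_{1j}, \ldots, a_{nj})$, is an injective comodule map, hence $X \in \gen{C_X}$.

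Next I would identify the direct system over $J$ with the direct system of subcoalgebras of $A$. For each $C \in J$, Theorem \ref{adualitytheorem} gives an isomorphism of algebras $\Lambda_C : \text{End}(\omega \res \gen{C}) \isoarrow C^*$. Since $C$ is finite-dimensional, dualizing yields an isomorphism of coalgebras $\Lambda_C^\circ : C \cong C^{**} \isoarrow \text{End}(\omega \res \gen{C})^\circ$. For $C \subset D$ in $J$, Proposition \ref{surjectivetransitionmapprop} states that the transition map $T_{C,D}: \text{End}(\omega \res \gen{D}) \to \text{End}(\omega \res \gen{C})$ corresponds under $\Lambda_D$ and $\Lambda_C$ to the restriction map $D^* \to C^*$ dual to the inclusion $C \hookrightarrow D$; hence $T_{C,D}^\circ$ corresponds to the inclusion $C \hookrightarrow D$ itself. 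Thus the $\Lambda_C^\circ$ assemble into an isomorphism between the direct system $\{C \hookrightarrow D\}_{C \subset D \in J}$ of finite-dimensional subcoalgebras of $A$ and the direct system $\{T_{C,D}^\circ\}$ restricted to $J$.

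Finally I would assemble the pieces. By the fundamental theorem of coalgebras (Theorem \ref{fundamentaltheoremofcoalgebras}), $A = \varinjlim_{C \in J} C$ under the inclusion maps. The preceding paragraph therefore gives $A \cong \varinjlim_{C \in J} \text{End}(\omega \res \gen{C})^\circ$ as coalgebras. Lemma \ref{directlimitlemma}, applied to the essential sub-directed set $J \subset I$ (essentiality was checked in the first paragraph, and the transition maps agree since the ordering on $J$ is the restriction of that on $I$), upgrades this to $A \cong \varinjlim_{X \in I} \text{End}(\omega \res \gen{X})^\circ$ via a unique isomorphism commuting with the canonical injections, which is the desired identification.

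I expect the main technical obstacle to be the verification of essentiality, specifically checking that $C_X$ really is a subcoalgebra and that the map $X \hookrightarrow C_X^n$ is a comodule embedding; the rest is essentially bookkeeping with the two dualities (the algebra/coalgebra duality and Theorem \ref{adualitytheorem}) and a direct application of Lemma \ref{directlimitlemma}.
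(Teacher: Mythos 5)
Your proof is correct and follows the same route as the paper's: restrict via Lemma \ref{directlimitlemma} to the essential sub-directed set of finite-dimensional subcoalgebras, apply the duality of Theorem \ref{adualitytheorem} and Proposition \ref{surjectivetransitionmapprop} to identify that sub-system with the inclusion system of subcoalgebras, and finish with the fundamental theorem of coalgebras. Your explicit $C_X$ construction used to verify essentiality is exactly Lemma \ref{smallDimensionEmbeddingLemma}, while the paper invokes Theorem \ref{regreptheorem} together with Theorem \ref{fundamentaltheoremofcoalgebras} for the same purpose — a cosmetic rather than substantive difference.
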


\begin{proof}
As argued at the beginning of this chapter, the entire category
$\text{Comod}_A$ can be recovered as the direct limit of the
principal subcategories $\gen{C}$, where $C$ ranges over all
finite dimensional subcoalgebras of $A$, with the direct system
being the inclusions $C \subset D$.  Lemma \ref{directlimitlemma}
tells that, for purposes of computing the direct limit, we are
justified in disregarding all objects but these subcoalgebras and
all maps but these inclusions.  But theorem \ref{adualitytheorem}
tells us that $\text{End}(\omega \res \gen{C})^\circ \isomorphic
C$, and proposition \ref{surjectivetransitionmapprop} tells us
that under this isomorphism, the map $\text{End}(\omega \res
\gen{C})^\circ \stackrel{T_{X,Y}^\circ}{\vlongrightarrow}
\text{End}(\omega \res \gen{D})^\circ$ can be identified with the
inclusion map $C \subset D$.  Apply theorem
\ref{fundamentaltheoremofcoalgebras} to see that the direct limit
of these is exactly the Hopf algebra $A$.
\end{proof}

\chapter{The Representing Hopf Algebra of a Restricted Ultraproduct}

\label{chapterTheRepresentingHopfAlgebraOf}

Let $k_i$ be an indexed collection of fields, $(A_i, \Delta_i,
\varepsilon_i)$ a collection of Hopf algebras over those fields,
and $\catfont{C}_i$ the category $\text{Comod}_{A_i}$. The work
done in chapter \ref{TheMainTheorem} tells us that the restricted
ultraproduct $\resprod \catfont{C}_i$ is itself a neutral
tannakian category over the field $k = \uprod k_i$, hence
tensorially equivalent to $\text{Comod}_{A_\infty}$ where
$A_\infty$ is some Hopf algebra over $k$.  The question then: what
is $A_\infty$?

Before starting in earnest, let us examine an obvious first guess:
$A_\infty$ is the ultraproduct of the Hopf algebras $A_i$.  But an
ultraproduct of Hopf algebras is not, in general, a Hopf algebra.
An ultraproduct of algebras over the fields $k_i$ is indeed an
algebra over the field $k_i$, with the obvious definitions of
addition, multiplication, and scalar multiplication.  The problem
comes when we try to give it the structure of a coalgebra,
consistent with the coalgebra structures on each $A_i$.  We start
by writing
\[ \Delta: \uprod A_i \stackrel{[\Delta_i]}{\vlongrightarrow}
\uprod A_i \otimes A_i \] But as it stands, this does not suffice;
we need $\Delta$ to point to $\uprod A_i \otimes \uprod A_i$.
Recall from proposition \ref{ultratensorproductprop} that there is
a natural injective map
\[ \uprod A_i \otimes \uprod A_i \stackrel{\Phi}{\longrightarrow}
\uprod A_i \otimes A_i \] and that, unless the $A_i$ have
boundedly finite dimensionality, it is not surjective.  The image
of $\Phi$ consists exactly of those elements $[v_i]$ which have
bounded tensor length, and for a given collection $A_i$ of
non-boundedly finite dimension, it is a relatively simple matter
to come up with an element $[a_i] \in \uprod A_i$ such that
$[\Delta_i(a_i)]$ has unbounded tensor length. Thus we cannot
expect the image of $\Delta$ constructed above to be contained in
$\uprod A_i \otimes \uprod A_i$ in general.

The next section is devoted to identifying a certain subset of
$\uprod A_i$ which can indeed be given the structure of a
coalgebra, using the definition of $\Delta$ given above.
Thereafter we will show that this coalgebra is in fact a Hopf
algebra, indeed equal to the $A_\infty$ we seek.

\section{The Restricted Ultraproduct of Hopf Algebras}

\label{sectionTheRestrictedUltraproductOfHopfAlgebras}

To allay some of the suspense, we give the following definition,
whose meaning will not be clear until later in this section.

\begin{defn}
\index{restricted ultraproduct!of Hopf algebras} \index{Hopf
algebra!restricted ultraproduct of} The \textbf{restricted
ultraproduct} of the Hopf algebras $A_i$, denoted \index{$A_R$}
$A_R$, is the subset of $\uprod A_i$ consisting of those elements
$[a_i]$ such that $\text{rank}(a_i)$ is bounded.
\end{defn}

Our goal in this section is to define exactly what \index{rank}
``rank'' means in this context, and to show that $A_R$ can be
given the structure of a coalgebra.  Note that the notation $A_R$
makes no mention of the particular ultrafilter $\filtfont{U}$
being applied.  As $\filtfont{U}$ is always understood to be fixed
but arbitrary, no confusion should result.

Let $A$ be a Hopf algebra, $\catfont{C} = \text{Comod}_A$.  For
each $X \in \catfont{C}$ let \index{$L_X$} $L_X$ be the image of
$\text{End}(\omega \res \gen{X})$ inside $\text{End}(\omega(X))$
(see page \pageref{LXdefn}), and \index{$T_{X,Y}$} $L_X
\stackrel{T_{X,Y}}{\longleftarrow} L_Y$ the usual transition
mapping for $X \in \gen{Y}$.  Then we have an inverse system of
algebras, and from it obtain an inverse limit
\begin{diagram}
 & & \invlim_\catfont{C} L_X & & \\
 & \ldTo^{T_Y} & & \rdTo^{T_Z} & \\
 L_Y & & \lTo_{T_{Y,Z}} & & L_Z \\
\end{diagram}

\begin{lem}
\label{T_CissurjectiveLemma_TheRepHopfAlgOf}
 If $C$ is a subcoalgebra of $A$, then $\invlim L_X
\stackrel{T_C}{\longrightarrow} L_C$ is surjective.
\end{lem}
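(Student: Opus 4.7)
The plan is to reduce the inverse limit to a much tamer cofinal subsystem consisting only of finite-dimensional subcoalgebras of $A$ containing $C$, and then exploit the duality $L_D \cong D^*$ from theorem \ref{adualitytheorem} together with the surjectivity of the transition maps proved in proposition \ref{surjectivetransitionmapprop}.

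First I would show that the collection $\catfont{J}$ of finite-dimensional subcoalgebras $D$ of $A$ satisfying $C \subset D$, ordered by inclusion, is an essential sub-directed set of $\catfont{C}$ in the sense of lemma \ref{inverselimitlemma}. Directedness is clear, since for $D_1,D_2 \in \catfont{J}$ the sum $D_1 + D_2$ is again a finite-dimensional subcoalgebra containing $C$. For essentiality, given any $X \in \catfont{C}$, theorem \ref{regreptheorem} embeds $X$ into some $A^n$; this image is a finite-dimensional subcomodule of $A^n$, and by (the proof of) theorem \ref{fundamentaltheoremofcoalgebras} it is contained in $E^n$ for some finite-dimensional subcoalgebra $E$ of $A$. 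Taking $D = E+C \in \catfont{J}$ gives $X \in \gen{D}$, so $X \leq D$ in the ordering on $\catfont{C}$. By lemma \ref{inverselimitlemma}, $\invlim_{\catfont{C}} L_X$ is then canonically isomorphic to $\invlim_{\catfont{J}} L_D$, compatibly with the canonical projections, and in particular with $T_C$.

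On $\catfont{J}$, theorem \ref{adualitytheorem} identifies each $L_D$ with the dual algebra $D^*$, and proposition \ref{surjectivetransitionmapprop} identifies the transition map $L_{D'} \to L_D$ for $D \subset D'$ in $\catfont{J}$ with the restriction map $(D')^* \to D^*$, which is surjective. Now given any $\ell_C \in L_C \cong C^*$, I would extend $\ell_C$ to a $k$-linear functional $\alpha : A \to k$ (possible since vector-space inclusions always split linearly) and define, for each $D \in \catfont{J}$, the element $\ell_D \in L_D \cong D^*$ to be the restriction $\alpha|_D$. Since the transition maps are themselves restriction maps and $\alpha|_C = \ell_C$, the family $(\ell_D)_{D \in \catfont{J}}$ is compatible with the inverse system, hence defines an element of $\invlim_{\catfont{J}} L_D$ whose image under the projection to $L_C$ is exactly $\ell_C$.

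The only real obstacle is the essentiality verification: one must chain together the regular-representation embedding, the fundamental theorem of coalgebras, and the closure of finite-dimensional subcoalgebras under sums in order to exhibit, for an arbitrary object $X$, a single finite-dimensional subcoalgebra $D \supset C$ with $X \in \gen{D}$. Once that reduction is in hand, the lifting step is painless because $L_D$ is literally the linear dual of $D$, and the transition maps are literally restrictions, so any Hahn--Banach-style extension of $\ell_C$ to $A$ produces a compatible family in one stroke and no Mittag-Leffler argument is needed.
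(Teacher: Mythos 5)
Your proof is correct, and it diverges from the paper's at the crucial final step in a way that actually makes it more rigorous. Both you and the paper begin by invoking lemma \ref{inverselimitlemma} to replace $\catfont{C}$ with a cofinal subsystem of finite-dimensional subcoalgebras (you restrict further to those containing $C$, which is harmless and convenient, and your essentiality check via the regular-representation embedding and the fundamental theorem of coalgebras is sound), and both use proposition \ref{surjectivetransitionmapprop} to identify the transition maps as dual to coalgebra inclusions. But at this point the paper simply cites ``a standard fact about inverse limits: if the transition maps are all surjective then so are the canonical projections.'' This is \emph{not} a theorem in general: for an arbitrary directed index set an inverse system with surjective transition maps can fail to have surjective projections (and can even have empty limit) absent some additional hypothesis such as countability of a cofinal chain, compactness of the fibers, or a Mittag--Leffler condition --- and none of those are obviously available here, since the $L_D$ have unbounded dimension.

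Your argument avoids the gap entirely. Because each $L_D \cong D^*$ and the transition maps are literally the restriction maps $D'^* \to D^*$, you simply extend $\ell_C \in C^*$ to a linear functional $\alpha \in A^*$ (possible for any vector-space inclusion) and observe that $(\alpha|_D)_{D \in \catfont{J}}$ is a compatible thread whose image under $T_C$ is $\ell_C$. This explicit construction of a preimage is exactly what the paper's ``standard fact'' would need to deliver, and it does so without invoking any Mittag--Leffler-type hypothesis. In short: your approach is a strengthening of the paper's, replacing an unjustified appeal to a general fact (which the paper never proves, and which is false at that level of generality) with a two-line direct construction made possible by the duality $L_D \cong D^*$.
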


\begin{proof}
By the discussion on page \pageref{adualitytheoremchapter} the
entire category $\catfont{C}$ is generated by the principal
subcategories $\gen{C}$, where $C$ ranges over all finite
dimensional subcoalgebras of $A$, with the direct system being
inclusion mappings $C \subset D$ when applicable. We can then
apply lemma \ref{inverselimitlemma} to see that we might as well
have recovered $\invlim L_X$ with respect to the sub-inverse
system consisting of all subcoalgebras of $A$ under the inclusion
mappings, that is as
\begin{diagram}
 & & \invlim_\catfont{C} L_X & & \\
 & \ldTo^{T_C} & & \rdTo^{T_D} & \\
 L_C & & \lTo_{T_{C,D}} & & L_D \\
\end{diagram}
where $C$ and $D$ range over all subcoalgebras of $A$, and
$T_{C,D}$ defined when $C \subset D$.  Proposition
\ref{surjectivetransitionmapprop} tells us that $T_{C,D}$ is
always surjective, and it is a standard fact about inverse limits
that if this is the case, $T_C$ is always surjective.

\end{proof}

Now apply the finite dual operation to the above inverse system
diagram to obtain a diagram of coalgebras:
\begin{diagram}
 & & (\invlim_\catfont{C} L_X)^\circ & & \\
 & \ruTo^{T_Y^\circ} & & \luTo^{T_Z^\circ} & \\
 L_Y^\circ & & \rTo_{T_{Y,Z}^\circ} & & L_Z^\circ \\
\end{diagram}

The maps $L_Y^\circ \stackrel{T_{Y,Z}^\circ}{\longrightarrow}
L_Z^\circ$ therefore form a direct system; recall from page
\pageref{pagerefForRecoveringA_TannakianDuality} that this is
exactly the direct system from which $A$ can be recovered as its
direct limit. Let us rename $T_{Y,Z}^\circ$ as $\phi_{Y,Z}$, and
so we have the direct limit diagram
\begin{diagram}
 & & \dirlim_\catfont{C} L_X^\circ & & \\
 & \ruTo^{\phi_Y} & & \luTo^{\phi_Z} & \\
 L_Y^\circ & & \rTo_{\phi_{Y,Z}} & & L_Z^\circ \\
\end{diagram}
with $A = \dirlim_\catfont{C} L_X^\circ$.  By the universal
property of direct limits there is a unique map of coalgebras,
call it $\phi:A \rightarrow \left(\invlim L_X\right)^\circ$,
making the following diagram commute for all $Y \in \gen{Z}$:
\begin{equation}
\label{phiDefnDiagram_ResUprodOfHopfAlgebras}
\begin{diagram}
 & & (\invlim_\catfont{C} L_X)^\circ & & \\
 & \ruTo(2,4)^{T_Y^\circ} & \uDashto^{\phi} & \luTo(2,4)^{T_Z^\circ} & \\
 &  & \dirlim_\catfont{C} L_X^\circ & & \\
 & \ruTo_{\phi_Y} & & \luTo_{\phi_Z} & \\
 L_Y^\circ & & \rTo_{\phi_{Y,Z}}  & & L_Z^\circ \\
\end{diagram}
\end{equation}

\begin{prop}
The map $\phi$ is injective.
\end{prop}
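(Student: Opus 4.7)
The plan is to prove injectivity of $\phi$ by leveraging the surjectivity of the canonical projections $T_C:\invlim L_X \rightarrow L_C$ established in Lemma \ref{T_CissurjectiveLemma_TheRepHopfAlgOf}, together with the fundamental theorem of coalgebras. The first step is a reduction: I would replace the direct system over all of $\catfont{C}$ by the sub-directed system consisting only of the finite-dimensional subcoalgebras $C$ of $A$, ordered by inclusion. By Theorem \ref{regreptheorem} every object of $\catfont{C}$ embeds in a finite sum of copies of the regular representation, and by Theorem \ref{fundamentaltheoremofcoalgebras} its image is contained in $C^n$ for some finite-dimensional subcoalgebra $C$, so subcoalgebras form an essential sub-directed set. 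Lemma \ref{directlimitlemma} then gives $A \isomorphic \dirlim_C L_C^\circ$, and under the identification $L_C^\circ \isomorphic C$ provided by Theorem \ref{adualitytheorem}, the canonical injection $\phi_C: L_C^\circ \rightarrow A$ becomes the subcoalgebra inclusion $C \hookrightarrow A$.

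Next I would dualize the projections. Each $L_C$ is finite dimensional (being a subalgebra of $\text{End}_k(\omega(C))$ with $\omega(C)$ finite dimensional), so the finite dual $L_C^\circ$ coincides with the full linear dual $L_C^*$. Consequently the surjectivity of $T_C$ given by Lemma \ref{T_CissurjectiveLemma_TheRepHopfAlgOf} dualizes to an injection $T_C^\circ: L_C^\circ \hookrightarrow (\invlim L_X)^\circ$.

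Finally I would put these ingredients together. Suppose $a \in A$ satisfies $\phi(a) \myeq 0$. By the reduction above, $a$ lies in some finite-dimensional subcoalgebra $C \subset A$, so we may write $a = \phi_C(b)$ for a (unique) $b \in L_C^\circ$. The commutativity of diagram \ref{phiDefnDiagram_ResUprodOfHopfAlgebras} gives $T_C^\circ(b) = \phi(\phi_C(b)) = \phi(a) = 0$. Injectivity of $T_C^\circ$ then forces $b = 0$, whence $a = 0$, and $\phi$ is injective.

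The argument is almost entirely a bookkeeping exercise once the earlier results are in hand; the only point requiring care is the verification that the subcoalgebras of $A$ genuinely form an essential sub-directed set inside the full directed system defining $A = \dirlim_\catfont{C} L_X^\circ$, so that Lemma \ref{directlimitlemma} may legitimately be invoked to identify $\phi_C$ with the inclusion $C \hookrightarrow A$. Everything else — the dualization of surjections and the pullback through the universal property — is routine.
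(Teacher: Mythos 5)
Your proof is correct and takes essentially the same approach as the paper's: reduce to the essential sub-directed system of finite-dimensional subcoalgebras, then exploit the surjectivity of the canonical projections $T_C$ (hence injectivity of their duals $T_C^\circ$) together with the commutativity of the diagram defining $\phi$. The paper works with two elements $[c],[d]$ and enlarges to a common subcoalgebra before comparing, whereas you use linearity of $\phi$ to reduce to showing $\ker\phi=0$, which avoids the enlargement step; the identification of $\phi_C$ with the subcoalgebra inclusion $C\hookrightarrow A$ that you invoke is true but not actually needed for the argument, since all you require is that every $a\in A$ lies in the image of some $\phi_C$ and that $\phi_C(0)=0$.
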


\begin{proof}
Recall the concrete definition of a direct limit of algebraic
objects; its underlying set consists of equivalence classes $[a]$
 where $a$ is some element of some $L_Y^\circ$, with $a \in L_Y,$
 $b \in L_Z$ equivalent when there is some $L_T$ with $Y,Z
 \in \gen{T}$ such that $\phi_{Y,T}(a) = \phi_{Z,T}(a)$.  As is
 discussed on page \pageref{adualitytheoremchapter}, for any object $Y$,
 there is a subcoalgebra $C$ of $A$ such that $Y \in \gen{C}$;
 this shows that any element $a$ of any $L_Y^\circ$ is equivalent to
 some element $c$ of $L_C^\circ$ for some subcoalgebra $C$ of $A$.
 Thus, every element of $A= \dirlim L_X^\circ$ can be written as $[c]$, for
 some $c \in L_C^\circ$, $C$ a coalgebra.  Further, given elements $[c]$ and $[d]$ of $\dirlim
 L_X^\circ$, we can clearly choose subcoalgebras $C$ and $D$ of $A$
 so that $c \in L_C^\circ$, $d \in L_D^\circ$, and $C \subset D$ (just enlarge $D$ to be a finite dimensional
 coalgebra containing both $C$ and $D$).

So let $[c], [d] \in \dirlim L_X^\circ$, with $c \in L_C^\circ$,
$d \in L_D^\circ$, and $C \subset D$, and suppose that $\phi$ maps
$[c]$ and $[d]$ to the same element.  Consider the diagram
\begin{diagram}
 & & (\invlim_X L_X)^\circ & & \\
 & \ruTo(2,4)^{T_C^\circ} & \uDashto^{\phi} & \luTo(2,4)^{T_D^\circ} & \\
 &  & \dirlim_X L_X^\circ & & \\
 & \ruTo_{\phi_C} & & \luTo_{\phi_D} & \\
 L_C^\circ & & \rTo_{\phi_{C,D}}  & & L_D^\circ \\
\end{diagram}
To say that $\phi([c]) = \phi([d])$ is the same as saying that
$T_C^\circ(c) = T_D^\circ(d)$.  But $T_D$ is surjective by lemma
\ref{T_CissurjectiveLemma_TheRepHopfAlgOf}, thus $T_D^\circ$ is
injective. By commutativity of
\begin{diagram}
& & (\invlim_X L_X)^\circ & & \\
& \ruTo^{T_C^\circ} & & \luTo^{T_D^\circ} \\
L_C^\circ & & \rTo_{T_{C,D}^\circ} & & L_D^\circ \\
\end{diagram}
we see that we must have $T_{C,D}^\circ(c) = d$; this means that
$[c]=[d]$.
\end{proof}

The map $\phi$ is by no means generally surjective; the author
verified this with a counterexample which he will not burden you
with.

We pause for a moment to see what the map $\phi$ actually looks
like.  A typical element of $A = \dirlim L_X^\circ$ is an
equivalence class $[\alpha:L_Y \rightarrow k]$, where $Y$ is some
object of $\catfont{C}$.  Passing this element through $\phi$ is
the same as pulling it back through $\phi_Y$, and passing back up
through $T_Y^\circ$.  Thus $\phi([\alpha:Y \rightarrow k])$ is the
composition
\[ \invlim_X L_X \stackrel{T_Y}{\longrightarrow} L_Y
\stackrel{\alpha}{\longrightarrow} k \]

Recall the definition of the finite dual $L^\circ$ of the algebra
$L$; it consists of those linear functionals $\alpha:L \rightarrow
k$ which happen to kill an ideal of $L$ having finite codimension.
We therefore define the \index{rank} \textbf{rank} of an element
of $L^\circ$ as the minimum $m$ such that $\alpha$ kills an ideal
of codimension $m$.

\begin{defn}
\index{rank} The \textbf{rank} of an element of $a \in A$  is the
rank of $\phi(a) \in (\invlim L_X)^\circ$.
\end{defn}

\begin{prop}
\label{tensorLengthInTheFiniteDualProp}

Let $(L,\text{mult})$ be any algebra, $(L^\circ, \Delta)$ its
finite dual.  Then if $\alpha \in L^\circ$ has rank $m$,
$\Delta(\alpha)$ can be written as a sum of no more than $m$
simple tensors. Further, we can write
\[ \Delta(\alpha) = \sum_{i=1}^m \beta_i \otimes \gamma_i \]
where each $\beta_i$ and $\gamma_i$ themselves have rank no larger
than $m$.
\end{prop}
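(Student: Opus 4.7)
The plan is to use the ideal-of-finite-codimension characterization of rank to factor $\Delta(\alpha)$ through a finite-dimensional quotient, where the bound on tensor length becomes a statement about tensors in a finite-dimensional vector space.

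First I would fix an ideal $I \subset L$ of codimension $m$ killed by $\alpha$, and let $\pi: L \to L/I$ be the quotient map. Since $\alpha$ kills $I$, there is a unique $\bar\alpha: L/I \to k$ with $\alpha = \bar\alpha \circ \pi$. Because $I$ is a two-sided ideal, the multiplication on $L$ descends to a multiplication $\overline{\text{mult}}: L/I \otimes L/I \to L/I$ satisfying $\pi \circ \text{mult} = \overline{\text{mult}} \circ (\pi \otimes \pi)$. Consequently the functional $\alpha \circ \text{mult}: L \otimes L \to k$ factors as
\[
\alpha \circ \text{mult} \;=\; \bar\alpha \circ \overline{\text{mult}} \circ (\pi \otimes \pi).
\]
Set $\bar f = \bar\alpha \circ \overline{\text{mult}} \in (L/I \otimes L/I)^{*}$.

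Next I would invoke the finite-dimensionality of $L/I$. Under the canonical isomorphism $(L/I \otimes L/I)^{*} \isomorphic (L/I)^{*} \otimes (L/I)^{*}$ (valid since both factors are finite dimensional), $\bar f$ corresponds to some element of $(L/I)^{*} \otimes (L/I)^{*}$. Because $(L/I)^{*}$ has dimension $m$, the lemma on minimal tensor length (lemma \ref{minTensorLengthLemma}) guarantees that $\bar f$ can be written as $\sum_{i=1}^{m} \bar\beta_i \otimes \bar\gamma_i$ with $\bar\beta_i, \bar\gamma_i \in (L/I)^{*}$. Pulling back along $\pi$, set $\beta_i = \bar\beta_i \circ \pi$ and $\gamma_i = \bar\gamma_i \circ \pi$; both lie in $L^\circ$ since each kills $I$, which has finite codimension $m$.

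I would then verify that $\sum_{i=1}^{m} \beta_i \otimes \gamma_i$ is exactly $\Delta(\alpha)$. By the definition of $\Delta$ and the identification $(L \otimes L)^\circ \isomorphic L^\circ \otimes L^\circ$, it suffices to check equality as functionals on $L \otimes L$, and for simple tensors this is the direct computation
\[
\sum_i \beta_i(x)\gamma_i(y) \;=\; \sum_i \bar\beta_i(\pi(x))\bar\gamma_i(\pi(y)) \;=\; \bar f(\pi(x)\otimes\pi(y)) \;=\; \alpha(xy).
\]
Since each $\beta_i$ and $\gamma_i$ kills the codimension-$m$ ideal $I$, both have rank at most $m$, delivering the second assertion of the proposition.

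The only conceptual subtlety worth guarding against is the interplay between the two embeddings into $L^{*} \otimes L^{*}$: the one from $L^\circ \otimes L^\circ$ and the one from $(L/I)^{*} \otimes (L/I)^{*}$ via $\pi^{*} \otimes \pi^{*}$. Both are injective, and the compatibility of the multiplication-dualized isomorphism with the quotient $\pi$ ensures the tensor decomposition built downstairs in $(L/I)^{*} \otimes (L/I)^{*}$ pulls back literally to the decomposition of $\Delta(\alpha)$ upstairs in $L^\circ \otimes L^\circ$. This is the main (and really, only) bookkeeping step; once it is checked, the bound on tensor length is immediate from $\dim (L/I)^{*} = m$.
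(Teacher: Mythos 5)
Your proof is correct, and it reaches the conclusion by a route that differs in flavor from the paper's. The paper's own argument is a direct basis computation: choose $e_1, \ldots, e_m \in L$ whose images form a basis of $L/I$, extend to a basis of $L$ by vectors $f_j \in I$, and observe that under $\text{mult}$ every basis tensor except those of the form $e_i \otimes e_j$ already lands in $I$ and is thus killed by $\alpha$; writing $\gamma_i$ for the coordinate functionals, this gives $\Delta(\alpha) = \sum_{i,j} c_{ij}\, \gamma_i \otimes \gamma_j = \sum_{i=1}^m \gamma_i \otimes \bigl(\sum_j c_{ij}\gamma_j\bigr)$, at most $m$ simple tensors with each factor killing $I$. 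You instead pass structurally to the quotient algebra $L/I$, dualize, and appeal to lemma \ref{minTensorLengthLemma} (via $\dim (L/I)^* = m$) for the tensor-length bound, then pull back along $\pi^*$. Both proofs hinge on the same fact --- the entire action of $\Delta(\alpha)$ factors through the $m$-dimensional quotient $L/I$ --- but yours frames this as an honest algebra map $\overline{\text{mult}}$ on $L/I$, which is a cleaner organizing principle, while the paper's is slightly more self-contained: it avoids relying on lemma \ref{minTensorLengthLemma}, which the paper actually states only \emph{after} this proposition, so if you keep your proof you would want to move that lemma earlier. The second claim (rank of $\beta_i, \gamma_i$ at most $m$) is obtained by the same observation in both proofs, namely that these functionals all vanish on $I$.
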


\begin{proof}
Let $I \lhd L$ be an ideal of codimension $m$ such that $\alpha(I)
= 0$, and let $e_1, \ldots, e_m \in L$ be such that $e_1 + I,
\ldots, e_m + I$ is a basis for $L/I$, and extend the $e_i$ to a
basis $e_1, \ldots, e_m, f_1, f_2, \ldots $ for all of $L$. Recall
the definition of $\Delta$ in terms of $\text{mult}$; it sends the
functional $\alpha$ to the map
\[ L \otimes L \stackrel{\text{mult}}{\longrightarrow} L
\stackrel{\alpha}{\longrightarrow} k \] and then passes to the
isomorphism $(L \otimes L)^\circ \isomorphic L^\circ \otimes
L^\circ$.  We have a basis for $L \otimes L$, namely those tensors
of the form $e_i \otimes f_j, f_i \otimes e_j, f_i \otimes f_j,
e_i \otimes e_j$.  Since $I$ is an ideal, the only of these basis
elements that might \emph{not} get sent to $I$ under mult are
those of the form $e_i \otimes e_j$, $i,j \leq m$. Then since
$\alpha$ kills $I$, $\Delta(\alpha)$ kills all but the $e_i
\otimes e_j$.  Letting $\gamma_i$ be the functional that sends
$e_i$ to $1$ and everything else to zero, we can write
\[ \Delta(\alpha) = \sum_{i,j} c_{ij} \gamma_i \otimes \gamma_j = \sum_i \gamma_i \otimes \left(\sum_j c_{ij}\gamma_j \right) \]
for some scalars $c_{ij}$, which is a sum of no more than $m$
simple tensors. As each $\gamma_i$ still kills $I$,
$\text{rank}(\gamma_i) \leq m$ for each $i$, and the last claim is
proved as well.
\end{proof}

We shall need the following lemma from linear algebra.

\begin{lem}
\label{minTensorLengthLemma} Let $V,W$ be vector spaces over some
field, and let $\sum_{i=1}^n v_i \otimes w_i \in V \otimes W$.
Then this expression is of minimal tensor length if and only if
the vectors $v_i$ are linearly independent, and the $w_i$ are
linearly independent.
\end{lem}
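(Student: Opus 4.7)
The plan is to prove both directions by contrapositive-style reasoning. First I would handle the easier "only if" direction: suppose the representation $\sum_{i=1}^n v_i \otimes w_i$ is of minimal tensor length, and toward a contradiction assume that, say, the $v_i$ are linearly dependent (the argument for the $w_i$ being symmetric). Then after reindexing we may write $v_n = \sum_{i=1}^{n-1} c_i v_i$ for some scalars $c_i$, so
\[ \sum_{i=1}^n v_i \otimes w_i = \sum_{i=1}^{n-1} v_i \otimes w_i + \left(\sum_{i=1}^{n-1} c_i v_i\right) \otimes w_n = \sum_{i=1}^{n-1} v_i \otimes (w_i + c_i w_n), \]
which is a strictly shorter expression for the same element, contradicting minimality.

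For the "if" direction, I would assume both $\{v_1,\ldots,v_n\}$ and $\{w_1,\ldots,w_n\}$ are linearly independent, and suppose toward a contradiction that the same element admits a shorter representation $\sum_{j=1}^m v_j' \otimes w_j'$ with $m < n$. The key tool is a family of linear functionals $\alpha_1,\ldots,\alpha_n$ on $V$ such that $\alpha_i(v_j) = \delta_{ij}$: since the $v_i$ are linearly independent we may extend $\{v_1,\ldots,v_n\}$ to a Hamel basis of $V$ and define each $\alpha_i$ to take the value $1$ on $v_i$ and $0$ on every other basis element. Applying the linear map $\alpha_i \otimes 1 : V \otimes W \to k \otimes W \isomorphic W$ to both representations yields
\[ w_i = \sum_{j=1}^m \alpha_i(v_j') \, w_j', \]
so every $w_i$ lies in $\mathrm{span}(w_1',\ldots,w_m')$, a subspace of dimension at most $m < n$. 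This contradicts the linear independence of $w_1,\ldots,w_n$.

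There is no real obstacle here beyond producing the separating functionals $\alpha_i$; the main thing to be careful about is that $V$ and $W$ are not assumed finite dimensional, so one must invoke a Hamel basis (equivalently, the axiom of choice) rather than rely on a finite dual basis. Once the $\alpha_i$ are in hand the argument is purely formal and does not require any further calculation.
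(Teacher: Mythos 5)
Your ``only if'' direction is essentially identical to the paper's: dependence of (say) the $v_i$ lets you absorb $v_n$ into the remaining terms and shorten the expression.

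For the ``if'' direction you take a genuinely different and cleaner route. The paper fixes bases for the finite-dimensional spans, rewrites both sides of the equality $\sum_i v_i \otimes w_i = \sum_j v_j' \otimes w_j'$ as coordinate matrices, reduces the problem to the matrix identity $(c_{ij})^T(d_{ij}) = (c_{ij}')^T(d_{ij}')$, and derives a contradiction by computing the rank of each side: linear independence of the $v_i$ and $w_i$ forces the left-hand product to have rank exactly $n$, while the zero column/row coming from $v_n' = w_n' = 0$ bounds the rank of the right-hand side by $n-1$. You instead work coordinate-free: choose dual functionals $\alpha_i$ with $\alpha_i(v_j) = \delta_{ij}$ (extending $\{v_i\}$ to a Hamel basis; the remark about choice is appropriate since $V$ need not be finite dimensional), apply $\alpha_i \otimes 1$ to both representations, and conclude that all $n$ linearly independent vectors $w_i$ lie in the $m$-dimensional space $\mathrm{span}(w_1',\ldots,w_m')$, a contradiction. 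Your argument is shorter, avoids introducing and manipulating coordinate matrices, and is the more standard textbook proof. The paper's rank computation is more explicit and might be preferred if one wanted to extract quantitative information, but for the statement at hand your route is tighter. Both are correct.
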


\begin{proof}
Suppose that one of the collections $v_i$ or $w_i$ are not
linearly independent, let's say the $v_i$.  Then say $v_n$ is in
the span of $v_1, \ldots, v_{n-1}$, and write
\[ v_n = a_1 v_1 + \ldots + a_{n-1} v_{n-1} \]
Then
\begin{equation*}
\begin{split}
\sum_{i=1}^n v_i \otimes w_i & = \sum_{i=1}^{n-1} v_i \otimes w_i
+ v_n \otimes w_n \\ & = \sum_{i=1}^{n-1} v_i \otimes w_i +
(\sum_{i=1}^{n-1} a_i v_i) \otimes w_n \\
& = \sum_{i=1}^{n-1} v_i \otimes (w_i + a_i w_n)
\end{split}
\end{equation*}
which is a sum of less than $n$ simple tensors.  Therefore the
expression is not of minimal tensor length.

Conversely, suppose that $\sum_{i=1}^n v_i \otimes w_i$ can be
reduced in tensor length, and that both the collections $v_i$ and
$w_i$ are linearly independent; we shall force a contradiction.
Suppose we have a tensor length reduction given by the equation
\begin{equation}
\label{tensorLengthLemmaEquation}
 \sum_{i=1}^n v_i \otimes w_i = \sum_{i=1}^n v_i^\prime \otimes
w_i^\prime
\end{equation}
where we let $v_n^\prime = w_n^\prime = 0$.  The $v_i,v_i^\prime$
and $w_i,w_i^\prime$ span finite dimensional subspaces of $V$ and
$W$ respectively; fix bases $\{e_1, \ldots, e_m\}$, $\{f_1,
\ldots, f_l\}$ for these subspaces.  Then write
\[ v_i = \sum_{j=1}^m c_{ij} e_j \hspace{1cm} w_i = \sum_{k=1}^l
d_{ik} f_k \]
\[ v_i^\prime = \sum_{j=1}^m c_{ij}^\prime e_j \hspace{1cm}
w_i^\prime = \sum_{k=1}^l d_{ik}^\prime f_k \] If we plug these
expressions into equation \ref{tensorLengthLemmaEquation} above
and rearrange the summations a bit, we obtain
\[ \sum_{{j=1 \ldots m} \atop {k=1 \ldots l}} \left(\sum_{i=1}^n c_{ij} d_{ik} \right) e_j \otimes f_k
= \sum_{{j=1 \ldots m} \atop {k=1 \ldots l}} \left( \sum_{i=1}^n
c_{ij}^\prime d_{ik}^\prime \right) e_j \otimes f_k \] By matching
coefficients on the linearly independent simple tensors $e_j
\otimes f_k$, we obtain, for every $j$ and $k$, $\sum_{i=1}^n
c_{ij} d_{ik} =  \sum_{i=1}^n c_{ij}^\prime d_{ik}^\prime $.  But
the left hand side is the $(j,k)^{\text{th}}$ entry of the matrix
$(c_{ij})^T(d_{ij})$, and the right hand side the
$(j,k)^{\text{th}}$ entry of $(c_{ij}^\prime)^T(d_{ij}^\prime)$.
Thus, equation \ref{tensorLengthLemmaEquation} is equivalent to
the matrix equation
\[ (c_{ij})^T(d_{ij}) = (c_{ij}^\prime)^T(d_{ij}^\prime) \]

Now since the $v_i$ sit in an $m$-dimensional space and the $w_i$
sit in an $l$-dimensional space, and since we are assuming both
the $v_i$, $w_i$ to be linearly independent, we conclude that $n$
is no bigger than either $m$ or $l$.  Further, the linear
independence of the $v_i$ is equivalent to the linear independence
of the row vectors of the matrix $(c_{ij})$, i.e.~the column
vectors of $(c_{ij})^T$.  Similarly the row vectors of $(d_{ij})$
are linearly independent.  This means that the matrix $(c_{ij})^T$
has fullest possible rank, namely $n$; for the same reason
$(d_{ij})$ has rank $n$.  Viewing the product $(c_{ij})^T(d_{ij})$
as a linear transformation $k^l \rightarrow k^n \rightarrow k^m$,
we see that this product also has rank $n$.

But we claim that $(c_{ij}^\prime)^T(d_{ij}^\prime)$ has rank less
than $n$.  The condition that $v_n^\prime = w_n^\prime = 0$ forces
the matrix $(c_{ij}^\prime)^T$ to have a column of zeroes at the
far right, and $(d_{ij}^\prime)$ to have a row of zeroes at the
bottom.  Then $(c_{ij}^\prime)^T$ has less than $n$ non-zero
column vectors, and so has rank less than $n$; similarly
$(d_{ij})$ has rank less than $n$.  Then clearly also must their
product.

We conclude then that $(c_{ij})^T(d_{ij}) =
(c_{ij}^\prime)^T(d_{ij}^\prime)$ have different rank, a
contradiction.  This completes the proof.
\end{proof}

\begin{cor}
Let $\psi:V \rightarrow W$ be an injective mapping of vector
spaces, and let $v \in V \otimes V$.  Then if $(\psi \otimes
\psi)(v) \in W \otimes W$ can be written as a sum of no more than
$m$ simple tensors, so can $v$.
\end{cor}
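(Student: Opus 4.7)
The plan is to reduce this to the previous lemma by showing that the minimal tensor length is preserved under $\psi \otimes \psi$ when $\psi$ is injective. First I would write $v$ in a minimal tensor decomposition $v = \sum_{i=1}^n v_i \otimes v_i'$, where $n$ is the minimal tensor length of $v$. By the preceding lemma, this minimality is equivalent to the assertion that both $\{v_1,\ldots,v_n\}$ and $\{v_1',\ldots,v_n'\}$ are linearly independent subsets of $V$.

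Next I would apply $\psi \otimes \psi$ to obtain the expression
\[ (\psi \otimes \psi)(v) = \sum_{i=1}^n \psi(v_i) \otimes \psi(v_i') \]
in $W \otimes W$. Since $\psi$ is injective, it preserves linear independence, so $\{\psi(v_1),\ldots,\psi(v_n)\}$ and $\{\psi(v_1'),\ldots,\psi(v_n')\}$ are linearly independent in $W$. Applying the lemma again — this time in the converse direction, which is exactly the other implication already proved — this expression for $(\psi \otimes \psi)(v)$ is itself of minimal tensor length, namely $n$.

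Finally, the hypothesis that $(\psi \otimes \psi)(v)$ admits some decomposition into at most $m$ simple tensors means that its minimal tensor length is at most $m$, so $n \leq m$, and the original decomposition $v = \sum_{i=1}^n v_i \otimes v_i'$ already exhibits $v$ as a sum of at most $m$ simple tensors. There is essentially no obstacle here: the whole point is to convert ``minimal tensor length'' into the linear independence criterion furnished by the lemma, push it across the injective map $\psi$, and convert back. The only subtlety worth pausing on is that we are using \emph{both} directions of the biconditional in the lemma — the forward direction to extract linear independence from the minimal decomposition of $v$, and the reverse direction to conclude minimality of the image decomposition in $W \otimes W$.
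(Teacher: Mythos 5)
Your proposal is correct and takes essentially the same approach as the paper: write $v$ in a minimal decomposition, extract linear independence via the lemma, transport it across the injective $\psi$, and apply the lemma again to see the image decomposition is minimal. The paper leaves the final comparison $n \leq m$ implicit, which you spell out, but the argument is the same.
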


\begin{proof}
Write $v = \sum_{i=1}^n v_i \otimes v_i^\prime$ as a minimal sum
of simple tensors, so by lemma \ref{minTensorLengthLemma}the $v_i$
and the $v_i^\prime$ are linearly independent.  As $\psi$ is
injective, the collections $\psi(v_i)$ and $\psi(v_i^\prime)$ are
also linearly independent.  Then the expression $(\psi \otimes
\psi)(v) = \sum_{i=1}^n \psi(v_i) \otimes \psi(v_i^\prime)$, again
by lemma \ref{minTensorLengthLemma}, is of minimal tensor length
in $W \otimes W$.
\end{proof}

We can now prove the key fact which allows us to define a natural
coalgebra structure on $A_R$.

\begin{prop}
\label{RankAndTensorLengthAreNiceProposition} If $a \in A =
\dirlim L_X^\circ$ has rank no greater than $m$, then $\Delta(a)$
can be written as
\[ \Delta(a) = \sum_{i=1}^m b_i \otimes c_i \]
where each $b_i$ and $c_i$ themselves have rank no greater than
$m$.
\end{prop}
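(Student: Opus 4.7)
The plan is to push the statement through the injective coalgebra map $\phi \colon A \to \left(\invlim L_X\right)^\circ$ from diagram \ref{phiDefnDiagram_ResUprodOfHopfAlgebras} and reduce everything to proposition \ref{tensorLengthInTheFiniteDualProp}, which has already been proved for an arbitrary finite-dual coalgebra. First, note that $\phi$ was constructed from the compatible family of coalgebra maps $T_Y^\circ \colon L_Y^\circ \to \left(\invlim L_X\right)^\circ$ via the universal property of the direct limit, so it is itself a morphism of coalgebras; in particular $(\phi \otimes \phi)(\Delta(a)) = \Delta(\phi(a))$.

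Applying proposition \ref{tensorLengthInTheFiniteDualProp} to $\phi(a)$, whose rank is at most $m$ by the very definition of the rank of $a$, we obtain a decomposition
\[ \Delta(\phi(a)) = \sum_{i=1}^{m} \beta_i \otimes \gamma_i \]
where every $\beta_i$ and $\gamma_i$ kills a common ideal $I \lhd \invlim L_X$ of codimension at most $m$. Writing $U$ for the subspace of $\left(\invlim L_X\right)^\circ$ consisting of functionals that kill $I$, we have $\dim U \leq m$, each element of $U$ has rank at most $m$, and $\Delta(\phi(a)) \in U \otimes U$.

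Next I would choose a \emph{minimal} expression $\Delta(a) = \sum_{i=1}^{n} b_i \otimes c_i$ in $A \otimes A$; by lemma \ref{minTensorLengthLemma} the families $\{b_i\}$ and $\{c_i\}$ are each linearly independent. Because $\phi$ is injective, the images $\{\phi(b_i)\}$ and $\{\phi(c_i)\}$ are also linearly independent, so
\[ \Delta(\phi(a)) = \sum_{i=1}^{n} \phi(b_i) \otimes \phi(c_i) \]
is likewise of minimal tensor length in $\left(\invlim L_X\right)^\circ \otimes \left(\invlim L_X\right)^\circ$; comparing with the $m$-term expression from the previous paragraph immediately yields $n \leq m$. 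Finally I would invoke the standard multilinear-algebra fact that the span of the linearly independent first (resp.\ second) factors in any minimal expression of a tensor $x$ equals the image of the canonical map $W^\ast \to V$ (resp.\ $V^\ast \to W$) attached to $x$, and is therefore contained in the span of the first (resp.\ second) factors of \emph{any} other expression for $x$. Applied to our two expressions for $\Delta(\phi(a))$, this gives $\phi(b_i), \phi(c_i) \in U$, so each kills $I$ and thus has rank $\leq m$; by the definition of rank in $A$, the same holds for each $b_i$ and $c_i$. Padding with zero tensors if $n < m$ produces the desired $m$-term expression.

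The only real obstacle is the multilinear-algebra lemma about canonical spans in the last step; this is elementary but is not stated explicitly in the paper, so I would either prove it as a short sublemma (extending $\{b_i\}$ to a basis and matching coefficients, in the spirit of the proof of lemma \ref{minTensorLengthLemma}) or reformulate it using the map $W^\ast \to V$ induced by a tensor. Everything else is a direct transport across the injective coalgebra map $\phi$.
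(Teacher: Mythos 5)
Your proof is correct and follows the same overall strategy as the paper: transfer to $(\invlim L_X)^\circ$ via the injective coalgebra map $\phi$, invoke proposition \ref{tensorLengthInTheFiniteDualProp}, pick a minimal-length decomposition $\Delta(a)=\sum_{i=1}^n b_i\otimes c_i$ (so that lemma \ref{minTensorLengthLemma} gives linear independence of both families, and injectivity of $\phi$ forces $n\leq m$). The only divergence is in the final step, where you must show each $\phi(b_i)$ and $\phi(c_i)$ kills the codimension-$m$ ideal $I$. You invoke the abstract multilinear-algebra fact that the span of the first (resp.\ second) factors of a minimal expression equals the left (resp.\ right) support of the tensor and is therefore contained in $U\otimes U$-supports of any other expression; you correctly flag that this fact is not stated in the paper and would require a short sublemma. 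The paper instead reaches the same conclusion by a direct contradiction specific to this situation: it supposes $\phi(b_1)$ fails to kill some $f\in I$, picks vectors $v_1,\dots,v_n$ dual to the linearly independent $\phi(c_i)$, and observes that $\Delta'(\alpha)=\sum_i\phi(b_i)\otimes\phi(c_i)$ would then fail to kill $f\otimes v_1$, whereas $\Delta'(\alpha)$ must kill it because it factors through $\text{mult}$ followed by $\alpha$ and $I$ is an ideal. In effect the paper proves, hands-on and in place, the one inclusion of your abstract span lemma that the argument actually needs, at the cost of leaning on the explicit description of $\Delta'$. Your version is a bit more modular and would work verbatim for any tensor lying in $U\otimes U$; the paper's avoids stating or proving the extra lemma. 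Either route closes the proof.
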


\begin{proof}
Let $\Delta$ be the coalgebra structure on $\dirlim L_X^\circ$,
$\Delta^\prime$ that on $(\invlim L_X)^\circ$, and let $\phi(a) =
\alpha$.  As $\alpha$ has rank no greater than $m$, proposition
\ref{tensorLengthInTheFiniteDualProp} tells us that
$\Delta^\prime(\alpha)$ can be written as a sum of no more than
$m$ simple tensors.  As $\phi:\dirlim L_X^\circ \rightarrow
(\invlim L_X)^\circ$ is injective, and since $\phi$ is a map of
coalgebras, we have
\[ (\phi \otimes \phi)(\Delta(a)) = \Delta^\prime(\alpha) \]
which, by the previous corollary, shows that $\Delta(a)$ is a sum
of no more than $m$ simple tensors.  Then write
\[ \Delta(a) = \sum_{i=1}^n b_i \otimes c_i \]
where $n$ is minimal, and in particular so that the $b_i$ and the
$c_i$ are linearly independent, and so that $n \leq m$.  We claim
that all of the $b_i$ and $c_i$ have rank no greater than $m$.

Suppose not, and say $b_1$ has rank greater than $m$.  Let $I
\ideal \invlim L_X$ be an ideal of codimension $m$ killed by
$\alpha$. Then $\phi(b_1)$ cannot kill all of $I$; lets say it
doesn't kill $f \in I$.

Now since the $c_i$ are linearly independent, so are the
$\phi(c_i)$.  Then we can find linearly independent vectors $v_1,
\ldots, v_n \in \dirlim L_X$ such that $\phi(c_i)(v_j) =
\delta_{ij}$ (lemma 1.5.8 of \cite{hopfalgebras}).  Then (under
the isomorphism $(\dirlim L_X)^\circ \otimes (\dirlim L_X)^\circ
\isomorphic (\dirlim L_X \otimes \dirlim L_X)^\circ$), $\sum_i
\phi(b_i) \otimes \phi(c_i)$ does not kill the element $f \otimes
v_1$.

But $\Delta^\prime(\alpha)$ \emph{does} kill $f \otimes v_1$; its
action is given by the composition
\[ \dirlim L_X \otimes \dirlim L_X
\stackrel{\text{mult}}{\vlongrightarrow} \dirlim L_X
\stackrel{\alpha}{\longrightarrow} k \] and as $I$ is an ideal, $f
\otimes v_1$ gets mapped into $I$ under mult, and as $\alpha$
kills $I$, $\Delta^\prime(\alpha)$ kills $f \otimes v_1$.  But
this is absurd, since $\Delta^\prime(\alpha) = \sum_i \phi(a_i)
\otimes \phi(b_i)$.

Thus $\phi(b_1)$ cannot have rank greater than $m$, and the same
argument obviously applies to all of the $b_i$ and $c_i$.  This
completes the proof.
\end{proof}

Now let $(A_i,\Delta_i,\varepsilon_i)$ be an indexed collection of
Hopf algebras over the fields $k_i$, $\filtfont{U}$ an ultrafilter
on $I$, $\uprod A_i$ the ultraproduct (as a vector space) of the
$A_i$, and let $\phi_i:A_i \rightarrow (\invlim L_X)^\circ$ be the
map defined by diagram \ref{phiDefnDiagram_ResUprodOfHopfAlgebras}
for each $A_i$. Define the subset $A_R \subset \uprod A_i$ to
consist exactly of those elements $[a_i] \in \uprod A_i$ such that
$\text{rank}(a_i)$ (defined by each $\phi_i$) is bounded
(equivalently, constant); we call this subset the
\textbf{restricted ultraproduct} of the Hopf algebras $A_i$. We
show now that $A_R$ can be given the structure of a coalgebra.

Consider
\begin{diagram}
A_R & \subset & \uprod A_i & \rTo^{[\Delta_i]} & \uprod A_i
\otimes A_i \\
& \rdDashto(4,4)_\Delta & & & \uTo_{\Phi} \\
& &  & & \uprod A_i \otimes \uprod A_i \\
& & & & \uTo_{\subset} \\
& & & & A_R \otimes A_R
\end{diagram}
We want to show that there is a $\Delta$ making this diagram
commute, which is simply the assertion that the image of
$[\Delta_i]$, when restricted to $A_R$, is contained inside $A_R
\otimes A_R$.  For $[a_i] \in A_R$, say with rank $m$, write
\[ [\Delta_i(a_i)] = [\sum_{j=1}^m b_{ij} \otimes c_{ij} ] \]
where we can take $m$ to be constant over $i$ by proposition
\ref{RankAndTensorLengthAreNiceProposition}; this element is in
the image of $\Phi$.  Pass it down through $\Phi^{-1}$ to
\[ \sum_{j=1}^m [b_{ij}] \otimes [c_{ij}] \]
Again by proposition \ref{RankAndTensorLengthAreNiceProposition}
we can take all of the $b_{ij}$ and $c_{ij}$ to have rank $\leq
m$, showing that this expression is in fact contained in $A_R
\otimes A_R$.  Thus $\Delta$ does indeed exist.

We define a co-unit map $\varepsilon$ from $A_R$ to $\uprod k_i$
in the obvious manner, as the composition
\[ \varepsilon: A_R \subset \uprod A_i
\stackrel{[\varepsilon_i]}{\vlongrightarrow} \uprod k_i \] We
forego the proof that $(A_R,\Delta,\varepsilon)$ satisfy the
relevant diagrams making it a coalgebra.  There are two diagrams
to check, namely diagrams \ref{hopf1} and \ref{hopf2}; but these
follow from the almost everywhere commutativity of these diagrams
with respect to each $(A_i,\Delta_i,\varepsilon_i)$, by
application of the `if' direction of proposition \ref{homprop}
(which holds even in the non-boundedly finite dimensional case),
and the naturality of $\Phi$.

 We have not yet proved that $A_R$ is a Hopf algebra, that it is
closed under multiplication and can be given an antipode map. This
will follow later as we show that $A_R$ is coalgebra isomorphic to
a Hopf algebra, namely $A_\infty$, the representing Hopf algebra
of $\resprod \text{Comod}_{A_i}$.

\section{The Map from $A_\infty$ to $A_R$}

Here we define the map of coalgebras from our representing Hopf
algebra for $\resprod \text{Comod}_{A_i}$, called $A_{\infty}$, to
the coalgebra $A_R$ defined in the previous section, which we will
show in the next section is an isomorphism.

Let us begin by fixing some notation.  $I$ is an indexing set,
$k_i$ is a collection of fields indexed by $I$, $k$ is the
ultraproduct of those fields, $(A_i,\Delta_i,\varepsilon_i)$ is a
collection of Hopf algebras over those fields, $\catfont{C}_i$ is
the category $\text{Comod}_{A_i}$, $\omega_i$ is the fibre
(i.e.~forgetful) functor on each $\catfont{C}_i$, $\catfont{C}$ is
the restricted ultraproduct of the categories $\catfont{C}_i$, and
$\omega$ is the fibre functor (as defined in theorem
\ref{omegaisafiberfunctorthm}) on $\catfont{C}$. For an object $X$
of $\catfont{C}_i$, $L_{X}$ is the image of $\text{End}(\omega_i
\res \gen{X})$ inside $\text{End}(\omega_i(X))$, and similarly for
an object $[X_i]$ of $\catfont{C}$, $L_{[X_i]}$ is the image of
$\text{End}(\omega \res \gen{[X_i]})$ inside
$\text{End}(\omega([X_i]))$.

Each $A_i$ can be recovered as
\[ A_i = \dirlim_{X_i \in \catfont{C}_i} L_{X_i}^\circ \]
and we write the corresponding direct limit diagram as
\begin{equation}
\label{repHopfAlgdiagram1}
\begin{diagram}
& & \dirlim_{\catfont{C}_i} L_{X_i}^\circ & & \\
& \ruTo^{\phi_{Y_i}} & & \luTo^{\phi_{Z_i}} & \\
L_{Y_i}^\circ & & \rTo_{\phi_{Y_i,Z_i}} & & L_{Z_i}^\circ \\
\end{diagram}
\end{equation}
Note that we have used the same symbol $\phi_{\slot}$ for the
several such existing in each category; no confusion should
result.

We also have, in each category, the inverse limit diagram
\begin{diagram}
& & \invlim_{\catfont{C}_i} L_{X_i} & & \\
& \ldTo^{T_{Y_i}} & & \rdTo^{T_{Z_i}} \\
L_{Y_i} & & \lTo_{T_{Y_i,Z_i}} & & L_{Z_i} \\
\end{diagram}
and again we have used $T$ to stand for the transition maps in all
of the categories.  We also have the unique map $\phi_i$ making
\begin{diagram}
 & & (\invlim_{\catfont{C}_i} L_{X_i})^\circ & & \\
 & \ruTo(2,4)^{T_{Y_i}^\circ} & \uDashto^{\phi_i} & \luTo(2,4)^{T_{Z_i}^\circ} & \\
 &  & \dirlim_{\catfont{C}_i} L_{X_i}^\circ & & \\
 & \ruTo_{\phi_{Y_i}} & & \luTo_{\phi_{Z_i}} & \\
 L_{Y_i}^\circ & & \rTo_{\phi_{Y_i,Z_i}}  & & L_{Z_i}^\circ \\
\end{diagram}
commute, as defined by diagram
\ref{phiDefnDiagram_ResUprodOfHopfAlgebras}. As $A_{\infty}$ is by
definition the representing Hopf algebra of $\catfont{C}$, it can
be recovered as a direct limit according to the diagram
\begin{equation}
\label{DirectLimitDiagram_TheMapFromA_Rto}
\begin{diagram}
& & \dirlim_{\catfont{C}} L_{[X_i]}^\circ & & \\
& \ruTo^{\phi_{[Y_i]}} & & \luTo^{\phi_{[Z_i]}} \\
L_{[Y_i]}^\circ & & \rTo_{\phi_{[Y_i],[Z_i]}} & & L_{[Z_i]}^\circ
\\
\end{diagram}
\end{equation}
where the direct system is, as usual, the objects of the category
$\catfont{C}$, with $[X_i] \leq [Y_i]$ meaning that $[X_i] \in
\gen{[Y_i]}$, and the $\phi_{[Y_i],[Z_i]}$ being dual to the
transition mappings $T_{[Y_i],[Z_i]}$.

\begin{prop}
\label{everythingisilluminatedprop}

Let $[X_i],[Y_i]$ be objects of $\catfont{C}$ with $[Y_i] \in
\gen{[X_i]}$.

\begin{enumerate}
\item{ $L_{[X_i]} = \uprod L_{X_i}$ under the isomorphism
$\text{End}_k(\omega([X_i])) \isomorphic \uprod
\text{End}_{k_i}(\omega_i(X_i))$} \item{$L_{[X_i]}^\circ = \uprod
L_{X_i}^\circ$} \item{The \index{transition mapping} transition
mapping $T_{[Y_i],[X_i]}:L_{[X_i]} \rightarrow L_{[Y_i]}$ can be
identified with the ultraproduct of the transition mappings,
$[T_{X_i,Y_i}]:\uprod L_{X_i} \rightarrow \uprod L_{Y_i}$}
\item{The natural $L_{[X_i]}$-module structure on $\omega([X_i])$
can be identified with the ultraproduct of the $L_{X_i}$ module
structures on $\omega_i(X_i)$} \item{The natural
$L_{[X_i]}^\circ$-comodule structure on $\omega([X_i])$ can be
identified with the ultraproduct of the $L_{X_i}^\circ$-comodule
structures on $\omega_i(X_i)$}

\end{enumerate}
\end{prop}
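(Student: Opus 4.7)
The plan is to establish all five assertions by transferring first-order properties through \L os's theorem, leveraging the constant-dimensionality hypothesis built into the restricted ultraproduct $\catfont{C}$. Throughout, Proposition \ref{homlemma} identifies $\text{End}_k(\omega([X_i]))$ with $\uprod \text{End}_{k_i}(\omega_i(X_i))$, so every linear endomorphism of $\omega([X_i])$ has a well-defined coordinate representation $[\phi_i]$.

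For part 1, I plan to invoke theorem \ref{L_Xtheorem}, which furnishes a first-order characterization of $L_X$ as the image of $\omega(P_X^\alpha)$ under the canonical map $\psi_\alpha : \omega(X^n) \isoarrow \omega(X)^* \otimes \omega(X)$. Fixing almost-everywhere compatible ordered bases of the $\omega_i(X_i)$ via Proposition \ref{basesarepreservedprop} yields coherent isomorphisms $\alpha_i$ and a corresponding $[\alpha_i]$ for $\omega([X_i])$. I will then prove the key equality $P_{[X_i]}^{[\alpha_i]} = [P_{X_i}^{\alpha_i}]$ by double inclusion: one direction uses that $[P_{X_i}^{\alpha_i}]$ is a subobject of $[X_i]^n$ whose image under $\omega$ contains $\psi_{[\alpha_i]}^{-1}(\text{id}) = [\psi_{\alpha_i}^{-1}(\text{id}_i)]$, forcing $P_{[X_i]}^{[\alpha_i]} \subset [P_{X_i}^{\alpha_i}]$ by minimality of the former; the other direction writes $P_{[X_i]}^{[\alpha_i]} = [P_i]$ and uses \L os to conclude that $P_i$ must almost everywhere contain $P_{X_i}^{\alpha_i}$. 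Given this identification, theorem \ref{L_Xtheorem} immediately delivers $L_{[X_i]} = \uprod L_{X_i}$. Part 2 is then a direct consequence, since $L_{X_i}$ has dimension at most $n^2$ and so Proposition \ref{uprodoverdualsprop} gives $L_{[X_i]}^\circ = (\uprod L_{X_i})^\circ \isomorphic \uprod L_{X_i}^\circ$.

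For part 3, the transition mapping $T_{Y,X} : L_X \to L_Y$ is characterized (see page \pageref{endomegadiscussion}) as the unique assignment making the naturality squares for the morphisms linking $Y$ to $X^n$ commute. For $[\phi_i] \in L_{[X_i]}$, I will show that $[T_{Y_i, X_i}(\phi_i)] \in L_{[Y_i]}$ satisfies the corresponding squares, using Proposition \ref{homprop} to pass almost-everywhere commutativity to commutativity in the ultraproduct; uniqueness then forces this to equal $T_{[Y_i],[X_i]}([\phi_i])$. Parts 4 and 5 are bookkeeping: the $L_X$-action on $\omega(X)$ is simply evaluation $\lambda \cdot v = \lambda(v)$, and ultraproducts of compositions coincide with compositions of ultraproducts, so $[\lambda_i] \cdot [v_i] = [\lambda_i(v_i)]$; the $L_X^\circ$-comodule structure is obtained from the module structure through the natural construction preceding theorem \ref{thegeneralequivalencethm}, which is built from tensor products, compositions, and the canonical isomorphism $L \isomorphic L^{\circ *}$, each of which commutes with ultraproducts in the boundedly finite-dimensional setting (Propositions \ref{homlemma}, \ref{ultratensorproductprop}, \ref{uprodoverdualsprop}).

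The main obstacle will be the identification $P_{[X_i]}^{[\alpha_i]} = [P_{X_i}^{\alpha_i}]$. The notion of \emph{smallest subobject} is a universal condition, and its preservation under ultraproducts needs the translation of a seemingly categorical statement into a first-order sentence in the language of abelian tensor categories; choosing the $\alpha_i$ coordinate-wise and uniformly is essential so that the element whose presence in $\omega(P)$ must be tested is literally the ultraproduct of the element tested at each coordinate. Once this subobject identification is established, all remaining steps reduce to routine applications of \L os and the ultraproduct-compatibility lemmas developed in the preceding chapters.
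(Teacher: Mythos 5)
Your proposal is correct and follows essentially the same route as the paper's own proof: it appeals to theorem \ref{L_Xtheorem} together with the first-order nature of "smallest subobject whose image under $\psi_\alpha$ contains $\text{id}$" (via {\L}os) to establish $P_{[X_i]}^{[\alpha_i]} = [P_{X_i}^{\alpha_i}]$ and hence part 1, derives part 2 from Proposition \ref{uprodoverdualsprop}, and handles part 3 by the uniqueness characterization of transition maps together with Proposition \ref{homprop}, treating 4 and 5 as bookkeeping. Your explicit double-inclusion argument for the subobject identification is just an unwound version of the paper's one-line invocation of first-order equivalence, so the two proofs are substantively identical.
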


\begin{proof}
Let $[X_i]$ have dimension $n$. To prove the first claim, we work
through the characterization of $L_{[X_i]}$ given by theorem
\ref{L_Xtheorem}. We start by fixing an isomorphism $\alpha:k^n
\rightarrow \omega([X_i])^*$.  As $\omega([X_i])^* = \left(\uprod
\omega_i(X_i)\right)^*$ can be identified with $\uprod
\omega_i(X_i)^*$, $\alpha$ can be uniquely written as
$[\alpha_i:k_i^n \rightarrow \omega_i(X_i)^*]$.  For our $\psi$ we
may as well choose the identity map $[X_i]^n \rightarrow [X_i]^n$,
since any subobject factors through it; then $\psi$ is the
ultraproduct of the identity maps $[\psi^i = 1_i]:[X_i^n]
\rightarrow [X_i^n]$. It is also clear that the map
$\psi_{\alpha}:\omega([X_i]^n) \rightarrow \omega([X_i])^* \otimes
\omega([X_i])$ can be identified with the ultraproduct of the maps
$\psi^i_{\alpha_i}:\omega_i(X_i)^n \rightarrow \omega_i(X_i)^*
\otimes \omega_i(X_i)$, if we allow a factorization through the
isomorphism $\uprod \omega_i(X_i)^* \otimes \omega_i(X_i)
\isomorphic (\uprod \omega_i(X_i))^* \otimes \uprod
\omega_i(X_i)$.

Next we are asked to find $P_{[X_i]}^\alpha$, the smallest
subobject of $[X_i]^n$ such that
$\psi_\alpha(\omega(P_{[X_i]}^\alpha))$ contains
$\text{id}:\omega([X_i]) \rightarrow \omega([X_i])$.
$P_{[X_i]}^\alpha$ is an object of $\resprod \catfont{C}_i$, and
can be written as $P_{[X_i]}^\alpha = [Y_i]$ for some collection
$Y_i$ of objects of $\catfont{C}_i$.  We claim $Y_i =
P_{X_i}^{\alpha_i}$ for almost every $i$. We can identify the
element $\text{id} \in \omega([X_i])^* \otimes \omega([X_i])$ with
the element $[\text{id}_i] \in \uprod \omega_i(X_i)^* \otimes
\omega_i(X_i)$, and the concepts of ``smallest'' and ``subobject
of'' are both first-order.  Thus the following statements are
equivalent:

\begin{enumerate}
\item{$[Y_i]$ is the smallest subobject of $[X_i]^n$ such that
$\psi_\alpha(\omega([Y_i]))$ contains $\text{id}:\omega([X_i])
\rightarrow \omega([X_i])$} \item{For almost every $i$, $Y_i$ is
the smallest subobject of $X_i^n$ such that
$\psi^i_{\alpha_i}(\omega_i(Y_i))$ contains $\text{id}_i :
\omega_i(X_i) \rightarrow \omega_i(X_i)$ }

\end{enumerate}
Then we must have $P_{[X_i]}^\alpha = [P_{X_i}^{\alpha_i}]$,
whence
\[ L_{[X_i]} = \omega(P_{[X_i]}^\alpha) = \uprod
\omega_i(P_{X_i}^{\alpha_i}) = \uprod L_{X_i} \]
and claim 1.~is
proved.

Claim 2.~is immediate, as the taking of duals is known to
distribute over ultraproducts for boundedly finite dimensional
collections of algebras (proposition \ref{uprodoverdualsprop}).

For claim 3.~we note that since $[X_i] \in \gen{[Y_i]}$, $X_i \in
\gen{Y_i}$ for almost every $i$ (lemma \ref{converseUniGenLem}),
and so $T_{X_i,Y_i}$ is defined for almost every $i$.  To prove
the claim we look to the definition of the transition mapping. Let
$[Y_i] \in \gen{[X_i]}$, and suppose for example that $[Y_i]$ is a
subobject of $[X_i]$, under the map $[\iota_i]$. By 1.~above,
every member of $L_{[X_i]}$ is of the form $[\phi_i]$, where
$\phi_i \in L_{X_i}$ for almost every $i$. Then the image of
$[\phi_i]$ under the transition mapping $T_{[Y_i],[X_i]}$ is the
unique map $[\sigma_i]$ that makes
\begin{diagram}
\uprod \omega_i(Y_i) & \rTo^{[\iota_i]} & \uprod \omega_i(X_i)
\\
\dTo^{[\sigma_i]} & & \dTo_{[\phi_i]} \\
\uprod \omega_i(Y_i) & \rTo_{[\iota_i]} & \uprod \omega_i(X_i)
\\
\end{diagram}
commute, which is equivalent to the almost everywhere
commutativity of
\begin{diagram}
\omega_i(Y_i) & \rTo^{\iota_i} & \omega_i(X_i) \\
\dTo^{\sigma_i} & & \dTo_{\phi_i} \\
\omega_i(Y_i) & \rTo_{\iota_i} & \omega_i(X_i) \\
\end{diagram}
which is equivalent to $\sigma_i = T_{Y_i,X_i}(\phi_i)$ for almost
every $i$.  Thus $T_{[Y_i],[X_i]}([\phi_i]) =
[T_{Y_i,X_i}(\phi_i)]$, and claim 3.~is proved.

Claim 4.~is merely the statement that, for $[\phi_i] \in
L_{[X_i]}$ and $[x_i] \in \omega([X_i])$, $[\phi_i]([x_i]) =
[\phi_i(x_i)]$, which is true by definition.  Claim 5.~is
similarly proved.

\end{proof}

Part 2.~of the above proposition tells us that, instead of the
direct limit diagram \ref{DirectLimitDiagram_TheMapFromA_Rto}, we
can write instead
\begin{diagram}
& & \dirlim_{\catfont{C}} \uprod L_{X_i}^\circ & & \\
& \ruTo^{\phi_{[Y_i]}} & & \luTo^{\phi_{[Z_i]}} \\
\uprod L_{Y_i}^\circ && \rTo_{[\phi_{Y_i,Z_i}]} & & \uprod
L_{Z_i}^\circ \\
\end{diagram}
with the understanding that $\phi_{[Z_i]}$ is factoring through
the isomorphism $L_{[Z_i]}^\circ \isomorphic \uprod
L_{Z_i}^\circ$.

\begin{lem}
\label{converseUniGenLem} \label{mylemma1} If $[X_i],[Y_i]$ are
objects of $\catfont{C}$, and if $[X_i] \in \gen{[Y_i]}$, then
$X_i \in \gen{Y_i}$ for almost every $i$.
\end{lem}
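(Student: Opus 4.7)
The plan is to reduce the claim to an application of $\L$os's theorem, by observing that for each fixed positive integer $n$ the statement ``$X$ is isomorphic to a subobject of a quotient of $Y^n$'' is expressible by a first-order formula in the language of abelian tensor categories. Unfolding the hypothesis $[X_i] \in \gen{[Y_i]}$ supplies a specific $n$ together with an object $[W_i]$ of $\catfont{C}$, an epimorphism $[\pi_i]:[Y_i]^n \rightarrow [W_i]$, and a monomorphism $[\iota_i]:[X_i] \rightarrow [W_i]$. This $n$ will be fixed throughout.

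For any fixed positive integer $n$, consider the formula
\begin{gather*}
 \Phi_n(X,Y) \stackrel{\text{def}}{=} (\exists A, W, \pi, \iota)(\text{``}A \text{ is an $n$-fold biproduct of } Y\text{''} \\
  \myand \pi:A \rightarrow W \myand \iota:X \rightarrow W \myand \text{epic}(\pi) \myand \text{monic}(\iota)).
\end{gather*}
Since $n$ is fixed, the clause about $A$ can be spelled out with $n-1$ additional existential quantifiers and applications of the formula $\text{Sum}$ from section \ref{axiomsforanabeliancategory}, so $\Phi_n$ is a genuine first-order formula. The finiteness of $n$ is essential here; an infinite disjunction over $n$ would not be first-order, which is why we must commit to a single $n$ supplied by the hypothesis. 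By construction, $\Phi_n([X_i],[Y_i])$ is satisfied in $\catfont{C}$, and because $\catfont{C}$ is a full subcategory of $\uprod \catfont{C}_i$ the same witnesses serve to satisfy $\Phi_n([X_i],[Y_i])$ in $\uprod \catfont{C}_i$ as well. $\L$os's theorem (corollary \ref{loscor}) then yields that $\Phi_n(X_i,Y_i)$ is satisfied in $\catfont{C}_i$ for almost every $i$, which is precisely the statement that $X_i \in \gen{Y_i}$ for almost every $i$.

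The only step deserving scrutiny is the passage between $\catfont{C}$ and $\uprod \catfont{C}_i$: \textit{a priori} one might worry about witnessing objects that live in $\uprod \catfont{C}_i$ but fail to lie in $\catfont{C}$. The hypothesis, however, hands us witnesses in $\catfont{C}$ to begin with; and in fact, by the proof of theorem \ref{resprodistannakianthm} the restricted ultraproduct is closed under biproducts, subobjects, and quotients, so the truth of $\Phi_n$ in the two categories coincides in any case. Thus no further obstacle arises, and the lemma follows.
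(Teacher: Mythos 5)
Your proof is correct and takes essentially the same approach as the paper: both reduce the claim to the first-order expressibility, for fixed $n$, of ``$X$ is a subobject of a quotient of $Y^n$,'' then apply $\L$os's theorem. You are somewhat more careful than the paper in explicitly justifying why the formula holds in the full ultraproduct $\uprod \catfont{C}_i$ (where $\L$os's theorem applies) given that the hypothesis is posed in the restricted subcategory $\catfont{C}$ --- a subtlety the paper's terse proof elides, and which is correctly handled here via fullness of $\catfont{C}$ together with the agreement of monomorphisms and epimorphisms established in the proof of lemma \ref{tannakiansubcategorylemma}.
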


\begin{proof}
All of the concepts ``is a subobject of'', ``is a quotient of'',
and (for \emph{fixed} $n$) ``is isomorphic to an $n$-fold direct
sum of'' are first-order statements in the language of abelian
tensor categories. To say that $[X_i] \in \gen{[Y_i]}$ means that,
for some fixed $n$, $[X_i]$ is a subobject of a quotient of
$[Y_i]^n$. Apply theorem \ref{Los'sTheorem} to see that the same
must be true for almost every $i$.
\end{proof}

Now, let us take diagram $\ref{repHopfAlgdiagram1}$ and apply
ultraproducts:
\begin{diagram}
& & \uprod \dirlim_{\catfont{C}_i} L_{X_i}^\circ & & \\
& \ruTo^{[\phi_{Y_i}]} & & \luTo^{[\phi_{Z_i}]} \\
\uprod L_{Y_i}^\circ & & \rTo_{[\phi_{Y_i,Z_i}]} & & \uprod
L_{Z_i}^\circ \\
\end{diagram}
As it stands this diagram is a bit nonsensical: $\uprod
\dirlim_{\catfont{C}_i} L_{X_i}^\circ$ is little more than a set,
being the ultraproduct of a collection of Hopf algebras, and
lacking any kind of coalgebra structure.  We claim however that
each of the maps $[\phi_{Y_i}]$ have their image inside $A_R
\subset \uprod A_i = \uprod \dirlim_{\catfont{C}_i}
L_{X_i}^\circ$, the restricted ultraproduct of the Hopf algebras
$A_i$.

\begin{prop}
The image of each $[\phi_{Y_i}]$, for $[Y_i] \in \catfont{C}$, is
contained inside $A_R$.
\end{prop}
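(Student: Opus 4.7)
The plan is to bound the rank of $\phi_{Y_i}(\alpha_i)$ almost uniformly in $i$, where $[\alpha_i]$ is an arbitrary element of $\uprod L_{Y_i}^\circ$. Since $[\phi_{Y_i}]$ is defined as the ultraproduct of the linear maps $\phi_{Y_i}$, we have $[\phi_{Y_i}]([\alpha_i]) = [\phi_{Y_i}(\alpha_i)]$, and to show this lies in $A_R$ we need only verify that the sequence of integers $\text{rank}(\phi_{Y_i}(\alpha_i))$ is almost everywhere bounded.

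First I would unravel the definition of rank. By definition, $\text{rank}(\phi_{Y_i}(\alpha_i))$ is the rank of $\phi_i(\phi_{Y_i}(\alpha_i)) \in (\invlim L_{X_i})^\circ$. The defining commutative diagram of $\phi_i$ (diagram \ref{phiDefnDiagram_ResUprodOfHopfAlgebras}, applied to each $\catfont{C}_i$) asserts exactly that $\phi_i \circ \phi_{Y_i} = T_{Y_i}^\circ$, so the functional in question is the composition
\[
\invlim L_{X_i} \stackrel{T_{Y_i}}{\longrightarrow} L_{Y_i} \stackrel{\alpha_i}{\longrightarrow} k_i.
\]
The point now is that this functional automatically kills the two-sided ideal $\ker(T_{Y_i})$ of $\invlim L_{X_i}$, whose codimension equals $\text{dim}(\text{image}(T_{Y_i})) \leq \text{dim}(L_{Y_i})$.

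The key observation is then that $\text{dim}(L_{Y_i})$ is almost everywhere bounded. Since $L_{Y_i}$ is by definition a subalgebra of $\text{End}_{k_i}(\omega_i(Y_i))$, we have $\text{dim}(L_{Y_i}) \leq \text{dim}(\omega_i(Y_i))^2$. But $[Y_i]$ is an object of $\catfont{C} = \resprod \catfont{C}_i$, so by the very definition of the restricted ultraproduct (and proposition \ref{prop1}) the dimensions $\text{dim}(\omega_i(Y_i))$ are almost everywhere equal to a constant $n = \text{dim}(\omega([Y_i]))$. Consequently $\text{rank}(\phi_{Y_i}(\alpha_i)) \leq n^2$ for almost every $i$, which shows that $[\phi_{Y_i}(\alpha_i)] \in A_R$.

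I do not anticipate a significant obstacle here; the entire argument essentially amounts to the observation that $L_{Y_i}$ has uniformly bounded dimension once $[Y_i]$ is fixed in $\catfont{C}$, so the only truly non-trivial input is the identification of the rank in $A_i$ with the rank in $(\invlim L_{X_i})^\circ$ via the map $\phi_i$, which is already built into the definition of rank given just before the statement.
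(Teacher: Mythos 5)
Your argument is correct and follows essentially the same route as the paper's: both use the defining diagram to write $\phi_i(\phi_{Y_i}(\alpha_i))$ as $T_{Y_i} \circ \alpha_i$, observe that this functional kills the ideal $\ker(T_{Y_i})$, and bound its codimension by $\dim(L_{Y_i}) \leq \dim(\omega_i(Y_i))^2 = n^2$, with $n$ constant almost everywhere because $[Y_i]$ lies in the restricted ultraproduct. Nothing is missing.
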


\begin{proof}
Consider, for fixed $i$, the diagram
\begin{diagram}
 & & (\invlim_{\catfont{C}_i} L_{X_i})^\circ & & \\
 & \ruTo(2,4)^{T_{Y_i}^\circ} & \uDashto^{\phi_i} & \luTo(2,4)^{T_{Z_i}^\circ} & \\
 &  & \dirlim_{\catfont{C}_i} L_{X_i}^\circ & & \\
 & \ruTo_{\phi_{Y_i}} & & \luTo_{\phi_{Z_i}} & \\
 L_{Y_i}^\circ & & \rTo_{\phi_{Y_i,Z_i}}  & & L_{Z_i}^\circ \\
\end{diagram}
We claim that if $Y_i$ has dimension $n$, then for any $\alpha \in
L_{Y_i}^\circ$, $\phi_{Y_i}(\alpha)$ has rank no larger than
$n^2$. Commutativity of the above gives
\[ \phi_i(\phi_{Y_i}(\alpha)) = T_{Y_i}^\circ(\alpha) \]
which is equal to the composition
\[ \invlim_{\catfont{C}_i} L_{X_i}
\stackrel{T_{Y_i}}{\vlongrightarrow} L_{Y_i}
\stackrel{\alpha}{\longrightarrow} k \]

Now $L_{Y_i}$, being a subalgebra of
$\text{End}_{\text{Vec}_{k_i}}(\omega_i(Y_i))$, certainly has
dimension no larger than $n^2$; further, the kernel of $T_{Y_i}
\circ \alpha$ contains the kernel of $T_{Y_i}$.  But $T_{Y_i}$ is
an algebra map, and so its kernel \emph{is} an ideal of $\invlim
L_{X_i}$, and has codimension no larger than $n^2$.  Thus
$\phi_i(\phi_{Y_i}(\alpha))$ has rank no larger than $n^2$, hence
by definition $\phi_{Y_i}(\alpha)$ has rank no larger than $n^2$.

Then if $[Y_i] \in \catfont{C}$, say of constant dimension $n$,
and if $[\alpha_i] \in \uprod L_{X_i}^\circ$, then
$[\phi_{Y_i}]([\alpha_i]) \stackrel{\text{def}}{=}
[\phi_{Y_i}(\alpha_i)]$ has bounded rank, each being no larger
than $n^2$; thus it is contained in $A_R$.
\end{proof}

We now have two diagrams
\[
\begin{diagram}
& & \dirlim_{\catfont{C}} \uprod L_{X_i}^\circ & & \\
& \ruTo^{\phi_{[Y_i]}} & & \luTo^{\phi_{[Z_i]}} \\
\uprod L_{Y_i}^\circ && \rTo_{[\phi_{Y_i,Z_i}]} & & \uprod
L_{Z_i}^\circ \\
\end{diagram}
\hspace{2cm}
\begin{diagram}
& & A_R & & \\
& \ruTo^{[\phi_{Y_i}]} & & \luTo^{[\phi_{Z_i}]} \\
\uprod L_{Y_i}^\circ & & \rTo_{[\phi_{Y_i,Z_i}]} & & \uprod
L_{Z_i}^\circ \\
\end{diagram}
\]
where, in the second diagram we have replaced $\uprod
\dirlim_{\catfont{C}_i} L_{X_i}^\circ$ with $A_R$, as we may by
the previous proposition; some routine arguing shows that since
$\phi_{Y_i}: L_{Y_i}^\circ \rightarrow A_i$ is a coalgebra map for
every $i$, then $[\phi_{Y_i}]:\uprod L_{Y_i}^\circ \rightarrow
A_R$ is also a coalgebra map. Some care must be taken here; on the
right, the map $[\phi_{Y_i,Z_i}]$ is defined whenever $Y_i \in
\gen{Z_i}$ for almost every $i$, while on the left, it is only
defined when $[Y_i] \in \gen{[Z_i]}$. Nonetheless, by lemma
\ref{mylemma1}, whenever it is defined on the left, it is defined
on the right. We can now appeal to the universal property of
direct limits to invoke the existence of a unique coalgebra map
$\Omega$ making the following diagram commute:
\begin{diagram}
 & & A_R & & \\
 & \ruTo(2,4)^{[\phi_{Y_i}]} & \uDashto^\Omega & \luTo(2,4)^{[\phi_{Z_i}]} & \\
 &  & \dirlim_{\catfont{C}} \uprod L_{X_i}^\circ & & \\
 & \ruTo_{\phi_{[Y_i]}} & & \luTo_{\phi_{[Z_i]}} & \\
 \uprod L_{Y_i}^\circ & & \rTo_{[\phi_{Y_i,Z_i}]}  & & \uprod L_{Z_i}^\circ \\
\end{diagram}
This $\Omega:A_\infty \rightarrow A_R$ is our claimed isomorphism
of coalgebras, later to be shown, of Hopf algebras.

\section{$\Omega$ is an Isomorphism}

This $\Omega$, while difficult to define, is not that difficult to
describe.  A typical element of $A_\infty = \dirlim \uprod
L_{X_i}^\circ$ looks like
\[ [[\alpha_i:L_{X_i} \rightarrow k_i]_\filtfont{U}]_\catfont{C} \]
That is, it is an equivalence class of equivalence classes of
linear functionals, with $\filtfont{U}$ denoting the equivalence
defined by the ultraproduct with respect to the ultrafilter
$\filtfont{U}$, and $\catfont{C}$ denoting the equivalence defined
by the direct limit over $\catfont{C}$. Each $\alpha_i$ is an
arbitrary linear functional, subject only to the restriction that
the objects $X_i$ have bounded dimension.

 A typical element of $A_R
\subset \uprod \dirlim L_{X_i}^\circ$ on the other hand looks like
\[ [[\alpha_i:L_{X_i} \rightarrow k_i]_{\catfont{C}_i}]_\filtfont{U} \]
where $\catfont{C}_i$ denotes the equivalence defined by the
direct limit over each $\catfont{C}_i$.  The $X_i$ here are not
assumed to have  bounded dimension; only that the functionals
$\alpha_i$ have bounded rank.

The action of $\Omega$ is simple then:
\begin{equation}
\label{actionOfOmegaEquation}
 \Omega: [[\alpha_i:L_{X_i}
\rightarrow k_i]_\filtfont{U}]_{\catfont{C}} \longmapsto
[[\alpha_i:L_{X_i} \rightarrow k_i]_{\catfont{C}_i}]_\filtfont{U}
\end{equation}
 To see it this way,
it is not at all obvious that it is well-defined, or that it is a
map of coalgebras, but we know it is, via the way we constructed
it.

\begin{lem}
\label{FromRankToDimensionLemma} Let $\catfont{C}$ be the category
$\text{Comod}_A$, and let $[\alpha:L_Y \rightarrow k]$ be an
element of $\dirlim_\catfont{C} L_X^\circ$ which has rank no
greater than $m$.  Then $[\alpha:L_Y \rightarrow k]$ can be
written as
\[ [\beta:L_C \rightarrow k] \]
where $C$ is some subcoalgebra of $A$ having dimension no greater
than $m$.
\end{lem}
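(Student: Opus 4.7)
The plan is to exploit theorem \ref{subcoalgebratheorem}, which converts a surjection out of $L_D$ into a subcoalgebra of $D$, together with the duality $L_C \isomorphic C^*$ of theorem \ref{adualitytheorem}, which says $\dim L_C = \dim C$. Representing rank downstairs inside a finite dimensional $L_D^\circ$ will translate the rank bound on $[\alpha]$ into an actual codimension bound on an ideal of $L_D$, at which point the subcoalgebra will essentially drop into our laps.

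First I would replace $\alpha$ with a representative defined on some $L_D$ with $D$ a finite dimensional subcoalgebra of $A$. As discussed at the outset of chapter \ref{adualitytheoremchapter}, any object $Y$ of $\catfont{C}$ embeds into $D^n$ for some subcoalgebra $D$ of $A$ (apply theorems \ref{regreptheorem} and \ref{fundamentaltheoremofcoalgebras}), hence $Y \in \gen{D}$. Under the transition map $T_{Y,D}:L_D \rightarrow L_Y$, the functional $\alpha':=\alpha \circ T_{Y,D} \in L_D^\circ$ is identified with $\alpha$ in $\dirlim_{\catfont{C}} L_X^\circ$ (this is the very content of the direct system $\phi_{Y,D} = T_{Y,D}^\circ$). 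Thus $[\alpha:L_Y \rightarrow k] = [\alpha^\prime:L_D \rightarrow k]$.

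Next I would compare the rank of $[\alpha]$ in $A$ with the rank of $\alpha^\prime \in L_D^\circ$. By definition the former is the rank of $\phi([\alpha^\prime]) = \alpha^\prime \circ T_D:\invlim_{\catfont{C}} L_X \rightarrow k$. Lemma \ref{T_CissurjectiveLemma_TheRepHopfAlgOf} says $T_D$ is a surjective algebra map, so sending an ideal $I \lhd L_D$ to $T_D^{-1}(I)$ gives an inclusion-preserving codimension-preserving bijection between ideals of $L_D$ and ideals of $\invlim_{\catfont{C}} L_X$ containing $\ker T_D$. Since any ideal killed by $\alpha^\prime \circ T_D$ can be enlarged by $\ker T_D$ without increasing codimension or losing the killing property, the minimum codimension of an ideal killed by $\alpha^\prime \circ T_D$ equals the minimum codimension of an ideal killed by $\alpha^\prime$ in $L_D$. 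Hence $\alpha^\prime$ kills some ideal $I \lhd L_D$ with $\dim_k L_D/I \leq m$.

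The remaining step is to apply theorem \ref{subcoalgebratheorem} to the canonical surjection $\pi:L_D \twoheadrightarrow L_D/I$: there is a subcoalgebra $C \subseteq D$ and an algebra isomorphism $L_D/I \isomorphic L_C$ identifying $\pi$ with the transition map $T_{C,D}:L_D \rightarrow L_C$. Because $\alpha^\prime$ vanishes on $I$, it factors as $\alpha^\prime = \bar\alpha \circ T_{C,D}$ for a unique $\bar\alpha:L_C \rightarrow k$, so in the direct limit $[\alpha^\prime] = [\bar\alpha]$. Finally, theorem \ref{adualitytheorem} gives $L_C \isomorphic C^*$, whence $\dim C = \dim L_C = \dim L_D/I \leq m$, and we are done. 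The only point requiring genuine care is the rank comparison in the middle step; once phrased in terms of the ideal correspondence under the surjective map $T_D$, everything else is essentially bookkeeping on the direct system.
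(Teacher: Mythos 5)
Your proof is correct and takes essentially the same route as the paper: replace the representative on $L_D$ for a subcoalgebra $D$ with $Y \in \gen{D}$, translate the rank bound into an ideal of $L_D$ of small codimension (the paper does this by enlarging $I \lhd \invlim L_X$ to contain $\ker T_D$ and then observing $(\invlim L_X)/I$ is a quotient of $L_D$, whereas you state the equivalent ideal correspondence directly), and then invoke theorem \ref{subcoalgebratheorem} to realize the quotient as $L_C$ for a subcoalgebra $C \subseteq D$ with $\dim C = \dim L_C \leq m$ by theorem \ref{adualitytheorem}. The labeling of $C$ and $D$ is swapped relative to the paper, but the argument is the same.
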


\begin{proof}
$Y$ is in the principal subcategory generated by some subcoalgebra
$C$ of $A$, and so we have the map $L_Y^\circ
\stackrel{\phi_{Y,C}}{\vlongrightarrow} L_C^\circ$; this shows
that we may as well take $[\alpha:L_Y \rightarrow k]$ to be
$[\gamma:L_C \rightarrow k]$ for some $\gamma \in L_C^\circ$.  To
say that this element has rank no greater than $m$ is to say that
the composition
\[ \invlim_\catfont{C} L_X \stackrel{T_C}{\longrightarrow} L_C
\stackrel{\gamma}{\longrightarrow} k \] kills an ideal $I \ideal
\invlim L_X$ of codimension no greater than $m$.  Let $J$ be the
kernel of $T_C$. We can assume that $I$ contains $J$; if not,
enlarge $I$ to $I+J$, which is still an ideal contained in
$\text{ker}(T_C \circ \gamma)$ having codimension no larger than
$m$.  Then we have a commutative diagram
\begin{diagram}
\invlim_\catfont{C} L_X & \rTo^{T_C} & L_C  \\
& \rdTo_\pi & \dTo_{\pi^\prime}  \\
& & (\invlim_\catfont{C} L_X)/I  \\
\end{diagram}
where $\pi$ is the natural projection, and $\pi^\prime$ is the
unique surjective map gotten because $J \subset I$. As $(\invlim
L_X)/I$ is a quotient algebra of $L_C$, theorem
\ref{subcoalgebratheorem} guarantees that it is isomorphic to
$L_D$ for some subcoalgebra $D$ of $C$, and that under this
identification we can take $\pi^\prime$ to be the transition map
$T_{D,C}$.  Thus we have the commutative diagram
\begin{diagram}
\invlim_\catfont{C} L_X & \rTo^{T_C} & L_C \\
& \rdTo_{T_D} & \dTo_{T_{D,C}}\\
& & L_D \\
\end{diagram}
with $L_D$ of dimension no greater than $m$. And since
$\text{ker}(\gamma) \supset T_C(I) = \text{ker}(T_{D,C})$, there
exists a linear functional $\beta$ making
\begin{diagram}
L_C & \rTo^\gamma & k \\
\dTo^{T_{D,C}} & \ruTo_\beta \\
L_D
\end{diagram}
commute.  By definition then, $\gamma$ and $\beta$ are equal in
the direct limit.  We can thus write
\[ [\gamma:L_C \rightarrow k] = [\beta:L_D: \rightarrow k] \]
Finally, since $L_D$ has dimension no greater than $m$ and $D$ is
a coalgebra, theorem \ref{adualitytheorem} tells us that $D
\isomorphic L_D^\circ$ has dimension no greater than $m$.  This
completes the proof.
\end{proof}

\begin{prop}
\label{OmegaIsSurjectiveProp} The map $\Omega$ is surjective.
\end{prop}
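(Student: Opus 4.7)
The plan is to use the preceding lemma \ref{FromRankToDimensionLemma} to replace an arbitrary bounded-rank representative by one supported on a coalgebra of bounded dimension, and then realize that representative as $\Omega$ of a specific element of $A_\infty$.

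First, take an arbitrary element of $A_R$. By construction it can be written as $[[\alpha_i : L_{X_i} \to k_i]_{\catfont{C}_i}]_\filtfont{U}$ where, without loss of generality, every $\alpha_i$ has rank bounded by some fixed $m$ (the bound that makes the element land in $A_R$). Applying lemma \ref{FromRankToDimensionLemma} slot-by-slot, for each $i$ we may replace $[\alpha_i:L_{X_i}\to k_i]_{\catfont{C}_i}$ by an equal class $[\beta_i:L_{C_i}\to k_i]_{\catfont{C}_i}$, where $C_i$ is a subcoalgebra of $A_i$ with $\dim_{k_i} C_i \le m$. Since two $\dirlim_{\catfont{C}_i}$-representatives that agree in every slot certainly agree after projecting to the ultraproduct, our element of $A_R$ is also represented by $[[\beta_i:L_{C_i}\to k_i]_{\catfont{C}_i}]_\filtfont{U}$.

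Next I would lift this to $A_\infty$. Because the $C_i$ have dimension uniformly bounded by $m$, their images $\omega_i(C_i)$ (regarded inside the regular representation) have bounded dimension, hence $[C_i]$ is an object of the restricted ultracategory $\catfont{C}$. Proposition \ref{everythingisilluminatedprop}(2) gives $L_{[C_i]}^\circ = \uprod L_{C_i}^\circ$, so the tuple $(\beta_i)$ defines an element $[\beta_i]_\filtfont{U} \in L_{[C_i]}^\circ$, and thus a perfectly good element $[[\beta_i]_\filtfont{U}]_\catfont{C}$ of $A_\infty = \dirlim_{\catfont{C}} \uprod L_{X_i}^\circ$.

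Finally, by the explicit formula \ref{actionOfOmegaEquation} for $\Omega$, the element $[[\beta_i]_\filtfont{U}]_\catfont{C}$ is sent to $[[\beta_i]_{\catfont{C}_i}]_\filtfont{U}$, which is precisely the representative of our original element of $A_R$. Hence $\Omega$ is surjective. The only real subtlety is checking that the bound on rank in $A_R$ translates into a \emph{uniform} dimension bound on the subcoalgebras $C_i$ produced by lemma \ref{FromRankToDimensionLemma}; this is automatic, since that lemma gives $\dim C \le m$ whenever the input has rank $\le m$, and the bound $m$ on $\text{rank}(\alpha_i)$ is uniform by the definition of $A_R$. Everything else is a bookkeeping check that ultraproduct and direct-limit equivalences commute in the way the diagram defining $\Omega$ forces them to.
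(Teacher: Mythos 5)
Your proposal is correct and takes essentially the same approach as the paper: apply lemma \ref{FromRankToDimensionLemma} to replace the representative by one supported on subcoalgebras $C_i$ of dimension bounded by the rank bound $m$, observe that the explicit formula \ref{actionOfOmegaEquation} then exhibits $[[\beta_i]_\filtfont{U}]_\catfont{C}$ as a preimage. The one place you are more careful than the paper's terse write-up is in explicitly verifying that $[C_i]$ is an object of $\resprod \catfont{C}_i$ (via the uniform dimension bound) and invoking proposition \ref{everythingisilluminatedprop}(2) so that $[\beta_i]_\filtfont{U}$ genuinely lives in $L_{[C_i]}^\circ$ -- both of these are implicit in the paper but worth making explicit, as you did.
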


\begin{proof}
Again, a typical element of $A_R$ looks like
\[ [[\alpha_i:L_{X_i} \rightarrow k_i]_{\catfont{C}_i}]_\filtfont{U} \]
with the $\alpha_i$ having constant rank, say $m$.  Then lemma
\ref{FromRankToDimensionLemma} shows we can write this instead as
\[ [[\beta_i:L_{D_i} \rightarrow
k_i]_{\catfont{C}_i}]_\filtfont{U}\] where each $D_i$ is a
subcoalgebra of $A_i$ having dimension no larger than $m$.  Then
the formula given for $\Omega$  at the beginning of this section
(equation \ref{actionOfOmegaEquation}) shows that
\[ [[\beta_i:L_{D_i} \rightarrow
k_i]_\filtfont{U}]_\catfont{C} \] qualifies as a pre-image for our
typical element under $\Omega$.
\end{proof}

\begin{lem}
\label{smallDimensionEmbeddingLemma} Let $G$ be an affine group
scheme represented by the Hopf algebra $A$ over a field $k$.  Let
$(V,\rho)$ be an $n$-dimensional $A$-comodule, fix a basis $e_1,
\ldots, e_n$ for $V$, and let $(a_{ij})$ be the matrix formula of
the representation of $G$ it defines in that basis. Then $C=
\text{span}_k(a_{ij}:1 \leq i,j \leq n)$ is a no more than
$n^2$-dimensional subcoalgebra of $A$. Further, $(V,\rho)$ can be
embedded, as a comodule, into $C^n$.
\end{lem}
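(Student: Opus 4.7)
The plan is to extract everything from the two defining diagrams of a comodule structure, namely coassociativity and counitality, applied to the basis $e_1, \ldots, e_n$.

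First I would unpack coassociativity. The diagram
\[
\begin{diagram}
V & \rTo^\rho & V \otimes A \\
\dTo^\rho & & \dTo_{1 \otimes \Delta} \\
V \otimes A & \rTo_{\rho \otimes 1} & V \otimes A \otimes A \\
\end{diagram}
\]
applied to $e_j$ gives, on one side, $\sum_i e_i \otimes \Delta(a_{ij})$ and, on the other, $\sum_{i,k} e_k \otimes a_{ki} \otimes a_{ij}$. Matching coefficients on the linearly independent $e_k$ yields
\[ \Delta(a_{kj}) = \sum_i a_{ki} \otimes a_{ij}, \]
which shows that $\Delta$ carries $C = \mathrm{span}_k(a_{ij})$ into $C \otimes C$. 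Since $C$ is spanned by $n^2$ elements, it has dimension at most $n^2$. (Applying the counitality diagram to $e_j$ similarly yields $\varepsilon(a_{ij}) = \delta_{ij}$, which I will need in a moment.)

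Next I would construct the embedding $\phi: V \hookrightarrow C^n$. Let $e_1^*, \ldots, e_n^*$ be the dual basis of $V^*$. For each $i$ define $\phi_i: V \rightarrow A$ as the composition
\[ V \stackrel{\rho}{\longrightarrow} V \otimes A \stackrel{e_i^* \otimes 1}{\vlongrightarrow} k \otimes A \isomorphic A. \]
A direct computation gives $\phi_i(e_j) = a_{ij}$, so each $\phi_i$ lands in $C$; combining them yields $\phi = (\phi_1, \ldots, \phi_n): V \rightarrow C^n$. Injectivity of $\phi$ follows from the counit identity: if $\phi(v) = 0$ for $v = \sum_j c_j e_j$, then applying $\varepsilon$ to $\phi_i(v) = \sum_j c_j a_{ij}$ gives $c_i = \sum_j c_j \varepsilon(a_{ij}) = 0$ for every $i$.

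Finally I would verify that $\phi$ is a morphism of $A$-comodules, where $C^n$ carries the $n$-fold direct sum of the regular representation on $C$ (item 1 and 4 of definition \ref{comoduleConstructionsDefn}). The formula $\Delta(a_{kj}) = \sum_i a_{ki} \otimes a_{ij}$ obtained above gives
\[ \rho_{C^n}(\phi(e_j)) = \bigl( \textstyle\sum_k a_{1k} \otimes a_{kj}, \ldots, \sum_k a_{nk} \otimes a_{kj} \bigr), \]
while $(\phi \otimes 1)(\rho(e_j)) = \sum_k \phi(e_k) \otimes a_{kj}$ yields exactly the same tuple under the identification $C^n \otimes A \isomorphic (C \otimes A)^n$. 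The verification is entirely forced by the Sweedler-style identity for $\Delta(a_{ij})$, so the only real obstacle is notational bookkeeping; once the coassociativity identity is in hand, the embedding construction and both of its required properties (injectivity via $\varepsilon$, comodule-morphism via $\Delta$) fall out mechanically.
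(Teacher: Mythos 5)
Your proof is correct and is essentially the paper's argument, just written out self-containedly: the paper cites Waterhouse (Theorem \ref{regreptheorem}) for the embedding $V \hookrightarrow A^n$ and then observes that this embedding — which is precisely your $\phi = (\phi_1,\ldots,\phi_n)$, since $\rho \circ \Psi(e_j) = (a_{1j},\ldots,a_{nj})$ — already lands in $C^n$, while you reconstruct the map, reprove injectivity from the counit identity, and re-verify the comodule-morphism property directly. Both rest on the same Sweedler-style relation $\Delta(a_{kj}) = \sum_i a_{ki} \otimes a_{ij}$.
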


\begin{proof}
Apply the comodule identity $\Delta(a_{ij}) = \sum_k a_{ik}
\otimes a_{kj}$ (equation \ref{comoduleequation2}) to see that
$\Delta(C) \subset C \otimes C$, whence $C$ is a subcoalgebra of
$A$.  For the embedding claim, we examine the embedding $V
\rightarrow A^n$ ($n$-fold direct sum of the regular
representation) defined in section 3.5 of \cite{waterhouse}; we
claim that the image of this embedding is in fact contained in
$C^n \subset A^n$.  Let $\Psi:V \otimes A \rightarrow A^n$ denote
the vector space isomorphism $e_i \otimes a \mapsto (0, \ldots,
a,\ldots,0)$ ($a$ in the $i^{\text{th}}$ slot, zeroes elsewhere).
Consider
\begin{diagram}
V & \rTo^\rho & V \otimes A & \rTo^\Psi & A^n \\
 & & & & \dTo_{\Delta^n} \\
 \dTo^\rho & & \dTo^{1 \otimes \Delta} & & (A \otimes A)^n \\
 & & & & \dTo_\isomorphic \\
 V \otimes A & \rTo_{\rho \otimes 1} & V \otimes A \otimes A &
 \rTo_{\Psi \otimes 1} & A^n \otimes A \\
\end{diagram}
Commutativity of the left rectangle is a comodule identity (see
diagram \ref{comod1}), and commutativity of the right rectangle is
obvious, whence this entire diagram commutes.  Note that the
composition that starts at the top right and goes directly down is
by definition the comodule structure on $A^n$ (see definition
\ref{comoduleConstructionsDefn}).  Looking at a condensed version
of the outermost rectangle
\begin{diagram}
V & \rTo^{\rho \circ \Psi}  & A^n \\
 & & \dTo_{\Delta^n} \\
 \dTo^\rho & & (A \otimes A)^n \\
 & & \dTo_\isomorphic \\
 V \otimes A & \rTo_{(\rho \circ \Psi) \otimes 1} & A^n \otimes A
 \\
\end{diagram}
we see that $\rho \circ \Psi$ is an embedding of $V$ into $A^n$.
And if we chase the basis element $e_j$ from $V$ to $A^n$ we
arrive at
\[ e_j \stackrel{\rho}{\longmapsto} \sum_i e_i \otimes a_{ij}
\stackrel{\Psi}{\longmapsto} (a_{1j}, a_{2j}, \ldots, a_{nj}) \]
which is an element of $C^n$.
\end{proof}

\begin{prop}
\label{OmegaIsInjectiveProp} $\Omega$ is injective.
\end{prop}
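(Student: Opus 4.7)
The plan is as follows. Suppose $\Omega(a) = 0$ where $a = [[\alpha_i:L_{X_i} \rightarrow k_i]_{\filtfont{U}}]_{\catfont{C}}$, and let $n$ be the constant dimension of the $X_i$ (which we may assume since $[X_i]$ lies in $\catfont{C} = \resprod \catfont{C}_i$). The hypothesis says that $\phi_{X_i}(\alpha_i) = 0$ in $A_i$ for almost every $i$. The strategy is to replace each $X_i$ uniformly by a subcoalgebra $C_i \subset A_i$ of bounded dimension, transport this vanishing into $L_{C_i}^\circ$, and then reassemble across $i$ to conclude $a = 0$ in $A_\infty$.

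For the uniform choice, apply Lemma \ref{smallDimensionEmbeddingLemma} to each $X_i$: the subcoalgebra $C_i$ of $A_i$ spanned by the matrix coefficients of $X_i$ has dimension at most $n^2$, and $X_i$ embeds as a comodule into $C_i^n$. In particular $X_i \in \gen{C_i}$, so the transition-dual $\phi_{X_i,C_i}:L_{X_i}^\circ \rightarrow L_{C_i}^\circ$ is defined, and commutativity of the direct-limit diagram defining $A_i$ gives $\phi_{X_i} = \phi_{C_i} \circ \phi_{X_i,C_i}$. The key point is that $\phi_{C_i}$ is \emph{injective} when $C_i$ is a subcoalgebra: under the canonical isomorphism $L_{C_i}^\circ \isomorphic C_i$ supplied by Theorem \ref{adualitytheorem} (since $L_{C_i} \isomorphic C_i^*$ is finite dimensional, so $L_{C_i}^\circ = L_{C_i}^* \isomorphic C_i^{**} \isomorphic C_i$), the map $\phi_{C_i}$ identifies with the inclusion of $C_i$ as a subcoalgebra of $A_i$, which is exactly the reconstruction statement at the end of Chapter \ref{adualitytheorem}. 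Consequently, $\phi_{X_i}(\alpha_i) = 0$ forces $\phi_{X_i,C_i}(\alpha_i) = 0$ in $L_{C_i}^\circ$ for almost every $i$.

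Since $\dim C_i \leq n^2$ is bounded, $[C_i]$ is an object of $\catfont{C}$, and the embeddings $X_i \hookrightarrow C_i^n$ with $n$ fixed yield $[X_i] \hookrightarrow [C_i]^n$, so $[X_i] \in \gen{[C_i]}$ and $\phi_{[X_i],[C_i]}$ is defined. Proposition \ref{everythingisilluminatedprop} identifies it with the ultraproduct map $[\phi_{X_i,C_i}]$, so
\[
\phi_{[X_i],[C_i]}\bigl([\alpha_i]_{\filtfont{U}}\bigr) = [\phi_{X_i,C_i}(\alpha_i)]_{\filtfont{U}} = 0,
\]
and in the direct limit $A_\infty = \dirlim_{\catfont{C}} L_{[X_i]}^\circ$ of diagram \ref{DirectLimitDiagram_TheMapFromA_Rto} we conclude $a = 0$. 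The main obstacle is justifying the injectivity of $\phi_{C_i}$ for subcoalgebras $C_i$, which is precisely where Theorem \ref{adualitytheorem} and the tannakian reconstruction result at the end of that chapter are used essentially; everything else is a matter of combining the bounded-dimension witnesses via Proposition \ref{everythingisilluminatedprop}.
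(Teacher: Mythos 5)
Your proof is correct, and it is a genuinely cleaner route than the paper's. The paper considers two elements $a$ and $b$ with $\Omega(a) = \Omega(b)$, unwinds this to an agreement $[\alpha_i]_{\catfont{C}_i} = [\beta_i]_{\catfont{C}_i}$ witnessed by some object $Z_i$ (of unbounded dimension) in each factor, then introduces yet another subcoalgebra $D_i$ containing $C_i$ and generating $X_i$, $Y_i$, $Z_i$, and finally cancels the unbounded data by quoting the surjectivity of $T_{C_i, D_i}$ (Proposition \ref{surjectivetransitionmapprop}). You instead exploit $k$-linearity of $\Omega$ (worth stating explicitly, but immediate since $\Omega$ is a coalgebra map) to reduce injectivity to trivial kernel, so only a single object $X_i$ is in play and the ``agreement witness'' $Z_i$ never appears: the factorization $\phi_{X_i} = \phi_{C_i} \circ \phi_{X_i,C_i}$ and the injectivity of $\phi_{C_i}$ for subcoalgebras $C_i$ do all the work. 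Your justification for the injectivity of $\phi_{C_i}$ — that the reconstruction theorem closing Chapter \ref{adualitytheoremchapter}, together with Theorem \ref{adualitytheorem}, identifies $\phi_{C_i}$ with the inclusion $C_i \hookrightarrow A_i$ — is valid; an even shorter route is to read it off from the factorization $T_{C_i}^\circ = \phi_i \circ \phi_{C_i}$ in diagram \ref{phiDefnDiagram_ResUprodOfHopfAlgebras} and the surjectivity of $T_{C_i}$ (Lemma \ref{T_CissurjectiveLemma_TheRepHopfAlgOf}), which makes $T_{C_i}^\circ$, and hence $\phi_{C_i}$, injective. Both proofs share Lemma \ref{smallDimensionEmbeddingLemma} as the device producing bounded-dimension subcoalgebras, and both close with Proposition \ref{everythingisilluminatedprop}; what your approach buys is the elimination of the auxiliary objects $Z_i$, $D_i$, and the attendant diagram chase.
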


\begin{proof}

 Let $[[\alpha_i:L_{X_i} \rightarrow k_i]_\filtfont{U}]_\catfont{C}$
and $[[\beta_i:L_{Y_i} \rightarrow k_i]_\filtfont{U}]_\catfont{C}$
be two typical elements of $A_\infty$ such that $\Omega$ maps them
to the same thing.  This means that
\[ [[\alpha_i:L_{X_i} \rightarrow k_i]_{\catfont{C}_i}
]_\filtfont{U} = [[\beta_i:L_{Y_i} \rightarrow
k_i]_{\catfont{C}_i} ]_\filtfont{U} \] which is to say that, for
almost every $i$,
\[ [\alpha_i:L_{X_i} \rightarrow k_i]_{\catfont{C}_i} = [\beta_i:L_{Y_i} \rightarrow
k_i]_{\catfont{C}_i} \] which is to say that, for almost every
$i$, there is a $Z_i$ such that $X_i,Y_i \in \gen{Z_i}$ and
\begin{diagram}
& & L_{X_i} & & \\
& \ruTo^{T_{X_i,Z_i}} & & \rdTo^{\alpha_i} & \\
L_{Z_i} & & & & k_i \\
& \rdTo_{T_{Y_i,Z_i}} & & \ruTo_{\beta_i}& \\
& & L_{Y_i} & & \\
\end{diagram}
commutes. Now the $Z_i$ are, as far as we know, not of bounded
dimension, so we have some work to do.  By lemma
\ref{smallDimensionEmbeddingLemma}, for each $i$ let $C_i$ be a
subcoalgebra of $A_i$ such that $C_i$ has dimension no larger than
$\text{dim}(X_i \oplus Y_i)^2$, and such that $X_i \oplus Y_i$ is
embeddable in $C_i^{\text{dim}(X_i \oplus Y_i)}$ . Note in
particular that this implies that both $[X_i]$ and $[Y_i]$ belong
to the principal subcategory generated by $[C_i]$ (since they are
both subobjects of a subobject of $[C_i^m] = [C_i]^m$ for a fixed
$m$).

For each $i$ let $D_i$ be a subcoalgebra generating all of the
$X_i$, $Y_i$, $Z_i$ and containing $C_i$, which of course we
cannot assume is of bounded dimension. Then for every $i$ we have
a commutative diagram
\begin{diagram}
& & & & L_{X_i} & & \\
& &  & \ruTo(4,2)^{T_{X_i,D_i}}  \ruTo_{T_{X_i,Z_i}} & & \rdTo^{\alpha_i} & \\
L_{D_i} & \rTo^{T_{Z_i,D_i}} & L_{Z_i} & & & & k_i \\
& \rdTo(4,2)_{T_{Y_i,D_i}}  & & \rdTo^{T_{Y_i,Z_i}} & & \ruTo_{\beta_i}& \\
& & & & L_{Y_i} & & \\
\end{diagram}
and in particular, the outermost diamond commutes:
\begin{diagram}
& & L_{X_i} & & \\
& \ruTo^{T_{X_i,D_i}} & & \rdTo^{\alpha_i} & \\
L_{D_i} & & & & k_i \\
& \rdTo_{T_{Y_i,D_i}} & & \ruTo_{\beta_i}& \\
& & L_{Y_i} & & \\
\end{diagram}
and hence so does
\begin{diagram}
& & L_{X_i} & & \\
& \ruTo^{T_{X_i,D_i}} & \uTo_{T_{X_i,C_i}}& \rdTo^{\alpha_i} & \\
L_{D_i} & \rTo^{T_{C_i,D_i}} & L_{C_i} & & k_i \\
& \rdTo_{T_{Y_i,D_i}} & \dTo_{T_{Y_i,C_i}} & \ruTo_{\beta_i}& \\
& & L_{Y_i} & & \\
\end{diagram}
As $C_i \subset D_i$ are subcoalgebras, proposition
\ref{surjectivetransitionmapprop} tells us that $T_{C_i,D_i}$ is
surjective.  Then commutativity of the above gives $T_{C_i,D_i}
\circ T_{X_i,C_i} \circ \alpha_i = T_{C_i,D_i} \circ T_{Y_i,C_i}
\circ \beta_i$, and since $T_{C_i,D_i}$ is surjective, this gives
us commutativity of
\begin{diagram}
& & L_{X_i} & & \\
& \ruTo^{T_{X_i,C_i}} & & \rdTo^{\alpha_i} & \\
L_{C_i} & & & & k_i \\
& \rdTo_{T_{Y_i,C_i}} & & \ruTo_{\beta_i}& \\
& & L_{Y_i} & & \\
\end{diagram}

Now apply ultraproducts to yield a commutative diagram
\begin{diagram}
& & \uprod L_{X_i} & & \\
& \ruTo^{[T_{X_i,C_i}]} & & \rdTo^{[\alpha_i]} & \\
\uprod L_{C_i} & & & & \uprod k_i \\
& \rdTo_{[T_{Y_i,C_i}]} & & \ruTo_{[\beta_i]}& \\
& & \uprod L_{Y_i} & & \\
\end{diagram}
Note that $[C_i]$, being of bounded dimension, is an object of
$\catfont{C}$.  Then if we identify $\uprod L_{C_i}$ with
$L_{[C_i]}$, $[T_{X_i,C_i}]$ with $T_{[X_i],[C_i]}$, etc.~(as we
may by proposition \ref{everythingisilluminatedprop}),
commutativity of the above implies the equality of $[\alpha_i]$
and $[\beta_i]$ in the direct limit over $\catfont{C}$; that is
\[ [[\alpha_i:L_{X_i} \rightarrow k_i]_\filtfont{U}]_\catfont{C} =
[[\beta_i:L_{Y_i} \rightarrow k_i]_\filtfont{U}]_\catfont{C} \] as
desired.
\end{proof}

\begin{thm}
The representing Hopf algebra of the restricted ultraproduct
$\resprod \text{Comod}_{A_i}$ is coalgebra-isomorphic to the
\index{Hopf algebra!restricted ultraproduct of} \index{restricted
ultraproduct!of Hopf algebras} restricted ultraproduct $A_R$ of
the Hopf algebras $A_i$.
\end{thm}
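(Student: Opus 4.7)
The plan is essentially to assemble the pieces that have already been put in place over the preceding sections. Recall that in the definition of $\Omega$ we invoked the universal property of the direct limit $A_\infty = \dirlim_{\catfont{C}} \uprod L_{X_i}^\circ$ against the compatible family of coalgebra maps $[\phi_{Y_i}]: \uprod L_{Y_i}^\circ \rightarrow A_R$, so $\Omega$ is already known to be a coalgebra homomorphism by construction; the only remaining obligation is to verify that $\Omega$ is a bijection on underlying sets.

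First I would cite Proposition \ref{OmegaIsSurjectiveProp} and Proposition \ref{OmegaIsInjectiveProp} directly: these establish surjectivity and injectivity of $\Omega$ respectively, so together they yield that $\Omega$ is a bijective coalgebra map, hence a coalgebra isomorphism. The surjectivity half rests on Lemma \ref{FromRankToDimensionLemma}, which lets us replace an almost-everywhere bounded-rank representative $[\alpha_i : L_{X_i} \to k_i]$ by a representative $[\beta_i : L_{D_i} \to k_i]$ with $D_i$ a subcoalgebra of $A_i$ of bounded dimension, so that the bounded-dimension objects $[D_i]$ lie in $\catfont{C} = \resprod \text{Comod}_{A_i}$ and furnish a preimage under $\Omega$ via the explicit formula \ref{actionOfOmegaEquation}. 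The injectivity half used Lemma \ref{smallDimensionEmbeddingLemma} to replace ambient generating objects $Z_i$ (of possibly unbounded dimension) with subcoalgebras $C_i \subset A_i$ whose dimensions are bounded by $\dim(X_i \oplus Y_i)^2$, then invoked Proposition \ref{surjectivetransitionmapprop} to cancel the surjective transition map $T_{C_i,D_i}$ and thereby detect the equality of $[\alpha_i]$ and $[\beta_i]$ already inside the category $\catfont{C}$.

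Once these two propositions are quoted, I would close the argument with a single sentence observing that a bijective homomorphism of coalgebras is automatically an isomorphism of coalgebras (its set-theoretic inverse automatically intertwines $\Delta$ and $\varepsilon$ on both sides, since these are preserved by $\Omega$). I do not expect any serious obstacle here: all of the analytical work was carried out in proving surjectivity and injectivity, and combining them is essentially formal. The only subtlety worth flagging explicitly is that we are proving an isomorphism of \emph{coalgebras} — the multiplication on $A_R$ has not been defined intrinsically, so the statement about Hopf algebra structure is deferred, with the multiplicative structure on $A_R$ to be transferred from $A_\infty$ through $\Omega$ in the subsequent discussion.
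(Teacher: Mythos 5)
Your proposal is correct and matches the paper's approach exactly: the paper's proof is precisely "apply Propositions \ref{OmegaIsSurjectiveProp} and \ref{OmegaIsInjectiveProp}," relying on the fact that $\Omega$ was already constructed as a coalgebra map via the universal property of the direct limit. Your additional remarks on the supporting lemmas and the deferral of the multiplicative structure are accurate glosses on the surrounding text rather than deviations from it.
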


\begin{proof}
Apply propositions \ref{OmegaIsSurjectiveProp} and
\ref{OmegaIsInjectiveProp}.
\end{proof}

\section{The Equivalence $\prod_{\scriptscriptstyle R} \catfont{C}_i \isomorphic \text{Comod}_{A_R}$}

For a collection of Hopf algebras $A_i$, the previous section
shows that $A_R$ is coalgebra isomorphic to $A_\infty$, the
representing Hopf algebra of $\resprod \text{Comod}_{A_i}$. Then
as $A_\infty$ is a Hopf algebra, so is $A_R$, under whatever
multiplication and antipode map are induced on it by $\Omega$.  We
would like to of course prove that this induced multiplication and
antipode are exactly those inherited by being a subset of $\uprod
A_i$; i.e.~that they are the ultraproduct of the individual
multiplications and antipodes on the $A_i$ restricted to $A_R$.

We will prove this for multiplication; we do not prove it for
antipode, but believe a similar proof to the one we give for
multiplication (using instead the dual construction instead of the
tensor product) could be constructed.

Our first step is to build the equivalence from the category
$\resprod \catfont{C}_i$ to $\text{Comod}_{A_R}$ induced by the
isomorphism $\Omega$; here our work will finally start to pay off,
as this equivalence is quite natural and easy to describe.
Examination of this equivalence will further yield the required
multiplication on $A_R$, as we examine the tensor product on
$\text{Comod}_{A_R}$ induced by this equivalence.

First, following the construction mentioned in theorem
\ref{thegeneralequivalencethm}, we build the equivalence $G:
\resprod \text{Comod}_{A_i} \rightarrow \text{Comod}_{A_\infty}$.
To keep notation simple we use the same symbol $X$ for an object
of $\text{Comod}_A$ and its image under the fibre functor, and
similarly for a morphism.

Let $[X_i,\rho_i:X_i \rightarrow X_i \otimes A_i]$ be an object of
$\resprod \catfont{C}_i$.  The remarks before theorem
\ref{thegeneralequivalencethm} tell us we should define the
$A_\infty$-comodule structure on $\uprod X_i$ to be the
composition
\[ \uprod X_i \stackrel{\rho^\prime}{\longrightarrow} \uprod X_i \otimes
L_{[X_i]}^\circ \stackrel{1 \otimes
\phi_{[X_i]}}{\vvlongrightarrow} (\uprod X_i) \otimes A_\infty \]
where $\rho^\prime$ is the natural $L_{[X_i]}^\circ$-comodule
structure on $\uprod X_i$.  But proposition
\ref{everythingisilluminatedprop} says that we can replace
$L_{[X_i]}^\circ$ with $\uprod L_{X_i}^\circ$, and in so doing can
define $\rho^\prime$ in terms of the individual
$L_{X_i}^\circ$-comodule structures on each $X_i$, whom we call
$\rho_i^\prime$; that is
\begin{diagram}
\uprod X_i & & \rTo^{\rho^\prime} & & \uprod X_i \otimes \uprod
L_{X_i}^\circ & \rTo^{1 \otimes \phi_{[X_i]}} & (\uprod X_i)
\otimes A_\infty \\
& \rdTo_{[\rho_i^\prime]} & & \ruTo_{\Phi^{-1}} \\
 &  & \uprod X_i \otimes L_{X_i}^\circ \\
\end{diagram}
commutes.  The $A_\infty$-comodule structure on $\uprod X_i$, call
it $\rho$, is thus the composition
\[ \uprod X_i \stackrel{[\rho_i^\prime]}{\longrightarrow} \uprod
X_i \otimes L_{X_i}^\circ \stackrel{\Phi^{-1}}{\longrightarrow}
\uprod X_i \otimes \uprod L_{X_i}^\circ \stackrel{1 \otimes
\phi_{[X_i]}}{\vvlongrightarrow} (\uprod X_i) \otimes A_\infty \]
$G([X_i])$ is thus $\uprod X_i$, with the above $A_\infty$
comodule structure.  For a morphism $[\psi_i:X_i \rightarrow Y_i]$
in $\resprod \catfont{C}_i$, we define of course $G([\psi_i])$ to
be $[\psi_i]:\uprod X_i \rightarrow \uprod Y_i$.

The next step is to pass to the isomorphism $\Omega$ to obtain an
equivalence of categories $\resprod \catfont{C}_i \rightarrow
\text{Comod}_{A_R}$.

\begin{thm}
Define a functor $F:\resprod \catfont{C}_i \rightarrow
\text{Comod}_{A_R}$ as follows. $F$ sends the object
$[X_i,\rho_i:X_i \rightarrow X_i \otimes A_i]$ to the vector space
$\uprod X_i$ with the $A_R$-comodule structure
\[ \uprod X_i \stackrel{[\rho_i]}{\vlongrightarrow} \uprod X_i
\otimes A_i \stackrel{\Phi^{-1}}{\vlongrightarrow} \uprod X_i
\otimes \uprod A_i \supset (\uprod X_i) \otimes A_R \]
and sends
the morphism $[\psi:X_i \rightarrow Y_i]$ to $[\psi_i]:\uprod X_i
\rightarrow \uprod Y_i$.  Then $F$ is the equivalence of
categories induced on $G$ by $\Omega$.
\end{thm}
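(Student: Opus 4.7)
The plan is to verify that, for each object $[X_i,\rho_i]$ of $\resprod \catfont{C}_i$, the $A_R$-comodule structure that $F$ assigns to $\uprod X_i$ coincides with the one obtained by post-composing the $A_\infty$-comodule structure furnished by $G$ with the coalgebra isomorphism $\Omega$; and to observe that both functors act identically on morphisms (both send $[\psi_i]$ to $[\psi_i]$). The whole assertion therefore collapses to a single commuting diagram, so the real content is chasing it.

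First I would record the key identity relating the two comodule pictures on each factor. By the correspondence between $L_{X_i}$-module and $L_{X_i}^\circ$-comodule structures used in the construction of $G$, the original comodule map $\rho_i:X_i \rightarrow X_i \otimes A_i$ factors as
\[ \rho_i = (1 \otimes \phi_{X_i}) \circ \rho_i^\prime, \]
where $\phi_{X_i}:L_{X_i}^\circ \rightarrow A_i$ is the canonical map into the direct limit. Taking ultraproducts of both sides gives $[\rho_i] = [1 \otimes \phi_{X_i}] \circ [\rho_i^\prime]$. By the naturality square for $\Phi$ (proposition \ref{ultratensormapsprop}), $\Phi \circ (1 \otimes [\phi_{X_i}]) = [1 \otimes \phi_{X_i}] \circ \Phi$, hence
\[ \Phi^{-1} \circ [\rho_i] = (1 \otimes [\phi_{X_i}]) \circ \Phi^{-1} \circ [\rho_i^\prime]. \]

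Next I would pull in the defining property of $\Omega$. The universal-property diagram used to build $\Omega$ says exactly that $\Omega \circ \phi_{[X_i]} = [\phi_{X_i}]$, both regarded as maps $\uprod L_{X_i}^\circ \rightarrow A_R \subset \uprod A_i$ (after identifying $L_{[X_i]}^\circ$ with $\uprod L_{X_i}^\circ$ via proposition \ref{everythingisilluminatedprop}). Tensoring with $1_{\uprod X_i}$ and composing with $\Phi^{-1} \circ [\rho_i^\prime]$ yields
\[ (1 \otimes \Omega) \circ (1 \otimes \phi_{[X_i]}) \circ \Phi^{-1} \circ [\rho_i^\prime] = (1 \otimes [\phi_{X_i}]) \circ \Phi^{-1} \circ [\rho_i^\prime]. \]
The left-hand side is, by definition, the $A_R$-comodule structure obtained by pushing $G([X_i,\rho_i])$ forward along $\Omega$; the right-hand side, by the previous paragraph, is the $A_R$-comodule structure $F$ assigns to $[X_i,\rho_i]$. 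So the two functors agree on objects. On morphisms, neither $G$ nor change of scalars along $\Omega$ alters the underlying linear map, and $F$ is likewise defined to send $[\psi_i]$ to $[\psi_i]$, so agreement on morphisms is automatic.

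The only mildly subtle step is bookkeeping around the map $\Phi$: one has to be careful that each composition actually lands in the image of $\Phi$ so that $\Phi^{-1}$ is legitimately applicable, and that the various factors $(1 \otimes \phi_{[X_i]})$, $(1 \otimes [\phi_{X_i}])$, and $(1 \otimes \Omega)$ are applied in the correct order relative to $\Phi$ and $\Phi^{-1}$. This is precisely what the naturality of $\Phi$ and proposition \ref{everythingisilluminatedprop} are there to handle, so once those are invoked cleanly the argument is a short diagram chase rather than a calculation, and this is the only genuine obstacle in the proof.
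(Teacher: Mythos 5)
Your proposal is correct and follows essentially the same route as the paper: you verify that the factorization $\rho_i$ through $\rho_i'$ and $\phi_{X_i}$ (the left triangle), the naturality of $\Phi$ (the middle square), and the universal-property defining relation $\phi_{[X_i]}$ followed by $\Omega$ equals $[\phi_{X_i}]$ (the right triangles) combine to identify $F$ with $\Omega$ post-composed with $G$, and note that both functors act identically on morphisms. The only cosmetic difference is that you write compositions in the usual right-to-left order rather than the paper's left-to-right $\circ$ convention, but the diagram chase is the same.
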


\begin{proof}
Consider the diagram
\begin{diagram}
\uprod X_i & \rTo^{[\rho_i^\prime]} & \uprod X_i \otimes
L_{X_i}^\circ & \rTo^{\Phi^{-1}} & \uprod X_i \otimes \uprod
L_{X_i}^\circ & \rTo^{1 \otimes \phi_{[X_i]}} & (\uprod X_i)
\otimes A_\infty \\
 & \rdTo_{[\rho_i]} & \dTo_{[1 \otimes
\phi_{X_i}]} & & \dTo_{1 \otimes [\phi_{X_i}]} & \rdTo^{1 \otimes
[\phi_{X_i}]} & \dTo_{1 \otimes \Omega} \\
& & \uprod X_i \otimes A_i & \rTo_{\Phi^{-1}} & \uprod X_i \otimes
\uprod A_i & \lInto & (\uprod X_i) \otimes A_R \\
\end{diagram}
The composition that starts at the top left, goes all the away
across, and then down, is the functor gotten from $G$ by $\Omega$;
that is, the top line is the $A_\infty$-comodule structure on
$[X_i]$ under $G$, and then we tack on $1 \otimes \Omega$ to
obtain an $A_R$-comodule structure.  The composition that starts
at the top left, goes diagonally down, and then all the way
across, is the composition referenced in the statement of the
theorem.  We want to see then that this diagram commutes.
Commutativity of the left-most triangle is equivalent to the
almost everywhere commutativity of it, which in turn is simply the
statement that the $A_i$-comodule structure on $X_i$ can be
factored through the $L_{X_i}^\circ$-comodule structure for it,
and through the canonical injection $\phi_{X_i}$.  The next square
follows automatically from the naturality of $\Phi$ (proposition
\ref{ultratensormapsprop}). Commutativity of the next triangle is
obvious, as $[\phi_{X_i}]$ is known to point to $A_R$, and the
last triangle follows from the definition of $\Omega$.  The
theorem is proved.
\end{proof}

Thus we have an equivalence of categories $F:\resprod
\catfont{C}_i \rightarrow \text{Comod}_{A_R}$ given by the
previous theorem.  As $\resprod \catfont{C}_i$ is a tensor
category under $\otimes$, it induces a similar structure on
$\text{Comod}_{A_R}$ through $F$, which we call
$\overline{\otimes}$, whose action is given as follows.  Any two
objects of $\text{Comod}_{A_R}$ pull back under $F$ to objects of
$\resprod \catfont{C}_i$ which look like
\[ [X_i,\rho_i:X_i \rightarrow X_i \otimes A_i]\]
\[ [Y_i, \mu_i:Y_i \rightarrow Y_i \otimes A_i] \]
Their tensor product in $\resprod \catfont{C}_i$ is defined as
\[ [X_i \otimes Y_i, X_i \otimes Y_i \stackrel{\rho_i \otimes
\mu_i}{\vlongrightarrow} X_i \otimes A_i \otimes Y_i \otimes A_i
\stackrel{1 \otimes T_i \otimes 1}{\vvlongrightarrow} X_i \otimes
Y_i \otimes A_i \otimes A_i \stackrel{1 \otimes 1 \otimes
\text{mult}_i}{\vvlongrightarrow} X_i \otimes Y_i \otimes A_i] \]
and we push this new object back through $F$ to yield a new object
in $\text{Comod}_{A_R}$, having underlying vector space $\uprod
X_i \otimes Y_i$ and comodule map given by the composition
\begin{gather*}
\uprod X_i \otimes Y_i \stackrel{[\rho_i \otimes
\mu_i]}{\vvlongrightarrow} \uprod X_i \otimes A_i \otimes Y_i
\otimes A_i \stackrel{[1 \otimes T_i \otimes
1]}{\vvlongrightarrow} \uprod X_i \otimes Y_i \otimes A_i \otimes
A_i \\
 \stackrel{[1 \otimes 1 \otimes
\text{mult}_i]}{\vvvlongrightarrow} \uprod X_i \otimes Y_i \otimes
A_i \stackrel{\Phi^{-1}}{\longrightarrow} (\uprod X_i \otimes Y_i)
\otimes \uprod A_i  \supset (\uprod X_i \otimes Y_i) \otimes A_R
\end{gather*}
Playing the same game we see that two morphisms in
$\text{Comod}_{A_R}$ pull back to morphisms $[\psi_i:X_i
\rightarrow V_i],[\xi_i:Y_i \rightarrow W_i]$ in $\resprod
\catfont{C}_i$, and upon taking their tensor product in $\resprod
\catfont{C}_i$ and pushing them back through $F$ we obtain the
image under $\overline{\otimes}$ of these morphisms:
\[ \uprod X_i \otimes Y_i \stackrel{[\psi_i \otimes
\xi_i]}{\vvlongrightarrow} \uprod V_i \otimes W_i \] Now let us
modify $\overline{\otimes}$ a bit; simply tack on $\Phi$ to both
ends of the above to yield
\begin{gather*}
 \uprod X_i \otimes \uprod Y_i
\stackrel{\Phi}{\longrightarrow} \\
\uprod X_i \otimes Y_i \stackrel{[\rho_i \otimes
\mu_i]}{\vvlongrightarrow} \uprod X_i \otimes A_i \otimes Y_i
\otimes A_i \stackrel{[1 \otimes T_i \otimes
1]}{\vvlongrightarrow} \uprod X_i \otimes Y_i \otimes A_i \otimes
A_i \\
 \stackrel{[1 \otimes 1 \otimes
\text{mult}_i]}{\vvvlongrightarrow} \uprod X_i \otimes Y_i \otimes
A_i \stackrel{\Phi^{-1}}{\longrightarrow} (\uprod X_i \otimes Y_i)
\otimes \uprod A_i  \supset (\uprod X_i \otimes Y_i) \otimes A_R
\\
 \stackrel{\Phi^{-1} \otimes 1}{\vlongrightarrow} \uprod X_i
\otimes \uprod Y_i \otimes A_R
\end{gather*}
 and instead of $[\psi_i \otimes \xi_i]$, write
\[ \uprod X_i \otimes \uprod Y_i \stackrel{[\psi_i] \otimes
[\xi_i]}{\vvlongrightarrow} \uprod V_i \otimes \uprod W_i \] The
naturality of the isomorphism $\Phi$ guarantees that this new
functor is naturally isomorphic to $\overline{\otimes}$; let us
relabel this new functor as $\overline{\otimes}$.

The next proposition simplifies the description of
$\overline{\otimes}$, one which doesn't require first pulling an
object back to $\resprod \catfont{C}_i$.

\begin{prop}
\label{rephopfalgprop1} If $(X,\rho)$, $(Y,\mu)$ are objects of
$\text{Comod}_{A_R}$, then $\overline{\otimes}$ sends this pair to
the vector space $X \otimes Y$, with comodule map given by the
composition
\begin{gather*}
 X \otimes Y \stackrel{\rho \otimes \mu}{\vlongrightarrow} X
\otimes A_R \otimes Y \otimes A_R \subset X \otimes \uprod A_i
\otimes Y \otimes \uprod A_i \\
 \stackrel{1 \otimes T \otimes 1}{\vvlongrightarrow} X \otimes Y
\otimes \uprod A_i \otimes \uprod A_i \stackrel{1 \otimes 1
\otimes \text{mult}}{\vvlongrightarrow} X \otimes Y \otimes \uprod
A_i \supset X \otimes Y \otimes A_R
\end{gather*}
 where $\text{mult}$ denotes
the natural coordinate wise multiplication on $\uprod A_i$.
\end{prop}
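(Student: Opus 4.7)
The plan is to chase the tensor structure from $\resprod \catfont{C}_i$ over to $\text{Comod}_{A_R}$ via $F$ and show the resulting comodule map matches the one claimed. First I would pull $(X,\rho)$ and $(Y,\mu)$ back through $F^{-1}$, writing $X = \uprod X_i$, $Y = \uprod Y_i$, and recovering comodule maps $\rho_i : X_i \to X_i \otimes A_i$, $\mu_i : Y_i \to Y_i \otimes A_i$ such that $\rho = \Phi^{-1} \circ [\rho_i]$ and $\mu = \Phi^{-1} \circ [\mu_i]$ (following the definition of $F$ in the preceding theorem). Since $F$ is an equivalence, such a pullback is available up to isomorphism, and it suffices to verify the formula for objects of this form.

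Next, from the preceding discussion the (modified) functor $\overline{\otimes}$ sends the pair $([X_i,\rho_i],[Y_i,\mu_i])$ to $\uprod X_i \otimes \uprod Y_i$ equipped with the composition
\begin{gather*}
\uprod X_i \otimes \uprod Y_i \stackrel{\Phi}{\longrightarrow} \uprod X_i \otimes Y_i \stackrel{[\rho_i \otimes \mu_i]}{\vvlongrightarrow} \uprod X_i \otimes A_i \otimes Y_i \otimes A_i \\
\stackrel{[1 \otimes T_i \otimes 1]}{\vvlongrightarrow} \uprod X_i \otimes Y_i \otimes A_i \otimes A_i \stackrel{[1 \otimes 1 \otimes \text{mult}_i]}{\vvvlongrightarrow} \uprod X_i \otimes Y_i \otimes A_i \\
\stackrel{\Phi^{-1}}{\longrightarrow} \uprod X_i \otimes Y_i \otimes \uprod A_i \stackrel{\Phi^{-1} \otimes 1}{\vlongrightarrow} \uprod X_i \otimes \uprod Y_i \otimes \uprod A_i.
\end{gather*}
The goal is to show this agrees with the composition in the statement of the proposition.

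The strategy is iterated application of the naturality of $\Phi$ (proposition \ref{ultratensormapsprop}): each step $[\phi_i]$ on an ultraproduct of simple tensors can be replaced by applying $\Phi^{-1}$ to break the ultraproduct apart into an ultraproduct-of-tensors, at the cost of permuting $\Phi$'s through the diagram. Concretely, I would build a large commutative diagram with the given composition across the top and the composition from the proposition across the bottom, linking them by repeated invocations of diagram \ref{naturalityofPhidiagram} for the maps $\rho_i \otimes \mu_i$, $1 \otimes T_i \otimes 1$, and $1 \otimes 1 \otimes \text{mult}_i$. Because $\Phi$ is injective and a two-sided inverse of $\Phi^{-1}$ wherever both are defined, inserting $\Phi \circ \Phi^{-1}$ between successive arrows is harmless, and this lets me replace the four-fold ultraproduct of tensors with the tensor product of ultraproducts one factor at a time. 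The key identifications to record along the way are (i) $\rho = \Phi^{-1} \circ [\rho_i]$ and similarly for $\mu$, (ii) the natural twist $T$ on $\uprod A_i \otimes \uprod A_i$ is the image under $\Phi$ of $[T_i]$, and (iii) the multiplication $\text{mult}$ on $\uprod A_i$ is by definition $[\text{mult}_i] \circ \Phi$ (see section \ref{tensorproductssubsection}), which is exactly the piece that transports the ``$A_R$-valued'' multiplication into the right-hand form.

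The main obstacle will be bookkeeping: the diagram involves ultraproducts of four-fold tensor products and one must consistently track when an arrow lives on $\uprod(\cdots \otimes \cdots)$ versus on the external tensor product of ultraproducts, and invoke $\Phi$ in the correct slot each time. Once those naturalities are assembled, the fact that $\text{mult}$ on $\uprod A_i$ was \emph{defined} as $[\text{mult}_i]$ composed with $\Phi$ makes the last step collapse, and containment of the image in $X \otimes Y \otimes A_R$ follows from the observation that each $[\rho_i(x_i) \otimes \mu_i(y_i)]$ is a bounded-tensor-length element whose image under $[\text{mult}_i]$ therefore has bounded rank, hence lies in $A_R$ by the rank bound argument used in section \ref{sectionTheRestrictedUltraproductOfHopfAlgebras}.
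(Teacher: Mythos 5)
Your proposal follows the same route as the paper: pull $(X,\rho)$ and $(Y,\mu)$ back through the equivalence $F$ to objects $[(X_i,\rho_i)]$, $[(Y_i,\mu_i)]$ of $\resprod\catfont{C}_i$, apply the modified $\overline{\otimes}$ to get the composition living on $\uprod X_i \otimes \uprod Y_i$, and then link that composition to the one in the statement by a single large commutative diagram whose simple sub-polygons commute by naturality of $\Phi$ (together with the definitional identities relating $\rho$, $T$, and $\text{mult}$ to $[\rho_i]$, $[T_i]$, and $[\text{mult}_i]$). This is exactly what the paper does. Your final paragraph adds a brief justification for why the image lands in $X \otimes Y \otimes A_R$ — the bounded-rank argument — which the paper's proof leaves implicit, but this is a small polish rather than a different approach.
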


\begin{proof}
As $F:\resprod \catfont{C}_i \rightarrow \text{Comod}_{A_R}$ is an
equivalence, there is an object $[(X_i,\rho_i)]$ of $\resprod
\catfont{C}_i$ such that $X = \uprod X_i$ and $\rho$ is equal to
the composition
\[ \uprod X_i \stackrel{[\rho_i]}{\vlongrightarrow} \uprod X_i
\otimes A_i \stackrel{\Phi^{-1}}{\longrightarrow} \uprod X_i
\otimes \uprod A_i \supset \uprod X_i \otimes A_R \] and similarly
for $(Y,\mu)$.  Consider

\begin{diagram}
X \otimes Y & \rEq & \uprod X_i \otimes \uprod Y_i & \rEq & \uprod X_i \otimes \uprod Y_i \\
& & \dTo_{[\rho_i] \otimes [\mu_i]} & & \dIso_\Phi \\
& & \uprod X_i \otimes A_i \otimes \uprod Y_i \otimes A_i & & \uprod X_i \otimes Y_i \\
\dTo^{\rho \otimes \mu} & & \dTo_{\Phi^{-1} \otimes \Phi^{-1}} & & \dTo_{[\rho_i \otimes \mu_i]}\\
& & \uprod X_i \otimes \uprod A_i \otimes \uprod Y_i \otimes \uprod A_i & \rIso^{\Phi} & \uprod X_i \otimes A_i \otimes Y_i \otimes A_i \\
& & \uInto_{\subset \otimes \subset}& & \dIso_{[1 \otimes T_i \otimes 1]} \\
X \otimes A_R \otimes Y \otimes A_R & \rEq & \uprod X_i \otimes A_R \otimes \uprod Y_i \otimes A_R & & \uprod X_i \otimes Y_i \otimes A_i \otimes A_i \\
\dInto^{\subset} & & \dInto_{\subset}   & \ruIso(2,4)_\Phi  &  \dTo_{[1 \otimes 1 \otimes \text{mult}_i]} \\
X \otimes \uprod A_i \otimes Y \otimes \uprod A_i & \rEq & \uprod X_i \otimes \uprod A_i \otimes \uprod Y_i \otimes \uprod A_i & & \uprod X_i \otimes Y_i \otimes A_i \\
\dIso^{1 \otimes T \otimes 1} & & \dIso^{1 \otimes T \otimes 1} & & \dIso_{\Phi} \\
X \otimes Y \otimes \uprod A_i \otimes \uprod A_i & \rEq & \uprod X_i \otimes \uprod Y_i \otimes \uprod A_i \otimes \uprod A_i & & \uprod X_i \otimes Y_i \otimes \uprod A_i \\
\dTo^{1 \otimes 1 \otimes \text{mult}} & & \dTo_{1 \otimes 1 \otimes \text{mult}} & \ruTo_{\Phi \otimes 1} & \uInto_{\subset} \\
X \otimes Y \otimes \uprod A_i & \rEq & \uprod X_i \otimes \uprod Y_i \otimes \uprod A_i & & (\uprod X_i \otimes Y_i) \otimes A_R \\
\uInto^{\subset} & & \uInto_{\subset} & & \dIso_{\Phi \otimes 1} \\
X \otimes Y \otimes A_R & \rEq & \uprod X_i \otimes \uprod Y_i \otimes A_R & \rEq & \uprod X_i \otimes \uprod Y_i \otimes A_R \\
\end{diagram}
which the author would at this time like to nominate to the
Academy as the ugliest diagram of all time.  The top left
rectangle commutes by the previous remarks, and all of the other
simple sub-polygons, though numerous, are easy to check; thus this
entire diagram commutes. The composition that starts at the top
right and goes all the way down is the image of $(X,\rho)$ and
$(Y,\mu)$ under $\overline{\otimes}$ as described previously; the
composition that starts at the top left and goes all the way down
is the composition given in the statement of the proposition.
These are equal, and the proposition is proved.
\end{proof}

So then, we have a bifunctor $\overline{\otimes}$ on
$\text{Comod}_{A_R}$ which sends two comodules to a new comodule
whose underlying vector space is the tensor product of the
underlying vector spaces of the comodules.  Proposition
\ref{tensortomultprop} tells us that this functor is induced by a
unique $k$-homomorphism $u:A_R \otimes A_R \rightarrow A_R$; that
is, $\overline{\otimes}$ sends the objects $(X,\rho)$ and
$(Y,\mu)$ to $X \otimes Y$, with $A_R$-comodule structure given by
the composition
\begin{gather*}
 X \otimes Y \stackrel{\rho \otimes
\mu}{\vlongrightarrow} X \otimes A_R \otimes Y \otimes A_R
\stackrel{1 \otimes T \otimes 1}{\vvlongrightarrow} X \otimes Y
\otimes A_R \otimes A_R \\
 \stackrel{1 \otimes 1 \otimes u}{\vvlongrightarrow} X \otimes Y
\otimes A_R
\end{gather*}
 We claim that this $u$ is nothing more than the
natural multiplication on $\uprod A_i$ restricted to $A_R$.

\begin{lem}
Let $(C,\Delta)$ be a coalgebra.  Define the subset $S$ of $C$ to
consist of those elements $a \in C$ with the following property:
there exists a finite dimensional comodule $(X,\rho)$ over $C$
such that, for some element $x \in X$, $\rho(x) = \sum_i x_i
\otimes a_i$, with the $x_i$ linearly independent and $a = a_i$
for some $i$.  Then $S$ spans $C$.
\end{lem}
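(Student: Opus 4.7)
The plan is to reduce an arbitrary element $a \in C$ to a span of elements that manifestly lie in $S$, by exhibiting $a$ itself as a coefficient-combination of tensor components of $\Delta(a)$. The key observation is that $C$ may be viewed as a right $C$-comodule over itself via $\Delta$, so I will pull the witnessing finite dimensional comodule $(X,\rho)$ out of $C$ directly, rather than trying to construct one abstractly.

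First I would invoke the fundamental theorem of comodules (theorem \ref{fundamentaltheoremofcomodules}) applied to $(C,\Delta)$, to produce a finite dimensional subcomodule $X$ of $C$ containing the given element $a$. Because $X$ is a subcomodule, the restriction $\rho \stackrel{\text{def}}{=} \Delta|_X$ maps $X$ into $X \otimes C$, and we may write $\rho(a) = \Delta(a) = \sum_i x_i \otimes a_i$ where the $x_i \in X$ are linearly independent (simply choose a basis of $X$, or appeal to lemma \ref{minTensorLengthLemma} for a minimal expression). By the very definition of $S$, every one of the $a_i$ then belongs to $S$, witnessed by the finite dimensional comodule $(X,\rho)$ together with the element $a \in X$.

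Next I would exploit the fact that $\rho$ here is not merely a comodule structure map, but the comultiplication on a coalgebra, so it satisfies both counit identities from diagram \ref{hopf2}. Applying $\varepsilon \otimes 1$ to the equation $\Delta(a) = \sum_i x_i \otimes a_i$ yields
\[
a = \sum_i \varepsilon(x_i)\, a_i,
\]
exhibiting $a$ as a $k$-linear combination of the $a_i$, each of which lies in $S$. Since $a$ was arbitrary, $S$ spans $C$.

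The only mild subtlety is the asymmetry in the counit axiom: the comodule axiom itself gives $(1 \otimes \varepsilon)\rho = \mathrm{id}$, which would only put $a$ in the span of the $x_i$. What saves the day is precisely that our $(X,\rho)$ was chosen inside $(C,\Delta)$, so $\rho$ inherits the second coalgebra counit identity $(\varepsilon \otimes 1)\Delta = \mathrm{id}$ as well, and this is what puts $a$ in the span of the $a_i$ rather than the $x_i$. No real obstacle arises.
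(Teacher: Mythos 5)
Your proof is correct and follows essentially the same route as the paper: you use a finite dimensional piece of $C$ as the witnessing comodule, read off the $a_i$ from $\Delta(a)$, and then recover $a$ from them via the counit identity $(\varepsilon \otimes 1)\Delta = \mathrm{id}$. The only cosmetic difference is that you invoke the fundamental theorem of comodules (to get a finite dimensional subcomodule $X \ni a$) where the paper invokes the fundamental theorem of coalgebras (to get the finite dimensional subcoalgebra $C'$ generated by $a$); both serve equally well here since the definition of $S$ only requires a finite dimensional comodule, not a subcoalgebra.
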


\begin{proof}
Let $c \in C$, and $C^\prime \subset C$ the finite dimensional
subcoalgebra it generates.  Write $\Delta(c) = \sum_i a_i \otimes
b_i$ with the $a_i$ linearly independent.  Then obviously all of
the $b_i$ are in $S$ if we view $C^\prime$ as a finite dimensional
comodule over $C$. Apply the coalgebra identity
\begin{diagram}
C & \rTo^{\Delta} & C \otimes C \\
& \rdTo_{\isomorphic} & \dTo_{\varepsilon \otimes 1} \\
& & k \otimes C \\
\end{diagram}
to yield $c = \sum_i \varepsilon(a_i) b_i$, showing $c$ to be in
the span of the $b_i$.
\end{proof}

\begin{thm}
$u$ is equal to the natural multiplication on $\uprod A_i$
restricted to $A_R$.
\end{thm}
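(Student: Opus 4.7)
The strategy is to invoke the uniqueness clause of proposition \ref{tensortomultprop}. Since $\overline{\otimes}$ is, by construction, a bifunctor on $\text{Comod}_{A_R}$ whose underlying action on vector spaces is the usual tensor product, there is exactly one $k$-homomorphism $u : A_R \otimes A_R \to A_R$ producing it via the formula $\phi^u$. So it suffices to exhibit \emph{any} $k$-homomorphism $v : A_R \otimes A_R \to A_R$ satisfying $\phi^v = \overline{\otimes}$; by uniqueness $v$ must then equal $u$. The obvious candidate is $v = \text{mult}\vert_{A_R \otimes A_R}$, the coordinate-wise multiplication on $\uprod A_i$ restricted to $A_R \otimes A_R$. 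The very shape of the formula in proposition \ref{rephopfalgprop1} makes $v$ land in $A_R$ (the inclusion $\supset X \otimes Y \otimes A_R$ at the end of that composition records precisely this closure), so $v$ is a well-defined map into $A_R$.

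With $v$ in hand, both $\phi^u$ and $\phi^v$ attach to a pair $(X,\rho),(Y,\mu)$ the $A_R$-comodule with underlying space $X \otimes Y$ and comodule map
\[
X \otimes Y \stackrel{\rho \otimes \mu}{\longrightarrow} X \otimes A_R \otimes Y \otimes A_R \stackrel{1 \otimes T \otimes 1}{\vvlongrightarrow} X \otimes Y \otimes A_R \otimes A_R \stackrel{1 \otimes 1 \otimes w}{\vvlongrightarrow} X \otimes Y \otimes A_R
\]
with $w$ being either $u$ or $v$. Meanwhile proposition \ref{rephopfalgprop1} gives the action of $\overline{\otimes}$ in an almost identical form, except the last two factors are grouped in $\uprod A_i \otimes \uprod A_i$ and collapsed by the natural multiplication there; this is literally the formula $\phi^v$ once we remember that $A_R \subset \uprod A_i$ and that $v$ is the restriction of that natural multiplication. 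Hence $\phi^v = \overline{\otimes}$.

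To verify this equality cleanly (rather than invoking it by inspection) I would pin down $u$ on an arbitrary element $a \otimes b \in A_R \otimes A_R$ as follows. By the spanning lemma preceding the theorem, $A_R$ is spanned by those elements $a \in A_R$ arising as distinguished coefficients of $\rho(x) = \sum_i x_i \otimes a_i$ with the $x_i$ linearly independent, and similarly for $b$. Fix finite-dimensional $A_R$-comodules $(X,\rho),(Y,\mu)$ and vectors $x \in X$, $y \in Y$ with $\rho(x) = \sum_i x_i \otimes a_i$, $\mu(y) = \sum_j y_j \otimes b_j$ (linearly independent $x_i$ and $y_j$), and with $a = a_{i_0}$, $b = b_{j_0}$ among these. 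Pushing $x \otimes y$ through the shared prefix $\rho \otimes \mu$ followed by $1 \otimes T \otimes 1$ yields $\sum_{i,j} x_i \otimes y_j \otimes a_i \otimes b_j$, and then applying $1 \otimes 1 \otimes u$ versus $1 \otimes 1 \otimes v$ gives
\[
\sum_{i,j} x_i \otimes y_j \otimes u(a_i \otimes b_j) \qquad \text{and} \qquad \sum_{i,j} x_i \otimes y_j \otimes \text{mult}(a_i, b_j)
\]
respectively, inside $X \otimes Y \otimes A_R$. Since $\phi^u = \overline{\otimes} = \phi^v$, these agree, and since the tensors $x_i \otimes y_j$ are linearly independent, matching coefficients gives $u(a_{i_0} \otimes b_{j_0}) = \text{mult}(a_{i_0}, b_{j_0})$, i.e.\ $u(a \otimes b) = \text{mult}(a,b)$. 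The spanning lemma then upgrades this from a spanning set to all of $A_R \otimes A_R$ by bilinearity.

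The only subtle step is ensuring that the formula from proposition \ref{rephopfalgprop1}, which routes through $\uprod A_i \otimes \uprod A_i$ and the native multiplication there, truly factors through $A_R \otimes A_R$ with the restricted multiplication $\text{mult}\vert_{A_R \otimes A_R}$; but this is precisely what the final inclusion $\supset X \otimes Y \otimes A_R$ in that proposition records, and so poses no real obstacle. Everything else reduces to bookkeeping with linearly independent tensors, so no genuine difficulty arises beyond correctly assembling the two formulas and invoking the spanning lemma.
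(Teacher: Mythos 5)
Your proposal is correct and takes essentially the same approach as the paper: after framing the argument around the uniqueness clause of proposition \ref{tensortomultprop}, your third paragraph carries out exactly the paper's computation — pick $a,b$ in the spanning set from the preceding lemma, push $x\otimes y$ through both realizations of $\overline{\otimes}$ (one via $u$, one via $\text{mult}$ as in proposition \ref{rephopfalgprop1}), and match coefficients on the linearly independent tensors $x_i \otimes y_j$. The explicit appeal to uniqueness up front and the preliminary check that $\text{mult}\vert_{A_R\otimes A_R}$ lands in $A_R$ are harmless variations in presentation; the paper gets the latter as a byproduct of the element-by-element computation rather than establishing it beforehand.
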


\begin{proof}
Let $(X,\rho)$,$(Y,\mu)$ be finite dimensional comodules over
$A_R$, and consider
\begin{diagram}
X \otimes Y & \rTo^{\rho \otimes \mu} & X \otimes A_R \otimes Y
\otimes A_R & \rInto^\subset & X \otimes \uprod A_i \otimes Y
\otimes \uprod A_i \\
\dTo^{\rho \otimes \mu} &  &  &  &  \dTo_{1 \otimes T \otimes 1} \\
&  &  &  &  X \otimes Y \otimes \uprod A_i \otimes \uprod A_i \\
X \otimes A_R \otimes Y \otimes A_R &  &  &  &  \dTo_{1 \otimes 1 \otimes \text{mult}} \\
\dTo^{1 \otimes T \otimes 1} &  &  &  &  X \otimes Y \otimes \uprod A_i \\
&  &  &  &  \uInto_\subset \\
X \otimes Y \otimes A_R \otimes A_R &  & \rTo_{1 \otimes 1 \otimes u}  &  &  X \otimes Y \otimes A_R \\
\end{diagram}
The composition that starts at the top left, goes across, and then
all the way down is the image of the pair $(X,\rho)$, $(Y,\mu)$
under $\overline{\otimes}$ as proved in proposition
\ref{rephopfalgprop1}.  The one that starts at the top left, goes
down, and then across is the bifunctor induced by $u$; this
diagram commutes by assumption.

In the notation of the previous lemma, let $a,b \in S \subset
A_R$.  This means that there is an $(X,\rho)$ such that, for some
$x \in X$, $\rho(x) = \sum_i x_i \otimes a_i$ with the $x_i$
linearly independent and $a=a_i$ for some $i$.  Similarly, there
is a $(Y,\mu)$ and $y \in Y$ such that $\mu(y) = \sum_j y_j
\otimes b_j$, with the $y_j$ linearly independent and $b = b_j$
for some $j$.  Now if we chase the element $x \otimes y$ in the
above diagram around both ways, it gives
\[ \sum_{i,j} x_i \otimes y_j \otimes u(a_i \otimes b_j) = \sum_{i,j} x_i
\otimes y_j \otimes \text{mult}(a_i \otimes b_j) \]
Since the $x_i
\otimes y_j$ are linearly independent, by matching coefficients we
must have $u(a_i \otimes b_j) = \text{mult}(a_i \otimes b_j)$ for
every $i$ and $j$; in particular, $u(a \otimes b) = \text{mult}(a
\otimes b)$.

Thus we have shown that $u$ and $\text{mult}$ are equal on
elements of the form $a \otimes b$, with $a,b \in S$.  But the
previous lemma shows that $S \otimes S$ spans $A_R \otimes A_R$.
As both $u$ and $\text{mult}$ are $k$-linear, they must be equal
everywhere.  This completes the proof.
\end{proof}

\section{Examples}

\label{sectionExamples}

\subsection{Finite Groups}

\label{subsectionFiniteGroups}

Let $G$ be a finite group defined over $\mathbb{Z}$, $A$ its
representing Hopf algebra, $k_i$ a sequence of fields, $A_i = A
\otimes k_i = $ the representing Hopf algebra of $G$ over $k_i$,
and $\catfont{C}_i = \text{Comod}_{A_i}$.  We claim that the
representing Hopf algebra of $\resprod \catfont{C}_i$ is nothing
more than $A \otimes \uprod k_i$.  The following observation shows
this not to be surprising.

\begin{prop}
The category $\resprod \catfont{C}_i$ is equal to the principal
subcategory generated by $[A \otimes k_i]$.
\end{prop}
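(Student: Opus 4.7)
The plan is to prove set-theoretic equality in two directions, using the finiteness of $G$ to ensure that $A \otimes k_i$ is a bounded-dimensional (in fact constant-dimensional) object of each $\catfont{C}_i$, and hence a genuine object of $\resprod \catfont{C}_i$.

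For the inclusion $\gen{[A \otimes k_i]} \subset \resprod \catfont{C}_i$: since $G$ is finite, $A$ is finitely generated as a $\mathbb{Z}$-module, so each $A_i = A \otimes k_i$ has dimension over $k_i$ bounded by (in fact equal to) a fixed integer $d = \dim_{\mathbb{Q}}(A \otimes \mathbb{Q})$. Thus $[A \otimes k_i]$ itself has bounded fibre-functor dimension, so it lies in $\resprod \catfont{C}_i$. Any $n$-fold direct power $[A \otimes k_i]^n$ has bounded dimension $nd$, and subobjects and quotients of such can only have smaller dimension; since $\resprod \catfont{C}_i$ is closed under these operations (as shown in the proof of Theorem~\ref{resprodistannakianthm}), every object of $\gen{[A \otimes k_i]}$ belongs to $\resprod \catfont{C}_i$.

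For the reverse inclusion $\resprod \catfont{C}_i \subset \gen{[A \otimes k_i]}$: let $[X_i]$ be an object of $\resprod \catfont{C}_i$ of constant dimension $m$. By Theorem~\ref{regreptheorem}, each $X_i$ embeds as a subobject of $A_i^m$, the $m$-fold direct sum of the regular representation. The statement ``$X$ is a subobject of $Y^m$'' is, for fixed $m$, a first-order sentence in the language of abelian tensor categories (it asserts the existence of a monomorphism from $X$ into the $m$-fold biproduct of $Y$, both of which are first-order notions, as detailed in Section~\ref{axiomsforanabeliancategory}). By $\L$os's theorem (Corollary~\ref{loscor}), this statement transfers to $\uprod \catfont{C}_i$, yielding that $[X_i]$ is a subobject of $[A_i^m] = [A_i]^m = [A \otimes k_i]^m$. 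Hence $[X_i] \in \gen{[A \otimes k_i]}$.

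I do not expect any step here to pose serious difficulty: the only subtlety is making certain that the concept ``subobject of a direct power'' is legitimately first-order, so that $\L$os's theorem applies; this has already been handled in Chapter~\ref{chapterFirstOrderDefinabilityOf}. The crucial feature making this example easy, in contrast to the $G_m$ example that follows, is precisely the constant finite dimensionality of the $A_i$, which keeps the regular representation itself inside $\resprod \catfont{C}_i$ and obviates any need for restricted constructions.
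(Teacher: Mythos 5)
Your proof is correct and follows essentially the same route as the paper: both apply Theorem~\ref{regreptheorem} to embed each $X_i$ into a bounded power of the regular representation, observe that this is a first-order statement for fixed bound, and transfer to the ultraproduct via $\L$os's theorem. The only differences are cosmetic --- you spell out the trivial inclusion $\gen{[A \otimes k_i]} \subset \resprod \catfont{C}_i$ explicitly, and you cite Corollary~\ref{loscor} (which is stated for sentences) where Theorem~\ref{Los'sTheorem} (which handles formulas with parameters such as $[X_i]$ and $[A \otimes k_i]$) is the more precise reference.
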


\begin{proof}
By $[A \otimes k_i]$ we mean the object of $\resprod
\catfont{C}_i$ consisting of the regular representation of $G$
over $k$ in every slot; it is clearly of constant finite
dimension. Let $[X_i]$ be an object of $\resprod \catfont{C}_i$,
say of dimension $n$.  Then by theorem \ref{regreptheorem} each
$X_i$ is a subobject of a quotient of $(A \otimes k_i)^n$; this is
a first-order statement, and so $[X_i]$ is a subobject of a
quotient of $[A \otimes k_i]^n$, showing $[X_i]$ to be in the
principal subcategory generated by $[A \otimes k_i]$.
\end{proof}

Proposition 2.20 of \cite{deligne} tells us that the property of
being singularly generated in fact characterizes those neutral
tannakian categories whose representing Hopf algebra is finite
dimensional.  In fact we can identify this Hopf algebra as
$\text{End}(\omega \vert \gen{[A \otimes k_i]})^\circ$, since $[A
\otimes k_i]$ generates the entire category.  Then by proposition
\ref{everythingisilluminatedprop} and theorem
\ref{adualitytheorem} we have that $A_\infty$, as a coalgebra, can
be identified as
\[ A_\infty = \text{End}(\omega \vert \gen{[A \otimes k_i]})^\circ
\isomorphic \uprod \text{End}(\omega_i \vert \gen{A \otimes
k_i})^\circ = \uprod A \otimes k_i \] Thus $A_\infty$ is equal to
the full ultraproduct of the $A \otimes k_i$.  Note that in this
case $A_R$, the restricted ultraproduct of the $A \otimes k_i$, is
in fact equal to the \textit{full} ultraproduct, since the $A
\otimes k_i$ are of constant finite dimension.

Since the $A \otimes k_i$ are of constant finite dimension, we
have an isomorphism $\uprod A \otimes k_i \isomorphic \uprod A
\otimes \uprod k_i$; it is not hard to see that the latter can be
further identified as $A \otimes \uprod k_i$ as an algebra,
coalgebra, and indeed Hopf algebra.

Thus, for $G$ finite, the category $\resprod \text{Rep}_{k_i} G$
can be identified with $\text{Rep}_{\uprod k_i} G$.

\subsection{The Multiplicative Group}

\label{subsectionTheGroupGm}

Here we compute $A_\infty$ for the multiplicative group $G_m$
using the work done in this chapter. Let $\catfont{C}_i =
\text{Rep}_{k_i} G_m$ with $k_i$ being some sequence of fields.
Let $A_i$ denote the representing Hopf algebra of $G_m$ over the
field $k_i$, which we identify as
\begin{gather*}
 A_i = k_i[x,x^{-1}] \\
 \Delta_i: x \mapsto x \otimes x \\
 \text{mult}:x^r \otimes x^s \mapsto x^{r+s}
\end{gather*}
We know then that $A_\infty$ is isomorphic to $A_R$, the
restricted ultraproduct of the Hopf algebras $A_i$, which we set
about now identifying.

Fix a field $k$, and let $A = k[x,x^{-1}]$, with $\Delta$,
$\text{mult}$ defined as above.  Then $A$ can be realized as the
increasing union of the finite dimensional subcoalgebras $B_0
\subset B_1 \subset \ldots $, defined as
\[ B_n = \text{span}_k(x^{-n}, x^{-(n-1)}, \ldots, x^{-1}, 1, x, x^2,
\ldots, x^n) \] The dual algebra $B_n^*$ to $B_n$ we identify as
\begin{gather*}
B_n^* = \text{span}_k(\alpha_{-n}, \ldots, \alpha_0 , \ldots,
\alpha_n) \\
 \text{mult}_n: \alpha_r \otimes \alpha_s \mapsto \delta_{rs}
\alpha_r
\end{gather*}
 As the $B_n$ form a direct system under the inclusion
mappings, the $B_n^*$ form an inverse system under the duals to
these inclusion mappings.  This map $B_n^* \leftarrow B_{n+1}^*$
is given by, for $\alpha_r:B_{n+1} \rightarrow k$, the image of
$\alpha_r$ is $\alpha_r$ if $|r| \leq n$ and $0$ otherwise. Then
we leave it to the reader to verify

\begin{prop}
The inverse limit $\invlim B_n^*$ of the $B_n^*$ can be identified
as $\text{span}_k(\alpha_i : i \in \mathbb{Z})$, with
$\text{mult}$ defined by
\[ \text{mult}:\alpha_r \otimes \alpha_s \mapsto \delta_{rs}
\alpha_r \] and the canonical mapping $T_m:\invlim B_n^*
\rightarrow B_m^*$ given by
\[ \alpha_r \mapsto \left\{
\begin{array}{c}
  \alpha_r \text{ if $|r| \leq m$} \\
  0 \text{ otherwise} \\
\end{array}
\right.
\]
\end{prop}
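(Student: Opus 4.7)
The plan is to unfold the inverse limit explicitly, using that each $B_n^*$ is finite dimensional with the concrete basis $\alpha_{-n}, \ldots, \alpha_n$ and that the transition maps $B_{n+1}^* \to B_n^*$ have the simple formula inherited from dualizing the inclusion $B_n \hookrightarrow B_{n+1}$. A generic element of $\invlim B_n^*$ is a compatible family $(\beta_n)_{n \geq 0}$ with $\beta_n = \sum_{|r| \leq n} c_r^{(n)} \alpha_r \in B_n^*$ satisfying $T_{n,n+1}(\beta_{n+1}) = \beta_n$. Since $T_{n,n+1}$ sends $\alpha_r$ to $\alpha_r$ when $|r| \leq n$ and to $0$ otherwise, compatibility forces $c_r^{(n+1)} = c_r^{(n)}$ whenever $|r| \leq n$. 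Hence each family is determined by a single sequence of coefficients $c_r := c_r^{(n)}$ (any $n \geq |r|$) indexed by $r \in \mathbb{Z}$.

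With this encoding in hand, I would define the identification map by sending the compatible family $(\beta_n)$ to the formal $k$-combination $\sum_{r \in \mathbb{Z}} c_r \alpha_r$, where the symbol $\alpha_r$ on the right corresponds to the coherent family whose $n$-th component is $\alpha_r$ for $n \geq |r|$ and $0$ otherwise. Under this identification, the canonical projection $T_m: \invlim B_n^* \to B_m^*$ simply reads off the $m$-th component of the compatible family, which on the formal symbol $\alpha_r$ returns $\alpha_r$ exactly when $|r| \leq m$ and $0$ otherwise, matching the formula in the statement.

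For the multiplicative structure, recall that the inverse limit is taken in the category of algebras, so multiplication on $\invlim B_n^*$ is componentwise. On each $B_n^*$, the given formula $\text{mult}_n: \alpha_r \otimes \alpha_s \mapsto \delta_{rs}\alpha_r$ holds whenever $|r|,|s| \leq n$, and so the relation $\alpha_r \alpha_s = \delta_{rs}\alpha_r$ lifts verbatim to the inverse limit on the formal symbols. Finally, one checks that the identification is $k$-linear and respects the direct system, giving an isomorphism of algebras.

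The main subtle point (and the place to be careful) is the interpretation of $\text{span}_k(\alpha_i : i \in \mathbb{Z})$: compatible families correspond naturally to \emph{all} formal $k$-combinations $\sum_{r \in \mathbb{Z}} c_r \alpha_r$ (allowing infinitely many nonzero $c_r$), since compatibility imposes no finiteness constraint on the underlying sequence $(c_r)$. The identification should therefore be understood with the $\alpha_r$ playing the role of formal basis symbols indexed by $\mathbb{Z}$, with projections and multiplication defined on these symbols and then extended by (possibly infinite) $k$-linearity, rather than as the linear span of finitely supported combinations.
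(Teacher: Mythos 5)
The paper does not give a proof of this proposition; it is explicitly left as an exercise. Your unfolding of the inverse limit is the natural, correct argument: since each transition map $B_{n+1}^* \to B_n^*$ is the coordinate projection that drops $\alpha_{\pm(n+1)}$, a compatible family is precisely a choice of scalar $c_r \in k$ for every $r \in \mathbb{Z}$ with no finiteness constraint, and since the transition maps are algebra maps (being dual to coalgebra inclusions) the multiplication on the limit is componentwise, reducing to $\alpha_r \alpha_s = \delta_{rs}\alpha_r$ on the symbols, with the projection $T_m$ simply reading off the $m$-th coordinate as claimed.

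Your closing caveat is a genuine and worthwhile observation. As a module, $\invlim B_n^*$ is the full product $\prod_{r \in \mathbb{Z}} k$, not the finitely-supported span of the $\alpha_r$, so the proposition's $\text{span}_k(\alpha_i : i \in \mathbb{Z})$ must be read loosely (allowing infinite coefficient sequences) for it to literally describe the inverse limit; a bona fide finite linear span is a strictly smaller subalgebra and does not satisfy the universal property. The imprecision is harmless for the paper's subsequent use of $\invlim B_n^*$, which only ever considers ideals cut out by finitely many linear conditions on coordinates (so the finite-codimension and largest-ideal computations come out the same either way), but your reading is the correct one.
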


Next we must identify the map $\phi:A \rightarrow \left(\invlim
B_n^*\right)^\circ$ giving us the notion of `rank' in $A$.  For
$x^r \in A$, we pull it back to $x^r \in B_m$ for some $m \geq
|r|$, pass to the isomorphism $B_m \isomorphic B_m^{*\circ}$, and
then up through $T_m^\circ$.  Thus we obtain

\begin{prop}
The linear functional $\phi(x^r)$ acts on $\invlim B_n^*$ by

\[ \phi(x^r):\alpha_s \mapsto \delta_{rs} \]
\end{prop}

Let $f = c_1 x^{m_1} + c_2 x^{m_2} + \ldots + c_n x^{m_n}$ be an
arbitrary element of $A$, with $m_i \in \mathbb{Z}$ and $c_i \in
k$. Then $\phi(f)$ kills the ideal
\[ I = \text{span}_k(\alpha_r : r \in \mathbb{Z} - \{m_1, m_2,
\ldots, m_n\}) \ideal \invlim B_n^* \] This ideal is the largest
ideal that $\phi(f)$ kills and it has codimension $n$. Therefore

\begin{prop}
\index{rank} The rank of the element $f = c_1 x^{m_1} + c_2
x^{m_2} + \ldots + c_n x^{m_n} \in A$ is equal to $n$, the number
of distinct monomials occurring as terms.
\end{prop}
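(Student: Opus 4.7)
The claim has two directions: rank is at most $n$, and rank is at least $n$. The upper bound is essentially immediate from the preceding proposition, since $\phi(f)$ kills the ideal $I = \mathrm{span}_k(\alpha_r : r \in \mathbb{Z} - \{m_1,\ldots,m_n\})$, and this ideal has codimension exactly $n$. So the real content is the matching lower bound: no ideal of $\invlim B_n^*$ of codimension strictly less than $n$ is killed by $\phi(f)$.

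The key step will be to classify the ideals of $R := \invlim B_n^*$. Recall from the preceding proposition that $R = \mathrm{span}_k(\alpha_r : r \in \mathbb{Z})$ with the orthogonal idempotent multiplication $\alpha_r \alpha_s = \delta_{rs}\alpha_r$. I claim every ideal $J \ideal R$ has the form $J_S := \mathrm{span}_k(\alpha_r : r \in S)$ for a unique subset $S \subset \mathbb{Z}$. Indeed, given $J$, let $S = \{r \in \mathbb{Z} : \alpha_r \in J\}$; clearly $J_S \subset J$, and conversely if $u = \sum_{r \in F} d_r \alpha_r \in J$ with $F$ finite, then multiplying by $\alpha_s$ gives $d_s \alpha_s \in J$, so $d_s \neq 0$ forces $s \in S$, which yields $u \in J_S$. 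From this classification the codimension of $J_S$ in $R$ is exactly $|\mathbb{Z} - S|$, which is finite iff $S$ is cofinite.

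Now, by the previous proposition, $\phi(f)$ acts by $\alpha_r \mapsto \sum_{i=1}^n c_i \delta_{r,m_i}$; assuming (as we may, by dropping zero terms) that the $c_i$ are all non-zero and the $m_i$ are distinct, $\phi(f)(\alpha_r) = 0$ iff $r \notin \{m_1,\ldots,m_n\}$. Hence $\phi(f)$ kills $J_S$ iff $S \subset \mathbb{Z} - \{m_1,\ldots,m_n\}$, in which case
\[ \mathrm{codim}(J_S) = |\mathbb{Z} - S| \geq |\{m_1,\ldots,m_n\}| = n, \]
with equality realized precisely when $S = \mathbb{Z} - \{m_1,\ldots,m_n\}$, i.e.~when $J_S = I$. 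Thus $n$ is the minimum codimension of an ideal killed by $\phi(f)$, so $\mathrm{rank}(f) = n$ by definition.

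The main obstacle, such as it is, is the ideal classification in $R$; but since the idempotents $\alpha_r$ are mutually orthogonal the argument above is essentially mechanical. Once we have that classification in hand, the rest of the proof is bookkeeping on the support of $\phi(f)$.
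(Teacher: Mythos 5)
Your proof is correct and takes essentially the same approach as the paper, which in fact merely asserts (without argument) that $I$ is the largest ideal killed by $\phi(f)$; you supply exactly the missing justification via the orthogonal-idempotent trick, observing that $u = \sum_r d_r\alpha_r \in J$ forces $d_{m_i}\alpha_{m_i} = \alpha_{m_i}u \in J$ and hence $d_{m_i}=0$, so every ideal killed by $\phi(f)$ is contained in $I$. One small caveat: your full classification of ideals as $J_S = \mathrm{span}_k(\alpha_r : r \in S)$ relies on the paper's identification of $\invlim B_n^*$ with the \emph{span} of the $\alpha_r$; if the inverse limit is in fact the full product $\prod_r k\alpha_r$ (as one would expect for an inverse limit of finite-dimensional algebras along surjections), then not every ideal has that form, but the containment $J \subset I$ for any ideal killed by $\phi(f)$—which is all you actually need—still follows from the same idempotent calculation.
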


Let $A_i = k_i[x,x^{-1}]$ be the representing Hopf algebra of
$G_m$ over the field $k_i$. According to the previous proposition,
the rank of a polynomial in $A_i$ is equal the number of monomial
terms occurring in it. Thus, in order for an element $[f_i] \in
\uprod A_i$ to have bounded rank, it is necessary and sufficient
for it to have almost everywhere bounded monomial length. If this
bound is $n$, then in almost every slot $f_i$ has length among the
finite set $\{0,1, \ldots, n\}$, and so by lemma
\ref{coloringlemma} we may as well assume that the $f_i$ have
constant length.  Then we have

\begin{prop}
The restricted ultraproduct $A_R$ of the $A_i$ can be identified
as
\begin{gather*}
A_R = \{ [f_i] \in \uprod A_i : f_i \text{ has constant length }
\} \\
 \Delta:[f_i] \mapsto [\Delta_i(f_i)] \\
 \text{mult}: [f_i] \otimes [g_i] \mapsto [\text{mult}_i(f_i
\otimes g_i)]
\end{gather*}
\end{prop}

Let us find a tighter description of $A_R$.  Let $\uprod
\mathbb{Z}$ denote the ultrapower of the integers and let
$A^\prime$ denote the $k = \uprod k_i$-span of the formal symbols
$x^{[z_i]}$, $[z_i] \in \uprod \mathbb{Z}$.  Define the following
Hopf algebra structure on this vector space as follows:
\begin{gather*}
 A^\prime = \text{span}_k(x^{[z_i]}:[z_i] \in \uprod \mathbb{Z})
 \\
 \Delta:x^{[z_i]} \mapsto x^{[z_i]} \otimes x^{[z_i]} \\
\text{mult}:x^{[z_i]} \otimes x^{[w_i]} \mapsto x^{[z_i + w_i]}
\\
 \varepsilon:x^{[z^i]} \mapsto 1 \\
 S:x^{[z_i]} \mapsto x^{[-z_i]}
\end{gather*}
  Now every element of $A_R$
looks like
\[ [a_i^1 x^{z_i^1} + \ldots + a_i^m x^{z_i^m} ] \]
with $a_i^j \in k_i$ for every $i$ and $j$, and $z_i^j \in
\mathbb{Z}$ with $z_i^1 < z_i^2 < \ldots < z_i^m$.  Then define a
map $A_R$ to $A^\prime$ by
\[ [a_i^1 x^{z_i^1} + \ldots + a_i^m x^{z_i^m} ] \mapsto
[a_i^1] x^{[z_i^1]} + \ldots + [a_i^m] x^{[z_i^m]} \] We leave it
to the reader to verify

\begin{prop}
The map just defined is an isomorphism of Hopf algebras.
\end{prop}

Finally, let us build the equivalence of categories $\resprod
\catfont{C}_i \rightarrow \text{Comod}_{A_R}$ using the
description of $A_R$ given above.  Let $[X_i,\rho_i]$ be an object
of $\resprod \catfont{C}_i$ of dimension $m$.  It is well known
that any module for $G_m$ over a field is simply a diagonal sum of
characters; that is, in some basis, it has matrix formula
\[
\left(%
\begin{array}{cccc}
  x^{z_1} &  &  &  \\
   & x^{z_2} &  &  \\
   &  & \ddots &  \\
   &  &  & x^{z_m} \\
\end{array}%
\right)
\]
for some collection of non-negative integers $z_1, \ldots, z_m$.
For each $i$ then, fix a basis $e_i^1, \ldots, e_i^m$ of $X_i$ for
which the action of $G_m$ is a diagonal sum of characters.  Then
we can write
\[ \rho_i:e_i^j \mapsto e_i^j \otimes x^{z_i^j} \]
The vectors $[e_i^1], \ldots, [e_i^m]$ form a basis for $\uprod
X_i$, and in this basis the $A_R$-comodule structure of $\uprod
X_i$ is given by
\[ \rho:[e_i^j] \mapsto [e_i^j] \otimes x^{[z_i^j]} \]
That is, the action of $G_\infty = $ the group represented by
$A_R$ on $\uprod X_i$ is simply
\[
\left(%
\begin{array}{cccc}
  x^{[z_i^1]} &  &  &  \\
   & x^{[z_i^2]} &  &  \\
   &  & \ddots &  \\
   &  &  & x^{[z_i^m]} \\
\end{array}%
\right)
\]

\chapter{A Combinatorial Approach to the Representation Theory of Unipotent Algebraic Groups}
\label{themethodchapter}

As promised in the introduction, for the next several chapters we
take a break entirely from working with ultraproducts, and instead
focus on working out the concrete representation theories of
certain unipotent algebraic groups.  Here we outline the approach
we will be taking for all of the proofs constructed throughout.

 Let $G$ be an \index{algebraic group} algebraic group over the
field $k$ with Hopf algebra $A = k[x_1, \ldots, x_n]/I$, where $I$
is the ideal of $k[x_1, \ldots, x_n]$ generated by the defining
polynomial equations of $G$ (e.g., if $G=\text{SL}_2$, then $A =
k[x_1,x_2,x_3,x_4]/(x_1x_4-x_2x_3-1)$). If $(V, \rho)$ is a
comodule over $A$ with basis $e_1, \ldots, e_m$, we can write
\[ \rho:e_j \mapsto \sum_i e_i \otimes a_{ij} \]
\[ a_{ij} = \sum_{\vec{r} = (r_1, \ldots, r_n)} c_{ij}^{\vec{r}} x_1^{r_1} \ldots
x_n^{r_n} \] where $c_{ij}^{\vec{r}}$ is a scalar for every $i,j$
and $\vec{r}$, and the summation runs over some finite collection
of $n$-tuples of non-negative integers.  If for each $\vec{r}$ we
think of \index{$(c_{ij})$} $(c_{ij})^{\vec{r}}$ as an $m \times
m$ matrix over $k$, its significance is that it consists of the
coefficients of the monomial $x_1^{r_1}\ldots x_n^{r_n}$ in the
matrix formula for the representation in the basis $e_1, \ldots,
e_m$.  For example, consider the group $G_a \times G_a$, where
$G_a$ denotes the \index{$G_a$} additive group (see chapter
\ref{chapterTheAdditiveGroup}).  $G_a \times G_a$ has as its
representing Hopf algebra $k[x,y]$.  Consider the
\index{representation} representation defined by
\[
\left(%
\begin{array}{ccc}
  1 & 2y+x & 2y^2+2yx+\frac{1}{2} x^2 \\
  0 & 1 & 2y + x \\
  0 & 0 & 1 \\
\end{array}
\right)
\]
Then in this basis the $(c_{ij})$ matrices are given by
\[
 (c_{ij})^{(0,0)} = \left(
\begin{array}{ccc}
  1 & 0 & 0 \\
  0 & 1 & 0 \\
  0 & 0 & 1 \\
\end{array}%
\right)
 \hspace{1cm} (c_{ij})^{(1,0)} =
\left(
\begin{array}{ccc}
  0 & 1 & 0 \\
  0 & 0 & 1 \\
  0 & 0 & 0 \\
\end{array}%
\right)
\]
\[(c_{ij})^{(0,1)} =
\left(%
\begin{array}{ccc}
  0 & 2 & 0 \\
  0 & 0 & 2 \\
  0 & 0 & 0 \\
\end{array}%
\right) \hspace{1cm} (c_{ij})^{(2,0)} =
\left(%
\begin{array}{ccc}
  0 & 0 & \frac{1}{2} \\
  0 & 0 & 0 \\
  0 & 0 & 0 \\
\end{array}%
\right)
\]
\[(c_{ij})^{(0,2)} =
\left(%
\begin{array}{ccc}
  0 & 0 & 2 \\
  0 & 0 & 0 \\
  0 & 0 & 0 \\
\end{array}%
\right)  \hspace{1cm} (c_{ij})^{(1,1)} =
\left(%
\begin{array}{ccc}
  0 & 0 & 2 \\
  0 & 0 & 0 \\
  0 & 0 & 0 \\
\end{array}%
\right)
\]
with $(c_{ij})^{\vec{r}} = 0$ for all other $\vec{r}$.  Now the
two diagrams asserting that a given vector space $V$ and linear
map $\rho:V \rightarrow V \otimes A$ is a comodule over $A$ are
\[
\begin{diagram}
V  & \rTo^\rho & V \otimes A \\
& \rdTo_{\sim} & \dTo_{1 \otimes \varepsilon} \\
& & V \otimes k \\
\end{diagram}
\hspace{3cm}
\begin{diagram}
V & \rTo^\rho & V \otimes A \\
\dTo^\rho & & \dTo_{1 \otimes \Delta} \\
V \otimes A & \rTo_{\rho \otimes 1} & V \otimes A \otimes A \\
\end{diagram}
\]
In the first diagram, if we chase $e_j$ along both paths we arrive
at
\[ e_j \otimes 1 = \sum_i e_i \otimes \varepsilon(a_{ij}) \]
and we see by matching up coefficients that $\varepsilon(a_{ij}) =
1$ if $i=j$, zero otherwise.  This simply says that
\begin{equation}
\label{comoduleequation1}
 (\varepsilon(a_{ij})) = \text{Id}
\end{equation}
which will be some matrix expression among the $(c_{ij})$
matrices.  For the second diagram we chase $e_j$ along both paths
and arrive at
\[ \sum_i e_i \otimes (\sum_k a_{ik} \otimes a_{kj}) = \sum_i e_i
\otimes \Delta(a_{ij}) \] and again, by matching coefficients,
this reduces to
\begin{equation}
\label{comoduleequation2} \sum_k a_{ik} \otimes a_{kj} =
\Delta(a_{ij})
\end{equation}
Notice that the left hand side is precisely the
$(i,j)^{\text{th}}$ entry of the matrix product $(a_{ij} \otimes
1)(1 \otimes a_{ij})$. In practice these two equations will allow
us to derive matrix equalities between the various
$(c_{ij})^{\vec{r}}$, which will serve as necessary and sufficient
conditions for them to define representations over a given group.

This approach is particularly amenable to the study of unipotent
groups.  It is well known that all unipotent algebraic groups (and
quite obviously the ones we will be studying) have Hopf algebras
which are algebra-isomorphic to $A = k[x_1,x_2, \ldots, x_n]$;
that is, isomorphic to a polynomial algebra with no relations.  As
such, the collection of all monomial tensors $x_1^{r_1} x_2^{r_2}
\ldots x_n^{r_n} \otimes x_1^{s_1} x_2^{s_2} \ldots x_n^{s_n}$
form a basis for $A \otimes A$, and a great deal of our work will
involve looking at equalities between large summations in $A
\otimes A$ and trying to match coefficients on a basis.  Thus, for
unipotent groups, a logical choice of basis with which to attempt
this is always available.

We would like to briefly mention that, in a round-about way, what
all this amounts to is working with the Lie algebra of the group
in the characteristic $0$ case, and the distribution algebra in
the characteristic $p > 0$ case (see chapter $7$ of \cite{jantzen}
for a good account of the latter).  In fact, for a given
$G$-module, these $(c_{ij})^{\vec{r}}$ matrices correspond to the
images of certain distributions under the associated
$\text{Dist}(G)$ module, and these matrix equalities we shall be
deriving essentially amount to working out the multiplication law
in $\text{Dist}(G)$; compare for instance equation
\ref{Gafundamentalrelation} with equation $(1)$ of page $101$ of
\cite{jantzen}.  We have chosen however to proceed without this
machinery; it gives us no advantage to our purposes, and in any
case puts less of a burden on the reader (and the author for that
matter).

\section{Morphisms}

None of this work would be worth much if we couldn't say something
about morphisms between comodules.

Again let $G$ be an algebraic group over the field $k$, with Hopf
algebra $A = k[x_1, \ldots, x_n]/I$, and this time fix a monomial
basis $\{x_1^{r_1}\ldots x_n^{r_n} : \vec{r} \in R\}$ of $A$. For
brevity, for $\vec{r} \in R$, denote by $x^{\vec{r}}$ the monomial
 $x_1^{r_1} \ldots x_n^{r_n}$. If $(V, \rho)$ is a comodule over
$A$ with basis $e_1, \ldots, e_n$, we can write
\[ \rho:e_j \mapsto \sum_{i=1}^n e_i \otimes a_{ij} \]
\[ a_{ij} = \sum_{\vec{r} \in R} c_{ij}^{\vec{r}} x^{\vec{r}} \]
with each $c_{ij}^{\vec{r}}$ a scalar.  Similarly let $(W,\mu)$ be
a comodule over $A$ with basis $f_1, \ldots, f_m$, and write
\[ \mu:f_j \mapsto \sum_{i=1}^m f_i \otimes b_{ij} \]
\[ b_{ij} = \sum_{\vec{r} \in R} d_{ij}^{\vec{r}} x^{\vec{r}} \]

\begin{thm}

\label{morphismsInTheMethodThm}

Let $\phi:V \rightarrow W$ be a linear map, and write it as the
matrix $(k_{ij})$ in the given bases.  Then $\phi$ is a morphism
of $V$ and $W$ as $A$-comodules if and only if for every $\vec{r}
\in R$
\[ (k_{ij}) (c_{ij})^{\vec{r}} = (d_{ij})^{\vec{r}} (k_{ij}) \]

\end{thm}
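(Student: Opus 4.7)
The plan is to simply unwind the diagram defining a morphism of comodules and match coefficients on the fixed monomial basis $\{x^{\vec{r}} : \vec{r} \in R\}$ of $A$.

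First I would write out what it means for $\phi$ to be a morphism: the square with $\rho$ and $\mu$ on the verticals and $\phi$, $\phi \otimes 1$ on the horizontals must commute. To translate this into matrices I will chase a basis vector $e_j$ around both paths. Going down then right gives $e_j \mapsto \sum_i e_i \otimes a_{ij} \mapsto \sum_{i,l} k_{li} f_l \otimes a_{ij}$, while going right then down gives $e_j \mapsto \sum_l k_{lj} f_l \mapsto \sum_{i,l} k_{lj} f_i \otimes b_{il}$. Matching the coefficient of $f_i$ in each expression (and using that the $f_i$ are a basis of $W$) yields the equality $\sum_l k_{il} a_{lj} = \sum_l b_{il} k_{lj}$ in $A$, for every $i,j$. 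Recognizing the two sides as the $(i,j)$-entries of the matrix products $(k_{ij})(a_{ij})$ and $(b_{ij})(k_{ij})$ respectively (where the matrix entries now live in $A$), this is precisely the single matrix identity
\[ (k_{ij})(a_{ij}) = (b_{ij})(k_{ij}) \]
over $A$.

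Next I would substitute the monomial expansions $a_{ij} = \sum_{\vec{r}} c_{ij}^{\vec{r}} x^{\vec{r}}$ and $b_{ij} = \sum_{\vec{r}} d_{ij}^{\vec{r}} x^{\vec{r}}$ into both sides and pull the scalar matrices outside. Since $(k_{ij})$ has entries in $k$, one obtains
\[ \sum_{\vec{r} \in R} (k_{ij})(c_{ij})^{\vec{r}}\, x^{\vec{r}} \;=\; \sum_{\vec{r} \in R} (d_{ij})^{\vec{r}}(k_{ij})\, x^{\vec{r}} \]
as an equality of matrices with entries in $A$. Because the $x^{\vec{r}}$ were chosen to be a basis of $A$, matching coefficients entry by entry gives the desired family of scalar matrix equalities $(k_{ij})(c_{ij})^{\vec{r}} = (d_{ij})^{\vec{r}}(k_{ij})$, one for each $\vec{r} \in R$. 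Conversely, if all these scalar equalities hold, reassembling the monomial sums reconstructs the $A$-matrix identity above, hence the commutativity of the defining square.

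There is no real obstacle here; the only mild subtlety is bookkeeping of indices consistent with the author's convention (column vectors multiplied on the right by $(k_{ij})$, so $\phi(e_j) = \sum_i k_{ij} f_i$), and making sure to invoke the linear independence of the $x^{\vec{r}}$ exactly once at the end to pass from one identity in $A$ to a family of identities in $k$. Both directions of the equivalence are handled simultaneously by this chain of rewrites, so no separate argument is needed for sufficiency.
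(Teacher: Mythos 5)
Your proof is correct and takes essentially the same route as the paper's: chase $e_j$ around the defining square and match coefficients. The only cosmetic difference is that you do the matching in two stages (first on the basis of $W$ to get the single identity $(k_{ij})(a_{ij}) = (b_{ij})(k_{ij})$ over $A$, then on the monomial basis of $A$), whereas the paper matches in one step against the tensor basis $\{f_s \otimes x^{\vec{r}}\}$ of $W \otimes A$.
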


\begin{proof}
Consider the diagram
\[
\begin{diagram}
V & \rTo^\phi & W \\
\dTo^\rho & & \dTo_\mu \\
V \otimes A & \rTo_{\phi \otimes 1} & W \otimes A \\
\end{diagram}
\]
whose commutativity is equivalent to $\phi$ being a morphism
between $V$ and $W$. If we chase $e_j$ along both paths we arrive
at
\[ \sum_{i=1}^n \left( \sum_{s=1}^m k_{si} f_s \right) \otimes
a_{ij} = \sum_{i=1}^m k_{ij} \left( \sum_{s=1}^m f_s \otimes
b_{si} \right) \] Replacing the $a's$ and $b's$ with their
definitions in terms of the $c's$ and $d's$ and re-arranging a
bit, we have
\[ \sum_{s,\vec{r}} \left( \sum_{i=1}^n k_{si} c_{ij}^{\vec{r}} \right) f_s \otimes
x^{\vec{r}} = \sum_{s,\vec{r}} \left(\sum_{i=1}^m d_{si}^{\vec{r}}
k_{ij} \right) f_s \otimes x^{\vec{r}} \] As $f_s \otimes
x^{\vec{r}}$ for varying $s=1 \ldots m$ and $\vec{r} \in R$ is a
free basis for $W \otimes A$, we must have, for every $j,s$ and
$\vec{r} \in R$
\[ \sum_{i=1}^n k_{si} c_{ij}^{\vec{r}} = \sum_{i=1}^m d_{si}^{\vec{r}} k_{ij}  \]
But the left hand side is the $(s,j)^\text{th}$ entry of
$(k_{ij})(c_{ij})^{\vec{r}}$, and the right the $(s,j)^\text{th}$
entry of $(d_{ij})^{\vec{r}} (k_{ij})$.

\end{proof}

A combinatorial lemma we will need later:

\begin{lem}
\label{commutingRepsInTheMethodThm} Let $(V,\rho)$,$(V,\mu)$ be
two representation of $G$ on the finite dimensional vector space
$V$, given by

\begin{equation*}
\begin{split}
&\rho:e_j \mapsto \sum_i e_i \otimes a_{ij} \hspace{1.5cm}
 a_{ij} = \sum_{\vec{r}} c_{ij}^{\vec{r}}x^{\vec{r}} \\
&\mu: e_j \mapsto \sum_i e_i \otimes b_{ij} \hspace{1.5cm} b_{ij}
= \sum_{\vec{r}} d_{ij}^{\vec{r}} x^{\vec{r}} \end{split}
\end{equation*}
Then the matrices $(a_{ij} \otimes 1)$ and $(1 \otimes b_{ij})$
(taking their entries from $A \otimes A$) commute if and only if
for every $\vec{r}, \vec{s} \in R$, the matrices
$(c_{ij})^{\vec{r}}$ and $(d_{ij})^{\vec{s}}$ commute.

\end{lem}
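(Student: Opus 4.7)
The plan is to reduce matrix commutativity in $M_n(A \otimes A)$ to a family of scalar matrix commutativity conditions in $M_n(k)$ by expanding entries in the chosen monomial basis $\{x^{\vec{r}} : \vec{r} \in R\}$ and matching coefficients on the free basis $\{x^{\vec{r}} \otimes x^{\vec{s}}\}$ of $A \otimes A$. First I would compute the $(i,j)$-entries of both matrix products directly. Denote $M = (a_{ij} \otimes 1)$ and $N = (1 \otimes b_{ij})$. Then
\[ (MN)_{ij} = \sum_k (a_{ik} \otimes 1)(1 \otimes b_{kj}) = \sum_k a_{ik} \otimes b_{kj}, \]
while
\[ (NM)_{ij} = \sum_k (1 \otimes b_{ik})(a_{kj} \otimes 1) = \sum_k a_{kj} \otimes b_{ik}. \]

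Next I would substitute $a_{ik} = \sum_{\vec{r}} c_{ik}^{\vec{r}} x^{\vec{r}}$ and $b_{kj} = \sum_{\vec{s}} d_{kj}^{\vec{s}} x^{\vec{s}}$ into each expression, and collect to get the $(i,j)$-entries as sums over $(\vec{r},\vec{s})$ of scalar coefficients times $x^{\vec{r}} \otimes x^{\vec{s}}$. Since $\{x^{\vec{r}} \otimes x^{\vec{s}}\}_{\vec{r},\vec{s} \in R}$ is a free basis of $A \otimes A$, the matrix equation $MN = NM$ is equivalent, coordinate by coordinate and coefficient by coefficient, to the family of scalar identities
\[ \sum_k c_{ik}^{\vec{r}} d_{kj}^{\vec{s}} = \sum_k d_{ik}^{\vec{s}} c_{kj}^{\vec{r}} \]
holding for all $i,j$ and all $\vec{r}, \vec{s} \in R$.

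Finally, I would recognize the left-hand side as the $(i,j)$-entry of the matrix product $(c_{ij})^{\vec{r}} (d_{ij})^{\vec{s}}$ and the right-hand side as the $(i,j)$-entry of $(d_{ij})^{\vec{s}} (c_{ij})^{\vec{r}}$. Hence, holding $\vec{r}$ and $\vec{s}$ fixed and letting $i,j$ vary, the family of scalar identities above amounts precisely to $(c_{ij})^{\vec{r}} (d_{ij})^{\vec{s}} = (d_{ij})^{\vec{s}} (c_{ij})^{\vec{r}}$; quantifying over all $\vec{r}, \vec{s} \in R$ then gives the claim in both directions simultaneously.

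The proof is essentially bookkeeping, and I do not anticipate any genuine obstacle: the only point requiring care is that the tensors $x^{\vec{r}} \otimes x^{\vec{s}}$ really are linearly independent in $A \otimes A$, so that coefficient-matching is valid. This is immediate here, because for the unipotent groups under consideration $A$ is a polynomial algebra with no relations, so its chosen monomial basis is genuinely free and the induced basis on $A \otimes A$ is free as well.
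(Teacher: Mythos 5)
Your proof is correct and follows essentially the same route as the paper: compute both products entrywise, expand in the monomial basis, match coefficients on the free basis $\{x^{\vec{r}} \otimes x^{\vec{s}}\}$ of $A \otimes A$, and recognize the resulting scalar identities as the entries of $(c_{ij})^{\vec{r}}(d_{ij})^{\vec{s}}$ and $(d_{ij})^{\vec{s}}(c_{ij})^{\vec{r}}$. One very minor remark: the freeness of $\{x^{\vec{r}} \otimes x^{\vec{s}}\}$ does not depend on $A$ being a polynomial ring without relations --- it follows for any choice of $k$-basis $\{x^{\vec{r}}\}_{\vec{r}\in R}$ of $A$, since the tensor product of two bases is a basis of the tensor product.
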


\begin{proof}
The $(i,j)^{\text{th}}$ entry of $(a_{ij} \otimes 1)(1 \otimes
b_{ij})$ is
\[  \sum_k a_{ik} \otimes b_{kj} = \sum_k
\left(\sum_{\vec{r}} c_{ik}^{\vec{r}} x^{\vec{r}})\right) \otimes
\left(\sum_{\vec{s}} d_{kj}^{\vec{s}} x^{\vec{s}}\right) =
\sum_{\vec{r},\vec{s}} \left(\sum_k c_{ik}^{\vec{r}}
d_{kj}^{\vec{s}} \right) x^{\vec{r}} \otimes x^{\vec{s}} \] while
the $(i,j)^{\text{th}}$ entry of $(1 \otimes b_{ij})(a_{ij}
\otimes 1)$ is
\[ \sum_k a_{kj} \otimes b_{ik} = \sum_k \left(\sum_{vec{r}}
c_{kj}^{\vec{r}} x^{\vec{r}} \right) \otimes \left( \sum_{\vec{s}}
d_{ik}^{\vec{s}} x^{\vec{s}} \right)  = \sum_{\vec{r},\vec{s}}
\left(\sum_k d_{ik}^{\vec{s}} c_{kj}^{\vec{r}} \right) x^{\vec{r}}
\otimes x^{\vec{s}} \] By matching coefficients on the free basis
$x^{\vec{r}} \otimes x^{\vec{s}}$ for $A \otimes A$ we must have,
for every $i,j \leq n$ and $\vec{r},\vec{s} \in R$
\[ \left(\sum_k c_{ik}^{\vec{r}}
d_{kj}^{\vec{s}} \right) = \left(\sum_k d_{ik}^{\vec{s}}
c_{kj}^{\vec{r}} \right) \] i.e.
\[ (c_{ij})^{\vec{r}} (d_{ij})^{\vec{s}} = (d_{ij})^{\vec{s}}
(c_{ij})^{\vec{r}} \]

\end{proof}

\chapter{Representation Theory of Direct Products}

\index{algebraic group!direct products of}

\label{representationtheoryofdirectproductschapter}

Here we are interested in the following: given that one has a
handle on the representation theory of the algebraic groups $G$
and $H$, what can be said about the representation theory of $G
\times H$? Any representation of $G \times H$ is evidently a
representation of both $G$ and $H$ in a natural way.  It is also
evident that one cannot paste together any two representations of
$G$ and $H$ to get one for $G \times H$; they must somehow be
compatible. The goal of this section is to prove necessary and
sufficient conditions for representations of $G$ and $H$ to
together define one for $G \times H$, and to provide a formula for
it.

I will spoil the suspense: two representations $\Phi$ and $\Psi$
for $G$ and $H$ on the vector space $V$ define one for $G \times
H$ on $V$ if and only if they commute; that is, the linear maps
$\Phi(g)$ and $\Psi(h)$ commute for every pair $g \in G$, $h \in
H$.  The matrix formula for the $G \times H$-module they define is
nothing more than the product of the matrix formulas of the
constituent modules.  Morphisms for the new module are exactly
those that are morphisms for both of the constituent modules, and
likewise direct sums and tensor products behave as we hope they
will.

We leave it to the reader to verify the following: if $(A,
\Delta_A, \varepsilon_A)$ and $(B, \Delta_B, \varepsilon_B)$ are
the representing Hopf algebras of $G$ and $H$, then the Hopf
algebra for $G \times H$ is $(A \otimes B, \Delta, \varepsilon)$,
defined by
\begin{gather*}
 \Delta: A \otimes B \stackrel{\Delta_A \otimes
\Delta_B}{\vvlongrightarrow} A \otimes A \otimes B \otimes B
\stackrel{1 \otimes \text{Twist} \otimes 1}{\vvlongrightarrow} A
\otimes B \otimes A \otimes B \\
 \varepsilon: A \otimes B \stackrel{\varepsilon_A \otimes
\varepsilon_B}{\vvlongrightarrow} k \otimes k \isomorphic k
\end{gather*}
The natural embedding $G \rightarrow G \times H$ is induced by the
Hopf algebra map $A \otimes B \stackrel{1 \otimes
\varepsilon_B}{\vlongrightarrow} A \otimes k \isomorphic A$,
similarly for $H$.  Then for any vector space $V$ and $A \otimes
B$-comodule structure $\rho:V \rightarrow V \otimes A \otimes B$,
we get an $A$-comodule structure on $V$ given by the composition
\[ V \stackrel{\rho}{\longrightarrow} V \otimes A \otimes B
\stackrel{1 \otimes 1 \otimes \varepsilon_B}{\vvlongrightarrow} V
\otimes A \otimes k \isomorphic V \otimes A \] Similarly we get a
$B$-comodule structure by instead tacking on $1 \otimes
\varepsilon_A \otimes 1$.

Let $(V,\rho),(V,\mu)$ be two comodule structures on the vector
space $V$, the first for $G$, the second for $H$.  Write
\[ \rho:e_j \mapsto \sum_i e_i \otimes a_{ij} \]
\[ \mu:e_j \mapsto \sum_i e_i \otimes b_{ij} \]
\begin{thm}
\label{directproducttheorem} Let $(V,\rho)$, $(V,\mu)$ be modules
for $G$ and $H$ as above. Define
\[ z_{ij} = \sum_k a_{ik} \otimes b_{kj} \]
Then the map $\sigma: V \rightarrow V \otimes A \otimes B$ defined
by
\[ \sigma: e_j \mapsto \sum_i e_i \otimes z_{ij} \]
is a valid module structure for \index{algebraic group!direct
products of} $G \times H$ if and only if the matrices $(a_{ij})$
and $(b_{ij})$ commute with one another. $\sigma$ restricts
naturally to $\rho$ and $\mu$ via the canonical embeddings, and
this is the only possible comodule structure on $V$ that does so.

\end{thm}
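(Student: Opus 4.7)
The plan is to verify the comodule axioms for $\sigma$ directly using the matrix-equation framework of chapter \ref{themethodchapter}, treating $(z_{ij})$ as an $n \times n$ matrix over the Hopf algebra $A \otimes B$. Note first that $(z_{ij}) = (a_{ij} \otimes 1)(1 \otimes b_{ij})$ as matrices over $A \otimes B$, so the commutativity hypothesis reads precisely as $(a_{ij} \otimes 1)(1 \otimes b_{ij}) = (1 \otimes b_{ij})(a_{ij} \otimes 1)$ in $M_n(A \otimes B)$.

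The counit axiom (\ref{comoduleequation1}) is essentially automatic: applying $\varepsilon_{A \otimes B} = \varepsilon_A \otimes \varepsilon_B$ (under $k \otimes k \isomorphic k$) entrywise yields $(\varepsilon_{A \otimes B}(z_{ij})) = (\varepsilon_A(a_{ij}))(\varepsilon_B(b_{ij})) = I \cdot I = I$, independently of any commutativity assumption. The substance of the theorem lies in the coassociativity axiom (\ref{comoduleequation2}), namely $\sum_k z_{ik} \otimes z_{kj} = \Delta(z_{ij})$. Expanding both sides via $\Delta_A(a_{ij}) = \sum_k a_{ik} \otimes a_{kj}$, $\Delta_B(b_{ij}) = \sum_k b_{ik} \otimes b_{kj}$, and the definition $\Delta = (1 \otimes \text{Twist} \otimes 1) \circ (\Delta_A \otimes \Delta_B)$, both sides become quadruple sums in $A \otimes B \otimes A \otimes B$ with identical outer factors $a_{il}$ and $b_{mj}$, differing only in the middle two slots. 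The equation thus reduces to the identity $\sum_k b_{lk} \otimes a_{km} = \sum_k b_{km} \otimes a_{lk}$ in $B \otimes A$ for all $l$, $m$, which, after the evident twist, is exactly the matrix commutativity hypothesis.

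For the restriction claim, applying $1 \otimes 1 \otimes \varepsilon_B$ to $\sigma(e_j) = \sum_i e_i \otimes z_{ij}$ gives $\sum_i e_i \otimes \sum_k a_{ik} \varepsilon_B(b_{kj}) = \sum_i e_i \otimes a_{ij}$ by the counit axiom for $\mu$, recovering $\rho$; the mirror computation recovers $\mu$. For uniqueness, let $\sigma' : e_j \mapsto \sum_i e_i \otimes w_{ij}$ be any $(A \otimes B)$-comodule structure restricting to $\rho$ and $\mu$, so that $(1 \otimes \varepsilon_B)(w_{ij}) = a_{ij}$ and $(\varepsilon_A \otimes 1)(w_{ij}) = b_{ij}$. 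Apply the linear map $(1 \otimes \varepsilon_B) \otimes (\varepsilon_A \otimes 1)$ to both sides of the coassociativity equation $\sum_k w_{ik} \otimes w_{kj} = \Delta(w_{ij})$: the left side collapses immediately to $\sum_k a_{ik} \otimes b_{kj} = z_{ij}$, while the right side simplifies to $w_{ij}$ via a short Sweedler-style calculation using the counit axioms of $A$ and $B$ separately. Hence $w_{ij} = z_{ij}$.

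The main obstacle will be the bookkeeping in the coassociativity verification---tracking which tensor slot in $A \otimes B \otimes A \otimes B$ each of the indices occupies, particularly after the twist inside $\Delta$ is applied---but once this is organized, the reduction to the commutator identity is transparent and the rest of the argument is essentially formal.
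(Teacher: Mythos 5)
Your overall structure mirrors the paper's exactly: verify the two comodule axioms for $\sigma$ by expanding into matrix-tensor expressions, observe that the counit axiom is automatic, show that coassociativity is equivalent to the commutativity hypothesis, and establish the restriction and uniqueness claims by composing with the appropriate counits. Your uniqueness argument --- applying $(1 \otimes \varepsilon_B) \otimes (\varepsilon_A \otimes 1)$ to both sides of the coassociativity equation for $w_{ij}$ --- is precisely the paper's.

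The one place where your proposal has a genuine gap is in the necessity direction of the coassociativity step. You write that the two quadruple sums ``with identical outer factors $a_{il}$ and $b_{mj}$, differing only in the middle two slots'' are equal for all $i,j$ ``thus reduces to'' the middle-slot identity $\sum_k b_{lk} \otimes a_{km} = \sum_k b_{km} \otimes a_{lk}$ for all $l,m$. This kind of coefficient-matching is not licensed: the elements $a_{il}$ and $b_{mj}$ are not assumed to be linearly independent in $A$ and $B$, so equality of the sums over $l$ and $m$ does not automatically localize to each pair $(l,m)$. The forward implication (the middle identity implies the equality of the quadruple sums) is fine, but you are asserting an equivalence. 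The paper closes this gap by interpreting each side as the $(i,j)$ entry of a product of four invertible $n \times n$ matrices over $A \otimes B \otimes A \otimes B$ (one $a$-matrix and one $b$-matrix in each pair of slots), and cancelling the invertible outer factors to obtain the commutation relation in the middle. You could equally well close it by applying $\varepsilon_A$ to slot $1$ and $\varepsilon_B$ to slot $4$: since $\varepsilon_A(a_{il}) = \delta_{il}$ and $\varepsilon_B(b_{mj}) = \delta_{mj}$, this collapses the outer sums and yields exactly the middle-slot identity at $(l,m) = (i,j)$, which suffices since $i,j$ range over all indices. Either fix is short, but some such step needs to be said; as written, the key implication is asserted rather than proved.
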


Some explanation is in order.  The matrices $(a_{ij})$ and
$(b_{ij})$ take their entries from different algebras, so it
doesn't make much sense to say they commute.  What we really mean
is that the matrix products $(a_{ij} \otimes 1)(1 \otimes b_{ij})$
and $(1 \otimes b_{ij})(a_{ij} \otimes 1)$, which take their
entries from $A \otimes B$, are equal.  The content of the theorem
then is that, if they do commute, the matrix $(z_{ij}) = (a_{ij}
\otimes 1)(1 \otimes b_{ij})$ provides a valid module structure
for $G \times H$, and this is the only one that restricts to
$\rho$ and $\mu$. In what follows we shall simply write $a_{ij}$
and $b_{ij}$, even when we wish to consider them as elements of $A
\otimes B$.

\begin{proof}
Given that this actually is a representation, the representation
induced on $G$ from it is given by
\begin{gather*}
 e_j \stackrel{\sigma}{\mapsto}
\sum_i e_i \otimes z_{ij} \stackrel{1 \otimes 1 \otimes
\varepsilon_B}{\mapsto} \sum_i e_i \otimes \sum_k a_{ik} \otimes
\varepsilon_B(b_{kj}) \\
= \sum_i e_i \otimes \sum_k a_{ik} \otimes \delta_{kj} = \sum_i
e_i \otimes a_{ij}
\end{gather*}
 and similarly for $H$.  Thus $\sigma$ does
indeed restrict to $\rho$ and $\mu$.

To prove that this actually is a representation, as always, we
must check that the equations $\varepsilon(z_{ij}) = \delta_{ij}$
and $\sum_k z_{ik} \otimes z_{kj} = \Delta(z_{ij})$ are satisfied.
For the first, we have
\begin{equation*}
\begin{split}
 \varepsilon(z_{ij}) &=  \varepsilon(\sum_k a_{ik} \otimes
b_{kj}) \\ &= \sum_k \varepsilon_A(a_{ik}) \varepsilon_B(b_{kj})\\
&= \sum_k \delta_{ik} \delta_{kj}\\ &= \delta_{ij}
\end{split}
\end{equation*}
as required. For the second equation, we have
\begin{equation*}
\begin{split}
 \Delta(z_{ij}) &= \Delta(\sum_k a_{ik} \otimes b_{kj}) \\
 &= \sum_k \Delta_A(a_{ik}) \otimes \Delta_B(b_{kj}) \\
 &= \sum_k (\sum_l a_{il} \otimes a_{lk}) \otimes (\sum_m b_{km}
\otimes b_{mj}) \\
 &\stackrel{\text{Twist}}{\isomorphic} \sum_{k,l,m} a_{il} \otimes
b_{km} \otimes a_{lk} \otimes b_{mj}
\end{split}
\end{equation*}
 where we have used the fact
that $\rho$ and $\mu$ are comodule structures for $G$ and $H$, and
applied the twist at the end. Note that the last expression is
equal to the $(i,j)^\text{th}$ entry of the matrix
$(a_{ij})(b_{ij})(a_{ij})(b_{ij})$.  For the other side:
\begin{equation*}
\begin{split}
 \sum_k z_{ik} \otimes z_{kj} &= \sum_k (\sum_l a_{il} \otimes
b_{lk}) \otimes (\sum_m a_{km} \otimes b_{mj}) \\
&= \sum_{k,l,m} a_{il} \otimes b_{lk} \otimes a_{km} \otimes
b_{mj}
\end{split}
\end{equation*}
This is equal to the $(i,j)^\text{th}$ entry of the matrix
$(a_{ij})(a_{ij})(b_{ij})(b_{ij})$.  Thus, all this reduces to the
matrix equality
\[ (a_{ij})(b_{ij})(a_{ij})(b_{ij}) =
(a_{ij})(a_{ij})(b_{ij})(b_{ij})\] And since the matrices
$(a_{ij})$ and $(b_{ij})$ are necessarily invertible, we can
multiply both sides on the left by $(a_{ij})^{-1}$ and on the
right by $(b_{ij})^{-1}$, and we are left with
\[ (b_{ij})(a_{ij}) = (a_{ij})(b_{ij}) \]
This proves the first part of the theorem.  Lastly, we wish to see
that $\sigma$ is the only possible comodule structure on $V$
restricting to $\rho$ and $\mu$.  Let $\tau:V \rightarrow V
\otimes A \otimes B$ be any such comodule, and write
\[ \tau: e_j \mapsto \sum_i e_i \otimes w_{ij} \]
$\tau$ restricting to $\rho$ and $\mu$ means that $1 \otimes
\varepsilon_B:w_{ij} \mapsto a_{ij}$ and that $\varepsilon_A
\otimes 1:w_{ij} \mapsto b_{ij}$.  By virtue of $\tau$ being a
comodule map we have
\[ (\Delta_A \otimes \Delta_B)(w_{ij}) = \sum_k w_{ik} \otimes
w_{kj} \] and, using the Hopf algebra identity $(\varepsilon
\otimes 1) \circ \Delta = (1 \otimes \varepsilon) \circ \Delta =
1$,
\begin{equation*}
\begin{split}
 (\Delta_A \otimes \Delta_B)(\sum_k a_{ik} \otimes b_{kj}) &=
(\Delta_A \otimes \Delta_B)(\sum_k (1 \otimes
\varepsilon_B)(w_{ik}) \otimes (\varepsilon_A \otimes 1)(w_{kj})
\\
 &= \sum_k w_{ik} \otimes w_{kj}
\end{split}
\end{equation*}
  But $\Delta_A \otimes \Delta_B
= \Delta$ is a 1-1 map (as all co-multiplication maps are), and
sends $w_{ij}$ and $\sum_i a_{ik} \otimes b_{kj}$ to the same
thing.  We conclude then that $w_{ij} = \sum_k a_{ik} \otimes
b_{kj}$.  This completes the proof.

\end{proof}

\begin{section}{Constructions and Morphisms}

Here we record some facts about certain constructions and
morphisms on representations of $G \times H$, relative to the
induced representations on $G$ and $H$.  Throughout, let $A$ and
$B$ be the representing Hopf algebras of $G$ and $H$, $V$ and $W$
fixed finite dimensional vector spaces spanned by $\{e_i\}$ and
$\{f_j\}$ respectively, and endowed with $A \otimes B$-comodule
structures $\sigma$ and $\tau$, given by
\begin{equation*}
\begin{split}
 \sigma: &e_j \mapsto \sum_i e_i \otimes z_{ij} \\
 \tau: &f_j \mapsto \sum_i f_i \otimes w_{ij}
\end{split}
\end{equation*}
Further, let $\sigma_A, \sigma_B, \tau_A, \tau_B$ be the induced
comodule structures on $A$ and $B$, given by
\begin{equation*}
\begin{split}
 \sigma_A: &e_j \mapsto \sum_i e_i \otimes a_{ij} \hspace{1.5cm}
 \tau_A: f_j \mapsto \sum_i f_i \otimes l_{ij} \\
 \sigma_B: &e_j \mapsto \sum_i e_i \otimes b_{ij}
\hspace{1.5cm} \tau_B: f_j \mapsto \sum_i f_i \otimes m_{ij}
\end{split}
\end{equation*}
Then we know from the previous section that the matrix $(z_{ij})$
is equal to the matrix product $(a_{ij})(b_{ij})$, and similarly
$(w_{ij}) = (l_{ij})(m_{ij})$, that the matrix $(a_{ij})$ commutes
with $(b_{ij})$, and that $(l_{ij})$ commutes with $(m_{ij})$.
Since $\sigma$ and $\tau$ are the unique representations
restricting to the given ones, we shall say that they are
\emph{induced} by $\sigma_A,\sigma_B$ and $\tau_A, \tau_B$
respectively.

\begin{prop}
The direct sum $\sigma \oplus \tau$ is induced by the direct sums
 $\sigma_A \oplus \tau_A$ and $\sigma_B \oplus \tau_B$.
\end{prop}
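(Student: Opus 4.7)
The plan is to apply the uniqueness half of Theorem \ref{directproducttheorem}: to show that the $G \times H$-comodule structure on $V \oplus W$ induced by $\sigma_A \oplus \tau_A$ and $\sigma_B \oplus \tau_B$ agrees with $\sigma \oplus \tau$, it suffices to verify that (i) $\sigma_A \oplus \tau_A$ and $\sigma_B \oplus \tau_B$ commute as representations on $V \oplus W$, so that Theorem \ref{directproducttheorem} actually produces a representation, and (ii) the resulting $G \times H$-comodule restricts to $\sigma_A \oplus \tau_A$ on $G$ and to $\sigma_B \oplus \tau_B$ on $H$.

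For (i), I would write the direct-sum representations in block-diagonal form in the basis $\{e_i\} \cup \{f_j\}$:
\[
\sigma_A \oplus \tau_A \leftrightarrow \begin{pmatrix} (a_{ij}) & 0 \\ 0 & (l_{ij}) \end{pmatrix}, \qquad \sigma_B \oplus \tau_B \leftrightarrow \begin{pmatrix} (b_{ij}) & 0 \\ 0 & (m_{ij}) \end{pmatrix},
\]
with entries interpreted in $A \otimes B$ after tensoring with $1$ on the appropriate side. Multiplying these in either order gives a block-diagonal matrix whose diagonal blocks are $(a_{ij})(b_{ij})$ and $(l_{ij})(m_{ij})$ in one order, and $(b_{ij})(a_{ij})$, $(m_{ij})(l_{ij})$ in the other; by hypothesis $(a_{ij})$ commutes with $(b_{ij})$ and $(l_{ij})$ commutes with $(m_{ij})$, so the two block-diagonal matrices commute. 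Thus Theorem \ref{directproducttheorem} applies and produces a $G\times H$-comodule structure on $V \oplus W$ whose matrix formula is the block-diagonal matrix with diagonal blocks $(z_{ij})$ and $(w_{ij})$.

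For (ii), I will observe that the block-diagonal matrix with blocks $(z_{ij})$ and $(w_{ij})$ is precisely the matrix formula of $\sigma \oplus \tau$ in the basis $\{e_i\} \cup \{f_j\}$, by Definition \ref{comoduleConstructionsDefn}. Restricting along $1 \otimes \varepsilon_B$ sends this block matrix to the block-diagonal matrix with blocks $(a_{ij})$ and $(l_{ij})$, i.e.~to $\sigma_A \oplus \tau_A$; similarly $\varepsilon_A \otimes 1$ recovers $\sigma_B \oplus \tau_B$. By the uniqueness clause of Theorem \ref{directproducttheorem}, the induced $G \times H$-comodule and $\sigma \oplus \tau$ coincide.

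There is no real obstacle here; the only point requiring a moment's care is the bookkeeping that block-diagonal matrices commute whenever their corresponding diagonal blocks do, and that this is exactly the hypothesis provided by the two given $G \times H$-module structures. The rest is a direct comparison of matrix formulas.
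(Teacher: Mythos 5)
Your proof is correct and rests on the same block-diagonal matrix computation that the paper uses. The only cosmetic difference is the logical wrapper: you invoke the uniqueness clause of Theorem \ref{directproducttheorem} after verifying the commutativity and restriction conditions, whereas the paper simply observes directly that the block matrix $\mathrm{diag}\bigl((z_{ij}),(w_{ij})\bigr)$ factors as $\mathrm{diag}\bigl((a_{ij}),(l_{ij})\bigr)\cdot\mathrm{diag}\bigl((b_{ij}),(m_{ij})\bigr)$ and identifies this as the induced representation; both routes hinge on the same identity.
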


\begin{proof}
If we view $(z_{ij})$ and $(w_{ij})$ as the matrix formulas for
the representations $\sigma$ and $\tau$, then the matrix formula
for $\sigma \oplus \tau$ in the basis $\{e_i\} \cup \{f_i\}$ is
\[
\left(%
\begin{array}{cc}
  (z_{ij}) &  \\
   & (w_{ij}) \\
\end{array}%
\right)
\]
which we can write as
\[
\left(%
\begin{array}{cc}
  (a_{ij})(b_{ij}) &  \\
   & (l_{ij})(m_{ij}) \\
\end{array}%
\right) =
\left(%
\begin{array}{cc}
  (a_{ij}) &  \\
   & (l_{ij}) \\
\end{array}
\right)
\left(%
\begin{array}{cc}
  (b_{ij}) &  \\
   & (m_{ij}) \\
\end{array}%
\right)
\]
This last formula is exactly that of the representation on $G
\times H$ induced by that of $\sigma_A \oplus \tau_A$ and
$\sigma_B \oplus \tau_B$.
\end{proof}

\begin{prop}
\label{tensorForDirProdsProp} The tensor product of
representations $\sigma \otimes \tau$ is induced by $\sigma_A
\otimes \tau_A$ and $\sigma_B \otimes \tau_B$.
\end{prop}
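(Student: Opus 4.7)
The plan is to invoke the uniqueness clause of theorem \ref{directproducttheorem}: it suffices to verify that $\sigma \otimes \tau$, as an $A \otimes B$-comodule on $V \otimes W$, restricts to $\sigma_A \otimes \tau_A$ under the canonical embedding $G \hookrightarrow G \times H$ and to $\sigma_B \otimes \tau_B$ under $H \hookrightarrow G \times H$. Because $\sigma \otimes \tau$ is constructed directly from $\sigma$ and $\tau$ via definition \ref{comoduleConstructionsDefn}, it is automatically a valid $A \otimes B$-comodule; once the two restrictions are identified, uniqueness in theorem \ref{directproducttheorem} forces $\sigma \otimes \tau$ to coincide with the representation induced by $\sigma_A \otimes \tau_A$ and $\sigma_B \otimes \tau_B$, and as a bonus forces the matrices of the latter two to commute in the sense required for the induction to make sense.

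The computational heart of the proof is the observation that passing from an $A \otimes B$-comodule to its $G$-restriction, which postcomposes the comodule map with $1 \otimes 1 \otimes \varepsilon_B$, commutes with the tensor product construction on comodules. Concretely, the tensor product $\sigma \otimes \tau$ is formed by applying $\sigma \otimes \tau$, twisting, and multiplying in $A \otimes B$; the $G$-restriction then applies $1 \otimes \varepsilon_B$ to the result. Since $\varepsilon_B$ is an algebra map and the multiplication on $A \otimes B$ is defined slot-wise, the square
\begin{diagram}
(A \otimes B) \otimes (A \otimes B) & \rTo^{\text{mult}_{A \otimes B}} & A \otimes B \\
\dTo^{(1 \otimes \varepsilon_B) \otimes (1 \otimes \varepsilon_B)} & & \dTo_{1 \otimes \varepsilon_B} \\
A \otimes A & \rTo_{\text{mult}_A} & A \\
\end{diagram}
commutes, which lets me push $1 \otimes \varepsilon_B$ past the multiplication and absorb it into each factor. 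I would then chase an arbitrary basis vector $e_j \otimes f_l$ through both paths: using $z_{ij} \cdot w_{kl} = \sum_{p,q} (a_{ip} l_{kq}) \otimes (b_{pj} m_{ql})$ and applying $1 \otimes \varepsilon_B$, the telescoping sum $\sum_{p,q} a_{ip} l_{kq} \, \delta_{pj} \delta_{ql}$ collapses to $a_{ij} l_{kl}$, which is exactly the $(i,k),(j,l)$ entry of the matrix formula for $\sigma_A \otimes \tau_A$ on $V \otimes W$. The argument for the $H$-restriction is identical with $A$ and $B$ swapped.

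Having identified the two restrictions, the proposition follows from the uniqueness half of theorem \ref{directproducttheorem}: the $A \otimes B$-comodule $\sigma \otimes \tau$ restricts to $\sigma_A \otimes \tau_A$ and $\sigma_B \otimes \tau_B$, so it must be the unique comodule structure doing so, namely the one induced by that pair. The only obstacle is notational bookkeeping in the diagram chase; there is no substantive difficulty, as everything reduces to the multiplicativity of $\varepsilon_A$ and $\varepsilon_B$ combined with the slot-wise definition of the multiplication on $A \otimes B$.
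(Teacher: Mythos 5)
Your proof is correct, but it takes a genuinely different route from the paper's.  The paper simply computes the matrix formula directly: the comodule map of $\sigma \otimes \tau$ has matrix $(z_{ij}) \otimes (w_{ij})$ (Kronecker product), and the mixed-product identity for the Kronecker product gives
\[ (z_{ij}) \otimes (w_{ij}) = (a_{ij})(b_{ij}) \otimes (l_{ij})(m_{ij}) = [(a_{ij}) \otimes (l_{ij})][(b_{ij}) \otimes (m_{ij})], \]
which one recognizes as the matrix formula of the representation induced by $\sigma_A \otimes \tau_A$ and $\sigma_B \otimes \tau_B$; the commutativity of the two factors is then automatic from the commutativity of $(a_{ij})$ with $(b_{ij})$ and of $(l_{ij})$ with $(m_{ij})$.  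You instead verify that the restriction functor $\text{Comod}_{A \otimes B} \to \text{Comod}_A$ (postcomposition with $1 \otimes \varepsilon_B$) is tensor-preserving, identify the two restrictions of $\sigma \otimes \tau$ as $\sigma_A \otimes \tau_A$ and $\sigma_B \otimes \tau_B$ by the $\delta$-collapse computation, and then invoke the uniqueness clause of theorem \ref{directproducttheorem}.  Your argument buys a more conceptual statement (that restriction along a Hopf algebra quotient commutes with the tensor product of comodules, so the identification is functorial) at the cost of being longer; the paper's one-line Kronecker-product identity is the more economical choice for this particular claim, since both sides are already written as explicit matrix expressions.  Both proofs are sound, and in particular your $\delta_{pj}\delta_{ql}$ collapse is computed correctly.
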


\begin{proof}
This is merely the observation that tensor product of matrices
commutes with matrix multiplication.  The matrix formula for the
representation $\sigma \otimes \tau$ is
\[ (z_{ij}) \otimes (w_{ij}) = (a_{ij})(b_{ij}) \otimes
(l_{ij})(m_{ij}) = [(a_{ij}) \otimes (l_{ij})][(b_{ij}) \otimes
(m_{ij})] \] which is the representation induced by $\sigma_A
\otimes \tau_A$ and $\sigma_B \otimes \tau_B$.
\end{proof}

\begin{prop}
\label{dirProdHomprop} A linear map $\phi: V \rightarrow W$ is a
morphism between the representations $(V,\sigma)$ and $(W,\tau)$
in the category $\text{Rep}_k G \times H$ if and only if it is
both a morphism between the representations $(V,\sigma_A)$ and
$(W,\tau_A)$ in the category $\text{Rep}_k G$ and between
$(V,\sigma_B)$ and $(W,\tau_B)$ in the category $\text{Rep}_k H$.

\end{prop}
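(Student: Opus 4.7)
The plan is to prove the two implications separately, with the forward direction following from general functoriality considerations and the backward direction reducing to a short matrix computation using Theorem~\ref{morphismsInTheMethodThm}.

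For the forward direction, I would invoke the natural embedding $G \hookrightarrow G \times H$ described at the beginning of the chapter, which is induced by the Hopf algebra surjection $A \otimes B \stackrel{1 \otimes \varepsilon_B}{\vlongrightarrow} A$. Restriction of scalars along this embedding is a functor $\text{Rep}_k G \times H \rightarrow \text{Rep}_k G$, which on objects sends $(V,\sigma)$ to $(V,\sigma_A)$ and $(W,\tau)$ to $(W,\tau_A)$. Being a functor, it sends morphisms to morphisms, so $\phi$ being a morphism of $G \times H$-modules immediately implies it is a morphism of the restricted $G$-modules. The analogous argument using $\varepsilon_A \otimes 1$ handles $H$. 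Thus one direction is essentially free.

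For the reverse direction, I would use Theorem~\ref{morphismsInTheMethodThm} to translate each hypothesis into a matrix equation. Writing $\phi$ as $(k_{ij})$ in the given bases, the assumption that $\phi$ is a morphism in $\text{Rep}_k G$ means $(k_{ij})(a_{ij}) = (l_{ij})(k_{ij})$, and the assumption in $\text{Rep}_k H$ means $(k_{ij})(b_{ij}) = (m_{ij})(k_{ij})$. Using that $(z_{ij}) = (a_{ij})(b_{ij})$ and $(w_{ij}) = (l_{ij})(m_{ij})$ in $A \otimes B$ (via the obvious inclusions of $A$ and $B$ into $A \otimes B$), I would chain the two equalities:
\[
(k_{ij})(z_{ij}) = (k_{ij})(a_{ij})(b_{ij}) = (l_{ij})(k_{ij})(b_{ij}) = (l_{ij})(m_{ij})(k_{ij}) = (w_{ij})(k_{ij}),
\]
which is exactly the matrix identity characterizing $\phi$ as a morphism in $\text{Rep}_k G \times H$, again by Theorem~\ref{morphismsInTheMethodThm}.

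There is no real obstacle here; the only point worth being careful about is that the three matrix equations live \emph{a priori} in three different algebras ($A$, $B$, and $A \otimes B$), and one must interpret the two hypotheses inside the common algebra $A \otimes B$ via the inclusions $a \mapsto a \otimes 1$ and $b \mapsto 1 \otimes b$ before chaining them. Once this is done the computation is immediate, and the proposition follows.
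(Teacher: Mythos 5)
Your proof is correct and takes essentially the same route as the paper: the backward direction is the same chain of matrix equalities that the paper labels ``obvious,'' and your forward direction, phrased as functoriality of restriction along $1\otimes\varepsilon_B \colon A\otimes B\to A$, is just a repackaging of the commutative-diagram chase the paper writes out explicitly. The care you take to interpret the matrix equations inside the common algebra $A\otimes B$ via $a\mapsto a\otimes 1$ and $b\mapsto 1\otimes b$ is exactly the right point to flag.
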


\begin{proof}
Write $\phi$ as the matrix $(k_{ij})$ in the relevant bases.
Recall that, $\phi$ being a morphism in $\text{Rep}_k G \times H$
is equivalent to the matrix equality $(k_{ij})(z_{ij}) = (w_{ij})
(k_{ij})$.  Then the `if' direction of the theorem is obvious,
since by assumption $(z_{ij}) = (a_{ij})(b_{ij}) =
(b_{ij})(a_{ij})$, similarly for $(w_{ij})$.

Conversely, consider the diagram
\begin{diagram}
V & \rTo^\phi & W \\
\dTo^\sigma & & \dTo_\tau \\
V \otimes A \otimes B & \rTo_{\phi \otimes 1 \otimes 1} & V \otimes A \otimes B \\
\dTo^{1 \otimes 1 \otimes \varepsilon_B} & & \dTo_{1 \otimes 1
\otimes \varepsilon_B} \\
V \otimes A & \rTo_{\phi \otimes 1} & V \otimes A \\
\end{diagram}
Commutativity of the top rectangle is the statement that $\phi$ is
a morphism between $(V,\sigma)$ and $(W,\tau)$, and commutativity
of the bottom rectangle is true no matter what $\phi$ is.  This
gives us commutativity of the outermost rectangle, which is the
assertion that $\phi$ is a morphism between $(V,\sigma_A)$ and
$(W,\tau_A)$ in $\text{Rep}_k G$.  An identical result holds in
$\text{Rep}_k H$ if we consider $\varepsilon_A$ instead.

\end{proof}

\end{section}

\chapter{The Additive Group}

\label{chapterTheAdditiveGroup}

In this chapter we give a complete characterization of the finite
dimensional representations of the additive group \index{$G_a$}
$G_a$ over any field, using no more than the combinatorial methods
described in chapter \ref{themethodchapter}.  The goal, as with
both of the unipotent groups we'll be investigating, is to show
that, for characteristic $p > 0$ large with respect to dimension,
modules for $G_a$ in characteristic $p$ look exactly like modules
for $G_a^n$ (direct product of copies of $G_a$) in characteristic
zero.  This is by far the easiest case we'll consider, as even
when $p << \text{dim}$, the analogy is still very strong (which is
atypical; modules for the Heisenberg group, discussed later, in
$\text{dim}
>> p$ do not share this property).

We start by considering the group $G_a^\infty$, the countable
direct product of $G_a$; all we need for the group $G_a$ will
emerge as a special case. For a fixed field $k$ we identify the
Hopf algebra $A$ for the group $G_a^\infty$ as $k[x_1,x_2,
\ldots]$, the free algebra on countably many commuting variables,
with $\Delta$ and $\varepsilon$ defined by
\begin{gather*}
A = k[x_1, x_2, \ldots] \\
 \Delta: x_i \mapsto 1 \otimes x_i + x_i \otimes 1 \\
 \varepsilon: x_i \mapsto 0
\end{gather*}
Let $(V,\rho)$ be a comodule over $A$, fix a basis $\{e_i\}$ of
$V$, and write \label{cijGaDefn}
\begin{gather*}
\rho:e_j \mapsto \sum_i e_i \otimes a_{ij} \\
 a_{ij} = \sum_{\vec{r} = (r_1, \ldots, r_n)} c_{ij}^{\vec{r}}
x^{\vec{r}}
\end{gather*}
 where we adopt the notation $x^{\vec{r}} = x_1^{r_1}
x_2^{r_2} \ldots x_n^{r_n}$. Notice that only finitely many of the
variables $x_i$ can show up in any of the $a_{ij}$, so for all
intents and purposes, this is really just a comodule over a
finitely generated slice of $A$, namely $k[x_1, \ldots, x_n]$, the
representing Hopf algebra of the group $G_a^n$.

\section{Combinatorics}

We look first at equation \ref{comoduleequation1}:
\[ (\varepsilon(a_{ij})) = \text{Id} \]
Note that $\varepsilon$ acts on $a_{ij}$ by simply picking off its
constant term $c_{ij}^{\vec{0}}$.  The above thus reduces to the
matrix equality
\[ (c_{ij})^{\vec{0}} =  \text{Id} \]
This equality is intuitively obvious, for when we evaluate the
matrix formula of the representation at $x_1 = \ldots = x_n = 0$
we are left with $(c_{ij})^{(0,\ldots,0)}$, which should indeed be
the identity matrix.

Next we look equation \ref{comoduleequation2}, namely $\sum_k
a_{ik} \otimes a_{kj} = \Delta(a_{ij})$.  Working with the right
hand side first we have, making repeated use of the binomial
theorem
\begin{equation*}
\begin{split}
 \Delta(a_{ij}) &= \sum_{\vec{r}} c_{ij}^{\vec{r}}
\Delta(x_1)^{r_1} \ldots \Delta(x_n)^{r_n} \\
&= \sum_{\vec{r}} c_{ij}^{\vec{r}} (1 \otimes x_1 + x_1 \otimes
1)^{r_1} \ldots (1 \otimes x_n + x_n \otimes 1)^{r_n} \\
&=\sum_{\vec{r}} c_{ij}^{\vec{r}} \left(\sum_{k_1 + l_1 =
r_1}{k_1+l_1 \choose l_1} x_1^{k_1}\otimes  x_1^{l_1}\right)
\ldots \left(\sum_{k_n + l_n = r_n} {k_n+l_n \choose l_n}
x_n^{k_n} \otimes x_n^{l_n} \right) \\
&= \sum_{\vec{r}} c_{ij}^{\vec{r}} \sum_{\vec{k} + \vec{l} =
\vec{r}} {k_1 + l_1 \choose l_1} \ldots {k_n+l_n \choose l_n}
x_1^{k_1} \ldots x_n^{k_n} \otimes x_1^{l_1}\ldots x_n^{l_n} \\
&= \sum_{\vec{r}} c_{ij}^{\vec{r}} \sum_{\vec{k} + \vec{l} =
\vec{r}} {\vec{k} + \vec{l} \choose \vec{l}} x^{\vec{k}} \otimes
x^{\vec{l}}
\end{split}
\end{equation*}
Here we have adopted the notation, for two $n$-tuples of
non-negative integers $\vec{m} = (m_1, \ldots, m_n)$ and $\vec{r}
= (r_1, \ldots, r_n)$,
\[\vec{m}+\vec{r} = (m_1 + r_1, \ldots, m_n + r_n)\]
and
\[{\vec{m} \choose \vec{r}} = {m_1 \choose r_1}{m_2 \choose r_2}
\ldots {m_n \choose r_n}\] For the left hand side we have
\begin{equation*}
\begin{split}
 \sum_k a_{ik} \otimes a_{kj} &= \sum_k \left(\sum_{\vec{r}}
c_{ik}^{\vec{r}}x^{\vec{r}}\right) \otimes \left(\sum_{\vec{s}}
c_{kj}^{\vec{s}}x^{\vec{s}} \right) \\
&= \sum_k \sum_{\vec{r},\vec{s}} c_{ik}^{\vec{r}} c_{kj}^{\vec{s}}
x^{\vec{r}} \otimes x^{\vec{s}}\\
&= \sum_{\vec{r},\vec{s}} \left( \sum_k c_{ik}^{\vec{r}}
c_{kj}^{\vec{s}} \right) x^{\vec{r}} \otimes x^{\vec{s}}
\end{split}
\end{equation*}
Thus, equation \ref{comoduleequation2} reduces to
\[ \sum_{\vec{r},\vec{s}} \left( \sum_k c_{ik}^{\vec{r}}
c_{kj}^{\vec{s}} \right) x^{\vec{r}} \otimes x^{\vec{s}}  =
\sum_{\vec{r}} c_{ij}^{\vec{r}} \sum_{\vec{k} + \vec{l} = \vec{r}}
{\vec{k} + \vec{l} \choose \vec{l}} x^{\vec{k}} \otimes
x^{\vec{l}} \]

Now the polynomial ring $k[x_1, \ldots, x_n]$ has no relations,
whence the collection of all monomial tensors $x^{\vec{r}} \otimes
x^{\vec{s}}$ for varying $\vec{r}$ and $\vec{s}$ constitutes a
free basis for $k[x_1, \ldots, x_n] \otimes k[x_1, \ldots, x_n]$;
we can therefore simply match coefficients. In the left hand side
of the above equation, clearly each monomial tensor occurs exactly
once, and its coefficient is $\sum_k c_{ik}^{\vec{r}}
c_{kj}^{\vec{s}}$. In the right hand side it is also true that
each monomial tensor occurs exactly once, for if you choose
$\vec{k}$ and $\vec{l}$, there is only one $\vec{r}$ in whose
summation the term $x^{\vec{k}} \otimes x^{\vec{l}}$ will occur,
and there it occurs exactly once. The coefficient of the monomial
tensor $x^{\vec{r}} \otimes x^{\vec{s}}$ on the right hand side is
thus ${\vec{r}+\vec{s} \choose \vec{s}} c_{ij}^{(\vec{r} +
\vec{s})}$. Then we have
\[ {\vec{r}+\vec{s} \choose \vec{s}} c_{ij}^{(\vec{r} + \vec{s})} = \sum_k
c_{ik}^{\vec{r}} c_{kj}^{\vec{s}} \] For every $i,j,\vec{r}$ and
$\vec{s}$.  But the right hand side is simply the
$(i,j)^{\text{th}}$ entry of the matrix
$(c_{ij})^{\vec{r}}(c_{ij})^{\vec{s}}$, and the left hand side is
the $(i,j)^{\text{th}}$ entry of the matrix ${\vec{r}+\vec{s}
\choose \vec{s}} (c_{ij})^{\vec{r}+\vec{s}}$.  Equation
\ref{comoduleequation2} is therefore equivalent to the matrix
equality
\begin{equation}
\index{fundamental relation!for $G_a^\infty$}
\label{GaInfinityfundamentalrelation}(c_{ij})^{\vec{r}}(c_{ij})^{\vec{s}}
= {\vec{r}+\vec{s} \choose \vec{s}}(c_{ij})^{(\vec{r}+\vec{s})}
\end{equation}
for every $\vec{r}$ and $\vec{s}$.  This equation, along with
$(c_{ij})^{\vec{0}} = \text{Id}$, and the requirement that
$(c_{ij})^{\vec{r}}$ should vanish for all but finitely many
$\vec{r}$, are necessary and sufficient for a collection of
matrices $(c_{ij})^{\vec{r}}$ to define a representation of
$G_a^\infty$ or $G_a^n$ over any field $k$.

In the case of $G_a = G_a^1$ the above equations reduces to
\[ (c_{ij})^0 = \text{Id} \]
and
\begin{equation}
\index{$(c_{ij})$} \index{fundamental relation!for $G_a$}
\label{Gafundamentalrelation} (c_{ij})^r (c_{ij})^s = {r+s \choose
r} (c_{ij})^{r+s}
\end{equation}
Again, these equations, along with the requirement that
$(c_{ij})^r$ vanish for large $r$, are necessary and sufficient to
define a representation of $G_a$.

For the rest of this chapter we restrict to the case of $G_a$, and
treat the case of zero and positive characteristic separately.

\section{Characteristic Zero}

Let $k$ have characteristic zero.

\begin{thm}
\label{Gacharzerothm} Every $n$-dimensional representation of
$G_a$ over $k$ is of the form $e^{xN}$, where $N$ is an $n \times
n$ nilpotent matrix with entries in $k$.  Further, any $n \times
n$ nilpotent matrix over $k$ gives a representation according to
this formula.
\end{thm}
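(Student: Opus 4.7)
The plan is to reduce the theorem entirely to the matrix relations derived in the previous section, namely $(c_{ij})^0 = \mathrm{Id}$ and the fundamental relation
\[ (c_{ij})^r (c_{ij})^s = \binom{r+s}{s}(c_{ij})^{r+s} \]
together with the finite-support requirement that $(c_{ij})^r = 0$ for all but finitely many $r$. The key move is to set $N := (c_{ij})^1$ and show that all of the higher $(c_{ij})^r$ are forced to be $N^r/r!$, so that the matrix formula for the representation collapses to the truncated exponential series.

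First I would specialize the fundamental relation to $s = 1$, which yields $(c_{ij})^{r} \cdot N = (r+1)(c_{ij})^{r+1}$, and then run an easy induction on $r$, dividing by $r!$ at each stage, to conclude $(c_{ij})^r = N^r/r!$ for every $r \geq 0$. This is the step where characteristic zero is indispensable, since the recursion demands that every positive integer be invertible in $k$. Once this identity is established, the finite-support condition on the family $\{(c_{ij})^r\}$ immediately forces $N^r = 0$ for all sufficiently large $r$, i.e.\ $N$ is nilpotent. Substituting back into the matrix formula $\sum_r (c_{ij})^r x^r$ gives the (finite) sum $\sum_r (xN)^r/r! = e^{xN}$, proving the first half of the statement.

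For the converse, I would let $N$ be an arbitrary $n \times n$ nilpotent matrix over $k$ and define $(c_{ij})^r := N^r/r!$ for $r \geq 0$. I then need only verify the three characterizing conditions: $(c_{ij})^0 = \mathrm{Id}$ is immediate; nilpotence of $N$ gives the finite-support condition; and the fundamental relation is the elementary identity
\[ \frac{N^r}{r!} \cdot \frac{N^s}{s!} = \frac{N^{r+s}}{r!\,s!} = \binom{r+s}{s}\frac{N^{r+s}}{(r+s)!}, \]
which is a purely algebraic computation valid in any characteristic-zero field. This completes the characterization.

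I do not anticipate a genuine obstacle here: the proof is really just bookkeeping around the fundamental relation, and the only subtle point is remembering that the division by $r!$ both makes the argument go through and simultaneously explains why the characteristic must be zero. This observation is precisely what will motivate the substantially more delicate positive-characteristic analogue in the next section, where the failure of $r!$ to be invertible forces the introduction of additional generators $N_0, N_1, \ldots, N_m$ attached to the Frobenius layers $x, x^p, x^{p^2}, \ldots$
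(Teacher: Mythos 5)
Your proof is correct and follows essentially the same route as the paper: set $N = (c_{ij})^1$, use the fundamental relation to force $(c_{ij})^r = N^r/r!$, and read off nilpotence from finite support. The only cosmetic difference is in the converse, where the paper invokes the exponential identity $e^{xN}e^{yN} = e^{(x+y)N}$ (proved later as a lemma) to verify the comodule axioms, whereas you verify the fundamental relation directly via the binomial identity $\frac{N^r}{r!}\cdot\frac{N^s}{s!} = \binom{r+s}{s}\frac{N^{r+s}}{(r+s)!}$; both are valid, and yours is arguably the more self-contained at this point in the development.
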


\begin{proof}
By $e^{xN}$ we mean the sum
\[ 1 + xN + \frac{x^2 N^2}{2} + \ldots + \frac{x^m N^m}{m!} \]
which of course terminates since $N$ is nilpotent.  Obviously such
a formula gives a representation, in view of the matrix identity
$e^{xN}e^{yN} = e^{(x+y)N}$ (see lemma
\ref{exponentialsAndWhatnotLemma}). For the converse let $\rho:e_j
\mapsto \sum_i e_i \otimes a_{ij}$ be any representation, and set
$N = (c_{ij})^1$. Then examination of equation
\ref{Gafundamentalrelation} gives, for any $r > 0$
\[ (c_{ij})^r = \frac{N^r}{r!} \]
Since $(c_{ij})^r$ must vanish for large $r$, $N^r = 0$ for large
$r$, whence $N$ is nilpotent. Recalling that the matrix formula
for this representation is
\[ (c_{ij})^0 + (c_{ij})^1 x + \ldots + (c_{ij})^n x^n \]
where $n$ is the largest non-zero $(c_{ij})$, this representation
is indeed of the form $e^{xN}$.
\end{proof}

In the preceding proof we used the fact that $\text{char}(k) = 0$
in the form of assuming that $\frac{1}{r!}$ is defined for all
non-negative integers $r$.

Example: take the $4 \times 4$ nilpotent matrix $N = $
\[
\left(
\begin{array}{cccc}
  0 & 1 & 0 & 0 \\
  0 & 0 & 1 & 0 \\
  0 & 0 & 0 & 1 \\
  0 & 0 & 0 & 0 \\
\end{array}
\right)
\]
Then the representation it defines is $e^{xN} = $
\[
\left(
\begin{array}{cccc}
  1 & x & \frac{x^2}{2} & \frac{x^3}{6} \\
   & 1 & x & \frac{x^2}{2} \\
   &  & 1 & x \\
   &  &  & 1 \\
\end{array}
\right)
\]

\begin{prop}
Two representation $e^{xN},e^{xM}$ are isomorphic if and only if
the nilpotent matrices $N$ and $M$ are conjugate.
\end{prop}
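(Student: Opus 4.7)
The plan is to reduce the isomorphism condition, which \emph{a priori} is an infinite system of matrix equalities indexed by $r$, to the single conjugation equation at $r=1$. First I would invoke theorem \ref{morphismsInTheMethodThm}: a linear map $\phi$ between the underlying vector spaces, written as a matrix $K = (k_{ij})$ in the chosen bases, is a morphism between the representations $e^{xN}$ and $e^{xM}$ if and only if
\[ K\,(c_{ij})^r = (d_{ij})^r\, K \]
for every $r \geq 0$. By the proof of theorem \ref{Gacharzerothm}, the coefficient matrices of $e^{xN}$ and $e^{xM}$ are $(c_{ij})^r = N^r/r!$ and $(d_{ij})^r = M^r/r!$, so after multiplying through by $r!$ this condition becomes $KN^r = M^r K$ for every $r \geq 0$.

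For the forward direction, I would simply specialize to $r = 1$ to obtain $KN = MK$. Assuming $\phi$ is an isomorphism, $K$ is invertible, so $M = KNK^{-1}$ and $N,M$ are conjugate. For the converse, given $M = KNK^{-1}$ with $K$ invertible, an easy induction gives $M^r = KN^rK^{-1}$, hence $KN^r = M^rK$ for every $r \geq 0$; so $K$ defines a morphism of comodules by theorem \ref{morphismsInTheMethodThm}, and its invertibility makes it an isomorphism of representations.

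The only mild obstacle is verifying that the infinite family of relations collapses to the single one at $r=1$, but this is immediate from the multiplicativity of conjugation, which is why the proof is essentially three lines once the combinatorial framework of chapter \ref{themethodchapter} is in place.
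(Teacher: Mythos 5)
Your proof is correct and matches the paper's in substance: both directions rest on matching coefficients of $x^r$ in the matrix formulas, with the necessity direction reading off the $r=1$ coefficient ($PNP^{-1}=M$) and sufficiency following from multiplicativity of conjugation. You spell out the reduction via theorem \ref{morphismsInTheMethodThm} more explicitly than the paper, which simply expands $Pe^{xN}P^{-1}$ and compares to $e^{xM}$, but it is the same argument.
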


\begin{proof}
Sufficiency is immediate.  For the necessity, suppose $e^{xN}$ and
$e^{xM}$ are isomorphic via a base change matrix $P$.  Then
\[ Pe^{xN} P^{-1} = 1 + xPNP^{-1} + \ldots = 1 + xM + \ldots \]
which forces $PNP^{-1} = M$.
\end{proof}

\begin{cor}
Let $k$ have characteristic zero.
\begin{enumerate}
\item{For an $n$-dimensional representation of $G_a$ over $k$, the
polynomials occurring as matrix entries cannot have degree larger
than $n-1$} \item{For a given dimension $n$, there are only
finitely many non-isomorphic $n$-dimensional representations of
$G_a$ over $k$}

\end{enumerate}
\end{cor}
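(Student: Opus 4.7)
The plan is to deduce both parts directly from Theorem \ref{GacharzeropanaloguetheoremIntro} (really, theorem \ref{Gacharzerothm}) together with the preceding proposition characterizing isomorphism in terms of conjugacy of the associated nilpotent matrices. By that theorem, every $n$-dimensional representation has the form $e^{xN}$ for some $n\times n$ nilpotent $N$, and two such representations are isomorphic exactly when the matrices $N$ are conjugate over $k$. Both statements will be consequences of elementary facts about nilpotent matrices of fixed size.

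For part 1, I would argue as follows. An $n\times n$ nilpotent matrix $N$ over any field satisfies $N^n = 0$; this follows because the characteristic polynomial of a nilpotent matrix is $t^n$, and Cayley--Hamilton gives $N^n=0$ (one could equally well quote that the minimal polynomial of a nilpotent operator on an $n$-dimensional space divides $t^n$). Consequently the exponential series truncates:
\[
e^{xN} \;=\; I + xN + \tfrac{x^2 N^2}{2!} + \cdots + \tfrac{x^{n-1} N^{n-1}}{(n-1)!},
\]
so every entry of the matrix $e^{xN}$ is a polynomial in $x$ of degree at most $n-1$.

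For part 2, I would use the preceding proposition to reduce the counting of isomorphism classes of $n$-dimensional representations to the counting of $k$-conjugacy classes of $n\times n$ nilpotent matrices over $k$. The key point is that every nilpotent matrix over $k$ admits a Jordan canonical form with all Jordan blocks having eigenvalue $0$; Jordan form over an arbitrary field requires eigenvalues to lie in $k$, but here the only eigenvalue is $0$, so no extension of scalars is needed. Two nilpotent matrices are conjugate over $k$ if and only if their Jordan block structures coincide, i.e.\ if and only if the multisets of block sizes agree. Thus conjugacy classes correspond bijectively to partitions of $n$, of which there are $p(n)<\infty$.

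No serious obstacle is anticipated; the only subtlety is making explicit that Jordan form is available over an arbitrary characteristic-zero field in the nilpotent case (so that part 2 genuinely counts $k$-conjugacy classes, not conjugacy classes over an algebraic closure). I would flag this point and otherwise keep the argument to the few lines above.
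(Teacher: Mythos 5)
Your proposal is correct and follows essentially the same route as the paper's proof: part 1 via nilpotent order $\leq n$ so $e^{xN}$ truncates at degree $n-1$, and part 2 via Jordan form for nilpotent matrices (which you correctly note is available over any field since the only eigenvalue is $0$), reducing to partitions of $n$. Your write-up is somewhat more explicit than the paper's brief sketch but contains no new ideas.
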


\begin{proof}
If $N$ is nilpotent, it is nilpotent of order no greater than the
dimension of the representation; this proves 1.  Any nilpotent
matrix is conjugate to a Jordan matrix with $0's$ on the main
diagonal, of which there are only finitely many for a given
dimension; this proves 2.
\end{proof}

\section{Characteristic $p$}

Let $k$ have characteristic $p > 0$.  In this section we prove

\begin{thm}
\label{Gacharptheorem} Any representation of $G_a$ over $k$ is of
the form
\[ e^{xN_0}e^{x^pN_1}e^{x^{p^2}N_2} \ldots e^{x^{p^m}N_m} \]
with each of the factors commuting (and so necessarily and
sufficiently all of the $N_i$ commuting), and each $N_i$ being
nilpotent of order $\leq p$.  Further, any finite collection of
commuting, $p$-nilpotent matrices defines a representation
according to the above formula.
\end{thm}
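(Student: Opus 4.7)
My plan is to read off the representation from the combinatorial identity (\ref{Gafundamentalrelation}) by singling out the matrices $N_i := (c_{ij})^{p^i}$ for $i=0,1,2,\ldots$, showing that they are $p$-nilpotent and pairwise commuting, expressing every other $(c_{ij})^r$ as a product of powers of the $N_i$, and finally recognizing the resulting matrix formula as a product of exponentials. Throughout, Lucas's theorem, which tells us exactly when $\binom{r+s}{r}$ is nonzero modulo $p$ in terms of carries in base-$p$ addition, is the workhorse.

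First I would establish that $N_i^p = 0$ and that $N_i N_j = N_j N_i$ for $i \neq j$. Iterating (\ref{Gafundamentalrelation}) gives
\[
N_i^p \;=\; \binom{2p^i}{p^i}\binom{3p^i}{p^i}\cdots\binom{p\cdot p^i}{p^i}\,(c_{ij})^{p^{i+1}};
\]
by Lucas each of $\binom{2p^i}{p^i},\ldots,\binom{(p-1)p^i}{p^i}$ equals the digit $2,\ldots,p-1$, while the last factor $\binom{p^{i+1}}{p^i}$ vanishes because the $i$th base-$p$ digit of $p^{i+1}$ is $0$. For commutativity, the sum $p^i + p^j$ has no base-$p$ carries when $i \neq j$, so $\binom{p^i+p^j}{p^i}=\binom{p^i+p^j}{p^j}=1$ and (\ref{Gafundamentalrelation}) yields $N_i N_j = (c_{ij})^{p^i+p^j} = N_j N_i$.

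Next I would prove by induction on $r$ that if $r$ has base-$p$ expansion $r = r_0 + r_1 p + \cdots + r_m p^m$ with $0 \le r_i < p$, then
\[
(c_{ij})^r \;=\; \prod_{i=0}^{m}\frac{N_i^{r_i}}{r_i!}.
\]
The two ingredients are (a) $N_i^{r_i} = r_i!\,(c_{ij})^{r_i p^i}$, which follows from Lucas applied to $\binom{kp^i}{p^i}=k$ for $2\le k \le r_i < p$; and (b) the fact that the partial products $(c_{ij})^{r_i p^i}$ multiply without any binomial-coefficient penalty, because the monomials $r_i p^i$ have disjoint base-$p$ supports, so all the intervening $\binom{\cdot}{\cdot}$ produced by (\ref{Gafundamentalrelation}) are $1$. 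The combinatorial step of verifying that no carries ever appear during the inductive assembly is the main piece of bookkeeping; conceptually it is straightforward once Lucas is invoked, and it is also where the commutativity of the $N_i$ becomes important so that the order of assembly does not matter.

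Given the closed form, the matrix formula factors as
\[
\sum_{r \ge 0}(c_{ij})^r\, x^r \;=\; \prod_{i=0}^{m}\Bigl(\sum_{r_i=0}^{p-1}\frac{(x^{p^i} N_i)^{r_i}}{r_i!}\Bigr) \;=\; \prod_{i=0}^{m} e^{x^{p^i} N_i},
\]
where the upper limit $p-1$ on each inner sum is forced by $N_i^p = 0$. For the converse, given commuting $p$-nilpotent $N_0,\ldots,N_m$, the simplest verification is to evaluate the proposed matrix at $x_1$ and $x_2$, multiply the two evaluations, and use the commutativity of all factors together with the Frobenius identity $(x_1+x_2)^{p^i} = x_1^{p^i} + x_2^{p^i}$ and Lemma \ref{exponentialsAndWhatnotLemma} to collapse the product down to the evaluation at $x_1 + x_2$; this shows directly that the formula defines a group homomorphism $G_a \to \text{GL}_n$, completing the proof.
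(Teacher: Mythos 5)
Your forward direction follows the paper's route quite closely: singling out $N_i=(c_{ij})^{p^i}$, invoking Lucas to force $N_i^p=0$ and pairwise commutativity, reassembling $(c_{ij})^r$ from its base-$p$ digits as $\Gamma(r)^{-1}\prod N_i^{r_i}$, and then regrouping the matrix formula into a product of exponentials. That is exactly the content of the paper's proof, up to the proposition \ref{Gacharpprop} and the displayed factorization that follows it. Where you diverge is the converse. The paper verifies the fundamental combinatorial relation $\binom{r+s}{r}(c_{ij})^{r+s}=(c_{ij})^r(c_{ij})^s$ directly, splitting into the ``carry'' case (both sides vanish by $p$-nilpotency and Lucas) and the ``no-carry'' case (reducing to the numerical identity $\binom{r+s}{r}\Gamma(r)\Gamma(s)=\Gamma(r+s)$). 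You instead check the group law $M(x_1)M(x_2)=M(x_1+x_2)$ in $k[x_1,x_2]$ using the Frobenius identity $(x_1+x_2)^{p^i}=x_1^{p^i}+x_2^{p^i}$ and the commuting-exponential identity to collapse the product. Both work, and since the comodule axiom $\sum_k a_{ik}\otimes a_{kj}=\Delta(a_{ij})$ is literally the statement $M(x\otimes 1)M(1\otimes x)=M(x\otimes 1+1\otimes x)$, your check is equivalent to the paper's and arguably more conceptual: Frobenius absorbs in one stroke all the digit-by-digit bookkeeping that the paper does by hand.

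One small repair is needed in the citation. Lemma \ref{exponentialsAndWhatnotLemma} as stated requires the characteristic to be at least the \emph{dimension} of the matrices, whereas in Theorem \ref{Gacharptheorem} you may have $p$ much smaller than the dimension; the only thing available is $N_i^p=0$. The good news is that the hypothesis $p\geq\dim$ is used in that lemma only to guarantee the nilpotency order is $<p$, so the proof goes through verbatim under the weaker hypothesis $N^p=0$ (the relevant polynomials then have degree $\leq p-1$, which is all the formal-derivative machinery needs, and $(X+Y)^p=X^p+Y^p=0$ by Frobenius). Alternatively you can bypass the lemma entirely: for $N^p=0$ and commuting scalars $a,b$, expand $e^{aN}e^{bN}=\sum_{t=0}^{p-1}\bigl(\sum_{r+s=t}\tfrac{a^rb^s}{r!s!}\bigr)N^t=\sum_{t=0}^{p-1}\tfrac{(a+b)^t}{t!}N^t=e^{(a+b)N}$, which is legitimate precisely because $t!$ is invertible for $t<p$. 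Either fix closes the gap.
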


Note that, for a sequence of assignments to the $(c_{ij})^n$ as in
equation \ref{Gafundamentalrelation}, satisfying the relations
$(c_{ij})^0 = \text{Id}$ along with
\[ (c_{ij})^r (c_{ij})^s = {r+s \choose r} (c_{ij})^{r+s} \]
are still necessary and sufficient to determine a representation,
no matter the characteristic. The gist of the theorem is that,
while in characteristic $0$ the entire representation is
determined by the nilpotent matrix $(c_{ij})^1$, in characteristic
$p$ the binomial coefficient ${ r+s \choose r}$ can and often does
vanish, which relaxes the relations that the $(c_{ij})$ matrices
must satisfy. This is to be expected, since for example
\[
\left(
\begin{array}{cc}
1 & x^{p^m} \\
0 & 1 \\
\end{array}
\right)
\]
defines a perfectly respectable representation of $G_a$ in
characteristic $p$, for any $m$ as large as we like.  This
illustrates that, for example, $(c_{ij})^{p^m}$ is not determined
by the matrix $(c_{ij})^1$.

It turns out that one can choose the matrices $(c_{ij})^{p^m}$
freely, subject only to the condition that they commute and are
nilpotent of order $\leq p$, and that these completely determine
the rest of the representation. These $(c_{ij})^{p^m}$ matrices
for $m \geq 1$ should thus be thought of as accounting for the
``Frobenius'' parts of the representation.

To start, we need some number theory concerning the behavior of
binomial and multinomial coefficients modulo a prime. The notation
${n \choose a,b,\ldots, z}$ denotes the usual multinomial
expression $\frac{n!}{a!b! \ldots z!}$.

\begin{thm}\index{Lucas' theorem} \label{Lucas'Thm} (Lucas' theorem)
Let $n$ and $a,b, \ldots ,z$ be non-negative integers with $a + b
+ \ldots + z = n$, $p$ a prime.  Write $n = n_m p^m + n_{m-1}
p^{m-1} + \ldots + n_0$ in $p$-ary notation, similarly for $a, b,
\ldots , z$. Then, modulo $p$,
\[ {n \choose a,b, \ldots, z}= \left\{
\begin{array}{c}
  0 \hspace{.3cm}\text{ \emph{if for some }$i$, $a_i + b_i + \ldots + z_i \geq p$} \\
  {n_0 \choose a_0, b_0, \ldots, z_0}{n_1 \choose a_1, b_1,
\ldots, z_1} \ldots {n_m \choose a_m, b_m, \ldots , z_m} \hspace{.3cm} \text{ \emph{otherwise}} \\
\end{array}
\right.
\]

\end{thm}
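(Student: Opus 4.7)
The plan is to prove Lucas' theorem by exhibiting a polynomial identity in $\mathbb{F}_p[x_1,\ldots,x_k]$ (where $k$ is the number of indices $a,b,\ldots,z$) and then matching coefficients. The key ingredient is the Frobenius identity: for any commutative $\mathbb{F}_p$-algebra and any elements $y_1,\ldots,y_k$, one has $(y_1+\cdots+y_k)^p = y_1^p+\cdots+y_k^p$, which follows immediately from the fact that $\binom{p}{i} \equiv 0 \pmod p$ for $0<i<p$. Iterating this Frobenius identity gives, in $\mathbb{F}_p[x_1,\ldots,x_k]$ and for every $i \geq 0$, the identity $(x_1+\cdots+x_k)^{p^i} = x_1^{p^i}+\cdots+x_k^{p^i}$.

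Next I would write $n$ in $p$-ary as $n = n_0 + n_1 p + \cdots + n_m p^m$ and compute, in $\mathbb{F}_p[x_1,\ldots,x_k]$,
\[
(x_1+\cdots+x_k)^n \;=\; \prod_{i=0}^m (x_1+\cdots+x_k)^{n_i p^i} \;=\; \prod_{i=0}^m \bigl(x_1^{p^i}+\cdots+x_k^{p^i}\bigr)^{n_i},
\]
where the second equality uses the Frobenius identity and the fact that each $n_i < p$ means the outer exponent $n_i$ is a genuine multiplication (so no reduction is needed). Expanding the left side by the ordinary multinomial theorem gives the monomial $x_1^{a}x_2^{b}\cdots x_k^{z}$ with coefficient $\binom{n}{a,b,\ldots,z} \bmod p$. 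Expanding each factor on the right by the multinomial theorem gives
\[
\prod_{i=0}^m \sum_{\alpha_{1,i}+\cdots+\alpha_{k,i}=n_i} \binom{n_i}{\alpha_{1,i},\ldots,\alpha_{k,i}}\, x_1^{\alpha_{1,i}p^i}\cdots x_k^{\alpha_{k,i}p^i},
\]
so a monomial $x_1^{a}\cdots x_k^{z}$ arises only from choices of digits $\alpha_{j,i}$ with $0\le\alpha_{j,i}\le n_i<p$ such that $\sum_i \alpha_{j,i}p^i$ equals the exponent on $x_j$.

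The final step is to match coefficients. Since each $\alpha_{j,i}<p$, uniqueness of the $p$-ary representation forces $\alpha_{j,i}$ to be the $i$-th $p$-ary digit of the corresponding exponent ($a_i$ for $j=1$, $b_i$ for $j=2$, etc.). For such a decomposition to contribute, one needs $\alpha_{1,i}+\cdots+\alpha_{k,i}=n_i$, i.e.\ $a_i+b_i+\cdots+z_i=n_i$ for each $i$ (which is exactly the no-carry condition, since $a+b+\cdots+z=n$). When this holds, the coefficient on the right is the product $\prod_i \binom{n_i}{a_i,b_i,\ldots,z_i}$, giving the second case of the theorem. When it fails for some $i$, the monomial $x_1^a\cdots x_k^z$ does not appear on the right at all, so its coefficient is $0$ in $\mathbb{F}_p$, giving the first case.

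The main obstacle I anticipate is the bookkeeping in the carries case: one must verify carefully that the condition ``$a_i+b_i+\cdots+z_i \geq p$ for some $i$'' is equivalent to the nonexistence of a digit decomposition $\alpha_{j,i}$ with $\alpha_{j,i}<p$ and $\sum_j \alpha_{j,i}=n_i$, given the global constraint $a+b+\cdots+z=n$. This is essentially the content of standard base-$p$ addition with carries, and follows from uniqueness of $p$-ary representations together with the observation that if no digit sum reaches $p$ then the digit-wise sums already give a valid $p$-ary representation of $n$, which must agree with $(n_0,n_1,\ldots,n_m)$ by uniqueness.
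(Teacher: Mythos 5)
The paper does not prove Lucas' theorem; immediately after stating it and the corollary it simply points the reader to an external reference (\cite{LucasThm}), so there is no internal proof to compare your argument against. Your proposal is the standard polynomial-identity proof and it is correct: in $\mathbb{F}_p[x_1,\ldots,x_k]$ one has $(x_1+\cdots+x_k)^n = \prod_i (x_1^{p^i}+\cdots+x_k^{p^i})^{n_i}$ by the iterated Frobenius identity, and uniqueness of base-$p$ representation (using $\alpha_{j,i}\le n_i<p$) forces each $\alpha_{j,i}$ to equal the $i$-th $p$-ary digit of the corresponding exponent, so the monomial $x_1^a\cdots x_k^z$ contributes exactly one term on the right when $a_i+\cdots+z_i=n_i$ for every $i$ and none otherwise. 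Your final paragraph correctly identifies and resolves the only delicate point, namely that under the constraint $a+\cdots+z=n$ the condition ``some digit sum $\ge p$'' is equivalent to ``the digit sums fail to match the digits of $n$'': if all digit sums are $<p$ then by induction from the least significant digit no carry ever propagates, so the digit sums themselves form a valid base-$p$ representation of $n$ and must equal $(n_0,\ldots,n_m)$ by uniqueness, while conversely a first position where the digit sum is $\ge p$ produces a carry, so the digit sums cannot all agree with $n$'s digits.
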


Some corollaries we will need later:

\begin{cor} Let $p$ be a prime, $n,r$ and $s$ non-negative
integers. \label{Lucas'Cor}
\begin{enumerate}
\item{The binomial coefficient ${n \choose r}$ is non-zero if and
only if every $p$-digit of $n$ is greater than or equal to the
corresponding digit of $r$} \item{If $n$ is not a power of $p$,
then for some $0 < r < n$, ${n \choose r}$ is non-zero} \item{If
$n$ is a power of $p$, for every $0 < r < n$, ${n \choose r}$ is
zero} \item{${r+s \choose r}$ is non-zero if and only if there is
no $p$-digit rollover (i.e.~carrying) for the sum $r+s$.}

\end{enumerate}
\end{cor}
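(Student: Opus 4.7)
The plan is to derive all four assertions directly from Lucas' theorem (Theorem \ref{Lucas'Thm}), specialized to the two-variable case ${n \choose r} = {n \choose r, n-r}$ and ${r+s \choose r} = {r+s \choose r, s}$. The key translation is that a sum $a + b = n$ is ``carry-free'' in base $p$ precisely when $a_i + b_i < p$ for every digit $i$, in which case the digits of $n$ are exactly the digit-wise sums $a_i + b_i$ (and in particular $n_i \geq a_i$ for every $i$). Conversely, if $n_i \geq a_i$ for all $i$, then $b_i \stackrel{\text{def}}{=} n_i - a_i$ gives a legitimate base-$p$ expansion of $b = n - a$ with no carries.

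For part~1, I would apply Lucas to ${n \choose r, n-r}$. Non-vanishing mod $p$ is equivalent to $r_i + (n-r)_i < p$ for every $i$, which (by the digit-wise equivalence just described) is equivalent to $n_i \geq r_i$ for every $i$. Part~4 is essentially the same observation applied to ${r+s \choose r, s}$: non-vanishing is equivalent to $r_i + s_i < p$ for every $i$, which is literally the no-carrying condition for $r+s$.

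Parts~2 and~3 I would deduce from part~1 by a small combinatorial argument about the $p$-ary expansion of $n$. For part~3, if $n = p^m$ then $n_m = 1$ and all other digits of $n$ are $0$; for $0 < r < n$ we have $r < p^m$, so $r_m = 0$, but $r > 0$ forces some $r_i \neq 0$ with $i < m$, where $n_i = 0 < r_i$, so part~1 gives ${n \choose r} \equiv 0 \pmod p$. For part~2, if $n$ is not a power of $p$ then either $n$ has a digit $n_i \geq 2$ or it has two distinct non-zero digits $n_i, n_j$; in either case take $r = p^i$ for a suitable non-zero digit position $i$. Then $0 < r < n$ (using $n_i \geq 2$ or the existence of another non-zero digit), and every digit of $r$ is $\leq$ the corresponding digit of $n$, so part~1 yields ${n \choose r} \not\equiv 0 \pmod p$.

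There is really no main obstacle here; Lucas' theorem does all of the work, and the only care required is in the bookkeeping for part~2 to ensure one picks a valid $r$ with $0 < r < n$ in both sub-cases (single large digit vs.\ multiple non-zero digits). I would write out parts~1 and~4 first, then deduce~3 immediately, and finish with the case split for~2.
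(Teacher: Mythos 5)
Your proposal is correct and complete. The paper itself does not prove the corollary---it simply cites \cite{LucasThm} after the statement---so your argument fills a gap the author chose to outsource. The reductions are the standard ones: parts 1 and 4 come from the two-term specialization of Lucas' theorem together with the uniqueness of base-$p$ expansions (so that ``no carrying in $r + (n-r) = n$'' is interchangeable with ``$n_i \geq r_i$ digit-wise,'' and $(n-r)_i = n_i - r_i$ in that case), and parts 2 and 3 follow from part 1 by elementary manipulation of the $p$-ary digits of $n$.

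The only point worth spelling out when you write it up is that in the ``otherwise'' branch of Lucas' theorem the product $\prod_i \binom{n_i}{a_i,\ldots,z_i}$ is genuinely nonzero mod $p$: since each digit sum is $< p$, the uniqueness of base-$p$ expansions forces $n_i = a_i + \cdots + z_i < p$, so each factor is a multinomial coefficient built entirely from numbers below $p$ and hence is a unit mod $p$. Without that observation Lucas only gives you a sufficient condition for vanishing, not the ``if and only if'' your part~1 asserts. Your case split in part~2 (a digit $\geq 2$ versus two distinct nonzero digits) correctly covers everything since the complementary case---exactly one nonzero digit equal to $1$---is precisely ``$n$ is a power of $p$,'' and in each subcase your choice $r = p^i$ satisfies $0 < r < n$ and dominates digit-wise as required.
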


See \cite{LucasThm} for a proof of these facts.

\begin{thm}
A representation of $G_a$ over $k$ given by the matrices
$(c_{ij})^n$ is completely determined by the assignments
\[ (c_{ij})^{p^0} = X_0, \hspace{.3cm} (c_{ij})^{p^1} = X_1, \hspace{.3cm} \ldots,
(c_{ij})^{p^m} = X_m \] (with the understanding that
$(c_{ij})^{p^k} = 0$ for $k > m$). The $X_i$ must necessarily
commute with each other and satisfy $X_i^{p} = 0$.

\end{thm}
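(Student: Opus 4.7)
The plan is to use the fundamental relation $(c_{ij})^r (c_{ij})^s = \binom{r+s}{r} (c_{ij})^{r+s}$ together with Lucas' theorem (Theorem \ref{Lucas'Thm}) and its corollaries throughout. All three assertions reduce to careful accounting of which binomial and multinomial coefficients vanish mod $p$.

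First I would prove determination. Given any non-negative integer $n$, write it in $p$-ary form as $n = n_0 + n_1 p + \cdots + n_m p^m$ with $0 \le n_i < p$. The key point is that the digit decomposition $n = n_0 + (n_1 p + n_2 p^2 + \cdots)$ involves no carrying, so by Corollary \ref{Lucas'Cor}(4) the coefficient $\binom{n}{n_0}$ is a nonzero element of $k$. Applying the fundamental relation and iterating this splitting digit by digit yields
\[ (c_{ij})^n = \tfrac{1}{\binom{n}{n_0}\binom{n-n_0}{n_1 p}\cdots}\, (c_{ij})^{n_0}(c_{ij})^{n_1 p}\cdots (c_{ij})^{n_m p^m}. \]
It therefore suffices to show each $(c_{ij})^{n_i p^i}$ is determined by $X_i$. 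But iterating the fundamental relation gives $X_i^{n_i} = \binom{n_i p^i}{p^i,p^i,\ldots,p^i}\,(c_{ij})^{n_i p^i}$, and the multinomial form of Lucas' theorem shows this coefficient reduces mod $p$ to $\binom{n_i}{1,1,\ldots,1}=n_i!$, which is nonzero since $n_i < p$. Thus $(c_{ij})^{n_i p^i} = \tfrac{1}{n_i!} X_i^{n_i}$, and $(c_{ij})^n$ is indeed determined by the $X_i$. That only finitely many $X_i$ are nonzero follows from the finiteness requirement on the $(c_{ij})^r$.

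Next I would establish commutativity. For $i \ne j$ the sum $p^i + p^j$ involves no $p$-digit carrying, so by Corollary \ref{Lucas'Cor}(4) we have $\binom{p^i + p^j}{p^i} = 1$ in $k$. The fundamental relation then gives
\[ X_i X_j = (c_{ij})^{p^i}(c_{ij})^{p^j} = (c_{ij})^{p^i+p^j} = (c_{ij})^{p^j}(c_{ij})^{p^i} = X_j X_i. \]

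Finally, for the nilpotence $X_i^p = 0$, I would again iterate the fundamental relation to get
\[ X_i^p = \binom{p\cdot p^i}{p^i,p^i,\ldots,p^i}\,(c_{ij})^{p\cdot p^i} = \frac{(p^{i+1})!}{(p^i!)^p}\,(c_{ij})^{p^{i+1}}. \]
Here the $p$-ary expansion of $p^{i+1}$ has a single digit $1$ at position $i+1$, while the $p$ copies of $p^i$ contribute digit $1$ each at position $i$, for a total of $p$ at that position --- a carry. Lucas' theorem (in its multinomial form) therefore forces the coefficient to vanish mod $p$, so $X_i^p = 0$.

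The main obstacle --- though it is more of a bookkeeping task than a genuine difficulty --- is the first step, making rigorous the inductive unravelling of $(c_{ij})^n$ via successive digit extractions and verifying that each intermediate binomial or multinomial coefficient is nonvanishing; everything else then follows cleanly from targeted applications of Corollary \ref{Lucas'Cor}.
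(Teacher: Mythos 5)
Your proof is correct and rests on the same pillars as the paper's: the fundamental relation $(c_{ij})^r(c_{ij})^s = \binom{r+s}{r}(c_{ij})^{r+s}$ together with Lucas' theorem and its corollaries. There is a mild stylistic difference in the determination step: the paper keeps the induction abstract, arguing that for non-powers of $p$ some $0 < r < n$ has $\binom{n}{r} \neq 0$ (Corollary~\ref{Lucas'Cor}(2)), without exhibiting a specific $r$; you instead work digit by digit, explicitly stripping off $n_0$, then $n_1 p$, and so on, and establish $(c_{ij})^{n_i p^i} = \tfrac{1}{n_i!}X_i^{n_i}$ along the way. This amounts to re-deriving, inside the uniqueness proof, the closed formula $(c_{ij})^n = \Gamma(n)^{-1}X_0^{n_0}\cdots X_m^{n_m}$ that the paper proves separately as Proposition~\ref{Gacharpprop} when it turns to the existence direction. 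Your route buys a more constructive argument at the cost of a little extra bookkeeping; it also makes the finiteness remark immediate. For nilpotence you and the paper do the same computation: both obtain the telescoping identity $X_m^p = \bigl[\prod_{k=2}^{p}\binom{kp^m}{p^m}\bigr](c_{ij})^{p^{m+1}}$; you package the telescoping product as the multinomial coefficient $\binom{p^{m+1}}{p^m,\ldots,p^m}$ and kill it via the multinomial Lucas statement, whereas the paper isolates the single vanishing factor $\binom{p^{m+1}}{p^m}$ and cancels the surviving nonzero factors. One small tidy-up: every intermediate binomial $\binom{n}{n_0}$, $\binom{n-n_0}{n_1p}$, \ldots\ in your unravelling formula is in fact $\equiv 1 \pmod p$ by Lucas (disjoint $p$-digit supports), so that prefactor can be dropped entirely; this is what lets the clean $\Gamma(n)^{-1}$ emerge in the paper's formula.
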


\begin{proof}
All is proved by examining equation \ref{Gafundamentalrelation}.
It is easy to see by induction that the values of $(c_{ij})^n$ are
determined by the $X_i$.  If $n$ is a power of $p$ then its value
is given, and if not, by 2.~of corollary \ref{Lucas'Cor} there is
$0 < r < n$ with ${n \choose r} \neq 0$ forcing
\[(c_{ij})^n  = {n \choose r}^{-1} (c_{ij})^r (c_{ij})^{n-r} \]

For the commutativity condition, if $n \neq m$, then by theorem
\ref{Lucas'Thm} ${p^m + p^n \choose p^m}$ is non-zero, and we must
have
\[ (c_{ij})^{p^m+p^n} = {p^m + p^n \choose p^m}^{-1} X_m X_n = {p^m +
p^n \choose p^n}^{-1} X_n X_m \]

To prove the nilpotency claim, consider
\begin{equation*}
\begin{split}
 (c_{ij})^{p^m} &= X_m \\
 (c_{ij})^{2p^m} &= {2p^m \choose p^m}^{-1} X_m^2 \\
  &\vdots \\
 (c_{ij})^{(p-1)p^m} &= \left[ \prod_{k=1}^{p-1} {kp^m \choose
p^m} \right]^{-1} X_m^{p-1}
\end{split}
\end{equation*}
 Noting that there is carrying in
computing the sum $(p^{m+1} - p^{m}) + p^m$, corollary
\ref{Lucas'Cor} tells us that ${p^{m+1} \choose p^m} = 0$, and we
have
\[ 0 = {p^{m+1} \choose p^m} X_{m+1} = (c_{ij})^{(p-1)p^m}
(c_{ij})^{p^m} = \left[ \prod_{k=2}^{p-1} {kp^m \choose p^m}
\right]^{-1} X_m^p \]
forcing $X_m^p = 0$.

\end{proof}

We have shown thus far that commutativity and $p$-nilpotency of
the $X_i$ are necessary to define a representation; we must now
show sufficiency.  This will become clear once we have a closed
expression for $(c_{ij})^n$ in terms of the $X_i$.  For $n = n_m
p^m + n_{m-1} p^{m-1} + \ldots + n_0$ in $p$-ary notation, let
\index{$\Gamma(n)$} $\Gamma(n) = n_0! n_1! \ldots n_m!$.
Obviously $\Gamma(n)$ is always non-zero mod $p$.

\begin{prop}
\label{Gacharpprop} Let $X_0, \ldots, X_m$ be pair-wise commuting
$p$-nilpotent matrices, and let $n= n_m p^m + \ldots + n_0$ be the
$p$-ary expansion of $n$. Then the assignment
\[ (c_{ij})^n = \Gamma(n)^{-1} X_0^{n_0} X_1^{n_1} \ldots
X_m^{n_m} \] defines a representation of $G_a$ over $k$.

\end{prop}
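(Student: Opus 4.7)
The plan is to verify directly that the assignment satisfies the three requirements that have been shown necessary and sufficient to determine a representation of $G_a$: $(c_{ij})^0 = \text{Id}$, the fundamental relation \ref{Gafundamentalrelation}, and eventual vanishing. The first is immediate since $\Gamma(0) = 1$ and the empty product of $X_i$'s is the identity. For eventual vanishing, if $n$ has any nonzero $p$-digit in position $> m$, then the formula involves $X_k$ for some $k > m$, which we interpret as zero by the convention $(c_{ij})^{p^k} = 0$ for $k > m$; thus $(c_{ij})^n = 0$ for all $n \geq p^{m+1}$.

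The main work, then, is to verify the fundamental relation
\[ (c_{ij})^r (c_{ij})^s = \binom{r+s}{r} (c_{ij})^{r+s} \]
for all non-negative integers $r,s$. I would split this into two cases based on whether the $p$-ary addition $r+s$ involves carrying. Writing $r = \sum r_i p^i$ and $s = \sum s_i p^i$, commutativity of the $X_i$ lets me rewrite the left-hand side as
\[ \Gamma(r)^{-1} \Gamma(s)^{-1} \, X_0^{r_0+s_0} X_1^{r_1+s_1} \cdots X_m^{r_m+s_m}. \]
\textbf{Case 1 (carrying):} if $r_i + s_i \geq p$ for some $i$, then $X_i^{r_i+s_i} = 0$ by the $p$-nilpotency hypothesis, so the left side vanishes; meanwhile the right side also vanishes by part 4 of corollary \ref{Lucas'Cor}. \textbf{Case 2 (no carrying):} the $p$-ary digits of $r+s$ are exactly $r_i + s_i$, so $\Gamma(r+s) = \prod_i (r_i+s_i)!$, and Lucas' theorem gives $\binom{r+s}{r} \equiv \prod_i \binom{r_i+s_i}{r_i} \pmod p$. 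A direct computation then reduces the desired equality to the combinatorial identity
\[ \Gamma(r+s) = \binom{r+s}{r} \Gamma(r) \Gamma(s), \]
which follows from $\binom{r_i+s_i}{r_i} = (r_i+s_i)!/(r_i! s_i!)$ factorwise.

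There is essentially no main obstacle here; the proposition is a straightforward verification once one notices how Lucas' theorem and the $p$-nilpotency of the $X_i$ conspire to make both sides vanish precisely when carrying occurs. The only bookkeeping subtlety is making sure the case $n \geq p^{m+1}$ is handled consistently with the formula, which is absorbed into the convention that $X_k = 0$ for $k > m$. Once the fundamental relation is established, the proposition follows from the fact (established in the discussion after equation \ref{Gafundamentalrelation}) that these relations together with $(c_{ij})^0 = \text{Id}$ and eventual vanishing are sufficient to define a representation of $G_a$.
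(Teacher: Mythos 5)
Your proof is correct and takes essentially the same approach as the paper: the same case split on whether the $p$-ary sum $r+s$ carries, the same use of $p$-nilpotency to kill the non-binomial side in the carrying case, and the same reduction to the identity $\Gamma(r+s) = \binom{r+s}{r}\Gamma(r)\Gamma(s)$ via Lucas' theorem in the no-carry case.
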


\begin{proof}

Obviously these assignments satisfy $(c_{ij})^0 = \text{Id}$, with
$(c_{ij})^n$ vanishing for large $n$ (we define $X_i = 0$ for $i >
m$).  Then it remains to check the equation
\[ {r+s \choose r} (c_{ij})^{r+s} = (c_{ij})^r (c_{ij})^s \]
Let $r=r_m p^m + \ldots + r_0$, $s=s_mp^m + \ldots + s_0$, and
suppose first that ${r+s \choose r} = 0$.  This means, by
corollary \ref{Lucas'Cor}, that there is some digit rollover in
the computation of $r+s$, i.e.~$r_i + s_i \geq p $ for some $i$.
Looking at the right hand side in view of the given assignments we
see that $X_i^{r_i + s_i}$ will occur as a factor.  But $X_i$ is
nilpotent of order less than or equal to $p$, so the right hand
side will be zero as well.

On the other hand, if ${r+s \choose r} \neq 0$ let $r+s=z_m p^m +
\ldots +z_0$, so that necessarily $r_i + s_i = z_i$ for all $i$.
Then the given assignments give the same power of each $X_i$ on
either side, so it only remains to check the coefficients.  This
reduces to
\[ {r+s \choose r}\Gamma(r) \Gamma(s) = \Gamma(r+s) \]
After applying theorem \ref{Lucas'Thm} for the term ${r+s \choose
r}$, the equality is clear.

\end{proof}

We can now prove theorem \ref{Gacharptheorem}.  Let $X_0, \ldots,
X_m$ be commuting, $p$-nilpotent matrices over $k$.  Then
according to the previous theorem, the representation they define
is
\begin{equation*}
\begin{split}
 \sum_{r=0}^{p^{m+1}-1} (c_{ij})^{r} x^r &= \sum_{r=0}^{p^{m+1}-1}
\Gamma(r)^{-1} X_0^{r_0} X_1^{r_1} \ldots X_m^{r_m} x^{r_0 + r_1p
+ \ldots + r_m p^m} \\
 &= \sum_{r_0=0}^{p-1} \sum_{r_1=0}^{p-1} \ldots
\sum_{r_m=0}^{p-1} \frac{1}{r_0!}\ldots\frac{1}{r_m!} X_0^{r_0}
\ldots X_m^{r_m} x^{r_0} x^{r_1 p} \ldots x^{r_m p^m} \\
&= \left(\sum_{r_0=0}^{p-1} \frac{1}{r_0!} X_0^{r_0}
x^{r_0}\right)\left(\sum_{r_1=0}^{p-1} \frac{1}{r_1!}X_1^{r_1}
x^{r_1 p}\right) \ldots \left(\sum_{r_m=0}^{p-1}
\frac{1}{r_m!}X_m^{r_m} x^{r_mp^m}\right) \\
&= e^{xX_0}e^{x^p X_1} \ldots e^{x^{p^m} X_m}
\end{split}
\end{equation*}
 as claimed.  All of
the factors of course commute, since the $X_i$ do.

Example: define the following matrices:
\[
X_0 = X_1 =
\left(%
\begin{array}{ccc}
  0 & 1 & 0 \\
  0 & 0 & 1 \\
  0 & 0 & 0 \\
\end{array}%
\right), \hspace{.5cm} X_2 =
\left(%
\begin{array}{ccc}
  0 & 0 & 2 \\
  0 & 0 & 0 \\
  0 & 0 & 0 \\
\end{array}%
\right)
\]
These all commute and are nilpotent of order less than or equal to
$3$. Then the representation they define in characteristic $3$ is
$e^{xX_0} e^{x^3 X_1} e^{x^9 X_2} = $
\[
\left(%
\begin{array}{ccc}
  1 & x+x^3 & 2x^2+x^4+2x^6+2x^9 \\
   & 1 & x+x^3 \\
   &  & 1 \\
\end{array}%
\right)
\]

With a view toward defining the height-restricted ultraproduct
later, we make the following simple but important observation.

\begin{thm} \textit{ }
\label{Gacharzeropanaloguetheorem}
\begin{enumerate}

\item{Let $k$ have characteristic zero.  Then the $n$-dimensional
representations of $G_a^\infty$ over $k$ are in $1-1$
correspondence with the finite ordered sequences $N_i$ of $n
\times n$ commuting nilpotent matrices over $k$, according to the
formula
\[ e^{x_0N_0} e^{x_1N_1} \ldots e^{x_mN_m} \]
} \item{Let $k$ have positive characteristic $p$.  Then if $p >>
n$, the $n$-dimensional representations of $G_a$ over $k$ are in
$1-1$ correspondence with the finite ordered sequences $N_i$ of $n
\times n$ commuting nilpotent matrices over $k$, according to the
formula
\[ e^{xN_0} e^{x^p N_1} \ldots e^{x^{p^m} N_m} \]
}

\end{enumerate}

\end{thm}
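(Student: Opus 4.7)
The plan is to derive both parts by composing results already established: for part 1, combine Theorem \ref{Gacharzerothm} (every representation of $G_a$ in characteristic zero has the form $e^{xN}$ for a nilpotent matrix $N$) with Theorem \ref{directproducttheorem} (representations of a direct product correspond bijectively to commuting representations of the factors, with the matrix formula being the product of the individual formulas); for part 2, apply Theorem \ref{Gacharptheorem} directly, and observe that the hypothesis $p \gg n$ collapses the $p$-nilpotency requirement of that theorem into ordinary nilpotency.

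For part 1, I would first note that any finite dimensional representation of $G_a^\infty$ involves only finitely many of the polynomial generators of its Hopf algebra $k[x_1, x_2, \ldots]$, so it factors through $G_a^m$ for some finite $m$. Writing $G_a^m = G_a \times \cdots \times G_a$ and applying Theorem \ref{directproducttheorem} inductively then shows that every such representation arises from an $m$-tuple of representations of $G_a$ whose matrix formulas pairwise commute, the formula on $G_a^m$ being the product of the individual formulas. Theorem \ref{Gacharzerothm} identifies each factor as $e^{x_i N_i}$ for a nilpotent $N_i$, and by Lemma \ref{commutingRepsInTheMethodThm} the matrix formulas commute if and only if the underlying matrices $N_i$ commute (in characteristic zero the single matrix $N_i = (c_{ij})^1$ determines the entire factor). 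Injectivity of the assignment from sequences to representations follows by recovering $N_i$ as the coefficient of $x_i$ in the expansion of the product, all other contributions involving $x_i$ being of degree at least two in $x_i$.

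For part 2, Theorem \ref{Gacharptheorem} already asserts that any representation of $G_a$ in characteristic $p$ takes the form $e^{xN_0} e^{x^p N_1} \cdots e^{x^{p^m} N_m}$ with the $N_i$ pairwise commuting and each satisfying $N_i^p = 0$, and conversely that any such data defines a representation. The only gap between that theorem and the present statement is the $p$-nilpotency restriction: but every $n \times n$ nilpotent matrix is nilpotent of order at most $n$, so as soon as $p > n$ the condition $N_i^p = 0$ is automatic from ordinary nilpotency of $N_i$. Injectivity of the correspondence follows from Proposition \ref{Gacharpprop}: the coefficient of $x^{p^i}$ in the expanded matrix formula is exactly $N_i$, since the $p$-ary expansion of $p^i$ has a single nonzero digit equal to $1$ and $\Gamma(p^i) = 1$, so only the factor $X_i = N_i$ contributes to that monomial.

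The only genuine subtlety I anticipate lies not in the existence halves of the correspondences, which are immediate from the cited theorems, but in verifying that distinct ordered sequences really do yield distinct representations. The coefficient-extraction argument sketched above handles this cleanly in both cases, once one observes that in the characteristic-$p$ situation the distinguished monomials $x, x^p, x^{p^2}, \ldots$ remain pairwise distinct and that the product expands, monomial by monomial, into exactly the closed form given by Proposition \ref{Gacharpprop}.
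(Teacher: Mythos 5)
Your proposal follows the same route as the paper: part 1 reduces $G_a^\infty$ to finite powers of $G_a$, applies Theorem \ref{directproducttheorem} to identify representations with commuting products of $e^{x_iN_i}$'s (via Theorem \ref{Gacharzerothm}), and uses the equivalence between commutativity of the matrix formulas and commutativity of the $N_i$; part 2 is exactly the paper's one-line deduction from Theorem \ref{Gacharptheorem} together with the observation that $p \geq n$ collapses $p$-nilpotency to ordinary nilpotency. You are more careful than the paper about verifying injectivity of the correspondence (coefficient extraction of $N_i$ at $x_i$ resp. $x^{p^i}$), which the paper leaves implicit, but this is a welcome tightening rather than a different approach.
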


\begin{proof}
By the work done in chapter
\ref{representationtheoryofdirectproductschapter}, all
representations of $G_a^\infty$ over $k$ are given by commuting
finite products of individual representations of $G_a$ over $k$.
It is easy to see that the representations $e^{xN}$ and $e^{xM}$
commute if and only if $N$ and $M$ do; this proves 1.  2.~follows
immediately from \ref{Gacharptheorem}, with the additional
realization that if $p$ is greater than or equal to dimension,
being nilpotent and $p$-nilpotent are identical concepts.

\end{proof}

\chapter{The Heisenberg Group}

\label{TheHeisenbergGroup}

In this chapter we investigate the group \index{$H_1$} $H_1$ of
all $3 \times 3$ unipotent upper triangular matrices
\[
\left(%
\begin{array}{ccc}
  1 & x & z \\
  0 & 1 & y \\
  0 & 0 & 1 \\
\end{array}%
\right)
\]
for arbitrary $x,y$ and $z$.  Our intent is to prove a theorem
analogous to theorem \ref{Gacharzeropanaloguetheorem} for the
group $G_a$; that is, if one is content to keep $p$ large with
respect to dimension (larger than twice the dimension in fact),
modules for $H_1$ in characteristic $p$ `look like' modules for
$H_1^{\infty}$ in characteristic zero. This result will be more
precisely stated in a later chapter, as we consider the
`height-restricted ultraproduct' of the categories
$\text{Rep}_{k_i} H_1$ for a collection of fields $k_i$ of
increasing positive characteristic.  The results of this chapter
are perhaps more surprising than the previous in that, unlike
modules for $G_a$, modules for $H_1$ in characteristic $p <<
\text{dim}$ look hardly at all like representation for
$H_1^{\infty}$ in characteristic zero; it is only when $p$ becomes
large enough with respect to dimension that the resemblance is
apparent.

It was the author's original intent to prove these results for all
of the generalized Heisenberg groups $H_n$, but that is not
attempted here. Nonetheless, we shall at least work out the
fundamental relations for the groups $H_n$, as all we need for
$H_1$ shall arise as a special case.

\section{Combinatorics for $H_n$}

Let \index{$H_n$} $H_n$, the $n^{\text{th}}$ generalized
Heisenberg group, be the group of all $(n+2) \times (n+2)$
matrices of the form
\[
\left(%
\begin{array}{ccccc}
  1 & x_1 & \ldots & x_n & z \\
   & 1 &  & 0 & y_1 \\
   &  & \ddots &  & \vdots \\
   &  &  & 1 & y_n \\
   &  &  &  & 1 \\
\end{array}%
\right)
\]
That is, upper triangular matrices with free variables on the top
row, right-most column, $1's$ on the diagonal, and $0's$
elsewhere. The Hopf algebra for $H_n$ is
\begin{gather*}
A = k[x_1, \ldots, x_n, y_1, \ldots, y_n, z] \\
 \Delta: x_i \mapsto 1 \otimes x_i + x_i \otimes 1, \hspace{.5cm}
y_i \mapsto 1 \otimes y_i + y_i \otimes 1, \hspace{.5cm}  z
\mapsto z \otimes 1 + 1 \otimes z + \sum_{i=1}^n x_i \otimes y_i
\\  \varepsilon: x_i, y_i, z \mapsto 0
\end{gather*}

Let us adopt the notation, for an $n$-tuple of non-negative
integers $\vec{r}$, $x^{\vec{r}} = x_1^{r_1} \ldots x_n^{r_n}$,
and similarly for $y^{\vec{r}}$.  Let $(V,\rho)$ be a comodule for
$G$ with basis $\{e_i\}$, and $\rho$ given by
\[ \rho:e_j \mapsto \sum_i e_i \otimes a_{ij} \]
and write
\[ a_{ij} = \sum_{(\vec{r},\vec{s},t)}
c_{ij}^{(\vec{r},\vec{s},t)} x^{\vec{r}} y^{\vec{s}} z^t \]

\begin{lem}
With $a_{ij}$ as above, $\Delta(a_{ij})$ is equal to
\begin{gather*}
 \hspace{-2cm} \sum_{{(\vec{r}_1, \vec{s}_1, a)} \atop
{(\vec{r}_2, \vec{s}_2, b)}} \left( \sum_{0 \leq \vec{t} \leq
\vec{r}_1, \vec{s}_2} {\vec{r}_1 + \vec{r}_2 - \vec{t} \choose
\vec{r}_2}{\vec{s}_1+\vec{s}_2-\vec{t} \choose \vec{s}_1}{a+b +
\abs{\vec{t}} \choose a, b, t_1, \ldots, t_n}
c_{ij}^{(\vec{r}_1+\vec{r}_2 -
\vec{t},\vec{s}_1+\vec{s}_2-\vec{t},a+b+\abs{\vec{t}})} \right) \\
 x^{\vec{r}_1} y^{\vec{s}_1} z^a \otimes x^{\vec{r}_2} y^
{\vec{s}_2} z^b
\end{gather*}

\end{lem}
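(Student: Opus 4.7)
The plan is to compute $\Delta(x^{\vec{r}} y^{\vec{s}} z^t)$ for a single monomial and then sum against the coefficients $c_{ij}^{(\vec{r},\vec{s},t)}$, finally rewriting the result on the monomial-tensor basis of $A \otimes A$ so that coefficients can be read off. Multiplicativity of $\Delta$ lets me factor
\[ \Delta(x^{\vec{r}} y^{\vec{s}} z^t) = \Delta(x_1)^{r_1}\cdots \Delta(x_n)^{r_n}\,\Delta(y_1)^{s_1}\cdots\Delta(y_n)^{s_n}\,\Delta(z)^t, \]
and the first two groups of factors behave exactly as in the $G_a^\infty$ computation: the binomial theorem (and the multi-index notation $\binom{\vec{r}}{\vec{r}_1} = \prod_i \binom{r_i}{r_{1,i}}$ introduced earlier) gives
\[ \Delta(x^{\vec{r}}) = \sum_{\vec{r}_1+\vec{r}_2=\vec{r}} \binom{\vec{r}}{\vec{r}_1}\, x^{\vec{r}_1} \otimes x^{\vec{r}_2}, \qquad \Delta(y^{\vec{s}}) = \sum_{\vec{s}_1+\vec{s}_2=\vec{s}} \binom{\vec{s}}{\vec{s}_1}\, y^{\vec{s}_1} \otimes y^{\vec{s}_2}. \]

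The step that requires genuine care is $\Delta(z)^t$, because $\Delta(z) = z\otimes 1 + 1\otimes z + \sum_i x_i\otimes y_i$ has $n+2$ summands rather than two. Here I would invoke the multinomial theorem (the summands pairwise commute in $A\otimes A$, since the $x_i$ commute with the $y_j$) to obtain
\[ \Delta(z)^t = \sum_{a+b+|\vec{t}|=t} \binom{t}{a,b,t_1,\ldots,t_n}\, x^{\vec{t}} z^a \otimes y^{\vec{t}} z^b, \]
where I have used $(z\otimes 1)^a(1\otimes z)^b \prod_i(x_i\otimes y_i)^{t_i} = x^{\vec{t}} z^a \otimes y^{\vec{t}} z^b$. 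Multiplying the three factorizations together gives
\[ \Delta(x^{\vec{r}} y^{\vec{s}} z^t) = \sum \binom{\vec{r}}{\vec{r}_1'}\binom{\vec{s}}{\vec{s}_1'}\binom{t}{a,b,\vec{t}}\, x^{\vec{r}_1'+\vec{t}} y^{\vec{s}_1'} z^a \otimes x^{\vec{r}_2'} y^{\vec{s}_2'+\vec{t}} z^b, \]
with the sum over $\vec{r}_1'+\vec{r}_2' = \vec{r}$, $\vec{s}_1'+\vec{s}_2' = \vec{s}$, and $a+b+|\vec{t}|=t$.

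The remaining step is bookkeeping: since the monomials $x^{\vec{r}_1} y^{\vec{s}_1} z^a \otimes x^{\vec{r}_2} y^{\vec{s}_2} z^b$ form a free basis of $A\otimes A$, I change variables to the exponents that actually appear on the two tensor slots, setting $\vec{r}_1 = \vec{r}_1'+\vec{t}$, $\vec{r}_2 = \vec{r}_2'$, $\vec{s}_1 = \vec{s}_1'$, $\vec{s}_2 = \vec{s}_2'+\vec{t}$. The nonnegativity of $\vec{r}_1',\vec{s}_2'$ forces $0 \leq \vec{t} \leq \vec{r}_1,\vec{s}_2$ componentwise, and back-substituting yields $\vec{r} = \vec{r}_1+\vec{r}_2-\vec{t}$, $\vec{s} = \vec{s}_1+\vec{s}_2-\vec{t}$, $t = a+b+|\vec{t}|$, together with the binomial coefficients in the stated form. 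Summing against $c_{ij}^{(\vec{r},\vec{s},t)}$ and reading off the coefficient of each basis monomial tensor then yields exactly the expression in the lemma. The main (only) obstacle is organizing the indices so that the three-variable multinomial expansion of $\Delta(z)^t$ meshes cleanly with the two separate binomial expansions of $\Delta(x^{\vec{r}})$ and $\Delta(y^{\vec{s}})$; once the variable change is performed the formula drops out.
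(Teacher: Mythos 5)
Your proposal is correct and follows essentially the same approach as the paper: factor $\Delta$ across the monomial, expand $\Delta(x^{\vec{r}})$ and $\Delta(y^{\vec{s}})$ by the binomial theorem, expand $\Delta(z)^t$ by the multinomial theorem, and then read off coefficients on the monomial-tensor basis by the change of variables $\vec{r}_1 = \vec{r}_1'+\vec{t}$, $\vec{s}_2 = \vec{s}_2'+\vec{t}$. The only cosmetic difference is that the paper does the multinomial expansion of $\Delta(z)^t$ in two stages (a three-term multinomial over $a+b+c=t$ followed by expanding $\bigl(\sum_i x_i\otimes y_i\bigr)^c$), whereas you invoke the $(n+2)$-term multinomial in one step; the resulting coefficients agree since $\binom{a+b+c}{a,b,c}\binom{c}{t_1,\ldots,t_n} = \binom{a+b+|\vec{t}|}{a,b,t_1,\ldots,t_n}$.
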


Remark: the summation condition $0 \leq \vec{t} \leq \vec{r}_1,
\vec{s}_2$ is understood to mean all $\vec{t}$ such that every
entry in $\vec{t}$ is no larger than either of the corresponding
entries of in $\vec{r}_1$ or $\vec{s}_2$.  $\abs{\vec{t}}$ means
$t_1 + \ldots + t_n$.

\begin{proof}
We start by computing
\[ \Delta(a_{ij}) = \sum_{\vec{r},\vec{s},t}
c_{ij}^{(\vec{r},\vec{s},\vec{t})} \Delta(x^{\vec{r}})
\Delta(y^{\vec{s}}) \Delta(z)^t
\]
We have
\begin{equation*}
\begin{split}
 \Delta(x^{\vec{r}}) &= \Delta(x_1)^{r_1} \ldots \Delta(x_n)^{r_n}
 \\
&= (x_1 \otimes 1 + 1 \otimes x_1)^{r_1} \ldots (x_n \otimes 1 + 1
\otimes x_n)^{r_n} \\
&= \left(\sum_{l_1 + m_1 = r_1}{l_1 + m_1 \choose m_1} x_1^{l_1}
\otimes x_1^{m_1}\right) \ldots \left(\sum_{l_n + m_n = r_n}{l_n +
m_n \choose m_n} x_n^{l_n} \otimes x_n^{m_n} \right) \\
&= \sum_{\vec{l} + \vec{m} = \vec{r}} {\vec{l} + \vec{m} \choose
\vec{m}} x^{\vec{l}} \otimes x^{\vec{m}}
\end{split}
\end{equation*}
where ${\vec{l} + \vec{m} \choose \vec{m}}$ is shorthand for the
product ${l_1 + m_1 \choose m_1} \ldots {l_n + m_n \choose m_n}$.
Similarly we have
\[ \Delta(y^{\vec{s}}) = \sum_{\vec{f}+\vec{g} =
\vec{s}}{\vec{f}+\vec{g} \choose \vec{g}} y^{\vec{f}} \otimes
y^{\vec{g}} \] and
\begin{equation*}
\begin{split}
\Delta(z^t) &= (z \otimes 1 + 1 \otimes z + \sum_i x_i \otimes
y_i)^t \\
&= \sum_{a+b+c =t}{a+b+c \choose a,b,c} (z \otimes 1)^a(1\otimes
z)^b (\sum_i x_i \otimes y_i)^c \\
&= \sum_{a+b+c=t}{a+b+c \choose a,b,c} z^a \otimes z^b
\sum_{\abs{\vec{t}}=c}{\abs{\vec{t}} \choose t_1, \ldots, t_n}
x^{\vec{t}} \otimes y^{\vec{t}} \\
&= \sum_{a+b+\abs{\vec{t}} = t}{a+b+\abs{t} \choose
a,b,t_1,\ldots,t_n} x^{\vec{t}} z^a \otimes y^{\vec{t}} z^b
\end{split}
\end{equation*}
where $\abs{\vec{t}} \stackrel{\text{def}}{=} t_1 + \ldots + t_n$.
Thus $\Delta(a_{ij})$ is equal to
\[ \sum_{\vec{r},\vec{s},t}
c_{ij}^{\vec{r},\vec{s},t} \sum_{{\vec{l}+\vec{m} = \vec{r}} \atop
{{\vec{f}+\vec{g} = \vec{s}} \atop {a+b+\abs{t} = t}}} {\vec{l} +
\vec{m} \choose \vec{m}}{\vec{f}+\vec{g} \choose
\vec{g}}{a+b+\abs{\vec{t}} \choose a,b,t_1, \ldots, t_n}
x^{\vec{l}+\vec{t}} y^{\vec{f}} z^a \otimes x^{\vec{m}}
y^{\vec{g}+\vec{t}} z^b
\]
We seek to write this as a sum over distinct monomial tensors,
i.e.~in the form
\[ \sum_{\vec{r}_1,\vec{s}_1,a \atop \vec{r}_2, \vec{s}_2, b} \chi\left({\vec{r}_1,\vec{s}_1,a \atop \vec{r}_2, \vec{s}_2, b}\right)
 x^{\vec{r}_1} y^{\vec{s}_1} z^a \otimes x^{\vec{r}_2}
y^{\vec{s}_2} z^b \] for some collection of scalars $\chi$, which
is merely a question of how many times a given monomial tensor
shows up as a term in our summation expression of
$\Delta(a_{ij})$.  That is, how many solutions are there to
\[ x^{\vec{l}+\vec{t}} y^{\vec{f}} z^a \otimes x^{\vec{m}}
y^{\vec{g}+\vec{t}} z^b= x^{\vec{r}_1} y^{\vec{s}_1} z^a \otimes
x^{\vec{r}_2} y^{\vec{s}_2} z^b \] Clearly the values of
$\vec{f},\vec{m},a$ and $b$ are determined. Further, once one
chooses $\vec{t}$, the values of both $\vec{l}$ and $\vec{g}$
follow; thus, we can parameterize by $\vec{t}$.  For the $n$-tuple
$\vec{t}$ to induce a solution, it is necessary and sufficient
that none of its entries be larger than the corresponding entries
in $\vec{r}_1$ or $\vec{s}_2$; we shall express this condition by
$\vec{0} \leq \vec{t} \leq \vec{r}_1,\vec{s}_2$.  Then
\[ \chi\left({\vec{r}_1,\vec{s}_1,a \atop \vec{r}_2, \vec{s}_2,
b}\right) = \sum_{\vec{0} \leq \vec{t} \leq \vec{r}_1,\vec{s}_2}
c_{ij}^{(\vec{r},\vec{s},t)}{\vec{l}+\vec{m} \choose
\vec{m}}{\vec{f} + \vec{g} \choose \vec{g}}{a+b+\abs{\vec{t}}
\choose a,b,t_1, \ldots, t_n} \] and upon substituting
\[ \vec{m} = \vec{r}_2 \eqspace \vec{f} = \vec{s}_1 \eqspace
\vec{s} = \vec{s}_1 + \vec{s}_2 - \vec{t} \eqspace \vec{r} =
\vec{r}_1 + \vec{r}_2 - \vec{t} \eqspace t = a+b+\abs{\vec{t}} \]
we get
\begin{gather*}
\chi\left({\vec{r}_1,\vec{s}_1,a \atop \vec{r}_2, \vec{s}_2,
b}\right) \\
 = \sum_{0 \leq \vec{t} \leq \vec{r}_1, \vec{s}_2} {\vec{r}_1 +
\vec{r}_2 - \vec{t} \choose \vec{r}_2}{\vec{s}_1+\vec{s}_2-\vec{t}
\choose \vec{s}_1}{a+b + \abs{\vec{t}} \choose a, b, t_1, \ldots,
t_n} c_{ij}^{(\vec{r}_1+\vec{r}_2 -
\vec{t},\vec{s}_1+\vec{s}_2-\vec{t},a+b+\abs{\vec{t}})}
\end{gather*}
which proves the lemma.

\end{proof}

\begin{thm}
\label{Hnfundamentalrelationthm} A finite collection of $(c_{ij})$
matrices defines a module for $H_n$ if and only if
$(c_{ij})^{(\vec{0}, \vec{0},0)} = \text{Id}$, and for all
$\vec{r}_1,\vec{r}_2,a,\vec{s}_1, \vec{s}_2, b$, the following
matrix equation holds: \index{fundamental relation!for $H_n$}
\begin{gather}
\label{Hnfundamentalrelation}
 (c_{ij})^{(\vec{r}_1,\vec{s}_1,a)}
(c_{ij})^{(\vec{r}_2,\vec{s}_2,b)} \\
 =\sum_{0 \leq \vec{t} \leq \vec{r}_1, \vec{s}_2} {\vec{r}_1 +
\vec{r}_2 - \vec{t} \choose \vec{r}_2}{\vec{s}_1+\vec{s}_2-\vec{t}
\choose \vec{s}_1}{a+b + \abs{\vec{t}} \choose a, b, t_1, \ldots,
t_n} (c_{ij})^{(\vec{r}_1+\vec{r}_2 -
\vec{t},\vec{s}_1+\vec{s}_2-\vec{t},a+b+\abs{\vec{t}})} \nonumber
\end{gather}

\end{thm}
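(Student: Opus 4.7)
The plan is to mimic exactly the strategy used in the previous chapter for $G_a$ (leading to equation \ref{Gafundamentalrelation}) and in chapter \ref{themethodchapter} more generally. The two equations we must verify for $(V,\rho)$ to be a valid comodule are equation \ref{comoduleequation1}, namely $(\varepsilon(a_{ij})) = \text{Id}$, and equation \ref{comoduleequation2}, namely $\sum_k a_{ik} \otimes a_{kj} = \Delta(a_{ij})$. For the first, note that $\varepsilon$ kills every $x_i$, $y_i$, and $z$, so $\varepsilon(a_{ij}) = c_{ij}^{(\vec{0},\vec{0},0)}$, and the matrix equality $(\varepsilon(a_{ij})) = \text{Id}$ becomes exactly $(c_{ij})^{(\vec{0},\vec{0},0)} = \text{Id}$.

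For the second equation, I would compute both sides as formal sums over the monomial basis of $A \otimes A$ and match coefficients. The right hand side $\Delta(a_{ij})$ has already been expanded in the previous lemma, and is written as
\begin{gather*}
\sum_{\vec{r}_1,\vec{s}_1,a \atop \vec{r}_2, \vec{s}_2, b} \chi\left({\vec{r}_1,\vec{s}_1,a \atop \vec{r}_2,\vec{s}_2,b}\right) x^{\vec{r}_1} y^{\vec{s}_1} z^a \otimes x^{\vec{r}_2} y^{\vec{s}_2} z^b,
\end{gather*}
where $\chi$ is exactly the sum over $\vec{t}$ appearing on the right hand side of equation \ref{Hnfundamentalrelation}. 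For the left hand side, substituting the definition of $a_{ij}$ gives
\begin{gather*}
\sum_k a_{ik} \otimes a_{kj} = \sum_{\vec{r}_1,\vec{s}_1,a \atop \vec{r}_2,\vec{s}_2,b} \left(\sum_k c_{ik}^{(\vec{r}_1,\vec{s}_1,a)} c_{kj}^{(\vec{r}_2,\vec{s}_2,b)}\right) x^{\vec{r}_1} y^{\vec{s}_1} z^a \otimes x^{\vec{r}_2} y^{\vec{s}_2} z^b,
\end{gather*}
and the inner coefficient is exactly the $(i,j)^{\text{th}}$ entry of the matrix product $(c_{ij})^{(\vec{r}_1,\vec{s}_1,a)} (c_{ij})^{(\vec{r}_2,\vec{s}_2,b)}$.

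Since $A = k[x_1,\ldots,x_n,y_1,\ldots,y_n,z]$ is a polynomial ring with no relations, the monomial tensors $x^{\vec{r}_1} y^{\vec{s}_1} z^a \otimes x^{\vec{r}_2} y^{\vec{s}_2} z^b$ form a free basis for $A \otimes A$, so equation \ref{comoduleequation2} holds if and only if the coefficients on both sides agree for every such monomial tensor. Identifying the coefficient of a fixed monomial tensor on both sides and re-reading it as the $(i,j)^{\text{th}}$ entry of a matrix equation gives precisely equation \ref{Hnfundamentalrelation} for every tuple $(\vec{r}_1,\vec{s}_1,a,\vec{r}_2,\vec{s}_2,b)$. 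Together with $(c_{ij})^{(\vec{0},\vec{0},0)} = \text{Id}$ (and the implicit condition that only finitely many $(c_{ij})^{(\vec{r},\vec{s},t)}$ are nonzero, which is equivalent to the representation being finite-dimensional), these are the necessary and sufficient conditions.

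There is no real obstacle here; the only subtlety is the bookkeeping for $\Delta(a_{ij})$, but that has already been dispatched in the preceding lemma. The proof is then essentially a direct translation of the coefficient-matching argument used in the $G_a$ case, adapted to the three families of variables of $H_n$ and the twisted co-multiplication $\Delta(z) = z \otimes 1 + 1 \otimes z + \sum_i x_i \otimes y_i$, which is the source of the $\vec{t}$-summation and multinomial coefficient.
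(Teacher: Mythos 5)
Your proof is correct and follows the same path as the paper's: verify equation \ref{comoduleequation1} by noting $\varepsilon$ kills the constant-free part of $a_{ij}$, then verify equation \ref{comoduleequation2} by matching coefficients on the free monomial-tensor basis of $A \otimes A$, with the right-hand side already computed in the preceding lemma and the left-hand side read off as a matrix product. The paper's proof is in fact terser than yours, simply citing the preceding lemma and asserting the coefficient match; you have spelled out the same bookkeeping in slightly more detail.
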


\begin{proof}
This follows by matching coefficients for the equation
$\Delta(a_{ij}) = \sum_k a_{ik} \otimes a_{kj}$.  The coefficient
of the monomial tensor $x^{\vec{r}_1} y^{\vec{s}_1} z^a \otimes
x^{\vec{r}_2} y^{\vec{s}_2} z^b$ for $\Delta(a_{ij})$ is the right
hand side of the above equation, as proved in the previous lemma,
while for $\sum_k a_{ik} \otimes a_{kj}$ it is the left hand side
of the above equation, as is easy to verify.
\end{proof}

\section{Combinatorics for $H_1$}

The Hopf algebra $(A,\Delta,\varepsilon)$ for the group $H_1$ over
the field $k$ is
\begin{gather*}
A = k[x,y,z] \\
 \Delta: x \mapsto 1 \otimes x+ x \otimes 1, \hspace{.5cm} y
\mapsto 1 \otimes y + y \otimes 1, \hspace{.5cm}
 z \mapsto 1 \otimes z + x \otimes y + z \otimes 1 \\
 \varepsilon: x,y,z \mapsto 0
\end{gather*}
  Let $V$ be a finite dimensional
vector space over $k$, $\rho:V \rightarrow V \otimes A$ a
$k$-linear map.  Fix a basis $\{e_i\}$ of $V$, and write
\[
\rho:e_j \mapsto \sum_i e_i \otimes a_{ij}
\]
Each $a_{ij} \in A$ is a polynomial in the variables $x,y$ and
$z$, so write
\[ a_{ij} = \sum_{\vec{r}} c_{ij}^{\vec{r}} x^{r_1}y^{r_2}z^{r_3}
\]
where the summation is over all $3$-tuples of non-negative
integers, remembering of course that $c_{ij}^{\vec{r}} = 0$ for
all but finitely many $\vec{r}$.

\begin{thm}
A finite collection of $(c_{ij})^{\vec{r}}$ matrices defines a
module for $H_1$ if and only if  they satisfy $(c_{ij})^{(0,0,0)}
= \text{Id}$, and for all $3$-tuples $\vec{r}$ and $\vec{s}$

\index{fundamental relation!for $H_1$}
\begin{equation} \index{$(c_{ij})$}
\label{H1fundamentalrelation} (c_{ij})^{\vec{r}}(c_{ij})^{\vec{s}}
= \sum_{l=0}^{\text{min}(r_1,s_2)}{ r_1 + s_1 -l \choose s_1}
{r_2+s_2-l \choose r_2}{r_3 + s_3 + l \choose r_3, s_3, l}
(c_{ij})^{\vec{r} + \vec{s} + (-l,-l,l)}
\end{equation}

\end{thm}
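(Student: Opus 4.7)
The plan is to derive this theorem as a direct specialization of Theorem \ref{Hnfundamentalrelationthm} to the case $n = 1$. The only substantive work is a careful translation between the two indexing conventions: in the statement of Theorem \ref{Hnfundamentalrelationthm} an exponent is recorded as a triple $(\vec{r}, \vec{s}, a)$ where $\vec{r}, \vec{s}$ are $n$-tuples (the $x$- and $y$-exponents) and $a$ is a scalar (the $z$-exponent), whereas the $H_1$ statement packages everything into a single $3$-tuple $\vec{r} = (r_1, r_2, r_3)$ with $r_1, r_2, r_3$ the $x$-, $y$-, and $z$-exponents respectively.

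First I would set up the identification: given $H_1$ exponents $\vec{r} = (r_1, r_2, r_3)$ and $\vec{s} = (s_1, s_2, s_3)$, identify them with the $H_n$ indices $(\vec{r}_1, \vec{s}_1, a) = (r_1, r_2, r_3)$ and $(\vec{r}_2, \vec{s}_2, b) = (s_1, s_2, s_3)$, where now $\vec{r}_1, \vec{r}_2, \vec{s}_1, \vec{s}_2$ are $1$-tuples (i.e., non-negative integers). The summation index $\vec{t}$ of Equation \ref{Hnfundamentalrelation} then becomes a single non-negative integer, which I rename $l$, and the constraint $\vec{0} \leq \vec{t} \leq \vec{r}_1, \vec{s}_2$ collapses to $0 \leq l \leq \min(r_1, s_2)$. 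Under this substitution the product ${\vec{r}_1 + \vec{r}_2 - \vec{t} \choose \vec{r}_2}{\vec{s}_1 + \vec{s}_2 - \vec{t} \choose \vec{s}_1}{a+b+\abs{\vec{t}} \choose a, b, t_1, \ldots, t_n}$ reduces to ${r_1 + s_1 - l \choose s_1}{r_2 + s_2 - l \choose r_2}{r_3 + s_3 + l \choose r_3, s_3, l}$, and the matrix indexed by $(\vec{r}_1 + \vec{r}_2 - \vec{t}, \vec{s}_1 + \vec{s}_2 - \vec{t}, a + b + \abs{\vec{t}})$ becomes $(c_{ij})^{\vec{r} + \vec{s} + (-l, -l, l)}$, matching Equation \ref{H1fundamentalrelation} term by term.

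There is no real obstacle here; the result is a clean corollary of the already-proved $H_n$ case. The main pitfall is bookkeeping, in particular making sure that $\vec{r}$ is paired with $(\vec{r}_1, \vec{s}_1, a)$ and $\vec{s}$ with $(\vec{r}_2, \vec{s}_2, b)$ in the correct order, since the binomial coefficients in Equation \ref{Hnfundamentalrelation} are not symmetric in these arguments. To guard against a swap error I would sanity-check the smallest nontrivial case $\vec{r} = (1,0,0)$, $\vec{s} = (0,1,0)$, for which the formula predicts $(c_{ij})^{(1,0,0)}(c_{ij})^{(0,1,0)} = (c_{ij})^{(1,1,0)} + (c_{ij})^{(0,0,1)}$, reflecting precisely the $x \otimes y$ cross-term in $\Delta(z)$ that distinguishes $H_1$ from a purely abelian group.
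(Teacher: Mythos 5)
Your proposal is exactly the paper's proof: the paper simply says ``Apply theorem \ref{Hnfundamentalrelationthm} to the case of $n=1$,'' and you have carried out the bookkeeping of that specialization correctly, including the collapse of the multi-index $\vec{t}$ to the scalar $l$ and the correct pairing of $\vec{r}$ with $(\vec{r}_1,\vec{s}_1,a)$ and $\vec{s}$ with $(\vec{r}_2,\vec{s}_2,b)$. Your sanity check on $(1,0,0)$ and $(0,1,0)$ also matches the computation the paper performs later in the proof of Theorem \ref{H1CharZeroThm}.
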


\begin{proof}
Apply theorem \ref{Hnfundamentalrelationthm} to the case of $n=1$.
\end{proof}

We will also make good use of the fact that $G$ contains three
copies of the additive group $G_a$, one for every coordinate. This
says that, for example, the collection of all matrices of the form
$(c_{ij})^{(r,0,0)}$ (the matrices representing the $x$-coordinate
in the representation) must in isolation satisfy equation
\ref{Gafundamentalrelation}:
\[ (c_{ij})^{(r,0,0)}(c_{ij})^{(s,0,0)} = {r+s \choose r}
(c_{ij})^{(r+s,0,0)} \] An identical statement holds for matrices
of the form $(c_{ij})^{(0,r,0)}$ and those of the form
$(c_{ij})^{(0,0,r)}$, the $y$ and $z$ parts respectively.  These
relations could of course just as well have been read off of
equation \ref{H1fundamentalrelation}.

From here on we treat the cases of zero and prime characteristic
separately.

\section{Characteristic Zero}

Let $k$ be a field of characteristic zero, and $(V,\rho)$ a
representation of $H_1$ as in the previous section.  Set $X =
(c_{ij})^{(1,0,0)}$, $Y = (c_{ij})^{(0,1,0)}$, $Z =
(c_{ij})^{(0,0,1)}$.

\begin{thm}
\label{H1CharZeroThm} A representation of $H_1$ over $k$ is
completely determined by the assignments $X$ and $Y$.  Necessarily
$Z = [X,Y]$, each of $X$ and $Y$ must commute with $Z$, and $X$,
$Y$ and $Z$ must all be nilpotent.  Further, any $X$ and $Y$
satisfying these relations defines a representation of $G$ over
$k$.

\end{thm}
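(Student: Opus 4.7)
The plan is to use equation \ref{H1fundamentalrelation} as the sole engine, extracting necessary conditions by feeding in small tuples $\vec{r},\vec{s}$, and then exhibiting an explicit closed form for $(c_{ij})^{\vec{r}}$ in terms of $X,Y,Z$ and verifying the relation in general.

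First I would derive $Z=[X,Y]$ by plugging $\vec{r}=(1,0,0)$, $\vec{s}=(0,1,0)$ into \ref{H1fundamentalrelation}: the sum runs over $l=0,1$ and collapses to $XY=(c_{ij})^{(1,1,0)}+Z$. Reversing to $\vec{r}=(0,1,0)$, $\vec{s}=(1,0,0)$ forces $l=0$ only, giving $YX=(c_{ij})^{(1,1,0)}$. Subtraction yields $[X,Y]=Z$. Analogous two-line computations with $\vec{r}=(1,0,0),\vec{s}=(0,0,1)$ (and then swapped) produce $XZ=ZX=(c_{ij})^{(1,0,1)}$, and likewise $YZ=ZY$, so $Z$ is central with respect to $X,Y$. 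Nilpotency of $X$, $Y$, $Z$ is immediate: restricting a representation of $H_1$ to each coordinate axis yields a representation of $G_a$, so theorem \ref{Gacharzerothm} applies to each, forcing $(c_{ij})^{(r,0,0)}=X^r/r!$, $(c_{ij})^{(0,r,0)}=Y^r/r!$, $(c_{ij})^{(0,0,r)}=Z^r/r!$, all eventually zero.

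Next I would establish the closed form
\begin{equation*}
(c_{ij})^{(r_1,r_2,r_3)} \;=\; \frac{Y^{r_2}X^{r_1}Z^{r_3}}{r_1!\,r_2!\,r_3!},
\end{equation*}
which shows determinacy by $X$ and $Y$ alone (since $Z=[X,Y]$). For the forward direction this can be proved by induction on $r_1+r_2+r_3$, using the fundamental relation with $\vec{r}=(0,r_2,0)$, $\vec{s}=(r_1,0,r_3)$ to solve for the highest term. The appearance of $r_1!r_2!r_3!$ in the denominators is the characteristic-zero obstruction: it explains why this clean formula breaks down in positive characteristic.

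For the converse, I would take $X$, $Y$ nilpotent with $Z\stackrel{\text{def}}{=}[X,Y]$ commuting with both and also nilpotent (which follows automatically from $Z=[X,Y]$ and commutativity of $Z$ with $X,Y$, since then $Z$ lies in the commutator algebra of nilpotents), define $(c_{ij})^{\vec{r}}$ by the boxed formula, and verify \ref{H1fundamentalrelation} directly. The key combinatorial identity to prove (and here is where I expect the real work to lie) is the Heisenberg reordering formula
\begin{equation*}
X^{r}Y^{s} \;=\; \sum_{l=0}^{\min(r,s)} \binom{r}{l}\binom{s}{l}\,l!\; Y^{s-l}X^{r-l}Z^{l},
\end{equation*}
which follows by induction on $r$ from $XY=YX+Z$ and the centrality of $Z$. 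Once this is in hand, expanding the left-hand side $(c_{ij})^{\vec{r}}(c_{ij})^{\vec{s}}$, shuffling the middle $X^{r_1}Y^{s_2}$ past each other via this identity, and repackaging factorials shows that the coefficient of $(c_{ij})^{\vec{r}+\vec{s}+(-l,-l,l)}$ equals exactly $\binom{r_1+s_1-l}{s_1}\binom{r_2+s_2-l}{r_2}\binom{r_3+s_3+l}{r_3,s_3,l}$, matching \ref{H1fundamentalrelation}. The main obstacle is this bookkeeping: ensuring that the factorials produced by inverting the Heisenberg identity exactly cancel the factorials introduced by the definition of $(c_{ij})^{\vec{r}}$ and those in the claimed multinomial coefficients, and confirming that $(c_{ij})^{(0,0,0)}=\mathrm{Id}$ together with the finiteness condition (only finitely many nonzero $(c_{ij})^{\vec{r}}$, which holds since $X,Y,Z$ are all nilpotent).
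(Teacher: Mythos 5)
Your proposal follows essentially the same route as the paper's proof: derive the necessary conditions from small instances of the fundamental relation \ref{H1fundamentalrelation}, obtain the closed form $(c_{ij})^{(r_1,r_2,r_3)} = Y^{r_2}X^{r_1}Z^{r_3}/(r_1!r_2!r_3!)$, and reduce the verification of the fundamental relation to the same Heisenberg reordering identity $X^{n}Y^{m}=\sum_l l!\binom{n}{l}\binom{m}{l}Z^lY^{m-l}X^{n-l}$, proved by induction (the paper uses a double induction in $n$ and $m$). One small aside: your parenthetical claim that nilpotency of $Z$ is automatic is true, but the justification offered (``lies in the commutator algebra of nilpotents'') is not quite a proof; the clean argument is that $\operatorname{tr}(Z^n)=0$ for all $n\geq 1$ (since $Z$ commutes with $X$ and $Y$ and trace is cyclic), which in characteristic zero forces nilpotency --- though this is moot since the theorem states nilpotency of $Z$ as a hypothesis anyway.
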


\begin{proof}
We know from our previous work with the additive group $G_a$ that
the following identities must hold:
\[ (c_{ij})^{(r,0,0)} = \frac{1}{r!} X^r \hspace{.8cm} (c_{ij})^{(0,r,0)} = \frac{1}{r!} Y^r \hspace{.8cm}
 (c_{ij})^{(0,0,r)} = \frac{1}{r!} Z^r \]
We work with the fundamental relation for $H_1$, equation
\ref{H1fundamentalrelation}:
\[(c_{ij})^{\vec{r}}(c_{ij})^{\vec{s}} = \sum_{l=0}^{\text{min}(r_1, s_2)} {r_1 + s_1 -l \choose s_1}{r_2 +
s_2 -l \choose r_2}{r_3 + s_3 +l \choose r_3, s_3, l}
(c_{ij})^{(r_1 + s1 -l, r_2+s_2-1,l)} \]
We have
\[ (c_{ij})^{(0,m,0)}(c_{ij})^{(n,0,0)} = \sum_{l=0}^{0}{n -l
\choose n} {m-l \choose m}{l \choose l} (c_{ij})^{(n-l,m-l,l)} \]
which says that
\[ (c_{ij})^{(n,m,0)} = \frac{1}{m!n!} Y^m X^n \]
Using again the fundamental relation, we also have
\[ (c_{ij})^{(n,m,0)}(c_{ij})^{(0,0,k)} = (c_{ij})^{(n,m,k)} \]
which together with the last equation gives
\[ (c_{ij})^{(n,m,k)} = \frac{1}{n!m!k!}Y^m X^n Z^k \]
Thus, all of the $(c_{ij})$ are determined by $X$, $Y$ and $Z$,
according to the above formula. Further,
\[ XY = \sum_{l=0}^1 {1 -l \choose
0}{1 -l \choose 0}{l \choose l} (c_{ij})^{1 -l, 1-1,l)} = YX + Z
\]
and so $Z = [X,Y]$ as claimed.  Each of $Y$ and $X$ must commute
with $Z$, for if we apply the fundamental relation to each of $XZ$
and $ZX$, in each case we obtain $(c_{ij})^{(1,0,1)}$, showing $XZ
= ZX$, and an identical computation shows $YZ = ZY$.  And by our
work on $G_a$ we know that each of $X$, $Y$ and $Z$ must be
nilpotent.

We must now show sufficiency of the given relations.  Let $X$, $Y$
and $Z$ be any three nilpotent matrices satisfying $Z = XY - YX$,
with each of $X$ and $Y$ commuting with $Z$.  We need to show that
the fundamental relation, equation \ref{H1fundamentalrelation}, is
always satisfied.  We assign
\[ (c_{ij})^{(n,m,k)} = \frac{1}{n!m!k!} Z^k Y^m X^n \]
Since each of $X$, $Y$ and $Z$ are nilpotent, $(c_{ij})^{\vec{r}}$
will vanish for all by finitely many $\vec{r}$, as required. The
fundamental relation, with these assignments, reduces to (after
shuffling all coefficients to the right-hand side and some
cancellation)
\[ Z^{r_3 + s_3} Y^{r_2} X^{r_1} Y^{s_2} X^{s_1} =
\sum_{l=0}^{\text{min}(r_1, s_2)} l!{r_1 \choose l}{s_2 \choose l}
Z^{r_3+s_3 + l} Y^{r_2+s_2-l} X^{r_1+s_1-l} \] Each term in the
summation has the term $Z^{r_3+s_3}$ in the front and $X^{s_1}$ in
the rear, and so does the left-hand side.  So it suffices to show
\[ Y^{r_2} X^{r_1} Y^{s_2} = \sum_{l=0}^{\text{min}(r_1, s_2)} l!{r_1 \choose l}{s_2 \choose l}
Z^l Y^{r_2+s_2-l} X^{r_1-l} \] and since $Y$ commutes with $Z$,
the summation term (minus coefficients) can be written as
$Y^{r_2}Z^l Y^{s_2-l}X^{r_1-l}$. We can now take off the $Y^{r_2}$
term from the front of either side, so it suffices to show
\[ X^n Y^m = \sum_{l=0}^{\text{min}(n,m)} l!{n \choose l}{m
\choose l} Z^l Y^{m-l}X^{n-l} \] where we have renamed $r_1$ and
$s_2$ with the less cumbersome $n$ and $m$.

We proceed by a double induction on $n$ and $m$.  The case of $n$
or $m$ being zero is trivial, and if $n=m=1$, the above equation
is $XY=Z+YX$, which is true by assumption.  Consider then $X^nY$,
and by induction suppose that the equation holds for $X^{n-1} Y$,
so that $X^{n-1}Y = YX^{n-1} + (n-1)ZX^{n-2}$. Then using the
relation $XY = Z+YX$ and $X$ commuting with $Z$, we have
\begin{equation*}
\begin{split}
 X^nY &= X^{n-1}XY \\ &
 = X^{n-1}(Z+YX) \\
 &= ZX^{n-1} + (X^{n-1}Y)X \\
 &= ZX^{n-1} + (YX^{n-1} + (n-1)ZX^{n-2})X \\
 &= nZX^{n-1} + YX^n
\end{split}
\end{equation*}
 and
so the equation is true when $m=1$.  Now suppose that $m \leq n$,
so that $\text{min}(n,m) = m$.  Then we have
\begin{equation*}
\begin{split}
 X^n Y^m &= (X^nY)Y^{m-1} \\
 &= (YX^n + nZX^{n-1})Y^{m-1} \\
 &= Y(X^nY^{m-1}) + nZ(X^{n-1}Y^{m-1})
\end{split}
\end{equation*}
which by induction is equal to
\begin{gather*}
 Y \left(\sum_{l=0}^{m-1} l!{n \choose l}{m-1 \choose
l}Z^lY^{m-1-l}X^{n-l} \right) + n Z \left(\sum_{l=0}^{m-1}l!{n-1
\choose l}{m-1 \choose l}Z^lY^{m-1-l}X^{n-1-l} \right) \\
= \sum_{l=0}^{m-1} l! {n \choose l}{m-1 \choose
l}Z^lY^{m-l}X^{m-l} + \sum_{l=0}^{m-1} n l!{n-1 \choose l}{m-1
\choose l} Z^{l+1}Y^{m-1-l}X^{n-1-l} \\
= Y^mX^n + \sum_{l=1}^{m-1}
l!{n \choose l}{m-1 \choose l}Z^l Y^{m-l}X^{n-l} + \sum_{l=1}^m
n(l-1)!{n-1 \choose l-1}{m-1 \choose l-1}Z^l Y^{m-l}X^{n-l}
\end{gather*}
where, in the last step, we have chopped off the first term of the
first summation and shifted the index $l$ of the second summation.
If we chop off the last term of the second summation we obtain
\begin{gather*}
 = Y^mX^n + \sum_{l=1}^{m-1} l!{n \choose l}{m-1 \choose l}Z^l
Y^{m-l}X^{n-l} \\
+ \sum_{l=1}^{m-1} n(l-1)!{n-1 \choose l-1}{m-1 \choose l-1}Z^l
Y^{m-l}X^{n-l} + n(m-1)!{n-1 \choose m-1}{m-1 \choose m-1}Z^m
X^{n-m}
\end{gather*}
and upon merging the summations, we have
\begin{equation*}
\begin{split}
 &= Y^m X^n + \sum_{l=1}^{m-1} \left[l!{n \choose l}{m-1 \choose
l} + n(l-1)!{n-1 \choose l-1}{m-1 \choose l-1} \right] Z^l Y^{m-1}
X^{n-l} \\
& \qquad + n(m-1)!{n-1 \choose m-1}{m-1 \choose m-1} Z^mX^{n-m} \\
&= Y^m X^n + \sum_{l=1}^{m-1} \left[l!{n \choose l}{m-1 \choose l}
+ n(l-1)!{n-1 \choose l-1}{m-1 \choose l-1} \right] Z^l Y^{m-1}
X^{n-l} \\
& \qquad + n!{n \choose m}{m \choose m} Z^mX^{n-m}
\end{split}
\end{equation*}
 The two terms
outlying the summation are exactly the first and last terms of
what the fundamental relation predicts them to be. To finish then,
it suffices to show that the term in brackets is equal to $l!{n
\choose l}{m \choose l}$, which is a straightforward computation
left to the reader.  This completes the case of $m \leq n$, and
the case $n \geq m$ is hardly any different, and left to the
reader.

\end{proof}

\section{Characteristic $p$}

Here we are not interested in giving a complete combinatorial
classification of characteristic $p$ representations of $H_1$.
Rather, we shall only be interested in the case where $p$ is
sufficiently large when compared to the dimension of the module.
Doing so, we obtain a result analogous to theorem
\ref{Gacharzeropanaloguetheorem} for the group $G_a$, namely that
such representations `look like' representations of $H_1^n$ in
characteristic zero.

Let $(V,\rho)$ be a comodule for $G$ over the field $k$ of
characteristic $p>0$, given by the matrices $(c_{ij})^{\vec{r}}$
over $k$.  Again, matrices of the form $(c_{ij})^{(r,0,0)}$,
$(c_{ij})^{(0,r,0)}$ and $(c_{ij})^{(0,0,r)}$, the $x$, $y$ and
$z$ parts of the representation respectively, must in isolation
define representations of $G_a$ over $k$.  We know then that
(proposition \ref{Gacharpprop}), for example, all matrices of the
form $(c_{ij})^{(r,0,0)}$ are completely determined by the
assignments
\[ X_0 = (c_{ij})^{(p^0,0,0)}, \hspace{.5cm} X_1 = (c_{ij})^{(p^1,0,0)}, \ldots, X_m
= (c_{ij})^{(p^m,0,0)} \] and abide by the formula, for $r = r_m
p^m + r_{m-1} p^{m-1} + \ldots r_0$ in $p$-ary notation
\[ (c_{ij})^{(r,0,0)} = \Gamma(r)^{-1} X_0^{r_0} X_1^{r_1} \ldots
X_m^{r_m} \] and that the $X_i$ must commute and be $p$-nilpotent.
An identical statement holds for the matrices $Y_m =
(c_{ij})^{(0,p^m,0)}$ and $Z_m = (c_{ij})^{(0,0,p^m)}$.

From here on we adopt the notation $X_{(i)} = (c_{ij})^{(i,0,0)}$,
similarly for $Y_{(i)}$ and $Z_{(i)}$.  Note that
\index{$X_{(i)}$} $X_{(i)}$ and $X_i$ are not the same thing.

\begin{thm}
\label{H1CharpThm} Let $k$ have characteristic $p>0$.  A
representation of $H_1$ over $k$ is completely determined by the
$X_i$ and $Y_i$. The $X_i$ must commute with one another, same for
the $Y_i$ and $Z_i$, and each $X_i$ must commute with every $Z_j$,
same for $Y_i$ and $Z_j$.
\end{thm}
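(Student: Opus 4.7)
The plan is to reduce everything to determining the $Z_i$ in terms of the $X_i$ and $Y_i$, using the fundamental relation \ref{H1fundamentalrelation} repeatedly. First I would establish the factorization
\[ (c_{ij})^{(a,b,c)} = Y_{(b)}\, X_{(a)}\, Z_{(c)}, \]
writing $X_{(n)} = (c_{ij})^{(n,0,0)}$ and similarly for $Y_{(n)}, Z_{(n)}$. Applying \ref{H1fundamentalrelation} with $\vec{r} = (0,b,0)$, $\vec{s} = (a,0,0)$ forces $\min(r_1, s_2)=0$, so the sum collapses to a single term and gives $Y_{(b)}X_{(a)} = (c_{ij})^{(a,b,0)}$; applying it again with $\vec{r} = (a,b,0)$, $\vec{s} = (0,0,c)$ collapses similarly and yields the claimed formula. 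The three axis slices $X_{(n)}, Y_{(n)}, Z_{(n)}$ each define a representation of a copy of $G_a$ inside $H_1$, so by proposition \ref{Gacharpprop} they are determined respectively by the families $\{X_i\}, \{Y_i\}, \{Z_i\}$, all of which must commute internally and be nilpotent of order at most $p$. The task therefore reduces to expressing each $Z_m$ in terms of the $X_i$ and $Y_i$.

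For $m=0$, direct application of \ref{H1fundamentalrelation} gives $X_0 Y_0 = (c_{ij})^{(1,1,0)} + Z_0$ and $Y_0 X_0 = (c_{ij})^{(1,1,0)}$, so $Z_0 = [X_0, Y_0]$. For the inductive step I would compute $X_m Y_m$ via \ref{H1fundamentalrelation} with $\vec{r} = (p^m,0,0)$, $\vec{s} = (0,p^m,0)$; every binomial and multinomial in the resulting sum equals $1$, so
\[ X_m Y_m \;=\; \sum_{l=0}^{p^m} (c_{ij})^{(p^m-l,\, p^m-l,\, l)} \;=\; \sum_{l=0}^{p^m} Y_{(p^m-l)}\, X_{(p^m-l)}\, Z_{(l)}. \]
Under the inductive hypothesis that $Z_0, \ldots, Z_{m-1}$ are already expressed in terms of the $X_i, Y_i$ with $i < m$, proposition \ref{Gacharpprop} determines every $Z_{(l)}$ for $l < p^m$, since the $p$-ary expansion of such an $l$ involves only digits in positions strictly less than $m$. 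The isolated $l = p^m$ term is $Y_{(0)}X_{(0)}Z_{(p^m)} = Z_m$, so one can solve for $Z_m$ as $X_m Y_m$ minus a combination already accounted for. The commutativity of $X_i$ with $Z_j$, and of $Y_i$ with $Z_j$, then follow by applying \ref{H1fundamentalrelation} directly to pairs like $(p^i,0,0)$ and $(0,0,p^j)$: both orders of multiplication collapse to the single term $(c_{ij})^{(p^i, 0, p^j)}$, so the factors commute. The internal commutativity of each of the families $\{X_i\}, \{Y_i\}, \{Z_i\}$ is inherited from proposition \ref{Gacharpprop} applied to each axis.

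The main obstacle I expect is the book-keeping in the inductive step: one must verify uniformly that every coefficient in the expansion of $X_m Y_m$ really reduces to $1 \pmod p$ (so that there is no coefficient obstruction to isolating $Z_m$), and that the reassembly via proposition \ref{Gacharpprop} cleanly separates the contribution of $Z_m$ from those of the lower $Z_j$ packaged inside the various $Z_{(l)}$. A secondary concern, formally beyond the theorem as stated, is the sufficiency converse: that any commuting, $p$-nilpotent families $\{X_i\}, \{Y_i\}$ together with the $Z_i$ manufactured inductively above actually yield a valid representation. Establishing that would amount to checking \ref{H1fundamentalrelation} for arbitrary $\vec{r}, \vec{s}$, a Lucas-theorem computation (corollary \ref{Lucas'Cor}) analogous to the sufficiency half of the characteristic-zero theorem \ref{H1CharZeroThm}.
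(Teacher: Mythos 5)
Your proof is correct and follows essentially the same route as the paper's: establish the factorization $(c_{ij})^{(n,m,k)} = Y_{(m)} X_{(n)} Z_{(k)}$ via two collapsing applications of the fundamental relation, observe that the three axis slices are governed by the $G_a$ theory, get $Z_0 = [X_0, Y_0]$ directly, run the induction on $X_m Y_m = \sum_{l=0}^{p^m} Y_{(p^m - l)} X_{(p^m - l)} Z_{(l)}$ to isolate $Z_m$, and obtain the $X_i$--$Z_j$ and $Y_i$--$Z_j$ commutation by applying the fundamental relation in both orders. One small remark: the coefficient concern you flag in the "main obstacle" paragraph is vacuous here — the binomials $\binom{p^m - l}{0}$ and the multinomial $\binom{l}{0,0,l}$ are all literally $1$, not merely $1 \pmod p$, so there is no verification to do.
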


\begin{proof}

We work again with the fundamental relation for $H_1$, equation
\ref{H1fundamentalrelation}:
\[(c_{ij})^{\vec{r}}(c_{ij})^{\vec{s}} = \sum_{l=0}^{\text{min}(r_1, s_2)} {r_1 + s_1 -l \choose s_1}{r_2 +
s_2 -l \choose r_2}{r_3 + s_3 +l \choose r_3, s_3, l}
(c_{ij})^{(r_1 + s1 -l, r_2+s_2-1,l)} \] taking care of course to
realize when a given binomial coefficient is or is not zero mod
$p$.  We begin with
\begin{equation*}
\begin{split}
 Y_{(m)} X_{(n)} &= (c_{ij})^{(0,m,0)}(c_{ij})^{(n,0,0)} \\
&= \sum_{l=0}^0 {n \choose 0}{m \choose 0}{l \choose l}
(c_{ij})^{(n-l,m-l,l)} \\
 &= (c_{ij})^{(n,m,0)}
\end{split}
\end{equation*}
and
\begin{equation*}
\begin{split}
 Y_{(m)} X_{(n)} Z_{(k)} &= (c_{ij})^{(n,m,0)} (c_{ij})^{(0,0,k)}
 \\
 &= \sum_{l=0}^0 {r_1 \choose l}{r_2 \choose l}{k+l \choose k,l}
(c_{ij})^{(n-l,m-l,k+l)} \\
 &= (c_{ij})^{(n,m,k)}
\end{split}
\end{equation*}
  Thus we have a
formula for an arbitrary $(c_{ij})$ matrix:
\[ (c_{ij})^{(n,m,k)} = Y_{(m)} X_{(n)} Z_{(k)} \]
We now show that each of $Z_i$ are determined by the $X_i$ and
$Y_i$.  The fundamental relation gives, just as in characteristic
zero
\[ X_0 Y_0 = (c_{ij})^{(1,0,0)}(c_{ij})^{(0,1,0)} = Y_0X_0 + Z_0 \]
showing $Z_0 = [X_0,Y_0]$.  Now assume by induction that $Z_i$ is
determined by the $X_i$ and $Y_i$ for $i < m$, and we have
\begin{equation*}
\begin{split}
X_m Y_m &= (c_{ij})^{(p^m,0,0)}(c_{ij})^{(0,p^m,0)} \\
 &= \sum_{l=0}^{p^m} {p^m -l \choose 0}{p^m - l \choose 0} {l \choose
l} Y_{(p^m-l)} X_{(p^m-l)} Z_{(l)} \\
 &= \left(\sum_{l=0}^{p^m-1} {p^m -l \choose 0}{p^m - l \choose 0}
{l \choose l} Y_{(p^m-l)} X_{(p^m-l)} Z_{(l)}\right) + Z_m
\end{split}
\end{equation*}
 For $l<
p^m$, $Z_{(l)}$ is determined by the $Z_i$ for $i<m$, who in turn,
by induction, are determined by the $X_i$ and $Y_i$. Every term in
the summation is thus determined by the $X_i$ and $Y_i$, hence so
is $Z_m$, the outlying term.  This shows that the entire
representation is determined by the $X_i$ and $Y_i$.

To see that each $X_i$ commutes with every $Z_j$, simply apply the
fundamental relation to both $X_i Z_j$ and $Z_j X_i$, for which
you get the same answer.  Do the same for $Y_i$ and $Z_j$, and
this completes the proof.

\end{proof}

We ask the reader to note that it is \emph{not} generally the case
that $Z_m = [X_m,Y_m]$ for $m > 0$, nor is it the case that $X_i$
and $Y_j$ commute for $i \neq j$ (the author verified this with
several counter-examples which he will not burden you with).
However, we will see now that these relations do in fact hold so
long as $p$ is sufficiently large when compared to the dimension
of a module.

\begin{lem}
\label{H1remark} Suppose that $p$ is greater than twice the
dimension of a module, and that the sum $r + s$ carries.  Then at
least one of $P_{(r)}$ or $Q_{(s)}$ must be zero, where $P$ and
$Q$ can be any of $X$, $Y$ or $Z$.
\end{lem}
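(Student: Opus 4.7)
The plan is to reduce everything to the analysis of $G_a$ already carried out in Theorem~\ref{Gacharptheorem}. Since $H_1$ contains three copies of $G_a$ (one along each coordinate axis), the matrices $\{P_{(r)} : r \geq 0\}$, for any $P \in \{X,Y,Z\}$, satisfy the fundamental relation for $G_a$ in isolation. I will therefore invoke Proposition~\ref{Gacharpprop}: writing $r = r_m p^m + \cdots + r_0$ in $p$-ary form, one has
\[ P_{(r)} = \Gamma(r)^{-1} P_0^{r_0} P_1^{r_1} \cdots P_m^{r_m}, \]
where $P_i = P_{(p^i)}$ and each $P_i$ is $p$-nilpotent.

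The key observation I will then exploit is that, in our setting, $p$-nilpotency can be substantially strengthened. Since $V$ has dimension $n < p/2$, each $P_i$ is a nilpotent $n \times n$ matrix, and every such matrix satisfies $P_i^n = 0$. Consequently, as soon as some $p$-digit $r_i$ of $r$ is at least $n$, the factor $P_i^{r_i}$ in the product above vanishes, forcing $P_{(r)} = 0$. Crucially, this conclusion depends only on the $p$-ary digits of $r$ and not on which of the three axial copies of $G_a$ the symbol $P$ refers to.

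With this in hand, the lemma will follow by an elementary pigeonhole argument. Suppose $r + s$ carries at the $i^{\text{th}}$ digit, so that $r_i + s_i \geq p > 2n$. Then at least one of $r_i$ or $s_i$ must be $\geq n$. If $r_i \geq n$, the observation above forces $P_{(r)} = 0$ regardless of whether $P$ is $X$, $Y$, or $Z$; symmetrically, if $s_i \geq n$, then $Q_{(s)} = 0$ regardless of $Q$. In either case one of the two factors vanishes, as claimed. I do not foresee any serious obstacle: the content of the lemma is essentially this pigeonhole observation combined with the structural formula from Theorem~\ref{Gacharptheorem}, and the fact that the three axial $G_a$'s sit inside $H_1$ independently of each other is precisely what allows $P$ and $Q$ to be chosen independently from $\{X,Y,Z\}$ in the statement.
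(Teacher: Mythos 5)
Your proposal is correct and follows essentially the same route as the paper's proof: both invoke the $G_a$ structure formula $P_{(r)} = \Gamma(r)^{-1} P_0^{r_0} \cdots P_m^{r_m}$ from Proposition~\ref{Gacharpprop}, note that each $P_i$ is nilpotent of order at most the dimension $n < p/2$, and observe that a carry at digit $i$ forces $r_i$ or $s_i$ to be large enough to kill the corresponding factor. The paper phrases the pigeonhole step as ``whence, say, $r_i \geq p/2$'' while you compare directly against $n$; this is an inessential difference.
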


\begin{proof}
The key fact is that since the $X_i$, $Y_i$, and $Z_i$ are all
nilpotent, they are nilpotent of order less than or equal to the
the dimension of the module, which we assume is no greater than
$p/2$. Since the sum $r + s$ carries, we have $r_i + s_i \geq p$
for some $i$, whence, say, $r_i \geq p/2$.  Then
\[ P_{(r)} = \Gamma(r)^{-1}P_0^{r_0} \ldots P_i^{r_i} \ldots
P_i^{r_m} \] is zero, since $P_i^{r_i}$ is.

\end{proof}

\begin{prop}
\label{H1CharpLargeThm1} Suppose $p$ is greater than or equal to
twice the dimension of a module. Then the following relations must
hold: $Z_m = [X_m,Y_m]$ for every $m$, and $X_m Y_n = Y_n X_m$ for
every $m \neq n$.

\end{prop}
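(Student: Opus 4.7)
The plan is to apply the fundamental relation for $H_1$ (equation \ref{H1fundamentalrelation}) directly to the products $X_m Y_n$, and show that nearly every term in the resulting sum vanishes by the nilpotency lemma \ref{H1remark}. For the case $m = n$, the surviving terms will be exactly $Y_m X_m$ and $Z_m$, giving the commutator relation; for $m \neq n$ all but one term will vanish, giving commutativity.

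More explicitly: with $\vec{r} = (p^m, 0, 0)$ and $\vec{s} = (0, p^n, 0)$, the binomial/multinomial coefficients in equation \ref{H1fundamentalrelation} all collapse to $1$, and we obtain
\[ X_m Y_n \;=\; \sum_{l=0}^{\min(p^m,p^n)} Y_{(p^n - l)}\, X_{(p^m - l)}\, Z_{(l)}. \]
First I would observe that for any $0 < l < \min(p^m,p^n)$, the sum $(p^m - l) + l = p^m$ must involve a $p$-digit carry (since both summands have digits only below position $m$ while their sum has a $1$ at position $m$). Lemma \ref{H1remark} then forces the product $X_{(p^m - l)} Z_{(l)}$ to vanish, killing the entire middle of the sum.

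This leaves only the boundary terms $l = 0$ and $l = \min(p^m,p^n)$. When $m = n$, these are $Y_m X_m$ and $Z_m$ respectively, so $X_m Y_m = Y_m X_m + Z_m$, yielding $Z_m = [X_m, Y_m]$. When $m \neq n$, say $m < n$, the $l = 0$ term is $Y_n X_m$ while the $l = p^m$ term is $Y_{(p^n - p^m)} Z_m$. The key observation here is that $p^n - p^m = (p-1) p^{n-1} + (p-1) p^{n-2} + \cdots + (p-1) p^m$, so by the explicit formula for $Y_{(\cdot)}$ given in proposition \ref{Gacharpprop}, $Y_{(p^n - p^m)}$ contains factors $Y_k^{p-1}$; since each $Y_k$ is nilpotent of order at most $\dim \leq p/2 < p - 1$, this factor vanishes and the boundary term dies, giving $X_m Y_n = Y_n X_m$ as desired.

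The main obstacle is really just organizing the bookkeeping of $p$-adic digits to confirm in each case that the relevant sums carry, so that lemma \ref{H1remark} applies; once that is set up, the proof reduces to reading off the two boundary terms of a collapsed sum. No genuine new combinatorial input is needed beyond the fundamental relation and the large-$p$ vanishing lemma already established.
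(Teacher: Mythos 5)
Your proof is correct and follows the same approach as the paper: apply the fundamental relation \ref{H1fundamentalrelation} with $\vec{r}=(p^m,0,0)$, $\vec{s}=(0,p^n,0)$, note that all binomial and multinomial coefficients collapse to $1$, and then use lemma \ref{H1remark} to annihilate the interior terms of the sum via the carry in $(p^m-l)+l=p^m$. The only small divergence is at the endpoint $l=\min(p^m,p^n)$ when $m\neq n$: the paper handles it uniformly with the rest of the sum by observing that $(p^{\max(m,n)}-l)+l$ also carries for every $1\leq l\leq\min(p^m,p^n)$, whereas you split it off as a separate case and compute directly that $Y_{(p^n-p^m)}$ (resp.\ $X_{(p^m-p^n)}$) vanishes because its factors $Y_k^{p-1}$ are zero --- both arguments are valid and rest on the same $p\geq 2\dim$ nilpotency bound.
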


\begin{proof}
Consider the fundamental relation, equation
\ref{H1fundamentalrelation}, applied to $X_m Y_m$:
\[ X_m Y_m =  Y_m X_m + \left(\sum_{l=1}^{p^m-1} {p^m -l \choose 0}{p^m - l \choose
0} {l \choose l} Z_{(l)} Y_{(p^m-l)} X_{(p^m-l)} \right) + Z_m \]
For every $0 < l < p^m$ there is clearly some carrying in
computing the sum $(p^m -l) + l$, so lemma \ref{H1remark} says
that the summation term $Z_{(l)}Y_{(p^m-l)}X_{(p^m-l)}$ is always
zero, since at least one of $Z_{(l)}$ or $Y_{(p^m-l)}$ is zero.
This gives $Z_m = [X_m,Y_m]$ as claimed.

Now let $n \neq m$, and consider the fundamental relation applied
to $X_m Y_n$:
\[ X_m Y_n = Y_n X_m + \left(\sum_{l=1}^{\text{min}(p^n,p^m)} {p^m -l \choose 0}{p^n - l \choose
0} {l \choose l} Z_{(l)} Y_{(p^n-l)} X_{(p^m-l)} \right) \] In
case $m < n$, for every value of $l$ in the above summation, $(p^n
-l) + l$ has digit rollover, again forcing at least one of
$Z_{(l)}$ or $Y_{(p^n-l)}$ to be zero, forcing every term in the
summation to be zero. A similar statement holds in case $n < m$.
This proves $X_m Y_n = Y_n X_m$, as claimed.

\end{proof}

Thus far we have shown that, for $p \geq 2d$, every
$d$-dimensional module must satisfy at least those relations that
representations of $G \times G \times \ldots$ over a field of
characteristic zero must satisfy.  We now show sufficiency.

\begin{lem}
\label{H1lemma2} If $p$ is greater than or equal to twice the
dimension of a module, then for any $r$ and $s$
\[ Z_{(r)} Z_{(s)} = {r+s \choose r} Z_{(r+s)} \]
The same holds if we replace $Z$ with $X$ or $Y$.

\end{lem}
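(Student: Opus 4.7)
The plan is to derive all three identities directly from the fundamental relation for $H_1$ (equation \ref{H1fundamentalrelation}), observing that each is just the specialization to one of the three $G_a$-coordinate subgroups. The hypothesis $p \geq 2\dim$ in fact plays no role in this lemma; it is inherited from the surrounding proposition and will matter only for the more delicate arguments that use this lemma, where carrying in the binomial coefficients genuinely matters. Here, by contrast, the entire content is that each claimed identity is a clean collapse of the fundamental sum to a single term.

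For the $Z$ identity, I would substitute $\vec{r} = (0,0,r)$ and $\vec{s} = (0,0,s)$ into
\[ (c_{ij})^{\vec{r}}(c_{ij})^{\vec{s}} = \sum_{l=0}^{\min(r_1,s_2)}\binom{r_1+s_1-l}{s_1}\binom{r_2+s_2-l}{r_2}\binom{r_3+s_3+l}{r_3,s_3,l}(c_{ij})^{(r_1+s_1-l,\,r_2+s_2-l,\,r_3+s_3+l)}. \]
Since $\min(r_1, s_2) = \min(0,0) = 0$, only the $l = 0$ term contributes; the prefactor becomes $\binom{0}{0}\binom{0}{0}\binom{r+s}{r,s,0} = \binom{r+s}{r}$, and the surviving coordinate triple is $(0,0,r+s)$, so the sum reads $Z_{(r)} Z_{(s)} = \binom{r+s}{r}\, Z_{(r+s)}$. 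The $X$ case uses $\vec{r} = (r,0,0)$, $\vec{s} = (s,0,0)$: again $\min(r_1,s_2) = \min(r,0) = 0$ forces $l = 0$, and the prefactor reduces to $\binom{r+s}{s}\binom{0}{0}\binom{0}{0,0,0} = \binom{r+s}{r}$. The $Y$ case uses $\vec{r} = (0,r,0)$, $\vec{s} = (0,s,0)$, for which $\min(0,s) = 0$ and the prefactor collapses identically.

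The main point, such as there is one, is simply that the fundamental relation is engineered so that cross-coordinate interactions arise only through positive values of $l$, and in each of the three substitutions above the pair of tuples has either $r_1$ or $s_2$ equal to zero, forcing $l = 0$. There is no real obstacle; the lemma is pure bookkeeping and sets up the notation $X_{(r)}$, $Y_{(r)}$, $Z_{(r)}$ for use in the subsequent comparisons with the characteristic-zero identities of theorem \ref{H1CharZeroThm}.
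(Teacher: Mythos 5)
Your algebra is correct: each of the three specializations of equation \ref{H1fundamentalrelation} does collapse the sum to the single $l = 0$ term, with prefactor ${r+s \choose r}$. The difficulty is the appeal to equation \ref{H1fundamentalrelation} itself. In this part of the paper, $Z_{(r)}$ denotes the explicit assignment $Z_{(r)} = \Gamma(r)^{-1} Z_0^{r_0}\cdots Z_m^{r_m}$ built from a candidate family of commuting nilpotent matrices, and the role of this lemma is to feed the sufficiency proof of theorem \ref{H1CharpLargeThm2}, where the goal is precisely to \emph{verify} that these assignments satisfy \ref{H1fundamentalrelation}. Invoking that relation here is therefore circular. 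The paper's own argument (like the proof of the auxiliary lemma \ref{H1remark}) works directly from the defining formula for $Z_{(r)}$, which is the only information legitimately available at this point.

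Your dismissal of the hypothesis $p \geq 2\dim$ falls for the same reason. When the sum $r + s$ carries, the right-hand side ${r+s \choose r} Z_{(r+s)}$ vanishes modulo $p$ by Lucas' theorem, so one must separately prove that $Z_{(r)} Z_{(s)} = 0$, and this is exactly where the dimension bound enters: it forces each $Z_i$ to be $p$-nilpotent (nilpotent order at most $\dim \leq p/2 < p$), which is what kills the product. The paper's proof splits accordingly: the non-carrying case is a direct check from the $\Gamma$-formula and needs no hypothesis on $p$; the carrying case invokes lemma \ref{H1remark}, which does. If you want a one-step argument of the kind you were reaching for, the legitimate version routes through $G_a$, not $H_1$: since $p \geq 2\dim$ makes the commuting nilpotent $Z_i$ $p$-nilpotent, proposition \ref{Gacharpprop} says the assignment $Z_{(n)} = \Gamma(n)^{-1}Z_0^{n_0}\cdots Z_m^{n_m}$ defines a representation of $G_a$, and equation \ref{Gafundamentalrelation} then delivers the identity; the same works with $X$ or $Y$ in place of $Z$.
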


\begin{proof}
In case the sum $r + s$ does not carry, we know from our previous
work with $G_a$ (or direct verification) that the equation is
true, just by checking the assignments of the $Z_{(i)}$ in terms
of the $Z_{i}$ (This is true even without the hypothesis that $p$
be large).  If on the other hand the sum does carry, then the
binomial coefficient on the right is zero by corollary
\ref{Lucas'Cor}. But so is the product on the left, by lemma
\ref{H1remark}.
\end{proof}

We can now prove

\begin{thm}
\label{H1CharpLargeThm2} Suppose $p \geq 2d$.  Let $X_i$, $Y_i$
and $Z_i$ be a finite sequence of $d \times d$ matrices satisfying

\begin{enumerate}
\item{The $X_i$, $Y_i$, and $Z_i$ are all nilpotent} \item{$Z_i
=[X_i,Y_i]$ for every $i$} \item{$[X_i,Z_i] = [Y_i,Z_i] = 0$ for
every $i$} \item{For every $i \neq j$, $X_i,Y_i,Z_i$ all commute
with $X_j,Y_j,Z_j$}

\end{enumerate}
Let $n = n_m p^m + n_{m-1} p^{m-1} + \ldots + n_1p+n_0$, and
assign
\[ X_{(n)} = \Gamma(n)^{-1} X_m^{n_m} \ldots X_0^{n_0} \]
and similarly for $Y_{(n)}$ and $Z_{(n)}$.  Set
\[ (c_{ij})^{(n,m,k)} = Z_{(k)}Y_{(m)}X_{(n)} \]
Then these assignments define a valid $d$-dimensional
representation of $G$ over $k$.

\end{thm}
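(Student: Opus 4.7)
The plan is to verify the two conditions defining a valid representation: $(c_{ij})^{(0,0,0)} = I$, and the fundamental relation \ref{H1fundamentalrelation}. The former is immediate, since $(c_{ij})^{(0,0,0)} = Z_{(0)} Y_{(0)} X_{(0)} = I \cdot I \cdot I$. For the latter, I first reduce the full six-index identity to a single key identity in only two variables. Using hypotheses 3 and 4, every $Z_{(k)}$ commutes with every $X_{(n)}$ and every $Y_{(m)}$, so in the product $Z_{(r_3)} Y_{(r_2)} X_{(r_1)} Z_{(s_3)} Y_{(s_2)} X_{(s_1)}$ the two $Z$-factors slide together, contributing $\binom{r_3+s_3}{r_3} Z_{(r_3+s_3)}$ via Lemma \ref{H1lemma2}. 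After expanding the inner $X_{(r_1)} Y_{(s_2)}$ as in the key identity below, the outer $Y$-products and $X$-products may again be collapsed via Lemma \ref{H1lemma2}, and the combination $\binom{r_3+s_3}{r_3}\binom{r_3+s_3+l}{l}$ rearranges into the multinomial $\binom{r_3+s_3+l}{r_3,s_3,l}$ appearing in the RHS of \ref{H1fundamentalrelation}. So everything reduces to proving
\begin{equation}
X_{(n)} Y_{(m)} = \sum_{l=0}^{\min(n,m)} Z_{(l)} Y_{(m-l)} X_{(n-l)}. \tag{$\ast$}
\end{equation}

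To prove $(\ast)$, I factor across Frobenius levels. Hypothesis 4 lets us commute $X_i$ past $Y_j$ whenever $i \neq j$, so
\[
X_{(n)} Y_{(m)} = \Gamma(n)^{-1}\Gamma(m)^{-1} \prod_i X_i^{n_i} Y_i^{m_i},
\]
with the level-$i$ factors pairwise commuting. Within a single level, $X_i Y_i = Y_i X_i + Z_i$ (hypothesis 2) together with $Z_i$ central among $\{X_i, Y_i, Z_i\}$ (hypothesis 3) reproduces the algebraic setup of the characteristic zero proof in Theorem \ref{H1CharZeroThm}. The universal identity proven there,
\[
X_i^{n_i} Y_i^{m_i} = \sum_{l_i=0}^{\min(n_i,m_i)} l_i!\binom{n_i}{l_i}\binom{m_i}{l_i} Z_i^{l_i} Y_i^{m_i-l_i} X_i^{n_i-l_i},
\]
is an integer-coefficient identity valid in any ring. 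Substituting, the cross-product over $i$ expands into a sum indexed by tuples $(l_0, l_1, \ldots)$, which I re-index by the integer $l = \sum_i l_i p^i$ having those digits. By Lucas (Corollary \ref{Lucas'Cor}), $\prod_i l_i!\binom{n_i}{l_i}\binom{m_i}{l_i}$ equals $\Gamma(l)\binom{n}{l}\binom{m}{l}$; combined with the reassembly $\prod_i Z_i^{l_i} = \Gamma(l) Z_{(l)}$ and analogous relations for $Y$ and $X$, a short arithmetic shuffle produces coefficient exactly $1$ for each $l$ with $\binom{n}{l}\binom{m}{l}\not\equiv 0 \pmod p$, matching $(\ast)$ on those terms.

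The main obstacle is the remaining terms of $(\ast)$: those $l \in [0,\min(n,m)]$ with $\binom{n}{l}\binom{m}{l}\equiv 0 \pmod p$, which my level-by-level derivation does not produce. I must show each such term already vanishes, i.e.~$Z_{(l)} Y_{(m-l)} X_{(n-l)} = 0$. Here is where $p \geq 2d$ enters essentially. If $\binom{n}{l}\equiv 0 \pmod p$, then the addition $l + (n-l) = n$ carries at some digit $i$, so $l_i + (n-l)_i \geq p \geq 2d$; by pigeonhole either $l_i \geq d$ or $(n-l)_i \geq d$. Since each $Z_j$ and each $X_j$ is nilpotent of order $\leq d$, the former forces $Z_i^{l_i} = 0$ (killing $Z_{(l)}$) while the latter forces $X_i^{(n-l)_i} = 0$ (killing $X_{(n-l)}$). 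The case $\binom{m}{l}\equiv 0 \pmod p$ is handled symmetrically using nilpotency of $Y$. All remaining pieces are routine bookkeeping with $\Gamma$-values, binomials, and Lucas' theorem.
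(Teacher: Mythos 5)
Your proof is correct, and your treatment of the key identity $(\ast)$ (which is exactly the paper's equation \ref{H1charpProofEq}) genuinely departs from the paper's argument. The reduction from the full six-index fundamental relation down to $(\ast)$ is essentially the same in both treatments: both use Lemma \ref{H1lemma2} to fuse the $Z$-, $Y$-, and $X$-products and observe that $\binom{r_3+s_3}{r_3}\binom{r_3+s_3+l}{l}$ reassembles into the multinomial. But from there the routes diverge. The paper proves $(\ast)$ by an induction on the number of $p$-digits of $n$ and $m$, with a base case borrowed from the characteristic-zero argument and a fairly elaborate inductive step involving a re-indexing $L = l + l'p^k$. You instead factor $X_{(n)}Y_{(m)}$ across Frobenius levels directly, apply the universal integer-coefficient identity $X^nY^m = \sum_l l!\binom{n}{l}\binom{m}{l} Z^l Y^{m-l}X^{n-l}$ to each level independently, re-index by $l = \sum_i l_i p^i$, and invoke Lucas for the coefficient bookkeeping. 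This is non-inductive, shorter, and — importantly — makes completely explicit where $p \geq 2d$ is used: it kills the terms of $(\ast)$ with $\binom{n}{l}\binom{m}{l} \equiv 0 \pmod p$ that the level-by-level expansion does not produce. The paper's inductive re-indexing step asserts that the double summation over $(l,l')$ ``is really a single summation running from $0$ to $\min(n',m')$'' without comment, which quietly depends on the very same vanishing phenomenon you spell out with the pigeonhole argument ($l_i + (n-l)_i \geq p \geq 2d$ forces one factor to die); your version makes that dependence visible. In short, your argument buys a cleaner global structure and a sharper account of the role of the hypothesis $p \geq 2d$, at the cost of having to verify that the missing summands really do vanish — a verification the paper never quite writes down.
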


\begin{proof}

For arbitrary $n,m,k,r,s$ and $t$, the equation we must verify is
\begin{gather*}
(c_{ij})^{(n,m,k)}(c_{ij})^{(r,s,t)} \\
= \sum_{l=0}^{\text{min}(n,s)} {n+r-l \choose r}{m+s-l \choose
m}{k+t+l \choose k,t,l} (c_{ij})^{(n+r-l,m+s-l,k+t+l)}
\end{gather*}
 which, with
the given assignments and assumptions, can be written
\begin{gather*}
 Z_{(k)}Z_{(t)}Y_{(m)}X_{(n)} Y_{(s)} X_{(r)} \\
 = \sum_{l=0}^{\text{min}(n,s)} {n+r-l \choose r}{m+s-l \choose
m}{k+t+l \choose k,t,l} Z_{(k+t+l)} Y_{(m+s-l)} X_{(n+r-l)}
\end{gather*}
Lemma \ref{H1lemma2} gives the identities
\begin{equation*}
\begin{split}
 Z_{(k)} Z_{(t)} &= {k+t \choose t} Z_{(k+t)} \\
 Y_{(m)}Y_{(s-l)}  &= {m+s-l \choose m}Y_{(m+s-l)} \\
X_{(n-l)} X_{(r)}  &= {n+r-l \choose r} X_{(n+r-l)}
\end{split}
\end{equation*}
 so we can
rewrite our equation as
\[ {k+t \choose t} Z_{(k+t)}Y_{(m)}X_{(n)} Y_{(s)} X_{(r)} = \sum_{l=0}^{\text{min}(n,s)} {k+t+l \choose k,t,l} Z_{(k+t+l)}
Y_{(m)}Y_{(s-l)} X_{(n-l)} X_{(r)} \] First suppose that the sum
$k+t$ carries. In this case the equation is true, since the left
hand side
 binomial coefficient vanishes, and the right hand side multinomial coefficient
 vanishes for every $l$, causing both sides to be zero.  We assume
 then that $k+t$ does not carry, so we can divide both sides
 by ${k+t \choose t}$ to yield
 \[ Z_{(k+t)}Y_{(m)}X_{(n)} Y_{(s)} X_{(r)} = \sum_{l=0}^{\text{min}(n,s)} {k+t+l \choose l} Z_{(k+t+l)}
Y_{(m)}Y_{(s-l)} X_{(n-l)} X_{(r)} \] Now apply ${k+t+l \choose
l}Z_{(k+t+l)} = Z_{(k+t)} Z_{(l)}$:
 \[ Z_{(k+t)}Y_{(m)}X_{(n)} Y_{(s)} X_{(r)} = \sum_{l=0}^{\text{min}(n,s)} Z_{(k+t)} Z_{(l)} Y_{(m)}Y_{(s-l)}
X_{(n-l)} X_{(r)} \] We have $Z_{(k+t)}$ in the front and
$X_{(r)}$ in the rear of both sides, so it suffices to show
 \[ Y_{(m)}X_{(n)} Y_{(s)} = \sum_{l=0}^{\text{min}(n,s)} Z_{(l)} Y_{(m)}Y_{(s-l)} X_{(n-l)}
\]
and since $Y_{(m)}$ commutes with $Z_{(l)}$, we can move it to the
front of the right hand side, and then take it off both sides, so
it suffices to show
\begin{equation}
\label{H1charpProofEq} X_{(n)} Y_{(m)} =
\sum_{l=0}^{\text{min}(n,m)} Z_{(l)} Y_{(m-l)} X_{(n-l)}
\end{equation}
where we have replaced $s$ with the more traditional $m$.

Now we begin to replace the $X_{(i)}'s$ with their definitions in
terms of the $X_i's$, similarly for $Y$ and $Z$, so that the left
hand side of equation \ref{H1charpProofEq} is
\[ \left[\Gamma(n) \Gamma(m)\right]^{-1}X_0^{n_0} \ldots
X_k^{n_k}Y_0^{m_0} \ldots Y_k^{m_k} \] and since everything
commutes except $X_i$ and $Y_j$ when $i = j$, we can write
\[ \left[\Gamma(n) \Gamma(m)\right]^{-1}(X_0^{n_0}Y_0^{m_0}) \ldots
(X_k^{n_k}Y_k^{m_k}) \] Moving all coefficients to the right, we
must show
\[ (X_0^{n_0}Y_0^{m_0}) \ldots
(X_k^{n_k}Y_k^{m_k}) = \Gamma(n) \Gamma(m)
\sum_{l=0}^{\text{min}(n,m)} Z_{(l)} Y_{(m-l)} X_{(n-l)} \] We
proceed by induction on $k$, maximum number of $p$-digits of
either $m$ or $n$. If $k=0$ the equation is
\begin{equation*}
\begin{split}
X_0^{n_0}Y_0^{m_0} &= n_0! m_0! \sum_{l=0}^{\text{min}(n_0,m_0)}
Z_{(l)} Y_{(m_0-l)} X_{(n_0 -l)} \\
 &= \sum_{l=0}^{\text{min}(n_0,m_0)}
\frac{n_0!m_0!}{(m_0-l)!(n_0-l)!l!} Z_0^l Y_0^{m_0-l} X_0^{n_0-l}
\\
 &= \sum_{l=0}^{\text{min}(n_0,m_0)} l!{ n_0 \choose l}{ m_0 \choose
l}  Z_0^l Y_0^{m_0-l} X_0^{n_0-l}
\end{split}
\end{equation*}
The reader may recall that this was exactly the equation to be
verified halfway through the proof of theorem \ref{H1CharZeroThm}
in the characteristic zero case for $X$, $Y$ and $Z$.  Nowhere in
that section of the proof did we use the characteristic of the
field; the same hypotheses hold here for $X_0,Y_0$ and $Z_0$, and
the proof goes through just the same, so we do not repeat it. Now
suppose the equation is true when $n$ and $m$ have no more than
$k-1$ digits. Let $n=n_{k-1}p^{k-1} + \ldots + n_0$ and let
$n^\prime = n_{k}p^{k} + n_{k-1} p^{k-1} + \ldots + n_0$, and
similarly for $m$. Then by induction we have
\begin{equation*}
\begin{split}
\Gamma(n^\prime)& \Gamma(m^\prime) X_{(n^\prime)} Y_{(m^\prime)} =
\left[(X_0^{n_0}Y_0^{m_0}) \ldots
(X_{k-1}^{n_{k-1}}Y_{k-1}^{m_{k-1}})\right](X_{k}^{n_{k}}Y_{k}^{m_{k}})
 \\
 &= \left(\Gamma(n) \Gamma(m) \sum_{l=0}^{\text{min}(n,m)} Z_{(l)}
Y_{(m-l)} X_{(n-l)}\right)  \left(
\sum_{l^\prime=0}^{\text{min}(n_{k},m_{k})} l^\prime!{ n_{k}
\choose l^\prime}{ m_{k} \choose l^\prime}  Z_{k}^{l^\prime}
Y_{k}^{m_{k}-l^\prime} X_{k}^{n_{k}-l^\prime} \right) \\
 &= n_k!\Gamma(n)m_k!\Gamma(m) \sum_{l,l^\prime} \left(\frac{Z_{(l)}
Z_k^{l^\prime}}{l^\prime!} \right) \left(\frac{Y_{(m-l)}
Y_k^{m_k-l^\prime}}{(m_k-l^\prime)!} \right) \left(\frac{X_{(n-l)}
X_k^{n_k-l^\prime}}{(n_k-l^\prime)!} \right)
\end{split}
\end{equation*}
Note that these divisions are valid, since for every value of
$l^\prime$ in the summation, $l^\prime \leq m_k,n_k < p$.  Note
also that, since $l \leq p^{k-1}$ and $l^\prime < p$ for all
values of $l,l^\prime$ in the summation, Lucas' theorem gives that
${ l + l^\prime p^k \choose l} = 1$ for all such $l$ and
$l^\prime$.  For similar reasons we have ${(m-l)+(m_k-l^\prime)p^k
\choose m-l} = {(n-l)+(n_k - l^\prime)p^k \choose n-l} = 1$.  Then
we have the identities
\begin{gather*}
 n_k!\Gamma(n) = \Gamma(n^\prime)  \hspace{1cm} \hspace{1cm}  m_k!\Gamma(m) =
\Gamma(m^\prime) \\
 \frac{Z_{(l)} Z_k^{l^\prime}}{l^\prime!} = Z_{(l)}Z_{(l^\prime
p^k)} = {l+l^\prime p^k \choose l} Z_{(l+l^\prime p^k)} =
Z_{(l+l^\prime p^k)} \\
 \frac{Y_{(m-l)} Y_k^{m_k-l^\prime}}{(m_k-l^\prime)!} =
Y_{(m-l)}Y_{((m_k-l^\prime)p^k)} = {(m-l)+(m_k-l^\prime)p^k
\choose m-l} Y_{((m+m_kp^k)-(l+l^\prime)p^k)} =
Y_{(m^\prime-(l+l^\prime p^k))}
\end{gather*}
 and similarly
\[ \frac{X_{(n-l)} X_k^{n_k-l^\prime}}{(n_k-l^\prime)!} =
X_{(n^\prime-(l+l^\prime p^k))} \] These substitutions transform
the right hand side of our equation into
\[ = \Gamma(n^\prime) \Gamma(m^\prime) \sum_{l,l^\prime}
Z_{(l+l^\prime p^k)} Y_{(m^\prime-(l+l^\prime p^k))}
X_{(n^\prime-(l+l^\prime p^k))} \] But, if we look at the
summation limits of $l = 0 \ldots \text{min}(n,m)$ and $l^\prime =
0 \ldots \text{min}(n_k,m_k)$, we see that it is really a single
summation running from $0$ to $\text{min}(n^\prime,m^\prime)$,
with $l+l^\prime p^k$ as the summation variable. That is
\[ = \Gamma(n^\prime) \Gamma(m^\prime)
\sum_{l=0}^{\text{min}(n^\prime,m^\prime)} Z_{(l)}
Y_{(m^\prime-l)} X_{(n^\prime-l)} \] which finally gives
\[ X_{(n^\prime)} Y_{(m^\prime)} = \sum_{l=0}^{\text{min}(n^\prime,m^\prime)} Z_{(l)}
Y_{(m^\prime-l)} X_{(n^\prime-l)} \] as required. This completes
the proof.

\end{proof}

\section{Baker-Campbell-Hausdorff Formula For $H_1$ in Positive Characteristic}

There is a much more compact way to state all of this.  What we
have really recovered is, in characteristic zero, the familiar
Baker-Campbell-Hausdorff formula for the group $H_1$, and for
characteristic $p >> \text{dimension}$, something very close to
it.

In theorem 3.1 of \cite{hall} it is proven that, if $X,Y$ and $Z$
are matrices over $\mathbb{R}$ such that $Z = [X,Y]$ and $[Z,X] =
[Z,Y] = 0$, then
\[ e^X e^Y = e^{X + Y + \frac{1}{2}Z} \]
Our first step is to extend this result to the case of a field of
sufficiently large characteristic when compared to the dimensions
of the matrices $X,Y$ and $Z$, and under the additional hypothesis
that they be nilpotent.  The proof we give is an almost exact
replica of that given in \cite{hall}; the only difference is that
we replace the notion of derivative with `formal derivative' of
polynomials.

For the remainder, by a polynomial $f(t)$, we shall mean a
polynomial in the commuting variable $t$ with coefficients which
are matrix expressions among the matrices $X$,$Y$ and $Z$ over a
given field; for example, $f(t) = XY + 2(Z-YX)t +
\frac{Y}{2}t^2-t^3$. We define the \textbf{formal derivative} of
$f(t)$ in the usual manner; for example, $f^\prime(t) = 2(Z-YX) +
Yt - 3t^2$. Then the following facts hold just as well for formal
differentiation as they do for standard differentiation.

\begin{lem}
\label{formaldifferenationforeverLemma} Let $f(t)$, $g(t)$ be
polynomials, and suppose that the field is either of
characteristic zero, or of positive characteristic greater than
the degrees of both $f(t)$ and $g(t)$.

\begin{enumerate}

\item{(product rule) $(fg)^\prime = f^\prime g + f g^\prime$}
\item{(uniqueness of antiderivatives) If $f^\prime(t) =
g^\prime(t)$, and if $f(0) = g(0)$, then $f(t) = g(t)$}
\item{(uniqueness of solutions to differential equations) Let $M$
be some matrix expression among $X$,$Y$ and $Z$.  Then if
$f^\prime(t) = M f(t)$, and if $g^\prime(t) = M g(t)$, and if
$f(0) = g(0)$, then $f(t) = g(t)$}

\end{enumerate}
\end{lem}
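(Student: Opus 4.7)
The plan is to handle the three parts separately, with parts (2) and (3) being the ones that genuinely use the hypothesis on the characteristic. Throughout, I will exploit the fact that the polynomials in question have degree strictly less than $p$ (in the positive characteristic case), so that every integer $i$ with $1 \le i \le \max(\deg f, \deg g)$ is invertible in the field.

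For part (1), my plan is to verify the product rule by a direct computation on monomials and then extend by linearity of formal differentiation. If $f(t) = At^n$ and $g(t) = Bt^m$, then $(fg)(t) = ABt^{n+m}$, so $(fg)'(t) = (n+m)AB\, t^{n+m-1}$, while $f'(t)g(t) + f(t)g'(t) = nAB\, t^{n+m-1} + mAB\, t^{n+m-1}$, and these agree as formal identities in any ring (since $(n+m)\cdot AB = nAB + mAB$). Bilinearity of multiplication and linearity of the formal derivative then extend this to arbitrary polynomials. This step does not actually use the characteristic hypothesis, but stating it within the same lemma is harmless.

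For part (2), I would set $h(t) = f(t) - g(t)$, so that $h'(t) = 0$ and $h(0) = 0$. Writing $h(t) = \sum_{i=0}^{N} c_i t^i$ where $c_i$ are matrix expressions and $N \le \max(\deg f, \deg g)$, the equation $h'(t) = 0$ forces $i\, c_i = 0$ for every $1 \le i \le N$. By hypothesis $N < p$ (or the field has characteristic zero), so each such $i$ is invertible, giving $c_i = 0$; combined with $c_0 = h(0) = 0$, this yields $h \equiv 0$.

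For part (3), the plan is essentially the same reduction: set $h(t) = f(t) - g(t)$, so that $h'(t) = Mh(t)$ and $h(0) = 0$. Writing $h(t) = \sum_{i=0}^{N} c_i t^i$ and matching coefficients in the equation $h'(t) = Mh(t)$ produces the recursion $(i+1)\, c_{i+1} = M c_i$ for $0 \le i \le N-1$. Starting from $c_0 = 0$, induct on $i$, using invertibility of $i+1$ (which requires $i+1 \le N < p$) to conclude $c_{i+1} = 0$ at every stage. The main (and really only) obstacle is making sure the bookkeeping on degrees is honest: I must observe that $\deg h \le \max(\deg f, \deg g) < p$, so every integer appearing as a coefficient in the recursion is indeed invertible in the field. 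Once that is in place, the arguments in (2) and (3) mirror the usual characteristic zero proofs verbatim.
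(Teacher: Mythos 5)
Your proposal is correct and takes essentially the same route as the paper: in all three parts you match coefficients degree by degree and invoke invertibility of the integers $1,\ldots,N$ in the field, which is exactly what the paper does. The only cosmetic difference is that you first subtract to form $h = f - g$ and argue $h \equiv 0$, whereas the paper compares the coefficients of $f$ and $g$ directly; the underlying computation is identical.
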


Remark: the assumption that $\text{char}(k) > \text{degree}$ is
essential.  For example, in characteristic $2$, the derivatives of
the polynomials $t^2$ and $0$ are both zero, and they are both
zero when evaluated at $t=0$, but they are obviously not
themselves equal.

\begin{proof}
1.~is true even without any hypothesis on the characteristic. Let
$f= \sum_{k=0}^m a_k t^k$, $g = \sum_{k=0}^m b_k t^k$, where the
$a_i,b_i$ are matrix expressions in $X,Y$ and $Z$.  Then
\begin{equation*}
\begin{split}
(fg)^\prime &= \left[\left(\sum_{k=0}^m a_k t^k\right)\left(
\sum_{l=0}^m b_l t^l\right) \right]^\prime \\
 &=
\left[\sum_{r=0}^{2m} \left(\sum_{k+l=r} a_k b_l \right) t^r
\right]^\prime \\
 &= \sum_{r=0}^{2m} r \left(\sum_{k+l=r} a_k b_l
\right) t^{r-1} \\
 &= \sum_{r=1}^{2m} r \left( \sum_{k+l=r} a_k b_l \right) t^{r-1} \\
 &= \sum_{r=0}^{2m-1} (r+1) \left(\sum_{k+l=r+1} a_k b_l \right) t^r
\end{split}
\end{equation*}
and
\begin{equation*}
\begin{split}
 f^\prime g + f g^\prime &= \left( \sum_{k=0}^m k a_k t^{k-1}
\right) \left( \sum_{l=0}^m b_l t^l \right) + \left( \sum_{k=0}^m
a_k t^k \right) \left( \sum_{l=0}^m l b_l t^{l-1} \right) \\
  &= \sum_{k,l=0}^m \left(k a_k b_l \right) t^{k-1+l} +
\sum_{k,l=0}^m \left(l a_k b_l \right) t^{k-1+l} \\
&= \sum_{k,l=0}^m (k+l)(a_k b_l) t^{k+l-1} \\
 &= \sum_{r=0}^{2m-1} \left(\sum_{k+l-1=r} (k+l) a_k b_l \right)
 t^r \\
&=\sum_{r=0}^{2m-1} (r+1) \left( \sum_{k+l = r+1} a_k b_l \right)
t^r
\end{split}
\end{equation*}
 which proves 1.

For 2., let $f$ and $g$ be as before.  To say that $f^\prime =
g^\prime$ is to say that $n a_n = n b_n, (n-1) a_{n-1} = (n-1)
b_{n-1}, \ldots, a_1 = b_1$, and to say that $f(0) = g(0)$ is to
say that $a_0 = b_0$.  Under the given hypotheses all of $n,n-1,
\ldots, 1$ are invertible, which forces $a_n = b_n$, $a_{n-1} =
b_{n-1}$, $\ldots a_1 = b_1$ and $a_0 = b_0$, whence $f=g$. This
proves 2.

For 3., suppose $f^\prime = M f$ and $g^\prime = M g$.  Then by
matching coefficients for the various powers of $t$ this forces
the equalities
\[
\begin{array}{cc}
  M a_n = 0 & M b_n = 0\\
  n a_n = M a_{n-1} & n b_n = M b_{n-1} \\
  (n-1) a_{n-1} =M a_{n-2}  & \hspace{1cm} (n-1) b_{n-1} =M b_{n-2} \\
  \vdots & \vdots \\
  2a_2 = M a_1 & 2b_2 = M b_1 \\
  a_1 = M a_0 & b_1 = M b_0 \\
\end{array}%
\]
$f(0)=g(0)$ again forces $a_0 = b_0$.  Noting again that all of
$n,n-1, \ldots, 1$ are invertible, we can work backwards to see
that $a_1 = M a_0 = M b_0 = b_1$, that $a_2 = \frac{1}{2} M a_1 =
\frac{1}{2} M b_1 = b_2$, $\ldots$, $a_n = \frac{1}{n} M a_{n-1} =
\frac{1}{n} M b_{n-1} = b_n$, whence $f = g$.  This proves 3.

\end{proof}

\begin{lem}

\label{exponentialsAndWhatnotLemma}

Let $X$ and $Y$ be commuting nilpotent matrices over a field $k$
such that $k$ is of characteristic zero, or of positive
characteristic greater than or equal to the dimension of $X$ and
$Y$. Then

\begin{enumerate}

\item{$\left(e^{tX}\right)^\prime = e^{tX} X$}
 \item{$\left(e^{t^2
X}\right)^\prime = e^{t^2X}(2tX)$}
 \item{$e^X e^Y = e^{X+Y}$}

\end{enumerate}
\end{lem}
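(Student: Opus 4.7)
Parts (1) and (2) are direct term-by-term formal differentiations of truncated exponential series, treating $e^{tX}$ and $e^{t^2X}$ as polynomials in $t$. Since $X$ is $n\times n$ nilpotent, $X^k=0$ for $k\geq n$, so both series terminate, and under the characteristic hypothesis each denominator $k!$ with $k\leq n-1$ is a unit. For (1), differentiating $\sum_{k=0}^{n-1} t^k X^k/k!$ and re-indexing produces $X\,e^{tX}$, which equals $e^{tX}X$ because $X$ commutes with each of its own powers. For (2), the analogous exercise on $\sum_{k=0}^{n-1} t^{2k} X^k/k!$ yields $(2tX)\,e^{t^2X}$, where a stray top-degree term disappears because $X^n=0$. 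I would settle (1) and (2) first, since (1) is the engine for (3).

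For (3) my plan is to prove the stronger polynomial identity $e^{tX}e^{tY}=e^{t(X+Y)}$ via the uniqueness-of-ODE-solutions clause in lemma \ref{formaldifferenationforeverLemma}, and then specialize to $t=1$. Set $f(t)=e^{tX}e^{tY}$ and $g(t)=e^{t(X+Y)}$. Applying part (1) and the product rule gives
\[
f'(t) \;=\; e^{tX}\,X\,e^{tY} + e^{tX}e^{tY}\,Y \;=\; e^{tX}e^{tY}(X+Y) \;=\; (X+Y)f(t),
\]
where the commutativity of $X$ and $Y$ is invoked twice: to slide $X$ past $e^{tY}$, and to slide $X+Y$ past $f(t)$. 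Applying part (1) to $X+Y$ (itself $n\times n$ nilpotent, since commuting nilpotents sum to a nilpotent) produces $g'(t)=(X+Y)g(t)$, and clearly $f(0)=g(0)=I$. Lemma \ref{formaldifferenationforeverLemma}(3) then forces $f\equiv g$, and evaluating at $t=1$ finishes.

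The main obstacle is verifying the degree hypothesis of lemma \ref{formaldifferenationforeverLemma}, which demands that $\mathrm{char}(k)$ strictly exceed the $t$-degrees of $f$ and $g$. The naive product expansion of $f(t)$ has $t$-degree $2(n-1)$, which would require $p>2n-2$ and is \emph{not} covered by the hypothesis $p\geq n$. The resolution is the following observation: since $X$ and $Y$ commute and are both nilpotent, they are simultaneously strictly upper triangulable in a common basis, whence $X^aY^b$ is supported on the $(a+b)$-th and higher superdiagonals and therefore vanishes whenever $a+b\geq n$. This collapses $f(t)$ to a polynomial of $t$-degree at most $n-1$, matching $g(t)$ and fitting squarely inside the hypothesis $p\geq n$. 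Once this nilpotent-vanishing fact is in hand, the rest of the argument is formal, and I expect no further surprises.
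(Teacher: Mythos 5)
Your argument is correct and follows essentially the same route as the paper: parts (1) and (2) by term-by-term formal differentiation, and part (3) by proving the polynomial identity $e^{tX}e^{tY}=e^{t(X+Y)}$ via the ODE-uniqueness clause of lemma~\ref{formaldifferenationforeverLemma} and specializing to $t=1$.

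You have, however, been more careful than the paper on one point, and it is worth flagging. The paper's proof of part (3) invokes the simultaneous strict upper triangulation of $X$ and $Y$ only to conclude that $X+Y$ is nilpotent, and then cites lemma~\ref{formaldifferenationforeverLemma}(3) without explicitly checking its degree hypothesis. As you observe, the naive $t$-degree of $f(t)=e^{tX}e^{tY}$ is $2(n-1)$, which the hypothesis $p\geq n$ does \emph{not} cover once $n\geq 2$. Your fix --- that simultaneous strict upper triangulation forces $X^aY^b=0$ whenever $a+b\geq n$, collapsing $\deg_t f$ to at most $n-1$ and hence bringing $p\geq n > n-1$ within reach of the lemma --- is exactly what is needed and closes the gap cleanly. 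The same observation also quietly handles the paper's trick of ``tacking on'' the vanishing top-degree term in parts (1) and (2). So your proposal is not merely correct; it is a more complete version of the paper's own argument.
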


Remark: the first two are obvious corollaries to the usual chain
rule for differentiation, but the chain rule is in general
\emph{not} valid for polynomials in non-commuting coefficients. It
is convenient for our purposes just to treat these cases
separately.

\begin{proof}
We note firstly that, if $\text{char}(k) = p \geq \text{dim}$,
then all of the above expressions make sense, since their series
expansions will vanish before we get to see denominators divisible
by $p$. For 1., compute:
\begin{equation*}
\begin{split}
(e^{tX})^\prime &= \left(1 + tX + \frac{t^2 X^2}{2!} + \ldots +
\frac{t^n X^n}{n!}\right)^\prime \\
&= X + \frac{2tX^2}{2!} + \ldots + \frac{n t^{n-1} X^n}{n!} \\
 &= X \left(1+tX + \ldots + \frac{t^{n-1} X^{n-1}}{(n-1)!} +
\frac{t^n X^n}{n!}\right) \\
&= X e^{tX}
\end{split}
\end{equation*}
 Note that, in the second to last expression, we are
justified in tacking on the term $\frac{t^nX^n}{n!}$ since
multiplication by $X$ will annihilate it anyway.  This proves 1.

For 2., compute again:
\begin{equation*}
\begin{split}
 (e^{t^2X})^\prime &= \left(1+t^2X+ \frac{t^4 X^2}{2!} + \ldots +
\frac{t^{2n} X^n}{n!} \right)^\prime \\
 &= 2tX+ \frac{4t^3 X^2}{2!} + \ldots + \frac{2n t^{2n-1} X^n}{n!}
 \\
 &= 2tX\left(1 + \frac{2t^2X}{2!} + \ldots + \frac{n t^{2(n-1)}
X^n}{n!}\right) \\
 &= 2tX \left(1+ t^2 X + \frac{t^4 X^2}{2!} + \ldots +
\frac{t^{2(n-1)} X^{n-1}}{(n-1)!} + \frac{t^{2n} X^n}{n!} \right)
\\
 &= 2tX e^{t^2X}
\end{split}
\end{equation*}
  where, again, in the second to last expression,
we are justified in tacking on the term $\frac{t^{2n} X^n}{n!}$
since $X$ will annihilate it anyhow.  This proves 2.

For 3., we shall prove that $e^{tX} e^{tY} = e^{t(X+Y)}$ as
polynomials; evaluating at $t=1$ gives the desired result.  Note
that the right hand side is defined; if $X$ and $Y$ commute, they
can be put in simultaneous upper triangular form, and so $X+Y$ is
nilpotent.  By 3.~of lemma \ref{formaldifferenationforeverLemma},
since they are equal when evaluated at $t=0$, it is enough to show
that they satisfy the same differential equation:
\begin{equation*}
\begin{split}
\left(e^{tX} e^{tY} \right)^\prime &= \left( e^{tX} \right)^\prime
e^{tY} + e^{tX} \left(e^{tY} \right)^\prime \\
 &= X e^{tX} e^{tY} +
e^{tX} Y e^{tY} \\
&=  e^{tX} e^{tY}(X+Y)
\end{split}
\end{equation*}
 and
\[ \left( e^{t(X+Y)} \right)^\prime = e^{t(X+Y)}(X+Y) \]
This completes the proof.

\end{proof}

\begin{lem}
\label{H1BakerLemma} Let $X$ and $Y$ be nilpotent matrices over a
field, commuting with their nilpotent commutator $Z$.  If the
field is either of characteristic zero, or of positive
characteristic larger than twice the dimension of the matrices,
then
\[ e^{X} e^{Y} = e^{X+Y+\frac{1}{2} Z} \]
\end{lem}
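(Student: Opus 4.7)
The plan is to introduce an auxiliary parameter $t$ and compare the two polynomial-valued expressions $f(t) = e^{tX} e^{tY}$ and $g(t) = e^{tX + tY + (t^2/2)Z}$, aiming to show $f(t) = g(t)$ as polynomials in $t$ with matrix coefficients; evaluating at $t = 1$ will then give the desired identity. First I would verify that $g(t)$ is actually defined: this requires $tX + tY + (t^2/2)Z$ to be nilpotent in $\text{Mat}_d(k[t])$. Since $[X,Z]=[Y,Z]=0$ and $[X,Y]=Z$, the matrices $X,Y,Z$ span a nilpotent Lie subalgebra of $\mathfrak{gl}_d$ consisting of nilpotent elements, so by Engel's theorem they admit a common basis in which they are all strictly upper triangular; consequently any $k[t]$-linear combination is strictly upper triangular as well, hence nilpotent of order at most $d$. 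The degrees in $t$ of $f$ and $g$ are both bounded by $2d$, so the hypothesis $\text{char}(k) = 0$ or $\text{char}(k) > 2d$ makes every factorial that appears as a denominator invertible.

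Next, clearly $f(0) = g(0) = I$, so by a $t$-dependent variant of part 3 of lemma \ref{formaldifferenationforeverLemma}, it suffices to show that $f$ and $g$ satisfy the same first-order linear differential equation $\varphi'(t) = (X + Y + tZ)\varphi(t)$. For $f$, the product rule and lemma \ref{exponentialsAndWhatnotLemma} give
\[ f'(t) = X e^{tX} e^{tY} + e^{tX} Y e^{tY}. \]
The crux is the commutation identity $e^{tX} Y = (Y + tZ) e^{tX}$. This is the formal analogue of $e^{tX} Y e^{-tX} = \sum_{k \geq 0} (t^k/k!)\,\mathrm{ad}(X)^k(Y)$: since $\mathrm{ad}(X)(Y) = Z$ and $\mathrm{ad}(X)^2(Y) = [X,Z] = 0$, the series terminates after two terms, and the identity can be checked directly by induction from $XY = YX + Z$ together with $XZ = ZX$. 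Substituting yields $f'(t) = (X + Y + tZ) f(t)$.

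For $g$, set $M(t) = tX + tY + (t^2/2)Z$, so $M'(t) = X + Y + tZ$. Because $X + Y$ commutes with $Z$, the matrices $M(t)$ and $M'(t)$ commute, and the calculation of $(e^{M(t)})'$ reduces term-by-term to the same differentiation as in parts 1 and 2 of lemma \ref{exponentialsAndWhatnotLemma}, giving $g'(t) = M'(t) g(t) = (X + Y + tZ) g(t)$. It remains to upgrade lemma \ref{formaldifferenationforeverLemma}(3) to the variable-coefficient case: writing $h(t) = f(t) - g(t) = \sum_{k \geq 0} c_k t^k$ with $c_0 = 0$, the relation $h'(t) = (X + Y + tZ) h(t)$ forces the recursion $(k+1) c_{k+1} = (X+Y) c_k + Z c_{k-1}$, and since $k+1$ is invertible in $k$ for all $1 \leq k+1 \leq 2d$, induction on $k$ gives $c_k = 0$ throughout, hence $f = g$.

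The main obstacle I anticipate is handling the joint nilpotency of $X, Y, Z$ without importing Engel's theorem as a black box; a self-contained substitute would use that $Z$ is central in the associative algebra $k[X,Y,Z]$, reducing the computation of $(X + Y + (t/2)Z)^N$ to a binomial-style expansion in $X + Y$ and $Z$ that must vanish once $N$ exceeds $2d$. The other delicate point is that all three formal-derivative identities used here (the product rule, the commutation $e^{tX}Y = (Y+tZ)e^{tX}$, and the chain-rule-like $(e^{M(t)})' = M'(t) e^{M(t)}$) must be justified without appeal to a general chain rule, each following instead from direct coefficient matching enabled by the commutativity assumption that is available in the specific situation.
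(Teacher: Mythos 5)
Your proof is correct and follows the paper's ODE strategy, but with a few small deviations worth noting. The paper rewrites the target as $e^{tX}e^{tY}e^{-t^{2}Z/2}=e^{t(X+Y)}$, so that both sides satisfy the \emph{constant-coefficient} equation $\varphi'=\varphi\,(X+Y)$, to which lemma \ref{formaldifferenationforeverLemma}(3) applies directly; you instead keep the $Z$-term inside the right-hand exponential, arrive at the variable-coefficient equation $\varphi'=(X+Y+tZ)\varphi$, and therefore need --- and correctly supply --- the extended recursive uniqueness argument $(k+1)c_{k+1}=(X+Y)c_{k}+Zc_{k-1}$. Your conjugation identity $e^{tX}Y=(Y+tZ)e^{tX}$ plays the same role as the paper's $e^{-tY}Xe^{tY}=X+tZ$; both terminate after two terms because $[X,[X,Y]]=0$. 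For the nilpotency of the exponent you invoke Engel's theorem, whereas the paper observes that a triple satisfying the hypotheses already defines a representation of the unipotent group $H_1$ (via theorems \ref{H1CharZeroThm} and \ref{H1CharpLargeThm2}), so that $X,Y,Z$ are simultaneously strictly upper triangular; both arguments are valid, and your alternative suggestion of exploiting centrality of $Z$ would also work. The degree bounds and the hypothesis on the characteristic correctly guarantee that all the divisions in the recursion make sense, so the proof goes through.
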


\begin{proof}

We shall prove something stronger, namely that
\[ e^{tX} e^{tY} = e^{tX+tY+\frac{t^2}{2} Z} \]
as polynomials; evaluating at $t=1$ will give the desired result.

We note first that if $\text{char} = p \geq 2\text{dim}$, all of
the above expressions make sense, since e.g.~the series expansion
for $e^{tX}$ will vanish before we get to see denominators
divisible by $p$.  Note also that the results of the previous two
lemmas apply, since the maximum degree of any of the above
polynomials is $2\text{dim} - 2$.

Note also that $tX+tY+\frac{t^2}{2}Z$ must be also be nilpotent.
If $X$,$Y$ and $Z$ are matrices satisfying the given hypotheses,
they define a representation of $H_1$ according to either theorem
\ref{H1CharZeroThm} or theorem \ref{H1CharpLargeThm2}.  As any
representation of a unipotent algebraic group can be put in upper
triangular form, it follows that $X,Y$ and $Z$ can be put in
simultaneous upper triangular form.  It is obvious then that any
linear combination of $X,Y$ and $Z$ is nilpotent, and so the right
hand side makes sense as well (except when $p = 2$; but this
forces dimension to be $\leq 1$, and in this case the result is
trivial).

The proof proceeds exactly as in theorem 3.1 of \cite{hall} for
the Lie group case.  Since $Z$ commutes with both $X$ and $Y$, we
can rewrite the above equation as
\[ e^{tX} e^{tY} e^{\frac{-t^2}{2} Z} = e^{t(X+Y)} \]
Denote by $A(t)$ the left hand side of this equation, $B(t)$ the
right hand side.  These are both equal to $1$ when evaluated at
$t=0$, so by 3.~of lemma \ref{formaldifferenationforeverLemma} it
suffices to show that they both satisfy the same linear
differential equation.  Working first with $A(t)$, using the
iterated product rule we have
\begin{equation*}
\begin{split}
 A^\prime(t) &= e^{tX} X e^{tY} e^{\frac{-t^2}{2} Z} + e^{tX}
e^{tY} Y e^{\frac{-t^2}{2}Z} + e^{tX}e^{tY}
e^{\frac{-t^2}{2}Z}(-tZ) \\
 &= e^{tX} e^{tY} \left(e^{-tY} X e^{tY} \right) e^{\frac{-t^2}{2}
Z} + e^{tX} e^{tY} Y e^{\frac{-t^2}{2}Z} + e^{tX}e^{tY}
e^{\frac{-t^2}{2}Z}(-tZ)
\end{split}
\end{equation*}
 We claim that $e^{-tY} X e^{tY}$ is equal
to $X + tZ$.  They are both equal to $X$ when evaluated at $t=0$,
and $(X+tZ)^\prime = Z$, so it suffices to show by part 2.~of
lemma \ref{formaldifferenationforeverLemma} that the derivative of
$e^{-tY} X e^{tY}$ is equal to $Z$:
\begin{equation*}
\begin{split}
 \left(e^{-tY} X e^{tY} \right)^\prime &= e^{-tY}(-YX) e^{tY} +
e^{-tY} XY e^{tY} \\
 &= e^{-tY}(XY-YX) e^{tY}\\
  &= e^{-tY} Z e^{tY} \\
  &= Z
\end{split}
\end{equation*}
as required.  Thus
\begin{equation*}
\begin{split}
A^\prime(t) &=  e^{tX} e^{tY} \left(X+tZ\right) e^{\frac{-t^2}{2}
Z} + e^{tX} e^{tY} Y e^{\frac{-t^2}{2}Z} + e^{tX}e^{tY}
e^{\frac{-t^2}{2}Z}(-tZ) \\
 &= e^{tX} e^{tY} e^{\frac{-t^2}{2} Z} (X + tZ + Y - tZ) \\
  &= e^{tX} e^{tY} e^{\frac{-t^2}{2} Z}(X+Y) \\
  &= A(t)(X+Y)
\end{split}
\end{equation*}
   To finish then, it suffices to show that
$B^\prime(t) = B(t)(X+Y)$; but this is obvious by part 1.~of lemma
\ref{exponentialsAndWhatnotLemma}.  This completes the proof.

\end{proof}

\begin{thm}
\label{H1charZeroBakerThm} Let $M(x,y,z)$ be (the matrix formula
for) a finite dimensional module for $H_1$ in characteristic zero
given by the nilpotent matrices $X,Y$, and $Z$, in the notation of
theorem \ref{H1CharZeroThm}. Then
\[ M(x,y,z) = e^{xX + yY + (z-xy/2)Z} \]
\end{thm}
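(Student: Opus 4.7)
The plan is to combine the explicit matrix formula from Theorem \ref{H1CharZeroThm} with the Baker--Campbell--Hausdorff identity of lemma \ref{H1BakerLemma} and the commuting-exponential identity of lemma \ref{exponentialsAndWhatnotLemma} to collapse a triple sum into a single exponential. The approach is essentially direct computation, but organized in three clean steps.

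First I would write out $M(x,y,z)$ using the closed formula $(c_{ij})^{(n,m,k)} = \frac{1}{n!m!k!} Z^k Y^m X^n$ established in Theorem \ref{H1CharZeroThm}. This gives
\[ M(x,y,z) = \sum_{n,m,k \geq 0} \frac{Z^k Y^m X^n}{n!\,m!\,k!} x^n y^m z^k. \]
Since $Z$ commutes with both $X$ and $Y$, the factor $Z^k$ may be pulled out of the inner expression, and the triple sum factors as
\[ M(x,y,z) = e^{zZ}\, e^{yY}\, e^{xX}, \]
where each exponential is a well-defined polynomial because $X$, $Y$, $Z$ are nilpotent.

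Next I would apply lemma \ref{H1BakerLemma} to the pair $yY$ and $xX$. Their commutator is $[yY, xX] = -xy\,Z$, which is itself nilpotent and commutes with both $yY$ and $xX$ (because $Z$ commutes with $X$ and $Y$), so the hypotheses of the lemma are met. This yields
\[ e^{yY}\, e^{xX} = e^{\,xX + yY - \frac{xy}{2} Z}. \]
Finally, since $zZ$ commutes with $xX + yY - \frac{xy}{2} Z$, part 3 of lemma \ref{exponentialsAndWhatnotLemma} gives
\[ M(x,y,z) = e^{zZ}\, e^{\,xX + yY - \frac{xy}{2} Z} = e^{\,xX + yY + (z - \frac{xy}{2}) Z}, \]
which is the claim.

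There is no real obstacle here: all three lemmas have already done the heavy lifting, and the only thing to watch is the sign that appears when swapping $[Y,X]$ for $[X,Y] = Z$ in the BCH step, together with the bookkeeping that allows $e^{zZ}$ to be absorbed at the end. The fact that the resulting single exponential is even well-defined is guaranteed because $X$, $Y$, $Z$ can be put in simultaneous upper-triangular form (as in the proof of lemma \ref{H1BakerLemma}), so any $k$-linear combination of them is still nilpotent.
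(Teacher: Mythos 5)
Your proposal is correct, and it differs from the paper's route in a meaningful way. The paper proceeds by first \emph{verifying} that the candidate formula $e^{xX+yY+(z-xy/2)Z}$ satisfies the comodule identity (the product rule $e^{xX+yY+(z-xy/2)Z}\,e^{rX+sY+(t-rs/2)Z} = e^{(x+r)X+(y+s)Y+(z+xs+t-(x+r)(y+s)/2)Z}$, checked via lemma \ref{H1BakerLemma}), and then invokes the \emph{uniqueness} clause of Theorem \ref{H1CharZeroThm}: since both $M(x,y,z)$ and the candidate formula have the same $(c_{ij})^{(1,0,0)} = X$ and $(c_{ij})^{(0,1,0)} = Y$, and those two coefficient matrices determine the entire representation, the two formulas coincide. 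You instead start from the explicit coefficient formula $(c_{ij})^{(n,m,k)} = \tfrac{1}{n!m!k!}Z^kY^mX^n$, observe that the resulting triple sum factors as $e^{zZ}e^{yY}e^{xX}$ (which requires nothing more than that $x,y,z$ are central scalars), and then apply lemma \ref{H1BakerLemma} once to the pair $(yY, xX)$ followed by lemma \ref{exponentialsAndWhatnotLemma}(3) to absorb $e^{zZ}$. Your argument is a direct derivation that bypasses the uniqueness statement entirely; the paper's argument is shorter on computation but leans on the classification result it has just established. Both invoke the same BCH lemma as the technical engine, and both implicitly apply it to matrices over the polynomial ring $k[x,y,z]$ rather than over $k$ itself, which is legitimate because the lemma's proof works formally and nilpotency is preserved under scalar extension. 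Your sign bookkeeping in $[yY,xX]=-xyZ$ is right, as is the final absorption of $e^{zZ}$.
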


\begin{proof}
We will first prove that the formula given is actually a
representation of $H_1$, which amounts to verifying the matrix
equality
\[ e^{xX+yY+(z-xy/2)Z} e^{rX+sY+(t-rs/2)Z}
 = e^{(x+r)X+(y+s)Y+(z+xs+t-(x+r)(y+s)/2)Z} \]
If $X$ and $Y$ are nilpotent and commute with their nilpotent
commutator $Z$, then $xX+yY$ and $rX+sY$ are also nilpotent, and
also commute with their nilpotent commutator $(xs-yr)Z$, and so
lemma \ref{H1BakerLemma} applies:
\[\text{exp}(xX+yY)\text{exp}(rX+sY) =
\text{exp}((x+r)X+(y+s)Y+\frac{(xs-yr)}{2}Z) \] Recalling that
$e^R e^S = e^{R+S}$ whenever $R$ and $S$ commute, the left hand
side of our first equation can be written
\begin{equation*}
\begin{split}
 \text{exp}&(xX+yY)\text{exp}(rX+sY)\text{exp}((z+t-(xy+rs)/2)Z) \\
 &= \text{exp}((x+r)X+(y+s)Y+
\frac{xs-yr}{2}Z)\text{exp}((z+t-(xy+rs)/2)Z) \\
&= \text{exp}((x+r)X+(y+s)Y+(z+xs+t-(x+r)(y+s)/2)Z)
\end{split}
\end{equation*}
 The expression
given in the statement of the theorem is therefore indeed a
representation of $H_1$.  To see that they are equal, simply
verify that, in the notation of theorem \ref{H1CharZeroThm}, the
matrix $(c_{ij})^{(1,0,0)}$ is actually $X$, and the matrix
$(c_{ij})^{(0,1,0)}$ is actually $Y$; as these completely
determine the rest of the representation, we conclude that
$M(x,y,z)$ and the given expression are in fact equal.
\end{proof}

In the characteristic $p>0$ case, if we assume $p \geq 2
\text{dim}$, we obtain a result analogous to theorem
\ref{Gacharptheorem} for the group $G_a$.

\begin{thm}
\label{H1charpBakerThm} Let $k$ have characteristic $p > 0$, and
suppose $p \geq 2d$. Then every $d$-dimensional representation of
$H_1$ over $k$ is of the form
\[ e^{xX_0+yY_0+(z-xy/2)Z_0} e^{x^pX_1+y^pY_1+(z^p-x^py^p/2)Z_1} \ldots
e^{x^{p^m} X_m + y^{p^m} Y_m+(z^{p^m} - x^{p^m} y^{p^m}/2)Z_m} \]
with all of the factors commuting.  Further, any collection
$X_i,Y_i,Z_i$ of $d$-dimensional matrices satisfying the
hypotheses of theorem \ref{H1CharpLargeThm2} gives a
representation according to the above formula.
\end{thm}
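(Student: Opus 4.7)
The plan is to reduce the theorem to the combinatorial classification of Theorem \ref{H1CharpLargeThm2} by showing the claimed exponential product is the correct closed-form expression for the representation prescribed by the data $(X_i, Y_i, Z_i)$. I would carry this out in three stages.

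First, for sufficiency, fix commuting nilpotent matrices $X_i, Y_i, Z_i$ ($0 \leq i \leq m$) satisfying the four hypotheses of Theorem \ref{H1CharpLargeThm2}. For each $i$, consider the auxiliary matrix expression $R_i(u,v,w) := \exp(uX_i + vY_i + (w-uv/2)Z_i)$. An essentially verbatim adaptation of the proof of Theorem \ref{H1charZeroBakerThm}, with Lemma \ref{H1BakerLemma} supplying the Baker--Campbell--Hausdorff identity under the hypothesis $p \geq 2d$, shows that $R_i$ is a valid $H_1$-representation on $V$. Next, the assignment $x \mapsto x^{p^i}$, $y \mapsto y^{p^i}$, $z \mapsto z^{p^i}$ defines a Hopf algebra endomorphism $F^i$ of $A = k[x,y,z]$; the only nontrivial compatibility to check is $\Delta(z^{p^i}) = (z \otimes 1 + 1 \otimes z + x \otimes y)^{p^i}$, and since the three summands commute in $A \otimes A$, the Freshman's dream in characteristic $p$ gives the desired $z^{p^i} \otimes 1 + 1 \otimes z^{p^i} + x^{p^i} \otimes y^{p^i}$. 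Pre-composing the comodule structure $R_i$ with $F^i$ therefore produces the $i$-th factor $F_i(x,y,z) := R_i(x^{p^i}, y^{p^i}, z^{p^i})$, which is again an $H_1$-representation on $V$.

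Second, I would show the matrix product $F_0 F_1 \cdots F_m$ is itself an $H_1$-representation. Because each $F_j$ has entries in the commutative subring $k[x^{p^j}, y^{p^j}, z^{p^j}]$, and because $\{X_i, Y_i, Z_i\}$ commute entrywise with $\{X_j, Y_j, Z_j\}$ for $i \neq j$ by hypothesis, the matrix formulas $F_i$ and $F_j$ commute pairwise in $\mathrm{Mat}_d(A)$. Theorem \ref{directproducttheorem} applied to each pair then yields a representation of $H_1 \times H_1$ on $V$ with matrix formula $F_i F_j$; pulling back along the diagonal group homomorphism $H_1 \to H_1 \times H_1$ recovers an $H_1$-representation with the same matrix formula. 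Iterating this through all factors establishes sufficiency. For necessity, given an arbitrary $d$-dimensional representation $M(x,y,z)$, define $X_i, Y_i, Z_i$ from the coefficient matrices of $M$ as in Theorem \ref{H1CharpThm}. Theorem \ref{H1CharpLargeThm1} guarantees these satisfy the four hypotheses of Theorem \ref{H1CharpLargeThm2}, so by sufficiency the product $N(x,y,z) := F_0 \cdots F_m$ built from this data is a valid $H_1$-representation. To conclude $M = N$, by Theorem \ref{H1CharpThm} it suffices to verify that $N$ has the same $X_i, Y_i$ data as $M$.

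The main obstacle will be precisely this last combinatorial verification: that the coefficient of the pure monomial $x^{p^i}$ in $N$ equals $X_i$. The entries of each $F_j$ are polynomials in $x^{p^j}, y^{p^j}, z^{p^j}$ in which no exponent exceeds $(p-1)p^j$ (by $p$-nilpotency of $X_j, Y_j, Z_j$), so a monomial in $N$ of the form $x^{p^i}$ can arise only by choosing a pure-$x$ term $x^{c_j p^j}$ from each factor $F_j$ with $\sum_j c_j p^j = p^i$ and $0 \leq c_j < p$. Uniqueness of the base-$p$ expansion forces $c_j = \delta_{ij}$, so only the factor $F_i$ contributes nontrivially, and from $F_i$ one reads off exactly $X_i$. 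The symmetric argument yields $Y_i(N) = Y_i$, completing the proof. This bookkeeping is analogous to, but somewhat more intricate than, the corresponding calculation in the $G_a$ case of Theorem \ref{Gacharptheorem}.
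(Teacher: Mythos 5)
Your proposal is correct, but it takes a genuinely different route from the paper's proof.

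The paper argues by direct algebraic expansion. It starts from the closed formula $(c_{ij})^{(n,m,k)} = Z_{(k)}Y_{(m)}X_{(n)}$ already established in Theorem~\ref{H1CharpLargeThm2}, writes out the full matrix formula $\sum_{n,m,k}(c_{ij})^{(n,m,k)}x^ny^mz^k$, substitutes the $p$-ary expansions of $n,m,k$, refactors the triple sum into a product of one sum per Frobenius digit, and then identifies each factor with the corresponding exponential by re-using the purely combinatorial identity extracted from the proof of Theorem~\ref{H1charZeroBakerThm} (which is characteristic-independent once Lemma~\ref{H1BakerLemma} holds). Because this is an identity of formal polynomial matrices, no separate uniqueness argument is needed.

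You instead build the object top-down: each factor $F_i$ is produced by precomposing a height-one BCH representation with a Frobenius Hopf endomorphism of $k[x,y,z]$ (a genuinely nice observation not made explicit in the paper -- the Freshman's-dream check on $\Delta(z^{p^i})$ is exactly the right thing to verify); the factors are glued into a single $H_1$-representation via Theorem~\ref{directproducttheorem} followed by pullback along the diagonal $H_1 \to H_1 \times H_1$; and the result is matched against an arbitrary module by comparing the determining data $X_i,Y_i$ via Theorem~\ref{H1CharpThm}, which requires the final base-$p$-uniqueness bookkeeping you sketch. What you gain is conceptual clarity about why the formula is a product of commuting Frobenius twists, at the cost of two extra moving parts: the appeal to the determination theorem, and the iterated direct-product/diagonal-pullback construction (the paper never needs the latter because the product structure falls out of the expansion for free). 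Both routes are valid; yours is more structural and would generalize more readily to other unipotent groups, while the paper's is more self-contained given the $(c_{ij})$-machinery it has already set up.
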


\begin{proof}
In the notation of theorem \ref{H1CharpLargeThm2}, let $X_0,
\ldots, X_s$, $Y_0, \ldots, Y_s$, $Z_0, \ldots, Z_s$ be given.
Then the matrix formula for the representation they define is
\begin{equation*}
\begin{split}
 M(x,y,z) &= \sum_{n,m,k} (c_{ij})^{(n,m,k)} x^n y^m z^k =
\sum_{n,m,k} Z_{(k)}Y_{(m)}X_{(n)} x^n y^m z^k \\
 &= \sum_{n,m,k} \Gamma(n)^{-1} \Gamma(m)^{-1} \Gamma(k)^{-1}
Z_0^{k_0} \ldots Z_s Y_0^{m_0} \ldots Y_s^{m_s}X_0^{n_0} \ldots
X_s^{n_s} \\
 & \qquad x^{n_0+n_1p+ \ldots +n_sp^s}y^{m_0+m_1p+ \ldots +m_sp^s}z^{k_0 +
k_1p+\ldots k_s p^s} \\
 &= \left(\sum_{n_0,m_0,k_0=0}^{p-1}
\frac{1}{n_0!m_0!k_0!}Z_0^{k_0}Y_0^{m_0}X_0^{n_0} x^{n_0} y^{m_0}
z^{m_0} \right) \\
& \qquad \ldots \left(\sum_{n_s,m_s,k_s=0}^{p-1}
\frac{1}{n_s!m_s!k_s!} Z_s^{k_s} Y_s^{m_s} X_s^{n_s}
x^{n_sp^s}y^{m_sp^s}z^{k_sp^s} \right)
\end{split}
\end{equation*}
 We note that, for fixed
$r$, the matrices $X_r,Y_r$, and $Z_r$, by theorem
\ref{H1CharpLargeThm2}, satisfy the hypotheses of lemma
\ref{H1BakerLemma}.  Working through the proof of theorem
\ref{H1charZeroBakerThm}, we see that nowhere was the
characteristic of the field used; only that lemma
\ref{H1BakerLemma} was satisfied.  In other words, theorem
\ref{H1charZeroBakerThm} establishes a purely combinatorial fact
that, if $X_0,Y_0,Z_0$ satisfy lemma \ref{H1BakerLemma}, then
\[ \sum_{n_0,m_0,k_0=0}^{m}
\frac{1}{n_0!m_0!k_0!}Z_0^{k_0}Y_0^{m_0}X_0^{n_0} x^{n_0} y^{m_0}
z^{m_0}  = \text{exp}(xX_0+yY_0+(z-xy/2)Z_0) \] whenever $m$ is
greater than or equal to the nilpotent orders of $X_0,Y_0$, and
$Z_0$.  We conclude that
\[ \sum_{n_0,m_0,k_0=0}^{p-1}
\frac{1}{n_0!m_0!k_0!}Z_0^{k_0}Y_0^{m_0}X_0^{n_0} x^{n_0} y^{m_0}
z^{m_0}  = \text{exp}(xX_0+yY_0+(z-xy/2)Z_0) \] We can of course
replace $x,y$ and $z$ with $x^{p^r},y^{p^r}$ and $z^{p^r}$ to
likewise obtain
\[\sum_{n_r,m_r,k_r=0}^{p-1} \frac{1}{n_r!m_r!k_r!}
Z_r^{k_r} Y_r^{m_r} X_r^{n_r} x^{n_rp^r}y^{m_rp^r}z^{k_rp^r}
 =
\text{exp}(x^{p^r}X_r+y^{p^r}Y_r+(z^{p^r}-x^{p^r}y^{p^r}/2)Z_r)
\]
for any $r$, and hence
\[ \left(\sum_{n_0,m_0,k_0=0}^{p-1}
\frac{1}{n_0!m_0!k_0!}Z_0^{k_0}Y_0^{m_0}X_0^{n_0} x^{n_0} y^{m_0}
z^{m_0} \right) \]
\[ \ldots \left(\sum_{n_s,m_s,k_s=0}^{p-1} \frac{1}{n_s!m_s!k_s!}
Z_s^{k_s} Y_s^{m_s} X_s^{n_s} x^{n_sp^s}y^{m_sp^s}z^{k_sp^s}
\right) \]
\[= \text{exp}(xX_0+yY_0+(z-xy/2)Z_0) \text{exp}(x^pX_1+y^pY_1+(z^p-x^py^p/2)Z_1) \]
\[ \ldots \text{exp}(x^{p^s} X_s + y^{p^s} Y_s+(z^{p^s} - x^{p^s}
y^{p^s}/2)Z_s)
\]
which proves the theorem.  Note that all of the factors commute,
since so do $X_i,Y_i,Z_i$ and $X_j,Y_j,Z_j$ when $i \neq j$.

\end{proof}

\chapter{The Height-Restricted Ultraproduct}
\index{height-restricted ultraproduct} In the previous chapters,
we saw that for the unipotent groups $G$ that we studied, the
representation theories of $G^n$ in characteristic zero and for
$G$ in characteristic $p
>> \text{dimension}$ are in perfect analogy. The appropriate
context, we believe, in which to interpret these results is in
consideration of the so-called height-restricted ultraproduct,
which we formally define now.

Let $G$ be any of our so far studied unipotent groups, and let $k$
be a field of characteristic $p>0$.  They all have Hopf algebras
isomorphic to $k[x_1, \ldots, x_n]$ for some $n$, so the following
definition makes sense:

\begin{defn}
\index{height} The \textbf{height} of a representation $M$ of $G$
over $k$ is the largest $m$ such that, for some $i$,
$x_i^{p^{m-1}}$ occurs as a coefficient in the matrix formula of
$M$.  In case no such occurs (i.e. $M$ is a trivial
representation), we say $M$ has height zero.
\end{defn}

Since all of the Hopf algebras at issue are isomorphic to $k[x_1,
\ldots, x_n]$, height is an isomorphism invariant. Lemma
\ref{comoduleCoefficientsLemma} shows that applying any base
change to the matrix formula of a representation yields two
matrices, each of whose entries will be linear combinations of the
entries of the other.

Example: the representation
\[
\left(%
\begin{array}{cc}
  1 & x+x^{p^2} \\
  0 & 1 \\
\end{array}%
\right)
\]
for $G_a$ has height $3$.

If $p$ is large with respect to dimension, we know that every
representation of $G_a$ or $H_1$ can be factored into a commuting
product of representations, each accounting for one of its
Frobenius layers. In this case the height of $M$ is equal to the
number of these layers (even in the case of a trivial
representation, which has no layers), hence the motivation for the
definition.

Let $k_i$ be a collection of fields of strictly increasing
positive characteristic, $\catfont{C}_i = \text{Rep}_{k_i} G$.

\begin{defn}
\label{defnHightResUprod} The \index{height} \textbf{height} of an
object $[X_i]$ of $\resprod \catfont{C}_i$ is defined to be the
essential supremum of $\{\text{height}(X_i):i \in I\}$.  The
\index{height-restricted ultraproduct} \textbf{height-restricted
ultraproduct} of the $\catfont{C}_i$, denoted \index{$\hprod
\catfont{C}_i$} $\hprod \catfont{C}_i$, is the full subcategory of
$\resprod \catfont{C}_i$ consisting of those objects having finite
height. For $n \in \mathbb{N}$, we denote by \index{$\hnprod{n}
\catfont{C}_i$} $\hnprod{n} \catfont{C}_i$ the full subcategory of
$\hprod \catfont{C}_i$ consisting of those objects of height no
greater than $n$.
\end{defn}

Note that, as a subcategory of $\resprod \catfont{C}_i$, we demand
the objects of $\hprod \catfont{C}_i$ to be of bounded dimension
as well as height.

The remainder of this chapter is devoted to proving

\begin{thm}
\index{height-restricted ultraproduct}
\label{heightResMainTheorem} Let $G$ be any of the so far studied
unipotent groups, $k_i$ a sequence of fields of strictly
increasing positive characteristic. Then $\hprod \text{Rep}_{k_i}
G$ is a neutral tannakian subcategory of $\resprod
\text{Rep}_{k_i} G$, and is tensorially equivalent to
$\text{Rep}_{\uprod k_i} G^\infty$. Likewise, for any $n \in
\mathbb{N}$, $\hnprod{n} \text{Rep}_{k_i} G$ is tensorially
equivalent to $\text{Rep}_{\uprod k_i} G^n$.
\end{thm}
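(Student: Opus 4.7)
The plan is to deduce the theorem in three stages: first verify the subcategory claim using proposition \ref{tannakiansubcategorylemma}; second construct, for each fixed $n$, a tensor equivalence $F_n : \hnprod{n} \catfont{C}_i \to \text{Rep}_k G^n$ where $k = \uprod k_i$ (which has characteristic zero by our assumption on the $k_i$); and third glue the $F_n$ into an equivalence on $\hprod \catfont{C}_i$.

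For the subcategory claim, I would check closure under biproducts, subobjects, quotients, tensor products, and duals, together with the presence of an identity object. The trivial representation has height zero. For subobjects and quotients, lemma \ref{comoduleCoefficientsLemma} forces new matrix coefficients to be linear combinations of old ones, so no fresh monomials (hence no new Frobenius layers) can appear. For biproducts, tensor products, and duals, the key is that for $[X_i],[Y_i] \in \hnprod{n} \catfont{C}_i$ we have $p_i \to \infty$ while dimensions stay bounded, so the exponential factorizations of theorems \ref{Gacharptheorem} and \ref{H1charpBakerThm} apply almost everywhere. Direct computation then shows that if $X_i$ and $Y_i$ have Frobenius layers $\{N^{(i)}_j\}_{j<n}$ and $\{M^{(i)}_j\}_{j<n}$, the layers of $X_i \oplus Y_i$, $X_i \otimes Y_i$, and $X_i^\vee$ are the block-diagonal matrices, the Kronecker sums $N^{(i)}_j \otimes I + I \otimes M^{(i)}_j$, and the matrices $-(N^{(i)}_j)^T$ respectively, none of which introduces a new Frobenius index. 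Hence $\hnprod{n}$ (and so $\hprod$) is closed as required.

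Next I would define $F_n$ on objects by sending $[X_i]$ to the $G^n$-representation over $k$ determined, via theorem \ref{Gacharzeropanaloguetheorem}(1) (resp.~theorem \ref{H1charZeroBakerThm} together with theorem \ref{directproducttheorem}), by the commuting nilpotent matrices $[N^{(i)}_0],\ldots,[N^{(i)}_{n-1}]$ extracted from the above Frobenius decomposition; on morphisms, $F_n$ sends $[\phi_i]$ to the same underlying linear map. Well-definedness on morphisms reduces, via theorem \ref{morphismsInTheMethodThm} and lemma \ref{commutingRepsInTheMethodThm}, to the observation that in the Frobenius-factored setting a linear map is a $G$-morphism iff it commutes with each layer matrix individually; this passes through ultraproducts using proposition \ref{homprop}. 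Faithfulness is immediate, fullness is the reverse direction of the same observation, and essential surjectivity follows by lifting an arbitrary commuting tuple of nilpotent matrices over $\uprod k_i$ to such tuples over the $k_i$ using proposition \ref{homlemma}, since commutativity and nilpotency of matrices of bounded dimension are first-order.

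Tensor compatibility, which I expect to be the main obstacle, requires that $F_n([X_i] \otimes [Y_i]) \isomorphic F_n([X_i]) \otimes F_n([Y_i])$ coherently with the associator, commutor, and unit isomorphisms. I would reduce this to the layer data: in characteristic $p$ large, a direct computation with the identity $(e^A \otimes I)(I \otimes e^B) = e^{A \otimes I + I \otimes B}$ from lemma \ref{exponentialsAndWhatnotLemma} shows that the $j$-th Frobenius layer of $X_i \otimes Y_i$ is the Kronecker sum $N^{(i)}_j \otimes I + I \otimes M^{(i)}_j$; in characteristic zero, proposition \ref{tensorForDirProdsProp} yields the same Kronecker-sum formula for the tensor product of $G^n$-representations, so the two assignments agree under $F_n$. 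Finally, to pass from $\hnprod{n}$ to $\hprod$, observe that $\hprod \catfont{C}_i$ is the directed union of the $\hnprod{n} \catfont{C}_i$ (each object has a well-defined finite height), that every finite-dimensional representation of $G^\infty$ over $k$ factors through some $G^n$ by theorem \ref{Gacharzeropanaloguetheorem}(1), and that the inclusion $\hnprod{n} \hookrightarrow \hnprod{n+1}$ corresponds under $F_n$ to the inclusion $\text{Rep}_k G^n \hookrightarrow \text{Rep}_k G^{n+1}$ given by adjoining $N_n = 0$; the $F_n$ therefore glue to an equivalence $F : \hprod \catfont{C}_i \to \text{Rep}_k G^\infty$, completing the proof.
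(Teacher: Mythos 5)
Your proposal is essentially the paper's argument, recast in functorial language. The paper proceeds by ``labelling'' the objects and morphisms of $\hprod \catfont{C}_i$ and of $\text{Rep}_k G^\infty$ by identical data (finite tuples of nilpotent linear transformations satisfying the relations of theorems \ref{H1hprodLabelingThm} and \ref{H1DirProdLabelingThm}), reads off the equivalence from the coincidence of the two labellings, and then checks tensor-compatibility by the same layer-by-layer computation you describe; your $F_n$-then-glue organization and explicit appeal to proposition \ref{tannakiansubcategorylemma} for the subcategory claim are just a different packaging of the same content. You are slightly more careful than the paper on one point: the $j$-th layer of $V \otimes W$ is indeed the Kronecker sum $N_j \otimes I + I \otimes M_j$, whereas the paper's proposition \ref{tensorGinftyprop} writes this abusively as ``$X_j \otimes R_j$'' (meaning the tensor product of the $j$-th layers \emph{as $G$-representations}, which has Kronecker-sum coefficient matrices). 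Your ``$-(N^{(i)}_j)^T$'' formula for the dual layers is correct for both $G_a$ and $H_1$ (in the latter case one checks that the antipode $S(z)=xy-z$ conspires with the $-xy/2$ term in the Baker--Campbell--Hausdorff formula so that the dual is still governed by the transpose-negatives), though as stated it reads as if each layer were a single matrix; for $H_1$ the layer is a triple. Both you and the paper correctly identify the ``Frobenius spillover'' issue as the one place where boundedness of dimension is indispensable, and both resolve it by observing that $p_i \to \infty$ dominates the fixed dimension bound almost everywhere.
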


Note that the groups $G^\infty$ and $G^n$ in this theorem are
\emph{independent} of the choice of non-principal ultrafilter, and
while the field $\uprod k_i$ does indeed vary, it will in all
cases have characteristic zero.

We shall prove the theorem for the group $H_1$, leaving it to the
reader to convince himself that the same proof applies to the
group $G_a$. The proof is quite straightforward; we shall
construct an explicit equivalence between the two categories, show
that it is tensor preserving, and it will be immediate that the
usual forgetful functor on $\text{Rep}_{\uprod k_i} G^\infty$ can
be identified as the restriction of the fibre functor
$\omega:\resprod \catfont{C}_i \rightarrow \text{Vec}_{\uprod
k_i}$ defined in chapter \ref{themaintheoremchapter} to $\hprod
\text{Rep}_{k_i} G$.

For the remainder, let $G$ denote the group $H_1$, $k_i$ a
sequence of fields of strictly increasing positive characteristic,
$\catfont{C}_i = \text{Rep}_{k_i} G$, and $k =\uprod k_i$.

\section{Labelling the Objects of $\prod_H \hspace{-.03cm} \catfont{C}_i$}

Now, what does an object $[V_i]$ of $\hprod \catfont{C}_i$
actually look like?  Firstly, the vector spaces $V_i$ are of
bounded dimension. And since the $k_i$ are of strictly increasing
characteristic, this tells us that, for all but finitely many $i$,
$V_i$ is of the form given by theorem \ref{H1CharpLargeThm2}; that
is, it is determined by a finite sequence of nilpotent
transformations on $V_i$
\[ X_0^i, \ldots, X_m^i,Y_0^i, \ldots, Y_m^i,Z_0^i, \ldots,Z_m^i
\]
according to the formula given in theorem \ref{H1charpBakerThm},
and under the conditions given in theorem \ref{H1CharpLargeThm2}.
Secondly, it is of finite height, whence we can take $m$ to be
constant for almost every $i$.

\begin{thm}
\label{H1hprodLabelingThm} Each object $[V_i]$ of $\hprod
\catfont{C}_i$ is completely determined by a sequence $X_0,
\ldots, X_m,Y_0, \ldots, Y_m,Z_0, \ldots ,Z_m$ of linear
transformations on $\uprod V_i$ satisfying
\begin{enumerate}
\item{The $X_j$, $Y_j$, and $Z_j$ are all nilpotent} \item{$Z_j =
[X_j,Y_j]$ for every $j$} \item{$[X_j,Z_j] = [Y_j,Z_j] = 0$ for
every $j$} \item{For every $i \neq j$, the matrices $X_i,Y_i,Z_i$
commute with $X_j,Y_j,Z_j$}
\end{enumerate}
Further, any such sequence of linear transformations on a finite
dimensional vector space over $k$ gives an object of $\hprod
\catfont{C}_i$.

\end{thm}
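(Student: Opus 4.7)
The plan is to set up an explicit bijection between objects of $\hprod \catfont{C}_i$ and tuples $(X_0, \ldots, X_m, Y_0, \ldots, Y_m, Z_0, \ldots, Z_m)$ satisfying the four conditions, by taking componentwise ultraproducts of the data supplied by theorem \ref{H1CharpLargeThm2}. First, I would unpack what an object $[V_i]$ of $\hprod \catfont{C}_i$ looks like: by definition $\dim V_i$ is bounded (as $[V_i] \in \resprod \catfont{C}_i$) and the height is bounded by some fixed $m+1$. Since the characteristics $\text{char}(k_i)$ strictly increase, for all but finitely many $i$ we have $\text{char}(k_i) \geq 2 \dim(V_i)$, so theorem \ref{H1CharpLargeThm2} produces linear transformations $X_0^i, \ldots, X_m^i, Y_0^i, \ldots, Y_m^i, Z_0^i, \ldots, Z_m^i$ on $V_i$ (padding the sequence with zeros where necessary so that the length $m+1$ is constant almost everywhere). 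I would then define $X_j = [X_j^i]$, $Y_j = [Y_j^i]$, $Z_j = [Z_j^i]$ as ultraproducts of linear transformations in the sense of proposition \ref{homlemma}.

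Next I would verify that the tuple $(X_j, Y_j, Z_j)$ satisfies the four conditions. Each is a finite conjunction of polynomial equations in the matrices: nilpotency $X_j^{N}=0$ for $N = \dim(\uprod V_i)$, the commutator identity $Z_j = X_j Y_j - Y_j X_j$, the commutation $[X_j, Z_j] = [Y_j, Z_j] = 0$, and the cross-commutation for $i \neq j$. By theorem \ref{transformationsarenicethm}, each such equation holds in the ultraproduct if and only if it holds almost everywhere; the latter is provided by theorem \ref{H1CharpLargeThm2} applied slot by slot. The same theorem, combined with the closed-form formula of theorem \ref{H1charpBakerThm}, shows that the comodule structure on $V_i$ is recovered as a matrix expression in $X_j^i, Y_j^i, Z_j^i$. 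Applying theorem \ref{transformationsarenicethm} once more shows the $A_R$-comodule structure on $\uprod V_i$ is recovered as the same matrix expression in the ultraproduct transformations $X_j, Y_j, Z_j$; hence $[V_i]$ is \emph{completely determined} by the tuple.

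For the converse, suppose we are given such a tuple on some finite dimensional vector space $V$ over $k = \uprod k_i$. By proposition \ref{homlemma} each $X_j$ can be written as $[X_j^i]$ for some sequence of linear maps $X_j^i$ on a sequence of vector spaces $V_i$ over $k_i$ with $\uprod V_i \isomorphic V$, and similarly for the $Y_j$ and $Z_j$. Theorem \ref{transformationsarenicethm} transfers the four conditions (expressed as finitely many matrix equations and a single nilpotency equation per matrix) back to hold almost everywhere. Because $\text{char}(k_i) \to \infty$, for almost every $i$ we have $\text{char}(k_i) \geq 2 \dim V_i$, so theorem \ref{H1CharpLargeThm2} produces a genuine representation $V_i$ of $H_1$ over $k_i$ built from the $X_j^i, Y_j^i, Z_j^i$. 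The resulting $[V_i]$ sits in $\resprod \catfont{C}_i$ (dimension is bounded) and has height at most $m+1$ (the formula of theorem \ref{H1charpBakerThm} introduces no Frobenius layer higher than $p^m$), so it lies in $\hprod \catfont{C}_i$; moreover, by the first half, the tuple associated to $[V_i]$ is the original tuple.

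The main obstacle, I expect, will not be any single step but the bookkeeping around the two ``almost everywhere'' hypotheses at once: the bound $\text{char}(k_i) \geq 2\dim V_i$ needed to invoke theorem \ref{H1CharpLargeThm2} in each slot, and the need to take the \emph{same} index $m$ uniformly in $i$ so that the labelling tuple has constant length. Both are handled by shrinking to a smaller large set and using that a finite intersection of large sets is large; the length issue is handled by padding with zero matrices, which is consistent with the conditions 1--4 and with the formula of theorem \ref{H1charpBakerThm}. Once this bookkeeping is settled, the bulk of the proof reduces to invoking theorems \ref{transformationsarenicethm}, \ref{H1CharpLargeThm2}, and \ref{H1charpBakerThm} in the two directions.
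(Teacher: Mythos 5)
Your proposal is correct and follows essentially the same route as the paper: form the ultraproducts $X_j=[X_j^i]$, $Y_j=[Y_j^i]$, $Z_j=[Z_j^i]$ slot-by-slot via theorem~\ref{H1CharpLargeThm2}, use theorem~\ref{transformationsarenicethm} to transfer the four defining conditions (finitely many matrix equations) between the ultraproduct and almost every slot, and use propositions~\ref{prop1} and~\ref{homlemma} in the converse direction to realize a given tuple over $\uprod k_i$ as an ultraproduct of tuples over the $k_i$. Your explicit appeal to the closed-form formula of theorem~\ref{H1charpBakerThm} to justify the ``completely determined'' claim is a small variant of the paper's appeal (which instead defers this to the description of morphisms in the following section), but the substance and tools are the same.
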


\begin{proof}
We define these transformations as the ultraproduct of the
transformations given above:
\[ X_0 = [X_0^i], \ldots, X_m = [X_m^i] \]
\[ Y_0 = [Y_0^i], \ldots, Y_m = [X_m^i] \]
\[ Z_0 = [Z_0^i], \ldots, Z_m = [Z_m^i] \]
By theorem \ref{transformationsarenicethm}, all four of the above
conditions are valid among the $X_j,Y_j$ and $Z_j$ if and only if,
for almost every $i$, all four are valid among the $X_j^i,Y_j^i$
and $Z_j^i$.  Thus every object of $\hprod \catfont{C}_i$
determines such a collection of transformations on a $\uprod
k_i$-vector space.

Conversely, suppose that we are given a sequence $X_0, \ldots,
X_m,Y_0, \ldots, Y_m,Z_0, \ldots ,Z_m$ of linear transformations
on an $n$-dimensional $\uprod k_i$-vector space $V$ satisfying all
the above; we claim there is an object $[V_i]$ of $\hprod
\catfont{C}_i$, unique up to isomorphism, to which these
transformations correspond. By proposition \ref{prop1} let $V_i$
be a collection of $n$-dimensional $k_i$-vector spaces such that
$\uprod V_i \isomorphic V$.  By proposition \ref{homlemma}, $X_0$
is uniquely of the form $[X_0^i]$, where each $X_0^i$ is a linear
transformation on $V_i$; the same goes for all of the $X_k,Y_k$
and $Z_k$.  Finally, note that given relations among the $X_k,Y_k$
and $Z_k$ amount to a finite number of equations involving
composition of maps, and so by theorem
\ref{transformationsarenicethm} these relations are almost
everywhere valid among the $X_k^i,Y_k^i$ and $Z_k^i$.  As such,
almost everywhere, they define a valid $H_1$-module structure on
$V_i$ according to theorem \ref{H1CharpLargeThm2}. The object we
seek then is $[V_i]$.  That $[V_i]$ is unique up to isomorphism is
clear from the description of morphisms in $\hprod \catfont{C}_i$
given in the following paragraphs.
\end{proof}

And what about morphisms?  By definition, a morphism
$[\phi_i]:[V_i] \rightarrow [W_i]$ in the category $\hprod
\catfont{C}_i$ is such that, for almost every $i$, $\phi_i:V_i
\rightarrow W_i$ is a morphism in the category $\catfont{C}_i$.
And by theorem \ref{morphismsInTheMethodThm}, for large enough
$i$, such $\phi_i$ are exactly those which commute with the
$X_j^i$, $Y_j^i$, and $Z_j^i$ for every $j$. Again by theorem
\ref{transformationsarenicethm} this is equivalent to saying

\begin{thm}
Let $V$, $W$ be objects of $\hprod \catfont{C}_i$, given by
(according to the previous theorem) the transformations $X_j,Y_j$
and $Z_j$ for $V$ and $R_j,S_j$ and $T_j$ for $W$.  Then $\phi =
[\phi_i]$ is a morphism between $V$ and $W$ if and only if, for
every $j$, $\phi$ satisfies
\[ X_j \circ \phi = \phi \circ R_j \hspace{1cm} Y_j \circ \phi =
\phi \circ S_j \hspace{1cm} Z_j \circ \phi = \phi \circ T_j \]
\end{thm}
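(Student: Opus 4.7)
The plan is to reduce the statement, via results already in the excerpt, to the combination of (i) the definition of morphism in the ultraproduct category, (ii) the matrix characterization of comodule morphisms of Theorem \ref{morphismsInTheMethodThm}, and (iii) the ``ultraproducts preserve finite systems of equations among linear maps'' principle of Theorem \ref{transformationsarenicethm}.

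First I would unpack what it means for $[\phi_i]$ to be a morphism in $\hprod \catfont{C}_i$. Since $\hprod \catfont{C}_i$ is a full subcategory of $\resprod \catfont{C}_i$, which is in turn full in $\uprod \catfont{C}_i$, this is just the $\L$os'-theorem statement that $\phi_i : V_i \to W_i$ is a $\catfont{C}_i$-morphism for a $\filtfont{U}$-large set of $i$.

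Second, because $[V_i]$ and $[W_i]$ have bounded dimension $d$ and the $k_i$ have strictly increasing positive characteristic, we have $p_i \geq 2d$ for almost every $i$. Consequently Theorems \ref{H1CharpLargeThm2} and \ref{H1charpBakerThm} apply almost everywhere: the comodule structure on $V_i$ is encoded by a finite sequence of nilpotent transformations $X_0^i,\ldots,X_m^i,Y_0^i,\ldots,Y_m^i,Z_0^i,\ldots,Z_m^i$ (with the same $m$ for almost every $i$, since $[V_i]$ has finite height), whose ultraproducts are the given $X_j,Y_j,Z_j$; likewise $W_i$ is encoded by $R_j^i,S_j^i,T_j^i$. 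Theorem \ref{H1CharpLargeThm2} further expresses every matrix coefficient $(c_{ij})^{\vec{r}}$ of $V_i$ as an explicit monomial in these generators, and identifies each generator as one of the $(c_{ij})^{\vec{r}}$. Thus Theorem \ref{morphismsInTheMethodThm}, which asserts that $\phi_i$ is a $\catfont{C}_i$-morphism iff $(k_{ij})(c_{ij})^{\vec{r}} = (d_{ij})^{\vec{r}}(k_{ij})$ for every $\vec{r}$, collapses (almost everywhere) to the finite system
\[
X_j^i \circ \phi_i = \phi_i \circ R_j^i,\qquad Y_j^i \circ \phi_i = \phi_i \circ S_j^i,\qquad Z_j^i \circ \phi_i = \phi_i \circ T_j^i
\]
for $j=0,\ldots,m$.

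Third, I would apply Theorem \ref{transformationsarenicethm} to transfer this \emph{finite} system across the ultraproduct. Because there are only finitely many equations, the intersection of the corresponding large sets is still large, so the three families of equations hold for almost every $i$ if and only if their ultraproduct versions $X_j \circ \phi = \phi \circ R_j$, $Y_j \circ \phi = \phi \circ S_j$, $Z_j \circ \phi = \phi \circ T_j$ hold in $\hprod \catfont{C}_i$. Chaining this with the previous paragraph gives the biconditional.

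The only real obstacle, which is more bookkeeping than mathematics, is verifying that the reduction from the infinite family of $(c_{ij})^{\vec{r}}$-equations of Theorem \ref{morphismsInTheMethodThm} to the finite ``generator'' equations is bidirectional and uniform in $i$; this is precisely what the explicit monomial formulas of Theorems \ref{H1CharpLargeThm2} and \ref{H1charpBakerThm} provide, together with the finiteness of the height $m$. Once this translation is fixed, the ultraproduct transfer is immediate from Theorem \ref{transformationsarenicethm}.
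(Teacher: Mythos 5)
Your proof is correct and takes essentially the same approach as the paper: unwind the definition of morphism in $\hprod\catfont{C}_i$ via $\L$os's theorem, reduce to the finite intertwining system for the generating transformations using Theorem \ref{morphismsInTheMethodThm} together with the monomial formulas of Theorems \ref{H1CharpLargeThm2} and \ref{H1charpBakerThm}, then transfer that finite system across the ultraproduct by Theorem \ref{transformationsarenicethm}. The paper is terser (it does not spell out the bidirectionality of the reduction to generator relations), but the logical skeleton is identical.
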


We can therefore identify the category $\hprod \catfont{C}_i$ as
the collection of all finite dimensional vector spaces $V$ over $k
= \uprod k_i$, each endowed with a collection of linear
transformations
\[ X_0, \ldots, X_m, Y_0, \ldots, Y_m, Z_0, \ldots, Z_m \]
satisfying the relations given in theorem
\ref{H1hprodLabelingThm}, with morphisms being those linear maps
commuting with the $X's$, $Y's$, and $Z's$.

\section{Labelling the Objects of $\text{Rep}_k G^\infty$}

Let $k= \uprod k_i$.  What does the category $\text{Rep}_{k}
G^\infty$ look like?  By theorem \ref{directproducttheorem}
representations of $G^\infty$ on the $k$-vector space $V$ are
exactly the finite commuting products of representations of $G$ on
$V$.  And as $k$ has characteristic zero, according to theorem
\ref{H1CharZeroThm}, an individual representation of $G$ on $V$ is
determined by a triple $X,Y,Z$ of nilpotent linear transformations
on $V$ satisfying $Z = [X,Y]$, $XZ=ZX$, $YZ= ZY$, and any such
triple gives a representation.  Thus, every object of
$\text{Rep}_{k} G^\infty$ is a finite dimensional vector space $V$
with an attached collection
\[ X_0, \ldots, X_m, Y_0, \ldots, Y_m, Z_0, \ldots, Z_m \]
of nilpotent linear transformations on $V$, such that the
representation determined by $X_i,Y_i,Z_i$ commutes with the
representation determined by $X_j,Y_j,Z_j$ for $i \neq j$.  By
theorem \ref{commutingRepsInTheMethodThm}, commutativity of these
representations is equivalent to requiring that $X_i,Y_i,Z_i$ all
commute with $X_j,Y_j,Z_j$ for $i \neq j$.  Thus

\begin{thm}
\label{H1DirProdLabelingThm} Each object of $\text{Rep}_{k}
G^\infty$ is a finite dimensional $k$-vector space $V$ with an
attached sequence $X_0, \ldots, X_m,Y_0, \ldots, Y_m,Z_0, \ldots
,Z_m$ of linear transformations on $V$ satisfying

\begin{enumerate}
\item{The $X_j$, $Y_j$, and $Z_j$ are all nilpotent} \item{$Z_j =
[X_j,Y_j]$ for every $j$} \item{$[X_j,Z_j] = [Y_j,Z_j] = 0$ for
every $j$} \item{For every $i \neq j$, the matrices $X_i,Y_i,Z_i$
commute with the matrices $X_j,Y_j,Z_j$}

\end{enumerate}
Further, any such sequence of linear transformations on a finite
dimensional vector space over $k$ gives an object of
$\text{Rep}_{k} G^\infty$.

\end{thm}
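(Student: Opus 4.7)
The proof is essentially an assembly of three prior results, and the paragraph immediately preceding the theorem already lays out the strategy. The plan is to start from an arbitrary object of $\text{Rep}_k G^\infty$, decompose it via Theorem \ref{directproducttheorem} into a finite commuting family of representations of $G = H_1$, and then apply Theorem \ref{H1CharZeroThm} to each factor to extract a nilpotent triple $(X_j, Y_j, Z_j)$.

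First I would observe that since any finite-dimensional representation of $G^\infty = \varinjlim G^{m+1}$ factors through some $G^{m+1}$ (only finitely many coordinates can act non-trivially), iterated application of Theorem \ref{directproducttheorem} shows that such a representation on $V$ is equivalent to the data of a finite sequence $\rho_0, \ldots, \rho_m$ of pairwise commuting representations of $G$ on $V$, and conversely any such sequence assembles into a representation of $G^{m+1}$. Since $k = \uprod k_i$ has characteristic zero, Theorem \ref{H1CharZeroThm} identifies each $\rho_j$ with a triple $(X_j, Y_j, Z_j)$ of nilpotent endomorphisms of $V$ subject to the conditions $Z_j = [X_j, Y_j]$ and $[X_j, Z_j] = [Y_j, Z_j] = 0$, with any such triple actually giving a representation. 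This handles conditions 1.--3. and the essential-surjectivity half of condition 4.

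What remains is to translate ``$\rho_i$ and $\rho_j$ commute as representations'' into the generator-level statement 4., namely that each of $X_i, Y_i, Z_i$ commutes with each of $X_j, Y_j, Z_j$. For this I would apply Theorem \ref{commutingRepsInTheMethodThm} to the monomial basis of $k[x,y,z]$: that theorem says commutativity of the matrix formulas $(a_{ij} \otimes 1)$ and $(1 \otimes b_{ij})$ is equivalent to commutativity of every pair of coefficient matrices $(c_{ij})^{\vec{r}}$ (for $\rho_i$) and $(d_{ij})^{\vec{s}}$ (for $\rho_j$). Since by Theorem \ref{H1CharZeroThm} each such coefficient matrix for $\rho_j$ is a polynomial in $X_j, Y_j, Z_j$ (and in particular $X_j = (c_{ij})^{(1,0,0)}$, $Y_j = (c_{ij})^{(0,1,0)}$, $Z_j = (c_{ij})^{(0,0,1)}$ for the $j$-th factor), commutativity of all such coefficient matrices is equivalent to commutativity of the three generators of $\rho_i$ with the three generators of $\rho_j$. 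This gives the equivalence of 4. with commutativity of the factors.

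The main obstacle, modest as it is, is the bookkeeping in that last translation: one must be careful to note that commuting with the three generators $X_j, Y_j, Z_j$ suffices to commute with every polynomial expression in them, so that the nine scalar commutation relations in 4. really do imply the full matrix-level commutativity demanded by Theorem \ref{commutingRepsInTheMethodThm}. Once this is in hand, both directions of the stated correspondence follow and the theorem is proved; no further computation is required.
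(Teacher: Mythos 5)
Your proof matches the paper's argument precisely: it appeals to Theorem \ref{directproducttheorem} to reduce to commuting families of $G$-representations, to Theorem \ref{H1CharZeroThm} (using that $\uprod k_i$ has characteristic zero) to convert each factor to a nilpotent triple $(X_j,Y_j,Z_j)$ with the stated relations, and to Lemma \ref{commutingRepsInTheMethodThm} to translate commutativity of representations into generator-level commutativity. The only difference is that you spell out the (correct, and genuinely worth noting) observation that commuting with the three generators suffices to commute with all coefficient matrices, since the latter are polynomial expressions in the former; the paper asserts this step without comment.
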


Let $V$, $W$ be objects of $\text{Rep}_k G^\infty$, given by the
nilpotent transformations $X_0, \ldots, X_m$, $Y_0, \ldots, Y_m$,
$Z_0, \ldots , Z_m$ and $R_0, \ldots, R_m$, $S_0, \ldots, S_m$, $
T_0, \ldots, T_m$ respectively.  What is a morphism between these
two objects? By proposition \ref{dirProdHomprop} it is a linear
map $\phi:V \rightarrow W$ such that, for every $j$, $\phi$ is a
morphism between the representations of $G$ on $V$ determined by
$X_j,Y_j,Z_j$ and $R_j,S_j,T_j$.  And by theorem
\ref{morphismsInTheMethodThm} this is equivalent to

\begin{thm}
Let $V$ and $W$ be objects of $\text{Rep}_k G^\infty$, given by
 the transformations $X_j,Y_j$
and $Z_j$ for $V$ and $R_j,S_j$ and $T_j$ for $W$.  Then a linear
map $\phi:V \rightarrow W$ is a morphism between $V$ and $W$ if
and only if, for every $j$, $\phi$ satisfies
\[ X_j \circ \phi = \phi \circ R_j \hspace{1cm} Y_j \circ \phi =
\phi \circ S_j \hspace{1cm} Z_j \circ \phi = \phi \circ T_j \]
\end{thm}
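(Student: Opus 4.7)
The plan is to chain together the two results that the passage just cited, namely Proposition \ref{dirProdHomprop} and Theorem \ref{morphismsInTheMethodThm}, and then use the explicit closed form for the $(c_{ij})^{\vec{r}}$ matrices provided by Theorem \ref{H1CharZeroThm} in order to replace the infinite family of commutation conditions by the three generator commutation conditions displayed in the theorem.

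First I would invoke Proposition \ref{dirProdHomprop}, which (iterating finitely many times over the direct product factors) says that $\phi : V \to W$ is a morphism in $\text{Rep}_k G^\infty$ if and only if, for each index $j$, $\phi$ is a morphism between the constituent $G$-representations on $V$ and $W$ determined respectively by $X_j, Y_j, Z_j$ and $R_j, S_j, T_j$. This reduces the claim to the statement for a single copy of $G = H_1$: namely that a linear map is an $H_1$-morphism between two characteristic zero representations if and only if it commutes in the appropriate sense with the three generating matrices.

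Next, fix $j$ and write the $H_1$-representation on $V$ determined by $X_j, Y_j, Z_j$ in the combinatorial form of chapter \ref{themethodchapter}, using the coefficient matrices $(c_{ij})^{\vec{r}}$ of page \pageref{cijGaDefn}, and similarly write $(d_{ij})^{\vec{r}}$ for $W$. By Theorem \ref{morphismsInTheMethodThm}, $\phi$ (written as the matrix $(k_{ij})$ in chosen bases) is an $H_1$-morphism precisely when $(k_{ij})(c_{ij})^{\vec{r}} = (d_{ij})^{\vec{r}}(k_{ij})$ for every $\vec{r}$, equivalently when $\phi$ commutes with each of the induced linear maps $(c_{ij})^{\vec{r}}$ and $(d_{ij})^{\vec{r}}$ on $V$ and $W$. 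Since $k = \uprod k_i$ has characteristic zero, Theorem \ref{H1CharZeroThm} supplies the closed form $(c_{ij})^{(n,m,k)} = \frac{1}{n!\,m!\,k!} Z_j^k Y_j^m X_j^n$ and the analogous formula for the $(d_{ij})^{\vec{r}}$. From these formulas the two directions of the biconditional fall out: if $\phi$ intertwines $X_j$ with $R_j$, $Y_j$ with $S_j$ and $Z_j$ with $T_j$, then it intertwines every monomial in these generators and hence every $(c_{ij})^{\vec{r}}$ with the corresponding $(d_{ij})^{\vec{r}}$; conversely, $X_j, Y_j, Z_j$ themselves appear as the matrices $(c_{ij})^{(1,0,0)}, (c_{ij})^{(0,1,0)}, (c_{ij})^{(0,0,1)}$ (and similarly on the $W$ side), so the three displayed intertwining relations are already contained in the family indexed by $\vec{r}$.

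There is no genuine obstacle here; the statement is essentially a bookkeeping consequence of two results already proven, together with the observation that in characteristic zero the entire $H_1$-module structure is generated by $X, Y, Z$. The only point that requires a little care is the application of Proposition \ref{dirProdHomprop}: strictly speaking, it is stated for a product of two groups, so one must iterate it (or restate its obvious generalization) to handle a product of $m+1$ copies of $G$. This is routine since the argument of that proposition, based on applying $1 \otimes 1 \otimes \varepsilon_B$ to one factor at a time, passes unchanged to any finite number of factors.
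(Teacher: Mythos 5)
Your proposal is correct and follows essentially the same route as the paper: apply Proposition \ref{dirProdHomprop} to reduce to morphisms of the individual $H_1$-layers, then apply Theorem \ref{morphismsInTheMethodThm} to translate each into a family of matrix commutation conditions. You are somewhat more careful than the text itself, which passes from the full family $(k_{ij})(c_{ij})^{\vec{r}} = (d_{ij})^{\vec{r}}(k_{ij})$ directly to the three displayed relations without comment; you correctly supply the missing observation that, by the closed form of Theorem \ref{H1CharZeroThm}, every $(c_{ij})^{\vec{r}}$ is a scalar times a monomial in $X_j, Y_j, Z_j$, so the infinite family collapses to the three generator conditions. Your remark that \ref{dirProdHomprop} must be iterated to handle more than two factors is likewise a harmless bookkeeping point the paper elides.
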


We can therefore identify the category $\text{Rep}_k G^\infty$ as
the collection of all finite dimensional vector spaces $V$ over
$k$, each endowed with a collection of linear transformations
\[ X_0, \ldots, X_m, Y_0, \ldots, Y_m, Z_0, \ldots, Z_m \]
satisfying the relations given in theorem
\ref{H1DirProdLabelingThm}, with morphisms being those linear maps
commuting with the $X's$, $Y's$, and $Z's$.

\section{The Equivalence $\prod_H \hspace{-.03cm} \catfont{C}_i \rightarrow
\text{Rep}_{\prod_\filtfont{U} k_i} G^\infty$}

\label{theEquivalenceHresGnSection}

With the characterization for objects and morphisms in the
categories $\hprod \text{Rep}_{k_i} G$ and $\text{Rep}_{\uprod
k_i} G^\infty$ given in the previous two sections, the equivalence
(as $k$-linear abelian categories) is obvious.

What is left to verify is the not quite obvious fact that this
equivalence is tensor preserving.  First, let us examine the
tensor product on $\text{Rep}_k G^1$.  Fix two objects $V$ and
$W$, given by the transformations $X,Y,Z$ and $R,S,T$ on $V$ and
$W$ respectively.  Then their tensor product has the matrix
formula
\[ \left(\sum_{n,k,k} (c_{ij})^{(n,m,k)} x^n y^m z^k \right)
\otimes \left(\sum_{r,s,t} (d_{ij})^{(r,s,t)} x^r y^s z^t \right)
\]
\[ = \sum_{n,m,k)}\sum_{r,s,t} (c_{ij})^{(n,m,k)} \otimes
(d_{ij})^{(r,s,t)} x^{n+r} y^{m+s} z^{k+t} \]

Recalling that $(c_{ij})^{(n,m,k)} = \frac{1}{n!m!k!}Z^kY^mZ^n$
and similarly for $(d_{ij})$, we see that the coefficient matrix
for $x$ in the representation $V \otimes W$ is actually $X \otimes
R$, that for $y$ is $Y \otimes S$, and for $z$ is $Z \otimes T$.

Now consider two representations $V$ and $W$ for $G^\infty$ over
$k$, given by the transformations $X_j,Y_j,Z_j$ and $R_j, S_j,
T_j$ respectively.  Proposition \ref{tensorForDirProdsProp} tells
that the `layers' for the tensor product of $V$ and $W$ is just
the tensor product of the individual layers.  That is

\begin{prop}
\label{tensorGinftyprop} The tensor product of the representations
$V$ and $W$ is given by the sequence of transformations $X_j
\otimes R_j$, $Y_j \otimes S_j$, $Z_j \otimes T_j$.
\end{prop}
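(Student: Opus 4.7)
The plan is to reduce to the single-factor computation performed in the paragraph immediately preceding the proposition, and to invoke Proposition \ref{tensorForDirProdsProp} (which controls tensor products in direct-product categories) to handle the passage from a single copy of $H_1$ to the countable direct product $G^\infty$. Since $V$ and $W$ have only finitely many non-trivial layers, there is some $m$ such that each is pulled back from a representation of the finite direct product $G^m \subset G^\infty$, so it suffices to establish the claim for $G^m$; I will do this by induction on $m$.

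The base case $m=1$ is exactly the explicit calculation carried out just before the proposition. There we used Theorem \ref{H1CharZeroThm} to write $(c_{ij})^{(n,m,k)} = \tfrac{1}{n!m!k!}Z^k Y^m X^n$ for $V$ and the analogous expression for $W$, and upon multiplying the matrix formulas and collecting the coefficients of $x$, $y$, $z$, one reads off $X \otimes R$, $Y \otimes S$, $Z \otimes T$ respectively. Since in characteristic zero an $H_1$-module is determined by these three matrices (Theorem \ref{H1CharZeroThm}), this identifies the $H_1$-module structure on $V \otimes W$ as the one parameterized by the tensored layers.

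For the inductive step, decompose $G^{m+1} = G^m \times G$. The restrictions of $V$ and $W$ to $G^m$ are given by the first $m$ layers of transformations, and their restrictions to the last factor $G$ are given by the $m$-th layer. By Proposition \ref{tensorForDirProdsProp}, the tensor product of $V$ and $W$ as $G^{m+1}$-modules is the unique $G^{m+1}$-representation whose restrictions to $G^m$ and to $G$ are the tensor products of the respective restrictions. By the inductive hypothesis the restriction to $G^m$ is parameterized by the layers $X_j \otimes R_j$, $Y_j \otimes S_j$, $Z_j \otimes T_j$ for $j < m$, and by the base case the restriction to $G$ is parameterized by $X_m \otimes R_m$, $Y_m \otimes S_m$, $Z_m \otimes T_m$. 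Combining these yields the desired parameterization of $V \otimes W$.

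I do not expect a serious obstacle here; the only point requiring care is to confirm that the ``layer'' parameterization in Theorem \ref{H1DirProdLabelingThm} is genuinely preserved under Proposition \ref{tensorForDirProdsProp}, which boils down to checking that the required commutativity relations (item 4 of Theorem \ref{H1DirProdLabelingThm}) survive tensor products of matrices: this is immediate, since if $A$ commutes with $B$ and $A'$ commutes with $B'$ then $A \otimes A'$ commutes with $B \otimes B'$. With that routine check in hand, the induction closes and the proposition follows.
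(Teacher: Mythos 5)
Your proof is correct and follows essentially the same route the paper takes (implicitly): the single-factor computation immediately preceding the proposition supplies the base case, and Proposition \ref{tensorForDirProdsProp} propagates it layer by layer, which you rightly observe requires an induction on the number of factors since that proposition is stated only for binary products $G\times H$. The only remark worth making is that your closing check (that tensoring preserves the commutation relations of Theorem \ref{H1DirProdLabelingThm}) is already built into Proposition \ref{tensorForDirProdsProp}, whose conclusion is that the tensor product \emph{is} a valid $G\times H$-representation, so the structural relations on the layers come for free; the check is correct but redundant.
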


In positive characteristic the situation is slightly more
delicate. Consider for example the natural representation of $H_1$
\[
\left(%
\begin{array}{ccc}
  1 & x & z \\
   & 1 & y \\
   &  & 1 \\
\end{array}%
\right)
\]
where we consider it as a representation of $G$ in characteristic
$2$, that is, as a height-$1$ representation given by the matrices
\[ X_0 =
\left(%
\begin{array}{ccc}
  0 & 1 & 0 \\
  0 & 0 & 0 \\
  0 & 0 & 0 \\
\end{array}%
\right),
 Y_0 =
\left(%
\begin{array}{ccc}
  0 & 0 & 0 \\
  0 & 0 & 1 \\
  0 & 0 & 0 \\
\end{array}%
\right),
 Z_0 =
\left(%
\begin{array}{ccc}
  0 & 0 & 1 \\
  0 & 0 & 0 \\
  0 & 0 & 0 \\
\end{array}%
\right)
\]
The tensor product of this representation with itself is a $9
\times 9$ representation in which $x^2$ will occur as an entry,
causing it to have height $2$.  We see then that, in general, the
taking of tensor products in positive characteristic often causes
Frobenius layers to `spill over' into one another, and we do not
always have a situation analogous to proposition
\ref{tensorGinftyprop}.

However, because objects of $\hprod \text{Rep}_{k_i} G$ are
demanded to be of bounded dimension, for large enough $i$ this
difficulty will always vanish.

\begin{prop}
Let $k$ have characteristic $p>0$, and let $V$ and $W$ be
$G$-modules over $k$, say of height $m$, given by the
transformations $X_j,Y_j,Z_j$ and $R_j,S_j,T_j$. Then if $p$ is
large compared to the dimensions of both $V$ and $W$, the
representation $V \otimes W$ will also be of height $m$, given by
the transformations
\[ X_j \otimes R_j, Y_j \otimes S_j, Z_j \otimes T_j \]
for all $j = 0, \ldots, m$.
\end{prop}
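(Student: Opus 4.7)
The plan is to use the explicit factored formula from Theorem~\ref{H1charpBakerThm}, combined with standard Kronecker-product identities, to express $M_V \otimes M_W$ in exactly the same factored form but with combined generators at each Frobenius layer. The hypothesis that $p$ is large relative to $\dim V \cdot \dim W$ will be essential both to invoke that factored formula on $V \otimes W$ and to ensure that no Frobenius layer spills over into higher powers of the variables.

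First I would write $M_V = E_0^V E_1^V \cdots E_m^V$ and $M_W = E_0^W E_1^W \cdots E_m^W$, where $E_j^V = \exp\!\bigl(x^{p^j} X_j + y^{p^j} Y_j + (z^{p^j} - x^{p^j} y^{p^j}/2) Z_j\bigr)$ and $E_j^W$ is the analogous factor built from $R_j,S_j,T_j$. Taking Kronecker products and using the identity $(AB) \otimes (CD) = (A \otimes C)(B \otimes D)$, together with the fact that $A \otimes I$ always commutes with $I \otimes B$ and that the $E_j^V$ commute among themselves (condition~4 of Theorem~\ref{H1CharpLargeThm2}) and likewise the $E_j^W$, one rearranges
\[ M_V \otimes M_W \;=\; \prod_{j=0}^m \bigl(E_j^V \otimes E_j^W\bigr). \]
For each $j$, writing $A_j$ and $B_j$ for the respective exponents, one has $E_j^V \otimes E_j^W = \exp(A_j \otimes I)\exp(I \otimes B_j) = \exp(A_j \otimes I + I \otimes B_j)$ because the two operands commute. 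Collecting coefficients identifies the exponent as the expected expression with new generators $\widetilde X_j := X_j \otimes I + I \otimes R_j$, $\widetilde Y_j := Y_j \otimes I + I \otimes S_j$, $\widetilde Z_j := Z_j \otimes I + I \otimes T_j$ (which is what the notation $X_j \otimes R_j$ etc.\ in the statement abbreviates, following the convention used in Proposition~\ref{tensorGinftyprop}).

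Next I would check that the triples $(\widetilde X_j, \widetilde Y_j, \widetilde Z_j)$ satisfy the four hypotheses of Theorem~\ref{H1CharpLargeThm2} on $V \otimes W$. The commutator identity $\widetilde Z_j = [\widetilde X_j, \widetilde Y_j]$ is immediate from bilinearity of the bracket, together with the fact that $X_j \otimes I$ commutes with $I \otimes S_j$ and similarly for the other cross terms. The commutation relations across distinct layers follow in the same fashion from the layerwise commutation on $V$ and on $W$ separately. Nilpotency, and more importantly $p$-nilpotency, of each $\widetilde X_j$ is where characteristic $p$ does its work: since $X_j \otimes I$ and $I \otimes R_j$ commute, the freshman's dream gives $\widetilde X_j^p = X_j^p \otimes I + I \otimes R_j^p = 0$, using Theorem~\ref{H1CharpThm} to see that $X_j$ and $R_j$ are themselves $p$-nilpotent. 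Applying Theorem~\ref{H1CharpLargeThm2}, whose hypothesis $p \geq 2\dim(V \otimes W) = 2\dim V \cdot \dim W$ is exactly the ``$p$ large'' assumption in the statement, produces a genuine $G$-module on $V \otimes W$ whose matrix formula is, by Theorem~\ref{H1charpBakerThm}, precisely the factored expression computed above; comparison with $M_V \otimes M_W$ finishes the proof. The main obstacle I expect is the height-preservation step: one must be sure that no monomial $x^{p^{m+1}}$ (or analogous higher Frobenius power) creeps into the product when the exponentials are expanded. Without the bound $p \geq 2\dim V \cdot \dim W$, the freshman's-dream argument giving $\widetilde X_j^p = 0$ would still go through, but the BCH-type factorisation of Theorem~\ref{H1charpBakerThm} could fail on $V \otimes W$ and the layers could genuinely spill over; it is this dimension hypothesis that keeps everything in line and makes the remaining manipulations routine bookkeeping about tensor products of commuting operators.
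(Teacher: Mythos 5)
Your proof is correct and rests on the same core observation as the paper's---that the degree bounds coming from $p \geq 2\dim(V\otimes W)$ prevent the Frobenius layers of $M_V \otimes M_W$ from rolling over into higher ones---but you organize the argument differently. The paper reduces to the case where $V$ and $W$ share a single nonzero Frobenius layer $r$, writes out that matrix formula as the explicit polynomial $\sum_{n,m,k < p/2} \frac{1}{n!m!k!}Z^kY^mX^n\,x^{np^r}y^{mp^r}z^{kp^r}$, Kronecker-multiplies it with the corresponding expansion for $W$, and observes directly that the resulting exponents $(n+a)p^r$ stay below $p^{r+1}$; the multi-layer case is then declared an ``easy corollary,'' which implicitly relies on the same rearrangement $\bigl(\prod_j E_j^V\bigr)\otimes\bigl(\prod_j E_j^W\bigr) = \prod_j (E_j^V \otimes E_j^W)$ that you make explicit via the mixed-product identity. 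Your route handles all the layers at once through the product-of-exponentials factorization and a re-application of Theorems~\ref{H1CharpLargeThm2} and~\ref{H1charpBakerThm}, so you fill in a step the paper waves at. One small simplification: the freshman's-dream computation of $\widetilde X_j^p = X_j^p \otimes I + I \otimes R_j^p = 0$ is correct but not needed, since Theorem~\ref{H1CharpLargeThm2} only asks that the generators be nilpotent, and $\widetilde X_j$ acting on $V\otimes W$ with $p \geq 2\dim(V\otimes W)$ is automatically nilpotent of order $< p$. The dimension hypothesis, not characteristic-$p$ arithmetic, is what keeps the layers from spilling over---exactly as you diagnose at the end.
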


\begin{proof}
We will first prove the theorem in the case where $V$ and $W$
contain a single, mutual non-zero Frobenius layer, say the
$r^{\text{th}}$ layer, given by $X, Y,Z$ and $R,S,T$; the case of
an arbitrary number of layers is an easy corollary.  We show that,
if $p$ is large compared to the dimensions of both, there is no
possibility of that layer `spilling over' into the next one.  We
can assume that $p$ is large enough so that all of $V$, $W$, and
$V \otimes W$ are of the form given by theorem
\ref{H1CharpLargeThm2}, and in particular, that all the relevant
matrices have nilpotent order $\leq p/2$. This means that we can
write the representation $V$ as
\[ \sum_{n,m,k=0}^{p/2-1} \frac{1}{n!m!k!}Z^k Y^m X^n x^{np^r} y^{mp^r} z^{kp^r} \]
and $W$ as
\[ \sum_{a,b,c=0}^{p/2-1} \frac{1}{a!b!c!}T^c S^b R^a x^{ap^r}
y^{bp^r} z^{cp^r} \] and their tensor product as
\[ = \sum_{n,m,k,a,b,c=0}^{p/2-1} \frac{1}{n!m!k!a!b!c!} (Z^kY^mX^n
\otimes T^cS^bR^a) x^{(n+a)p^r} y^{(m+b)p^r} z^{(k+c)p^r} \] Note
that this new representation still only has a single non-zero
Frobenius layer, the $r^{\text{th}}$ one, since, e.g., $(n+a)p^r <
p^{r+1}$ for every $n$ and $a$ in the summation.  Notice also the
coefficient matrix for the monomial $x^{p^r}$ is exactly $X
\otimes R$, that for $y^{p^r}$ is exactly $Y \otimes S$, and that
for $z^{p^r}$ is $Z \otimes T$.  Thus the theorem is true in the
single layer case, and the case of an arbitrary number of layers
easily follows.
\end{proof}

So, given two objects $[V_i],[W_i]$ of $\hprod \text{Rep}_{k_i}
G$, for large enough $i$ the previous proposition applies, whence
the tensor products on $\hprod \text{Rep}_{k_i} G$ and
$\text{Rep}_{\uprod k_i} G^\infty$, via our equivalence, are
compatible, and this equivalence is indeed tensor preserving.

We leave it to the reader to convince himself that all of the
arguments of this chapter can be slightly modified to prove

\begin{thm} If $k_i$ is a sequence of fields of strictly
increasing positive characteristic, then for any $n \in
\mathbb{N}$, the category $\hnprod{n} \text{Rep}_{k_i} G$ is
tensorially equivalent to $\text{Rep}_{\uprod k_i} G^n$.
\end{thm}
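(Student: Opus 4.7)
The plan is to adapt the arguments of section \ref{theEquivalenceHresGnSection} essentially verbatim, simply capping the number of Frobenius layers at $n$ rather than allowing arbitrary finite length. First I would restrict theorem \ref{H1hprodLabelingThm} to height-$n$ objects: any $[V_i]$ of $\hnprod{n} \catfont{C}_i$ has, for almost every $i$, height at most $n$, so by theorem \ref{H1charpBakerThm} each $V_i$ is determined (for large enough $i$) by a sequence of $3n$ nilpotent transformations $X_0^i, \ldots, X_{n-1}^i, Y_0^i, \ldots, Y_{n-1}^i, Z_0^i, \ldots, Z_{n-1}^i$ satisfying the conditions of theorem \ref{H1CharpLargeThm2} (with any higher-index transformations zero). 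Passing to ultraproducts, and using theorem \ref{transformationsarenicethm} to transport those relations, yields a sequence $X_0, \ldots, X_{n-1}, Y_0, \ldots, Y_{n-1}, Z_0, \ldots, Z_{n-1}$ of nilpotent $\uprod k_i$-linear transformations on $\uprod V_i$ satisfying the four relations of theorem \ref{H1hprodLabelingThm}. Conversely, any such sequence of $3n$ transformations on a finite-dimensional $\uprod k_i$-vector space assembles into an object of $\hnprod{n} \catfont{C}_i$ exactly as in the proof of theorem \ref{H1hprodLabelingThm}.

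Next I would observe that theorem \ref{H1DirProdLabelingThm}, applied with $G^n$ in place of $G^\infty$, gives precisely the same combinatorial description: an object of $\text{Rep}_{\uprod k_i} G^n$ is a finite dimensional $\uprod k_i$-vector space equipped with exactly $n$ commuting triples $(X_j, Y_j, Z_j)$ of nilpotent transformations satisfying the identical four relations. Morphisms between two such objects are, in both categories, precisely the linear maps commuting with all $3n$ transformations. Thus the assignment sending $[V_i] \in \hnprod{n} \catfont{C}_i$ to the $\uprod k_i$-vector space $\uprod V_i$ equipped with the triples $([X_j^i], [Y_j^i], [Z_j^i])$ gives a fully faithful and essentially surjective $k$-linear functor $F: \hnprod{n} \catfont{C}_i \to \text{Rep}_{\uprod k_i} G^n$, i.e.\ an equivalence of $k$-linear abelian categories.

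Finally, the tensor-preservation argument at the end of section \ref{theEquivalenceHresGnSection} carries over without change. On the $\text{Rep}_{\uprod k_i} G^n$ side, proposition \ref{tensorForDirProdsProp} (applied to $G^n$ instead of $G^\infty$) shows that the tensor product of two objects is given layer-by-layer by $X_j \otimes R_j$, $Y_j \otimes S_j$, $Z_j \otimes T_j$ for $j = 0, \ldots, n-1$. On the $\hnprod{n} \catfont{C}_i$ side, the no-spillover proposition proved in section \ref{theEquivalenceHresGnSection} (whose hypothesis is automatic here since the objects of $\hnprod{n} \catfont{C}_i$ have bounded dimension and the characteristics $\text{char}(k_i)$ increase without bound) guarantees that for almost every $i$ the tensor product $V_i \otimes W_i$ is again of height at most $n$, with each Frobenius layer given by the tensor product of the corresponding layers. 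The equivalence $F$ therefore intertwines the two tensor structures.

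The only genuinely delicate point, and the place I would expect the most care, is the height-bounded no-spillover statement: one must check that the same large-characteristic argument used for $\hprod$ also gives height $\leq n$ (not merely finite height) after tensoring. But this falls out of the identical computation, since $n+a < 2(p/2) = p$ for any $n, a$ in the relevant ranges. Beyond that, no essentially new ideas beyond those already deployed for the $G^\infty$ case are required.
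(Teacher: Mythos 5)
Your proposal matches the paper's intended argument: the paper explicitly leaves this theorem to the reader as a slight modification of the $G^\infty$ case (theorem \ref{heightResMainTheorem}), and you have carried out precisely that modification, correctly identifying the height-$\leq n$ no-spillover point as the only place requiring additional care. The observation that the tensor-product proposition already produces a representation of the same height (not merely finite height), since the exponents $(n+a)p^r$ stay below $p^{r+1}$ when $n, a < p/2$, is exactly the check that makes the restriction to $\hnprod{n}$ work.
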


\chapter{Height-Restricted Generic Cohomology}

One application of theorem \ref{heightResMainTheorem} of the
previous chapter is to give quick and intuitive large
characteristic, `height-restricted' generic cohomology results for
the two unipotent groups we have studied, at least in the case of
$\text{Ext}^1$.

\section{First-Order Definability of $\text{Ext}^1$}

\label{FODofExt1Section}

Let $M$ and $N$ be objects in some tannakian category over the
field $k$, fix $n$, and let $\xi_1, \ldots, \xi_m$ be a sequence
of diagrams of the form
\[ \xi_j: 0 \rightarrow N \rightarrow X_j \rightarrow M \rightarrow 0 \]
To prevent ourselves from having to repeat the same long-winded
sentence over and over again, we define the formula
\index{$\text{LISE}$} $\text{LISE}(\xi_1, \ldots, \xi_m,M,N)$ to
mean `` $\xi_1, \ldots, \xi_m$ is a linearly independent sequence
of $1$-fold extensions of $M$ by $N$.''

\begin{thm}
\label{FODofLISEthm}  For fixed $m$, the formula
$\text{LISE}(\xi_1, \ldots, \xi_m,M,N)$, modulo the theory of
tannakian categories, is expressible as a first-order formula in
the language of abelian tensor categories.
\end{thm}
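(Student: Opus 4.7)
The plan is to build $\text{LISE}(\xi_1,\ldots,\xi_m,M,N)$ up in stages, each stage a bounded formula in the language of abelian tensor categories modulo the axioms of Chapter~\ref{axiomsfortannakiancategories}. First I would encode each $\xi_j$ not as a single variable but as a triple $(X_j,\iota_j,\pi_j)$ consisting of an object together with two morphisms $\iota_j : N \to X_j$ and $\pi_j: X_j \to M$; since $m$ is fixed the total arity is fixed. The statement ``$\xi_j$ is a $1$-fold extension of $M$ by $N$'' is then the conjunction of $\iota_j : N\to X_j$, $\pi_j : X_j\to M$, $\text{monic}(\iota_j)$, $\text{epic}(\pi_j)$, and $\text{ker}(\iota_j;\pi_j)$, all of which were shown to be first-order in Section~\ref{axiomsforanabeliancategory}. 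Equivalence of two such extensions $\xi$ and $\chi$ is captured by the first-order sentence asserting the existence of a morphism $\phi:X\to Y$ making the obvious ladder commute — and by the remark following the definition of equivalence in Section~\ref{CohomologyOfComodulesSection}, this single-step version already generates the equivalence relation on $\text{Ext}^1(M,N)$.

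Next I would encode the vector-space operations on $\text{Ext}^1(M,N)$ as first-order relations. Scalars are morphisms $c:\underline{1}\to\underline{1}$, so quantification over $k=\text{End}(\underline{1})$ is just bounded quantification in the morphism sort; the $k$-linear structure on $\text{Hom}$ induced by composition with $c$ and the unit isomorphism is already present in the axioms. Scalar multiplication of extensions is then defined by the formula ``if $c\neq 0$ then $c\cdot\xi$ is the triple $(X,c^{-1}\iota,\pi)$; if $c = 0$ then $c\cdot\xi$ is the split extension $N \oplus M$,'' a disjunction of two first-order clauses (the split extension being first-order describable via the biproduct predicate $\text{Sum}$). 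Baer sum of two extensions is the relation $\text{BaerSum}(\xi,\chi,\eta)$ which unravels the construction of Section~\ref{CohomologyOfComodulesSection}: assert the existence of a pullback $X$ of $X_1\to M\leftarrow X_2$ (first-order by Lemma~\ref{pullbackpushoutlemma} via universal properties), of the induced maps $\phi',\bar\phi$, of the cokernel $Y$ of $\phi'$, and of the unique factorization $\psi$, and finally require that $(Y,\bar\phi\circ\tau,\psi)$ be equivalent to the prescribed $\eta$. Each step is a bounded existential block.

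With these building blocks in place, for fixed $m$ I can iteratively compose Baer sum to define a $(m+1)$-ary relation ``$\eta$ is equivalent to $c_1\xi_1 \oplus \cdots \oplus c_m\xi_m$'' with nesting depth depending only on $m$. The full formula becomes the conjunction that each $\xi_j$ is a $1$-fold extension of $M$ by $N$ together with
\[
\forall c_1,\ldots,c_m :\underline{1}\to\underline{1}\;\;\bigl(\,c_1\xi_1\oplus\cdots\oplus c_m\xi_m \text{ is equivalent to the split extension}\;\Longrightarrow\;c_1=\ldots=c_m = 0_{\underline{1},\underline{1}}\bigr).
\]
The main obstacle is purely bookkeeping: verifying that each step of the Baer sum construction — pullback, cokernel, induced factorization — can be packaged as a first-order existential block whose depth is independent of the ambient category, and that iterating Baer sum $m-1$ times does not blow up quantifier complexity beyond a finite bound depending only on $m$. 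Once this is carried out, $\text{LISE}$ is manifestly first-order.
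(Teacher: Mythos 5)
Your proposal is correct and follows essentially the same route the paper takes: encode each $\xi_j$ as a triple $(X_j,\iota_j,\pi_j)$, verify that ``is a $1$-fold extension'', scalar multiplication, Baer sum, and equivalence-of-extensions (via a single ladder morphism, justified because for $\text{Ext}^1$ the map $\phi_0$ is forced to be an isomorphism so the one-step relation is already an equivalence relation) are all first-order, and then universally quantify over scalars $c_1,\ldots,c_m \in \text{End}(\underline{1})$ to assert linear independence. The one cosmetic variation is how you handle the trivial extension: you describe it directly as the canonical split extension through the biproduct predicate $\text{Sum}$, whereas the paper characterizes it by the property ``$\xi\oplus\chi$ is equivalent to $\xi$ for every extension $\xi$''; both are first-order, and your version avoids an extra universal quantifier, but the substance of the argument is identical.
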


\begin{proof}

A full proof that $\text{LISE}$ is a first-order formula would be
an unnecessarily mind-numbing exercise; we shall instead be
content to give an outline of such a proof, leaving it to the
reader to fill in the necessary details.

Define the formula $\text{Exten}(N,\iota,X,\pi,M)$ to mean that
these objects and morphisms comprise a $1$-fold extension of $M$
by $N$.  This amounts to demanding that $N$, $X$, and $M$ are
objects, that $\iota$ and $\pi$ are morphisms, that the morphisms
point between the objects we want them to, that $\iota$ is
injective, that $\pi$ is surjective, and that the sequence is
exact at $X$. This can be translated into a first-order sentence.

Now the formula $\text{LISE}(\xi_1, \ldots, \xi_n,M,N)$ doesn't
make sense on its face, since we are treating the extensions
$\xi_i$ as if they were elements of our category, which they are
not.  If we were being strictly formal, we should instead use the
objects and morphisms comprising the extensions as the variables,
and make the additional assertions that they are all extensions of
$M$ by $N$.  But since $\text{Exten}$ is first-order, this can
certainly be done, so we are justified in using this abbreviation.

The formula $\text{LISE}(\xi_1, \ldots, \xi_m,M,N)$ should go
something like, ``$\xi_1, \ldots, \xi_m$ are extensions of $M$ by
$N$, and for any scalars $k_1, \ldots, k_n$, if $k_1 \xi_1 \oplus
\ldots \oplus k_n \xi_n$ is equivalent to the trivial extension,
then $k_1 = \ldots = k_n = 0$.''  It is then simply a matter of
showing that the concepts of being a scalar, of scalar
multiplication of extensions, of Baer sum of extensions, of being
a trivial extension, and of being the zero scalar are all
first-order.

The first is obvious; to be a scalar simply means that it is an
endomorphism of the identity object, which is clearly first-order.
If $\phi:X \rightarrow Y$ is a morphism and $k$ a scalar, then we
define the scalar multiplication of $k$ on $\phi$ to be the
composition
\[ X \stackrel{\text{unit}_X}{\vlongrightarrow} \underline{1} \otimes X   \stackrel{k \otimes \phi}{\vlongrightarrow}
\underline{1} \otimes Y
\stackrel{\text{unit}_Y^{-1}}{\vlongrightarrow} Y
\]
which again is first-order.  This allows us, under the definition
given in section \ref{CohomologyOfComodulesSection}, to define the
scalar multiplication of an extension in a first-order way.  (We
would of course have to make separate definitions for the case
when $k = 0$ or $k \neq 0$; but this is no problem, since ``$k$ is
the zero scalar'' and ``$\xi$ is a trivial extension'' are both
first-order, as is shown below.)

As for the Baer sum, we ask the reader to see the definition of it
given in section \ref{CohomologyOfComodulesSection}.  It involves
such concepts as ``being a pullback'', ``being the unique map
pushing through a pullback'', ``being a cokernel'', ``being the
unique map pushing through a cokernel'', etc.  All of these
concepts are expressed in terms of universal properties, which are
quite amenable to being expressed in a first-order fashion.  They
simply state that, given a collection of morphisms making such and
such a diagram commute, there is a unique morphism making such and
such a diagram commute. These types of statements are plainly
first-order.

The statement that two extensions are equivalent is first-order;
it is merely the assertion that there exists a morphism making an
equivalence diagram between the two extensions commute (this is
the only point in the proof at which it is necessary to restrict
to $\text{Ext}^1$ as opposed to higher $\text{Ext}$; see section
\ref{TheDifficultyWithSection} for more on this). Then to say that
$\chi$ is equivalent to the trivial extension would go something
like ``if $\xi$ is any extension, then $\xi \oplus \chi$ is
equivalent to $\xi$''.

Finally, to say that the scalar $k$ is the zero scalar is simply
to say that it is the additive identity of the field
$\text{End}(\underline{1})$, which is clearly first-order.

\end{proof}

\begin{cor}
For fixed $n$, the formula ``$\extDim \text{Ext}^1(M,N) = n$'' is
first-order.
\end{cor}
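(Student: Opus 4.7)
The plan is to bootstrap the corollary directly from Theorem \ref{FODofLISEthm}, which asserts that $\text{LISE}(\xi_1,\ldots,\xi_m,M,N)$ is a first-order formula for every fixed $m$. Recall that the vector space $\text{Ext}^1(M,N)$ has dimension exactly $n$ precisely when (i) there exist $n$ linearly independent $1$-fold extensions of $M$ by $N$, and (ii) no collection of $n+1$ such extensions is linearly independent. Both clauses are existentially/universally quantified instances of $\text{LISE}$.

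First, I would unpack the shorthand ``$\xi_j$ is an extension'' into its underlying data. As noted in the proof of Theorem \ref{FODofLISEthm}, each $\xi_j$ is not literally an element of the structure, but is coded by a tuple $(N,\iota_j,X_j,\pi_j,M)$ for which the predicate $\text{Exten}(N,\iota_j,X_j,\pi_j,M)$ holds. Quantifying over $\xi_j$ in a first-order sentence therefore means quantifying over the morphisms $\iota_j,\pi_j$ and the object $X_j$, and conjoining the corresponding $\text{Exten}$ predicate. Since $\text{LISE}$ already absorbs these conjunctions, I can treat $\exists \xi_j \ldots$ and $\forall \xi_j \ldots$ as legitimate first-order abbreviations.

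Second, I would write the target formula as
\begin{align*}
\phi_n(M,N) \;\stackrel{\text{def}}{=}\;& \bigl(\exists \xi_1,\ldots,\xi_n\bigr)\,\text{LISE}(\xi_1,\ldots,\xi_n,M,N) \\
& \myand \;\bigl(\forall \xi_1,\ldots,\xi_{n+1}\bigr)\,\mynot\,\text{LISE}(\xi_1,\ldots,\xi_{n+1},M,N).
\end{align*}
The first conjunct produces a linearly independent family of size $n$, witnessing $\extDim \text{Ext}^1(M,N) \geq n$, and the second rules out any linearly independent family of size $n+1$, witnessing $\extDim \text{Ext}^1(M,N) \leq n$. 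Together they pin the dimension to exactly $n$. Since $n$ and $n+1$ are fixed integers, each quantifier block is finite, and each inner occurrence of $\text{LISE}$ is first-order by Theorem \ref{FODofLISEthm}; hence $\phi_n(M,N)$ is a first-order formula in the language of abelian tensor categories, as required.

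There is no real obstacle here beyond a bookkeeping nuisance: making precise that quantifying over an ``extension'' is legitimate first-order syntax, which was already handled in the preceding theorem. The only conceptual subtlety is the restriction to $\text{Ext}^1$, since the equivalence relation on $1$-fold extensions is genuinely symmetric and transitive (a bona fide equivalence relation generated by a single diagram of morphisms), whereas for higher $\text{Ext}^n$ one would need to quantify over zig-zags of equivalences of unbounded length, which is not first-order. This is exactly why the statement is framed for $\text{Ext}^1$ only.
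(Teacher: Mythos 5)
Your argument matches the paper's proof exactly: both reduce the claim to the two clauses ``there exist $n$ linearly independent $1$-fold extensions'' and ``there do not exist $n+1$ of them,'' each an instance of the already-established first-order predicate $\text{LISE}$. The extra unpacking of quantification over extension data and the remark about why the argument is confined to $\text{Ext}^1$ are both consistent with the surrounding discussion in the paper.
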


\begin{proof}
It is equivalent to ``there exist $n$ linear independent $1$-fold
extensions of $M$ by $N$, and there do not exist $n+1$ of them''.
\end{proof}

\section{Generic Cohomology for $\text{Ext}^1$}

\label{GenericCohomologyforExt1}

In this section $G$ denotes any unipotent group, defined over
$\mathbb{Z}$, for which the conclusion of theorem
\ref{heightResMainTheorem} is true (and in particular, has Hopf
algebra isomorphic to $A = \mathbb{Z}[x_1, \ldots, x_n]$ for some
$n$), $k_i$ is a sequence of fields of strictly increasing
positive characteristic, $\catfont{C}_i = \text{Rep}_{k_i} G$, and
$k$ is the ultraproduct of the fields $k_i$.

\begin{defn}
Let $k$ be a field of characteristic $p>0$, $M$ and $N$ modules
for $G$ over $k$, and let $n,h \in \mathbb{N}$.  Then we define
\index{$\text{Ext}^{n,h}(M,N)$}
\index{extension!height-restricted}
\[ \text{Ext}^{n,h}_{G(k)}(M,N) \]
to be the subset of $\text{Ext}^n_{G(k)}(M,N)$ consisting of those
(equivalence classes of) $n$-fold extensions of $M$ by $N$ such
that, up to equivalence, each of the extension modules can be
taken to have height less than or equal to $h$.
\end{defn}

Example: the (equivalence class of) the extension
\[ 0 \rightarrow k \rightarrow
\left(%
\begin{array}{cc}
  1 & x^{p^2} \\
  0 & 1 \\
\end{array}%
\right) \rightarrow k \rightarrow 0 \]
is a member of
$\text{Ext}^{1,3}_{G_a(k)}(k,k)$, but not of
$\text{Ext}^{1,2}_{G_a(k)}(k,k)$.

\begin{lem}
\label{heightRespectsSubmodulesAndQuotientsLemma} If $M$ is a
$G$-module of height no greater than $h$, then any submodule or
quotient of $M$ also has height no greater than $h$.  If $M$ and
$N$ have height no greater than $h$, so does $M \oplus N$.
\end{lem}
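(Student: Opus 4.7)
The plan is to reduce everything to lemma \ref{comoduleCoefficientsLemma}, together with the observation (implicit in the definition of height and justified in the paragraph following it) that if a representation has its matrix formula, in one basis, supported only on monomials drawn from some set $S$ of monomials in $x_1,\ldots,x_n$, then \emph{in every basis} the entries of its matrix formula are supported on monomials in $S$. This is simply because base change replaces the matrix entries by $k$-linear combinations of the originals, and taking a $k$-linear combination of polynomials can only shrink, never enlarge, the set of monomials appearing with nonzero coefficient.

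With this in hand I would argue as follows. Let $M$ have height $\leq h$, so in some basis the matrix formula $(b_{ij})$ of $M$ uses only monomials $x_1^{r_1}\cdots x_n^{r_n}$ in which no single $x_i^{p^{h}}$ occurs as a divisor. Let $V$ be a submodule of $M$ and fix any basis of $V$, giving a matrix formula $(a_{ij})$. By part $1$ of lemma \ref{comoduleCoefficientsLemma}, each $a_{ij}$ is a $k$-linear combination of the $b_{ij}$. In particular, the set of monomials appearing in the $a_{ij}$ is contained in the set of monomials appearing in the $b_{ij}$, so no $x_i^{p^{h}}$ divides any monomial of $V$'s matrix formula either, giving $\text{height}(V) \leq h$. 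The quotient case is identical, using part $2$ of lemma \ref{comoduleCoefficientsLemma}.

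For the direct sum, choose bases for $M$ and $N$ giving matrix formulas $A(x)$ and $B(x)$. In the concatenated basis, the matrix formula of $M \oplus N$ is the block diagonal matrix with blocks $A(x)$ and $B(x)$ (this follows at once from definition \ref{comoduleConstructionsDefn}). The set of monomials appearing in its entries is exactly the union of those appearing in $A(x)$ and $B(x)$, so its height is $\max(\text{height}(M),\text{height}(N)) \leq h$.

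There is no real obstacle here; the only thing to be mildly careful about is the basis-dependence of the definition of height, which is why the first paragraph's reduction to ``the set of monomials supporting the matrix entries'' — an honest isomorphism invariant — is worth recording explicitly before invoking lemma \ref{comoduleCoefficientsLemma}. Everything else is a one-line consequence.
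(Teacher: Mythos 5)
Your proof is correct and takes essentially the same route as the paper's: both reduce the submodule and quotient cases to lemma \ref{comoduleCoefficientsLemma} (matrix entries are linear combinations, so monomial support can only shrink) and handle the direct sum by inspecting the block-diagonal matrix formula. Your explicit framing of "monomial support is a basis-independent invariant" is a slightly more careful way of saying what the paper already records in the remarks immediately following the definition of height.
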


\begin{proof}
The case of subobjects and quotients follows immediately from
lemma \ref{comoduleCoefficientsLemma}: any subobject or quotient
of $M$ will have matrix formula with entries who are linear
combinations of the entries of $M$.  The case of $M \oplus N$ is
even easier to see, examining the usual matrix representation for
a direct sum.
\end{proof}

\begin{thm}
Let $M$ and $N$ be modules for $G$ over a field $k$ of
characteristic $p>0$, of height no greater than $h$.  Then
$\text{Ext}^{1,h}_{G(k)}(M,N)$ is a subspace of
$\text{Ext}^1_{G(k)}(M,N)$.
\end{thm}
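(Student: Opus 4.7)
The plan is to verify the three conditions that define a vector subspace: containment of the zero element (the trivial extension), closure under scalar multiplication, and closure under Baer sum. The key technical input is lemma \ref{heightRespectsSubmodulesAndQuotientsLemma}, which says that height $\leq h$ is preserved under submodules, quotients, and direct sums. Since every construction involved in the vector space structure on $\text{Ext}^1$ is built out of these operations (plus pullbacks and pushouts, which sit inside direct sums), the result should follow almost mechanically.

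First I would handle the trivial extension $0 \to N \to N \oplus M \to M \to 0$: by hypothesis both $M$ and $N$ have height $\leq h$, so their direct sum does as well, and hence the additive identity of $\text{Ext}^1_{G(k)}(M,N)$ lies in $\text{Ext}^{1,h}_{G(k)}(M,N)$. For scalar multiplication by a nonzero scalar $a \in k$, recall from section \ref{CohomologyOfComodulesSection} that $a\xi$ has the same extension module $X$ as $\xi$ (only the injection is rescaled by $a^{-1}$), so the height is literally unchanged; for $a = 0$, by definition $0\xi$ is the trivial extension, already handled.

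The main content is closure under Baer sum. Given two extensions
\[ \xi: 0 \to N \stackrel{\phi_1}{\to} X \stackrel{\psi_1}{\to} M \to 0, \qquad \chi: 0 \to N \stackrel{\phi_2}{\to} Y \stackrel{\psi_2}{\to} M \to 0 \]
with $X$ and $Y$ of height $\leq h$, one forms the pullback $P$ of $X$ and $Y$ over $M$, and then takes the pushout (equivalently the cokernel of the anti-diagonal map from $N$) to produce the extension module $W$ of $\xi \oplus \chi$. The pullback $P$ embeds as a submodule of $X \oplus Y$ (this is the standard construction; see for instance the proof of lemma \ref{pullbackpushoutlemma}), and $W$ arises as a quotient of $P$. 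Since $X \oplus Y$ has height $\leq h$ by lemma \ref{heightRespectsSubmodulesAndQuotientsLemma}, so does $P$, and hence so does $W$. Therefore $\xi \oplus \chi$ has an extension module of height $\leq h$, proving closure under Baer sum.

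The only step requiring any thought is the Baer sum case, and even there the only obstacle is making sure one uses the explicit submodule-of-direct-sum description of the pullback rather than its universal property alone; once that is in hand, lemma \ref{heightRespectsSubmodulesAndQuotientsLemma} does all the work. No new tools are needed.
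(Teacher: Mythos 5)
Your proof is correct and follows essentially the same route as the paper: both handle the trivial extension via $M \oplus N$, observe that scalar multiplication by a nonzero scalar leaves the extension module unchanged, and reduce closure under Baer sum to the fact that the pullback is a submodule of $X \oplus Y$ and the extension module is a quotient of that pullback, all covered by lemma \ref{heightRespectsSubmodulesAndQuotientsLemma}. Your write-up is somewhat more explicit about the pullback/cokernel factorization than the paper's terse version, but the argument is the same.
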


\begin{proof}
Let $\xi,\chi$ be extensions in $\text{Ext}^{1,h}_{G(k)}(M,N)$,
with the extension modules of both $\xi$ and $\chi$ having height
no greater than $h$. We examine the definitions given for the Baer
sum and scalar multiplications in section
\ref{CohomologyOfComodulesSection}. Clearly a non-zero scalar
multiple of either of them is still in
$\text{Ext}^{1,h}_{G(k)}(M,N)$.  As for the Baer sum $\xi \oplus
\chi$, we recall the concrete constructions of a pullback or
pushout of $G$-modules.  The former is defined as a certain
subobject of the direct sum of two modules, and the other a
certain quotient of their direct sum.  By the previous lemma both
of these constructions yield modules of height no greater than
those of the originals.  Thus $\text{Ext}^{1,h}_{G(k)}(M,N)$ is
closed under the Baer sum.  The trivial extension has extension
module isomorphic to the direct sum of $M$ and $N$, again by the
previous lemma, of height no greater than that of $M$ or $N$.
\end{proof}

Let $M$ and $N$ be modules for $G$ over $\mathbb{Z}$.  Then it of
course makes sense to consider them as modules for $G$ over any
field.  Further, for any $n \in \mathbb{N}$, and indeed for $n =
\infty$, we can consider them as modules for $G^n$ over any field,
that is, as representations with a single layer, according to
theorem \ref{directproducttheorem}.

Our goal for the rest of this section is to prove

\begin{thm}
\label{genCohomExt1Thm} Let $h \in \mathbb{N}$, $M,N$ modules for
$G$ over $\mathbb{Z}$. Suppose that the computation $\extdim
\text{Ext}^1_{G^h(k)}(M,N) = m$ is the same for any characteristic
zero field $k$.  Let $k_i$ be a sequence of fields of strictly
increasing characteristic.

\begin{enumerate}

\item{If $m$ is finite, then for sufficiently large $i$
\index{$\text{Ext}^{n,h}(M,N)$}
\[ \extdim \text{Ext}^{1,h}_{G(k_i)}(M,N) = m \]
} \item{If $m = \infty$, then for any sequence of fields $k_i$ of
strictly increasing characteristic, $\extdim
\text{Ext}^{1,h}_{G(k_i)}(M,N)$ diverges to infinity with
increasing $i$.}

\end{enumerate}
\end{thm}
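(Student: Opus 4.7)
The plan is to combine the tensor equivalence of Theorem \ref{heightResMainTheorem} with the first-order definability of ``$\extdim \text{Ext}^1 = n$'' (Theorem \ref{FODofLISEthm} and its corollary) and $\L$os' theorem, shuttling the dimensional data between the slot categories $\catfont{C}_i$ and the full abelian subcategory $\hnprod{h} \catfont{C}_i \subset \uprod \catfont{C}_i$.

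First I would observe that, since $M$ and $N$ are defined over $\mathbb{Z}$ by fixed polynomial entries, for $i$ sufficiently large $M \otimes k_i$ and $N \otimes k_i$ have height $1 \le h$, so the constant tuples $[M_i]$ and $[N_i]$ lie in $\hnprod{h} \catfont{C}_i$. Theorem \ref{heightResMainTheorem} then supplies a tensorial equivalence $\hnprod{h} \text{Rep}_{k_i} G \simeq \text{Rep}_{\uprod k_i} G^h$; under the explicit description in Section \ref{theEquivalenceHresGnSection}, the constant tuple $[M_i]$ corresponds to $M$ viewed as a $G^h$-module via the first projection $G^h \to G$, and similarly for $N$. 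Since $\uprod k_i$ has characteristic zero, the hypothesis $\extdim \text{Ext}^1_{G^h(k)}(M,N) = m$ forces $\extdim \text{Ext}^1_{\hnprod{h} \catfont{C}_i}([M_i],[N_i]) = m$, regardless of the non-principal ultrafilter chosen.

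Next I would build a dictionary between $\text{Ext}^1$ inside $\hnprod{h} \catfont{C}_i$ and the slot-wise $\text{Ext}^{1,h}_{G(k_i)}(M_i,N_i)$. A $1$-fold extension in $\hnprod{h}$ of $[M_i]$ by $[N_i]$ is, by the very definition of the subcategory, the ultraproduct of slot-wise extensions whose middle term has height $\le h$ almost everywhere, and conversely any such ultraproduct yields an extension in $\hnprod{h}$. Since the middle term has dimension $\dim M + \dim N$ everywhere, the $\text{Ext}^1$ groups computed in $\hnprod{h}$, $\resprod \catfont{C}_i$, and $\uprod \catfont{C}_i$ all agree for these objects (as noted in the introduction). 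Combining this with Proposition \ref{linIndExtensionsInSubcategoriesProp}, which preserves linear independence of $1$-fold extensions under full abelian subcategories, and with $\L$os' theorem applied to the first-order formula $\text{LISE}$, I obtain the key dictionary: a tuple $(\xi^i_1, \ldots, \xi^i_r)$ of slot-wise Ext$^{1,h}$-extensions of $M_i$ by $N_i$ is linearly independent slot-wise on a large set if and only if $([\xi^i_1], \ldots, [\xi^i_r])$ is linearly independent in $\text{Ext}^1_{\hnprod{h} \catfont{C}_i}([M_i],[N_i])$.

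To conclude part (1), suppose for contradiction the set $\{i : \extdim \text{Ext}^{1,h}_{G(k_i)}(M_i,N_i) \ne m\}$ is infinite and extend it to a non-principal ultrafilter $\filtfont{U}$. If the slot-wise dimension exceeds $m$ on a $\filtfont{U}$-large set, I select $m+1$ linearly independent Ext$^{1,h}$-extensions slot-wise, take the ultraproduct, and apply the dictionary to obtain $m+1$ linearly independent extensions in $\hnprod{h} \catfont{C}_i$, contradicting the dimension $m$ established in the first step; the case ``dimension $< m$'' is symmetric, starting from the given $m$ linearly independent extensions in $\hnprod{h}$ and descending. For part (2), fixing $n$, infiniteness of $\{i : \extdim \text{Ext}^{1,h}(M_i,N_i) < n\}$ leads to the same contradiction, as $m = \infty$ forces $n$ linearly independent extensions in $\hnprod{h} \catfont{C}_i$ for every choice of ultrafilter. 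The main obstacle I anticipate is the bookkeeping in the dictionary, specifically verifying that representatives chosen in the slots for a given extension living in $\hnprod{h}$ can really be arranged to have height $\le h$ (and not merely some larger height), which requires careful use of the explicit form of the equivalence in Section \ref{theEquivalenceHresGnSection}.
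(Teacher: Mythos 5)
Your proposal is essentially the paper's argument, merely phrased as a proof by contradiction rather than the paper's direct route (which establishes the dictionary as Proposition \ref{Ext1InequalityProp}, deduces that the set on which $\extdim \text{Ext}^{1,h}_{G(k_i)}(M,N) = m$ is large for every non-principal ultrafilter, and then invokes Corollary \ref{cofinitelemma} to conclude cofiniteness); the key lemmas you cite --- the tensor equivalence $\hnprod{h} \catfont{C}_i \simeq \text{Rep}_{\uprod k_i} G^h$, Proposition \ref{linIndExtensionsInSubcategoriesProp}, and $\L$os' theorem applied to $\text{LISE}$ --- are exactly the ones the paper uses. One small inaccuracy worth flagging: your remark that ``the $\text{Ext}^1$ groups computed in $\hnprod{h}$, $\resprod \catfont{C}_i$, and $\uprod \catfont{C}_i$ all agree for these objects'' is not quite right, since bounded dimension of the middle term does not force bounded \emph{height} (the example in Section \ref{GenCohomExampleSection} shows $\text{Ext}^1_{G_a(k)}(k,k)$ is infinite-dimensional while $\text{Ext}^{1,h}$ is $h$-dimensional), but this overclaim is not load-bearing in your argument because the dictionary you actually use runs through Proposition \ref{linIndExtensionsInSubcategoriesProp}, which only needs linear-independence preservation under the full abelian subcategory inclusion, not equality of the Ext groups.
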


For each $i$, suppose we have a diagram in $\catfont{C}_i$
\[ \xi^i: 0 \rightarrow N_i \stackrel{\iota^i}{\longrightarrow}
X_i \stackrel{\pi^i}{\longrightarrow} M_i \rightarrow 0 \] Denote
by $[\xi^i]$ corresponding diagram in $\uprod \catfont{C}_i$:
\[ [\xi^i]: 0 \rightarrow [N_i] \stackrel{[\iota^i]}{\longrightarrow}
[X_i] \stackrel{[\pi^i]}{\longrightarrow} [M_i] \rightarrow 0 \]

\begin{prop}
For each $i$, let $\xi_i^1, \ldots, \xi_i^m$ be the sequence of
diagrams in $\catfont{C}_i$
\begin{gather*}
 \xi_i^1: 0 \rightarrow N_i \rightarrow X_i^1 \rightarrow M_i
\rightarrow 0 \\
 \vdots \\
 \xi_i^m: 0 \rightarrow N_i \rightarrow X_i^m \rightarrow M_i
\rightarrow 0
\end{gather*}
 Then the formula $\text{LISE}(\xi_i^1, \ldots,
\xi_i^m,M_i,N_i)$ holds in almost every $\catfont{C}_i$ if and
only if the formula $\text{LISE}([\xi_i^1], \ldots,
[\xi_i^m],[M_i],[N_i])$ holds in $\uprod \catfont{C}_i$.
\end{prop}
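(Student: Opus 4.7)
The plan is to reduce the claim entirely to \L os's theorem by invoking the first-order definability result of the previous section. By Theorem \ref{FODofLISEthm}, modulo the theory of tannakian categories, $\text{LISE}(\xi^1,\ldots,\xi^m,M,N)$ is expressible as a single first-order formula $\Phi(\xi^1,\ldots,\xi^m,M,N)$ in the language of abelian tensor categories (strictly speaking, the $\xi^j$ are abbreviations for the tuples of objects and morphisms comprising the extensions, but this is harmless as the property ``is a $1$-fold extension of $M$ by $N$'' is itself first-order). Since each $\catfont{C}_i$ is a tannakian category, and since by Theorem \ref{resprodisatannakiancategorythm} the ultraproduct $\uprod \catfont{C}_i$ is also tannakian, the axioms of a tannakian category hold in every relevant slot as well as in the ultraproduct. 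Hence the formula $\Phi$ faithfully encodes $\text{LISE}$ everywhere in sight.

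Given this, I would apply \L os's theorem (Corollary \ref{loscor}) directly. The element $[\xi_i^j]$ of $\uprod \catfont{C}_i$ is, by construction, the equivalence class of the tuple of objects and morphisms comprising $\xi_i^j$, and the atomic predicates of the language (composition, addition, tensor product, the natural isomorphisms $\text{assoc}$, $\text{comm}$, $\text{unit}$) are, by definition of the ultraproduct of structures, computed slot-wise on representatives modulo $\filtfont{U}$. Thus $\Phi([\xi_i^1],\ldots,[\xi_i^m],[M_i],[N_i])$ holds in $\uprod \catfont{C}_i$ if and only if the large set
\[ \{ i \in I : \Phi(\xi_i^1,\ldots,\xi_i^m,M_i,N_i) \text{ holds in } \catfont{C}_i \} \]
belongs to $\filtfont{U}$, which is precisely the statement that $\text{LISE}(\xi_i^1,\ldots,\xi_i^m,M_i,N_i)$ holds almost everywhere.

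The main (and only) conceptual obstacle is keeping track of the equivalence between the informal assertion $\text{LISE}$, phrased in terms of the $k$-vector space structure on $\text{Ext}^1$, and its translation into the purely relational first-order formula $\Phi$. In particular, one must verify that being a scalar of $\catfont{C}_i$ (i.e.~an endomorphism of $\underline{1}_i$) corresponds correctly under the ultraproduct to being a scalar of $\uprod \catfont{C}_i$; but this is immediate from the identification $\text{End}(\underline{1}) = \uprod \text{End}(\underline{1}_i) = \uprod k_i$ already established, and Baer sum and scalar multiplication of extensions, being defined through pullbacks, pushouts, cokernels and unique factorizations (all first-order universal properties), are correspondingly preserved. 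Once these bookkeeping matters are settled, the proposition is an immediate consequence of \L os's theorem and requires no further combinatorial input.
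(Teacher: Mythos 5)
Your argument is correct and is exactly the paper's: the paper's entire proof reads ``Apply theorems \ref{FODofLISEthm} and \ref{Los'sTheorem}.'' You have simply unpacked the same two-step reduction (first-order definability of $\text{LISE}$, then \L os) with some extra bookkeeping commentary, so there is nothing substantive to add.
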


\begin{proof}
Apply theorems \ref{FODofLISEthm} and \ref{Los'sTheorem}.
\end{proof}

Now fix two modules $M$ and $N$ for $G$ over $\mathbb{Z}$, and for
each $i$, let $\xi_i^1, \ldots, \xi_i^m$ be the sequence of
diagrams in $\catfont{C}_i$
\begin{gather*}
 \xi_i^1: 0 \rightarrow N \rightarrow X_i^1 \rightarrow M
\rightarrow 0 \\
 \vdots \\
 \xi_i^m: 0 \rightarrow N \rightarrow X_i^m \rightarrow M
\rightarrow 0
\end{gather*}
 Further, assume that each $\xi_i^j$ is a member of
$\text{Ext}^{1,h}_{G(k_i)}(M,N)$; this is merely the assertion
that every $X_i^j$ has height no greater than $h$.  As $M$ and $N$
are constant over $i$ and have height $1$, each of $[M],[N]$, and
$[X_i^j]$ have bounded height, whence $[\xi_i]_j
\stackrel{\text{def}}{=} [\xi_i^j]$ is an extension in $\hnprod{h}
\catfont{C}_i$.

\begin{prop}
\label{Ext1InequalityProp} For fixed $n,h \in \mathbb{N}$ and
modules $M$ and $N$ for $G$ over $\mathbb{Z}$, the statement
``$\extdim \text{Ext}^{1,h}_{G(k_i)}(M,N) \geq n$'' holds for
almost every $i$ if and only if $\extdim \text{Ext}^1_{G^h(\uprod
k_i)}(M,N) \geq n$.
\end{prop}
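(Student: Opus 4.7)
The plan is to chain together the first-order definability of linear independence of extensions, \L os' theorem, and the tensor equivalence $\hnprod{h} \catfont{C}_i \isomorphic \text{Rep}_{\uprod k_i} G^h$ of theorem \ref{heightResMainTheorem}. The statement ``$\extdim \text{Ext}^{1,h}_{G(k_i)}(M,N) \geq n$'' unwinds to the assertion that there exist $n$ diagrams $\xi_i^1, \ldots, \xi_i^n$ which are $1$-fold extensions of $M$ by $N$ in $\catfont{C}_i$, whose extension modules have height no greater than $h$, and which satisfy $\text{LISE}(\xi_i^1, \ldots, \xi_i^n, M, N)$. The first step is to observe that each of these conditions is first-order in the language of abelian tensor categories: $\text{LISE}$ is first-order by theorem \ref{FODofLISEthm}, and having height $\leq h$ is first-order since it amounts to the nilpotency constraints $X_h^i = Y_h^i = Z_h^i = \cdots = 0$ (equivalently, the relevant distinguished endomorphisms vanish), which are finite equational conditions.

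Next I would apply the preceding proposition (or \L os' theorem directly) to transfer the ``almost every $i$'' statement to a statement inside the ultraproduct $\uprod \catfont{C}_i$: there exist diagrams $[\xi_i^1], \ldots, [\xi_i^n]$ satisfying $\text{LISE}([\xi_i^1],\ldots,[\xi_i^n],[M],[N])$, whose extension modules $[X_i^j]$ have height $\leq h$. The crucial point is that the extension modules $[X_i^j]$ automatically have bounded dimension $\dim M + \dim N$, so $[\xi_i^j]$ actually lives in the restricted ultraproduct $\resprod \catfont{C}_i$, and by the height condition, in fact in $\hnprod{h} \catfont{C}_i$. I now invoke proposition \ref{linIndExtensionsInSubcategoriesProp} twice: once to pass linear independence between $\uprod \catfont{C}_i$ and the full $k$-linear abelian subcategory $\resprod \catfont{C}_i$, and again to pass it between $\resprod \catfont{C}_i$ and $\hnprod{h} \catfont{C}_i$. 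Thus ``almost every $i$'' linear independence is equivalent to linear independence of the $n$ extensions in $\hnprod{h} \catfont{C}_i$.

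Finally I apply theorem \ref{heightResMainTheorem}, which gives a tensor equivalence $F:\hnprod{h} \catfont{C}_i \isoarrow \text{Rep}_{\uprod k_i} G^h$. Equivalences of $k$-linear abelian categories preserve $\text{Ext}^1$ and Baer sums, and under $F$ the objects $[M]$ and $[N]$ (which are the constant tuples, hence have all Frobenius layers beyond the zeroth equal to zero) correspond precisely to $M$ and $N$ viewed as $G^h$-modules with only their zeroth $G$-factor acting nontrivially, which is the same as viewing them as $G$-modules pulled back through the first projection $G^h \twoheadrightarrow G$. Hence $n$ linearly independent height-$\leq h$ extensions in $\hnprod{h} \catfont{C}_i$ correspond bijectively to $n$ linearly independent extensions in $\text{Rep}_{\uprod k_i} G^h$, giving $\extdim \text{Ext}^1_{G^h(\uprod k_i)}(M,N) \geq n$.

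The main obstacle I anticipate is the bookkeeping in the last step, namely verifying that under the explicit equivalence constructed in section \ref{theEquivalenceHresGnSection} the constant objects $[M]$ and $[N]$ are actually identified with the appropriate $G^h$-module structures on $M$ and $N$. Concretely, one must check that their single nonzero Frobenius layer corresponds to a single $G$-factor of $G^h$ acting, with the remaining factors trivial, so that $\text{Ext}^1$ computed in $\text{Rep}_{\uprod k_i} G^h$ agrees with the stated quantity. Once this identification is made precise, the rest of the argument is a clean succession of already-established equivalences, and both directions of the ``if and only if'' fall out simultaneously.
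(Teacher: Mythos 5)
Your proof follows the same route as the paper's: transfer linear independence via the first-order definability of $\text{LISE}$ and \L os' theorem, locate the extensions inside $\hnprod{h}\catfont{C}_i$ by boundedness of dimension, invoke proposition~\ref{linIndExtensionsInSubcategoriesProp} to pass linear independence between the full ultracategory and the subcategory, and finish with the tensor equivalence of theorem~\ref{heightResMainTheorem} together with the observation that the constant objects $[M],[N]$ correspond to $M,N$ in $\text{Rep}_{\uprod k_i}G^h$.

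One assertion in your first paragraph is, however, unjustified and in fact false: the claim that ``has height $\leq h$'' is a first-order condition in the language of abelian tensor categories. Height is defined in terms of the Frobenius layers of the matrix formula, i.e.~in terms of the specific $H_1$- (or $G_a$-) comodule structure on the underlying vector space, and the distinguished linear maps $X_h,Y_h,Z_h$ are not named by any primitive or derived symbol of the categorical language; they are not invariant under tensor equivalence in the way the language's formulas must be. The paper carefully avoids making this claim. Fortunately the claim is also unnecessary: in the forward direction the hypothesis already supplies extension modules of height $\leq h$ for almost every $i$, so the ultraproduct objects $[X_i^j]$ have height $\leq h$ directly from the definition of height in $\resprod\catfont{C}_i$ (essential supremum), no \L os' theorem required; in the reverse direction, being an object of $\hnprod{h}\catfont{C}_i$ by definition means almost every $X_i^j$ has height $\leq h$, again with no transfer principle needed. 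So delete the first-order claim about height and simply note that the height bound holds by hypothesis in one direction and by definition of $\hnprod{h}$ in the other; what \emph{is} transferred by \L os' theorem is only the $\text{LISE}$ formula. With that correction your argument coincides with the one in the text.
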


\begin{proof}
Suppose $\extdim \text{Ext}^{1,h}_{G(k_i)}(M,N) \geq n$ holds for
almost every $i$.  This means that, for almost every $i$, we have
a linearly independent sequence of $1$-fold extensions of $M$ by
$N$
\begin{gather*}
 \xi_i^1: 0 \rightarrow N \rightarrow X_i^1 \rightarrow M
\rightarrow 0 \\
 \vdots \\
 \xi_i^m: 0 \rightarrow N \rightarrow X_i^m \rightarrow M
\rightarrow 0
\end{gather*}
 with each $X_i^j$ being of height $\leq h$. Note
that the objects $[X_i]^1, \ldots, [X_i]^m$ are of bounded height
and dimension; then these project to the sequence of diagrams
$[\xi_i]^1, \ldots, [\xi_i]^m$ in $\hnprod{h} \catfont{C}_i$.  As
the formula $\text{LISE}$ is first-order, these extensions,
considered as diagrams in the full ultracategory $\uprod
\catfont{C}_i$, are also linearly independent, and by proposition
\ref{linIndExtensionsInSubcategoriesProp}, so also are they in the
undercategory $\hnprod{h} \catfont{C}_i$.  Note also that, under
the equivalence $\hnprod{h} \catfont{C}_i \isomorphic
\text{Rep}_{\uprod k_i} G^h$ given in section
\ref{theEquivalenceHresGnSection} the objects $[M]$ and $[N]$ in
$\hnprod{h} \catfont{C}_i$ actually correspond to the objects $M$
and $N$ in $\text{Rep}_{\uprod k_i} G^h$.  This gives a collection
of $m$ linearly independent extensions of $N$ by $M$ in the
category $\text{Rep}_{\uprod k_i} G^h$; thus, $\extdim
\text{Ext}^1_{G^h(\uprod k_i)}(M,N) \geq n$.

The converse is proved similarly; if $\extdim
\text{Ext}^1_{G^h(\uprod k_i)}(M,N) \geq n$, take a linearly
independent sequence of extensions $[\xi_i]^1, \ldots, [\xi_i]^n$
of $N$ by $M$ in $\hnprod{h} \catfont{C}_i$, which project back
to, for almost $i$, a linearly independent sequence of extensions
$\xi_i^1, \ldots, \xi_i^n$ of $M$ by $N$ in $\catfont{C}_i$,
showing $\extdim \text{Ext}^{1,h}_{G(k_i)}(M,N) \geq n$ for almost
every $i$.
\end{proof}

\begin{cor}
The statement $\extdim \text{Ext}^1_{G^h(\uprod k_i)}(M,N) = n$
holds if and only if $\extdim \text{Ext}^{1,h}_{G(k_i)}(M,N) = n$
holds for almost every $i$.
\end{cor}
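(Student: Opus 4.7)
The plan is to derive the equality statement from Proposition \ref{Ext1InequalityProp} by applying it at the two adjacent thresholds $n$ and $n+1$, and then exploiting the defining property of an ultrafilter that exactly one of a set and its complement is large.

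First I would rewrite the equality $\extdim \text{Ext}^1_{G^h(\uprod k_i)}(M,N) = n$ as the conjunction of the two inequalities $\extdim \text{Ext}^1_{G^h(\uprod k_i)}(M,N) \geq n$ and $\extdim \text{Ext}^1_{G^h(\uprod k_i)}(M,N) < n+1$, and similarly for the statement about almost every $i$. By Proposition \ref{Ext1InequalityProp}, the first inequality in the ultraproduct is equivalent to ``$\extdim \text{Ext}^{1,h}_{G(k_i)}(M,N) \geq n$ for almost every $i$'', and analogously ``$\geq n+1$'' in the ultraproduct corresponds to ``$\geq n+1$ almost everywhere''.

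The main step is then to convert the negation ``$\not\geq n+1$ in the ultraproduct'' into a positive almost-everywhere statement. Here I would invoke the ultrafilter dichotomy: for any subset $J \subseteq I$, either $J$ or $I \setminus J$ lies in $\filtfont{U}$, but not both. Applying this to $J = \{i \in I : \extdim \text{Ext}^{1,h}_{G(k_i)}(M,N) \geq n+1\}$, the failure of $J$ to be large is equivalent to its complement being large, i.e.\ to the almost-everywhere statement ``$\extdim \text{Ext}^{1,h}_{G(k_i)}(M,N) \leq n$''. Combining this with the almost-everywhere inequality ``$\geq n$'' and using that the intersection of two large sets is large (a basic filter property), I obtain a single large set on which $\extdim \text{Ext}^{1,h}_{G(k_i)}(M,N) = n$ holds. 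The reverse direction is symmetric: if equality holds on a large set, then both of the inequalities ``$\geq n$'' and ``$\not\geq n+1$'' hold on that same large set, and Proposition \ref{Ext1InequalityProp} transports each to the ultraproduct, yielding the equality there.

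I do not anticipate any serious obstacle; the entire content of the corollary has been packaged into the preceding proposition, and the remaining work is purely the standard manipulation of ultrafilters to pass from inequalities to equalities. The only subtlety worth flagging explicitly in the writeup is the asymmetric use of the ultrafilter axiom when handling the strict inequality $< n+1$, since this is where the negation gets promoted to a positive almost-everywhere statement rather than merely a ``not almost everywhere'' one.
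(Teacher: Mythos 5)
Your proposal is correct and takes essentially the same route as the paper: decompose the equality as the conjunction of $\geq n$ and the negation of $\geq n+1$, then apply Proposition \ref{Ext1InequalityProp} at both thresholds. The paper's proof is terser and leaves the ultrafilter bookkeeping implicit, but the content is identical; your explicit handling of the negation via the ultrafilter dichotomy and intersection of large sets is exactly what the paper's ``Apply the previous proposition'' compresses.
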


\begin{proof}
The above statement is equivalent to the conjunction ``$\extdim
\text{Ext}^1_{G^h(\uprod k_i)}(M,N) \geq n$'' and NOT ``$\extdim
\text{Ext}^1_{G^h(\uprod k_i)}(M,N) \geq n+1$''.  Apply the
previous proposition.
\end{proof}

We can now prove theorem \ref{genCohomExt1Thm}.  Suppose that the
computation $\extdim \text{Ext}^1_{G^h(k)}(M,N) = n$ is both
finite and the same for \emph{any} characteristic zero field $k$.
Then in particular, it is the same for the field $\uprod k_i$ for
\emph{any} choice of non-principal ultrafilter.  Let $J \subset I$
be the set on which $\extdim \text{Ext}^{1,h}_{G(k_i)}(M,N) = n$
is true.  By the previous corollary, $J$ is large for every choice
of non-principal ultrafilter, and by corollary
\ref{cofinitelemma}, $J$ is cofinite.  This proves the first part
of the theorem.

If instead $\extdim \text{Ext}^1_{G^h(k)}(M,N)$ is infinite, then
for any $n \in \mathbb{N}$, the statement $\extdim
\text{Ext}^{1,h}_{G(k_i)} = n$ is false for almost every $i$, for
every choice of non-principal ultrafilter, whence the statement
$\extdim \text{Ext}^{1,h}_{G(k_i)} = n$ is false on a cofinite
set. This goes for every $n \in \mathbb{N}$, whence $\extdim
\text{Ext}^{1,h}_{G(k_i)}$ is divergent.

\subsection{An Example}

\label{GenCohomExampleSection}

We shall illustrate an application of theorem
\ref{genCohomExt1Thm} with a simple, easily verifiable example.

Let $G=G_a$ and consider \index{$\text{Ext}^n(M,N)$}
$\text{Ext}^1_{G_a(k)}(k,k)$, where $k$ has characteristic $p>0$.
As the extension module of any extension of $k$ by $k$ has
dimension $2$, and as $p \geq 2$ for all primes, theorem
\ref{Gacharptheorem} applies, whence any $2$-dimensional
representation of $G_a$ is given by a finite sequence $X_0,
\ldots, X_m$ of commuting nilpotent matrices over $k$.  We can
take $X_0$ to be in Jordan form times some scalar, namely
\[
\left(%
\begin{array}{cc}
  0 & c_0 \\
  0 & 0 \\
\end{array}%
\right)
\]
for some scalar $c_0$.  The centralizers of this matrix are
exactly those of the form
\[
\left(%
\begin{array}{cc}
  a & b \\
  0 & a \\
\end{array}%
\right)
\]
and if we demand them to be nilpotent, we must have $a=0$.  Thus
the $X_i$ can be taken to be
\[
X_0 =
\left(%
\begin{array}{cc}
  0 & c_0 \\
  0 & 0 \\
\end{array}%
\right) , \ldots, X_m =
\left(%
\begin{array}{cc}
  0 & c_m \\
  0 & 0 \\
\end{array}%
\right)
\]
for some scalars $c_0, \ldots, c_m$.  The representation they
generate according to theorem \ref{Gacharptheorem} is
\[
\left(
\begin{array}{cc}
  1 & c_0x+c_1x^p + \ldots + c_m x^{p^m} \\
  0 & 1 \\
\end{array}
\right)
\]
These are all extensions of $k$ by $k$ with the obvious injection
$1 \mapsto (1,0)$ and projection $(1,0) \mapsto 0,(0,1) \mapsto
1$, and any extension of $k$ by $k$ must be of this form.
Therefore \index{extension} extensions of the form
\[ 0 \rightarrow k \rightarrow
\left(
\begin{array}{cc}
  1 & c_0x+c_1x^p + \ldots + c_m x^{p^m} \\
  0 & 1 \\
\end{array}
\right) \rightarrow k \rightarrow 0 \] constitute all extensions
of $k$ by $k$.  Denote by $\xi_m$ the extension
\[\xi_m: 0 \rightarrow k \rightarrow
\left(
\begin{array}{cc}
  1 & x^{p^m} \\
  0 & 1 \\
\end{array}
\right) \rightarrow k \rightarrow 0 \] Then direct computation
shows that the \index{extension!Baer sum of} Baer sum of $\xi_m$
and $\xi_n$ is the extension
\[ \xi_m \oplus \xi_n: 0 \rightarrow k \rightarrow
\left(
\begin{array}{cc}
  1 & x^{p^m}+x^{p^n} \\
  0 & 1 \\
\end{array}
\right) \rightarrow k \rightarrow 0
\]
and that, for $c \neq 0$, the \index{extension!scalar
multiplication of} scalar multiplication $c \xi_m$ is
\index{extension!equivalence of} (equivalent to)
\[ c\xi_m: 0 \rightarrow k \rightarrow
\left(
\begin{array}{cc}
  1 & c x^{p^m}\\
  0 & 1 \\
\end{array}
\right) \rightarrow k \rightarrow 0
\]
A basis for $\text{Ext}^1_{G_a(k)}(k,k)$ is therefore given by
$\xi_1, \xi_1, \ldots $.  If instead we restrict to
\index{extension!height-restricted}
$\text{Ext}^{1,h}_{G_a(k)}(k,k)$, then this is a finite
dimensional subspace spanned by $\xi_0, \ldots, \xi_{h-1}$.

Now consider $\text{Ext}^1_{G_a^h(k)}(k,k)$, where $k$ now has
characteristic zero.  Using theorems \ref{Gacharzerothm} and
\ref{directproducttheorem} virtually identical computations to the
above show it to be spanned by the linearly independent extensions
$\chi_0, \ldots, \chi_{h-1}$, given by
\[ \chi_m: 0 \rightarrow k \rightarrow
\left(
\begin{array}{cc}
  1 & x_m\\
  0 & 1 \\
\end{array}
\right) \rightarrow k \rightarrow 0
\]
where $x_m$ denotes the $m^{th}$ free variable of the Hopf algebra
$k[x_0, \ldots, x_{h-1}]$, and that the Baer sum and scalar
multiplication of extensions give analogous results to that of the
above.  We see then that
\[ \extDim \text{Ext}^{1,h}_{G_a(k)}(k,k) =  \extDim
\text{Ext}^1_{G_a^h(k^\prime)}(k^\prime,k^\prime) \] when $k$ has
characteristic $p$ and $k^\prime$ has characteristic zero.  In
particular we conclude that, if $k_i$ is a sequence of fields of
increasing positive characteristic, then
\[ \extDim \text{Ext}^{1,h}_{G_a(k_i)}(k_i,k_i) \hspace{.2cm}
\vlongrightarrow\hspace{.2cm} \extDim \text{Ext}^1_{G_a^h(\uprod
k_i)}(\uprod k_i,\uprod k_i) \] which is predicted by theorem
\ref{genCohomExt1Thm}.

The reader should note that this example is misleading, in that
the generic value of $\text{Ext}^{1,h}_{G_a(k)}(k,k)$ is attained
for \emph{any} positive characteristic $p \geq 2$.  This was
simply due to the fact that theorem
\ref{Gacharzeropanaloguetheorem} applies to all characteristics in
dimension $2$, i.e.~because any $2 \times 2$ nilpotent matrix is
nilpotent of order $\leq 2$.  If instead we were to consider
$\text{Ext}^{1,h}_{G_a(k_i)}(k_i \oplus k_i, k_i \oplus k_i)$,
then (assuming the computation on the right does not depend on the
particular characteristic zero field) we would still have
\[ \extDim \text{Ext}^{1,h}_{G_a(k_i)}(k_i \oplus k_i,k_i \oplus k_i) \hspace{.2cm}
\vlongrightarrow\hspace{.2cm} \extDim \text{Ext}^1_{G_a^h(\uprod
k_i)}(\uprod k_i \oplus \uprod k_i, \uprod k_i \oplus \uprod k_i)
\]
only this time we would have to wait for $\text{char}(k_i) = 5$
for the generic value to be obtained.

\section{The Difficulty with Higher Ext}

\label{TheDifficultyWithSection}

To finish, we mention a few of the reasons why our attempts to
apply this machinery to $\text{Ext}^n$ for $n>1$ have so far
proved unfruitful.

In the previous section we saw that there is a $1-1$
correspondence between extensions in $\text{Ext}^1_{G^h(\uprod
k_i)}(M,N)$ and almost everywhere extensions in
$\text{Ext}^{1,h}_{G(k_i)}(M,N)$. But for higher Ext, this does
not always work. Here is what can go wrong.  For concreteness'
sake consider $\text{Ext}^{2,h}_{G(k_i)}(M,N)$, and suppose that,
for each $i$, we have an element $\xi_i \in
\text{Ext}^{2,h}_{G(k_i)}(M,N)$.  This means that each $\xi_i$ is
of the form
\[ \xi_i:0 \rightarrow M \rightarrow X_i \rightarrow Y_i
\rightarrow N \rightarrow 0 \] with every $X_i$ and $Y_i$ being,
up to equivalence of extensions, of height $\leq h$.  But this
says nothing about the \emph{dimensions} of $X_i$ and $Y_i$, and
indeed there is every reason to suspect that $\text{dim}(X_i)$ and
$\text{dim}(Y_i)$ diverge as $i$ becomes large.  As the objects of
$\hnprod{h} \text{Rep}_{k_i} G$ are demanded to have bounded
dimension as well as height, the objects $[X_i]$ and $[Y_i]$ will
not belong to $\hnprod{h} \text{Rep}_{k_i} G$, and hence the
extension $[\xi_i] \in \uprod \text{Rep}_{k_i} G$ will not belong
to $\hnprod{h} \text{Rep}_{k_i} G$.

Another problem we face in the case of higher $\text{Ext}$ is in
trying to define equivalence of extensions in a first-order way.
For $\text{Ext}^1$ this was no problem, since if two $1$-fold
extensions are equivalent, there is necessarily an actual
equivalence map between them, which is easily asserted in a
first-order way. But this is not so for $n$-fold extensions in
general; equivalent extensions need not have an actual equivalence
mapping between them (see section
\ref{CohomologyOfComodulesSection}).

To illustrate the problem, suppose we have, for each $i$, two
equivalent extensions $\xi_i$ and $\chi_i$ in the category
$\catfont{C}_i$. What this says is that, for each $i$, there
exists a finite sequence of $m_i$ extensions $\rho_i^1,\rho_i^2,
\ldots, \rho_i^{m_i}$ forming a chain of concrete equivalencies
leading from $\xi_i$ to $\chi_i$.  But there is every reason to
suspect that $m_i$ diverges to infinity as $i$ becomes large. As
such, these equivalencies between $\xi_i$ and $\chi_i$ in the
categories $\catfont{C}_i$ do not necessarily project to an
equivalence between the extensions $[\xi_i]$ and $[\chi_i]$ in the
category $\uprod \catfont{C}_i$.  If such a collection were found
(and we have none in mind), this would in fact prove that the
property of being equivalent is not first-order.

With these difficulties in mind, we tried instead to prove the
following inequality:

\begin{thm}
Let $n,h \in \mathbb{N}$, and let $M$ and $N$ be modules for $G$
over $\mathbb{Z}$.  Suppose that the computation $\extdim
\text{Ext}^n_{G^h(k)}(M,N) = m$ (where $m$ could possibly be
infinite) is the same for every characteristic zero field $k$.
Then for any sequence of fields $k_i$ of increasing positive
characteristic
\[ \extdim \text{Ext}^{n,h}_{G(k_i)}(M,N) \geq m \]
for all sufficiently large $i$.
\end{thm}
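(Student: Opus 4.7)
The plan is to mirror the $n=1$ argument of section \ref{GenericCohomologyforExt1} while negotiating the two obstructions flagged in section \ref{TheDifficultyWithSection}: unbounded dimensions of intermediate modules, and the non-first-order nature of Yoneda equivalence for $n > 1$. The saving grace is that we need only an inequality, and that the witnesses we need live inside the bounded-height, bounded-dimension subcategory $\hnprod{h} \catfont{C}_i$, where lemma \ref{pullbackpushoutlemma}, lemma \ref{freydlemma}, and lemma \ref{tannakiansubcategorylemma}-style arguments give us very tight control.

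First I would apply the hypothesis with $k = \uprod k_i$, which has characteristic zero (section \ref{ultraproductsoffields}), to obtain $m$ linearly independent classes $\Xi_1, \ldots, \Xi_m$ in $\text{Ext}^n_{G^h(\uprod k_i)}(M,N)$. Pulling back along the tensor equivalence $\text{Rep}_{\uprod k_i} G^h \simeq \hnprod{h} \catfont{C}_i$ of theorem \ref{heightResMainTheorem}, I would reinterpret each $\Xi_j$ as an $n$-fold extension $0 \to [N] \to [X_{n-1,i}^j] \to \cdots \to [X_{0,i}^j] \to [M] \to 0$ of $[M]$ by $[N]$ in $\hnprod{h} \catfont{C}_i$, whose intermediate objects have bounded dimension and height $\leq h$.

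Next I would upgrade these to a linearly independent family inside the full ultracategory $\uprod \catfont{C}_i$ via an $n$-fold generalization of proposition \ref{linIndExtensionsInSubcategoriesProp}. The ingredients are already at hand: lemma \ref{pullbackpushoutlemma} and the proof of lemma \ref{tannakiansubcategorylemma} show that pullbacks, pushouts, kernels, cokernels, and biproducts computed in $\hnprod{h} \catfont{C}_i$ agree with those in $\uprod \catfont{C}_i$, which forces the Baer sum and scalar multiplication of $n$-fold extensions to agree in the two categories, and fullness guarantees that each morphism $\phi_k$ witnessing a direct equivalence between $n$-fold extensions in $\uprod \catfont{C}_i$ already lives in $\hnprod{h} \catfont{C}_i$. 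Hence the $\Xi_j$ remain linearly independent in $\uprod \catfont{C}_i$. I would then choose representing tuples $\xi_i^j : 0 \to N \to X_{n-1,i}^j \to \cdots \to X_{0,i}^j \to M \to 0$ in the factors $\catfont{C}_i$; since ``is an exact sequence'' is a finite conjunction of kernel and cokernel conditions, proposition \ref{ultraexactsequenceprop} supplies a large (hence, after intersecting over $j$, large) set of $i$ on which every $\xi_i^j$ is genuinely an $n$-fold extension with intermediate height $\leq h$, therefore represents a class in $\text{Ext}^{n,h}_{G(k_i)}(M,N)$.

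The remaining task, and the principal obstacle, is to transfer linear independence from $\uprod \catfont{C}_i$ to almost every factor. Baer sum is a cascade of universal constructions and so commutes with ultraproducts in this bounded-dimension setting, meaning a relation $c_{1,i} \xi_i^1 \oplus \cdots \oplus c_{m,i} \xi_i^m \equiv 0$ on a large set of factors gives $c_1 \Xi_1 \oplus \cdots \oplus c_m \Xi_m = [c_{1,i}\xi_i^1 \oplus \cdots \oplus c_{m,i}\xi_i^m]$ in $\uprod \catfont{C}_i$; the trouble, as section \ref{TheDifficultyWithSection} points out, is that the chain of direct equivalencies witnessing triviality in each factor may grow unboundedly with $i$, so $\L$os' theorem does not immediately transport equivalence across the ultraproduct. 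The clean way to finish is therefore to establish a dimension-dependent bound on the length of the chain required to witness Yoneda equivalence between two $n$-fold extensions whose intermediate objects have dimension at most a fixed constant---a bound classically available in categories with enough injectives or projectives, where the chain $\xi \leftarrow \rho \to \chi$ of length two always suffices, but which must here be verified by hand inside $\hnprod{h} \catfont{C}_i$. Once such a bound is in place, ``equivalent as $n$-fold extensions'' becomes a first-order formula in the sense of theorem \ref{FODofLISEthm}, $\L$os' theorem transfers linear independence to almost every factor, and the inequality follows. For the case $m = \infty$, the same argument applied to each finite $m' < \infty$ gives $\extdim \text{Ext}^{n,h}_{G(k_i)}(M,N) \geq m'$ for all sufficiently large $i$, forcing the dimension to diverge.
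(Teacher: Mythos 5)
Your proposal reproduces the very argument the paper records at the end of section \ref{TheDifficultyWithSection} and then abandons: immediately after stating this theorem the paper says ``the obvious attempt at a proof of this falls apart as well,'' so there is no proof in the paper against which to compare. The gap you leave open coincides exactly with the paper's own diagnosis of why the attempt fails.

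Concretely, you flag the ``principal obstacle'' and then assume it away. Transferring linear independence from $\uprod\catfont{C}_i$ to almost every factor $\catfont{C}_i$ requires, by contraposition, that almost-everywhere triviality of a Baer sum $a_i^1\xi_i^1 \oplus \cdots \oplus a_i^m\xi_i^m$ assemble to triviality in the ultraproduct; triviality of an $n$-fold extension is Yoneda equivalence to the split sequence, and for $n>1$ Yoneda equivalence is only the transitive closure of the ``morphism of extensions'' relation, so the witnessing chains in the $\catfont{C}_i$ may grow without bound as $i$ does. You propose to close this with a ``dimension-dependent bound on the length of the chain,'' appealing to the length-two roof available in categories with enough injectives or projectives; but $\text{Rep}_k G$ of finite-dimensional modules has neither (a point the paper emphasizes in section \ref{FODofExt1Section}), and you explicitly say the bound ``must here be verified by hand'' without verifying it. The same unbounded-chain issue also appears, less visibly, one step earlier: your proposed $n$-fold analogue of proposition \ref{linIndExtensionsInSubcategoriesProp} requires triviality in $\uprod\catfont{C}_i$ to descend to $\hnprod{h}\catfont{C}_i$, hence requires the witnessing chain to stay inside the bounded-height, bounded-dimension subcategory; for $n>1$ the intermediate extension modules in the chain may have unbounded height or dimension, so fullness of the subcategory does not control them the way a single isomorphism does in the $n=1$ proof. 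Absent a proof of a uniform bound on chain length in these subcategories (a purely categorical roof construction, using only pullbacks and pushouts of long exact sequences rather than enough injectives or projectives, is the lemma one would want to prove here), your argument halts exactly where the paper's did, and the theorem remains open.
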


But the obvious attempt at a proof of this falls apart as well.
Suppose we had a sequence $[\xi_i]^1, \ldots, [\xi_i]^n$ of
linearly independent extensions in the category $\hnprod{h}
\text{Rep}_{k_i} G$; then we would like to see that these project
back to an almost everywhere sequence of linearly independent
extensions $\xi_i^1, \ldots, \xi_i^n$ in the categories
$\catfont{C}_i$.  But even this, as far as we can tell, is not
guaranteed.  Linear independence means that, whenever $a_i^1
\xi_i^1 \oplus \ldots \oplus a_i^n \xi_i^n$ is a trivial
extension, then $a_i^1 = \ldots = a_i^n = 0$.  Being a trivial
extension in turn means that, whenever $\chi_i$ is any extension,
$\chi_i \oplus (a_i^1 \xi_i^1 \oplus \ldots \oplus a_i^n \xi_i^n)$
is equivalent to $\chi_i$.  But again, equivalence of extensions
is not necessarily first-order, and so neither is the property of
being a trivial extension.  We see then that a linear dependence
among the $\xi_i^1, \ldots, \xi_i^n$ does not necessarily project
to a linear dependence among the $[\xi_i]^1, \ldots, [\xi_i]^n$,
and we cannot automatically conclude that the $a_i^1, \ldots,
a_i^n$ are equal to zero for almost every $i$.





\bibliography{common}
\bibliographystyle{plain}


\appendix

\chapter{Model Theory and First-Order Languages}

Here is a very basic and sometimes imprecise introduction to the
notions of \index{model} models and first-order languages; it will
be just enough to get by.  The reader is encouraged to consult
\cite{hodges} for an excellent introduction to the subject.

We abandon the term `model' for the moment and instead focus on
the notion of \index{relational structure} \textbf{relational
structure}. This is by definition a set $X$ (called the domain of
the structure) endowed with the following: a collection $\{f_i\}$
of $n_i$-ary functions on $X$ (functions from $X^{n_i}$ to $X$,
where $n_i \in \mathbb{N}$), a collection $\{r_j\}$ of $n_j$-ary
relations on $X$ (subsets of $X^{n_j}$) , and a collection of
`constants' $\{c_k\}$, certain distinguished elements of the
domain. The various labels given to these functions, relations and
constants is called the \index{signature} \textbf{signature} of
the structure.   Many (but not all) of the usual mathematical
structures one comes across can be realized as relational
structures. We do not at all demand that a signature be finite,
but all of the examples given in this dissertation will have
finite signatures.

Example: a field $k$ can be realized as a relational structure.  A
natural choice for signature might be the two binary functions $+$
and $\multsymbol$, the unary function $-$, and the two constants
$0$ and $1$, representing the obvious.  We abusively call this the
\index{first-order language!of fields} `signature of fields',
realizing that a random structure in this signature is not at all
guaranteed to be a field.

The whole point of bothering with which symbols you choose to
attach to a relational structure is three-fold. Firstly, it
determines the definition of a `homomorphism' of relational
structures (always assumed to be between structures in the same
signature); namely, a homomorphism is demanded to preserve
relations, functions, and send constants to constants. Secondly,
it determines the notion of a `substructure' $A$ of a structure
$B$, which by definition must contain all constants, be closed
under all functions, and such that the relations on $A$ are
compatible with those on $B$.  Note for instance that we included
the symbol $-$ in the language of fields, whence any substructure
of a field must be closed under negation.  If we were to omit this
symbol, this would no longer be the case; e.g.~$\mathbb{N}$ would
now qualify as a substructure of $\mathbb{Q}$.

Thirdly, and most importantly for us, the signature of a structure
determines the structure's \index{first-order language}
\textbf{first-order language}. Roughly speaking, the first-order
language of a structure is the collection of all meaningful
`formulae' one can form, in certain prescribed ways, using the
symbols of the signature as the primitive elements of the
language.

Any language, at the least, needs certain primitive verbs and
nouns.  In the context of first-order languages verbs are called
\textbf{predicates} and nouns are called \textbf{terms}.  For a
given signature we define the terms of our language as follows:

\begin{enumerate}
\item{Any variable is a term (a variable is any convenient symbol
you might choose not being used by the language already, e.g.
$x,y,a,b$, etc.)} \item{Any constant symbol is a term} \item{If
$f$ is an $n$-ary function symbol in the language and $t_1,
\ldots, t_n$ are terms, then so is $f(t_1, \ldots, t_n)$.}
\end{enumerate}

In the case of fields, $1$ is a term, so is $x$, so is $0
\multsymbol x$, and so is $(x + y) \multsymbol (1+(-z))$. These
represent the `nouns' of our language.

Next we need predicates, ways to say stuff about our nouns.
 This is the role fulfilled by the \emph{relational} symbols of our
language, as well as the binary relational symbol `=',
representing equality, which we always reserve for ourselves. We
define the \textbf{atomic formulae}, which one can think of as the
most basic sentences belonging to our language, as follows:

\begin{enumerate}
\item{If $s$ and $t$ are terms, then $s=t$ is an atomic formula.}
\item{If $r$ is an $n$-ary relational symbol, and $t_1, \ldots,
t_n$ are terms, then $r(t_1, \ldots, t_n)$ is an atomic formula.}

\end{enumerate}

In the case of fields, $1=0$ is an atomic formula, and so is
$x+y=1\multsymbol z$.  The signature we chose for fields did not
include any relational symbols other than `=', so all atomic
formulae in this signature must be built from this.

We are of course not content to restrict ourselves to these
primitive formulae; we want to able to put them together using the
usual logical symbols.  Our primitive logical symbols are $\myand,
\myor$, and $\mynot$, representing `and', `or', and `not'. Thus,
the following are all formulae in the first-order language of
fields: $\mynot(1=0) \myand x=y$, $\mynot(1+x=x)$, and
$\mynot(1=0) \myand \mynot(1+1=0) \myand \mynot(1+1+1=0) \myand
\mynot(1+1+1+1=0)$. In higher order languages, there is indeed a
notion of conjunction or disjunction of an infinite collection of
formulae, but the definition of a first-order language explicitly
disallows this. All logical combinations of formulae take place
over finite collections of formulae.

We finally have two more symbols, namely $\forall$ and $\exists$,
representing universal and existential quantification.  For any
formulae $\Phi$ in our language, and any variable $x$, we also
have the formula $\forall x \Phi$ and $\exists x \Phi$.  So, for
example, in the language of fields, the following are formulae:
$\forall x (0 \multsymbol x = 0)$, $(\forall x)(\forall y)(x
\multsymbol y = y \multsymbol x)$, and $\mynot(\exists x)(x
\multsymbol 0 = 1)$.  It is important to remember that
quantification is always understood to be over the \emph{elements}
of a structure; in particular, we have no concept in first-order
logic of quantification over \emph{subsets} of a structure.

To make things manageable, we shall not hesitate to use
abbreviations. For two formulae $\Phi$ and $\Psi$, $\Phi
\myimplies \Psi$ is shorthand for $\Psi \myor \mynot \Phi$, and
$\Phi \myiff \Psi$ is shorthand for $(\Phi \myimplies \Psi) \myand
(\Psi \myimplies \Phi)$.  If $x$ is a variable and $\Phi(x)$ is a
formula in which the free variable (unbound by quantification) $x$
occurs, then $(\exists!x)\Phi(x)$ is shorthand for $(\exists
x)(\Phi(x) \myand (\forall y)(\Phi(y) \myimplies x = y))$.  We
shall be making several such abbreviations as we go along, and
usually leave it the reader to convince himself that the intended
meaning can be achieved using only the primitive symbols of our
language.

We say that a first-order formula is a \textbf{sentence} if it has
no free variables. A (perhaps infinite) collection of sentences in
a given first-order language is called a \textbf{theory}.  If $M$
is a relational structure and $\Phi = \{\phi_i : i \in I\}$ is a
theory, we say that $M$ is a \index{model} \textbf{model} of
$\Phi$ if every sentence of $\Phi$ is true in the structure $M$.
Obviously not all collections of sentences have models; $\{ 1=0,
\mynot(1=0) \}$ obviously has no model, whatever you interpret $0$
and $1$ to be.

\begin{thm} \index{compactness theorem} (Compactness theorem for first-order logic)
Let $\Phi$ be a collection of first-order sentences such that
every finite subset of $\Phi$ has a model.  Then $\Phi$ has a
model.
\end{thm}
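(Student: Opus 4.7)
Given the heavy use of ultraproducts throughout the dissertation, and the fact that this theorem appears in an appendix reviewing model theory, the plan is to give the ultraproduct proof, which is both natural in context and should fit the author's established machinery. The proof consists of three moves: constructing an index set with an appropriate ultrafilter, forming the ultraproduct of models of finite subsets, and applying Łoś's theorem.

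First, I would let $I$ denote the set of all finite subsets of $\Phi$. By hypothesis, for each $F \in I$, we may select a model $M_F$ of $F$. The next step is to cook up an ultrafilter on $I$ that ``remembers'' every sentence in $\Phi$. For each $\phi \in \Phi$, define
\[ J_\phi = \{ F \in I : \phi \in F \}. \]
The family $\{J_\phi : \phi \in \Phi\}$ has the finite intersection property, since for any $\phi_1, \ldots, \phi_n \in \Phi$ the set $\{\phi_1, \ldots, \phi_n\}$ itself lies in $J_{\phi_1} \cap \cdots \cap J_{\phi_n}$. Hence this family extends to a filter, and by a standard Zorn's lemma argument, to an ultrafilter $\filtfont{U}$ on $I$ containing every $J_\phi$.

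Next, I would form the ultraproduct $M = \uprod M_F$ with respect to $\filtfont{U}$. The crux is then to invoke Łoś's theorem (corollary \ref{loscor}, which the paper has already used repeatedly and which the appendix should state): for any first-order sentence $\phi$,
\[ M \models \phi \quad \Longleftrightarrow \quad \{ F \in I : M_F \models \phi \} \in \filtfont{U}. \]
For each $\phi \in \Phi$, the set $\{F \in I : M_F \models \phi\}$ contains $J_\phi$, because whenever $\phi \in F$ we chose $M_F$ to model all of $F$, and hence $M_F \models \phi$. Since $J_\phi \in \filtfont{U}$ by construction and filters are closed under supersets, we conclude $M \models \phi$ for every $\phi \in \Phi$, so $M$ is the desired model.

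The main obstacle is really not in the construction itself but in what it rests on: the existence of the ultrafilter extending the filter generated by the $J_\phi$, which uses the axiom of choice, and Łoś's theorem, which must be available in the appendix. Assuming both, the argument is essentially a one-paragraph application of the machinery already developed. An alternative route would be to deduce compactness from Gödel's completeness theorem (since any proof of a contradiction from $\Phi$ uses only finitely many axioms), but the ultraproduct proof is both self-contained within the framework of this dissertation and thematically consistent with the rest of the work.
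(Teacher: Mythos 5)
The paper does not actually prove this theorem: it simply cites theorem 5.1.1 of Hodges and moves on. Your proposal, by contrast, supplies a complete proof via ultraproducts, and it is correct. This is a reasonable thing to do given the dissertation's thematic emphasis, but it is worth being clear that you are going beyond what the author wrote.

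One small wrinkle is worth flagging, because of how the appendix is written. The paper defines the ultraproduct $\uprod M_i$ only for \emph{non-principal} ultrafilters $\filtfont{U}$, and Łoś's theorem is stated in that setting. Your construction does not explicitly verify non-principality. Fortunately the gap is easily closed: if $\Phi$ is finite, $\Phi$ is itself a finite subset of $\Phi$, so a model exists by hypothesis and there is nothing to prove. If $\Phi$ is infinite, suppose the ultrafilter you built were principal, generated by some finite set $F_0 \in I$. Since every $J_\phi \in \filtfont{U}$ and a principal ultrafilter generated by $F_0$ consists precisely of the sets containing $F_0$, we would need $F_0 \in J_\phi$ for every $\phi$, i.e.\ $\Phi \subseteq F_0$, contradicting finiteness of $F_0$. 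So the ultrafilter is automatically non-principal, and your appeal to Łoś's theorem as stated in the appendix is legitimate. With that observation added, the argument is complete and would sit naturally alongside the rest of the dissertation's machinery, whereas the author chose instead to outsource the proof to a reference.
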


\begin{proof}
See theorem 5.1.1 of \cite{hodges}.
\end{proof}

Some oft used corollaries:

\begin{prop}
\label{compactnesscorollaries} The following are all corollaries
of the compactness theorem.

\begin{enumerate}
\item{If the first-order sentence $\phi$ is equivalent to the
infinite conjunction of the first-order sentences $\{\psi_i:i \in
I\}$, then $\phi$ is equivalent to some finite conjunction of
them.} \item{If the first-order sentence $\phi$ is implied by the
infinite conjunction of the first-order sentences $\{\psi_i:i \in
I\}$, then $\phi$ is implied by some finite conjunction of them.}
\end{enumerate}
\end{prop}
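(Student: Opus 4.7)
The plan is to derive part 2 directly from compactness by a standard contrapositive/contradiction argument, and then extract part 1 as a quick consequence of part 2. First, I would rephrase the statements without mentioning infinite conjunctions explicitly (since these are not themselves first-order sentences): to say that $\phi$ is implied by the infinite conjunction of $\{\psi_i : i \in I\}$ means that every model of $\{\psi_i : i \in I\}$ is a model of $\phi$; and equivalence means in addition that every model of $\phi$ is a model of every $\psi_i$.

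For part 2, I would argue by contradiction. Suppose no finite subconjunction of the $\psi_i$ implies $\phi$. Then for every finite subset $F \subseteq I$, there exists a structure $M_F$ satisfying all $\psi_i$ for $i \in F$ but not satisfying $\phi$, i.e., satisfying $\mynot \phi$. So $M_F$ is a model of the finite set $\{\psi_i : i \in F\} \cup \{\mynot \phi\}$. Thus every finite subset of the theory $T = \{\psi_i : i \in I\} \cup \{\mynot \phi\}$ has a model. By the compactness theorem, $T$ itself has a model $M$. But then $M$ satisfies every $\psi_i$, so by hypothesis $M$ satisfies $\phi$; yet $M$ also satisfies $\mynot \phi$, a contradiction.

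For part 1, assuming $\phi$ is equivalent to $\{\psi_i : i \in I\}$, in particular $\phi$ is implied by the $\psi_i$, so by part 2 there is a finite $F \subseteq I$ such that $\bigwedge_{i \in F} \psi_i \myimplies \phi$. Conversely, any model of $\phi$ satisfies every $\psi_i$ (by equivalence), hence in particular $\bigwedge_{i \in F} \psi_i$. Thus $\phi$ is equivalent to the finite conjunction $\bigwedge_{i \in F} \psi_i$.

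The main obstacle here is not any deep technical step, but rather the minor delicacy of recognizing that ``equivalent to an infinite conjunction'' must be interpreted semantically (in terms of common models), since a first-order sentence by definition cannot be an infinite conjunction. Once this interpretation is fixed, both parts follow in a few lines from compactness, and the only care required is in the bookkeeping of the contradiction in part 2.
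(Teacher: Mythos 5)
Your proof is correct, and it is the standard argument one expects here. The paper itself states this proposition without proof, merely labeling it a corollary of the compactness theorem (which it quotes as: if every finite subset of a set of sentences has a model, then the whole set does), so there is no paper proof to compare against in detail. Your contrapositive argument for part 2 -- building, for each finite $F \subseteq I$, a model $M_F$ of $\{\psi_i : i \in F\} \cup \{\mynot\phi\}$ and then invoking compactness on $\{\psi_i : i \in I\} \cup \{\mynot\phi\}$ to produce the contradictory model -- is exactly the intended application, and deriving part 1 from part 2 by noting that equivalence already gives the reverse implication for any subconjunction is the natural shortcut. The one point worth flagging (which you noticed) is that the whole statement only makes sense once ``implied by / equivalent to the infinite conjunction'' is read semantically, since an infinite conjunction is not itself a first-order sentence; your interpretation is the right one and is what makes the compactness argument apply.
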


As an easy example of an application of compactness, let
$+,\multsymbol,-,0,1$ be the language of fields. See proposition
\ref{ultraprodoffieldsisafieldprop} for the fairly obvious
observation that ``is a field'' is expressible by a first-order
sentence in this language.

\begin{prop} Let $L$ be the language of fields.
\begin{enumerate}
\item{The statement ``has characteristic zero'', modulo the theory
of fields, is not expressible by a first-order sentence of $L$.}
\item{If $\phi$ is a first-order $L$-sentence which is true of
every characteristic zero field, then $\phi$ is true for all
fields of sufficiently large positive characteristic.}
\end{enumerate}
\end{prop}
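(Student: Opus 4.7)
The plan is to prove both parts by a single compactness argument applied to the countable family of ``characteristic sentences'' $\chi_p : \underbrace{1+1+\cdots+1}_{p\text{ times}} = 0$, one for each prime $p$. These are honest $L$-sentences since the language has the constant $1$ and the binary function $+$, and ``has characteristic zero'' is captured (modulo the theory $T$ of fields) by the infinite set of axioms $\{\mynot \chi_p : p \text{ prime}\}$. The key observation I will exploit is that, for any prime $q$, a field of characteristic $q$ satisfies $\mynot \chi_p$ whenever $p \neq q$.

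For part (1), I will suppose for contradiction that some single $L$-sentence $\phi$ is, modulo $T$, equivalent to ``has characteristic zero,'' and consider the theory
\[ T' = T \,\cup\, \{\mynot \phi\} \,\cup\, \{\mynot \chi_p : p \text{ prime}\}. \]
If $\phi$ really does express characteristic zero, then $T'$ can have no model (its models would be characteristic zero fields in which $\phi$ fails). On the other hand, any \emph{finite} subset $S \subset T'$ mentions only finitely many primes $p_1, \ldots, p_n$; choosing any prime $q \notin \{p_1, \ldots, p_n\}$, the field $\mathbb{F}_q$ satisfies $T$, satisfies $\mynot \phi$ (since it is not of characteristic zero), and satisfies each $\mynot \chi_{p_i}$. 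So every finite subset of $T'$ has a model, and by the compactness theorem $T'$ itself has a model, a contradiction.

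For part (2), I will run essentially the same argument contrapositively. Suppose $\phi$ is true in every characteristic zero field but fails in fields of arbitrarily large positive characteristic, that is, there is an infinite set $P$ of primes such that for each $p \in P$ some field $k_p$ of characteristic $p$ satisfies $\mynot \phi$. Consider the same theory $T'$ as above. Given a finite $S \subset T'$ mentioning only the primes $p_1, \ldots, p_n$, pick any $q \in P$ with $q \notin \{p_1, \ldots, p_n\}$ — possible since $P$ is infinite — and take $M = k_q$ as a model of $S$. By compactness $T'$ has a model, which is a field of characteristic zero in which $\phi$ fails, contradicting the hypothesis.

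No step here is a serious obstacle; the only mild subtlety is to remember that quantification in first-order logic is over elements and not over primes, so ``has characteristic zero'' must be handled via the \emph{scheme} $\{\mynot \chi_p : p \text{ prime}\}$ rather than a single sentence, and to realize that in part~(1) the non-first-order-definability is exactly what lets the scheme approach work. In both parts the essential mechanism is the one already noted after proposition \ref{compactnesscorollaries}: an infinite conjunction of $L$-sentences cannot be captured by any finite subconjunction unless the truth of each conjunct in all relevant models is witnessed locally, which fails here by the characteristic-$q$ trick.
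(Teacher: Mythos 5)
Your proof is correct and follows essentially the same approach as the paper: both rest on compactness applied to the family of sentences $\{\mynot\,\text{char}_p\}$. The only difference is cosmetic — the paper cites the packaged corollaries in proposition \ref{compactnesscorollaries} (an infinite conjunction that is equivalent to, or implies, a first-order sentence can be replaced by a finite subconjunction), whereas you unpack that reasoning and apply the compactness theorem directly to a constructed theory $T'$; the underlying argument, including the use of $\mathbb{F}_q$ for a fresh prime $q$ to witness finite satisfiability, is the same.
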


\begin{proof}
For a fixed prime number $p$, define $\text{char}_p$ to be the
first-order sentence $1+1+ \ldots + 1 = 0$ ($p$-occurrences of
$1$).  Modulo the theory of fields, this is obviously equivalent
to the assertion that the field is of characteristic $p$. Now the
statement ``is of characteristic zero'' is by definition
equivalent to the infinite conjunction of the sentences $\mynot
\text{char}_p$ for $p = 2,3,5,\ldots$.  By 1. of proposition
\ref{compactnesscorollaries}, if this were expressible as a
first-order sentence, it would be equivalent to some finite subset
of this collection.  But we know this is absurd; no finite
collection of the sentences $\mynot \text{char}_p$ can guarantee a
field to be of characteristic zero. We conclude that ``is of
characteristic zero'' is not first-order.

Now suppose that the first-order sentence $\phi$ were true in
every characteristic zero field.  This means that the infinite
conjunction of the sentences $\mynot \text{char}_p$ implies
$\phi$. By 2. of proposition \ref{compactnesscorollaries}, $\phi$
is implied by some finite subset of them.  Any field of large
enough positive characteristic satisfies this finite collection of
sentences, and hence satisfies $\phi$ as well.
\end{proof}


\chapter{Ultrafilters}

The notion of a filter is sometimes given as a slightly more
general definition then we give, but it suffices for our purposes.

\begin{defn}
\label{defnFilter} Let $I$ be a set.  A \textbf{filter} on $I$ is
a non-empty collection $\filtfont{F}$ of subsets of $I$ satisfying

\begin{enumerate}
\item{$\filtfont{F}$ is closed under the taking of pairwise
intersections} \item{If $Y$ is a superset of some element of
$\filtfont{F}$, then $Y$ is in $\filtfont{F}$} \item{The empty set
is not in $\filtfont{F}$}

\end{enumerate}

A filter is called an \index{ultrafilter} \textbf{ultrafilter} if
it is maximal with respect to inclusion among all filters.  An
ultrafilter is called \textbf{principal} if it is of the form $\{X
\subset I : x \in X\}$ for some element $x$ of $I$.

\end{defn}

We sometimes call the elements of a filter \textbf{large} sets. If
$\phi(i)$ is some statement about elements of $I$ we say that
$\phi$ holds \textbf{almost everywhere} or \textbf{for almost
every $i$} if the set on which $\phi(i)$ is true is large.

\begin{prop}
\label{ultrafilterlemma} A filter $\filtfont{F}$ on $I$ is an
ultrafilter if and only if for any subset $X$ of $I$, either $X$
or its complement is in $\filtfont{F}$.
\end{prop}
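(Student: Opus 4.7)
The plan is to prove both directions by exploiting the basic filter axioms, with the forward direction using the maximality of ultrafilters via a standard extension argument.

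For the forward direction, I would proceed by contrapositive: assume there is some $X \subset I$ with neither $X$ nor its complement $X^c$ in $\filtfont{F}$, and then construct a strictly larger filter, contradicting maximality. The candidate extension is
\[ \filtfont{F}' = \{Y \subset I : Y \supset X \cap F \text{ for some } F \in \filtfont{F}\}. \]
The key observation is that if $X \cap F = \emptyset$ for some $F \in \filtfont{F}$, then $F \subset X^c$, and closure under supersets (property 2 of definition \ref{defnFilter}) would force $X^c \in \filtfont{F}$, contrary to assumption. Hence every $X \cap F$ is nonempty, $\emptyset \nin \filtfont{F}'$, and one checks $\filtfont{F}'$ is closed under pairwise intersection and supersets. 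Since $X = X \cap I \in \filtfont{F}'$ but $X \nin \filtfont{F}$, this strictly enlarges $\filtfont{F}$, contradicting maximality.

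For the reverse direction, suppose $\filtfont{F}$ has the stated dichotomy property, and let $\filtfont{G} \supsetneq \filtfont{F}$ be any filter properly containing it. Pick $Y \in \filtfont{G} \setminus \filtfont{F}$. By hypothesis $Y^c \in \filtfont{F} \subset \filtfont{G}$, so both $Y$ and $Y^c$ lie in $\filtfont{G}$; closure under intersection then puts $\emptyset = Y \cap Y^c$ into $\filtfont{G}$, violating property 3. Hence no such $\filtfont{G}$ exists, and $\filtfont{F}$ is maximal.

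The proof is essentially routine; the only mildly delicate step is verifying that $\filtfont{F}'$ in the forward direction really is a filter (in particular that it avoids the empty set), and this is precisely where the assumption ``$X^c \nin \filtfont{F}$'' is used. I would also note as a preliminary remark that $X$ and $X^c$ cannot both belong to $\filtfont{F}$, since their intersection is empty, so the dichotomy in the statement is automatically exclusive.
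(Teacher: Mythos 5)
Your proof is correct and follows essentially the same strategy as the paper: the forward direction argues by contrapositive and extends the filter to get a contradiction with maximality, while the reverse direction shows any proper extension would be forced to contain the empty set. Your observation that $X \cap F = \emptyset$ directly forces $X^c \in \filtfont{F}$ via superset closure is a slightly cleaner route than the paper's, which first argues that at least one of $X$ or $X^c$ must be non-disjoint from everything in $\filtfont{F}$ before choosing which one to adjoin.
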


\begin{proof}
Suppose $\filtfont{F}$ does not contain $X$ or its complement; we
claim that $\filtfont{F}$ can be enlarged to a new filter
containing one or the other. First suppose that $R,S \in
\filtfont{F}$ are such that $X \cap R = \nothing$ and $X^C \cap S
= \nothing$. Then $R \subset X^C$ and $S \subset X$, whence $R
\cap S = \nothing$; but this cannot happen since $\filtfont{F}$ is
closed under intersections and does not contain $\nothing$. Thus
at least one of $X$ or its complement is not disjoint from
anything in $\filtfont{F}$, let's say $X$.  Then define
$\filtfont{F}^\prime = \filtfont{F} \cup \{S \subset I: S \supset
X\} \cup \{S \cap R:R \in \filtfont{F}, S \supset X \}$, which is
easily seen to be a new filter properly containing $\filtfont{F}$.

Conversely, if $\filtfont{F}$ contains every set or its
complement, then it is necessarily maximal, since there are no new
sets we can throw in; any such $X$ would intersect with its
complement to arrive at $\nothing \in \filtfont{F}$.
\end{proof}

Examples: The collection of all subsets having Lebesgue measure
$1$ is a filter on the interval $[0,1]$, and the collection of all
cofinite subsets is a filter on $\mathbb{N}$.  These are both
obviously non-principal and non-ultra.

Principal ultrafilters are boring and useless; we need
non-principal ultrafilters.

\begin{prop}
\label{singletonlemma} An ultrafilter \index{$\filtfont{U}$}
$\filtfont{U}$ is non-principal if and only if it contains no
finite sets if and only if it contains no singleton sets.
\end{prop}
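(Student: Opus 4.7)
The plan is to establish the equivalences by a short cycle of implications, leveraging proposition \ref{ultrafilterlemma} as the key technical tool. First I would dispose of the trivial direction: containing no finite sets obviously implies containing no singletons, since singletons are finite. What requires real work is the converse, showing that ``no singletons'' implies ``no finite sets''.

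For that converse, I would argue contrapositively and proceed by induction on cardinality. Suppose $F = \{x_1, \ldots, x_n\} \in \filtfont{U}$ with $n \geq 2$. Writing $F = \{x_1\} \cup (F \setminus \{x_1\})$, I would invoke proposition \ref{ultrafilterlemma} to observe that for each of $\{x_1\}$ and $F \setminus \{x_1\}$, either the set or its complement lies in $\filtfont{U}$. If $\{x_1\} \in \filtfont{U}$ we are done; otherwise $I \setminus \{x_1\} \in \filtfont{U}$, and intersecting with $F$ (using closure under intersections) gives $F \setminus \{x_1\} \in \filtfont{U}$, a set of strictly smaller cardinality. Iterating, we eventually land on a singleton in $\filtfont{U}$, completing the contrapositive.

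The equivalence with non-principality is then essentially bookkeeping. If $\filtfont{U}$ is principal, generated by $x \in I$, then by definition $\{x\} \in \filtfont{U}$, so $\filtfont{U}$ contains a singleton. Conversely, suppose $\{x\} \in \filtfont{U}$ for some $x \in I$. By property 2 of definition \ref{defnFilter}, every superset of $\{x\}$ lies in $\filtfont{U}$. On the other hand, any $X \subset I$ with $x \nin X$ satisfies $X \cap \{x\} = \nothing$, so $X \nin \filtfont{U}$ by closure under intersection together with property 3. Hence $\filtfont{U} = \{X \subset I : x \in X\}$, which is the principal ultrafilter at $x$.

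There is no real obstacle here; the only step with any content is the inductive reduction from a finite set in $\filtfont{U}$ to a singleton in $\filtfont{U}$, and that reduction is entirely driven by the dichotomy supplied by proposition \ref{ultrafilterlemma}. I would present the proof as the two-line trivial implication, the short induction, and the explicit identification of $\filtfont{U}$ as the principal ultrafilter at $x$.
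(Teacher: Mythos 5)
Your proof is correct and follows essentially the same strategy as the paper's: use the ultrafilter dichotomy (proposition \ref{ultrafilterlemma}) to peel elements off a finite set in $\filtfont{U}$ and land on a singleton, then verify that a singleton in $\filtfont{U}$ pins it down as the principal ultrafilter at that point. The only cosmetic difference is that you split off $\{x_1\}$ while the paper splits off $\{x_1,\ldots,x_{n-1}\}$; the inductive shrinking argument is identical in substance.
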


\begin{proof}
If $\filtfont{U}$ is principal, say generated by $x \in I$, then
obviously $\filtfont{U}$ contains the singleton set $\{x\}$.
Conversely, suppose $\filtfont{U}$ contains the finite set
$X=\{x_1, \ldots, x_n\}$, $n>1$.  Then at least one of the sets
$\{x_1, \ldots,x_{n-1}\}$ or its complement is in $\filtfont{U}$.
In the latter case we intersect with $X$ to obtain $\{x_n\} \in
\filtfont{U}$, and in either case we have a new subset with less
than $n$ elements.  Applying this process finitely many times will
eventually yield some singleton $\{x\}$ in $\filtfont{U}$.  Then
any subset containing $x$ is in $\filtfont{U}$, no subset not
containing $x$ can be in $\filtfont{U}$, and thus $\filtfont{U}$
is principal.
\end{proof}

\begin{prop}
Let $I$ be an infinite set, and $X \subset I$ any infinite subset
of $I$.  Then there exists a non-principal ultrafilter on $I$
containing $X$.
\end{prop}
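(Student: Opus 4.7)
The plan is to construct the desired ultrafilter via Zorn's lemma applied to a suitable starting filter that already forces both the containment of $X$ and non-principality.

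First I would define the starting filter $\filtfont{F}_0$ on $I$ to be the collection of all subsets of $I$ that contain $X \cap C$ for some cofinite $C \subset I$. The key preliminary check is that $\filtfont{F}_0$ is actually a filter in the sense of definition \ref{defnFilter}: it contains $X$ (take $C = I$), it is upward-closed by construction, and it is closed under pairwise intersection because $(X \cap C_1) \cap (X \cap C_2) = X \cap (C_1 \cap C_2)$ and the cofinite sets form a filter. Most importantly, $\nothing \nin \filtfont{F}_0$: this is where the hypothesis that $X$ is infinite is used, because for any cofinite $C$, the set $X \cap C$ is the intersection of an infinite set with a cofinite set, hence non-empty (in fact infinite).

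Next I would apply Zorn's lemma to the poset $\catfont{P}$ of all filters on $I$ containing $\filtfont{F}_0$, ordered by inclusion. Given a chain $\{\filtfont{F}_\alpha\}$ in $\catfont{P}$, the union $\bigcup_\alpha \filtfont{F}_\alpha$ is easily verified to be a filter: upward closure and exclusion of $\nothing$ pass to unions trivially, and closure under pairwise intersection follows from the chain condition. Thus $\catfont{P}$ has a maximal element $\filtfont{U}$. Maximality inside $\catfont{P}$ coincides with maximality among all filters on $I$, since any strictly larger filter would still contain $\filtfont{F}_0$ and thus lie in $\catfont{P}$; hence $\filtfont{U}$ is an ultrafilter.

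Finally I would verify the two required properties of $\filtfont{U}$. Containment $X \in \filtfont{U}$ is immediate since $X \in \filtfont{F}_0 \subset \filtfont{U}$. For non-principality, by proposition \ref{singletonlemma} it suffices to show $\filtfont{U}$ contains no finite set. If some finite $F \in \filtfont{U}$, then its complement $F^c$ is cofinite, so $F^c \in \filtfont{F}_0 \subset \filtfont{U}$, and then $F \cap F^c = \nothing \in \filtfont{U}$, contradicting the filter axioms. The only real obstacle is the very first step, namely verifying that $\filtfont{F}_0$ is a legitimate filter; everything after that is a routine Zorn's lemma application together with the observation that containing all cofinite sets automatically rules out finite members.
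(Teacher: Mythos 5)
Your proposal is correct and follows essentially the same route as the paper: enlarge the cofinite filter to one containing $X$ (where $X$ being infinite guarantees all the intersections $X \cap C$ with cofinite $C$ are nonempty), take a maximal extension via Zorn's lemma, and deduce non-principality from the absence of finite members. Your write-up is somewhat more explicit — you describe the generated filter $\filtfont{F}_0$ concretely and are careful to Zorn over the poset of filters containing $\filtfont{F}_0$ rather than merely filters containing $X$, which is the precise form needed for the non-principality step to go through — but the underlying argument is identical.
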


\begin{proof}
Let $\filtfont{C}$ be the filter on $I$ consisting of all cofinite
sets, and enlarge it, as in the proof of proposition
\ref{ultrafilterlemma}, to contain $X$.  Partially order the
collection of all filters containing $X$ by inclusion, which we
just showed is non-empty.  The union over any chain of filters
qualifies as an upper bound for that chain; take a maximal element
by Zorn's Lemma.  It is guaranteed to be non-principal since it
contains no finite sets, by proposition \ref{singletonlemma}.
\end{proof}

\begin{cor}
\label{cofinitelemma} The subsets of $I$ that are contained in
every non-principal ultrafilter are exactly the cofinite subsets.
\end{cor}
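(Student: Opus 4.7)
The plan is to prove both inclusions separately, leaning entirely on the two preceding results (propositions \ref{ultrafilterlemma} and \ref{singletonlemma}) together with the proposition immediately above stating that any infinite subset of $I$ sits inside some non-principal ultrafilter.

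For the forward direction, I would take a cofinite $X \subset I$ and an arbitrary non-principal ultrafilter $\filtfont{U}$, and argue by contradiction. If $X \notin \filtfont{U}$, then proposition \ref{ultrafilterlemma} forces $X^C \in \filtfont{U}$; but $X^C$ is finite, contradicting proposition \ref{singletonlemma}, which tells us that a non-principal ultrafilter contains no finite sets. Hence $X \in \filtfont{U}$, and this works for every non-principal ultrafilter.

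For the reverse direction, I would prove the contrapositive: if $X$ is not cofinite, then some non-principal ultrafilter omits it. If $X^C$ is infinite, the preceding proposition supplies a non-principal ultrafilter $\filtfont{U}$ containing $X^C$. Then $X$ cannot lie in $\filtfont{U}$, since otherwise closure under intersection would put $X \cap X^C = \nothing$ into $\filtfont{U}$, violating axiom 3 of definition \ref{defnFilter}.

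The argument is essentially immediate from the tools already developed in the appendix, so I do not anticipate any real obstacle — the only mild subtlety is remembering to invoke proposition \ref{ultrafilterlemma} (the ``$X$ or $X^C$'' dichotomy) in the forward direction rather than trying to build an ultrafilter from scratch, and to invoke the existence result in the reverse direction rather than trying to characterize $\filtfont{U}$ explicitly.
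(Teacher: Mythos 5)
Your proposal is correct and matches the paper's proof essentially step for step: both directions use the existence proposition and proposition \ref{singletonlemma}, with the ``$X$ or $X^C$'' dichotomy of an ultrafilter supplying the forward inclusion. You are slightly more explicit than the paper in naming proposition \ref{ultrafilterlemma} and the empty-set axiom, but the argument is the same.
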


\begin{proof}
If $X$ is not cofinite, the previous proposition shows that its
complement is contained in some non-principal ultrafilter,
necessarily not containing $X$.  If $X$ is cofinite, then
proposition \ref{singletonlemma} shows that every non-principal
ultrafilter does not contain $X^C$, and so contains $X$.
\end{proof}

\begin{lem}
\label{coveringlemma} Let $\filtfont{U}$ be an ultrafilter on $I$,
$J$ a member of $\filtfont{U}$, and $X_1, \ldots, X_n$ a finite
collection of subsets of $I$ which cover $J$.  Then at least one
of the $X_i$ is in $\filtfont{U}$.
\end{lem}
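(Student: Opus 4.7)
The plan is to argue by contradiction, leveraging the characterization of ultrafilters given in Proposition \ref{ultrafilterlemma}, namely that a filter $\filtfont{U}$ is an ultrafilter precisely when, for every subset $X \subset I$, either $X \in \filtfont{U}$ or $X^C \in \filtfont{U}$. Suppose toward a contradiction that none of $X_1, \ldots, X_n$ lies in $\filtfont{U}$. Then by this characterization, each complement $X_i^C$ must lie in $\filtfont{U}$.

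Next, I would invoke closure of $\filtfont{U}$ under pairwise (hence finite) intersections, which is condition 1 of the definition of a filter. Iterating this $n-1$ times yields $X_1^C \cap X_2^C \cap \cdots \cap X_n^C \in \filtfont{U}$. By De Morgan's law, this intersection equals $(X_1 \cup \cdots \cup X_n)^C$, and since the $X_i$ cover $J$, we have $X_1 \cup \cdots \cup X_n \supset J$, and therefore $(X_1 \cup \cdots \cup X_n)^C \subset J^C$.

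Now I would apply upward closure (condition 2 of the filter definition): since $\filtfont{U}$ contains the subset $(X_1 \cup \cdots \cup X_n)^C$ of $J^C$, it must also contain $J^C$. But $J \in \filtfont{U}$ by hypothesis, and using closure under pairwise intersection once more we get $J \cap J^C = \nothing \in \filtfont{U}$, contradicting condition 3 of the definition of a filter. This contradiction forces at least one $X_i$ to belong to $\filtfont{U}$.

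There is no real obstacle here; the entire proof is a direct manipulation of the three defining axioms of a filter together with the ultrafilter dichotomy. The only subtlety worth flagging in writing is that finiteness of the cover $\{X_1, \ldots, X_n\}$ is essential, since we must remain within the realm of \emph{finite} intersections to stay inside $\filtfont{U}$; with an infinite cover, the argument would break down, which is consistent with the fact (implicit elsewhere in the appendix) that ultrafilters need not be closed under countable intersections.
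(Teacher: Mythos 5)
Your proof is correct and takes essentially the same route as the paper's: assume no $X_i$ lies in $\filtfont{U}$, pass to complements via the ultrafilter dichotomy, intersect, observe containment in $J^C$, and derive a contradiction with $J \in \filtfont{U}$. The paper's version is merely more terse, leaving the upward-closure and final-contradiction steps implicit, whereas you have spelled them out; the reasoning is the same.
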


\begin{proof}
Suppose none of them are in $\filtfont{U}$.  Then all of their
complements are in $\filtfont{U}$, as well as the intersection of
their complements, which is contained in $J^C$; but this cannot
be, since $J^C \notin \filtfont{U}$.
\end{proof}

\begin{lem}
\label{coloringlemma} If $\filtfont{U}$ is an ultrafilter on $I$,
$J$ a member of $\filtfont{U}$, and $X_1, \ldots, X_n$ a finite
disjoint partition of $J$, then exactly one of the $X_i$ is
contained in $\filtfont{U}$.
\end{lem}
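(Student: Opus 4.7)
The plan is to establish existence by direct appeal to the covering lemma (lemma \ref{coveringlemma}) already proved just above, and then get uniqueness from the disjointness hypothesis together with the basic filter axioms.

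First I would observe that the hypothesis ``$X_1,\ldots,X_n$ is a disjoint partition of $J$'' supplies in particular the covering condition $J \subseteq X_1 \cup \cdots \cup X_n$. Since $J \in \filtfont{U}$ and $\filtfont{U}$ is an ultrafilter on $I$, lemma \ref{coveringlemma} applies and yields some index $i$ with $X_i \in \filtfont{U}$. This handles the ``at least one'' half of the statement.

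For the ``at most one'' half, I would argue by contradiction. Suppose $X_i, X_j \in \filtfont{U}$ with $i \neq j$. By axiom 1 of definition \ref{defnFilter}, filters are closed under pairwise intersections, so $X_i \cap X_j \in \filtfont{U}$. But disjointness of the partition forces $X_i \cap X_j = \nothing$, contradicting axiom 3 of definition \ref{defnFilter}, which forbids the empty set from lying in a filter. Hence exactly one $X_i$ belongs to $\filtfont{U}$.

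There is no real obstacle here; the lemma follows immediately from combining lemma \ref{coveringlemma} with the defining properties of a filter, and the argument is essentially two lines once the disjointness hypothesis is invoked against closure under intersection.
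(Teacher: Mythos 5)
Your proof is correct and follows the same route as the paper: existence via lemma \ref{coveringlemma} applied to the covering of $J$, and uniqueness by noting that two disjoint members of $\filtfont{U}$ would force $\nothing \in \filtfont{U}$ via closure under intersection. Nothing to add.
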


\begin{proof}
At least one of them is in $\filtfont{U}$ by the previous lemma,
and no two of them can be, lest we take their intersection and
arrive at $\nothing \in \filtfont{U}$.
\end{proof}

With a view towards defining ultraproducts in the next section, we
close with

\begin{prop}
\label{ultraprodequivalencelemma} Let $X_i$ be a collection of
sets indexed by $I$, $\filtfont{U}$ an ultrafilter on $I$.  Define
a relation on $\prod_{i \in I} X_i$ (cartesian product of the
$X_i$) as follows. For tuples $(x_i),(y_i) \in \prod_{i \in I}
X_i$, $(x_i) \sim (y_i)$ if and only if the set $\{i \in I: x_i =
y_i \}$ is in $\filtfont{U}$. Then $\sim$ is an equivalence
relation.
\end{prop}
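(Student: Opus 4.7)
The plan is to verify the three properties of an equivalence relation directly from the filter axioms in Definition \ref{defnFilter}, using nothing more than the fact that $\filtfont{U}$ is closed under pairwise intersections and contains every superset of its members. Since $\filtfont{U}$ is non-empty and $I$ is a superset of every set in $\filtfont{U}$, we have $I \in \filtfont{U}$; this is the only ``ultra'' aspect we will even need, and in fact the proposition holds for arbitrary filters, not just ultrafilters.

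First I would check reflexivity: for any tuple $(x_i)$, the set $\{i : x_i = x_i\}$ equals $I$, which lies in $\filtfont{U}$, so $(x_i) \sim (x_i)$. Next, symmetry is immediate from the symmetry of equality in each slot: the set $\{i : x_i = y_i\}$ is literally equal to $\{i : y_i = x_i\}$, so membership in $\filtfont{U}$ of one is membership of the other.

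The only step with any content at all is transitivity. Suppose $(x_i) \sim (y_i)$ and $(y_i) \sim (z_i)$, so that $J = \{i : x_i = y_i\}$ and $K = \{i : y_i = z_i\}$ are both in $\filtfont{U}$. By axiom 1 of Definition \ref{defnFilter}, $J \cap K \in \filtfont{U}$. But on $J \cap K$ we have $x_i = y_i = z_i$, hence $J \cap K \subseteq \{i : x_i = z_i\}$. By axiom 2 (closure under supersets), $\{i : x_i = z_i\} \in \filtfont{U}$, so $(x_i) \sim (z_i)$.

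There is no main obstacle — the proposition is purely a bookkeeping check, and the proof is essentially three lines invoking the defining properties of a filter. I would write it up as a single short paragraph covering all three properties.
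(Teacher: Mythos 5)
Your proof is correct and follows essentially the same route as the paper's: reflexivity from $I \in \filtfont{U}$, symmetry from symmetry of equality, and transitivity from intersection closure followed by superset closure. Your added remark that the argument only uses the filter axioms (so the result holds for arbitrary filters) is accurate and consistent with the paper's proof, which likewise never invokes the ultra property.
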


\begin{proof}
Reflexivity is clear since necessarily $I \in \filtfont{F}$, and
symmetry is obvious.  For transitivity, suppose $(x_i) \sim (y_i)$
and $(y_i) \sim (z_i)$.  Then the set
\[ \{ i \in I : x_i = z_i \} \]
contains at least the set
\[ \{i \in I: x_i = y_i \} \cap \{i \in I: y_i = z_i \} \]
which is in $\filtfont{U}$ by intersection closure.  Then so is
$\{ i \in I : x_i = z_i \}$, by superset closure.
\end{proof}

We say that two such tuples are equal \textbf{almost everywhere}
or \textbf{on a large set} if they are related through this
relation, and we denote by $[x_i]$ the equivalence class of the
tuple $(x_i)$.


\chapter{Ultraproducts}

\index{ultraproduct}

Let $M_i$ be a collection of relational structures in a common
signature $L$, indexed by the set $I$, and fix a non-principal
ultrafilter $\filtfont{U}$ on $I$. Then we define the
\textbf{ultraproduct} of these structures relative to
$\filtfont{U}$,  denoted \index{$\uprod M_i$} $M = \uprod M_i$, to
be a new $L$-structure defined as follows.

The domain of $M$ is the collection of all equivalence classes
$[x_i]$ of tuples $(x_i) \in \prod_{i \in I} M_i$ (cartesian
product of the $M_i$) as defined in proposition
\ref{ultraprodequivalencelemma}. For an $n$-ary relation symbol
$r$, we define $r([x_i]_1, \ldots, [x_i]_n)$ to hold if and only
if, for almost every $i$, $r(x_{i,1},\ldots,x_{i,n})$ holds in the
structure $M_i$.  For an $n$-ary function symbol $f$,
$f([x_i]_1,\ldots,[x_i]_n)$ is the element
$[f(x_{i,1},\ldots,x_{i,n})]$ of $M$, and the constant $c$
corresponds to the element $[x_i]$, where $x_i$ is the element of
$X_i$ corresponding to the constant $c$.

\begin{prop}
For any ultrafilter $\filtfont{U}$, the definition just given for
$M = \uprod M_i$ is well-defined.
\end{prop}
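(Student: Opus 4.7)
The plan is to verify that each piece of the $L$-structure on $\uprod M_i$ is independent of the choice of representatives from the equivalence classes $[x_i]$. There are essentially three things to check: the interpretation of relation symbols, the interpretation of function symbols, and (trivially) the interpretation of constant symbols. Throughout, the only tools needed are the defining closure properties of the filter $\filtfont{U}$, namely closure under finite intersection and closure under the taking of supersets (conditions 1 and 2 of definition \ref{defnFilter}).

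First I would handle relation symbols. Let $r$ be an $n$-ary relation symbol, and suppose $(x_{i,j}) \sim (y_{i,j})$ for each $j = 1, \ldots, n$. Then for each $j$ the set $J_j = \{i \in I : x_{i,j} = y_{i,j}\}$ lies in $\filtfont{U}$. Let $J = J_1 \cap \cdots \cap J_n$; this is a finite intersection of large sets, hence itself large. On $J$, the tuple $(x_{i,1}, \ldots, x_{i,n})$ agrees with $(y_{i,1}, \ldots, y_{i,n})$ coordinatewise, so the set on which $r(x_{i,1}, \ldots, x_{i,n})$ holds differs from the set on which $r(y_{i,1}, \ldots, y_{i,n})$ holds only outside $J$. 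Taking intersections with $J$ and using superset closure, one set lies in $\filtfont{U}$ iff the other does.

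Next I would handle function symbols by an essentially identical argument. Given an $n$-ary $f$ and tuples $(x_{i,j}) \sim (y_{i,j})$, on the same large set $J$ just constructed we have $f(x_{i,1}, \ldots, x_{i,n}) = f(y_{i,1}, \ldots, y_{i,n})$, so the tuples $(f(x_{i,1}, \ldots, x_{i,n}))$ and $(f(y_{i,1}, \ldots, y_{i,n}))$ agree on a large set and therefore define the same equivalence class in $\uprod M_i$. For constant symbols there is nothing to check, since they are interpreted by a single fixed tuple.

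Finally one should observe that these arguments never used that $\filtfont{U}$ is an ultrafilter or that it is non-principal; only the filter axioms were invoked, which is why the proposition is stated for arbitrary $\filtfont{U}$. I do not foresee any real obstacle here; the proof is a routine bookkeeping exercise, and the only subtlety worth flagging is that finite intersection closure is genuinely essential (one must take the intersection over the $n$ coordinate-equality sets, and $n$ is finite precisely because our signature admits only finitary relations and functions).
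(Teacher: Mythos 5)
Your proof is correct and follows essentially the same route as the paper's: intersect the $n$ coordinate-equality large sets, observe that the interpretations can only differ outside that intersection, and invoke superset closure. The one small addition you make — noting explicitly that only the filter axioms (and not ultra-ness or non-principality) are used — is a nice observation and consistent with the paper, which also states the proposition for arbitrary ultrafilters.
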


\begin{proof}
We must show that the definitions given are independent of the
choice of tuple $(x_i)$ one uses to represent the equivalence
class $[x_i]$. Suppose then that $(x_i)_1 \sim (y_i)_1, \ldots ,
(x_i)_n \sim (y_i)_n$, with $x_{i,1} = y_{i,1}$ holding on the
large set $J_1$, similarly for $J_2, \ldots, J_n$.  Let $r$ be an
$n$-ary relational symbol, and suppose that the relation
$r(x_{i,1},\ldots,x_{i,n})$ holds for almost every $i$, say on the
large set $J \subset I$.  Then the relation
$r(y_{i,1},\ldots,y_{i,n})$ holds at least on the set $J_1 \cap
\ldots \cap J_n \cap J$, which is large by intersection closure.
Thus deciding if $r([x_i]_1, \ldots, [x_i]_n)$ holds in $M$ is
independent of the choice of representatives. Identical arguments
hold for function and constant symbols.
\end{proof}

Since we are assumed to be working over an ultrafilter, we can say
something stronger:

\begin{prop}
The relation $r([x_i]_1,\ldots,[x_i]_n)$ does not hold in $M$ if
and only if, for almost every $i$, $r(x_{i,1},\ldots,x_{i,n})$
does not hold in $M_i$.
\end{prop}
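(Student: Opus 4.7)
The plan is to derive this statement as a direct consequence of the defining property of an ultraproduct (as given in the preceding construction) combined with the characterization of ultrafilters provided in proposition \ref{ultrafilterlemma}.

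First I would set $J = \{i \in I : r(x_{i,1},\ldots,x_{i,n}) \text{ holds in } M_i\}$. By the definition of the relation $r$ on $M = \uprod M_i$, the statement $r([x_i]_1,\ldots,[x_i]_n)$ holds in $M$ precisely when $J \in \filtfont{U}$. Consequently, $r([x_i]_1,\ldots,[x_i]_n)$ fails in $M$ if and only if $J \notin \filtfont{U}$.

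Next I would invoke the fact that $\filtfont{U}$ is an \emph{ultra}filter, so by proposition \ref{ultrafilterlemma} we have $J \notin \filtfont{U}$ if and only if $J^C \in \filtfont{U}$. Observing that $J^C = \{i \in I : r(x_{i,1},\ldots,x_{i,n}) \text{ does not hold in } M_i\}$, this is exactly the statement that $r(x_{i,1},\ldots,x_{i,n})$ fails in $M_i$ for almost every $i$, completing both directions simultaneously.

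There really is no significant obstacle here; the proposition is essentially a restatement of the preceding one with ``holds'' replaced by ``does not hold,'' and the only genuine content is the appeal to proposition \ref{ultrafilterlemma}. It is worth noting, however, that this is precisely the step that would fail for a general (non-ultra) filter: one could then have neither $J$ nor $J^C$ large, so that $r([x_i]_1,\ldots,[x_i]_n)$ would fail in $M$ without $r(x_{i,1},\ldots,x_{i,n})$ failing almost everywhere. Thus the ultrafilter hypothesis is doing all of the work, and the argument above is the right way to expose that.
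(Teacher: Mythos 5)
Your proof is correct and follows essentially the same route as the paper: both hinge on the characterization of ultrafilters as those filters containing every set or its complement. The only cosmetic difference is that you present a single biconditional chain, whereas the paper handles the two implications asymmetrically (noting that one direction already holds for non-ultra filters, then invoking the ultrafilter property only for the converse) — your closing remark about where the ultrafilter hypothesis is used captures the same observation.
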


\begin{proof}
The `if' direction is true even in a non-ultra filter.  For the
converse, If $r([x_i]_1, \ldots, [x_i]_n)$ does not hold, it is
because the set on which $r(x_{i,1},\ldots,x_{i,n})$ holds is not
large. Then as $\filtfont{U}$ is an ultrafilter, its complement is
large, namely the set on which $r(x_{i,1},\ldots,x_{i,n})$ does
not hold.
\end{proof}

This is the reason we demand our filters to be ultra; otherwise
$M$ preserves the primitive relations $r_i$, but not necessarily
their negations. The reason we demand our ultrafilters to be
non-principal is because

\begin{prop}
If $\filtfont{U}$ is a principal ultrafilter, say generated by $j
\in I$, then $M$ is isomorphic to $M_j$.
\end{prop}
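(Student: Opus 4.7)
The plan is to exhibit an explicit isomorphism $\phi: M \to M_j$ defined by $\phi([x_i]) = x_j$, i.e., evaluation at the generating index. First I would unpack the hypothesis: a principal ultrafilter generated by $j$ is precisely $\filtfont{U} = \{X \subset I : j \in X\}$, so the equivalence relation on $\prod_{i \in I} M_i$ defining $M$ simplifies dramatically: $(x_i) \sim (y_i)$ iff $\{i : x_i = y_i\} \in \filtfont{U}$ iff $j \in \{i : x_i = y_i\}$ iff $x_j = y_j$. From this reformulation, well-definedness and injectivity of $\phi$ are immediate (both say $[x_i] = [y_i] \iff x_j = y_j$), and surjectivity is obtained by sending any $a \in M_j$ to the equivalence class of a tuple $(x_i)$ with $x_j = a$ (the other coordinates being arbitrary).

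Next I would verify that $\phi$ preserves the $L$-structure, treating the three kinds of symbols separately. For a constant symbol $c$, $c^M = [c^{M_i}]$ by definition of the ultraproduct, and $\phi$ sends this to $c^{M_j}$. For an $n$-ary function symbol $f$, we have $f^M([x_i]_1, \ldots, [x_i]_n) = [f^{M_i}(x_{i,1}, \ldots, x_{i,n})]$, which $\phi$ maps to $f^{M_j}(x_{j,1}, \ldots, x_{j,n}) = f^{M_j}(\phi([x_i]_1), \ldots, \phi([x_i]_n))$. For a relation symbol $r$, the definition says $r^M([x_i]_1, \ldots, [x_i]_n)$ holds iff $\{i : r^{M_i}(x_{i,1}, \ldots, x_{i,n})\} \in \filtfont{U}$, iff $j$ lies in that set, iff $r^{M_j}(x_{j,1}, \ldots, x_{j,n})$ holds, iff $r^{M_j}(\phi([x_i]_1), \ldots, \phi([x_i]_n))$ holds.

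None of the steps present a real obstacle; the whole content of the proposition is the observation that membership in a principal ultrafilter is determined by a single coordinate, so the quotient $\prod M_i / \sim$ collapses onto the $j$th factor. The only minor subtlety worth mentioning is that the argument never uses any structural hypothesis on the $M_i$ beyond their being $L$-structures, and in particular does not require the ultrafilter to be non-principal (which is the content of the proposition, and also the reason the rest of the theory insists on non-principal ultrafilters: principal ones yield nothing new).
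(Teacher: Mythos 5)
Your proposal follows exactly the paper's own argument: you observe that for a principal ultrafilter generated by $j$, the defining equivalence collapses to $(x_i) \sim (y_i) \iff x_j = y_j$, and then check that $[x_i] \mapsto x_j$ is an isomorphism of $L$-structures. The only difference is that you spell out the verification of the constant, function, and relation symbols explicitly, whereas the paper compresses that check into a single sentence.
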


\begin{proof}
Two tuples $(x_i),(y_i)$ are then equivalent if and only if the
set on which they are equal contains $j$, if and only if $x_j =
y_j$.  The map $[x_i] \mapsto x_j$ is thus easily seen to be an
isomorphism of $L$-structures, preserving all relations and
whatnot.
\end{proof}

The `fundamental theorem of ultraproducts', what makes them worth
studying at all, would have to be

\begin{thm}\index{Los' theorem}($\L$os' Theorem)
\label{Los'sTheorem} Let $\filtfont{U}$ be an ultrafilter on $I$,
$M = \uprod M_i$ the ultraproduct of the structures $M_i$ with
respect to $\filtfont{U}$. Let $\Phi(x_1, \ldots, x_n)$ be a
first-order formula in the language $L$ in the variables $x_1,
\ldots, x_n$, and let $[a_i]_1, \ldots, [a_i]_n$ be a collection
of elements of $M$. Then $\Phi([a_i]_1, \ldots, [a_i]_n)$ is true
of $M$ if and only if $\Phi(a_{i,1}, \ldots, a_{i,n})$ is true of
$M_i$ for almost every $i$.
\end{thm}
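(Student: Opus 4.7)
The proof will proceed by induction on the complexity of the formula $\Phi$, which is the standard approach for $\L$os' theorem. Before handling formulas, I would first dispose of a preliminary sub-induction on the complexity of \emph{terms}: if $t(x_1,\ldots,x_n)$ is an $L$-term and $[a_i]_1,\ldots,[a_i]_n$ are elements of $M$, then the value of $t$ in $M$ at these arguments is precisely $[t^{M_i}(a_{i,1},\ldots,a_{i,n})]$. This follows immediately from the definitions of how function and constant symbols are interpreted in the ultraproduct, with the induction handling nested function applications.

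With terms in hand, the base case concerns atomic formulas, of which there are two species. For formulas of the form $s = t$, the equivalence class equality $[s^{M_i}(\vec{a}_i)] = [t^{M_i}(\vec{a}_i)]$ in $M$ is by definition the assertion that the set of $i$ on which $s^{M_i}(\vec{a}_i) = t^{M_i}(\vec{a}_i)$ lies in $\filtfont{U}$. For formulas of the form $r(t_1,\ldots,t_m)$, the definition of $r$ in $M$ and the fact that $\filtfont{U}$ is an \emph{ultra}filter (so both the relation and its negation are transferred) give the claim directly.

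For the inductive step on propositional connectives, the conjunction case $\Phi = \Psi \myand \Xi$ uses closure of $\filtfont{U}$ under intersection: if $\Psi$ holds on a large set $J_1$ and $\Xi$ holds on a large set $J_2$, then $\Phi$ holds on $J_1 \cap J_2 \in \filtfont{U}$, and conversely the large set on which $\Phi$ holds is contained in the corresponding sets for $\Psi$ and $\Xi$, which are therefore large by superset closure. The negation case $\Phi = \mynot \Psi$ is precisely where the ultra-ness of $\filtfont{U}$ is essential: by proposition \ref{ultrafilterlemma}, for any subset of $I$ either it or its complement is in $\filtfont{U}$, so $\Psi$ failing on a large set is equivalent to $\Psi$ holding on a non-large set, which by the inductive hypothesis is equivalent to $\Psi$ failing in $M$. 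Disjunction can either be reduced to $\myand$ and $\mynot$, or handled directly via lemma \ref{coveringlemma}.

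The main obstacle, and the only genuinely delicate step, is the existential quantifier: $\Phi(\vec{a}) = (\exists x)\Psi(x,\vec{a})$. The easy direction is that if $\Psi([b_i],[\vec{a}_i])$ holds in $M$ for some witness $[b_i]$, then by induction $\Psi(b_i,\vec{a}_i)$ holds in $M_i$ on a large set, hence so does $(\exists x)\Psi(x,\vec{a}_i)$. The harder direction requires that if $(\exists x)\Psi(x,\vec{a}_i)$ holds for almost every $i$, we must \emph{assemble} a witness in $M$ from witnesses chosen coordinate-wise. This invokes the axiom of choice: on the large set $J$ where a witness exists, choose $b_i \in M_i$ with $\Psi(b_i,\vec{a}_i)$, and on $I \setminus J$ define $b_i$ arbitrarily. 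Then $[b_i] \in M$ is the required witness by the inductive hypothesis. The universal quantifier case then follows by rewriting $\forall x \Psi$ as $\mynot (\exists x) \mynot \Psi$ and appealing to the cases already handled. This completes the induction and yields the theorem.
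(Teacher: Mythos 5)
Your proof is correct and follows the standard induction-on-complexity argument for $\L$os' theorem. The paper itself simply cites Hodges (theorem 8.5.3) and remarks briefly that the induction proceeds from atomic formulae through propositional connectives via intersection of large sets; your proposal fills in the same skeleton in full detail, correctly isolating the two places where ultra-ness matters (negation) and where choice is needed (assembling a witness for the existential quantifier).
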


\begin{proof}
See theorem 8.5.3 of \cite{hodges}.
\end{proof}

We've proved this theorem already in the case of atomic formulae
or their negations.  The rest of the proof proceeds by induction
on the complexity (i.e.~length) of the formula.  For example, if
the theorem is true for the formulae $\Phi(x_1, \ldots, x_n)$ and
$\Psi(y_1, \ldots, y_m)$ then it is also true for their
conjunction, by considering the intersection of two large sets,
which is also large.

\begin{cor}
\label{loscor} If $\Phi$ is a first-order statement in the
language $L$, then $\Phi$ is true of $\uprod M_i$ if and only if
it is true of almost every $i$.
\end{cor}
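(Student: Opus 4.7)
The plan is to derive this corollary as an immediate specialization of Łoś' theorem (Theorem \ref{Los'sTheorem}) to the case of sentences, i.e.\ formulas with no free variables. The observation is that Łoś' theorem as stated applies to a formula $\Phi(x_1,\ldots,x_n)$ together with a choice of parameters $[a_i]_1,\ldots,[a_i]_n$ drawn from the ultraproduct; when $n=0$ there are no parameters to supply, and the biconditional degenerates precisely to the statement of the corollary.

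More concretely, I would first note that a sentence $\Phi$ is, by definition, a first-order formula in the language $L$ with no free variables. Viewing $\Phi$ as a formula $\Phi(x_1,\ldots,x_n)$ with $n=0$, Łoś' theorem asserts that $\Phi$ holds in $M = \uprod M_i$ if and only if $\Phi$ holds in $M_i$ for almost every $i$ (where ``almost every'' means on a set belonging to the fixed ultrafilter $\filtfont{U}$). Since no elements $[a_i]_j$ need to be chosen, the hypothesis on parameters is vacuously satisfied, and the conclusion is exactly the claim of the corollary.

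The only genuine content here is Łoś' theorem itself, whose proof proceeds by induction on the complexity of the formula: the atomic and negated-atomic cases have already been verified in the text preceding the theorem; conjunctions are handled by intersecting two large sets (closure under finite intersection, clause 1 of Definition \ref{defnFilter}); negations are handled using the ultrafilter property (Proposition \ref{ultrafilterlemma}, that for every subset either it or its complement is large); and existential quantifiers are handled by choosing witnesses coordinatewise on a large set. Since Łoś' theorem is cited as a known result (Theorem 8.5.3 of \cite{hodges}), there is essentially no obstacle to overcome; the corollary is a one-line deduction. If anything, the only subtlety worth mentioning explicitly in the write-up is that the biconditional genuinely requires the filter to be an ultrafilter, so that failure of $\Phi$ in the ultraproduct corresponds to the large set on which $\Phi$ holds having large complement, which is not automatic for an arbitrary filter.
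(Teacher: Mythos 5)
Your proof is correct and follows the same route as the paper: specialize Łoś' theorem (Theorem \ref{Los'sTheorem}) to the case $n=0$, i.e.\ to sentences, for which the parameter clause is vacuous. The paper's own proof is just the one-line remark ``Sentences are just a particular type of formulae; apply Łoś' theorem,'' so your writeup is simply a more explicit version of the same deduction.
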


\begin{proof}
Sentences are just a particular type of formulae; apply $\L$os'
theorem.
\end{proof}

Ultimately, we are not particularly interested in what sorts of
statements might hold in $\uprod M_i$ for a particular choice of
non-principal ultrafilter, but rather those first-order statements
that hold for \emph{every} non-principal ultrafilter.

\begin{prop}
Let $\Phi$ be a first-order statement that holds in $\uprod M_i$
for every choice of non-principal ultrafilter on $I$.  Then $\Phi$
holds in $M_i$ for all but finitely many $i \in I$.
\end{prop}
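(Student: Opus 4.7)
The plan is to prove the contrapositive: assuming $\Phi$ fails in $M_i$ for infinitely many $i \in I$, I will construct a single non-principal ultrafilter $\filtfont{U}$ on $I$ relative to which $\Phi$ fails in $\uprod M_i$, contradicting the hypothesis. This will use exactly two ingredients from the preceding material, namely the existence theorem for non-principal ultrafilters containing a prescribed infinite set, and $\L$os' theorem (in the sentence form given by corollary \ref{loscor}).

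More precisely, I would let $J = \{i \in I : \Phi \text{ is false in } M_i\}$, which by assumption is infinite. Invoking the proposition that any infinite subset of an infinite indexing set is contained in some non-principal ultrafilter, I pick a non-principal ultrafilter $\filtfont{U}$ on $I$ with $J \in \filtfont{U}$. Since $\filtfont{U}$ is a filter, it cannot contain $I \setminus J$ (otherwise the intersection $J \cap (I \setminus J) = \emptyset$ would lie in $\filtfont{U}$, violating condition 3 of definition \ref{defnFilter}). Thus the set of indices where $\Phi$ holds is not large, and by corollary \ref{loscor} this means $\Phi$ is false in $\uprod M_i$ with respect to this particular $\filtfont{U}$. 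This contradicts the hypothesis that $\Phi$ holds in the ultraproduct for every choice of non-principal ultrafilter, completing the contrapositive.

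There is really no main obstacle here; the argument is a direct two-line application of the tools already assembled in the appendix. The only thing worth double-checking is the mild asymmetry between ``$\Phi$ holds in the ultraproduct'' (which, by $\L$os, means the truth set of $\Phi$ lies in $\filtfont{U}$) and ``$\Phi$ fails'' (which, because $\filtfont{U}$ is an ultrafilter, is equivalent to the failure set lying in $\filtfont{U}$); both directions of this dichotomy have already been recorded, so no further work is needed. I note in passing that the same argument shows the converse holds as well, which could be worth remarking on as a corollary for emphasis.
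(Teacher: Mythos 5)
Your proof is correct and takes essentially the same route as the paper: the paper simply cites corollary \ref{cofinitelemma} (a set lies in every non-principal ultrafilter iff it is cofinite) together with $\L$os' theorem, whereas you unpack the contrapositive direction of that corollary by appealing directly to the existence of a non-principal ultrafilter containing a prescribed infinite set. Same ingredients, same argument; yours is just the inlined version.
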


\begin{proof}
Apply corollary \ref{loscor} and corollary \ref{cofinitelemma}.
\end{proof}

\addcontentsline{toc}{chapter}{Index}

\printindex

\end{document} 